\theoremstyle{plain}
\newtheorem{thm}{Theorem}[section]
\newtheorem*{thm*}{Theorem}
\newtheorem{corollary}[thm]{Corollary}
\newtheorem{lemma}[thm]{Lemma}
\newtheorem{prop}[thm]{Proposition}
\newtheorem{statement}[thm]{Statement}
\newtheorem*{statement*}{Statement}
\newtheorem{assumption}[thm]{Assumption}
\theoremstyle{definition}   
\newtheorem{defn}[thm]{Definition}
\theoremstyle{remark}  
\newtheorem{remark}[thm]{Remark}
\newtheorem{example}[thm]{Example}
\providecommand{\customgenericname}{}
\newcommand{\newcustomtheorem}[2]{\newenvironment{#1}[1]
	{\renewcommand\customgenericname{#2}
		\renewcommand\theinnercustomgeneric{##1}\innercustomgeneric}{\endinnercustomgeneric}}
\newcommand\trho{\widetilde{\rho}}
\newcommand\bL{\mathbb{L}}
\newcommand\bR{\mathbb{R}}
\newcommand\bC{\mathbb{C}}
\newcommand\bH{\mathbb{H}}
\newcommand\bZ{\mathbb{Z}}
\newcommand\bW{\mathbb{W}}
\newcommand\bS{\mathbb{S}}
\newcommand\bN{\mathbb{N}}
\newcommand\cA{\mathcal{A}}
\newcommand\cD{\mathcal{D}}
\newcommand\cF{\mathcal{F}}
\newcommand\cH{\mathcal{H}}
\newcommand\cI{\mathcal{I}}
\newcommand\cL{\mathcal{L}}
\newcommand\cM{\mathcal{M}}
\newcommand\cO{\mathcal{O}}
\newcommand\cW{\mathcal{W}}
\newcommand{\domain}{\mathcal{O}}
\newcommand\la{\langle}
\newcommand\ra{\rangle}
\newcommand\dd{\,\mathrm{d}}
\newcommand{\mysection}[1]{\section{#1}
\setcounter{equation}{0}}
		\let\pgfkochsegmentlength=\pgfmathresult%
		\let\pgfkochy=\pgfmathresult%
		\let\pgfkochxa=\pgfmathresult%
		\let\pgfkochxb=\pgfmathresult%
\begin{document}


\title[$L_p$-theory for PDEs in non-smooth domains]
{Sobolev space theory for Poisson's and the heat equations in non-smooth domains via superharmonic functions and Hardy's inequality}





 
\thanks{The author was supported by the National Research Foundation of Korea(NRF) grant funded by the Korea government(MSIT) (No. NRF-2020R1A2C1A01003354)} 

%

\author{jinsol Seo}
\address{Jinsol Seo, Department of Mathematics, Korea University, Anam-ro 145, Sungbuk-gu, Seoul, 02841, Republic of Korea}
\email{seo9401@korea.ac.kr}

\subjclass[2020]{35J05; 35K05, 46E35}

\keywords{Poisson equation, heat equation, weighted Sobolev space}

\begin{abstract}
We prove the unique solvability for the Poisson and heat equations in non-smooth domains $\Omega\subset \bR^d$ in weighted Sobolev spaces.
The zero Dirichlet boundary condition is considered, and domains are merely assumed to admit the Hardy inequality:
$$
\int_{\Omega}\Big|\frac{f(x)}{d(x,\partial\Omega)}\Big|^2\dd x\leq N\int_{\Omega}|\nabla f|^2 \dd x\,\,\,\,,\,\,\,\,   \forall f\in C_c^{\infty}(\Omega)\,.
$$
To describe the boundary behavior of solutions, we introduce a weight system that consists of superharmonic functions and the distance function to the boundary.
The results provide separate applications for the following domains: convex domains, domains with exterior cone condition, totally vanishing exterior Reifenberg domains, conic domains, and domains $\Omega\subset\bR^d$ which the Aikawa dimension of $\Omega^c$ is less than $d-2$. 
\end{abstract}

\maketitle

\setcounter{tocdepth}{3}

\let\oldtocsection=\tocsection

\let\oldtocsubsection=\tocsubsection

\let\oldtocsubsubsection=\tocsubsubsectio

\renewcommand{\tocsection}[2]{\hspace{0em}\oldtocsection{#1}{#2}}
\renewcommand{\tocsubsection}[2]{\hspace{1em}\oldtocsubsection{#1}{#2}}
\renewcommand{\tocsubsubsection}[2]{\hspace{2em}\oldtocsubsubsection{#1}{#2}}

\tableofcontents

\mysection{Introduction}\label{sec:Introduction}
The Poisson and heat equations are among the most classical partial differential equations.
Together with the Schauder and $L_2$-theories, the $L_p$-theory for these equations in $\bR^d$ and $C^2$-domains has been developed long before.
In particular, there are extensions in various directions, including variable coefficients \cite{DongKim, Krylov2009}, semigroups \cite{Pseudodiff, semigroup}, and non-smooth domains.
This paper concentrates on non-smooth domains, where unweighted or weighted $L_p$-theories have been developed for several tyes of domains:
$C^1$-domains \cite{doyoon, KK2004}, Reifenberg domains \cite{Relliptic, Rparabolic}, convex domains \cite{convexAdo,convexFromm}, Lipschitz domains \cite{kenig,IW}, smooth cones \cite{Kozlov,Naza,Sol}, and polyhedrons \cite{KMR,MNP,MR}.

In this paper, we present a weighted $L_p$-theory for the Poisson equation
\begin{alignat}{3}
	\Delta u=f&\quad\text{in}\,\,\,\Omega\quad&&;\quad u|_{\partial\Omega}\equiv 0\qquad \label{ellip}
\end{alignat}
and the heat equation
\begin{alignat}{3}
	u_t=\Delta u+f&\quad\text{in}\,\,\,(0,\infty)\times \Omega\quad&&;\quad u(0,\cdot)=u_0\,\,,\,\,u|_{(0,\infty)\times \partial\Omega}\equiv 0\label{para}\,.
\end{alignat}
Here, $\Omega\subsetneq\bR^d$ is an open set admitting the ($L_2$-)Hardy inequality, \textit{i.e.}, when there exists a constant $\mathrm{C}_0(\Omega)>0$ such that
\begin{align}\label{hardy}
	\int_{\Omega}\Big|\frac{f(x)}{d(x,\partial\Omega)}\Big|^2\dd x\leq \mathrm{C}_0(\Omega)\int_{\Omega}|\nabla f(x)|^2 \dd x\quad\text{for all}\quad f\in C_c^{\infty}(\Omega)\,,
\end{align}
where $d(\,\cdot\,,\partial\Omega)$ is the distance function to the boundary of $\Omega$.
One of notable sufficient conditions for the Hardy inequality is the volume density condition:
\begin{align}\label{230212413}
	\inf_{\substack{p\in\partial\Omega\\r>0}}\frac{m(\Omega^c \cap B_r(p))}{m(B_r(p))}>0\,,
\end{align}
where $m$ is the Lebesgue measure on $\bR^d$ (see Remark~\ref{21.07.06.1}).

Our main results are introduced in a simplified manner in Subsection~\ref{0002}.
The results provide separate applications for the following domain conditions:
\begin{enumerate}
	\item Domains $\Omega$ satisfying \eqref{230212413};
	\item Domains $\Omega\subset \bR^d$ with $\dim_{\cA}\Omega^c<d-2$;
	\item Domains satisfying the exterior cone condition, and planar domains satsifying the exterior line segment condition;
	\item Convex domains;
	\item Domains satsifying the totally vanishing exterior Reifenberg condition;
	\item Conic domains (containing smooth cones and polyhedral cones).
\end{enumerate}
These applications are presented in Subsubsections~\ref{0003}.1 - \ref{0003}.6, sequentially.
Before summarizing the main results and their applications, we first introduce several studies related to the $L_p$-theory of the Poisson and heat equations in various domains.

\subsection{Historical remarks and aims of this paper.}\label{230214201}
\,\,

\vspace{2mm}
\noindent
\textbf{Remark on $L_p$-results for non-smooth domains.}\hspace{2mm}
One of the most remarkable studies on the Poisson equation in non-smooth domains is the work of Jerison and Kenig \cite{kenig}, where the authors proved the following result:

\begin{thm*}[\cite{kenig}, Theorems 1.1-1.3 and Proposition 1.4]
	For a domain $\Omega\subsetneq \bR^d$ and $p\in(1,\infty)$, we denote
	\begin{equation}\label{230214355}
		\begin{alignedat}{2}
			&\mathring{W}^1_p(\Omega)&&=\text{the closure of $C_c^{\infty}(\Omega)$ in $W_p^1(\Omega)$}\,;\\
			&W_p^{-1}(\Omega)&&=\text{the dual space of $\mathring{W}^1_{p/(p-1)}(\Omega)$}\,.
		\end{alignedat}	
	\end{equation}
	\begin{enumerate}
		\item
		For any bounded Lipschitz domain $\Omega\subset \bR^d$, there exists $\epsilon>0$ such that if
		\begin{align*}
			\begin{cases}
				4/3-\epsilon< p<4+\epsilon\quad\text{when}\,\,\,\,d=2\,;\vspace{1mm}\\
				3/2-\epsilon< p<3+\epsilon\quad\text{when}\,\,\,\,d\geq 3\,,
			\end{cases}
		\end{align*}
		then for any $f\in W_p^{-1}(\Omega)$, the equation $\Delta u=f$ is uniquely solvable in $\mathring{W}^1_p(\Omega)$.
		For the solution $u$, we have
		\begin{align*}
			\|u\|_{W_p^1(\Omega)}\leq N(\Omega,p)\|f\|_{W_p^{-1}(\Omega)}\,.
		\end{align*}
		\item If $p>4$ when $d=2$, and $p>3$ when $d\geq 3$,
		then there exists a bounded Lipschitz domain $\Omega$ and $f\in W_p^{-1}(\Omega)$ such that the equation $\Delta u=f$ does not have solution $u$ in $\mathring{W}^1_p(\Omega)$.
	\end{enumerate}
\end{thm*}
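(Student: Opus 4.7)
The plan is to attack part (1) by the method of layer potentials in the spirit of Dahlberg, Jerison--Kenig, and Verchota. First I would split $\Delta u=f$ into a particular solution $w=\cN f$ obtained by convolving the zero-extension of $f$ with the Newtonian kernel (so $w\in W^1_p(\bR^d)$ by the Calder\'on--Zygmund theory whenever $f\in W_p^{-1}(\Omega)$) and a harmonic corrector $v$ that solves the homogeneous problem $\Delta v=0$ in $\Omega$ with $v|_{\partial\Omega}=-w|_{\partial\Omega}$. The homogeneous problem is then represented by a double-layer potential $v=D\phi$; the classical jump relation reduces it to the boundary integral equation $(\tfrac12 I+K)\phi=-w|_{\partial\Omega}$ on $\partial\Omega$.

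The core of the argument is the invertibility of $\tfrac12 I+K$ on the appropriate boundary Sobolev/Besov space for $p$ in the stated range. $L_2$-boundedness of $K$ follows from the Coifman--McIntosh--Meyer theorem on the $L_2$-boundedness of Cauchy-type singular integrals along Lipschitz graphs, and $L_2$-invertibility is then extracted from a Rellich identity that compares tangential and normal derivatives of $v$ on $\partial\Omega$. Dahlberg's reverse H\"older inequality for the Poisson kernel propagates this invertibility from $p=2$ to a neighborhood $(2-\epsilon,2+\epsilon)$, and exploiting the $A_\infty$ character of harmonic measure on Lipschitz boundaries extends the range to $p<3$ in dimension $d\geq 3$; in the plane a conformal-map plus $H^p$ Hardy-space argument pushes the threshold to $p<4$. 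Duality between $\mathring W^1_p$ and $W_{p/(p-1)}^{-1}$ together with standard trace theorems then delivers existence, uniqueness, and the claimed estimate.

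For part (2), I would construct a counterexample from a Lipschitz domain with an isolated conical vertex. Near such a vertex one can separate variables and write model harmonic solutions as $u(r,\theta)=r^{\lambda}\Phi(\theta)$, where $\Phi$ is a Dirichlet eigenfunction of the spherical Laplacian on the cone's cross-section and $\lambda=\lambda(\Omega)>0$ is the associated characteristic exponent. By narrowing the opening of the cone, $\lambda$ can be made arbitrarily small and positive, so that $|\nabla u|\sim r^{\lambda-1}$ fails to be $L_p$-integrable near the vertex once $p(\lambda-1)+d-1\leq -1$. Inserting such a singular harmonic profile into a cut-off construction yields an $f\in W_p^{-1}(\Omega)$ for which no solution in $\mathring W^1_p(\Omega)$ exists. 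The main obstacle is clearly the sharp boundary $L_p$-theory for $\tfrac12 I+K$: bridging the elementary $L_2$ case to the optimal exponents $p=3,4$ is the technical heart of the Dahlberg--Kenig--Verchota program, and requires a delicate interplay of Rellich identities, Carleson measure estimates, and reverse H\"older inequalities for harmonic measure on non-smooth boundaries.
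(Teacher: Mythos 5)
This statement is not proved in the paper at all: it is quoted verbatim as Theorems 1.1--1.3 and Proposition~1.4 of Jerison--Kenig \cite{kenig}, and the paper's only remark about its proof is the one-sentence summary that the authors study the trace map $w\mapsto w|_{\partial\Omega}$ for a Newtonian particular solution $w$ and then solve the homogeneous problem $\Delta v=0$, $v|_{\partial\Omega}=w|_{\partial\Omega}$. Your decomposition into a Newtonian part plus a harmonic corrector matches that high-level description, so the framing is appropriate. However, since the paper supplies no proof, the attempt has to be judged on its own, and as written it has a substantive gap at the point that is actually hard.

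The gap is the passage from $L_2$-invertibility of $\tfrac12 I+K$ to the sharp intervals $p\in(3/2-\epsilon,3+\epsilon)$ for $d\ge3$ and $p\in(4/3-\epsilon,4+\epsilon)$ for $d=2$. You invoke ``Dahlberg's reverse H\"older inequality'' to extend to $(2-\epsilon,2+\epsilon)$ and then ``the $A_\infty$ character of harmonic measure'' to reach $p<3$. These are different statements with very different strengths: Dahlberg's result gives a reverse-H\"older-$2$ (i.e.\ $B_2$) estimate for the Poisson kernel, which is considerably stronger than generic $A_\infty$, and even $B_2$ on its own only yields Dirichlet solvability in a perturbative range near $p=2$; it does not by itself produce the thresholds $3$ and $4$. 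In \cite{kenig} those numbers come out of a much more elaborate machinery — atomic decompositions of the boundary Besov spaces $B^{p,p}_s(\partial\Omega)$, square-function and nontangential maximal-function estimates for harmonic functions, trace and extension theorems adapted to the interior Bessel-potential/Triebel--Lizorkin scale $L^p_s(\Omega)$, a careful duality between the Dirichlet and regularity problems, and real interpolation — rather than from the layer-potential calculus of the Dahlberg--Kenig--Verchota school. Those two routes are closely related but not interchangeable, and in particular the boundary integral equation you write down would have to be inverted on a nonstandard Besov space $B^{1-1/p}_{p}(\partial\Omega)$, not on $L_p(\partial\Omega)$, which you do not address; the ``conformal map plus $H^p$ Hardy space'' claim for $d=2$ is also not how the plane case is handled in the source.

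For part~(2) the conical-vertex construction is the right idea, but the inequality you state, $p(\lambda-1)+d-1\le-1$, is for the radial integrability of $|\nabla u|^p$ and needs to be paired with an explicit exhibit of an $f\in W_p^{-1}(\Omega)$ (a cutoff of $\Delta(\eta u)$) for which one can verify there is no $\mathring W_p^1(\Omega)$ solution, e.g.\ by a duality argument against the singular harmonic profile; as stated the sketch stops short of producing that contradiction. It is also worth flagging that the relevant corners are the reentrant ones (interior angle close to $2\pi$ when $d=2$, nonconvex cross-section when $d\ge3$), which is why the exponent $\lambda$ tends to $1/2$ (giving the threshold $p=4$ in the plane) rather than to $0$.

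In short: the overall architecture (particular solution plus homogeneous corrector, reduce to a boundary problem, counterexample via singular corners) is correct and consistent with what the paper says about \cite{kenig}, but the proof proposal skips the technically decisive step of reaching the sharp $L_p$-range, and what it offers in its place ($A_\infty$ harmonic measure, conformal maps) would not suffice.
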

\noindent
This theorem establishes that the Poisson equation is \textit{not uniquely solvable} in unweighted Sobolev spaces $\mathring{W}_p^1$, in general, for non-smooth domains $\Omega$ and values of $p\in(1,\infty)$.
For (1) of the above theorem, Jerison and Kenig investigated the trace map $w\mapsto w|_{\partial\Omega}$ for $w\in W_p^1(\bR^d)$ satisfying $\Delta w=f 1_{\Omega}$, and the homogeneous Dirichlet problem $\Delta v=0\,;\,v|_{\partial\Omega}=w|_{\partial\Omega}$.
The Lipschitz boundary condition for $\Omega$ plays a crucial role in this context.

Elliptic and parabolic equations in smooth cones and polyhedrons have been extensively studied in the literature, including studies in \cite{KMR,MNP,MR} (for elliptic equations) and \cite{Kozlov,Naza,Sol} (for parabolic equations).
Here, a smooth cone is a domain $\Omega\subset \bR^d$ defined as 
$$
\Omega=\{r\sigma\,:\,r>0\,\,\,\,\text{and}\,\,\,\, \sigma\in\cM\}\,,
$$
where $\cM$ is a a smooth subdomain of $\bS^{d-1}:=\partial B_1(0)$ (see Figure \ref{230222651}).
The references provide the unique solvability of the equations in specific weighted $L_p$-Sobolev spaces for all $p\in(1,\infty)$, by using the spectral theory of so-called operator pencils for elliptic equations and by using Green function estimates for parabolic equations.
The weight system in these Sobolev spaces (for smooth cones and polyhedrons) consists of distance functions for each vertex and edge; the range of weights for the solvability is closely related to the \textit{eigenvalues} of the spherical Laplacian on $\cM$.
Furthermore, by Sobolev-H\"older embedding theorems (introduced in \cite[Lemma 1.2.3, Lemma 3.1.4]{MR}), the pointwise behavior of solutions near vertices and edges is also obtained.

The aforementioned studies suggest considering weight systems associated with each domain and Laplace operator to investigate the solvability of the Poisson and heat equations in various non-smooth domains and to describe the boundary behavior of solutions.

There are many other notable studies in this area.
In Section \ref{0003}, dealing with several types of non-smooth domains, we mention works relevant to each situation.

	\vspace{4mm}
	
	\noindent\textbf{Remark on the method of this paper.}\hspace{2mm}
	Our approach is based on the localization argument developed by Krylov \cite{Krylov1999-1}, where the author investigated the Poisson and heat equations in the half space $\bR_+^d:=\{(x_1,\ldots,x_d)\,:\,x_1>0\}$.
	Krylov provides the following weighted $L_p$-estimates (see \cite[Theorem 4.1]{Krylov1999-1}): if $\theta\in(-p-1,-1)$, then for any $n\in\bN_0$, $u\in C_c^{\infty}(\Omega)$ and $f:=\Delta u$,
	\begin{align}\label{220930316}
		\int_{\bR_+^d}\bigg(\sum_{k=0}^{n+2}|\rho^{k}D^ku|\bigg)^p\rho^{\theta}\dd x\lesssim\int_{\bR_+^d}\bigg(\sum_{k=0}^{n}|\rho^{k+2}D^kf|\bigg)^p\rho^{\theta}\dd x\,,
	\end{align}
	where $\rho(x)=d(x,\partial\bR_+^d)=x_1$ for $x=(x_1,\ldots,x_d)$.
	By setting $\theta=-p$, this implies
	\begin{align*}
		\int_{\bR_+^d}|\rho^{-1}u|^p+|Du|^p+\cdots+|\rho^{n+1}D^{n+2}u|^p\dd x\lesssim\int_{\bR_+^d}|\rho f|^p+\cdots+|\rho^{n+1}D^nf|^p\dd x\,.
	\end{align*}
	The value of $\theta$ in \eqref{220930316} describes the boundary behavior of solutions and their derivatives.
	We further refer to \cite[Theorem 3.1]{Krylov1999-1} for Sobolev-H\"older embedding theorems for the above weight system.
	
	Briefly speaking, the proof of \cite[Theorem 4.1]{Krylov1999-1} can be divided into two steps.
	Firstly, a \textit{localization argument} is applied to estimate higher order derivatives of the solution $u$ (the left-hand side of \eqref{220930316}) by the zeroth-order derivative of $u$ ($\int|u|^p\rho^{\theta}\dd x$) and the force term $f$ (the right-hand side of \eqref{220930316}).
	Secondly, the author estimates the zeroth-order derivative of $u$ by $f$, using the {\textit{weighted Hardy inequalities} for $\bR_+$; the sharp constants in the weighted Hardy inequalities play a crucial role.
	
	The localization argument used in \cite{Krylov1999-1} is applicable to any domain and any $\theta\in\bR$, not just to $\bR^d_+$ and $\theta\in (-p-1,-1)$ (see, \textit{e.g.}, \cite{KK2004,ConicPDE} or Lemma \ref{21.05.13.8}).
	However, the second step of the proof for \cite[Theorem 4.1]{Krylov1999-1} cannot be directly applied to other domains; this step relies on the weighted Hardy inequalities for $\bR_+$.
	For instance, the authors of \cite{ConicPDE} employ the localization argument for parabolic equations in smooth cones.
	However, in contrast to the approach in \cite{Krylov1999-1}, they use pointwise estimates of Green functions to estimate zeroth-order derivatives of solutions since weighted Hardy inequalities on conic domains have yet to be explored as extensively as those on $\bR_+$.
	
	We concentrate on the (unweighted) Hardy inequality \eqref{hardy} to estimate zeroth-order derivatives of solutions.
	We do this because the Hardy inequality holds on various non-smooth domains (see \eqref{230212413}), and the approach used in \cite{Krylov1999-1} is independent of the kernels of the Poisson and heat equations.
	To the best of our knowledge, the class of domains admitting the Hardy inequality is broader than the class of domains for which sharp estimates for the Poisson kernel have been investigated.
	
	To focus on the Hardy inequality, we note the work of Kim \cite{Kim2014}, where the author investigated stochastic parabolic equations in bounded domains $\Omega$ admitting the Hardy inequality.
	In particular, in \cite[Theorem 2.12]{Kim2014} the author provides a \eqref{220930316} type estimate, in which $(\bR^d_+,\rho(\cdot))$ is replaced by $(\Omega,d(\,\cdot\,,\partial\Omega))$ and the range of $\theta$ is restricted to to around $-2$.
	
	This work revealed a connection between the Hardy inequality \eqref{hardy} and the approach used in \cite{Krylov1999-1}.
	However, it should be noted that the range of $\theta$ in \cite[Theorem 2.12]{Kim2014} is not specified.
	Therefore, the results in \cite{Kim2014} may not fully describe the boundary behavior of solutions sufficiently well and may not include the results on $\bR_+^d$ \cite{Krylov1999-1} and $C^1$-domains \cite{KK2004}.

	\vspace{4mm}
	\noindent
	\textbf{Objective and approach of this paper.}
	\hspace{2mm}
	This paper aims to develop a general $L_p$-theory for the Poisson equation \eqref{ellip} and the heat equation \eqref{para} in a variety of non-smooth domains.
	We focus on domains that merely admit the Hardy inequality, following \cite{Kim2014}.
	A distinguishing feature of this paper from earlier studies is the use of \textit{superharmonic functions}.
	These functions are used with the Hardy inequality to estimate zeroth-order derivatives of solutions, as shown in Theorem~\ref{21.05.13.2}.
	
	Furthermore, we introduce the concept of \textit{Harnack functions} and \textit{regular Harnack functions} (see Definition~\ref{2210261255}) to extend the localization argument employed in \cite{Krylov1999-1} to a broader class of weight functions.
	These notions enable us to obtain a unified formulation for the main theorem.
	While the weight system used in most applications within this paper consists only of the distance function to the boundary, the notion of Harnack functions helps us to derive a result for conic domains, as presented in Subsection~\ref{0074}.
	
	The main result of this paper, Theorem~\ref{2207291120}, establishes that for a domain $\Omega$ admitting the Hardy inequality \eqref{hardy} and a superharmonic Harnack function $\psi$ on $\Omega$, equations \eqref{ellip} and \eqref{para} are uniquely solvable in weighted Sobolev spaces related to $\psi$.
	This result has applications to various non-smooth domains listed below \eqref{230212413} (see Subsection \ref{0003}).
	By proving the existence of suitable superharmonic functions reflecting geometric conditions for domains, we obtain unique solvability results that differ for each domain condition (see Theorems~\ref{21.11.08.1}, \ref{21.10.18.1}, \ref{22.02.19.5}, \ref{22.02.19.6}).
	
	Our results bridge the gap between \cite{KK2004, Krylov1999-1} and \cite{Kim2014}.
	Since we only assume the Hardy inequality for domains, this paper can be seen as an extension of \cite{Kim2014}.
	In addition, when focusing only on the Poisson and heat equation, Corollaries~\ref{2208131026} and \ref{22.07.17.109} encompass \cite[Theorem 4.1, Theorem 5.6]{Krylov1999-1} and \cite[Theorem 2.10]{KK2004}, respectively.

	Finally, we mention that the approach presented in this paper can be applied not only to the Poisson and heat equations but also to extended evolution equations, such as the time-fractional heat equations and the stochastic heat equation (for definitions, see, \textit{e.g.}, \cite{PDEfracC1,KKL2017} and \cite{K2004SPDE,ConicsPDE,aapp}, respectively).
	The localization argument presented in Section~\ref{0050} and the results that provide appropriate superharmonic functions for each domain (see Sections~\ref{app.} and \ref{app2.}) can be directly applied to these equations.
	In future work, we plan to extend the results obtained in this paper to these extended evolution equations.

	\vspace{2mm}

	\subsection{Summary of the main result}\label{0002}

	Let $d\in\bN$, $\Omega\subsetneq\bR^d$ is an open set, and $T\in(0,\infty]$.
	We denote
	$$
	\rho(x):=d(x,\partial\Omega):=\mathrm{dist}(x,\partial\Omega)
	$$
	and when $T=\infty$, we adopt the convention that $[0,T]=[0,\infty)$.
	\begin{defn}\label{2210261255}
		Let $\psi:\Omega\rightarrow \bR_+$ be a locally integrable function.
		\begin{enumerate}
			\item $\psi$ is said to be \textit{superharmonic} if $\Delta \psi\leq 0$ in the sense of distribution, \textit{i.e.},
			\begin{align*}
				\int_{\Omega}\psi \Delta \zeta \dd x\leq 0\quad\text{for all $\zeta\in C_c^{\infty}(\Omega)$ with $\zeta\geq 0$.}
			\end{align*}
			\item We call $\psi$ a \textit{Harnack} function if $\psi>0$, and there exists a constant $\mathrm{C}_1(\psi)$ such that
			\begin{align*}
				\underset{B(x,\rho(x)/2)}{\mathrm{ess\,sup}}\,\psi\leq \mathrm{C}_1(\psi)\underset{B(x,\rho(x)/2)}{\mathrm{ess\,inf}}\,\psi\quad\text{for all}\,\,x\in\Omega.
			\end{align*}
		\end{enumerate}
	\end{defn}
	\vspace{1mm}
	
	The primary motivation for the concept of Harnack functions is a localization argument.
	The following is proved in Lemma~\ref{21.05.27.3}: $\psi$ is Harnack if and only if there exists $\Psi\in C^{\infty}(\Omega)$ such that
	\begin{equation}\label{221027303}
		\begin{aligned}
			&\Psi\simeq \psi\quad \text{almost everywhere on}\,\,\Omega\,;\\
			&|D^k\Psi|\lesssim \rho^{-k}\Psi\quad \text{for all }k\in\bN\,.
		\end{aligned}
	\end{equation}
	We call $\Psi$ a regularization of $\psi$.
	The concept of regularization enables us to generalize a localization argument used in \cite{Krylov1999-1} to a broader class of weight functions; see Lemmas~\ref{21.05.13.8} and \ref{21.05.13.9} for this generalization.

	We introduce weighted Sobolev spaces and weighted Sobolev-Slobodeckij spaces.
	\begin{defn}
		Let $p\in (1,\infty)$, $\theta,\,\sigma\in\bR$, and $\psi$ is a Harnack function.
		\begin{enumerate}
			\item For $n\in \{0,\,1,\,2,\,\ldots\}$ and $0<s<1$, we denote
			\begin{alignat*}{2}
				\|&f\|_{W_{p,\theta}^n(\Omega,\psi^\sigma)}^p&=&\,\sum_{k=0}^n\int_{\Omega}|\rho^{k}D^k f|^p\psi^{\sigma} \rho^{\theta}\dd x\quad \Big(=\sum_{k=0}^n\big\|\rho^kD^kf\big\|_{W_{p,\theta}^0(\Omega,\psi^\sigma)}^p\Big)\,,\\[8pt]
				\|&f\|_{W_{p,\theta}^{n+s}(\Omega,\psi^\sigma)}^p&=&\,\|f\|_{W_{p,\theta}^n(\Omega,\psi^\sigma)}^p+[D^nf]_{W_{p,\theta+np}^{s}(\Omega,\psi^\sigma)}^p\,,
			\end{alignat*}
			where
			\begin{alignat*}{2}
				[&h]_{W_{p,\theta+np}^{s}(\Omega,\psi^\sigma)}^p&:=&\int_{\Omega}\bigg(\int_{\{y:|x-y|\leq \rho(x)/2\}}\frac{|h(x)-h(y)|^p}{|x-y|^{d+s p}}\dd y\bigg)\psi(x)^{\sigma} \rho(x)^{(n+s)p+\theta}\dd x\,.
			\end{alignat*}
			\vspace{1mm}
			
			\item For $n\in\bN$ and $s\in[0,1)$, we denote
			\begin{align*}
				\|f\|_{W_{p,\theta}^{-n+s}(\Omega,\psi^\sigma)}=\inf\Big\{\sum_{|\alpha|\leq n}\|\rho^{-|\alpha|}f_{\alpha}\|_{W_{p,\theta}^{s}(\Omega,\psi^\sigma)}\,:\,f=\sum_{|\alpha|\leq n}D^{\alpha}f_{\alpha}\Big\}\,.
			\end{align*}
			\item For $\gamma\in\bR$, we denote
			$$
			W_{p,\theta}^{\gamma}(\Omega,\psi^\sigma)=\big\{f\in\cD'(\Omega)\,:\,\|f\|_{W_{p,\theta}^{\gamma}(\Omega,\psi^\sigma)}<\infty\big\}\,,
			$$ where $\cD'(\Omega)$ denotes the spaces of all distributions on $\Omega$.
		\end{enumerate}
	\end{defn}
	
	\begin{defn} Let $p\in (1,\infty)$, $\theta,\,\sigma\in\bR$, $n\in\bZ$.
		\begin{enumerate}
			\item We denote $\bW_{p,\theta}^{n}(\Omega_T,\psi^{\sigma})=L_p\big((0,T);W_{p,\theta}^{n}(\Omega,\psi^{\sigma})\big)$.
			\vspace{1mm}
			
			\item By $\cW_{p,\theta}^{n+2}(\Omega_T,\psi^{\sigma})$, we denote the set of all $u:[0,T]\rightarrow \cD'(\Omega)$ satisfying the following:
			\begin{itemize}
				\item $u\in\bW_{p,\theta}^{n+2}(\Omega_T,\psi^{\sigma})$ and $u(0,\cdot)\in W_{p,\theta+2}^{n+2-2/p}(\Omega,\psi^\sigma)$;
				\item there exists $f\in \bW_{p,\theta+2p}^n(\Omega_T,\psi^{\sigma})$ such that 
				\begin{align}\label{230417643}
				\big\langle u(t,\cdot),\zeta\big\rangle=\big\langle u(0,\cdot),\zeta\big\rangle+\int_0^t\big\langle f(s,\cdot),\zeta\big\rangle\, ds\,.
				\end{align}
				for all $t\in(0,T]$ and $\zeta\in C_c^{\infty}(\Omega)$.
			\end{itemize}
			In the case \eqref{230417643}, we denote $\partial_tu=f$.
			The norm of $\cW_{p,\theta}^{n+2}(\Omega_T,\psi^{\sigma})$ is defined by
			$$
			\|u\|_{\cW_{p,\theta}^{n+2}(\Omega,\psi^{\sigma})}:=\|u\|_{\bW_{p,\theta}^{n+2}(\Omega,\psi^{\sigma})}+\|u(0,\cdot)\|_{W_{p,\theta+2}^{n+2-2/p}(\Omega,\psi^{\sigma})}+\|\partial_t u\|_{\bW_{p,\theta+2p}^{n}(\Omega,\psi^{\sigma})}\,.
			$$
			
			%
			%
		\end{enumerate}
	\end{defn}

	\begin{remark}\label{230214208}
		\,\,
		
		\begin{enumerate}
			\item 	The spaces $W_{p,\theta}^{\gamma}(\Omega,\psi^\sigma)$, $\bW_{p,\theta}^{n}(\Omega,\psi^\sigma)$ and $\cW_{p,\theta}^{n}(\Omega,\psi^\sigma)$ appear only in this section. 
			However, these spaces have the following equivalent relation (see Propositions \ref{220528651}, \ref{220418435}, Corollary \ref{21.05.26.3} and Remark~\ref{22.04.18.5}):
			\begin{itemize}
			\item Let $n\in\bZ$, $0<s<1$, and let $\Psi$ be a function satisfying \eqref{221027303}.
					\begin{align*}
					W_{p,\theta}^{n}(\Omega,\psi^\sigma)=\Psi^{-\sigma/p} H_{p,\theta+d}^{n}(\Omega)\quad\text{and}\quad W_{p,\theta}^{n+s}(\Omega,\psi^\sigma)=\Psi^{-\sigma/p} B_{p,\theta+d}^{n+s}(\Omega)\,,
				\end{align*}
				where $\Psi^{-\sigma/p} H_{p,\theta+d}^{\gamma}$ and $\Psi^{-\sigma/p} B_{p,\theta+d}^{\gamma}$ are introduced in Subsections~\ref{0042} and \ref{0052}.
				In addition,
				\begin{align*}
					\bW_{p,\theta}^{n}(\Omega_T,\psi^\sigma)=\Psi^{-\sigma/p} \bH_{p,\theta+d}^{n}(\Omega,T)\quad\text{and}\quad \cW_{p,\theta}^{n+2}(\Omega_T,\psi^\sigma)=\Psi^{-\sigma/p} \cH_{p,\theta+d}^{n+2}(\Omega,T)\,,
				\end{align*}
				where $\Psi^{-\sigma/p} \bH_{p,\theta+d}^{n}$ and $\Psi^{-\sigma/p} \cH_{p,\theta+d}^{n+2}(\Omega)$ are introduced in \eqref{221015645} and the below of \eqref{221015645}, respectively.
			\end{itemize}
			\vspace{1mm}
			
			\item Properties of $W_{p,\theta}^{\gamma}(\Omega,\psi^{\sigma})$ and $\cW_{p,\theta}^{n}(\Omega_T,\psi^\sigma)$) are introduced in Subsections \ref{0042} and \ref{0052}.
			Especially, Lemmas~\ref{21.09.29.4}, \ref{22.04.15.148}, and Proposition~\ref{2205241111} provide that the dual space of $W_{p,\theta}^{s}(\Omega,\psi^{\sigma})$ is 
			$W_{p',\theta'}^{-s}(\Omega,\psi^{\sigma'})$, where
			$$
			\frac{1}{p}+\frac{1}{p'}=1\,\,,\quad \frac{\theta}{p}+\frac{\theta'}{p'}=\frac{\sigma}{p}+\frac{\sigma'}{p'}=0\,.
			$$
			In addition, $W_{p,\theta}^{\gamma}(\Omega,\psi^{\sigma})$ is a Banach space, and $C_c^{\infty}(\Omega)$ is dense in $W_{p,\theta}^{\gamma}(\Omega,\psi^{\sigma})$.
			Similarly, $\cW_{p,\theta}^{\gamma}(\Omega,\psi^{\sigma})$ is a Banach space, and $C_c^{\infty}\big([0,\infty)\times \Omega\big)$ is dense in $\cW_{p,\theta}^{\gamma}(\Omega,\psi^{\sigma})$.
		\end{enumerate}
	\end{remark}
	\vspace{1mm}	

	For $0<\nu_1\leq \nu_2<\infty$ and $T\in (0,\infty]$, we denote
	\begin{itemize}
	\item $\mathrm{M}(\nu_1,\nu_2)$ : the set of all $d\times d$ real-valued symmetric matrices $(\alpha^{ij})_{d\times d}$ satisfying
$$
\nu_1|\xi|^2\leq \sum_{i,j=1}^d\alpha^{ij}\xi_i\xi_j\leq \nu_2|\xi|^2\qquad\forall\,\, \xi\in\bR^d;
$$

\item $\cM_T(\nu_1,\nu_2)$ : 
the set of all $\cL:=\sum_{i,j=1}^da^{ij}(\cdot)D_{ij}$, where $\{a^{ij}(\cdot)\}_{i,j=1,....,d}$ is a family of time measurable function on $\bR_+$ such that $\big(a^{ij}(t)\big)_{d\times d}\in \mathrm{M}(\nu_1,\nu_2)$ for all $t\in(0,T]$.
	\end{itemize}
	
	We state main results of this paper as a version by $W_{p,\theta}^\gamma(\Omega,\psi^\sigma)$.
	
	\begin{thm}[see Theorems~\ref{21.09.29.1}, \ref{22.02.18.6} with Proposition~\ref{05.11.1} and Remarks~\ref{220617}, \ref{230212657}]\label{2207291120}
		Suppose that \vspace{1mm}
		\begin{itemize}
			\item[]\qquad\qquad $p\in(1,\infty)$, $n\in\bZ$, $\sigma\in (-p+1,1)$;\vspace{1mm}
			
			\item[]\qquad\qquad $\Omega$ admits the Hardy inequality \eqref{hardy};\vspace{1mm}
			
			\item[]\qquad\qquad $\psi$ is a superharmonic Harnack function on $\Omega$.\vspace{1mm}
		\end{itemize}
		
		\begin{enumerate}
			\item For any $\lambda\geq 0$ and $f\in W_{p,2p-2}^{n}(\Omega,\psi^\sigma)$, the equation
			$$
			\Delta u-\lambda u=f\,
			$$ 
			has a unique solution $u$ in $W_{p,-2}^{n+2}(\Omega,\psi^{\sigma})$.
			Moreover, we have
			$$
			\|u\|_{W_{p,-2}^{n+2}(\Omega,\psi^{\sigma})}+\lambda \|u\|_{W_{p,2p-2}^{n}(\Omega,\psi^{\sigma})}\leq N\|f\|_{W_{p,2p-2}^{n}(\Omega,\psi^{\sigma})}\,,
			$$
			where $N$ depends only on $d$, $p$, $n$, $\sigma$, $\mathrm{C}_0(\Omega)$ and $\mathrm{C}_1(\psi)$.
			\vspace{1mm}
			
			\item For any $f\in \bW_{p,2p-2}^{n}(\Omega_T,\psi^{\sigma})$ and $u_0\in W_{p,0}^{n+2-2/p}(\Omega,\psi^{\sigma})$, the equation
			$$
			u_t=\Delta u+f\quad;\quad u(0)=u_0
			$$ 
			has a unique solution $u$ in $\cW_{p,-2}^{n+2}(\Omega_T,\psi^{\sigma})$.
			Moreover, we have
			$$
			\|u\|_{\cW_{p,-2}^{n+2}(\Omega_T,\psi^{\sigma})}\leq N\left(\|u_0\|_{W_{p,0}^{n+2-2/p}(\Omega,\psi^{\sigma})}+\|f\|_{\bW_{p,2p-2}^{n}(\Omega_T,\psi^{\sigma})}\right)\,,
			$$
			where $N$ depends only on $d$, $p$, $n$, $\sigma$, $\mathrm{C}_0(\Omega)$ and $\mathrm{C}_1(\psi)$.
			\vspace{1mm}
			
			\item Let $0<\nu_1\leq \nu_2<\infty$ and $\cL\in \cM_T(\nu_1,\nu_2)$, and
			additionally assume that $\psi$ satisfies
			\begin{align*}
				\alpha^{ij}D_{ij}\psi\leq 0
			\end{align*}
			in the sense of distribution for all $(\alpha^{ij})_{d\times d}\in \mathrm{M}(\nu_1,\nu_2)$.
			Then for any $f\in \bW_{p,2p-2}^{n}(\Omega_T,\psi^{\sigma})$ and $u_0\in W_{p,0}^{n+2-2/p}(\Omega,\psi^{\sigma})$, the equation
			$$
			u_t=\cL u+f\quad;\quad u(0)=u_0
			$$ 
			has a unique solution $u$ in $\cW_{p,-2}^{n+2}(\Omega_T,\psi^{\sigma})$.
			Moreover, we have
			$$
			\|u\|_{\cW_{p,-2}^{n+2}(\Omega_T,\psi^{\sigma})}\leq N\left(\|u_0\|_{W_{p,0}^{n+2-2/p}(\Omega,\psi^{\sigma})}+\|f\|_{\bW_{p,2p-2}^{n}(\Omega_T,\psi^{\sigma})}\right)\,,
			$$
			where $N$ depends only on $d$, $p$, $n$, $\nu_1$, $\nu_2$, $\sigma$, $\mathrm{C}_0(\Omega)$ and $\mathrm{C}_1(\psi)$.
		\end{enumerate}
	\end{thm}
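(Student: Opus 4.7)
The plan is to prove part (1) first for $n=0$ via an a priori estimate, extend to all $n\in\bZ$ by the localization argument and duality, and then deduce parts (2) and (3) from the elliptic theory through a method of continuity. Using Lemma~\ref{21.05.27.3}, I would fix a regularization $\Psi\in C^\infty(\Omega)$ of $\psi$ satisfying \eqref{221027303}, so that by Remark~\ref{230214208}(1) every $W_{p,\theta}^\gamma(\Omega,\psi^\sigma)$-norm can be rewritten as a weighted Bessel-potential norm of $v=\Psi^{\sigma/p}u$ in which the weight $\Psi$ is smooth and satisfies $|D^k\Psi|\lesssim\rho^{-k}\Psi$. Applied to $u\in C_c^\infty(\Omega)$ solving $\Delta u-\lambda u=f$, the localization scheme of Lemmas~\ref{21.05.13.8} and \ref{21.05.13.9} then yields
\[
\|u\|_{W_{p,-2}^{n+2}(\Omega,\psi^\sigma)}+\lambda\|u\|_{W_{p,2p-2}^{n}(\Omega,\psi^\sigma)}\lesssim \|f\|_{W_{p,2p-2}^n(\Omega,\psi^\sigma)}+\|\rho^{-1}u\|_{W_{p,0}^{0}(\Omega,\psi^\sigma)},
\]
so the whole matter comes down to the zeroth-order estimate $\|\rho^{-1}u\|_{W_{p,0}^{0}(\Omega,\psi^\sigma)}\lesssim\|f\|_{W_{p,2p-2}^{0}(\Omega,\psi^\sigma)}$.

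The main obstacle is this zeroth-order estimate, and it is precisely here that both the Hardy inequality and the superharmonicity of $\psi$ are needed. The approach (which is the content of Theorem~\ref{21.05.13.2} referenced in the excerpt) is to test $\Delta u-\lambda u=f$ against $|u|^{p-2}u\,\Psi^{\sigma}$ and to integrate by parts twice, producing an identity of the form
\[
(p-1)\int_\Omega |u|^{p-2}|\nabla u|^2\Psi^{\sigma}\dd x+\lambda\int_\Omega |u|^p\Psi^{\sigma}\dd x=-\int_\Omega f\,|u|^{p-2}u\,\Psi^{\sigma}\dd x+\tfrac{\sigma}{p}\int_\Omega|u|^p\bigl[(\sigma{-}1)\Psi^{\sigma-2}|\nabla\Psi|^2+\Psi^{\sigma-1}\Delta\Psi\bigr]\dd x.
\]
Because $\Delta\Psi\simeq\Delta\psi\leq 0$ in the sense of distributions, the $\Delta\Psi$ contribution has a favourable sign exactly when $\sigma$ lies on the correct side of $0$; the companion term involving $|\nabla\Psi|^2$ is absorbed using $|\nabla\Psi|\lesssim\rho^{-1}\Psi$ together with the $L_2$-Hardy inequality \eqref{hardy} applied to $|u|^{p/2}\Psi^{\sigma/2}$. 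The sharp threshold for absorption turns out to be $\sigma\in(-p+1,1)$, matching the hypothesis. Combining the resulting gradient bound with the Hardy inequality for $v=\Psi^{\sigma/p}u$ converts $\|\nabla v\|_{L_p}$ into $\|\rho^{-1}v\|_{L_p}=\|\rho^{-1}u\|_{W_{p,0}^0(\Omega,\psi^\sigma)}$, closing the estimate. Existence is then obtained by density of $C_c^\infty(\Omega)$ in $W_{p,-2}^{n+2}(\Omega,\psi^\sigma)$ and a method of continuity starting from the Hilbert case $p=2$, $\sigma=0$.

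For positive integers $n$, higher derivatives are handled by differentiating the localized equations and iterating the $n=0$ bound; for negative $n$, I would use the duality $\bigl(W_{p,\theta}^{s}(\Omega,\psi^{\sigma})\bigr)^{*}=W_{p',\theta'}^{-s}(\Omega,\psi^{\sigma'})$ from Remark~\ref{230214208}(2). The symmetry $\sigma\mapsto\sigma'=-\tfrac{p'}{p}\sigma$ preserves the range $(-p+1,1)\leftrightarrow(-p'+1,1)$, and $\Delta-\lambda$ is essentially self-adjoint with respect to the pairing up to the weight shifts encoded in the exponent identities, so the positive-index estimate on the dual scale transfers to the desired negative-index estimate.

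Part (2) follows from (1) by subtracting an extension of the initial datum $u_0$ (constructed, for instance, from an approximate heat semigroup on $\Omega$) to reduce to zero initial data, then applying the resolvent estimate of (1) time-slice-wise and concluding via the method of continuity between $\partial_t-\Delta+1$ and $\partial_t-\Delta$ on $\Omega_T$. Part (3) is proved by repeating the argument of (1)--(2) with $\cL$ in place of $\Delta$; the only point in the proof of (1) that used the specific structure of $\Delta$ was the sign of the $\Delta\Psi$ term in the energy identity above, which now must be replaced by the sign of $a^{ij}(t)D_{ij}\Psi$ for each $t$, and this is exactly what the new hypothesis $\alpha^{ij}D_{ij}\psi\leq 0$ for every $(\alpha^{ij})\in\mathrm{M}(\nu_1,\nu_2)$ supplies. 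Time-measurability of the coefficients causes no difficulty because all the spatial estimates hold pointwise in $t$.
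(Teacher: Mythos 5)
Your core energy estimate contains a genuine error. You test the equation against $|u|^{p-2}u\,\Psi^\sigma$, with $\Psi$ the regularization of $\psi$, and then claim the $\Delta(\Psi^\sigma)$ term arising after two integrations by parts has a favourable sign because ``$\Delta\Psi\simeq\Delta\psi\leq 0$.'' This is false: the regularization in Lemma~\ref{21.05.27.3} is built by mollifying the dyadic truncations $\psi 1_{U_{2,k}}$, which are not superharmonic because of the sharp cut-offs, so $\Delta\Psi$ has no one-sided bound, and ``$\Delta\Psi\simeq\Delta\psi$'' does not even parse (the right side is a signed measure, the left a smooth function of unconstrained sign). All one retains about $\Psi$ is $\Psi\simeq\psi$ and $|D^k\Psi|\lesssim\rho^{-k}\Psi$; superharmonicity is lost. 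The paper avoids this by keeping $\psi$ and $\Psi$ in strictly separate roles. The zeroth-order estimate (Theorem~\ref{21.05.13.2}, via Lemma~\ref{03.30}) tests against $|u|^{p-2}u\,\phi^c$ with $\phi=\psi$ itself, and handles non-smoothness by replacing $\phi$ locally on $\operatorname{supp}(u)$ by its \emph{uncut} mollification $\phi^{(\epsilon)}$, which remains superharmonic by Proposition~\ref{21.04.23.3}.(4) and converges by Lemma~\ref{21.04.23.5}. Only after this energy bound is obtained is it re-expressed as a $\Psi$-weighted norm using $\Psi\simeq\psi$; the derivative bounds on $\Psi$ feed only into the localization Lemma~\ref{21.05.13.8}, never into the sign argument. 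To repair your proof you would have to test with (a mollification of) $\psi^\sigma$ rather than $\Psi^\sigma$.

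Two further points of divergence. The paper does not pass to negative $n$ by duality but by the decomposition $f=\sum_{|\alpha|\le 1}D^\alpha f_\alpha$ of Lemma~\ref{22.02.16.1} and the downward-induction Lemma~\ref{21.11.12.1}; duality might be made to work, but the weight exchange $\sigma\mapsto -p'\sigma/p$ interacting with the conjugation by $\Psi$ is left unverified in your sketch. And the paper does not obtain existence by a method of continuity in $(p,\sigma)$ — that is nonstandard, since the underlying Banach space changes along the path — but by constructing a concrete weak solution via exhaustion of $\Omega$ by smooth subdomains (Lemma~\ref{21.05.25.3}) and a density argument; the method of continuity is used only in Theorem~\ref{22.02.18.6} to pass from $\nu_1\Delta$ to a general $\cL\in\cM_T(\nu_1,\nu_2)$. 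Likewise, ``applying the resolvent estimate of (1) time-slice-wise'' does not yield the maximal-regularity estimate of part (2); the paper relies on the genuinely parabolic Theorem~\ref{05.11.2} and Lemma~\ref{21.05.25.300}.
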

	The constant function $1_{\Omega}$ is a trivial example of superharmonic Harnack functions.
	As another example, it is provided in Example~\ref{220912411} that if $\Omega$ is a domain (connected open set) admitting the Hardy inequality, then $G_{\Omega}(x_0,\,\cdot\,)\wedge 1$ is a superharmonic Harnack function, where $G_{\Omega}$ is the Green function for the Poisson equation in $\Omega$ and $x_0$ is an arbitrary fixed point in $\Omega$.
	
	\vspace{2mm}
	
	\subsection{Summary of applications}\label{0003}

	This subsection considers a domain $\Omega\subset \bR^d$, where $d\geq 2$.
	For convenience, we denote
	$$
	W_{p,\theta}^{\gamma}(\Omega)=W_{p,\theta}^{\gamma}(\Omega,1)\,\,\,,\,\,\,\,\bW_{p,\theta}^{n}(\Omega_T)=\bW_{p,\theta}^{n}(\Omega_T,1)\,\,\,,\,\,\,\,\cW_{p,\theta}^{n}(\Omega_T)=\cW_{p,\theta}^{n}(\Omega_T,1)\,,
	$$
	and define the following statement:
	\begin{statement}[$\Omega,p,\theta$]\label{230203621}$\,\,$
		\begin{enumerate}
			\item[$\mathrm{[Pois]}$] Let $\lambda\geq 0$. For any $n\in\bZ$, if $f\in W_{p,\theta}^{n}(\Omega)$, the equation
			\begin{align}\label{230214224}
				\Delta u-\lambda u=f
			\end{align}
			has a unique solution $u$ in $W_{p,\theta+2p}^{n+2}(\Omega)$.
			Moreover, we have
			\begin{align}\label{220923207}
				\|u\|_{W_{p,\theta}^{n+2}(\Omega)}+\lambda \|u\|_{W_{p,\theta}^{n+2}(\Omega)}\leq N_1\|f\|_{W_{p,\theta}^{n+2}(\Omega)}\,,
			\end{align}
			where $N_1$ is independent of $f$, $u$, and $\lambda$.
			\vspace{1mm}
			
			\item[$\mathrm{[Heat]}$] For any $n\in\bZ$, if $f\in \bW_{p,\theta+2p}^{n}(\Omega_T)$ and $u_0\in W_{p,\theta+2}^{n+2-2/p}(\Omega)$, then the equation
			$$
			u_t=\Delta u+f\quad;\quad u(0)=u_0
			$$ 
			has a unique solution $u$ in $\cW_{p,\theta}^{n+2}(\Omega_T)$.
			Moreover, we have
			\begin{align}\label{2209232071}
				\|u\|_{\cW_{p,\theta}^{n+2}(\Omega_T)}\leq N_2\big(\|u_0\|_{W_{p,\theta+2}^{n+2-2/p}(\Omega)}+\|f\|_{\bW_{p,\theta}^{n+2}(\Omega_T)}\big)\,,
			\end{align}
			where $N_2$ is independent of $f$, $u$, and $T$.
			\vspace{1mm}
			
			\item[$\mathrm{[Para]}$] Let $\cL\in\cM_T(\nu,\nu^{-1})$ for some $\nu\in(0,1]$.
			For any $n\in\bZ$, if $f\in \bW_{p,\theta+2p}^{n}(\Omega_T)$ and $u_0\in W_{p,\theta+2}^{n+2-2/p}(\Omega)$, then the equation
			$$
			u_t=\cL u+f\quad;\quad u(0)=u_0
			$$ 
			has a unique solution $u$ in $\cW_{p,\theta}^{n+2}(\Omega_T)$.
			Moreover, we have
			\begin{align}\label{2209232072}
				\|u\|_{\cW_{p,\theta}^{n+2}(\Omega_T)}\leq N_3\left(\|u_0\|_{W_{p,\theta+2}^{n+2-2/p}(\Omega)}+\|f\|_{\bW_{p,\theta}^{n+2}(\Omega_T)}\right)\,,
			\end{align}
			where $N_3$ is independent of $f$, $u$, and $T$.
		\end{enumerate}
	\end{statement}

	
	\vspace{6mm}\noindent
	\textbf{1.3.1. (Subsections \ref{fatex} and \ref{fatex2}) Domains with fat exterior.}
	\vspace{2mm}
	
	Consider a domain $\Omega$ satisfying the \textit{capacity density condition} for $\Omega^c$:
	\begin{align}\label{2302101253}
		\inf_{\substack{p\in\partial\Omega\\r>0}}\frac{\mathrm{Cap}\left(\Omega^c\cap \overline{B}_r(p),B_{2r}(p)\right)}{\mathrm{Cap}\left(\overline{B}_r(p),B_{2r}(p)\right)}\geq \epsilon_0>0\,,
	\end{align}
	where $\mathrm{Cap}(K,U)$ denotes the capacity of $K$ relative to $U$ (for the definition, see \eqref{230324942}).
	It is worth noting that this condition has been studied in the literature, including \cite{aikawa2002, Aikawa2009, AA, lewis}, and the volume density condition \eqref{230212413} is a sufficient condition for \eqref{2302101253} (see Remark~\ref{21.07.06.1}).
	
	\begin{thm}[see Corollary~\ref{22.02.19.3} with Remark~\ref{21.07.06.1}]
		Let $\Omega$ satisfy \eqref{2302101253}.
		Then there exists $\alpha>0$, which depends only on $d$ and $\epsilon_0$, such that for any $p\in(1,\infty)$ and $\theta\in\bR$ satisfying
		$$
		-2-(p-1)\alpha<\theta<-2+\alpha\,,
		$$
		Statement~\ref{230203621} $(\Omega,p,\theta)$-$\mathrm{[Pois,Heat]}$ holds.
		In addition, $N_1$ (in \eqref{220923207}) and $N_2$ (in \eqref{2209232071}) depend only on $d$, $p$, $n$, $\theta$, and $\epsilon_0$.
	\end{thm}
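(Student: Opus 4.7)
The plan is to reduce the statement to Theorem~\ref{2207291120} by constructing a superharmonic Harnack function $\psi$ on $\Omega$ whose boundary behavior is $\psi \simeq \rho^\alpha$, and then translating the $\psi$-weighted spaces back to unweighted ones. Two ingredients outside the main theorem are needed. First, by Remark~\ref{21.07.06.1}, the capacity density condition~\eqref{2302101253} already implies the Hardy inequality~\eqref{hardy} with $\mathrm{C}_0(\Omega)$ controlled by $d$ and $\epsilon_0$. Second, and this is the main geometric input, one needs to produce $\alpha = \alpha(d,\epsilon_0) \in (0,1]$ together with a superharmonic Harnack function $\psi$ on $\Omega$ satisfying $\psi(x) \simeq \rho(x)^\alpha$ for every $x \in \Omega$, with $\mathrm{C}_1(\psi)$ depending only on $d$ and $\epsilon_0$.

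For this second ingredient, the natural candidate (following Example~\ref{220912411}) is $\psi = G_{\Omega}(x_0,\cdot) \wedge 1$ for a fixed $x_0 \in \Omega$. The minimum of a positive superharmonic function and a positive constant is again superharmonic, and the interior Harnack inequality for nonnegative harmonic functions on $B(x,\rho(x)/2)$ (applied away from $x_0$) supplies the Harnack-function property. The upper bound $\psi(x) \lesssim \rho(x)^\alpha$ reflects boundary H\"older regularity for harmonic functions vanishing on $\partial\Omega$, which under~\eqref{2302101253} holds with a uniform H\"older exponent $\alpha(d,\epsilon_0) > 0$ by Wiener-type capacity estimates. The matching lower bound $\psi(x) \gtrsim \rho(x)^\alpha$ comes from quantitative Green function lower estimates driven by the same capacity lower bound, e.g.\ via iterated application of the boundary maximum principle on dyadic annuli. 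Away from $\partial\Omega$, comparability is automatic since $\psi$ is bounded between two positive constants on $\{\rho \geq \delta\}$ for each $\delta > 0$.

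Once $\psi$ is in hand, the rest is mechanical. Fix $p \in (1,\infty)$ and $\theta \in (-2-(p-1)\alpha,\, -2+\alpha)$, and set $\sigma := (\theta+2)/\alpha$. Then $\sigma \in (-p+1,\,1)$, exactly the range required by Theorem~\ref{2207291120}, and $\psi^\sigma \rho^{\tau} \simeq \rho^{\theta + 2 + \tau}$ for every $\tau \in \bR$. Using the regularization $\Psi$ from~\eqref{221027303} and the identifications in Remark~\ref{230214208}(1), this comparability gives norm equivalences $W_{p,\tau}^{n}(\Omega,\psi^\sigma) \simeq W_{p,\theta+2+\tau}^{n}(\Omega)$ for every admissible index, and likewise for the $\bW$ and $\cW$ spaces. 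Matching the values $\tau = -2,\,0,\,2p-2$ appearing in Theorem~\ref{2207291120} produces precisely the spaces of Statement~\ref{230203621}$(\Omega,p,\theta)$-$\mathrm{[Pois,\,Heat]}$; parts~(1) and~(2) of Theorem~\ref{2207291120} then close the argument. The constants in the main theorem depend on $d$, $p$, $n$, $\sigma$, $\mathrm{C}_0(\Omega)$, $\mathrm{C}_1(\psi)$, all of which reduce to a dependence on $d$, $p$, $n$, $\theta$, $\epsilon_0$.

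The main obstacle is the construction of $\psi$ with the quantitative two-sided bound $\psi \simeq \rho^\alpha$. The upper estimate is the smoother direction and follows from the standard passage from capacity density to boundary H\"older regularity. The uniform lower bound is more delicate and requires explicit Green function estimates on dyadic scales; this is where the exponent $\alpha$ is quantified in terms of $d$ and $\epsilon_0$. Once $\alpha$ is pinned down, it determines the admissible window of weights: the condition $-2-(p-1)\alpha < \theta < -2+\alpha$ is precisely the preimage under $\sigma = (\theta+2)/\alpha$ of the range $(-p+1,1)$ for $\sigma$ required in the main theorem.
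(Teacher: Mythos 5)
The overall skeleton is right: reduce to Theorem~\ref{2207291120} by producing a superharmonic Harnack function $\psi\simeq\rho^\beta$ for a suitable exponent $\beta$, check the Hardy inequality via Remark~\ref{21.07.06.1}, and then translate the $\psi^\sigma$-weighted spaces into unweighted ones by Lemma~\ref{21.05.20.3}.(3) and Remark~\ref{21.10.05.2}. That is exactly what the paper does (Corollary~\ref{22.02.19.3} together with Remark~\ref{220617557}). The problem is the construction of $\psi$.

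Your proposed $\psi=G_\Omega(x_0,\cdot)\wedge 1$ cannot satisfy a two-sided bound $\psi\simeq\rho^\alpha$ in general, because the decay rate of the Green function near $\partial\Omega$ is not uniform across boundary points. Take, for instance, a planar disk with a radial slit removed: this domain satisfies~\eqref{2302101253}, and near the tip of the slit $G_\Omega(x_0,\cdot)\simeq \rho^{1/2}$, while near the smooth parts of the boundary $G_\Omega(x_0,\cdot)\simeq\rho$. No single exponent $\alpha$ makes both an upper and a lower bound hold. The $\mathbf{LHMD}(\alpha)$ condition, which is what~\eqref{2302101253} gives you (Remark~\ref{21.07.06.1}.(1)), only controls the harmonic measure (hence the Green function) \emph{from above}: $w(x,p,r)\leq M_\alpha(|x-p|/r)^\alpha$. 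It gives no matching lower bound, and indeed Example~\ref{220912411} shows that $G_\Omega(x_0,\cdot)\wedge 1$ is the \emph{smallest} positive superharmonic function up to constants, so any superharmonic $\phi\simeq\rho^\beta$ dominates it; but dominating is all you get. Your hand-waved ``Green function lower estimates on dyadic scales'' would have to prove something false.

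The paper avoids this entirely in Theorem~\ref{21.11.08.1}. For each $\beta<\alpha$ it builds $\phi_{p,k}(x)=r_0^{k\beta}\big((1-\eta)w(x,p,r_0^k)+\eta\big)$ on the ball $B(p,r_0^k)$; the additive constant $\eta>0$ is precisely what forces the lower bound $\phi_{p,k}\geq\eta\,r_0^{k\beta}$, which the Green function cannot supply. The decay of $w$ (from $\mathbf{LHMD}(\alpha)$), together with the choices of $r_0$ and $\eta$, makes these functions compatible across adjacent scales, so that $\phi_p:=\inf_k\phi_{p,k}$ is superharmonic on $\Omega$ and $\phi:=\inf_{p\in\partial\Omega}\phi_p$ satisfies $\eta\rho^\beta\leq\phi\leq r_0^{-\beta}\rho^\beta$. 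Note also that this only yields the comparability for $\beta$ strictly less than $\alpha$; the paper's proof of Corollary~\ref{22.02.19.3} then picks, for each admissible $\theta$, some $\beta<\alpha$ still large enough that $\theta$ lies in the $\beta$-window, rather than working with $\alpha$ itself. Once you replace your Green-function candidate with this $\phi$ (and the corresponding $\beta$), the remainder of your argument — setting $\sigma=(\theta+2)/\beta$, checking $\sigma\in(-p+1,1)$, and translating the spaces — goes through as you wrote.
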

	
	Moreover, we also obtain a solvability result for the Poisson and heat equations in unweighted Sobolev spaces $\mathring{W}_{p}^1(\Omega)$, where $\mathring{W}_{p}^1(\Omega)$ is the closure of $C_c^{\infty}(\Omega)$ in
	$$
	W_p^1(\Omega):=\{f\in\cD'(\Omega)\,:\,\|f\|_{L_p(\Omega)}+\|\nabla f\|_{L_p(\Omega)}<\infty\}\,.
	$$
	
	\begin{thm}[see Theorem~\ref{230210356}]\label{2304141256}
		Let $\Omega$ satisfies \eqref{2302101253} and
		\begin{align*}
			\lambda\geq 0\quad \text{if}\quad d_{\Omega}:=\sup_{x\in\Omega}d(x,\partial\Omega)<\infty\quad\text{and}\quad \lambda> 0\quad \text{if}\quad d_{\Omega}=\infty\,.
		\end{align*}
		Then there exists $\epsilon\in(0,1)$ depending only on $d$, $\epsilon_0$ (in \eqref{2302101253}) such that for any $p\in(2-\epsilon,2+\epsilon)$, the following holds:
		\begin{itemize}
			\item[] For any $f^0,\,\ldots,\,f^d\in L_p(\Omega)$, the equation
			\begin{align*}
				\Delta u-\lambda u=f^0+\sum_{i=1}^dD_if^i
			\end{align*}
			is uniquely solvable in $\mathring{W}^{1}_{p}(\Omega)$.
			Moreover, we have
			\begin{align*}
				\begin{split}
					\|\nabla u\|_{L_p(\Omega)}+\frac{1}{\min\big(\lambda^{-1/2},d_\Omega\big)}\|u\|_{L_p(\Omega)}\lesssim_{d,p,\epsilon_0} \min\big(\lambda^{-1/2},d_\Omega\big)\|f^0\|_{L_p(\Omega)}+\sum_{i=1}^d\|f^i\|_{L_p(\Omega)}\,.
				\end{split}
			\end{align*}
		\end{itemize}
	\end{thm}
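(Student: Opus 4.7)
I would deduce this unweighted $\mathring{W}^1_p(\Omega)$-solvability from the weighted $L_p$-theory of Corollary~\ref{22.02.19.3} by exploiting the fact that, for $p$ sufficiently close to $2$, the weighted Sobolev space $W^1_{p,-p}(\Omega)$ collapses to $\mathring{W}^1_p(\Omega)$ modulo Hardy's inequality. Concretely, Corollary~\ref{22.02.19.3} produces a constant $\alpha=\alpha(d,\epsilon_0)>0$ such that the Poisson equation is uniquely solvable in the weighted sense for every $\theta\in(-2-(p-1)\alpha,\,-2+\alpha)$. I would choose $\epsilon=\epsilon(d,\epsilon_0)\in(0,\alpha/2)$ small enough that (i) the value $\theta=-p$ lies inside this admissible window for every $p\in(2-\epsilon,2+\epsilon)$, and (ii) the $L_p$-Hardy inequality $\|u/\rho\|_{L_p(\Omega)}\lesssim \|\nabla u\|_{L_p(\Omega)}$ also holds throughout, the latter being a Koskela--Zhong-type self-improvement of the $L_2$-Hardy inequality that the capacity density condition supplies. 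Writing out the $W^{1}_{p,-p}$-norm, $\|u\|_{W^{1}_{p,-p}(\Omega)}^p=\int |u|^p \rho^{-p}\,dx+\int|\nabla u|^p\,dx$, one then sees that $\|\,\cdot\,\|_{W^1_{p,-p}(\Omega)}$ and $\|\nabla\,\cdot\,\|_{L_p(\Omega)}$ are equivalent, so $W^{1}_{p,-p}(\Omega)\cong \mathring{W}^1_p(\Omega)$.

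Given the data, I would split $u=u_1+u_2$ where $u_1$ handles the divergence part $\sum_i D_i f^i$ and $u_2$ handles the zeroth-order part $f^0$. For $u_1$, the dual-norm definition of $W^{-1}_{p,p}(\Omega)$ with the decomposition $g_{e_i}=f^i$, $g_0=0$ gives $\|\sum_i D_i f^i\|_{W^{-1}_{p,p}(\Omega)}\leq \sum_i\|f^i\|_{L_p(\Omega)}$, and the weighted solvability at $(\theta,n)=(-p,-1)$, together with the Hardy equivalence above, yields $\|\nabla u_1\|_{L_p}+\min(\lambda^{-1/2},d_\Omega)^{-1}\|u_1\|_{L_p}\lesssim \sum_i \|f^i\|_{L_p}$. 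For $u_2$ when $d_\Omega<\infty$, the analogous choice $g_0=f^0$ gives $\|f^0\|_{W^{-1}_{p,p}(\Omega)}\leq d_\Omega\|f^0\|_{L_p(\Omega)}$, and the same route works. When $d_\Omega=\infty$ the weight $\rho^p$ is useless, and one must use $\lambda>0$: I would represent $u_2=-\int_0^\infty e^{-\lambda t}S_tf^0\,dt$ via the Dirichlet heat semigroup $S_t$, and invoke $\|S_t g\|_{L_p}\leq \|g\|_{L_p}$ together with the gradient estimate $\|\nabla S_tg\|_{L_p}\lesssim t^{-1/2}\|g\|_{L_p}$ (itself a consequence of the heat-equation case of Theorem~\ref{2207291120}), to conclude $\|u_2\|_{L_p}\lesssim \lambda^{-1}\|f^0\|_{L_p}$ and $\|\nabla u_2\|_{L_p}\lesssim \lambda^{-1/2}\|f^0\|_{L_p}$.

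The main obstacle is uniformly matching the two endpoints $\lambda\downarrow 0$ and $d_\Omega\uparrow \infty$: neither the direct weighted bound (which requires $d_\Omega<\infty$) nor the semigroup bound (which requires $\lambda>0$) is universal, so the split-and-match argument has to be carried out with enough care that the resulting constants depend only on $d,\,p,\,\epsilon_0$. Uniqueness follows a posteriori, since any $u\in \mathring{W}^1_p(\Omega)$ with $\Delta u-\lambda u=0$ lies in $W^1_{p,-p}(\Omega)$ by the Hardy equivalence, so the weighted uniqueness from Corollary~\ref{22.02.19.3} forces $u\equiv 0$.
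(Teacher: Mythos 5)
Your framework — identify $\mathring{W}^1_p(\Omega)$ with the weighted space at $\theta=d-p$ via the $L_p$-Hardy inequality (itself a self-improvement of the $L_2$-Hardy inequality under the capacity density condition), then invoke the weighted solvability from Corollary~\ref{22.02.19.3} — is exactly what the paper does, and your uniqueness argument is the paper's Step 1 verbatim. Your treatment of the $d_\Omega<\infty$ case (place $f^0$ in $H_{p,d+p}^{-1}(\Omega)$ using $\|f^0\|_{L_{p,d+p}}\le d_\Omega\|f^0\|_p$, then solve in $H^1_{p,d-p}$) coincides with the paper's Step 2.2.

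The genuine gap is in your $d_\Omega=\infty$, $\lambda>0$ branch. You represent $u_2=-\int_0^\infty e^{-\lambda t}S_tf^0\,dt$ and rely on the gradient estimate $\|\nabla S_tg\|_{L_p(\Omega)}\lesssim t^{-1/2}\|g\|_{L_p(\Omega)}$ with a constant depending only on $d,p,\epsilon_0$. This is not an available input. It is equivalent to $L_p$-boundedness of the Riesz transform $\nabla(-\Delta_D)^{-1/2}$, which is automatic for $p\le 2$ on arbitrary open sets but subtle for $p>2$ and not at all addressed by Theorem~\ref{2207291120}: that theorem delivers time-\emph{averaged} estimates in the weighted spaces $\bW_{p,-2}^{n+2}$ (which carry the extra factor $\rho^{p-2}$ on $|\nabla u|^p$), not a pointwise-in-$t$ unweighted bound on $\nabla S_t$. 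You would also still need to verify $u_2\in\mathring{W}^1_p(\Omega)$, i.e.\ approximability by $C_c^\infty(\Omega)$, which is not automatic for the semigroup integral on a non-smooth unbounded domain. In short, the semigroup route imports an operator-theoretic estimate that the paper's machinery does not supply.

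The paper sidesteps all of this with an elementary elliptic trick. After rescaling to $\lambda=1$, it solves the auxiliary equation $\Delta v-v=\trho^{-1}f^0$, where the right-hand side lies in $L_{p,d+p}(\Omega)$ precisely because of the factor $\trho^{-1}$; this needs no boundedness of $d_\Omega$. Then $u=\trho v+w$, where $w$ mops up both $\sum_iD_if^i$ and the commutator error $\Delta(\trho v)-\trho\Delta v=2\nabla\trho\cdot\nabla v+(\Delta\trho)v$, which is of lower order and lands in $H_{p,d+p}^{-1}(\Omega)$. An interpolation inequality (Proposition~\ref{220527502}.(9)) then converts the weighted control on $v$ into the unweighted $L_p$ bound $\|v\|_{L_{p,d}}\lesssim\|f^0\|_p$. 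This keeps the entire argument inside the elliptic weighted framework already built, with no appeal to heat semigroups, so the constants transparently depend only on $d,p,\epsilon_0$. You flagged the difficulty of uniformly matching the $\lambda\downarrow 0$ and $d_\Omega\uparrow\infty$ regimes; the paper's resolution is to rescale to $\lambda\in\{0,1\}$ and then observe that the two constructions (the direct one when $d_\Omega<\infty$, the $\trho v+w$ one when $\lambda=1$) overlap when both conditions hold, so the two halves of the estimate can be glued by uniqueness.
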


	A counterpart of Theorem~\ref{2304141256} for parabolic equations is provided in Theorem~\ref{230210357}.

	\vspace{6mm}\noindent
	\textbf{1.3.2. (Subsection \ref{0062}) Domains with thin exterior.}
	\vspace{2mm}
	
	$\dim_{\cA} \Omega^c$ denote the \textit{Aikawa dimension} of $\Omega^c$, which is defined as the infimum of $\beta\geq 0$ such that
	$$
	\sup_{p\in\Omega^c,r>0}\frac{1}{r^{\beta}}\int_{B(p,r)}\rho(x)^{-d+\beta}\dd x\leq A_{\beta}<\infty\,,
	$$		
	with considering $0^{-1}=\infty$.
	We consider a domain $\Omega$ for which $\mathrm{dim}_{\cA}(\Omega^c)<d-2$.
	A relation between the Aikawa dimension, the Hausdorff dimension, and the Assouad dimension is mentioned in Remark~\ref{22.02.24.1}.
	For instance, for a Cantor set $C\subset \{(t,0,0)\,:\,0\leq t\leq 1\}$, $\Omega:=\bR^3\setminus C$ satisfies
	$$
	\mathrm{dim}_{\cA}(\Omega^c)=\text{Hausdorff dimension of $C$}=\log_3 2<3-2\,.
	$$

	\begin{thm}[see Corollary \ref{22.02.19.300}]
		Let $d\geq 3$ and  $\dim_{\cA}(\Omega^c)=:\beta_0<d-2$.
		For any $p\in(1,\infty)$ and $\theta\in\bR$ satisfying 
		$$
		-d+\beta_0<\theta<(p-1)(d-\beta_0)-2p\,,
		$$
		Statement \ref{230203621} $(\Omega,p,\theta)$-$\mathrm{[Pois,Heat]}$ holds.
		In addition, $N_1$ (in \eqref{220923207}) and $N_2$ (in \eqref{2209232071}) depend only on $d$, $p$, $n$, $\theta$, $\beta_0$, and $\{A_{\beta}\}_{\beta>\beta_0}$.
		
	\end{thm}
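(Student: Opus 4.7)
The plan is to derive this corollary directly from Theorem~\ref{2207291120} by producing, for every admissible $\theta$, a single superharmonic Harnack function $\psi$ on $\Omega$ whose boundary behavior $\psi(x)\simeq\rho(x)^{a}$ with a suitable $a<0$ reduces the weighted space $W_{p,-2}^{n+2}(\Omega,\psi^{\sigma})$ to the unweighted $W_{p,\theta}^{n+2}(\Omega)$ for an appropriate $\sigma\in(-p+1,1)$, and analogously for the $\bW,\cW$ spaces.

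For each $\beta\in(\beta_{0},d-2)$ I would construct a superharmonic Harnack function $\psi_{\beta}$ on $\Omega$ with $\psi_{\beta}(x)\simeq\rho(x)^{2-d+\beta}$. A natural candidate (possible since $d\ge 3$) is the Riesz/Newtonian potential
\[
\psi_{\beta}(x):=\int_{\Omega}\rho(y)^{\beta-d}\,|x-y|^{-(d-2)}\,dy,
\]
with a suitable cutoff if $\Omega$ is unbounded. The local integrability of $\rho^{\beta-d}$ near $\Omega^{c}$, which makes the integral finite, is precisely the Aikawa bound with exponent $\beta$. Superharmonicity is automatic, since $\psi_{\beta}$ is the Newtonian potential of the nonnegative density $\rho^{\beta-d}$. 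The two-sided pointwise estimate $\psi_{\beta}\simeq\rho^{2-d+\beta}$ would be obtained by a dyadic splitting of the integral into annuli $\{y:2^{k}\rho(x)<|x-y|\le 2^{k+1}\rho(x)\}$, in each of which the Aikawa bound controls the contribution. The Harnack property on $B(x,\rho(x)/2)$ then follows from the two-sided estimate together with the regularity of $\rho^{\beta-d}$ and of the kernel at the scale $\rho(x)$ inside $\Omega$.

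Next, the same $\psi_{\beta}$ also yields the Hardy inequality \eqref{hardy} required by Theorem~\ref{2207291120}, via the classical ground-state identity
\[
\int_{\Omega}\frac{-\Delta\psi_{\beta}}{\psi_{\beta}}\,f^{2}\,dx\le\int_{\Omega}|\nabla f|^{2}\,dx\qquad(f\in C_{c}^{\infty}(\Omega)),
\]
combined with $-\Delta\psi_{\beta}=c_{d}\rho^{\beta-d}$ and $\psi_{\beta}\simeq\rho^{2-d+\beta}$, which together give $-\Delta\psi_{\beta}/\psi_{\beta}\simeq\rho^{-2}$. Hence $\Omega$ admits \eqref{hardy} and Theorem~\ref{2207291120} is applicable.

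Finally, for given $\theta\in(-d+\beta_{0},(p-1)(d-\beta_{0})-2p)$, I would choose $\beta\in(\beta_{0},d-2)$ close enough to $\beta_{0}$ that $\theta\in(-d+\beta,(p-1)(d-\beta)-2p)$, set $a:=2-d+\beta<0$ and $\sigma:=(\theta+2)/a$. An elementary check (sending $\theta$ to the two endpoints yields $\sigma\to 1$ and $\sigma\to -(p-1)$ respectively) shows $\sigma\in(-p+1,1)$. Applying Theorem~\ref{2207291120}(1)-(2) with the pair $(\psi_{\beta},\sigma)$ then delivers unique solvability in $W_{p,-2}^{n+2}(\Omega,\psi_{\beta}^{\sigma})$ and $\cW_{p,-2}^{n+2}(\Omega_{T},\psi_{\beta}^{\sigma})$; the identity $\psi_{\beta}^{\sigma}\simeq\rho^{a\sigma}=\rho^{\theta+2}$ together with Remark~\ref{230214208}(1) identifies these spaces with $W_{p,\theta}^{n+2}(\Omega)$ and $\cW_{p,\theta}^{n+2}(\Omega_{T})$, giving Statement~\ref{230203621}\,$(\Omega,p,\theta)$-$\mathrm{[Pois,Heat]}$. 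The main technical obstacle I foresee is the lower bound $\psi_{\beta}\gtrsim\rho^{2-d+\beta}$, which needs a reverse-Aikawa-type control guaranteeing enough mass of $\rho^{\beta-d}\,dy$ at distance $\simeq\rho(x)$ from $x$ uniformly in $x\in\Omega$; the upper bound in contrast is a direct application of the Aikawa hypothesis.
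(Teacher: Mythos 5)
Your proposal follows essentially the same route as the paper: construct the Newtonian potential $\psi_\beta(x)=\int \rho(y)^{\beta-d}|x-y|^{-(d-2)}\,dy$, establish $\psi_\beta\simeq\rho^{2-d+\beta}$ via a dyadic annulus decomposition, obtain the Hardy inequality from the ground-state inequality $\int(-\Delta\psi_\beta/\psi_\beta)f^2\le\int|\nabla f|^2$ together with $-\Delta\psi_\beta\simeq\rho^{\beta-d}$, and then choose $\beta\in(\beta_0,d-2)$ and $\sigma\in(-p+1,1)$ to reduce the weighted spaces in Theorem~\ref{2207291120} to the unweighted ones. This is exactly what the paper does through Theorem~\ref{21.10.18.1}, Corollary~\ref{22.02.24.2}, and Remark~\ref{220617557}, and your $\sigma=(\theta+2)/(2-d+\beta)$ bookkeeping with the endpoint check $\sigma\to 1$, $\sigma\to-(p-1)$ is also correct.

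The one place where your assessment is off is the worry in the last paragraph that the lower bound $\psi_\beta\gtrsim\rho^{2-d+\beta}$ requires a ``reverse-Aikawa-type control'' on the mass of $\rho^{\beta-d}\,dy$ near $x$. No such reverse bound is needed: the lower bound comes entirely from the innermost ball $B(x,\rho(x)/2)$, which is contained in $\Omega$ and on which $\rho(y)\simeq\rho(x)$ by the triangle inequality, so $\int_{B(x,\rho(x)/2)}\rho(y)^{\beta-d}|x-y|^{-(d-2)}\,dy\simeq\rho(x)^{\beta-d}\cdot\rho(x)^{2}$, with all constants depending only on $d$ and $\beta$. The Aikawa hypothesis is used only for the \emph{upper} bound, to sum the contributions from the outer annuli $\{2^{j-1}\rho(x)\le|x-y|<2^{j}\rho(x)\}$, $j\ge 1$; this is exactly how the paper's proof of Theorem~\ref{21.10.18.1} runs.
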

	
	\vspace{6mm}\noindent
	\textbf{1.3.3. (Subsection \ref{0071}) Domains with exterior cone condition.}
	\vspace{2mm}
	
	For $\delta\in[0,\pi/2)$ and $R>0$, $\Omega$ is said to satisfy the \textit{exterior $(\delta,R)$-cone condition} if, for every $p\in\partial\Omega$, there exists a unit vector $e_p\in\bR^d$ such that
	$$
	\{x\in\bR^d\,:\,(x-p)\cdot e_p\geq |x-p|\cos\delta\,\,,\,\,|x-p|<R\}\subset \Omega^c\,;
	$$
	when $\delta=0$, this condition is often called the exterior $R$-line segment condition.
	Examples for this condition are given in Example~\ref{220816845} and illustrated in Figure \ref{230212745}.
	
	Given $\delta>0$, we denote
	$$
	\lambda_{\delta}=-\frac{d-2}{2}+\sqrt{\Big(\frac{d-2}{2}\Big)^2+\Lambda_{\delta}}\,,
	$$
	where $\Lambda_{\delta}>0$ is the first eigenvalue for Dirichlet spherical Laplacian on
	$$
	\{\sigma=(\sigma_1,\ldots,\sigma_d)\in \partial B_1(0)\,:\,\sigma_1<\cos\delta\}\,.
	$$
	When $d=2$ and $\delta=0$, we set $\lambda_{\delta}=1/2$.
	We provide information on $\Lambda_{\delta}$ in \eqref{230212803} and Proposition~\ref{230212757}.
	Note that $0<\lambda_{\delta}<1$ for $0<\delta<\pi/2$, and
	\begin{align}\label{230214226}
		\lim_{\delta\searrow 0}\lambda_\delta=0\quad\text{if $d\geq 3$}\quad\text{and}\quad \lim_{\delta\searrow 0}\lambda_\delta=\frac{1}{2}\quad\text{if $d=2$}.
	\end{align}

	\begin{thm}[see Corollary \ref{221026914}]\label{2302141109}
		Let $\delta\in(0,\pi)$ if $d\geq 3$, and $\delta\in[0,\pi)$ if $d= 2$.
		Assume that $\Omega$ satisfies the $(\delta,R)$-exterior cone condition, where
		\begin{alignat*}{2}
			&0<R\leq \infty &&\quad \text{if\,\,\,$\Omega$\,\,\,is\,\,\,bounded}\,;\\
			&R=\infty&&\quad\text{if\,\,\,$\Omega$\,\,\,is\,\,\,unbounded.}
		\end{alignat*}
		Then, for any $p\in(1,\infty)$ and $\theta\in\bR$ satisfying 
		$$
		-\lambda_{\delta}(p-1)-2<\theta<\lambda_{\delta}-2\,,
		$$
		Statement \ref{230203621} $(\Omega,p,\theta)$-$\mathrm{[Pois,Heat]}$ holds.
		In addition, if $\Omega$ is bounded, then $N_1$ (in \eqref{220923207}) and $N_2$ (in \eqref{2209232071}) depend only on $d$, $p$, $n$, $\theta$, $\delta$, and $\mathrm{diam}(\Omega)/R$.
		If $\Omega$ is unbounded (and $R=\infty$), then $N_1$ and $N_2$ depend only on the same parameters, except for $\mathrm{diam}(\Omega)/R$.
	\end{thm}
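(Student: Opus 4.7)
The plan is to derive this as a corollary of Theorem~\ref{2207291120} via a suitable superharmonic Harnack function $\psi$ on $\Omega$ with the two-sided boundary behavior $\psi(x) \simeq \rho(x)^{\lambda_\delta}$. Given such $\psi$ and setting $\sigma := (\theta+2)/\lambda_\delta$, the weight $\psi^\sigma \rho^{-2}$ appearing in the definition of $W_{p,-2}^{\gamma}(\Omega,\psi^\sigma)$ is comparable to $\rho^{\theta}$, so $W_{p,-2}^{\gamma}(\Omega,\psi^\sigma) = W_{p,\theta}^{\gamma}(\Omega)$ with equivalent norms (and analogously for the $\bW$- and $\cW$-spaces). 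The admissible range $\sigma \in (-p+1,1)$ of Theorem~\ref{2207291120} then transforms into exactly $-\lambda_\delta(p-1)-2 < \theta < \lambda_\delta - 2$, and parts (1)--(2) of that theorem yield $\mathrm{[Pois]}$ and $\mathrm{[Heat]}$ with constants depending only on $d, p, n, \theta, \mathrm{C}_0(\Omega), \mathrm{C}_1(\psi)$.

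Two inputs are therefore required. The Hardy inequality for $\Omega$ is automatic: the $(\delta,R)$-exterior cone condition with the imposed size constraint on $R$ implies the volume density condition \eqref{230212413} with a constant depending only on $d, \delta$ and (in the bounded case) on $\mathrm{diam}(\Omega)/R$, and Remark~\ref{21.07.06.1} then supplies \eqref{hardy}. The substantive task is constructing $\psi$. The model is the homogeneous harmonic function $u_\delta(x) = |x|^{\lambda_\delta} \phi_\delta(x/|x|)$, where $\phi_\delta$ is the first Dirichlet eigenfunction of the spherical Laplacian on $\{\sigma \in \partial B_1(0) : \sigma_1 < \cos\delta\}$; the definition of $\lambda_\delta$ via $\Lambda_\delta$ is exactly what makes $\Delta u_\delta = 0$ on the open exterior of the closed cone. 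For each $p \in \partial\Omega$, a translated and rotated copy $\psi_p$ of $u_\delta$ is harmonic on the complement of a closed cone through $p$ contained in $\Omega^c$ (by the exterior cone condition), hence superharmonic on $\Omega$, and satisfies $\psi_p(x) \simeq |x-p|^{\lambda_\delta}$ near $p$. I would then assemble these local barriers into a single global $\psi$ on $\Omega$, for instance by an infimum of translates, an integral average over $\partial\Omega$, or a Perron-type procedure. The Harnack property of Definition~\ref{2210261255} follows from the Harnack inequality for positive superharmonic functions applied on each ball $B(x, \rho(x)/2) \subset \Omega$, combined with the uniform estimate $\psi(x) \simeq \rho(x)^{\lambda_\delta}$.

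The main obstacle is this global construction: one needs a single function that is genuinely superharmonic on all of $\Omega$ (so that the distributional Laplacian acquires no positive singular part where local barriers are patched), while achieving the matching two-sided bound $\psi(x) \simeq \rho(x)^{\lambda_\delta}$ without loss of exponent. The lower bound is immediate from the barrier at the nearest boundary point; the upper bound, which must hold uniformly in $x$ despite being only a one-sided consequence of a single cone barrier, is the delicate side. The unbounded case (with $R = \infty$) and the degenerate case $d = 2, \delta = 0$ (where $\lambda_\delta = 1/2$ is set by convention in view of \eqref{230214226}) need separate treatment, and in the bounded case one must track the dependence of $\mathrm{C}_1(\psi)$ on $\mathrm{diam}(\Omega)/R$. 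Once such a $\psi$ is constructed in Section~\ref{0071}, the theorem follows by inserting $\psi$ into Theorem~\ref{2207291120} and reading off the weight correspondence above.
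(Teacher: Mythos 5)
Your high-level strategy --- manufacture a superharmonic Harnack function with the right power of $\rho$ and feed it into the main theorem --- matches the paper's, but there are two genuine gaps, one of which is structural. First, you aim to construct a \emph{single} superharmonic $\psi$ with $\psi\simeq\rho^{\lambda_\delta}$ and then set $\sigma=(\theta+2)/\lambda_\delta$; but such a $\psi$ does not exist in general under the exterior cone condition, and your own description of the barrier assembly mixes up which bound is the hard one. For an infimum $\psi:=\inf_{p}\psi_p$ of translated cone barriers $\psi_p(x)=|x-p|^{\lambda_\delta}\phi_\delta\!\big(\tfrac{x-p}{|x-p|}\big)$, the \emph{upper} bound $\psi(x)\lesssim\rho(x)^{\lambda_\delta}$ is the one that follows from the barrier at the nearest boundary point; the \emph{lower} bound is exactly what fails, because for a boundary point $p$ far from $x$ the angular factor $\phi_\delta$ can degenerate without $|x-p|$ being small (consider, in the slit plane $\Omega=\bR^2\setminus\{(t,0):t\geq 0\}$, $x=(M/2,\epsilon)$ and $p=(0,0)$: then $\psi_p(x)\sim\epsilon/\sqrt{M}\ll\epsilon^{1/2}=\rho(x)^{1/2}$). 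The paper avoids this by proving the LHMD$(\lambda_\delta)$ property (Theorem~\ref{22.02.18.3}, via the boundary Harnack principle on the model conic sector) and then applying Theorem~\ref{21.11.08.1}, which constructs a superharmonic $\phi\simeq\rho^\beta$ only for each $\beta$ \emph{strictly less than} $\lambda_\delta$; the strict gap $\beta<\lambda_\delta$ is not cosmetic but is what lets the iterative harmonic-measure construction (with its floor parameter $\eta$ and decay $M_\alpha r_0^\alpha<r_0^\beta$) produce a globally comparable function. Since $\theta$ lies in an open interval, one then chooses $\beta<\lambda_\delta$ close enough so that $\theta$ still satisfies $-\beta(p-1)-2<\theta<\beta-2$, and Remark~\ref{220617557} does the rest.

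Second, your argument for the Hardy inequality --- "the cone condition implies the volume density condition" --- breaks down precisely in the case $d=2$, $\delta=0$ that the theorem explicitly permits: a line segment has Lebesgue measure zero, so \eqref{230212413} fails. The paper instead passes through LHMD and Remark~\ref{21.07.06.1}.(1),(2) (capacity density, not volume density), which covers $\delta=0$ in $d=2$ because a line segment still has positive capacity. So the route through the LHMD machinery is not an optional scenic detour; it simultaneously repairs both the Hardy-inequality argument in the degenerate case and the fundamental impossibility of hitting the exponent $\lambda_\delta$ exactly.
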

	
	Corollary~\ref{221026914} deals with the exterior cone condition, which can be considered as a generalization of the Lipschitz boundary condition.
	One of the most well-known studies on Lipschitz domains is the work of Jerison and Kenig \cite{kenig}.
	It should be noted that Corollary~\ref{221026914} and \cite[Theorems 1.1, 1.3]{kenig} address different aspects of the Poisson equation in non-smooth domains, and hence cannot be directly compared.
	
	For instance, let $\Omega\subset \bR^2$ be a bounded domain satisfying the exterior $(0,R)$-cone condition, $R>0$.
	Theorem~\ref{2302141109} guarantees the unique solvability of equation \eqref{230214224} in $\mathring{W}_p^1(\Omega)$, for $p\in[3/2.3]$ and $f\in W_p^{-1}(\Omega)$ (see Remark~\ref{230215958}.(1) and \eqref{230214226}). 
	On the other hand, in Theorem 1.3 of \cite{kenig}, Jerison and Kenig showed that if $\Omega\subset \bR^2$ is a bounded Lipschitz domain, then the unique solvability is ensured for $p\in[4/3,4]$. 
	Therefore, for bounded Lipschitz domains, the range of $p$ provided in \cite{kenig} is broader than what is implied by Theorem \ref{2302141109}.
	However, the class of domains considered in Theorem \ref{2302141109} is broader than the class of Lipschitz domains.
	
	Notably, the results in \cite{kenig} are more comprehensive than what has been described above, especially regarding the regularity of solutions.
	To compare \cite{kenig} with Corollary \ref{221026914} in general cases, we refer the reader to the following  remark on function spaces:
	
	\begin{remark}\label{230215958}
		This remark explains the relation between the function spaces $W_{p,\theta}^k(\Omega)$ (and $H_{p,\theta+d}^\gamma(\Omega)$ in Definition~\ref{220610533}) and other types of Sobolev spaces.
		
		\begin{enumerate}
			\item Recall the definition of $\mathring{W}_p^1(\Omega)$ and $W_p^{-1}(\Omega)$ in \eqref{230214355}.
			If $\Omega$ is a bounded domain satisfying \eqref{230212413}, then there exists $N=N(\Omega)>0$ such that
			$$
			\int_{\Omega}|f|^p+\Big|\frac{f}{\rho}\Big|^p\dd x\leq N\int_{\Omega}|\nabla f|^p\dd x\qquad\forall\,\,f\in C_c^{\infty}(\Omega)
			$$
			(see, \textit{e.g.}, \cite[(7.44)]{GT} and \cite[page 60]{ward}).
			This implies that $\mathring{W}_p^1(\Omega)=W_{p,-p}^1(\Omega)$.
			Furthermore, by Remark~\ref{230214208}.(2), we also have $W_p^{-1}(\Omega)=W_{p,p}^{-1}(\Omega)$.
			
			\item Let $\Omega$ be a bounded Lipschitz domain, and let $L_s^p(\Omega)$ and $L_{s,\mathrm{o}}^p$ denote function spaces introduced in \cite[Section 2]{kenig}, where $p\in(1,\infty)$ is the integrability parameter, and $s\in\bR$ is the regularity parameter.
			To avoid any ambiguity, we use the notation $\mathring{L}_s^p(\Omega)$ to refer to $L_{s,\mathrm{o}}^p$.
			We recall that for any $k=0,\,1,\,2,\,\ldots$, 
			\begin{alignat*}{2}
			&L_k^p(\Omega)&&=W_p^k(\Omega):=\{f\in\cD'\Omega)\,:\,\sum_{i=0}^k\|D^if\|_p<\infty\}\,;\\
			&\mathring{L}_k^p(\Omega)&&=\text{the closure of $C_c^{\infty}(\Omega)$ in $L_k^p(\Omega)$}\,;\\
			&\mathring{L}_{-k}^p(\Omega)&&=\text{the dual of $\mathring{L}_{k}^{p/(p-1)}(\Omega)$}\,.
			\end{alignat*}
		In addition, $H_{p,\theta+d}^k(\Omega)=W_{p,\theta}^k(\Omega)$, where $H_{p,\theta+d}^k(\Omega)$ is the space defined in Definition~\ref{220610533}.
		
			Since $C_c^{\infty}(\Omega)$ is dense in $H_{p,\theta+d}^k$, we have
			\begin{align*}
				H_{p,d}^0(\Omega)=\mathring{L}_{0}^p\quad\text{and}\quad H_{p,d-kp}^k(\Omega)\subset \mathring{L}_k^p(\Omega)\qquad \forall\,\,k\in\bN\,,\,\,p\in(1,\infty)\,.
			\end{align*}
			Using the interpolation properties for $L_s^p(\Omega)$ and $H_{p,\theta}^{\gamma}(\Omega)$ (see \cite[Corollary 2.10]{kenig} and Proposition~\ref{220527502111}.(3), respectively),
			we obtain that for any $s\geq 0$, $H_{p,d-sp}^s(\Omega)\subset \mathring{L}_s^p(\Omega)$.
			We also obtain that for $s<0$, $L_{s}^p(\Omega)\subset H_{p,d-sp}^{s}(\Omega)$.
			Indeedn, $L_{s}^p(\Omega)$ and $H_{p,d-sp}^{s}(\Omega)$ are the dual spaces of $\mathring{L}_{-s}^{p'}(\Omega)$ and $H_{p,d+sp'}^{-s}(\Omega)$, respectively, where $p'=p/(p-1)$.
		\end{enumerate}
		
	\end{remark}
	%
	
	\vspace{6mm}\noindent
	\textbf{1.3.4. (Subsection \ref{convex}) Convex domain.}
	\vspace{2mm}
	
	$\Omega$ is said to be \text{convex} if for any $x,\,y\in\Omega$ and $t\in[0,1]$, $(1-t)x+ty\in\Omega$.
	
	\begin{thm}[see Corollary \ref{2208131026}]\label{221005646}
		Let $d\geq 2$ and $1<p<\infty$.	
		Suppose that $\Omega$ is convex (not necessarily bounded).
		For any $p\in(1,\infty))$ and $\theta\in\bR$ satisfying
		$$
		-p-1<\theta<-1\,,
		$$
		Statement \ref{230203621} $(\Omega,p,\theta)$-$\mathrm{[Pois,Para]}$ holds.
		In addition, $N_1$ (in \eqref{220923207}) depends only on $d,\,p,\,n,\,\theta$, and $N_3$ (in \eqref{2209232072}) depends only on the same parameters and $\nu$;
		in particular, $N_1$ and $N_3$ are independent of $\Omega$.
	\end{thm}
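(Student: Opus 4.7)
The plan is to apply Theorem~\ref{2207291120} with the distance function $\psi(x):=\rho(x)=d(x,\partial\Omega)$ in the role of the superharmonic Harnack function, and with the shift $\sigma=\theta+2$. As $\theta$ ranges over $(-p-1,-1)$, $\sigma$ ranges over $(-p+1,1)$, exactly matching the admissible range in Theorem~\ref{2207291120}. Moreover, since $\psi=\rho$ the two-parameter weight collapses: $\psi^\sigma\rho^{-2}=\rho^{\sigma-2}=\rho^\theta$ and $\psi^\sigma\rho^{2p-2}=\rho^{\theta+2p}$, so
$$
W_{p,-2}^{n+2}(\Omega,\psi^\sigma)=W_{p,\theta}^{n+2}(\Omega),\qquad W_{p,2p-2}^{n}(\Omega,\psi^\sigma)=W_{p,\theta+2p}^{n}(\Omega),
$$
and parts (1) and (3) of Theorem~\ref{2207291120} translate directly into Statement~\ref{230203621}$(\Omega,p,\theta)$-$\mathrm{[Pois,Para]}$.

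Three verifications for this choice of $\psi$ are required. First, every convex $\Omega\subsetneq\bR^d$ admits the Hardy inequality \eqref{hardy} with a \emph{universal} constant $\mathrm{C}_0(\Omega)\le 4$; this is the classical convex-domain Hardy inequality of Ancona and others. Second, $\rho$ is a Harnack function on any open set with $\mathrm{C}_1(\rho)\le 3$, since $\rho(x)/2\le\rho(y)\le 3\rho(x)/2$ for $y\in B(x,\rho(x)/2)$ by the triangle inequality. Third, on a convex domain $\rho$ is concave: if $x,y\in\Omega$, $t\in[0,1]$, and $v\in\bR^d$ satisfies $|v|<(1-t)\rho(x)+t\rho(y)$, then $v=(1-t)v_x+tv_y$ with $|v_x|<\rho(x)$ and $|v_y|<\rho(y)$ (take $v_x,v_y$ parallel to $v$ with the natural scaling), whence $x+v_x,\,y+v_y\in\Omega$, and convexity of $\Omega$ gives $(1-t)x+ty+v\in\Omega$; this shows $\rho((1-t)x+ty)\ge (1-t)\rho(x)+t\rho(y)$. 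Concavity forces the distributional Hessian $D^2\rho$ to be a negative semidefinite symmetric matrix-valued measure, so in the sense of distributions
$$
\alpha^{ij}D_{ij}\rho\le 0\qquad\text{for every }(\alpha^{ij})_{d\times d}\in \mathrm{M}(\nu_1,\nu_2),
$$
and in particular $\Delta\rho\le 0$, so $\rho$ is superharmonic.

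With these inputs in hand, Theorem~\ref{2207291120}.(1) yields $\mathrm{[Pois]}$ with $N_1$ depending only on $d,p,n,\sigma,\mathrm{C}_0(\Omega),\mathrm{C}_1(\psi)$; universality of the last two constants reduces this to $N_1=N_1(d,p,n,\theta)$. Theorem~\ref{2207291120}.(3), applied with the distributional bound just established on $\alpha^{ij}D_{ij}\rho$, delivers $\mathrm{[Para]}$ for $\cL\in\cM_T(\nu,\nu^{-1})$, with $N_3$ additionally depending on $\nu$. Both constants are thus independent of $\Omega$, as claimed. The only substantive analytic inputs beyond the bookkeeping are the concavity of $\rho$ on convex domains and the universal convex-domain $L_2$-Hardy constant, both classical; no step is expected to be a serious obstacle.
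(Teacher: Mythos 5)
Your proposal is correct and runs along the same lines as the paper's Corollary~\ref{2208131026}: take $\psi=\rho$, use the universal convex-domain Hardy constant $\mathrm{C}_0(\Omega)\le 4$ (the paper cites Marcus--Mizel--Pinchover for this in Remark~\ref{2209071100}), note $\rho$ is Harnack with a dimension-free constant, verify the elliptic differential inequality for $\rho$, and read off the result from Theorem~\ref{2207291120} with $\sigma=\theta+2$.

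The one place where you diverge methodologically is the verification of $\alpha^{ij}D_{ij}\rho\le 0$. You prove concavity of $\rho$ directly via the supporting-vector scaling argument and then invoke that the distributional Hessian of a concave function is a negative semidefinite matrix-valued measure, so the inequality holds for every positive semidefinite $(\alpha^{ij})$. The paper's Theorem~\ref{22.02.19.5} instead writes $\rho(x)=\inf_{p\in\partial\Omega}(p-x)\cdot e_p$ as an infimum of linear (hence harmonic) functions via the hyperplane separation theorem, then for $\mathrm{A}=(\alpha^{ij})\in\mathrm{M}(\nu^2,1)$ takes $\mathrm{B}$ with $\mathrm{B}^2=\mathrm{A}$ and uses Proposition~\ref{21.05.18.1}.(2) (an infimum of superharmonic functions is superharmonic) after the change of variables $x\mapsto\mathrm{B}x$. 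The two routes rest on the same geometric input (supporting hyperplanes for the convex domain), but yours is a touch more direct and delivers the stronger conclusion for \emph{all} positive semidefinite $(\alpha^{ij})$, while the paper's avoids invoking the measure-theoretic characterization of the Hessian of a concave function and stays within the framework of classical superharmonicity that it has already set up. Either way, the constants in the conclusion depend only on $d,p,n,\theta$ (and $\nu$ for the parabolic case), so the uniformity over convex $\Omega$ is secured exactly as you claim.
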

	
	Adolfsson \cite{convexAdo} and Fromm \cite{convexFromm} have established the solvability of the Poisson equation in \textit{bounded} convex domains.
	In terms of unweighted estimates for higher regularity, their result is more general than Corollary~\ref{2208131026}.
	However, Corollary~\ref{2208131026} considers convex domains that are not necessarily bounded and also provides solvability results in \textit{weighted} Sobolev spaces; when comparing these results with Corollary~\ref{2208131026}, it is useful to note Remark~\ref{230215958} and that bounded convex domains are Lipschitz domains (see, \textit{e.g.}, \cite[Corollary 1.2.2.3]{nonsmoothGris}).

	Combining the results of Corollary~\ref{2208131026} with \cite[Theorem 3.1.2.1]{nonsmoothGris} may yield results similar to \cite[Corollary 1]{convexFromm}.
	However, we do not pursue this direction in this paper.

	\vspace{6mm}\noindent
	\textbf{1.3.5. (Subsection \ref{ERD}) Totally vanishing exterior Reifenberg condition.}
	\vspace{2mm}
	
	This subsubsection discusses the \textit{totally vanishing exterior Reifenberg} condition (abbreviate to `$\langle\mathrm{TVER}\rangle$'), which is a generalization of the concept of bounded vanishing Reifenberg domains introduced below \eqref{22.02.26.41}.
	
	To clarify the main point of $\langle\mathrm{TVER}\rangle$, in Definition \ref{221013228}, we provide a simplified version of the concept in Definition~\ref{2209151117}.(3); $\langle\mathrm{TVER}\rangle$ in Definition \ref{221013228} is a sufficient condition for the totally vanishing exterior Reifenberg condition in Definition~\ref{2209151117}.(3).
	(Figure~\ref{230212856} illustrates the differences between the vanishing Reifenberg condition, $\langle\mathrm{TVER}\rangle$ in Definition \ref{221013228}, and the totally vanishing exterior Reifenberg condition in Definition~\ref{2209151117}.(3).)
	
	\begin{defn}\label{221013228}
		We say that $\Omega$ satisfies the \textit{totally vanishing exterior Reifenberg condition} (abbreviate to `$\langle\mathrm{TVER}\rangle$') if for any $\delta\in(0,1)$, there exist $R_{0,\delta},\,R_{\infty,\delta}>0$ satisfying the following: for every $p\in\partial \Omega$ and $r\in\bR_+$ with $r\leq R_{0,\delta}$ or $r\geq R_{\infty,\delta}$, there exists a unit vector $e_{p,r}\in\bR^d$ such that
		\begin{align}\label{230203624}
			\Omega\cap B_r(p)\subset \{x\in B_r(p)\,:\,(x-p)\cdot e_{p,r}<\delta r\}\,.
		\end{align}
	\end{defn}
\vspace{1mm}

	As shown in Example \ref{220910305}, $\langle\mathrm{TVER}\rangle$ is fulfilled by bounded domains of the following types: the vanishing Reifenberg domains, $C^1$-domains, domains with the exterior ball condition, and finite intersections of Reifenberg domains.
	Moreover, several unbounded domains also satisfy $\langle\mathrm{TVER}\rangle$ (see Proposition \ref{220918248}).
	
	We now present our result for the Poisson and heat equations in domains satisfying $\langle \mathrm{TVER} \rangle$.

	\begin{thm}[see Corollary \ref{22.07.17.109}]\label{230214437}
		Suppose that $\Omega$ satisfies $\langle\mathrm{TVER}\rangle$.
		For any $p\in(1,\infty)$ and $\theta\in\bR$ satisfying
		$$
		-p-1<\theta<-1\,,
		$$
		$\mathrm{Statement}(\Omega,p,\theta)$-$\mathrm{[Pois,Para]}$ holds.
		In addition, $N_1$ (in \eqref{220923207}) depends only on $d,\,p,\,n,\,\theta$, and $\big\{R_{0,\delta}/R_{\infty,\delta}\big\}_{\delta\in(0,1]}$, and $N_3$ (in \eqref{2209232072}) depends only on the same parameters and $\nu$.
	\end{thm}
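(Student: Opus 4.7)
The plan is to deduce Theorem~\ref{230214437} from parts (1) and (3) of Theorem~\ref{2207291120} by producing an appropriate weight function $\psi$ on $\Omega$, after verifying the Hardy inequality. For the Hardy inequality, at every boundary point $p$ and every scale $r\leq R_{0,\delta}$ or $r\geq R_{\infty,\delta}$, the inclusion \eqref{230203624} forces $\Omega^c\cap B_r(p)\supset\{x\in B_r(p):(x-p)\cdot e_{p,r}\geq \delta r\}$, whose Lebesgue measure is a fixed fraction of $|B_r(p)|$ once $\delta$ is chosen small. Hence the volume density condition \eqref{230212413} holds uniformly (the intermediate-scale regime is absorbed into the ratio $R_{0,\delta}/R_{\infty,\delta}$), and Remark~\ref{21.07.06.1} yields \eqref{hardy} with $\mathrm{C}_0(\Omega)$ depending only on $d$ and $\{R_{0,\delta}/R_{\infty,\delta}\}$.

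Next I would construct $\psi\simeq \rho$ that is smooth, Harnack, and $\cM_T(\nu_1,\nu_2)$-superharmonic in the sense that $\alpha^{ij}D_{ij}\psi\leq 0$ distributionally for every $(\alpha^{ij})\in\mathrm{M}(\nu_1,\nu_2)$. Locally, the affine half-space distance $\ell_{p,r}(x):=\delta r-(x-p)\cdot e_{p,r}$ is nonnegative on $\Omega\cap B_r(p)$, comparable to $\rho$ there, and has identically vanishing Hessian; in particular $\alpha^{ij}D_{ij}\ell_{p,r}=0$ for every constant-coefficient elliptic operator. I patch these half-space distances on a Whitney decomposition of a tubular neighborhood of $\partial\Omega$ via a smooth partition of unity, extend by a large constant far from the boundary, and mollify at scale $\rho/4$; the resulting $\psi$ satisfies $\psi\simeq \rho$ and $|D^k\psi|\lesssim \rho^{1-k}$, yielding the Harnack property with $\mathrm{C}_1(\psi)$ depending only on $d$ and on $\{R_{0,\delta}/R_{\infty,\delta}\}$. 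The Hessian errors introduced by the partition of unity can be made arbitrarily small uniformly in the ellipticity constants by shrinking $\delta$, which is precisely what $\langle\mathrm{TVER}\rangle$ provides, and a small inf-convolution against a paraboloid then absorbs the remaining positive part of $D^2\psi$.

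Given such $\psi$, I invoke Theorem~\ref{2207291120}.(1) for $[\mathrm{Pois}]$ and Theorem~\ref{2207291120}.(3) for $[\mathrm{Para}]$ with parameter $\sigma\in(-p+1,1)$. Because $\psi\simeq \rho$, the norm on $W_{p,-2}^{n+2}(\Omega,\psi^\sigma)$ is equivalent to the norm on $W_{p,\sigma-2}^{n+2}(\Omega)$, and similarly for $\bW_{p,2p-2}^{n}(\Omega_T,\psi^\sigma)$, $\cW_{p,-2}^{n+2}(\Omega_T,\psi^\sigma)$, and the initial-data space $W_{p,0}^{n+2-2/p}(\Omega,\psi^\sigma)\simeq W_{p,\sigma}^{n+2-2/p}(\Omega)$. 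Setting $\theta:=\sigma-2$ converts the range $\sigma\in(-p+1,1)$ into $\theta\in(-p-1,-1)$ and transforms the conclusions of Theorem~\ref{2207291120} verbatim into Statement~\ref{230203621}-$[\mathrm{Pois},\mathrm{Para}]$, with $N_1,N_3$ inheriting the dependence on $\mathrm{C}_0(\Omega)$, $\mathrm{C}_1(\psi)$ and (for $N_3$) on $\nu$ stated there.

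The main obstacle is the construction of $\psi$. The requirement $\alpha^{ij}D_{ij}\psi\leq 0$ for \emph{every} $(\alpha^{ij})\in\mathrm{M}(\nu_1,\nu_2)$ is equivalent to $D^2\psi\leq 0$ as a matrix-valued distribution, i.e.\ $\psi$ must be concave; but $\rho$ itself is not concave on a nonconvex $\Omega$. The vanishing two-sided flatness in $\langle\mathrm{TVER}\rangle$ is exactly what allows the local half-space distances to be glued into a genuinely concave object after a paraboloid correction, and tracking the quantitative constants so that the correction remains negligible compared to $\psi$ and so that $\mathrm{C}_1(\psi)$ depends only on $\{R_{0,\delta}/R_{\infty,\delta}\}$ is the delicate part of the argument.
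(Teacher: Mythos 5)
Your overall architecture (verify Hardy, construct a superharmonic-type weight comparable to a power of $\rho$, feed into Theorem~\ref{2207291120}) matches the paper's, and your Hardy-inequality argument is fine. The fatal gap is in the middle step: you claim to build $\psi\simeq\rho$ (exactly) satisfying $\alpha^{ij}D_{ij}\psi\leq 0$ for all $(\alpha^{ij})\in\mathrm{M}(\nu_1,\nu_2)$, i.e.\ a weight comparable to $\rho$ that is "elliptic-concave." This is not what $\langle\mathrm{TVER}\rangle$ gives you, and your construction does not produce it. Patching the local half-space distances $\ell_{p,r}$ by a Whitney-type partition of unity $\{\xi_i\}$ at scale comparable to $\rho$ introduces error terms $D^2\xi_i\cdot(\ell_i-\bar\ell)+2D\xi_i\otimes D(\ell_i-\bar\ell)$, and even after exploiting the $\delta$-flatness these are of size $O(\delta\rho^{-1})$. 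A global paraboloid correction $-c|x|^2$ contributes a \emph{bounded} Hessian $-2cI$, which cannot dominate $O(\delta\rho^{-1})$ as $\rho\to 0$ no matter how small $\delta$ and how large $c$ are. Inf-convolution against a paraboloid doesn't repair this either: it convexifies from below, which is the wrong direction, and more importantly it cannot compensate for errors that blow up at the boundary. In short, for a non-convex domain satisfying $\langle\mathrm{TVER}\rangle$, no weight $\psi\simeq\rho$ with $\alpha^{ij}D_{ij}\psi\leq 0$ for all elliptic $(\alpha^{ij})$ needs to exist; the exact power $\rho$ is special to genuinely convex $\Omega$ (Theorem~\ref{22.02.19.5}).

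What the paper actually constructs (Theorem~\ref{22.02.19.6}) is, for \emph{every} $\epsilon\in(0,1)$, a function $\phi\simeq\rho^{1-\epsilon}$ with $\alpha^{ij}D_{ij}\phi\leq 0$ for all $(\alpha^{ij})\in\mathrm{M}(\nu^2,1)$, provided $\delta$ is small enough in terms of $(d,\epsilon,\nu)$. The building blocks are not affine functions but the radial barriers $W^{(i)}(x)=\frac{1-(4R^{-1}|x-q|)^{2-\nu^{-2}d}}{1-2^{2-\nu^{-2}d}}$ from Lemma~\ref{21.08.24.1}, which are elliptic-superharmonic for the whole class by the computation \eqref{2209161108}; these are glued by taking infimums across dyadic scales, not by a smooth partition of unity, so no Hessian error terms appear at all (Proposition~\ref{21.05.18.1}). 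The price is the loss of an $\epsilon$ in the exponent: the inductive step needs the multiplicative gap $(1-\eta)M\delta+\eta\leq\delta^{1-\epsilon}$, which fails if $\epsilon=0$. Correspondingly, feeding $\Psi\simeq\rho^{1-\epsilon}$ into Theorem~\ref{2207291120} with $\sigma\in(-p+1,1)$ gives $\theta=(1-\epsilon)\sigma-2$, so the admissible range is $\theta\in(-p-1+(p-1)\epsilon,\,-1-\epsilon)$. The full range $\theta\in(-p-1,-1)$ is then recovered by choosing $\epsilon$ small depending on the fixed $\theta$ (see the proof of Corollary~\ref{22.07.17.109}). Your change of variables $\theta=\sigma-2$ only works when $\epsilon=0$, which is exactly the case you cannot reach.

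A secondary (less consequential) inaccuracy: the condition $\alpha^{ij}D_{ij}\psi\leq 0$ for all $(\alpha^{ij})\in\mathrm{M}(\nu_1,\nu_2)$ is \emph{not} equivalent to $D^2\psi\leq 0$. Concavity is sufficient, but the class $\mathrm{M}(\nu_1,\nu_2)$ is a compact subset of the PSD cone, and a single positive eigenvalue of $D^2\psi$ can be dominated by the negative ones for all such $A$. Since you only use the sufficiency direction in your sketch this does not itself break the argument, but it does mean the problem is less rigid than you assert, which is precisely why the paper can afford to work with the radial barriers rather than with genuinely concave functions.
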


	The Poisson and heat equations in bounded vanishing Reifenberg domains have been investigated in the literature, such as the works of Byun and Wang \cite{Relliptic, Rparabolic}, Choi and Kim \cite{Reifweight2}, and Dong and Kim \cite{DongKim}.
	More specifically, these studies focus on the elliptic and parabolic equations with variable coefficients, and the results in \cite{Reifweight2, DongKim} also provide weighted $L_p$-estimates for Muckenhoupt $A_p$-weight functions.
	It is worth noting, however, that these studies mostly dealt with bounded vanishing Reifenberg domains.
	In contrast, Theorem \ref{230214437} considers the class of domains satisfying $\langle\mathrm{TVER}\rangle$, which includes bounded vanishing Reifenberg domains.

	\vspace{6mm}\noindent
	\textbf{1.3.6. (Subsection \ref{0074}) Conic domain.}\hspace{2mm}
	\vspace{2mm}
	
	Let $\cM$ be a subdomain of $\bS^{d-1}:=\{\,x\in\bR^d\,:\,|x|=1\}$ and $\Omega$ be a conic domain generated by $\cM$, \textit{i.e.},
	$$
	\Omega=\{r\sigma\,:\,r>0\,\,,\,\,\,\,\sigma\in\cM\}\,.
	$$
	We consider $\cM$ satisfying Assumption~\ref{2207111151}; this assumption is satisfied if $\Omega$ is a smooth cone or polyhedral cone (see Proposition~\ref{230413525}.(3)).
	
	For $r\in(0,1]$, we denote
	$$
	B_r^{\Omega}:=\Omega\cap B_r(0)\subset \bR^d\quad\text{and}\quad Q_r^{\Omega}=(1-r^2,1]\times B_r^{\Omega}\,.
	$$
	$\Lambda_0>0$ represents the first Dirichlet eigenvalue for spherical laplacian $\Delta_{\bS}$ on $\cM$;
	for the definition and more information of $\Delta_{\bS}$ and $\Lambda_0$, see \eqref{230203650} and Proposition~\ref{230413525}.(1), respectively.
	We define
	$$
	\lambda_0=-\frac{d-2}{2}+\sqrt{\Lambda_0+\Big(\frac{d-2}{2}\Big)^2}\,.
	$$ 
	
	We obtain the following pointwise estimate for homogeneous solution to the heat equation in $\Omega$:
	\begin{thm}[see Theorem~\ref{2208221223} and Remark~\ref{2301111204}]\label{230218744}
		Let $\cM\subset \bS^{d-1}\,(d\geq 2)$ satisfy Assuption~\ref{2207111151}, and suppose that $u\in C^{\infty}(Q_1^{\Omega})$ satisfies that
		\begin{align*}
			&u_t=\Delta u\quad\text{in}\quad Q_1^{\Omega}\,;\\
			&\lim_{(t,x)\rightarrow (t_0,x_0)}u(t,x)=0\quad\text{whenever}\quad 0<t_0\leq 1\,\,,\,\,x_0\in(\partial\Omega)\cap B_1.
		\end{align*}
		Then for any $\lambda\in(0,\lambda_0)$ and $R \in(0,1)$,
		\begin{align}\label{230212805}
			|u(t,x)|\leq N\Big(\sup_{Q_1^{\Omega}}|u|\Big)|x|^{\lambda}\qquad \forall\,\, (t,x)\in Q_R^{\Omega}\,,
		\end{align}
		where $N=N(\Omega,\epsilon,R)>0$.
	\end{thm}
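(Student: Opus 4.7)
The plan is to set up a barrier adapted to the conic geometry, apply parabolic comparison to obtain a single-scale decay, and iterate via the natural parabolic rescaling of the cone. First, given $\lambda\in(0,\lambda_0)$, I fix $\lambda'\in(\lambda,\lambda_0)$. By continuity of the first Dirichlet eigenvalue of $-\Delta_{\bS}$ under enlargement of spherical subdomains, there exists an open $\widetilde\cM\subset\bS^{d-1}$ with $\overline\cM\subset\widetilde\cM$ and first eigenvalue $\widetilde\Lambda_0>\lambda'(\lambda'+d-2)$. Letting $\widetilde\phi_0>0$ be the principal Dirichlet eigenfunction on $\widetilde\cM$ normalized so $\inf_{\overline\cM}\widetilde\phi_0=1$, and writing $h_{\max}:=\sup_{\overline\cM}\widetilde\phi_0$, a polar-coordinate computation shows $v(x):=|x|^{\lambda'}\widetilde\phi_0(x/|x|)$ satisfies $-\Delta v=\kappa|x|^{\lambda'-2}\widetilde\phi_0$ with $\kappa:=\widetilde\Lambda_0-\lambda'(\lambda'+d-2)>0$; so $v$ is a positive, time-independent, strict classical supersolution of the heat equation on $\Omega$ with $|x|^{\lambda'}\le v(x)\le h_{\max}|x|^{\lambda'}$.

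Set $M_\rho:=\sup_{Q_\rho^\Omega}|u|$. The parabolic rescaling $u(t,x)\mapsto u(1-\rho^2(1-t),\rho x)$ is a symmetry of the heat equation that preserves the zero-on-lateral-boundary condition and bijects $Q_1^\Omega$-data onto $Q_\rho^\Omega$-data, so it suffices to produce a single-scale decay $M_{1/N}\le N^{-\lambda}M_1$ for some $N$ depending on $\lambda$ and the geometry. Iterating at geometric scales $\rho_k=N^{-k}$ then yields $M_{N^{-k}}\le N^{-k\lambda}M_1$, hence $M_\rho\lesssim\rho^\lambda M_1$ on $(0,1]$, from which \eqref{230212805} follows. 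To obtain the one-step decay, I apply the parabolic maximum principle on $Q_1^\Omega$ to the combined barrier $W(t,x):=M_1(Av(x)+e^{-\mu t})$ with $A\ge 1$ and $\mu:=A\kappa$. Since $\widetilde\phi_0\ge 1$ and $|x|\le 1$, one has $W_t-\Delta W\ge M_1(A\kappa-\mu)=0$; $W\ge M_1\ge|u|$ on the initial slice $t=0$ and on $\Omega\cap\partial B_1$ (using $A\ge 1$), and $W\ge 0=\lim u$ on $(\partial\Omega)\cap B_1$; hence $|u|\le W$ on $Q_1^\Omega$, whence $M_{1/N}\le M_1\bigl(Ah_{\max}N^{-\lambda'}+e^{-A\kappa(1-1/N^2)}\bigr)$. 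Because $\lambda<\lambda'$, choosing $N\sim(2Ah_{\max})^{1/(\lambda'-\lambda)}$ and then $A$ large enough that $A\kappa\ge\tfrac{2\lambda}{\lambda'-\lambda}\log(2Ah_{\max})$ makes both summands at most $\tfrac12 N^{-\lambda}$, completing the one-step decay.

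The central obstacle is that the spatial barrier $v$ vanishes at the vertex, so no purely spatial supersolution can dominate $|u|$ on the initial slice $\{t=0\}\times B_1^\Omega$, where $|u|$ may remain of order $M_1$ arbitrarily close to the origin. The time-decaying cushion $M_1 e^{-\mu t}$ bridges this gap but couples the two parameters (the supersolution condition forces $\mu=A\kappa$), so the target exponent is reached only by optimizing over $A$ and the iteration length $N$ simultaneously; the freedom $\lambda'<\lambda_0$ supplies the exponent slack needed for this optimization, and the slight enlargement $\widetilde\cM\supsetneq\overline\cM$ is what keeps $\widetilde\phi_0$ bounded away from zero on $\overline\cM$ so that $h_{\max}$ is finite. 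A secondary subtlety is extending the parabolic-cylinder decay $M_\rho\lesssim\rho^\lambda M_1$ to the full pointwise bound on $Q_R^\Omega$: for $(t,x)\in Q_R^\Omega$ with $t$ in the lower part of the time window and $|x|\ll\sqrt{1-t}$, the iteration gives only $|u|\lesssim(\sqrt{1-t})^\lambda M_1$, so a supplementary argument is needed — most naturally, applying the above comparison on a time-translated cylinder whose parabolic bottom lies below $t=1-R^2$ and invoking the hypothesis $u\to 0$ at the vertex uniformly in $t$ — to upgrade the $(\sqrt{1-t})^\lambda$ bound to $|x|^\lambda$ in this regime.
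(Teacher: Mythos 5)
Your barrier-and-iteration strategy is elementary and appealing --- it bypasses the paper's weighted-Sobolev-space machinery entirely and produces decay by parabolic comparison --- but the supersolution verification has a gap that restricts the range of achievable exponents and leaves the full theorem unproved.

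Concretely, your claim that $W_t-\Delta W\ge M_1(A\kappa-\mu)$ with $\mu=A\kappa$ rests on the inequality $|x|^{\lambda'-2}\widetilde\phi_0(x/|x|)\ge 1$ for $x\in B_1^{\Omega}$. Since your normalization gives $\widetilde\phi_0\ge 1$ on $\overline\cM$, this reduces to $|x|^{\lambda'-2}\ge 1$ for $|x|\le 1$, which holds if and only if $\lambda'\le 2$. When $\lambda'>2$ --- which you are forced to take whenever $\lambda\ge 2$ --- the factor $|x|^{\lambda'-2}$ tends to $0$ as $x\to 0$, so near the vertex $\Delta(Av)\to 0$ while $W_t=-\mu M_1 e^{-\mu t}<0$, and $W$ becomes a strict \emph{subsolution} in a neighborhood of the vertex. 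The comparison principle then cannot be applied. This is a genuine loss of generality: under Assumption~\ref{2207111151} the exponent $\lambda_0$ can exceed $2$ (for a thin Lipschitz spherical cap $\cM$, the first eigenvalue $\Lambda_0$ is large while the volume density condition for $\Omega^c$ is comfortably satisfied, so the hypotheses hold), and the theorem claims decay for all $\lambda\in(0,\lambda_0)$, including exponents your one-shot barrier cannot reach.

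The natural repair is a bootstrap: first establish $|u|\lesssim |x|^{\alpha_0}M_1$ for some $\alpha_0<\min(2,\lambda_0)$ via your barrier, then iteratively improve. But each improvement step must encode the already-established smallness of $u$ near the vertex in the barrier; the spatially uniform cushion $e^{-\mu t}$ cannot do this, and replacing it with $e^{-\mu t}\widetilde\phi_0(x/|x|)$ trades the $\lambda'\le 2$ constraint for the constraint $\mu\le\widetilde\Lambda_0$, which again caps the one-step gain. The paper's proof of Theorem~\ref{2208221223} is structured precisely around this obstacle: it runs a finite iteration over exponents $\epsilon_{i_0+1}>\cdots>\epsilon_0=\epsilon$, gaining a fixed increment $\beta_x/K$ at each step by applying the weighted Sobolev--H\"older embeddings (Propositions~\ref{220512537} and~\ref{2204160313}) to the cut-off solution, so it never requires a single barrier to span the whole range. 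Your argument, as written, is a correct and simple proof in the regime $\lambda_0\le 2$ (which does cover many cases, including convex cones), but covering all $\lambda<\lambda_0$ requires either such an iterative improvement or a barrier construction that remains a supersolution for $\lambda'>2$; neither is supplied.

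Two lesser points. First, the continuity of the Dirichlet eigenvalue under outer enlargement (to find $\widetilde\cM\supset\overline\cM$ open with $\widetilde\Lambda_0>\lambda'(\lambda'+d-2)$) is not automatic for arbitrary open spherical subdomains; here it follows from the Wiener-type boundary regularity implied by Assumption~\ref{2207111151}.(2), but this should be stated. Second, you correctly note that the cylinder decay $M_\rho\lesssim\rho^\lambda M_1$ gives only $|u(t,x)|\lesssim\max(|x|,\sqrt{1-t})^{\lambda}M_1$ and needs a supplementary time-translated comparison to yield the pointwise $|x|^\lambda$ bound on $Q_R^\Omega$; the translation $\tilde u(t,x)=u(t_0-\rho^2(1-t),\rho x)$ with $\rho<\sqrt{1-R^2}$ indeed closes this, but as written it is only a sketch.
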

	
	When $\Omega$ is a smooth cone, \textit{i.e.}, $\cM\subset \bS^{d-1}$ has a smooth boundary, estimate \eqref{230212805} is already established in the literature (see, \textit{e.g.}, \cite[Theorem 2.1.3]{Kozlov}).
	In Lemma 3.8 of \cite{Kozlov}, an estimate of the same type as \eqref{230212805} was employed to obtain pointwise estimates for Green functions for parabolic equations in smooth cones.
	Following the approach in \cite{Kozlov}, we anticipate that Theorem~\ref{230218744} can be used to derive estimates of the heat kernels for polyhedral cones, as Assumption~\ref{2207111151} holds for such cones.
	However, we leave the details for future work.

	\subsection{Plan for the paper and notation}\label{0004}
	We provide an outline of Sections~\ref{0030} - \ref{app2.} and Appendix~\ref{008}.
	
	\vspace{2mm}
	
	In Section~\ref{0030}, we present key estimates associated with superharmonic functions and provide weighted $L_p$-estimates for zeroth-order derivatives of solutions to the Poisson equation.
	
	\vspace{1mm}
	
	Section~\ref{0040} is devoted to function spaces for the Poisson equation and the solvability of this equation.
	Subsection~\ref{0041} introduces the notions of Harnack functions and regular Harnack functions.
	Subsection~\ref{0042} presents the weighted Sobolev spaces $\Psi H_{p,\theta}^{\gamma}(\Omega)$, where $\Psi$ is a regular Harnack function and $H_{p,\theta}^{\gamma}(\Omega)$ are the Sobolev spaces introduced in \cite{Krylov1999-1,Lo1}.
	Additionally, we provide properties of $\Psi H_{p,\theta}^{\gamma}(\Omega)$ in this subsection.
	In Subsection~\ref{0043}, we prove the unique solvability of the Poisson equation in the context of $\Psi H_{p,\theta}^{\gamma}(\Omega)$.
	
	\vspace{1mm}
	
	Section~\ref{0050} focuses on the heat equation.
	Subsection~\ref{0051} presents results for the heat equation corresponding to Section~\ref{0030}, while 
	Subsection~\ref{0052} introduces the function spaces for parabolic equations.
	In Subsection~\ref{0053}, we prove the unique solvability of parabolic equations.
	
	\vspace{1mm}
	In Section~\ref{app.}, we begin by exploring the relationship between the Hardy inequality and dimensional notions.
	We also recall classical results for superharmonic functions.
	Subsections~\ref{fatex} and \ref{fatex2} present results for domains introduced in Subsubsec. 1.3.1, while Subsection \ref{0062} provides results for domains introduced in Subsubsec. 1.3.2.
	\vspace{1mm}
	
	Section~\ref{app2.} sequentially provides results for domains introduced in Subsubsections~1.3.3 - 1.3.6.
	
	\vspace{1mm}
	Appendix~\ref{008} discusses the function spaces $H_{p,\theta}^{\gamma}(\Omega)$, $\Psi H_{p,\theta}^{\gamma}(\Omega)$, $B_{p,\theta}^{\gamma}(\Omega)$, and $\Psi B_{p,\theta}^{\gamma}(\Omega)$.
	Appendix~\ref{0082} complies properties of $H_{p,\theta}^{\gamma}(\Omega)$ and $B_{p,\theta}^{\gamma}(\Omega)$, based on the analysis in \cite{Krylov1999-1,Lo1}.
	In Appendix~\ref{2302131019}, we provide auxiliary results used in the proofs of Lemmas~\ref{22.02.16.1} and \ref{21.09.29.4}.(5).
	Finally, Appendix~\ref{0083} offers equivalent norms for $\Psi H_{p,\theta}^{n}(\Omega)$ and $\Psi B_{p,\theta}^{n+s}(\Omega)$, where $n\in\bN_0$ and $s\in(0,1)$.
	
	\vspace{4mm}
	
	\noindent\textbf{Notations.}
	\begin{itemize}
		\item We use $:=$ to denote a definition.

		\item  Throughout the paper, the letter $N$ denotes a finite positive constant which may have different values along the argument  while the dependence  will be informed;  $N=N(a,b,\cdots)$, meaning that  $N$ depends only on the parameters inside the parentheses.
		
		\item  $A\lesssim_{a,b,\ldots} B$ means that $A\leq N(a,b,\ldots)B$, and $A\simeq _{a,b,\ldots}B$ means that $A\lesssim_{a,b,\ldots} B$ and $B\lesssim_{a,b,\ldots} A$.
		
		\item $a \vee b :=\max\{a,b\}$, $a \wedge b :=\min\{a,b\}$.

		
		\item $\bR^d$ stands for the $d$-dimensional Euclidean space of points $x=(x^1,\ldots,x^d)$, and  $\bR^d_+:=\{x=(x^1,\ldots,x^d): x^1>0\}$.

		\item $\bS^{d-1}$ denotes $\bS^{d-1}=\big\{(x_1,\ldots,x_d)\in\bR^d\,:\,\sqrt{(x_1)^2+\cdots+(x_d)^2}=1\big\}$.
		
		\item  $\bN$ denotes the natural number system, $\bN_0=\{0\}\cup \bN$,  and   $\bZ$ denotes the set of integers.
		
		\item For $x=(x_1,\ldots,x_d)$, $y=(y_1,\ldots,y_d)$ in $\bR^d$,  $x\cdot y:=(x,y)_d :=\sum^d_{i=1}x_iy_i$ denotes the standard inner product. $|x|$ denotes $\sqrt{x\cdot x}$.
		
		\item For an open set $\domain$ in $\bR^d$, $\partial \domain$ denotes the boundary of $\domain$, $\overline{\cO}\setminus\cO$.
		
		\item A non-empty connected open set is called a domain.
		
		\item For a set $E\subset \bR^d$, $d(x,E)$ denotes the distance between a point $x$ and a set $\domain\in\bR^d$, defined by $\inf_{y\in E}|x-y|$.
		For two sets $E_1,\,E_2\subset \bR^d$, $d(E_1,E_2):=\inf_{x\in E_1}d(x,E_2)$.

		\item For a set $E\subset \bR^d$, $1_E$ denotes the indicator function on $E$ so that $1_E(x)=1$ if $x\in E$, and $1_E(x)=0$ if $x\notin E$.

		\item  For a measure space $(A, \cA, \mu)$ and a measurable function $f:A\rightarrow [-\infty,\infty]$,
		\begin{align*}
			&\underset{A}{\mathrm{ess\,sup}}\,f:=\underset{x\in A}{\mathrm{ess\,sup}}\,f(x):=\inf\{a\in[-\infty,\infty]\,:\,\mu\big(\{x\in A\,:\,f(x)> a\}\big)=0\}\,,\\
			&\underset{A}{\mathrm{ess\,inf}}\,f:=\underset{x\in A}{\mathrm{ess\,inf}}\,f(x):=-\underset{A}{\mathrm{ess\,sup}}\,(-f)\,.
		\end{align*}

		\item  For a measure space $(A, \cA, \mu)$, a Banach space $(B,\|\cdot\|_B)$, and $p\in[1,\infty]$, we write $L_p(A,\cA, \mu;B)$ for the collection of all $B$-valued $\overline{\cA}$-measurable functions $f$ such that
		\begin{alignat*}{2}
		&\|f\|^p_{L_p(A,\cA,\mu;B)}:=\int_{A} \lVert f\rVert^p_{B} \dd \mu<\infty&&\qquad\text{if}\quad p\in[1,\infty)\,;\\
		&\|f\|_{L_\infty(A,\cA,\mu;B)}:=\underset{x\in A}{\mathrm{ess\,sup}}\,\|f(x)\|_B<\infty&&\qquad\text{if}\quad  p=\infty\,.
		\end{alignat*}
		Here, $\bar{\cA}$ is the completion of $\cA$ with respect to $\mu$.  
		We will drop $\cA$ or $\mu$ or even $B$ in $L_p(A,\cA, \mu;B)$ when they are obvious in the context. 
		
		\item  For  any multi-index $\alpha=(\alpha_1,\ldots,\alpha_d)$, $\alpha_i\in \{0\}\cup \bN$,   
		$$
		\partial_tf:=\frac{\partial f}{\partial t}, \quad f_{x^i}:=D_if:=\frac{\partial f}{\partial x^i}, \quad D^{\alpha}f(x):=D^{\alpha_d}_d\cdots D^{\alpha_1}_1f(x).
		$$
		We denote $|\alpha|:=\sum_{i=1}^d \alpha_i$.  For the second order derivatives we denote $D_jD_if$ by $D_{ij}f$. We often use the notation 
		$|gf_x|^p$ for $|g|^p\sum_i|D_if|^p$ and $|gf_{xx}|^p$ for $|g|^p\sum_{i,j}|D_{ij}f|^p$.  We also use $D^m f$ to denote arbitrary partial derivatives  of order $m$ with respect to the space variable.
		
		\item $\Delta f:=\sum_{i=1}^d D_{ii}f$ denotes the Laplacian for a function $f$ defined on $\cO$.
		
		\item For $n\in \{0\}\cup \bN$,  $W^n_p(\domain):=\{f: \sum_{|\alpha|\le n}\int_{\domain}|D^{\alpha}f|^p \dd x<\infty\}$, the Sobolev space.

		\item  For an open set $\domain\subseteq\bR^d$ and a Banach space $B$, $C(\domain;B)$ denotes the set of all $B$-valued continuous functions $f$ in $\domain$ such that $|f|_{C(\domain;B)}:=\sup_{\domain}\|f\|_B<\infty$.
		For $n\in\bN$, by $C^n(\cO;B)$ we denote the set of all $f:\cO\rightarrow B$ which is strongly $n$-times continuously diffrentiable on $\cO$ with
		$$
		\|f\|_{C^n(\cO;B)}:=\sum_{k=0}^n\Big(\sup_{x\in \Omega}\|D^kf(x)\|_B\Big)<\infty\,.
		$$
		For $n\in\bN_0$ and $\alpha\in (0,1]$, by $C^{n,\alpha}(\cO;B)$ we denote the set of all $f\in C^n(\cO;B)$ such that
		\begin{align*}
			\|f\|_{C^{n,\alpha}(\cO;B)}\,&:=\|f\|_{C^n(\cO;B)}+[f]_{C^{n,\alpha}(\domain;B)}\\
			&:=\|f\|_{C^n(\cO;B)}+\sup_{x\neq y\in \domain} \frac{\|D^nf(x)-D^nf(y)\|_B}{|x-y|^{\alpha}}<\infty.
		\end{align*}

		\item  $supp(f)$ denotes the support of the function $f$ defined as the closure of $\{x\,:\,f(x)\neq 0\}$.
		For an open set $\domain\subseteq\bR^d$, $C^{\infty}_c(\domain)$ is the the space of infinitely differentiable functions $f$ for which $\mathrm{supp}(f)$ is a compact subset of $\domain$.
		Also, $C^{\infty}(\domain)$ denotes the the space of infinitely differentiable functions in $\domain$.
		
		\item Let $\cO\subseteq \bR^d$ be an open set.
		For $X(\cO)=L_p(\cO)$ or $C^n(\cO)$ or $C^{n,\alpha}(\cO)$, $X_{loc}(\cO)$ denotes the set of all function $f$ on $\cO$ such that $f\zeta\in X(\cO)$ for all $\zeta\in C_c^{\infty}(\cO)$. 
		
		\item For an open set $\cO\subseteq \bR^d$, $\cD'(\cO)$ denotes the set of all distrubitions on $\cO$, which is the dual of $C_c^{\infty}(\Omega)$.
		If $f$ is a distribution with the reference domain $\domain$, then the expression $\la f,\varphi\ra $, $\varphi\in C^{\infty}_c(\domain)$, will denote the evaluation of $f$ with the test function $\varphi$.
		
		\item For $F\in\cD'(\Omega)$, the notation $F\geq 0$ denotes that $\la F,\zeta\ra\geq 0$ for any $\zeta\in C_c^{\infty}(\Omega)$ with $\zeta\geq 0$.

		

		
		
	\end{itemize}
	
\vspace{2mm}

\mysection{Key estimates for the Poisson equation}\label{0030}

This section aims to obtain estimates for the zeroth-order derivatives (the function itself) of solutions to the Poisson equation
\begin{align*}
	\Delta u-\lambda u=f\quad\text{in}\,\,\,\Omega\quad;\quad u|_{\partial\Omega}=0\,,
\end{align*}
where $\lambda\geq 0$ and $\Omega$ admits the Hardy inequality (see Theorem~\ref{21.05.13.2}).
In this estimates, superharmonic functions are used as weight functions.
We begin with the definition and elementary properties of superharmonic functions.

\begin{defn}\label{21.01.19.1}
	\,\,
	
	\begin{enumerate} 
		\item A function $\phi\in L^1_{loc}(\Omega)$ is said to be \textit{superharmonic} if $\Delta \phi\leq 0$ in the sense of distribution on $\Omega$, \textit{i.e.},
		$$
		\int_{\Omega}\phi\,\Delta \zeta\, \dd x\leq 0\qquad\forall\,\,\zeta\in C_c^{\infty}(\Omega)\,.
		$$
		
		\item A function $\phi:\Omega\rightarrow (-\infty,+\infty]$ is called a \textit{classical superharmonic function} if the following are satisfied:
		\begin{enumerate}
			\item $\phi$ is lower semi-continuous on $\Omega$.
			\item For any $x\in \Omega$ and $r>0$ satisfying $\overline{B}_r(x)\subset \Omega$, 
			\begin{align*}
				\phi(x)\geq \frac{1}{m\big(B_r(x)\big)}\int_{B_r(x)}\phi(y)\dd y\,,
			\end{align*}
			where $m$ is the Lebesgue measure on $\bR^d$. 
			\item $\phi\not\equiv +\infty$ on each connected component of $\Omega$.
		\end{enumerate}
	\end{enumerate}
\end{defn}
\vspace{1mm}

Recall that $\phi$ is said to be $harmonic$ if both $\phi$ and $-\phi$ are classical superharmonic functions.
\begin{remark}
	Equivalent definitions of classical superharmonic functions are introduced in \cite[Definition 3.1.2, Theorem 3.2.2]{AG}.
	It follows that if $\phi$ is a classical superharmonic function on a neighborhood of each point in $\Omega$, then $\phi$ is a classical superharmonic function on $\Omega$.
\end{remark}
%

\begin{remark}\label{22.01.25.1}
	It is well known that every classical superharmonic function is superharmonic.
	Conversely, if $\phi$ is a superharmonic function, then there exists a classical superharmonic function $\phi_0$ such that $\phi=\phi_0$ almost everywhere on $\Omega$.
	They can be found in \cite[Theorem 4.3.2]{AG} and \cite[Proposition 30.6]{TF}, respectively.
\end{remark}

\begin{prop}\label{21.04.23.3}
	Let $\phi$ be a classical superharmonic function on $\Omega$.
	
	\begin{enumerate}
		\item If $\phi$ is twice continuously differentiable, then $\Delta\phi\leq 0$.
		
		\item $\phi$ is locally integrable on $\Omega$.
		
		\item For any compact set $K\subset \Omega$, $\phi$ has the minumum value on $K$. 
		
		\item 
		For $\epsilon>0$, put
		\begin{align}\label{21.04.23.1}
			\phi^{(\epsilon)}(x)=\int_{B_1}\big(\phi 1_{\Omega}\big)(x-\epsilon y)\widetilde{\zeta}(y)\dd y\,,
		\end{align}
		where
		\begin{align*}
			\widetilde{\zeta}(x):=N_0\,e^{-1/(1-|x|^2)}1_{B_1(0)}(x)
		\end{align*}
		and $N_0$ is a positive constant such that $\int_{\bR^d}\widetilde{\zeta}\dd x=1$.
		Then for any compact set $K\subset \Omega$ and $\epsilon\in\big(0,d(K,\Omega^c)\big)$, the following hold:
		\begin{enumerate}
			\item $\phi^{(\epsilon)}$ is infinitely smooth on $\bR^d$.
			\item $\phi^{(\epsilon)}$ is a classical superharmonic function on $K^{\circ}$.
			\item For any $x\in K$, $\phi^{(\epsilon)}(x)\nearrow \phi(x)$ as $\epsilon\searrow 0$.
		\end{enumerate}
	\end{enumerate}
\end{prop}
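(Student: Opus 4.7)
The plan is to treat each part using the mean value inequality from Definition~\ref{21.01.19.1}(2), together with Fubini's theorem and the monotonicity of spherical means of superharmonic functions.

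For (1), since $\phi\in C^2$, a Taylor expansion at a fixed $x\in\Omega$ yields
$$\frac{1}{m(B_r(x))}\int_{B_r(x)}\phi(y)\dd y=\phi(x)+\frac{r^2}{2(d+2)}\Delta\phi(x)+o(r^2)\qquad\text{as } r\searrow 0.$$
The mean value inequality forces the left-hand side to be $\le\phi(x)$, so dividing by $r^2$ and sending $r\to 0$ gives $\Delta\phi(x)\le 0$. For (3), since $\phi$ is lower semi-continuous, it attains its infimum on any nonempty compact set; in particular $\phi>-\infty$ on compact subsets of $\Omega$. For (2), pick $x_0$ in a given connected component of $\Omega$ with $\phi(x_0)<\infty$ (which exists by Definition~\ref{21.01.19.1}(2)(c)) and any $r>0$ with $\overline{B}_r(x_0)\subset\Omega$; the mean value inequality gives $\int_{B_r(x_0)}\phi\,\dd y\le m(B_r(x_0))\phi(x_0)<\infty$, and (3) supplies a lower bound, so $\phi\in L^1(B_r(x_0))$. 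A standard chaining argument along a finite cover of any compact $K\subset\Omega$ by such balls then propagates local integrability to all of $\Omega$.

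For (4)(a), local integrability of $\phi 1_\Omega$ from (2) makes the mollification $\phi^{(\epsilon)}$ smooth on $\bR^d$. For (4)(b), take $x\in K^\circ$ and $r>0$ small enough that $\overline{B}_r(x-\epsilon y)\subset\Omega$ for all $y\in B_1$ (which holds whenever $r+\epsilon<d(x,\Omega^c)$, a condition satisfied on a nontrivial range of $r$ since $\epsilon<d(K,\Omega^c)$). Fubini and translation invariance yield
$$\frac{1}{m(B_r(x))}\int_{B_r(x)}\phi^{(\epsilon)}(z)\dd z=\int_{B_1}\widetilde\zeta(y)\Big(\frac{1}{m(B_r(x-\epsilon y))}\int_{B_r(x-\epsilon y)}\phi(w)\dd w\Big)\dd y,$$
and the inner ball average is $\le\phi(x-\epsilon y)$ by the mean value inequality for $\phi$; integrating against $\widetilde\zeta$ gives at most $\phi^{(\epsilon)}(x)$. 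This establishes the local mean value property, and together with smoothness (so lower semi-continuity) we conclude that $\phi^{(\epsilon)}$ is classical superharmonic on $K^\circ$.

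For (4)(c), write in polar coordinates
$$\phi^{(\epsilon)}(x)=\int_0^1 s^{d-1}\Big(\int_{\partial B_1}\widetilde\zeta(s\omega)\phi(x-\epsilon s\omega)\dd\sigma(\omega)\Big)\dd s,$$
and use that $\widetilde\zeta$ is radial so the inner integral is a positive constant multiple of the spherical mean $\overline\phi(x,\epsilon s)$. A classical consequence of the mean value inequality is that $r\mapsto\overline\phi(x,r)$ is nonincreasing in $r$ for superharmonic $\phi$, so $\phi^{(\epsilon)}(x)$ is nondecreasing as $\epsilon\searrow 0$. The upper bound $\phi^{(\epsilon)}(x)\le\phi(x)$ comes from the spherical mean inequality $\overline\phi(x,r)\le\phi(x)$, while $\liminf_{\epsilon\searrow 0}\phi^{(\epsilon)}(x)\ge\phi(x)$ follows from Fatou applied to the integrand (bounded below by (3)) together with the lower semi-continuity of $\phi$, giving $\lim_{\epsilon\searrow 0}\phi^{(\epsilon)}(x)=\phi(x)$.

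The main obstacle is part (4)(c): one must combine the radial structure of $\widetilde\zeta$ with the monotonicity of spherical means cleanly, and then justify the pointwise limit using only lower semi-continuity (not continuity) of $\phi$. The Fubini step in (4)(b) also requires care in restricting $r$ so that every translated ball $B_r(x-\epsilon y)$ lies inside $\Omega$; this is where the precise hypothesis $\epsilon<d(K,\Omega^c)$ enters.
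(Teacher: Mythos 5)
Your proof is correct, and it is essentially the standard argument one finds in the reference the paper actually invokes: the paper's own ``proof'' of this proposition is only a citation to Definition~\ref{21.01.19.1}, Remark~\ref{22.01.25.1}, and \cite[Theorem 3.3.3]{AG}, so you are supplying the argument that the paper delegates to \cite{AG}. The one dependence worth naming is that in (4)(c) you invoke ``monotonicity of spherical means'' for superharmonic $\phi$. That is not an immediate consequence of the volume-mean inequality in Definition~\ref{21.01.19.1}(2)(b); it is the content of \cite[Theorem 3.2.2]{AG} and its proof uses comparison with harmonic functions (minimum principle/Poisson integral). You flag it as classical, which is fair, but be aware it is the nontrivial step. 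Interestingly, the upper bound $\phi^{(\epsilon)}(x)\le\phi(x)$ can be obtained from volume means alone: since $\widetilde\zeta$ is radially decreasing, a layer-cake decomposition $\widetilde\zeta(y)=\int_0^\infty 1_{B_{r(t)}(0)}(y)\,\dd t$ expresses $\phi^{(\epsilon)}(x)$ as a probability mixture of volume averages $\mathrm{avg}_{B_{\epsilon r(t)}(x)}\phi$, each $\le\phi(x)$ directly by the definition; it is only the monotone increase of $\phi^{(\epsilon)}(x)$ as $\epsilon\searrow 0$ that genuinely requires the spherical-mean (or equivalently volume-mean) monotonicity.

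Two smaller points. In (2), the ``chaining'' needs a little care: you want to show the set $U$ of points where $\phi$ is integrable on a neighborhood is both open and closed in each component, and for closedness you must produce, near any boundary point of $U$ inside $\Omega$, a nearby point $z$ with $\phi(z)<\infty$ (which exists because $\phi<\infty$ a.e.\ on $U$) and then apply the mean-value inequality on a ball around $z$ large enough to swallow a neighborhood of the boundary point. In (4)(a), both you and the paper state that $\phi^{(\epsilon)}$ is smooth on all of $\bR^d$; this is not literally justified, since (2) gives $\phi\in L^1_{\mathrm{loc}}(\Omega)$ but not $\phi 1_\Omega\in L^1_{\mathrm{loc}}(\bR^d)$ (a superharmonic $\phi$ can fail to be integrable up to $\partial\Omega$). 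What is actually established, and all that is used downstream, is smoothness on $\{x:d(x,\Omega^c)>\epsilon\}$, which contains a neighborhood of $K$. For (4)(b), your remark about restricting $r$ so that $r+\epsilon<d(x,\Omega^c)$ is the right observation; combining it with the local-to-global principle in the remark following Definition~\ref{21.01.19.1} gives superharmonicity on $K^\circ$.
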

For this proposition, (1) - (3) follow from Definition~\ref{21.01.19.1} and Remark~\ref{22.01.25.1}, and (4) can be found in \cite[Theorem 3.3.3]{AG}.

\begin{remark}\label{2303081145}
	If $\phi$ is a positive classical superharmonic function on $\Omega$ and $c\leq 1$, then $\phi^c$ is locally integrable on $\Omega$.
	Indeed, for any comapct set $K\subset \Omega$, if $c\in(0,1]$, then by Proposition~\ref{21.04.23.3}.(2),
	$$
	\int_K\phi^c\dd x\leq |K|^{1-c}\big(\int_K\phi\dd x\big)^c<\infty\,.
	$$
	If $c\leq 0$, then by Proposition~\ref{21.04.23.3}.(3), $\max\limits_K(\phi^c)= \big(\min\limits_K\phi\big)^c<\infty$.
\end{remark}

\begin{lemma}\label{21.04.23.5}
	Let $\phi$ be a positive classical superharmonic function on $\Omega$.
	If $f\in L^1(\Omega)$ and  $\mathrm{supp}(f)$ is a compact subset of $\Omega$, then for any $c\in\bR$,
	\begin{align}\label{220429449}
		\lim_{\epsilon\rightarrow 0}\int_{\Omega}|f|\big(\phi^{(\epsilon)}\big)^c\dd x= \int_{\Omega}|f|\phi^c\dd x\,,
	\end{align}
	where $\phi^{(\epsilon)}$ is defined in \eqref{21.04.23.1}.
\end{lemma}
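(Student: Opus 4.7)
The plan is to split the statement according to the sign of $c$ and reduce each case to a standard convergence theorem, using the monotone approximation property of mollified superharmonic functions established in Proposition~\ref{21.04.23.3}.(4). Set $K:=\mathrm{supp}(f)$, which by hypothesis is a compact subset of $\Omega$, and fix any $\epsilon_0\in(0,d(K,\Omega^c))$. By Proposition~\ref{21.04.23.3}.(4)(c), for every $x\in K$ the values $\phi^{(\epsilon)}(x)$ increase monotonically to $\phi(x)\in(0,+\infty]$ as $\epsilon\searrow 0$; in particular, whenever $0<\epsilon\leq \epsilon_0$ we have the pointwise bound $\phi^{(\epsilon_0)}(x)\leq \phi^{(\epsilon)}(x)\leq \phi(x)$ on $K$.

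For the case $c\geq 0$, the map $t\mapsto t^c$ is non-decreasing on $(0,\infty]$, so $\bigl(\phi^{(\epsilon)}\bigr)^c\nearrow \phi^c$ pointwise on $K$ as $\epsilon\searrow 0$. Since the integrands $|f|\bigl(\phi^{(\epsilon)}\bigr)^c$ are non-negative and increasing, the monotone convergence theorem delivers \eqref{220429449}, where both sides are allowed to equal $+\infty$ if $\phi^c$ is not integrable against $|f|$. The case $c=0$ is trivial.

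The main obstacle is the case $c<0$, because then $\bigl(\phi^{(\epsilon)}\bigr)^c\searrow \phi^c$ and monotone convergence runs the wrong way, so a dominating function is needed. My plan is to use $\bigl(\phi^{(\epsilon_0)}\bigr)^c$ as the dominant. By Proposition~\ref{21.04.23.3}.(4)(a), $\phi^{(\epsilon_0)}\in C^\infty(\bR^d)$, so in particular it is continuous on the compact set $K$. Positivity of $\phi^{(\epsilon_0)}$ on $K$ follows because $\widetilde{\zeta}>0$ on $B_1(0)$ and $\phi>0$ on $\Omega$: the integrand in \eqref{21.04.23.1} is non-negative and cannot vanish a.e.\ on $B_1(0)$, for that would force $\phi\equiv 0$ a.e.\ on $B_{\epsilon_0}(x)\subset\Omega$. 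Therefore $\phi^{(\epsilon_0)}$ attains a strictly positive minimum on $K$, giving an upper bound $M<\infty$ for $\bigl(\phi^{(\epsilon_0)}\bigr)^c$ on $K$. Consequently, for all $\epsilon\in(0,\epsilon_0]$,
\begin{align*}
 |f(x)|\bigl(\phi^{(\epsilon)}(x)\bigr)^c \leq |f(x)|\bigl(\phi^{(\epsilon_0)}(x)\bigr)^c \leq M\,|f(x)|,
\end{align*}
and the right-hand side is integrable. The dominated convergence theorem then yields \eqref{220429449}, noting that $\phi^c=0$ at any point where $\phi=+\infty$, which is consistent with the limit. The only delicate ingredient is verifying the strict positivity of $\phi^{(\epsilon_0)}$; once that is in hand, everything else is routine.
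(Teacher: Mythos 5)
Your proof is correct and takes essentially the same approach as the paper's: monotone convergence for $c\geq 0$, and dominated convergence for $c<0$ using a uniform positive lower bound for $\phi^{(\epsilon)}$ on the compact support of $f$. The only cosmetic difference is the choice of dominant: the paper bounds $\phi^{(\epsilon)}\geq m:=\min_{\overline{U}}\phi>0$ directly (via Proposition~\ref{21.04.23.3}.(3) applied to a slightly enlarged compact neighborhood $\overline{U}$), whereas you bound $\phi^{(\epsilon)}\geq\phi^{(\epsilon_0)}$ and then use continuity and strict positivity of $\phi^{(\epsilon_0)}$ on $K$ — both yield the same constant-times-$|f|$ majorant.
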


\begin{proof}
	Take a bounded open set $U$ such that $\text{supp}(f)\subset U$ and $\overline{U}\subset\Omega$.
	Proposition~\ref{21.04.23.3} implies that for $0<\epsilon<d(\text{supp}(f),U^c)$ and $x\in \text{supp}(f)$,
	\begin{align*}
		\phi^{(\epsilon)}(x)\nearrow\phi(x)\,\,\,\,\text{as}\,\,\,\,\epsilon\searrow 0\,\,,\,\,\,\,\text{and}\quad 0<\min_{\overline{U}}\phi=:m\leq \phi^{(\epsilon)}(x)\,. 
	\end{align*}
	If $c\geq 0$, then \eqref{220429449} follows from the monotone convergence theorem.
	If $c<0$, then $|f|\big(\phi^{(\epsilon)}\big)^c\leq m^{c}|f|$, and therefore \eqref{220429449} follows from the Lebesgue dominated convergence theorem.
\end{proof}

\begin{remark}\label{21.11.15.2}
	Under the assumption in Lemma~\ref{21.04.23.5}, we additionally assume that $c\leq 1$ and $f$ is bounded.
	Then $f\phi^c$ is integrable on $\Omega$ (see Remark~\ref{2303081145}).
	By applying Lemma~\ref{21.04.23.5} with $f$ replaced by $\max(f,0)$ and $\max(-f,0)$, we obtain
	$$
	\lim_{\epsilon\rightarrow 0}\int_{\Omega}f\,\big(\phi^{(\epsilon)}\big)^c\dd x= \int_{\Omega}f\,\phi^c\dd x.
	$$
\end{remark}

\vspace{1mm}

%
%

The following is the key lemma of this section.

\begin{lemma}\label{03.30}
	Let $p\in(1,\infty)$ and $c\in(-p+1,1)$ and suppose that $u\in C(\Omega)$ satisfies that
	\begin{equation}\label{22.01.25.2}
		\begin{gathered}
			\mathrm{supp}(u)\,\,\text{is a compact subset of}\,\,\,\Omega\,,\\
			u\in C_{\mathrm{loc}}^2(\{x\in\Omega\,:\,u(x)\neq 0\})\,\,\,,\,\,\,\text{and}\quad \int_{\{u\neq 0\}}|u|^{p-1}|D^2u|\dd x<\infty\,,\
		\end{gathered}
	\end{equation}
	and $\phi$ is a positive superharmonic function on a neighborhood of $\mathrm{supp}(u)$.
	\begin{enumerate}
		\item If $\phi$ is twice continuously differentiable, then
		\begin{align}\label{230121211}
			\int_{\Omega}|u|^p\phi^{c-2}|\nabla \phi|^2 \dd x \leq \Big(\frac{p}{1-c}\Big)^2\int_{\Omega\cap\{u\neq 0\}}|u|^{p-2}|\nabla u|^2\phi^c \dd x.
		\end{align}
		
		\item If $(\Delta u)1_{\{u\neq 0\}}$ is bounded, then
		\begin{align}\label{22062422511}
			\int_{\Omega\cap\{u\neq 0\}}|u|^{p-2}|\nabla u|^2\phi^c \dd x\leq N\int_{\Omega\cap\{u\neq 0\}}(-\Delta u)\cdot u|u|^{p-2}\phi^c \dd x\,,
		\end{align}
		where $N=N(p,c)>0$.
		\vspace{1mm}
		
		\item If the Hardy inequality \eqref{hardy} holds for $\Omega$, then
		\begin{align}\label{22062422512}
			\int_{\Omega}|u|^p\phi^c\rho^{-2}\dd x\leq N\int_{\Omega\cap\{u\neq 0\}}|u|^{p-2}|\nabla u|^2 \phi^c \dd x\,,
		\end{align}
		where $N=N(p,c,\mathrm{C}_0(\Omega))>0$.
	\end{enumerate}
\end{lemma}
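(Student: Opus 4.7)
I would prove the three parts in order (1), (2), (3), since each uses its predecessors. Part (1) is a direct integration-by-parts computation under the $C^{2}$-hypothesis on $\phi$. For Parts (2) and (3), where $\phi$ is only superharmonic, I would first prove the inequalities with $\phi$ replaced by the smooth superharmonic mollification $\phi^{(\epsilon)}$ supplied by Proposition~\ref{21.04.23.3}, and then pass $\epsilon\downarrow 0$ using Lemma~\ref{21.04.23.5} and Remark~\ref{21.11.15.2}.

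\emph{Part (1).} Starting from the pointwise identity $\phi^{c-2}|\nabla\phi|^{2}=\tfrac{1}{c-1}\nabla\cdot(\phi^{c-1}\nabla\phi)-\tfrac{1}{c-1}\phi^{c-1}\Delta\phi$, I multiply by $|u|^{p}$, integrate over $\Omega$, and integrate the divergence term by parts (no boundary contribution since $u$ is compactly supported). Because $\Delta\phi\leq 0$ and $1-c>0$, the surviving non-derivative piece is nonpositive, and using $\nabla(|u|^{p})=p|u|^{p-2}u\nabla u$ a.e.\ (valid for $p>1$ under \eqref{22.01.25.2}) one obtains
\[
\int_{\Omega}|u|^{p}\phi^{c-2}|\nabla\phi|^{2}\,\dd x\leq\frac{p}{1-c}\int_{\{u\ne 0\}}|u|^{p-2}u\nabla u\cdot\phi^{c-1}\nabla\phi\,\dd x.
\]
Splitting the integrand on the right as $\bigl(|u|^{(p-2)/2}|\nabla u|\phi^{c/2}\bigr)\cdot\bigl(|u|^{p/2}\phi^{(c-2)/2}|\nabla\phi|\bigr)$, Young's inequality with parameter $\epsilon=(1-c)/p$ and absorption of the $|\nabla\phi|^{2}$-integral (both sides a priori finite since $\phi$ is $C^{2}$ and positive on the compact $\mathrm{supp}(u)$) give the sharp constant $\bigl(p/(1-c)\bigr)^{2}$.

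\emph{Part (2).} For smooth $\phi$, integrating $(-\Delta u)\cdot u|u|^{p-2}\phi^{c}$ by parts and using $\nabla(u|u|^{p-2})=(p-1)|u|^{p-2}\nabla u$ on $\{u\ne 0\}$ produces $\int(-\Delta u)u|u|^{p-2}\phi^{c}\,\dd x=(p-1)Y+cI$, with $Y:=\int|u|^{p-2}|\nabla u|^{2}\phi^{c}\,\dd x$ and $I:=\int u|u|^{p-2}\nabla u\cdot\phi^{c-1}\nabla\phi\,\dd x$. For $c\geq 0$, rewrite $cI=(c/p)\int\nabla(|u|^{p})\cdot\phi^{c-1}\nabla\phi\,\dd x$ and integrate by parts again to get
\[
cI=\tfrac{c(1-c)}{p}\!\int|u|^{p}\phi^{c-2}|\nabla\phi|^{2}\,\dd x-\tfrac{c}{p}\!\int|u|^{p}\phi^{c-1}\Delta\phi\,\dd x\geq 0,
\]
since $c(1-c)\geq 0$, while $-c/p\leq 0$ multiplies the nonpositive integral $\int|u|^{p}\phi^{c-1}\Delta\phi\,\dd x$. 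Hence $(p-1)Y\leq\int(-\Delta u)u|u|^{p-2}\phi^{c}\,\dd x$. For $c<0$ the second integration by parts yields unfavorable signs, so instead I bound $|cI|$ directly by Young's inequality and then invoke Part~(1):
\[
|cI|\leq\tfrac{|c|}{2\epsilon}Y+\tfrac{|c|\epsilon}{2}\!\int|u|^{p}\phi^{c-2}|\nabla\phi|^{2}\,\dd x\leq\Bigl(\tfrac{|c|}{2\epsilon}+\tfrac{|c|\epsilon p^{2}}{2(1-c)^{2}}\Bigr)Y,
\]
optimizing at $\epsilon=(1-c)/p$ to get $|cI|\leq\tfrac{|c|p}{1-c}Y$, whence $\int(-\Delta u)u|u|^{p-2}\phi^{c}\,\dd x\geq\tfrac{p-1+c}{1-c}Y$; the coefficient is strictly positive exactly when $c>1-p$, matching the hypothesis.

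\emph{Part (3) and main obstacle.} For Part~(3), I apply the Hardy inequality \eqref{hardy} to $f:=|u|^{p/2}\phi^{c/2}$. For smooth $\phi$, $f$ is continuous with compact support in $\Omega$ and (by the regularity in \eqref{22.01.25.2}) belongs to $\mathring{W}_{2}^{1}(\Omega)$ via mollification. Using $\nabla f=\tfrac{p}{2}|u|^{p/2-2}u\phi^{c/2}\nabla u+\tfrac{c}{2}|u|^{p/2}\phi^{c/2-1}\nabla\phi$ and $(a+b)^{2}\leq 2a^{2}+2b^{2}$,
\[
\int|u|^{p}\phi^{c}\rho^{-2}\,\dd x\leq\mathrm{C}_{0}(\Omega)\int|\nabla f|^{2}\,\dd x\leq\mathrm{C}_{0}(\Omega)\Bigl[\tfrac{p^{2}}{2}Y+\tfrac{c^{2}}{2}\!\int|u|^{p}\phi^{c-2}|\nabla\phi|^{2}\,\dd x\Bigr],
\]
and Part~(1) absorbs the last integral. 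The principal technical obstacle, common to Parts~(2) and~(3), is the approximation argument passing from $\phi^{(\epsilon)}$ back to $\phi$: the left-hand sides cause no trouble (bounded integrands with compactly supported weights, so Lemma~\ref{21.04.23.5} and Remark~\ref{21.11.15.2} apply), but for the $Y$-quantities on the right one needs uniform-in-$\epsilon$ integrability control on $|u|^{p-2}|\nabla u|^{2}$. I would extract this from \eqref{22.01.25.2} together with the smooth-$\phi$ case just proved (which controls $Y_{\epsilon}$ in terms of the finite LHS) and then distinguish monotone vs.\ dominated convergence of $(\phi^{(\epsilon)})^{c}$ depending on the sign of $c$.
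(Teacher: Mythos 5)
Your proposal is correct and follows essentially the same route as the paper: integrate by parts in (1), split into cases $c\ge 0$ and $c<0$ in (2) using (1) for the latter, apply Hardy to $|u|^{p/2}\phi^{c/2}$ in (3), and reduce the superharmonic case to the smooth case via the mollification $\phi^{(\epsilon)}$. The only differences are cosmetic: you use Young's inequality with absorption where the paper uses Cauchy--Schwarz followed by division (same sharp constants), and you take $f=|u|^{p/2}\phi^{c/2}$ in (3) where the paper takes $|u|^{p/2-1}u\,\phi^{c/2}$ (same modulus and gradient modulus).

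One small remark on the $\epsilon\downarrow 0$ step: the worry about ``uniform-in-$\epsilon$ integrability control'' of $|u|^{p-2}|\nabla u|^2$ is more than is needed. Lemma~\ref{21.04.23.4}.(1) already gives $|u|^{p-2}|\nabla u|^2\,1_{\{u\ne 0\}}\in L^1$ outright from \eqref{22.01.25.2}, and since $\phi^{(\epsilon)}\nearrow\phi$ on $\mathrm{supp}(u)$ with a positive uniform lower bound, Lemma~\ref{21.04.23.5} (monotone convergence for $c\ge 0$, dominated convergence for $c<0$) and Remark~\ref{21.11.15.2} (for the signed bounded integrand $(-\Delta u)\,u|u|^{p-2}1_{\{u\ne 0\}}$) pass the limit on every integral appearing in (2) and (3) directly; there is no need to re-use the smooth-$\phi$ inequality for this.
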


Lemma~\ref{03.30} is mainly used for $u\in C_c^{\infty}(\Omega)$.
However, in order to obtain appropriate solutions of the Poisson equation that are aruitable for our purpose (see  Lemma~\ref{21.05.25.3}), we consider the condition \eqref{22.01.25.2} in Lemma~\ref{03.30}.
Before proving Lemma~\ref{03.30}, we introduce a lemma that help us handle functions satisfying \eqref{22.01.25.2}.
\begin{lemma}\label{21.04.23.4}
	Let $p\in(1,\infty)$ and $u\in C(\bR^d)$ satisfy \eqref{22.01.25.2}.
	
	\begin{enumerate}
		\item $|u|^{p/2-1}u\in W_2^1(\bR^d)$ and $D_i(|u|^{p/2-1}u)=\frac{p}{2}|u|^{p/2-1}(D_iu)1_{\{u\neq 0\}}$.
		\vspace{1mm}
		
		\item  $|u|^p\in W_1^2(\bR^d)$ and 
		\begin{align*}
			D_i\big(|u|^p\big)\,&=p|u|^{p-2}uD_iu 1_{\{u\neq 0\}}\,\,;\\
			D_{ij}\big(|u|^p\big)\,&=\big(p|u|^{p-2}uD_{ij}u+p(p-1)|u|^{p-2}D_iuD_ju\big)\,1_{\{u\neq 0\}}.
		\end{align*}
	\end{enumerate}
\end{lemma}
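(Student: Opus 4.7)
The plan is to prove both parts by regularizing with a smooth cutoff that excises a neighborhood of the zero set $\{u=0\}$, where $u$ is not assumed to be $C^2$. Fix a nondecreasing $\eta\in C^{\infty}(\bR)$ with $\eta\equiv 0$ on $(-\infty,1]$ and $\eta\equiv 1$ on $[2,\infty)$, and set $\eta_\epsilon(t):=\eta(t/\epsilon)$. Since $\eta_\epsilon(|u|)$ vanishes on $\{|u|\le\epsilon\}$ and $u\in C_{\mathrm{loc}}^2(\{u\ne 0\})$, the composition $\eta_\epsilon(|u|)$ is globally $C^2$ on $\bR^d$; consequently products such as $\eta_\epsilon(|u|)|u|^{p/2-1}u$ and $\eta_\epsilon(|u|)|u|^p$ lie in $C_c^2(\bR^d)$. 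The crucial scaling property is $|t^k\eta_\epsilon^{(k)}(t)|\lesssim 1$ uniformly in $\epsilon$, so whenever a derivative lands on $\eta_\epsilon(|u|)$ on its support $\{\epsilon<|u|<2\epsilon\}$, the resulting factor is absorbed by the corresponding power of $|u|$.

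Next I would establish the auxiliary integrability bound
\begin{align*}
\int_{\{u\ne 0\}}|u|^{p-2}|\nabla u|^2\,\dd x\le \tfrac{\sqrt{d}}{p-1}\int_{\{u\ne 0\}}|u|^{p-1}|D^2u|\,\dd x<\infty,
\end{align*}
the last inequality being hypothesis \eqref{22.01.25.2}. To obtain this, I would apply the divergence theorem to the vector field with components $\eta_\epsilon(|u|)|u|^{p-2}u\,D_ju$ (each in $C_c^1(\bR^d)$, being $C^1$ on $\{u\ne 0\}$ and vanishing on $\{|u|\le\epsilon\}$). A direct expansion gives
\begin{align*}
(p-1)\!\int\!\eta_\epsilon(|u|)|u|^{p-2}|\nabla u|^2\,\dd x+\!\int\!\eta_\epsilon'(|u|)|u|^{p-1}|\nabla u|^2\,\dd x=-\!\int\!\eta_\epsilon(|u|)|u|^{p-2}u\,\Delta u\,\dd x.
\end{align*}
Both terms on the left are nonnegative (using $p>1$ and $\eta_\epsilon'\ge 0$); the right side is bounded by $\sqrt{d}\int_{\{u\ne 0\}}|u|^{p-1}|D^2u|\,\dd x$ via $|\Delta u|\le\sqrt{d}\,|D^2u|$. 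Letting $\epsilon\searrow 0$ and applying Fatou's lemma produces the claim.

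For Part (1), set $v_\epsilon:=\eta_\epsilon(|u|)|u|^{p/2-1}u\in C_c^2(\bR^d)$. Since $|v_\epsilon|\le|u|^{p/2}$ and $u\in L^{\infty}$ has compact support, dominated convergence yields $v_\epsilon\to|u|^{p/2-1}u$ in $L_2(\bR^d)$. Expanding,
\begin{align*}
D_jv_\epsilon=\tfrac{p}{2}\eta_\epsilon(|u|)|u|^{p/2-1}D_ju+\eta_\epsilon'(|u|)|u|^{p/2}D_ju;
\end{align*}
the first summand converges to $\tfrac{p}{2}|u|^{p/2-1}(D_ju)\,1_{\{u\ne 0\}}$ in $L_2$ by dominated convergence, thanks to the auxiliary bound. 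For the second, $|u|^{p/2}|\eta_\epsilon'(|u|)|\lesssim|u|^{p/2-1}$ on $\{\epsilon<|u|<2\epsilon\}$, so its squared $L_2$-norm is $\lesssim\int_{\{\epsilon<|u|<2\epsilon\}}|u|^{p-2}|\nabla u|^2\,\dd x\to 0$ as these sets shrink to a null set. Hence $v_\epsilon\to|u|^{p/2-1}u$ in $W^1_2(\bR^d)$ and the weak-derivative formula follows. Part (2) is analogous with $w_\epsilon:=\eta_\epsilon(|u|)|u|^p\in C_c^2$: expanding $D_jw_\epsilon$ and $D_{ij}w_\epsilon$ by the Leibniz and chain rules, I identify the ``main'' terms as those in which no derivative hits $\eta_\epsilon$, and observe that every ``remainder'' term containing $\eta_\epsilon'$ or $\eta_\epsilon''$ is pointwise dominated on $\{\epsilon<|u|<2\epsilon\}$ by a constant multiple of $|u|^{p-2}|\nabla u|^2+|u|^{p-1}|D^2u|$, which vanishes in $L_1$ by dominated convergence.

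\textbf{Main obstacle.} The central difficulty is that $u$ is only assumed $C^2$ on the open set $\{u\ne 0\}$, so the classical chain and product rules cannot be applied to $F(u)$ for $F$ non-smooth at $0$ (e.g.\ $F(t)=|t|^{p/2-1}t$ when $p\in(1,2)$), and one cannot a priori integrate by parts on $\{u\ne 0\}$ because $\partial\{u\ne 0\}$ is arbitrary. The cutoff $\eta_\epsilon(|u|)$ bypasses this obstruction by localizing everything to $\{|u|>\epsilon\}$, after which the whole argument reduces to verifying the $L_2$ (resp.\ $L_1$) convergences of the approximants, all powered by the single a priori estimate on $|u|^{p-2}|\nabla u|^2$.
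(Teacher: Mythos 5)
Your proof is correct, and its endgame is the same as the paper's: localize away from $\{u=0\}$, establish the key a priori estimate $|u|^{p-2}|\nabla u|^2\,1_{\{u\ne 0\}}\in L_1(\bR^d)$ by an integration by parts against $\Delta u$, then pass to the limit to identify the weak derivatives. The implementation, however, differs. The paper follows Krylov's scheme and approximates the \emph{derivative profile} $|s|^{p/2-1}1_{\{s\ne 0\}}$ from below by continuous $g_n$ vanishing near the origin, then constructs the approximants as antiderivatives $F_n(u)$ and $G_n(u)=\int_0^u g_n^2$; the monotonicity $g_n\nearrow$ then makes the monotone and dominated convergence steps nearly automatic, and no remainder terms appear. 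You instead multiply the target functions directly by a smooth radial cutoff $\eta_\epsilon(|u|)$, which is more self-contained (no auxiliary sequence to construct) but forces you to track the extra terms where a derivative hits $\eta_\epsilon$. You handle these correctly: the scaling $|t^k\eta_\epsilon^{(k)}(t)|\lesssim 1$ on the transition annulus $\{\epsilon<|u|<2\epsilon\}$ reduces each remainder to an integral dominated by $|u|^{p-2}|\nabla u|^2+|u|^{p-1}|D^2u|$ over a set shrinking to a null set. The divergence-theorem derivation of the a priori bound is the same integration-by-parts identity that the paper obtains via $G_n(u)\Delta u$, just written in a different parametrization; your constant $\sqrt{d}$ arises only because you pass from $|\Delta u|$ to $|D^2u|$, which the paper's hypothesis renders unnecessary. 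In short: same key estimate, same limit strategy, genuinely different regularization; yours is marginally more elementary, the paper's is marginally cleaner in Part~(2) because $F_n,G_n$ absorb what you must treat as remainders.
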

The proof of Lemma~\ref{21.04.23.4} is provided in the end of this subsection.

\begin{remark}\label{220613813}
	If \eqref{hardy} holds, then the inequality in \eqref{hardy} also holds for all $f\in \mathring{W}^1_2(\Omega)$, where $\mathring{W}^1_2(\Omega)$ denotes the closure of $C_c^{\infty}(\Omega)$ in $W_2^1(\Omega)$.	
\end{remark}

\begin{proof}[Proof of Lemma~\ref{03.30}]
	By Remark~\ref{22.01.25.1}, we may assume that $\phi$ is a classical superharmonic function on a neighborhood of $\text{supp}(u)$.
	In this proof, all of the integrations by parts are based on Lemma~\ref{21.04.23.4}.
	
	(1) Recall that $\phi$ is twice continuouly differentiable on a neighborhood of $\mathrm{supp}(u)$.
	Integrate by parts to obtain
	\begin{align}\label{220530132}
		\begin{split}
			&(1-c)\int_{\Omega}|u|^p\phi^{c-2}|\nabla \phi|^2\dd x\\
			=\,&-\int_{\Omega}|u|^p\nabla\phi\cdot\nabla (\phi^{c-1})\,\dd x\\
			=\,&p\int_{\Omega\cap\{u\neq 0\}}|u|^{p-2}u\,\phi^{c-1}(\nabla u\cdot\nabla \phi)\dd x+\int_{\Omega}|u|^p\phi^{c-1} \Delta \phi\, \dd x\\
			\leq\,& p\,\Big(\int_{\Omega\cap\{u\neq 0\}} |u|^{p-2}|\nabla u|^2\phi^c\dd x\Big)^{1/2}\Big(\int_{\Omega} |u|^p\phi^{c-2}|\nabla \phi|^2\dd x \Big)^{1/2}\,,
		\end{split}
	\end{align}
	where the last inequality follows from the H\"older inequality and that $\Delta\phi\leq 0$ on $\{u\neq 0\}$.
	Since the first term of \eqref{220530132} is finite, we obtain \eqref{230121211}.
	\vspace{1mm}
	
	Although in (2) and (3), we do not assume that $\phi$ is infinitely smooth, we can restrict our attention to this case.
	This is because  if \eqref{22062422511} and \eqref{22062422512} hold for $\phi^{(\epsilon)}$ instead of $\phi$, for all sufficiently small $\epsilon>0$,
	then \eqref{22062422511} and \eqref{22062422512} also hold for $\phi$ by Lemma~\ref{21.04.23.5} and Remark~\ref{21.11.15.2}.
	Note that if $0<\epsilon<d(\mathrm{supp}(u),\partial\Omega)$, then that $\phi^{(\epsilon)}$ is a positive superharmonic function on $\mathrm{supp}(u)$ (see Proposition~\ref{21.04.23.3} ).
	In addition, $|u|^{p-2}|\nabla u|^21_{\{u\neq 0\}}$ and $|u|^p\rho^{-2}$ are integrable (see Lemma~\ref{21.04.23.4}) and $-\Delta u\cdot u|u|^{p-2}1_{\{u\neq 0\}}$ in \eqref{22062422511} is bounded.
	Therefore, in the proof of (2) and (3), we additionally assume that $\phi$ is infinitely smooth.
	
	
	(2) \textbf{Case 1.} $c\in [0,1)$
	
	Integrate by parts to obtain
	\begin{align*}
		\int_{\Omega}-\Delta u\cdot u|u|^{p-2}\phi^c \dd x=\,& (p-1)\int_{\Omega\cap\{u\neq 0\}}|u|^{p-2}|\nabla u|^2\phi^c\dd x-\frac{1}{p}\int_{\Omega}|u|^p\Delta(\phi^c)\dd x\,.
	\end{align*}
	Since
	\begin{align}\label{22.04.12.1104}
		\Delta (\phi^c)=c\,\phi^{c-1}\Delta\phi+c(c-1)\phi^{c-2}|\nabla\phi|^2\leq 0\qquad\text{on}\quad \text{supp}(u)\,,
	\end{align}
	\eqref{22062422511} is obtained.
	
	\textbf{Case 2.} $c\in(-p+1,0)$
	
	Due to integration by parts, H\"older inequality, and \eqref{230121211}, we have
	\begin{align*}
		&\int_{\Omega}-\Delta u\cdot u|u|^{p-2}\phi^c \dd x\\
		=\,&(p-1)\int_{\Omega}|u|^{p-2}|\nabla u|^2\phi^c\dd x+c\int_{\Omega}(\nabla u)\cdot(\nabla\phi) u|u|^{p-2}\phi^{c-1}\dd x\\
		\geq\,& (p-1)\int_{\Omega}|u|^{p-2}|\nabla u|^2\phi^c\dd x\\
		&+c\left(\int_{\Omega\cap\{u\neq 0\}}|u|^{p-2}|\nabla u|^2\phi^c\dd x\cdot\int_{\Omega}|u|^p\phi^{c-2}|\nabla \phi|^2\dd x\right)^{1/2}\\
		\geq\,& \frac{p+c-1}{1-c}\int_{\Omega}|u|^{p-2}|\nabla u|^2\phi^c\dd x\,.
	\end{align*}
	
	(3)
	Recall that $\phi$ is assumed to be positive and smooth on a neighborhood of $\text{supp}(u)$.
	Due to Lemma~\ref{21.04.23.4}, $|u|^{p/2-1}u\phi^c$ belongs to $\mathring{W}^1_2(\Omega)$ and
	$$
	\nabla\big(|u|^{p/2-1}u\phi^c\big)=\frac{p}{2}|u|^{p/2-1}(\nabla u)1_{\{u\neq 0\}}\phi^{c/2}+\frac{c}{2}|u|^{p/2}\phi^{c/2-1}\nabla\phi\,.
	$$
	Therefore, due to the Hardy inequality (see Remark~\ref{220613813}) and \eqref{230121211}, we have
	\begin{align*}
		\,&\int_{\Omega}\big||u|^{p/2-1}u\phi^{c/2}\big|^2\rho^{-2}\dd x\\
		\lesssim_{p,c}\,&\mathrm{C}_0(\Omega) \int_{\Omega}\Big(|u|^{p-2}|\nabla u|^2\phi^c1_{\{u\neq 0\}}+|u|^p\phi^{c-2}|\nabla\phi|^2\Big)\dd x\\
		\lesssim_{p,c}\,& \mathrm{C}_0(\Omega) \int_{\Omega\cap\{u\neq 0\}}|u|^{p-2}|\nabla u|^2\phi^c \dd x.
	\end{align*}
\end{proof}

\begin{thm}\label{21.05.13.2}
	Let $p\in(1,\infty)$ and suppose that 
	\begin{align*}
		&\text{$\Omega$ admits the Hardy inequality \eqref{hardy}}\,;\\
		&\text{$\phi$ is a positive superharmonic function on $\Omega$, and }-p+1<c<1\,.
	\end{align*}
	If $u\in C(\Omega)$ satisfies \eqref{22.01.25.2} and $(\Delta u)1_{\{u\neq 0\}}$ is bounded, then for any $\lambda\geq 0$,
	\begin{align*}
		\int_{\Omega}|u|^{p}\phi^c\rho^{-2}\dd x\leq N\int_{\Omega}|\Delta u-\lambda u|^p\phi^c\rho^{2p-2}\dd x\,,
	\end{align*}
	where $N=N(p,c,\mathrm{C}_0(\Omega))$.
\end{thm}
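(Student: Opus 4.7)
The strategy is to chain together the three parts of Lemma~\ref{03.30}, use $\lambda\geq 0$ to convert $-\Delta u$ into $-(\Delta u-\lambda u)$, and close the loop with H\"older's inequality.

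First I would apply Lemma~\ref{03.30}(3) to obtain
\begin{align*}
\int_{\Omega}|u|^{p}\phi^{c}\rho^{-2}\dd x\,\lesssim_{p,c,\mathrm{C}_0(\Omega)}\int_{\Omega\cap\{u\neq 0\}}|u|^{p-2}|\nabla u|^{2}\phi^{c}\dd x,
\end{align*}
and then Lemma~\ref{03.30}(2) (whose hypothesis is met because $(\Delta u)1_{\{u\neq 0\}}$ is assumed bounded) to continue with
\begin{align*}
\int_{\Omega\cap\{u\neq 0\}}|u|^{p-2}|\nabla u|^{2}\phi^{c}\dd x\,\lesssim_{p,c}\int_{\Omega\cap\{u\neq 0\}}(-\Delta u)\,u|u|^{p-2}\phi^{c}\dd x.
\end{align*}

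Next I would handle $\lambda$. Writing $-\Delta u=-(\Delta u-\lambda u)-\lambda u$ and multiplying by $u|u|^{p-2}\phi^{c}\ge 0$ (on $\{u\ne 0\}$), the identity
\begin{align*}
(-\Delta u)\,u|u|^{p-2}\phi^{c}=-(\Delta u-\lambda u)\,u|u|^{p-2}\phi^{c}-\lambda |u|^{p}\phi^{c}
\end{align*}
and $\lambda\ge 0$ yield
\begin{align*}
\int_{\Omega\cap\{u\neq 0\}}(-\Delta u)\,u|u|^{p-2}\phi^{c}\dd x\,\leq\,\int_{\Omega}|\Delta u-\lambda u|\,|u|^{p-1}\phi^{c}\dd x.
\end{align*}
Splitting the integrand as $\bigl(|\Delta u-\lambda u|\,\phi^{c/p}\rho^{(2p-2)/p}\bigr)\cdot\bigl(|u|^{p-1}\phi^{c(p-1)/p}\rho^{-(2p-2)/p}\bigr)$ and applying H\"older's inequality with exponents $p$ and $p/(p-1)$ produces
\begin{align*}
\int_{\Omega}|\Delta u-\lambda u|\,|u|^{p-1}\phi^{c}\dd x\,\leq\,\Big(\int_{\Omega}|\Delta u-\lambda u|^{p}\phi^{c}\rho^{2p-2}\dd x\Big)^{\!1/p}\!\Big(\int_{\Omega}|u|^{p}\phi^{c}\rho^{-2}\dd x\Big)^{\!(p-1)/p}.
\end{align*}

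Chaining these estimates gives $A\leq N\,B^{1/p}A^{(p-1)/p}$ with $A:=\int_{\Omega}|u|^{p}\phi^{c}\rho^{-2}\dd x$ and $B:=\int_{\Omega}|\Delta u-\lambda u|^{p}\phi^{c}\rho^{2p-2}\dd x$, and the desired inequality $A\le N^{p}B$ follows once we know $A<\infty$ (so the absorption is legitimate). The finiteness of $A$ is the only delicate point: since $u$ is continuous with compact support in $\Omega$, $\rho$ is bounded below by some $\varepsilon>0$ on $\mathrm{supp}(u)$; and because $\phi$ is a positive superharmonic function and $c<1$, Remark~\ref{2303081145} ensures $\phi^{c}$ is locally integrable, so $A\leq \|u\|_{\infty}^{p}\,\varepsilon^{-2}\int_{\mathrm{supp}(u)}\phi^{c}\dd x<\infty$. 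Thus the main work is entirely done by Lemma~\ref{03.30}; I do not anticipate a serious obstacle beyond verifying this finiteness and carefully bookkeeping the weight exponents so that H\"older's inequality matches the target weights $\phi^{c}\rho^{2p-2}$ and $\phi^{c}\rho^{-2}$.
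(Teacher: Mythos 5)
Your proposal is correct and follows the same path as the paper: Lemma~\ref{03.30}(2)–(3) to bound $A:=\int|u|^p\phi^c\rho^{-2}$ by $\int(-\Delta u)\cdot u|u|^{p-2}1_{\{u\neq0\}}\phi^c$, monotonicity in $\lambda\geq 0$ to replace $-\Delta u$ by $-(\Delta u-\lambda u)$, then H\"older with the weights split as $\phi^{c/p}\rho^{(2p-2)/p}$ and $\phi^{c(p-1)/p}\rho^{-(2p-2)/p}$, closing by absorption once $A<\infty$ is known (which the paper also justifies via Remark~\ref{2303081145} and compactness of $\mathrm{supp}(u)$). One small slip: the parenthetical claim that $u|u|^{p-2}\phi^c\geq 0$ on $\{u\neq 0\}$ is false when $u<0$, but it is not actually used — the displayed identity is purely algebraic and the inequality comes only from $\lambda\int|u|^p\phi^c\geq 0$, so the argument stands.
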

\begin{proof}
	Since $\lambda\geq 0$, Lemma~\ref{03.30} implies
	\begin{align}\label{22.04.18.1}
		\begin{split}
			\int_{\Omega}|u|^{p}\phi^c\rho^{-2}\dd x&\leq N\int_{\Omega}(-\Delta u)\cdot u |u|^{p-2}1_{\{u\neq 0\}}\phi^c\dd x\\
			&\leq N\int_{\Omega}(-\Delta u+\lambda u)\cdot u |u|^{p-2}1_{\{u\neq 0\}}\phi^c\dd x\,,
		\end{split}
	\end{align}
	where $N=N(p,c,\mathrm{C}_0(\Omega))>0$.
	Since $\phi^c\rho^{-2}$ is locally integrable on $\Omega$ (see Remark~\ref{2303081145}), the first term in \eqref{22.04.18.1} is finite. 
	By the H\"older inequality, the proof is completed.
\end{proof}

%

\begin{lemma}[Existence of a weak solution]\label{21.05.25.3}
	Suppose that \eqref{hardy} holds for $\Omega$.
	Then for any $\lambda\geq 0$ and $f\in C_c^{\infty}(\Omega)$, there exists a measurable function $u:\Omega\rightarrow \bR$ satisfying the following:
	\begin{enumerate}
		\item $u\in L_{1,\mathrm{loc}}(\Omega)$.
		
		\item $\Delta u-\lambda u=f$ in the sense of distrituion on $\Omega$, \textit{i.e.}, for any $\zeta\in C_c^{\infty}(\Omega)$,
		\begin{align}\label{230328849}
			\int_{\Omega}u\big(\Delta \zeta -\lambda \zeta\big)\dd x=\int_{\Omega} f \zeta \dd x\,.
		\end{align}
		
		\item For any $p\in(1,\infty)$, $\mu\in(-1/p,1-1/p)$ and positive superharmic function $\phi$ on $\Omega$,
		\begin{align}\label{220613103}
			\begin{split}
				\int_{\Omega}|u|^p\phi^{-\mu p}\rho^{-2}\dd x\leq N\int_{\Omega}|f|^p\phi^{-\mu p}\rho^{2p-2}\dd x
			\end{split}
		\end{align}
		where $N=N(p,c,\mathrm{C}_0(\Omega))>0$.
	\end{enumerate}
\end{lemma}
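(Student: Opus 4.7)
The plan is to construct $u$ as the limit of solutions on an exhausting sequence of smooth bounded subdomains. I would pick $\{\Omega_n\}$ smooth and bounded with $\Omega_n\subset\subset\Omega_{n+1}\subset\subset\Omega$, $\bigcup_n\Omega_n=\Omega$, and $\mathrm{supp}(f)\subset\Omega_1$. By classical elliptic theory (Lax–Milgram on $H^1_0(\Omega_n)$ with $\lambda\geq 0$, followed by boundary regularity), there exists $u_n\in C^\infty(\overline{\Omega_n})\cap H^1_0(\Omega_n)$ solving $\Delta u_n-\lambda u_n=f$ in $\Omega_n$ with $u_n|_{\partial\Omega_n}=0$. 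Extending $u_n$ by zero to $\Omega$, the extension lies in $C(\Omega)\cap\mathring{W}^1_2(\Omega)$ with compact support $\overline{\Omega_n}$, which is the setting of Theorem~\ref{21.05.13.2}.

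Setting $c:=-\mu p\in(-p+1,1)$, I would verify the hypotheses of Theorem~\ref{21.05.13.2} for the extended $u_n$: the compact support is automatic, the $C^2_{\mathrm{loc}}$ regularity on $\{u_n\neq 0\}\subset\Omega_n$ holds since $u_n\in C^\infty(\Omega_n)$, the integrability $\int_{\{u_n\neq 0\}}|u_n|^{p-1}|D^2u_n|\dd x<\infty$ is clear because $u_n$ is smooth up to the compact boundary of $\Omega_n$, and $(\Delta u_n)1_{\{u_n\neq 0\}}=(f+\lambda u_n)1_{\{u_n\neq 0\}}$ is bounded. Theorem~\ref{21.05.13.2} then yields the uniform-in-$n$ estimate
$$
\int_\Omega |u_n|^p\phi^{-\mu p}\rho^{-2}\dd x \,\leq\, N\int_\Omega |f|^p\phi^{-\mu p}\rho^{2p-2}\dd x,
$$
whose right-hand side is finite because $f$ is bounded and compactly supported in $\Omega$ while $\phi^{-\mu p}$ is locally integrable by Remark~\ref{2303081145} (using $-\mu p<1$).

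To produce a limit, I would use an energy bound: testing the equation against $u_n$ and invoking the Hardy inequality gives $\|\nabla u_n\|_{L_2(\Omega)}^2+\lambda\|u_n\|_{L_2(\Omega)}^2 \leq \|\rho f\|_{L_2(\Omega)}\|u_n/\rho\|_{L_2(\Omega)}\lesssim \|\rho f\|_{L_2(\Omega)}\|\nabla u_n\|_{L_2(\Omega)}$, so $\{u_n\}$ is bounded in $\mathring{W}^1_2(\Omega)$. Extracting a subsequence (still written $u_n$), one has $u_n\rightharpoonup u$ weakly in $\mathring{W}^1_2(\Omega)$, strongly in $L_{2,\mathrm{loc}}(\Omega)$ by Rellich, and, after a further subsequence, pointwise a.e. on $\Omega$. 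For $\zeta\in C_c^\infty(\Omega)$ one has $\mathrm{supp}(\zeta)\subset\Omega_n$ for $n$ large, so the identity $\int_\Omega u_n(\Delta\zeta-\lambda\zeta)\dd x=\int_\Omega f\zeta\dd x$ passes to the limit and delivers \eqref{230328849}. Applying Fatou's lemma to the a.e. convergence transfers the uniform bound to $u$, giving \eqref{220613103}, while (1) follows from $u\in\mathring{W}^1_2(\Omega)\subset L_{1,\mathrm{loc}}(\Omega)$.

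The main subtlety is the application of Theorem~\ref{21.05.13.2} to the zero-extended $u_n$: one must notice that the hypothesis $(\Delta u_n)1_{\{u_n\neq 0\}}$ bounded refers to the classical Laplacian on the open set $\{u_n\neq 0\}\subset\Omega_n$, so the surface-measure contribution that the distributional Laplacian of the extension picks up across $\partial\Omega_n$ is harmless. Choosing to pass to the limit in $\mathring{W}^1_2(\Omega)$ rather than directly in the weighted space $L_p(\Omega,\phi^{-\mu p}\rho^{-2}\dd x)$ avoids identifying the dual weight $\phi^{\mu p/(p-1)}\rho^{2/(p-1)}$ and verifying that $C_c^\infty(\Omega)$ embeds into its dual; the desired weighted bound is recovered a posteriori via Fatou.
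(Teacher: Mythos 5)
Your proof is correct, but it follows a genuinely different limiting argument from the paper's. The paper decomposes $f=f_1-f_2$ into two nonpositive pieces (via an auxiliary $F\geq|f|$), uses the maximum principle to get a pointwise monotone sequence $0\leq v_n\leq v_{n+1}$ for each sign, and then passes to the limit using the monotone and dominated convergence theorems. Your version instead extracts a compactness limit from a uniform energy bound $\|\nabla u_n\|_{L_2}\lesssim\|\rho f\|_{L_2}$ plus Rellich on compacta, with Fatou delivering the weighted bound. Both approaches apply Theorem~\ref{21.05.13.2} to the zero-extended $u_n$ in exactly the same way, and your verification that the extension satisfies \eqref{22.01.25.2} and that the classical Laplacian on $\{u_n\neq0\}$ is what matters is exactly right. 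The trade-off is that the paper's monotone construction converges along the \emph{full} sequence and immediately identifies $u$ with the Green potential (see Remark~\ref{221022314}), which is used later; your compactness route is closer to textbook elliptic theory, does not need the sign decomposition or the maximum principle, but only produces convergence along a subsequence and no representation formula.

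One small imprecision: for unbounded $\Omega$ with $\lambda=0$, your energy bound controls $\|\nabla u_n\|_{L_2(\Omega)}$ and, via Hardy, $\|u_n/\rho\|_{L_2(\Omega)}$, but \emph{not} $\|u_n\|_{L_2(\Omega)}$, so ``bounded in $\mathring{W}^1_2(\Omega)$'' (with the usual $W^1_2$-norm) is not quite accurate. Nothing breaks, because the rest of the argument only uses local $W^1_2$-bounds (from $\sup_K\rho<\infty$), Rellich on compacta, a.e.\ convergence of a diagonal subsequence, and the distributional identity tested against compactly supported $\zeta$. If you want the global weak compactness statement, work in the Hilbert space $\mathring{W}^1_2(\Omega)$ equipped with $\|\nabla\cdot\|_{L_2}$ (equivalently $\|\nabla\cdot\|_{L_2}+\|\cdot/\rho\|_{L_2}$ by Hardy) rather than the $W^1_2$-norm. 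Similarly, $u\in L_{1,\mathrm{loc}}(\Omega)$ is most cleanly read off from $u\in L_{2,\mathrm{loc}}(\Omega)$ rather than from a global Sobolev membership.
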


\begin{proof}
	Take infinitely smooth bounded open sets $\Omega_n$, $n\in\bN$, such that
	$$
	\text{supp}(f)\subset \Omega_1\,\,,\quad \overline{\Omega_n}\subset \Omega_{n+1}\,\,,\quad \bigcup_{n}\Omega_n=\Omega
	$$
	(see, \textit{e.g.}, \cite[Proposition 8.2.1]{DD_2008}).
	For $h\in C_c^{\infty}(\Omega_1)$ and $n\in\bN$, by $R_{\lambda,n}h$ we denote the classical solution $H\in C^{\infty}(\overline{\Omega_n})$ of the equation
	$$
	\Delta H-\lambda H=h1_{\Omega_1}\quad \text{on}\,\,\Omega_n\quad;\quad H|_{\partial\Omega_n}\equiv 0\,.
	$$
	Since
	$$
	\text{$\overline{\Omega_n}$ is a compact subset of $\Omega$}\,\,,\quad R_{\lambda,n}h\in C^{\infty}(\overline{\Omega_n})\,\,,\quad  R_{\lambda,n}h|_{\partial\Omega_n}\equiv 0,
	$$
	we obtain that $\big(R_{\lambda,n}h\big)1_{\Omega_n}\in C(\Omega)$ satisfies \eqref{22.01.25.2}.
	By Theorem~\ref{21.05.13.2}, for any $p\in (1,\infty)$, $\mu\in(-1/p,1-1/p)$ and positive superharmonic fucntions $\phi$ on $\Omega$, we have
	\begin{align}\label{220610424}
		\int_{\Omega}\big|\big(R_{\lambda,n}h\big)1_{\Omega_n}\big|^{p}\phi^{-\mu p}\rho^{-2}\dd x\leq N(p,c,\mathrm{C}_0(\Omega)) \int_{\Omega}|h|^p\phi^{-\mu p}\rho^{2p-2}\dd x\,.
	\end{align}
	Note that $N$ in \eqref{220610424} is independent of $n$.
	
	Take $F\in C_c^{\infty}(\Omega_1)$ such that $F\geq |f|$, and put
	\begin{align}\label{230308251}
		f_1=\frac{f-F}{2}\quad\text{and}\quad f_2=\frac{-f-F}{2}
	\end{align}
	so that $f_1,\,f_2\leq 0$, and $f_1-f_2=f$.
	
	For $v_n:=\big(R_{\lambda,n}f_1\big)1_{\Omega_n}$, the maximum principle implies that
	$$
	0\leq v_n\leq v_{n+1}\quad\text{on}\quad \Omega\,.
	$$
	We define $v(x):=\lim_{n\rightarrow \infty}v_n(x)$.
	By applying the monotone convergence theorem to \eqref{220610424} with $(h,\phi,p,c)=(f_1,1_{\Omega},2,0)$, we obtain
	\begin{align*}
	\int_{\Omega}|v|^2\rho^{-2}\dd x\lesssim \int_{\Omega}|f_1|^2\rho^{2}\dd x\,,
	\end{align*}
	which implies that $v\in L_{1,\mathrm{loc}}(\Omega)$.
	
	We next caim that for any $\zeta\in C_c^{\infty}(\Omega)$,
	\begin{align}\label{230125936}
		\int_{\Omega}v\big(\Delta \zeta -\lambda \zeta\big)\dd x=\int_{\Omega} f_1 \zeta \dd x\,.
	\end{align}
	Fix $\zeta\in C_c^{\infty}(\Omega)$, and take $N\in\bN$ such that $\mathrm{supp}(\zeta)\subset \Omega_N$.
	It follows from the definition of $v_n=R_{\lambda,n}f_1$ that for any $n\geq N$,
	$$
	\int_{\Omega}v_n\big(\Delta \zeta-\lambda \zeta\big)\dd x=\int_{\Omega} f_1\zeta \dd x.
	$$
	Since $0\leq v_n\leq v$ and $v\in L_{1,\mathrm{loc}}(\Omega)$, the Lebesgue dominated convergence theorem yields \eqref{230125936}.
	By the same argument, 
	$$
	w:=\lim\limits_{\substack{n\rightarrow \infty}}\big(R_{\lambda,n}f_2\big)1_{\Omega_n}
	$$
	belongs to $L_{1,\mathrm{loc}}(\Omega)$, and satisfies that for any $\zeta\in C_c^{\infty}(\Omega)$,
	\begin{align*}
		\int_{\Omega}w\big(\Delta \zeta -\lambda \zeta\big)\dd x=\int_{\Omega} f_2 \zeta \dd x\,.
	\end{align*}
	
	Put
	\begin{align*}
		u=v-w=\lim_{n\rightarrow \infty}\Big[\big(R_{\lambda,n}f\big)1_{\Omega_n}\Big]
	\end{align*}
	(the limit exists almost everywhere on $\Omega$).
	Then $u\in L_{1,\mathrm{loc}}(\Omega)$, and $u$ satisfies \eqref{230328849}.
	In addition, by applying Fatou's lemma to \eqref{220610424} with $h=f$, \eqref{220613103} is obtained.
\end{proof}

\begin{remark}\label{221022314}
	We discuss Lemma~\ref{21.05.25.3} and the Green functions for the Poisson equation.
	It follows from \cite[Theorem 4.1.2, Theorem 5.3.8]{AG} and \cite[Theorem 2]{AA} that if $\Omega$ admits the Hardy inequality, $\Omega$ also admits the Green function $G_{\Omega}:\Omega\times\Omega\rightarrow [0,\infty]$ for the equation 
	$$
	-\Delta u=f\,\,\,\text{on}\,\,\,\Omega\quad ;\quad u|_{\partial\Omega}=0\,
	$$
	(the definition of $G_{\Omega}$ can be found in \cite[Detinition 4.1.3]{AG}).
	For $\{\Omega_n\}_{n\in\bN}$ in the proof of Lemma~\ref{21.05.25.3}, $G_{\Omega_n}$ increases and converges to $G_{\Omega}$ on $\Omega\times\Omega$ (see \textit{e.g.} \cite[Theorem 4.1.10]{AG}).
	Since $f_1$ in \eqref{230308251} belongs to $C_c^{\infty}(\Omega_n)$ and $\Omega_n$ is a infinitely smooth domain, we have
	$$
	R_{0,n}f_1(x)=-\int_{\Omega_n}G_{\Omega_n}(x,y)f_1(y)\dd y\,.
	$$
	The monotone convergence theorem implies that
	$$
	v(x)=\lim_{n\rightarrow \infty}\big(R_{0,n}f_1(x)\big)1_{\Omega_n}(x)= -\int_{\Omega}G_{\Omega}(x,y)f_1(y)\dd y\,.
	$$
	By the same argument for $w$, we conclude that the function $u=v-w$ in Lemma~\ref{21.05.25.3} is representated by
	$$
	u(x)=-\int_{\Omega}G_{\Omega}(x,y)f(y)\dd y\,.
	$$
\end{remark}
\vspace{1mm}

We end this subsection providing the proof of Lemma~\ref{21.04.23.4}.

\begin{proof}[Proof of Lemma~\ref{21.04.23.4}]
	This proof is a variant of \cite[Lemma 2.17]{Krylov1999-1}.
	Take nonnegative functions $g_n\in C(\bR)$ such that
	\begin{align*}
		\begin{gathered}
			\text{$g_n=0$ on a neighborhood of $0$ for each $n\in\bN$, and}\\
			\text{$ g_n(s) \nearrow |s|^{p/2-1}1_{s\neq 0}$ for all $s\in\bR$.}
		\end{gathered}
	\end{align*}
	Recall the assumption \eqref{22.01.25.2}, and denote $A=\sup|u|$.
	Since $0\leq g_n(s)\leq |s|^{p/2-1}$, the Lebesgue dominated convergence theorem implies that
	\begin{alignat*}{2}
		F_n(t)\,&:=\,\,\int^t_0g_n(s)\dd s\,&&\rightarrow \,\frac{2}{p}\,|t|^{p/2-1}t\,,\\
		G_n(t)\,&:=\int_0^t\big(g_n(s)\big)^2\dd s\,&&\rightarrow  \,\frac{1}{p-1}|t|^{p-2}t
	\end{alignat*}
	uniformly for $t\in[-A,A]$.
	Furthermore, there absolute values increase as $n\rightarrow \infty$.
	Since $F_n(u)$ and $G_n(u)$ vanish on a neighborhood of $\{u=0\}$, these functions are supported on a compact subset of $\{u\neq 0\}$, and continuously differentiable with
	$$
	D_i\big(F_n(u)\big)=g_n(u)D_iu\,1_{\{u\neq 0\}}\quad\text{and}\quad D_i\big(G_n(u)\big)=\big(g_n(u)\big)^2D_iu\,1_{\{u\neq 0\}}\,.
	$$
	
	(1) Integrate by parts to obtain
	\begin{align*}
		\int_{\bR^d}|g_n(u)\nabla u \,1_{\{u\neq 0\}}|^2\dd x\,&=-\int_{\bR^d}G_n(u)\Delta u \,1_{\{u\neq 0\}}\dd x\\
		&\leq \frac{1}{p-1}\int_{\{u\neq 0\}}|u|^{p-1}|\Delta u|\dd x\,.
	\end{align*}
	Apply the monotone convergence theorem to obtain that 
	\begin{align}\label{230308116}
		|u|^{p/2-1}|\nabla u|\in L_2(\bR^d)\,.
	\end{align}
	We denote $v=\frac{2}{p}|u|^{p/2-1}u$.
	For any $\zeta\in C_c^{\infty}(\bR^d)$, we have
	\begin{align*}
		-\int_{\bR^d} v\cdot D_i\zeta \dd x&=-\lim_{n\rightarrow\infty}\int_{\bR^d} F_n(u)\cdot D_i\zeta\dd x\\
		&=\lim_{n\rightarrow\infty}\int_{\{u\neq 0\}} g_n(u)D_iu\cdot \zeta\dd x=\int_{\{u\neq 0\}} |u|^{p/2-1}D_iu\cdot \zeta \dd x\,.
	\end{align*}
	Here, the first and the last equalities follow from the Lebesgue dominated convergence theorem, because 
	$|F_n(u)|\leq |v|$ and $|g_n(u)|\leq |u|^{p/2-1}$ (recall \eqref{230308116}).
	Therefore $v\in W_2^1(\bR^d)$ and $D_i v=|u|^{p/2-1}D_i u\,1_{\{u\neq 0\}}$.
	
	(2) It follows from (1) of this lemma that $|u|^p\in W_1^1(\bR^d)$ with $D_i\big(|u|^p\big)=p|u|^{p-2}u D_iu 1_{u\neq 0}$.
	For any $\zeta\in C_c^{\infty}$, we have
	\begin{align*}
		&\,\frac{1}{p-1}\int_{\{u\neq 0\}}|u|^{p-2}uD_iu \cdot D_j\zeta \dd x\\
		=&\,\lim_{n\rightarrow \infty}\int_{\bR^d} G_n(u)D_iu \cdot D_j\zeta \dd x\\
		=&\,-\lim_{n\rightarrow \infty}\int_{\bR^d} \Big(|g_n(u)|^2 D_iu D_ju+G_n(u)D_{ij}u\Big)\zeta \dd x\\
		=&\,-\int_{\{u\neq 0\}} \Big(|u|^{p-2}D_i u D_ju+\frac{1}{p-1}|u|^{p-2}u D_{ij}u1_{\{u\neq 0\}}\Big)\zeta \dd x\,.
	\end{align*}
	Here, the first and last inequalities follow from the Lebesgue dominated convergence theorem, because $|G_n(u)|\leq \frac{1}{p-1}|u|^{p-1}$ and $|g_n(u)|\leq |u|^{p/2-1}$ (recall \eqref{230308116}). 
	Therefore $|u|^{p-2}uD_iu\in W_1^1(\bR^d)$ and 
	$$
	D_j\big(|u|^{p-2}uD_iu\big)=|u|^{p-2}D_i u D_ju+\frac{1}{p-1}|u|^{p-2}u D_{ij}u1_{\{u\neq 0\}}\,.
	$$
\end{proof}

\vspace{2mm}

\mysection{Weighted Sobolev spaces and solvability of the Poisson equation}\label{0040}
In this section, we focus on the Poisson equation
$$
\Delta u-\lambda u=f\quad(\lambda\geq 0)
$$
in an open set $\Omega\subset \bR^d$ admitting the Hardy inequality.
We use the weighted Sobolev $\Psi H_{p,\theta}^{\gamma}(\Omega)$ introduced in Definition~\ref{220610533}, for the classes of the solution $u$ and the force term $f$.
It is worth noting that the zero Dirichlet condition ($u|_{\partial\Omega}=0$) is implicitly considered in these Sobolev spaces, as $C_c^{\infty}(\Omega)$ is dense in $\Psi H_{p,\theta}^{\gamma}(\Omega)$ (see Lemma~\ref{21.09.29.4}).

We recall the organization of this section.
In Subsection~\ref{0041}, we present the notions of \textit{Harnack function} and \textit{regular Harnack function}.
Subsection~\ref{0042} introduces the weighted Sobolev spaces $\Psi H_{p,\theta}^{\gamma}(\Omega)$, which is a combination of regular Harnack functions $\Psi$ and the spaces $H_{p,\theta}^{\gamma}(\Omega)$; the spaces $H_{p,\theta}^{\gamma}(\Omega)$ was first introduced by Krylov (for $\Omega=\bR_+^d$, \cite{Krylov1999-1}) and Lototsky (for general $\Omega$, \cite{Lo1}). 
In Subsection~\ref{0043}, we prove the main theorem of this section (Theorem~\ref{21.09.29.1}), through Section~\ref{0030} and the localization argument used in \cite{Krylov1999-1}. 
The concept of regular Harnack functions helps us state the main theorem in a unified manner to obtain useful applications provided in Subsections~\ref{app.} and \ref{app2.}.

\vspace{2mm}

\subsection{Harnack function and regular Harnack function}\label{0041}

\begin{defn}\label{21.10.14.1}\,
	
	\begin{enumerate}
		\item We call a measurable function $\psi:\Omega\rightarrow \bR_+$ a \textit{Harnack function}, if there exists a constant $C=:\mathrm{C}_1(\psi)>0$ such that
		\begin{align*}
			\underset{B(x,\rho(x)/2)}{\mathrm{ess\,sup}}\,\psi\leq C\underset{B(x,\rho(x)/2)}{\mathrm{ess\,inf}}\,\psi\quad\text{for all}\,\,x\in\Omega\,.
		\end{align*}
		
		\item We call a function $\Psi\in C^{\infty}(\Omega)$ a \textit{regular Harnack function}, if $\Psi>0$ and there exists a seqeunce of constants $\{C^{(k)}\}_{k\in\bN}=:\mathrm{C}_2(\Psi)$ such that for every $k\in\bN$,
		\begin{align*}
			|D^k\Psi|\leq C^{(k)}\,\rho^{-k}\Psi\quad\text{on}\quad\Omega\,.
		\end{align*}
		
		\item Let $\psi$ be a measurable function and $\Psi$ be a regular Harnack function on $\Omega$. We say that $\Psi$ is a \textit{regularization} of $\psi$, if there exists a constant $C=:\mathrm{C}_3(\psi,\Psi)>0$ such that
		$$
		C^{-1}\Psi\leq\psi\leq C\,\Psi\quad\text{almost everywhere on}\,\,\Omega. 
		$$
	\end{enumerate}
\end{defn}
\vspace{1mm}

A relation between the notions of Harnack functions and regular Harnack functions is provided in Lemma~\ref{21.05.27.3}.

\begin{example}\label{21.05.18.2}\,
	
	\begin{enumerate}
		\item For any $E\subset \Omega^c$, the function $x\mapsto d(x,E)$ is a Harnack function on $\Omega$.
		Additionally, $\mathrm{C}_1\big(d(\,\cdot\,,E)\big)$ can be chosen as $3$.
		
		\item Let $\Psi\in C^{\infty}(\Omega)$ satisfy
		$$
		\Psi>0\quad\text{and}\quad \Delta\Psi=-\widetilde{\Lambda}\Psi
		$$ 
		for some constant $\widetilde{\Lambda}\geq 0$.
		We claim that $\Psi$ is a regular Harnack function on $\Omega$, and $\mathrm{C}_2(\Psi)$ can be chosen to depend only on $d$.
		To observe this, for a fixed $x_0\in\Omega$, put
		$$
		u(t,x):=e^{-\widetilde{\Lambda} \rho(x_0)^2 t}\Psi\big(x_0+\rho(x_0)x\big)
		$$
		so that $u_t=\Delta u$ on $\bR\times B_1(0)$.
		The interior estimates (see, \textit{e.g.}, \cite[Theorem 2.3.9]{Krylov2008}) and the parabolic Harnack inequality imply that for any $k\in\bR$,
		$$
		\rho(x_0)^k|D^k\Psi(x_0)|=|D^k_x u(0,0)|\lesssim_{k,d}\|u\|_{L_2((-1/4,0]\times B_{1/2}(0))}\lesssim_d u(1,0)\leq \Psi(x_0)\,.
		$$
		
		\item The multivariate Fa\'a di Bruno's formula (see, \textit{e.g.}, \cite[Theorem 2.1]{FDB}) implies the following:
		\begin{itemize}
		\item[] Let $U\subset \bR^d$ and  $V\subset\bR$ be open sets and $f:U\rightarrow V$ and $l:V\rightarrow \bR$ be smooth functions. For any multi-index $\alpha$,
		\begin{align*}
			\big|D^{\alpha}(l\circ f)\big|\leq N(d,\alpha)\sum_{k=1}^{|\alpha|}\Big(\big|\big(D^kl\big)\circ f\big|\sum_{\substack{\beta_1+\ldots+\beta_k=\alpha\\|\beta_i|\geq 1}}\,\prod_{i=1}^k|D^{\beta_k}f|\Big)\,.
		\end{align*}
	\end{itemize}
		This inequality implies that for any regular Harnack function $\Psi$ on $\Omega$, and $\sigma\in\bR$, $\Psi^\sigma$ is also a regular Harnack function on $\Omega$, and $\mathrm{C}_2(\Psi^\sigma)$ can be chosen to depend only on $d,\, \sigma,\,\mathrm{C}_2(\Psi)$.
		
		\item If $\Psi$ and $\Phi$ are regularizations of $\psi$ and $\phi$, respectively, then $\Psi\Phi$, $\Psi+\Phi$, and $\frac{\Phi\Psi}{\Phi+\Psi}$ are regularizations of $\psi\phi$, $\max (\psi,\phi)$, and $\min (\psi,\phi)$, respectively.
	\end{enumerate}
\end{example}

\begin{lemma}\label{21.11.16.1}
	A measurable function $\psi:\Omega\rightarrow \bR_+$ is a Harnack function if and only if there exists $r\in(0,1)$ and $N_{r}>0$ such that
	\begin{align*}
		\underset{B(x,r\rho(x))}{\mathrm{ess\,sup}}\,\psi\leq N_{r}\underset{B(x,r\rho(x))}{\mathrm{ess\,inf}}\,\psi\quad\text{for all}\,\,x\in\Omega.
	\end{align*}
	In this case, $\mathrm{C}_1(\psi)$ and $N_r$ depend only on each other and $r$.
\end{lemma}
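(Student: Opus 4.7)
The plan is to treat the two implications separately; essentially all of the content lies in the converse. The forward direction is immediate: if $\psi$ is a Harnack function, then taking $r=1/2$ and $N_{1/2}:=\mathrm{C}_1(\psi)$ gives the stated inequality on $B(x,r\rho(x))=B(x,\rho(x)/2)$ verbatim.

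For the converse, assume $\mathrm{ess\,sup}_{B(x,r\rho(x))}\,\psi\leq N_r\,\mathrm{ess\,inf}_{B(x,r\rho(x))}\,\psi$ for all $x\in\Omega$, and split on the size of $r$. When $r\geq 1/2$, the inclusion $B(x,\rho(x)/2)\subset B(x,r\rho(x))$ combined with the monotonicity of $\mathrm{ess\,sup}$ (increasing in the underlying set) and $\mathrm{ess\,inf}$ (decreasing in the underlying set) yields the Harnack inequality immediately, with $\mathrm{C}_1(\psi)$ equal to $N_r$. This case requires no further work.

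The substantive case is $r<1/2$, where the target ball $B(x,\rho(x)/2)$ is larger than the hypothesis balls and a covering-plus-chaining argument is needed. The crucial observation is that for every $y\in B(x,\rho(x)/2)$ the triangle inequality gives $\rho(y)\geq \rho(x)/2$, so every hypothesis ball $B(y,r\rho(y))$ has radius at least $r\rho(x)/2$. I would then cover $\overline{B(x,\rho(x)/2)}$ by a finite family of such balls $\tilde B_i:=B(y_i,r\rho(y_i))$ with centers $y_i\in B(x,\rho(x)/2)$, whose number $K$ depends only on $d$ and $r$, and whose enumeration can be arranged — using connectedness of the target ball — so that consecutive $\tilde B_i,\tilde B_{i+1}$ overlap in a set of positive Lebesgue measure.

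The hypothesis is then chained through the cover. Setting $\Phi_i:=\mathrm{ess\,sup}_{\tilde B_i}\,\psi$, the hypothesis gives $\Phi_i\leq N_r\,\mathrm{ess\,inf}_{\tilde B_i}\,\psi$, so a.e.\ point in the positive-measure overlap $\tilde B_i\cap\tilde B_{i+1}$ takes a $\psi$-value at least $\Phi_i/N_r$ (from the first ball) and at most $\Phi_{i+1}$ (from the second), forcing $\Phi_i\leq N_r\Phi_{i+1}$. Iterating along the chain gives $\max_i\Phi_i\leq N_r^{K-1}\min_i\Phi_i$, and combining with $\mathrm{ess\,sup}_{B(x,\rho(x)/2)}\,\psi\leq \max_i\Phi_i$ and $\mathrm{ess\,inf}_{B(x,\rho(x)/2)}\,\psi\geq N_r^{-1}\min_i\Phi_i$ produces the Harnack inequality with a constant of the form $N_r^K$ depending only on $d$, $r$, and $N_r$. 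The main point requiring care is the cover-and-chain construction: one must ensure $K$ is bounded independently of $x\in\Omega$ and that consecutive balls in the enumeration truly overlap in positive measure, a soft topological fact but one that deserves explicit bookkeeping.
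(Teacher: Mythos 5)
Your proposal is correct in substance and takes a variant of the same overall strategy as the paper, but the chaining is realized differently. The paper makes the chain explicit: for a fixed $x_0$ and each $y\in\overline{B}(x_0,r\rho(x_0))$, it introduces intermediate points $x_k=(1-\tfrac{k}{M})x_0+\tfrac{k}{M}y$ for $k=1,\dots,M$ (with $M$ the smallest integer $\geq \frac{r}{(1-r)r_0}$), checks $\rho(x_k)\geq(1-r)\rho(x_0)$ and $|x_{k-1}-x_k|\leq r_0\rho(x_k)$ so that consecutive hypothesis balls overlap, and then passes to a finite cover of $B(x_0,r\rho(x_0))$ by the balls $B(y,r_0\rho(y))$ at the end. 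This yields a constant of the form $\widetilde{N}^{2M+1}$ and has the advantage of treating arbitrary pairs $(r_0,r)$ simultaneously, which the paper exploits. Your argument instead covers the whole target ball by $K$ hypothesis balls (with $K$ bounded by a covering-number estimate in terms of $d$ and $r$ only, using $\rho(y)\geq\rho(x)/2$ for $y\in B(x,\rho(x)/2)$) and chains $\Phi_i$ through positive-measure overlaps. Both approaches are valid; yours is shorter but trades the explicit constant for one coming from a packing bound.

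One small inaccuracy in your write-up: you say the cover can be \emph{enumerated} so that consecutive $\tilde B_i,\tilde B_{i+1}$ overlap in positive measure. That would require a Hamiltonian path in the intersection graph of the cover, which a finite cover of a connected set need not admit. What is true, and what you actually use, is the weaker fact that the intersection graph is \emph{connected} (if it were disconnected, the corresponding unions would disconnect $B(x,\rho(x)/2)$), so any two balls in the cover are joined by a path of length at most $K-1$; chaining along such a path gives $\Phi_i\leq N_r^{K-1}\Phi_j$ for all $i,j$, which is exactly what your argument needs. With that reading, the bound $\mathrm{ess\,sup}\leq N_r^{K}\,\mathrm{ess\,inf}$ on $B(x,\rho(x)/2)$ follows as you describe, and the dependence of the final constant on $d$, $r$, and $N_r$ only is as claimed.
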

\begin{proof}
	We only need to show that for fixed constants $r_0,\,r\in(0,1)$ and $\widetilde{N}\geq 1$,
	\begin{align}\label{220530319}
		\begin{split}
			\text{if}\quad \underset{B(x,r_0\rho(x))}{\mathrm{ess\,sup}}\,\psi\leq\,& \widetilde{N}\underset{B(x,r_0\rho(x))}{\mathrm{ess\,inf}}\,\psi\quad \forall\,\,x\in\Omega,\\
			\text{then}\quad \underset{B(x,r\rho(x))}{\mathrm{ess\,sup}}\,\psi\leq\,& \widetilde{N}^{2M+1}\underset{B(x,r\rho(x))}{\mathrm{ess\,inf}}\,\psi\quad \forall\,\,x\in\Omega\,,
		\end{split}
	\end{align}
	where $M$ is the smallest integer such that $M\geq \frac{r}{(1-r)r_0}$.
	
	If $r\leq r_0$, then there is nothing to prove.
	Consider the case $r>r_0$.
	For $x\in\Omega$ we denote $B(x)=B\big(x,r_0\rho(x)\big)$.
	For fixed $x_0\in\Omega$ and $y\in \overline{B}\big(x_0,r\rho(x_0)\big)$, put $x_{k}=(1-\frac{k}{M})x_0+\frac{k}{M}y$, $k=1,\,\ldots,\,M$.
	Since
	$$
	\rho(x_k)\geq \rho(x_0)-|x_0-x_k|\geq (1-r)\rho(x_0),
	$$
	we obtain that
	\begin{align*}
		|x_{k-1}-x_k|=\frac{|x_0-y|}{M}\leq (1-r)r_0\rho(x_0)\leq r_0\rho(x_k)\,.
	\end{align*}
	Therefore $x_{k-1}\in B(x_k)$, which implies $B(x_{k-1})\cap B(x_k)\neq \emptyset$, and hence
	\begin{align}\label{22.03.02.4}
		\underset{B(x_{k})}{\mathrm{ess\,sup}}\,\psi\leq \widetilde{N}\,\underset{B(x_{k})}{\mathrm{ess\,inf}}\,\psi\leq \widetilde{N}\underset{B(x_{k-1})\cap B(x_k)}{\mathrm{ess\,inf}}\psi\leq \widetilde{N}\,\underset{B(x_{k-1})}{\mathrm{ess\,sup}}\,\psi\,.
	\end{align}
	By applying \eqref{22.03.02.4} for $k=1,\,\ldots,\,M$, we have
	\begin{align}\label{2301261013}
		\underset{B(y)}{\mathrm{ess\,sup}}\,\psi\leq \widetilde{N}^{M}\underset{B(x_0)}{\mathrm{ess\,sup}}\,\psi\,.
	\end{align}
	Since $B(x_0,r\rho(x_0))$ is contained in a finite union of elements in 
	$$
	\big\{B(y)\,:\,y\in \overline{B}(x_0,r\rho(x_0))\big\}\,,
	$$
	\eqref{2301261013} implies 
	\begin{align}\label{22.03.02.6}
		\underset{B(x_0,r\rho(x_0))}{\mathrm{ess\,sup}}\,\psi\leq \widetilde{N}^{M}\underset{B(x_0,r_0\rho(x_0))}{\mathrm{ess\,sup}}\,\psi\,.
	\end{align}
	By the same argument, we obtain that
	\begin{align}\label{22.03.02.7}
		\underset{B(x_0,r_0\rho(x_0))}{\mathrm{ess\,inf}}\,\psi\leq \widetilde{N}^{M}\underset{B(x_0,r\rho(x_0))}{\mathrm{ess\,inf}}\,\psi\,.
	\end{align}
	By combining \eqref{22.03.02.6}, \eqref{22.03.02.7}, and the assumption in \eqref{220530319}, the proof is completed.
\end{proof}

\begin{remark}\label{22.02.17.5}
	Let $\psi$ be a Harnack function on $\Omega$. Since $\psi\in L_{1,\mathrm{loc}}(\Omega)$, almost every point in $\Omega$ is a Lebesgue point of $\psi$.
	If $x\in\Omega$ is a Lebesgue point of $\psi$, then for any $r\in(0,1)$,
	$$
	\underset{B(x,r\rho(x))}{\mathrm{ess\,inf}}\,\psi\leq \psi(x)\leq \underset{B(x,r\rho(x))}{\mathrm{ess\,sup}}\,\psi\,.
	$$
	By Lemma~\ref{21.11.16.1}, we obtain that for almost every $x\in\Omega$ and for any $r\in(0,1)$, there exists $N_r>0$ depending only on $\mathrm{C}_1(\psi)$ and $r$ such that
	$$
	N_r^{-1}\underset{B(x,r\rho(x))}{\mathrm{ess\,sup}}\,\psi\leq \psi(x)\leq N_r\underset{B(x,r\rho(x))}{\mathrm{ess\,inf}}\,\psi\,.
	$$
\end{remark}

\begin{lemma}\label{21.05.27.3}
	\,\,
	
	\begin{enumerate}
		\item If $\psi$ is a Harnack function, then there exists a regularization of $\psi$.
		For this regularization of $\psi$, denoted by $\widetilde{\psi}$, $\mathrm{C}_2(\widetilde{\psi})$ and $\mathrm{C}_3(\psi,\widetilde{\psi})$ can be chosen to depend only on $d$ and $\mathrm{C}_1(\psi)$.
		
		\item If $\Psi$ is a regular Harnack function, then it is also a Harnack function and $\mathrm{C}_1(\Psi)$ can be chosen to depend only on $d$ and $\mathrm{C}_2(\Psi)$.
	\end{enumerate}
\end{lemma}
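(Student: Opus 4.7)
The plan is to treat the two parts separately: part (2) follows from direct integration along line segments, and part (1) requires constructing $\widetilde{\psi}$ via a Whitney-type smoothing of $\psi$.

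For part (2), fix $x_0\in\Omega$ and any $y\in B(x_0,\rho(x_0)/2)$. Every point $x_t := (1-t)x_0+ty$, $t\in[0,1]$, lies in $\Omega$ with $\rho(x_t)\geq\rho(x_0)/2$, so the hypothesis $|\nabla\Psi|\leq C^{(1)}\rho^{-1}\Psi$ along the segment gives $|\nabla\log\Psi(x_t)|\leq 2C^{(1)}/\rho(x_0)$. Integrating along the segment,
\[
|\log\Psi(y)-\log\Psi(x_0)|\leq 2C^{(1)}\,\frac{|y-x_0|}{\rho(x_0)}\leq C^{(1)},
\]
so the essential supremum and infimum of $\Psi$ on $B(x_0,\rho(x_0)/2)$ differ by a factor at most $e^{2C^{(1)}}$, which depends only on $\mathrm{C}_2(\Psi)$.

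For part (1), I would build $\widetilde{\psi}$ from a Whitney decomposition. Choose a standard Whitney cube decomposition $\{Q_j\}$ of $\Omega$ and a smooth partition of unity $\{\phi_j\}$ subordinate to slightly enlarged cubes $Q_j^*$, as in Stein's \emph{Singular Integrals}, Chapter VI; these satisfy $\mathrm{diam}(Q_j)\simeq\rho$ on $Q_j^*$, the derivative bound $|D^\alpha\phi_j|\leq N(d,\alpha)\rho^{-|\alpha|}$, and a uniform overlap estimate ($\leq N(d)$ cubes $Q_j^*$ meet at any given point). Setting $a_j := |Q_j|^{-1}\int_{Q_j}\psi\,\mathrm{d}x$, I define
\[
\widetilde{\psi}(x):=\sum_j a_j\,\phi_j(x),
\]
which is automatically smooth. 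The comparison $\widetilde{\psi}\simeq\psi$ a.e.\ follows from the Harnack property: whenever $\phi_j(x)\neq 0$ we have $Q_j\subset B(x,c\rho(x))$ for some universal $c<1$, so Lemma~\ref{21.11.16.1} and Remark~\ref{22.02.17.5} yield $a_j\simeq\psi(x)$ with constants depending only on $d$ and $\mathrm{C}_1(\psi)$. Since $\sum_j\phi_j\equiv 1$, this gives $\mathrm{C}_3(\psi,\widetilde{\psi})$ with the stated dependence.

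For the derivative estimate I would use the standard cancellation trick: $\sum_j\phi_j\equiv 1$ on $\Omega$ forces $\sum_j D^\alpha\phi_j\equiv 0$ for $|\alpha|\geq 1$, so picking any $i$ with $\phi_i(x)\neq 0$,
\[
D^\alpha\widetilde{\psi}(x)=\sum_j(a_j-a_i)\,D^\alpha\phi_j(x).
\]
Only $N(d)$ terms are nonzero; each satisfies $|a_j-a_i|\lesssim a_i\simeq\widetilde{\psi}(x)$ by Harnack and $|D^\alpha\phi_j(x)|\lesssim\rho(x)^{-|\alpha|}$, yielding $|D^\alpha\widetilde{\psi}|\leq C^{(|\alpha|)}\rho^{-|\alpha|}\widetilde{\psi}$ with $C^{(k)}$ depending only on $d$ and $\mathrm{C}_1(\psi)$. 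The only mildly delicate step is verifying that the Harnack control chains cleanly across neighboring Whitney cubes, but this is precisely the content of Lemma~\ref{21.11.16.1}, which upgrades the ratio on balls of radius $\rho/2$ to balls of any fixed fraction of $\rho$ at the cost of a constant depending only on $\mathrm{C}_1(\psi)$.
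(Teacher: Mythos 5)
Part (2): your argument is essentially the same as the paper's. The paper integrates $|\nabla\Psi|$ along the segment and closes with Gr\"onwall followed by Lemma~\ref{21.11.16.1}; you integrate $\nabla\log\Psi$ directly, which is marginally cleaner since it gives the two-sided bound at once. Both are correct and yield a constant depending only on $d$ and $\mathrm{C}_2(\Psi)$.

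Part (1): your construction is correct but takes a genuinely different route. The paper works with dyadic distance shells $U_{i,k}=\{2^{k-i}<\rho<2^{k+i}\}$, restricts $\psi$ to $U_{2,k}$, mollifies at scale $2^{k-4}$ to get $\Psi_k$, and sets $\widetilde\psi=\sum_k\Psi_k$; the partition-of-unity role is played implicitly by the fact that the functions $\big(\psi 1_{U_{2,k}}\big)*\zeta_k$ collectively reproduce $\psi$ up to Harnack constants. You instead run a Whitney decomposition $\{Q_j\}$ with subordinate bumps $\phi_j$, take the cube averages $a_j$, and set $\widetilde\psi=\sum_j a_j\phi_j$; the derivative bound then comes from the standard cancellation $\sum_j D^\alpha\phi_j\equiv 0$ rather than from differentiating a mollification. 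Both are sound, both give constants depending only on $d$ and $\mathrm{C}_1(\psi)$, and the ingredients invoked (Lemma~\ref{21.11.16.1}, Remark~\ref{22.02.17.5}) are exactly what's needed. The paper's shell-and-mollify construction is slightly more self-contained (no Whitney machinery invoked); the Whitney version is arguably shorter if the decomposition is taken as given.

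One point you gloss over a bit: the claim that $\phi_j(x)\neq 0$ forces $Q_j\subset B(x,c\rho(x))$ with a \emph{universal} $c<1$ is not automatic for a generic Whitney construction -- with the usual Stein normalization ($\mathrm{diam}(Q_j)\le d(Q_j,\Omega^c)\le 4\,\mathrm{diam}(Q_j)$, $9/8$-dilates) the resulting ratio comes out around $6/5>1$, so $B(x,c\rho(x))$ need not be contained in $\Omega$. The fix is elementary -- either re-parametrize the Whitney cubes so the dilation constant is small enough, or chain through a bounded (depending only on $d$) number of balls of radius $\rho/2$ -- but as written, a direct appeal to Remark~\ref{22.02.17.5} with $r=c$ doesn't apply. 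You flag this as the "only mildly delicate step" and point at Lemma~\ref{21.11.16.1}, which is the right instinct, but note that Lemma~\ref{21.11.16.1} as stated only covers $r\in(0,1)$; what actually handles the overshoot is the chaining in its proof, not the statement itself.
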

This lemma implies that a measurable function is a Harnack function if and only if it has a regularization.
\begin{proof}[Proof of Lemma~\ref{21.05.27.3}]
	\,\,
	
	(1) Let $\psi$ be a Harnack function on $\Omega$.
	Take $\zeta\in C_c^{\infty}(\bR^d)$ such that
	$$
	\zeta\geq 0\,\,,\quad \text{supp}(\zeta)\subset B_1\,\,,\quad \int\zeta dx=1\,.
	$$
	For $i=1,\,2,\,3$ and $k\in\bZ$, put
	\begin{align*}
		U_{i,k}=\{x\in\Omega\,:\,2^{k-i}<\rho(x)<2^{k+i}\}\quad\text{and}\quad \zeta_k(x)=\frac{1}{2^{(k-4)d}}\zeta\Big(\frac{x}{2^{k-4}}\Big)\,.
	\end{align*}
	Note that for each $i$,
	\begin{align}\label{220604957}
		\text{$\big\{U_{i,k}\big\}_{k\in\bZ}$ is a locally finte cover of $\Omega$, and}\,\,\,\,\sum_{k\in\bZ}1_{U_{i,k}}\leq 2i\,.
	\end{align}
	For each $k\in\bZ$, put
	$$
	\Psi_k(x)=\big(\psi 1_{U_{2,k}}\big)\ast \zeta_k(x):=\int_{B(x,2^{k-4})}\big(\psi 1_{U_{2,k}}\big)(y)\zeta_k(x-y)\dd y\,,
	$$
	so that $\Psi_k\in C^{\infty}(\Omega)$.
	Since
	$$
	x\in U_{1,k}\quad \Longrightarrow \quad B(x,2^{k-4})\subset B(x,\rho(x)/8)\subset U_{2,k}\,,
	$$
	we have
	\begin{align}\label{22.02.17.3}
		\Big(\underset{B(x,\rho(x)/8)}{\mathrm{ess\,inf}}\psi\Big)1_{U_{1,k}}(x)\leq \Psi_k(x)\,.
	\end{align}
	Since
	\begin{align}\label{2303141439}
		\begin{split}
			&x\in U_{3,k}\quad\Longrightarrow \quad B(x,2^{k-4})\subset B(x,\rho(x)/2)\,;\\
			&x\notin U_{3,k}\quad\Longrightarrow \quad B(x,2^{k-4})\cap U_{2,k}=\emptyset\,,
		\end{split}
	\end{align}
	we have
	\begin{align}\label{22.02.17.4}
		\Psi_k(x)\leq \Big(\underset{B(x,\rho(x)/2)}{\mathrm{ess\,sup}}\psi\Big)1_{U_{3,k}}(x)\,.
	\end{align}
	By \eqref{22.02.17.3}, \eqref{22.02.17.4}, and Remark~\ref{22.02.17.5}, we obtain that
	\begin{align}\label{2206292521}
		N^{-1}\psi(x)1_{U_{1,k}}(x)\leq \Psi_k(x)\leq N\psi(x)1_{U_{3,k}}(x)
	\end{align}
	for almost every $x\in\Omega$, where $N=N(\mathrm{C}_1(\psi))$.
	Moreover,
	\begin{align}\label{2206292522}
		\begin{split}
			|D^{\alpha}\Psi_k(x)|\,&\leq \|D^{\alpha}\zeta_k\|_{\infty}\int_{B(x,2^{k-4})}\psi 1_{U_{2,k}}\dd y\\
			&\leq 2^{-|\alpha|k}\big(\underset{B(x,\rho(x)/2)}{\mathrm{ess\,sup}}\psi\big)1_{U_{3,k}}(x)\\
			&\leq N \rho(x)^{-|\alpha|}\psi(x)1_{U_{3,k}}(x)
		\end{split}
	\end{align}
	for almost every $x\in\Omega$, where $N=N(d,\alpha,\mathrm{C}_1(\psi))$ (see \eqref{2303141439} and  Remark~\ref{22.02.17.5}).
	Due to \eqref{220604957}, \eqref{2206292521}, and \eqref{2206292522}, $\Psi:=\sum_{k\in\bZ}\Psi_k$ belongs to $C^{\infty}(\Omega)$ and 
	\begin{align}\label{230328229}
		\Psi\simeq_{\mathrm{C}_1(\psi)}\psi\quad,\quad |D^\alpha\Psi|\leq\sum_{k\in\bZ}|D^{\alpha}\Psi_k|\lesssim_N\rho^{-|\alpha|}\psi
	\end{align}
	for almost every $x\in\Omega$, where $N=N(d,\alpha,\mathrm{C}_1(\psi))$.
	By \eqref{230328229}, the proof is completed.
	
	(2) Let $x,y\in\Omega$ satisfy $|x-y|<\rho(x)/2$.
	For $r\in[0,1]$, put $x_r=(1-r)x+ry$, so that 
	$$
	x_r\in B\big(x,\rho(x)/2\big)\quad\text{and}\quad \rho(x_r)\geq \rho(x)-|x-x_r|\geq|x-y|\,.
	$$
	Then we have
	\begin{align*}
		\Psi(x_r)\,&\leq \Psi(x_0)+|x-y|\int_0^r\big|(\nabla\Psi)(x_{t})\big|\dd t\\
		&\leq \Psi(x_0)+N_0|x-y|\int_0^r\rho(x_{t})^{-1}\Psi(x_t)\dd t\\
		&\leq \Psi(x_0)+N_0\int_0^r\Psi(x_t)\dd t\,,
	\end{align*}
	where $N_0=N(d,\mathrm{C}_2(\Psi))>0$.
	By Gr\"onwall's inequality, we obtain
	$$
	\Psi(y)=\Psi(x_1)\leq e^{N_0}\Psi(x_0)=e^{N_0}\Psi(x).
	$$
	If $x,\,y\in\Omega$ satisfy $|x-y|<\rho(x)/3$, then $|x-y|<\rho(x)/2$ and $|x-y|<\rho(y)/2$.
	Therefore we have
	$$
	e^{-N_0}\Psi(y)\leq \Psi(x)\leq e^{N_0}\Psi(y)\,.
	$$
	By Lemma~\ref{21.11.16.1}, the proof is completed.
\end{proof}

We end this subsection with the following remark, which describes the boundary behavior of regular Harnack functions on domains satisfying a certain geometric condition; this remark is used in Subsection~\ref{0074}

\begin{remark}\label{220819318}
	In \cite{VM}, the term `Harnack function' is used not for the Harnack function defined in Definition \ref{21.10.14.1} but for the continuous Harnack functions. It should be noted that regular Harnack functions are continuous Harnack functions.
	If a domain is a John domain (which will be introduced later), then we obtain the upper and lower bounds of the boundary behavior of regular Harnack functions.
	
	It follows from \cite[Corollary 3.4]{VM} that for any domain $\Omega$, if $\Psi$ is a regular Harnack function on $\Omega$, then
	\begin{align}\label{220818200}
		N_0^{-(k(x,x_0)+1)}\leq \frac{\Psi(x)}{\Psi(x_0)}\leq N_0^{k(x,x_0)+1}\qquad\text{for all}\quad x_0,\,x\in \Omega\,,
	\end{align}
	where $N_0\geq 1$ is a constant depending only on $\mathrm{C}_1(\Psi)$, and $k(x,x_0)\geq 0$ is the quasihyperbolic distance between $x$ and $x_0$ (see \cite[paragraph 2.5]{VM} for the definition). In addition, Gehring and Martio \cite[Theorem 3.11]{GO} proved that if $\Omega$ is a \textit{John domain}, then for any $x_0\in \Omega$, there exists $N,\,A>0$ depending only on $\Omega$ and $x_0$ such that
	\begin{align}\label{220818159}
		e^{k(x,x_0)}\leq N\rho(x)^{-A}\qquad\text{whenever}\quad x\in \Omega\,.
	\end{align}
	Here, $\Omega$ is called a John domain if the following conditions are satisfied:
	\begin{enumerate}
		\item $\Omega$ is a connected and bounded open set.
		
		\item There exist a point $x_0\in \Omega$ and a constant $L_0,\,\epsilon_0>0$ such that for any $x\in \Omega$, there exists a rectifiable path $\gamma:\big[0,L\big]\rightarrow \Omega$ parameterised by arclength such that $L\leq L_0$, $\gamma(0)=x$, $\gamma(L)=x_0$, and 
		$$
		d\big(\gamma(t),\partial \Omega\big)\geq \frac{\epsilon_0t}{L}\quad\text{for all}\,\,t\in[0,L]\,.
		$$
	\end{enumerate}
	Due to \eqref{220818200} and \eqref{220818159}, if $\Omega$ is a John domain, then for any $x_0\in \Omega$, there exist constants $N,\,A>0$ depending only on $\Omega$ and $x_0$ such that for any regular Harnack function $\Psi$ on $\Omega$ and $x\in\Omega$,
	$$
	N^{-1}\rho(x)^{A}\leq \frac{\Psi(x)}{\Psi(x_0)}\leq N\rho(x)^{-A}\,.
	$$
\end{remark}

\vspace{2mm}

\subsection{Weighted Sobolev spaces and regular Harnack functions}\label{0042}\,

In this subsection, we introduce the weighted Sobolev space $H_{p,\theta}^{\gamma}(\Omega)$ and generalize them through regular Harnack functions.

We first recall the definition of the Bessel potential space on $\bR^d$.
For $p\in(1,\infty)$ and $\gamma\in\bR$, $H_p^{\gamma}=H_p^{\gamma}(\bR^d)$ denotes the space of Bessel potential with the norm
\begin{align}\label{2207201137}
	\|f\|_{H_p^{\gamma}}:=\|(1-\Delta)^{\gamma/2}f\|_{L_p(\bR^d)}:=\big\|\cF^{-1}\big[(1+|\xi|^2)^{\gamma/2}\cF(f)(\xi)\big]\big\|_p\,,
\end{align}
where $\cF$ is the Fourier transform and $\cF^{-1}$ is the inverse Fourier transform.
If $\gamma\in\bN_0$, then $H_p^{\gamma}$ coincides with the Sobolev space 
\begin{align*}
	W_p^{\gamma}(\bR^d):=\left\{f\in\cD'(\bR^d)\,:\,\sum_{k=0}^\gamma\int_{\bR^d}|D^kf|^p\dd x<\infty\right\}
\end{align*}
(see, \textit{e.g.}, \cite[Theorem 2.5.6]{triebel2}).

We next introduce the weighted Sobolev spaces $H_{p,\theta}^{\gamma}(\Omega)$ and $\Psi H_{p,\theta}^{\gamma}(\Omega)$.
The space  $H_{p,\theta}^{\gamma}(\Omega)$ was first introduced by Krylov \cite{Krylov1999-1} for $\Omega=\bR_+^d$, and later generalized by Lototsky \cite{Lo1} for arbitrary domains $\Omega\subset \bR^d$.
It is worth mentioning in advance that for $p\in(1,\infty)$, $\theta\in\bR$ and $\gamma\in\bN_0$, the space $H_{p,\theta}^{\gamma}(\Omega)$ coincides with the space
\begin{align*}
	\left\{f\in\cD'(\Omega)\,:\,\sum_{k=0}^{\gamma}\int_{\Omega}|\rho^kD^kf|^p\rho^{\theta-d}\dd x<\infty\right\}
\end{align*}
(see \cite[Proposition 2.2.3]{Lo1} or Lemma~\ref{220512433} of this paper).

In the remainder of this subsection, we assume that
\begin{align}\label{22082801111}
	p\in(1,\infty)\,,\,\,\,\gamma,\,\theta\in\bR\,,\,\,\,\text{$\Psi$ is a regular Harnack function on $\Omega$}\,.
\end{align}
By $\trho$ we denote the regularization of $\rho=d(\,\cdot\,,\partial\Omega)$ constructed in Lemma~\ref{21.05.27.3}.(1).
Recall that for each $k\in\bN_0$, there exists a constant $N_k=N(d,k)>0$ such that
\begin{align}\label{230130541}
	\trho\simeq_{N_0}\rho\quad\text{and}\quad|D^k\trho\,|\leq N_k\trho^{\,1-k}\quad \text{on}\quad \Omega\,.
\end{align}
To define the weighted Sobolev spaces, fix a nonnegative function $\zeta_0\in C_c^{\infty}(\bR_+)$ such that
\begin{align*}
	\text{supp}(\zeta_0)\subset [e^{-1},e]\,\,,\,\,\text{and}\quad \sum_{n\in\bZ}\zeta_0(e^{n}t)=1\quad\text{for all}\,\,t\in\bR_+\,.
\end{align*}
For $x\in\bR^d$ and $n\in\bZ$, put
\begin{align}\label{230130543}
	\zeta_{0,(n)}(x)=\zeta_0\big(e^{-n}\trho(x)\big)1_{\Omega}(x)
\end{align}
so that
\begin{align}\label{230130542}
	\begin{split}
		&\sum_{n\in\bZ}\zeta_{0,(n)}\equiv 1\quad\text{on}\,\,\Omega\,,\\
		&\text{supp}(\zeta_{0,(n)})\subset \{x\in\Omega\,:\,e^{n-1}\leq \trho(x)\leq e^{n+1}\}\,,\\
		&\zeta_{0,(n)}\in C^{\infty}(\bR^d)\quad\text{and}\quad |D^{\alpha}\zeta_{0,(n)}|\leq N(d,\alpha,\zeta)\,e^{-n|\alpha|}\,.
	\end{split}
\end{align}

\begin{defn}\label{220610533}\,\,
	
	\begin{enumerate}
		\item By $H_p^{\gamma}(\Omega)$ we denote the class of all distributions $f\in\cD'(\Omega)$ such that
		\begin{align*}
			\|f\|^p_{H^{\gamma}_{p,\theta}(\Omega)}:=\sum_{n\in\bZ}e^{n\theta}\|\big(\zeta_{0,(n)}f\big)(e^n\cdot)\|_{H^{\gamma}_p(\bR^d)}^p<\infty\,.
		\end{align*}
		
		\item By $\Psi H_{p,\theta}^{\gamma}(\Omega)$ we denote the class of all distributions $f\in\cD'(\Omega)$ such that $f=\Psi g$ for some $g\in H_{p,\theta}^{\gamma}(\Omega)$. The norm in $\Psi H_{p,\theta}^{\gamma}(\Omega)$ is defined by
		$$
		\|f\|_{\Psi H_{p,\theta}^{\gamma}(\Omega)}:=\|\Psi^{-1}f\|_{H_{p,\theta}^{\gamma}(\Omega)}\,.
		$$
	\end{enumerate}
\end{defn}
\vspace{1mm}

We also denote
$$
L_{p,\theta}(\Omega)=H_{p,\theta}^{0}(\Omega)\quad\text{and}\quad \Psi L_{p,\theta}(\Omega)=\Psi H_{p,\theta}^{0}(\Omega)\,.
$$

The spaces $H_{p,\theta}^{\gamma}(\Omega)$ and $\Psi H_{p,\theta}^{\gamma}(\Omega)$ are independent of the choice of $\zeta_0$ (see \cite[Proposition 2.2.4]{Lo1} or Proposition~\ref{220527502}.(5) of this paper).
Therefore we ignore the dependence on $\zeta_0$.
Similar to $H_p^{\gamma}$ and $H_{p,\theta}^{\gamma}(\Omega)$, for $\gamma\in\bN_0$, the space $\Psi H_{p,\theta}^{\gamma}(\Omega)$ has the following equivalent norm:

\begin{lemma}[see Proposition~\ref{220528651}]\label{220512433}
	For any $k\in\bN_0$, 
	\begin{align*}
		\|f\|^p_{\Psi H_{p,\theta}^{k}(\Omega)}\simeq_N  \sum_{|\alpha|\leq k}\int_{\Omega}\big|\rho^{|\alpha|}D^\alpha f\big|^p\Psi^{-p}\rho^{\theta-d}\dd x\,,
	\end{align*}
	where $N=N(c,p,k,\theta,\mathrm{C}_2(\Psi))$.
\end{lemma}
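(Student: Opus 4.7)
The strategy is to unfold the definition of $\|\cdot\|_{\Psi H_{p,\theta}^k(\Omega)}$, use the integer-order identification $H_p^k(\bR^d)=W_p^k(\bR^d)$ with the equivalent norm $\sum_{|\alpha|\leq k}\|D^\alpha\cdot\|_{L_p}$, and pass between the rescaled derivatives on $\bR^d$ and pointwise weighted derivatives on $\Omega$ via the chain rule together with the localization provided by $\{\zeta_{0,(n)}\}$. Two standing tools drive the whole argument: (a) by Example~\ref{21.05.18.2}.(3) with $\sigma=-1$, $\Psi^{-1}$ is itself a regular Harnack function with $\mathrm{C}_2(\Psi^{-1})$ controlled by $d$ and $\mathrm{C}_2(\Psi)$, so $|D^\beta\Psi^{-1}|\lesssim \rho^{-|\beta|}\Psi^{-1}$; and (b) on $\mathrm{supp}(\zeta_{0,(n)})$, \eqref{230130541}--\eqref{230130542} give $\rho\simeq\trho\simeq e^n$, so any dilation factor $e^{ns}$ can be traded for $\rho^s$ with constants depending only on $d$.

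For the upper bound $\|f\|^p_{\Psi H_{p,\theta}^k(\Omega)}\lesssim \text{RHS}$, the chain rule $D^\alpha[(\zeta_{0,(n)}\Psi^{-1}f)(e^n\cdot)](y)=e^{n|\alpha|}(D^\alpha(\zeta_{0,(n)}\Psi^{-1}f))(e^n y)$ together with a change of variables yields
\begin{align*}
\|f\|^p_{\Psi H_{p,\theta}^k(\Omega)}\;\simeq\; \sum_{n\in\bZ}\sum_{|\alpha|\leq k} e^{n(p|\alpha|+\theta-d)}\int_\Omega\big|D^\alpha(\zeta_{0,(n)}\Psi^{-1}f)\big|^p\dd x.
\end{align*}
Expanding $D^\alpha(\zeta_{0,(n)}\Psi^{-1}f)$ by Leibniz into $\sum_{\beta_1+\beta_2+\beta_3=\alpha}c\, D^{\beta_1}\zeta_{0,(n)}\cdot D^{\beta_2}\Psi^{-1}\cdot D^{\beta_3}f$, bounding $|D^{\beta_1}\zeta_{0,(n)}|\lesssim e^{-n|\beta_1|}$ pointwise on $\mathrm{supp}(\zeta_{0,(n)})$ and $|D^{\beta_2}\Psi^{-1}|\lesssim \rho^{-|\beta_2|}\Psi^{-1}$, and finally using (b) to replace all powers of $e^n$ by the corresponding powers of $\rho$, the integrand is dominated by $\sum_{|\beta|\leq|\alpha|}|\rho^{|\beta|}D^\beta f|^p\Psi^{-p}\rho^{\theta-d}\cdot 1_{\mathrm{supp}(\zeta_{0,(n)})}$. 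Summing in $n$ and invoking the finite-overlap bound $\sum_n 1_{\mathrm{supp}(\zeta_{0,(n)})}\lesssim 1$ from \eqref{230130542} delivers the desired estimate.

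For the reverse inequality I would argue by induction on $k$. The base case $k=0$ is immediate: the finite overlap together with Jensen gives $1\lesssim \sum_n \zeta_{0,(n)}^p$, hence
\begin{align*}
\int_\Omega|f|^p\Psi^{-p}\rho^{\theta-d}\dd x\;\lesssim\;\sum_n\int_\Omega \zeta_{0,(n)}^p|\Psi^{-1}f|^p\rho^{\theta-d}\dd x\;\simeq\;\sum_n e^{n(\theta-d)}\int_\Omega|\zeta_{0,(n)}\Psi^{-1}f|^p\dd x\;=\;\|f\|^p_{\Psi L_{p,\theta}(\Omega)},
\end{align*}
where in the middle step I used $\rho\simeq e^n$ on $\mathrm{supp}(\zeta_{0,(n)})$. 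For the inductive step $|\alpha|\mapsto|\alpha|$, the Leibniz identity in reverse,
\begin{align*}
\zeta_{0,(n)}\Psi^{-1}D^\alpha f \;=\; D^\alpha\big(\zeta_{0,(n)}\Psi^{-1}f\big)\;-\sum_{\substack{\beta_1+\beta_2+\beta_3=\alpha\\(\beta_1,\beta_2)\neq(0,0)}} c_{\beta_1,\beta_2,\beta_3}\,D^{\beta_1}\zeta_{0,(n)}\,D^{\beta_2}\Psi^{-1}\,D^{\beta_3}f,
\end{align*}
isolates the top-order piece; every correction term contains $D^{\beta_3}f$ with $|\beta_3|<|\alpha|$. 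Multiplying by $\rho^{|\alpha|}$, taking $L_p(\rho^{\theta-d}\dd x)$ norms, estimating the correction terms exactly as in the upper-bound argument via (a) and (b), and applying the inductive hypothesis to the $|\beta_3|<|\alpha|$ terms closes the estimate. Using $1\lesssim \sum_n\zeta_{0,(n)}^p$ once more on the left-hand side and converting back to the dilated $L_p$-norm completes the bound.

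The main obstacle is not conceptual but rather bookkeeping: one must verify that after all the Leibniz expansions and changes of variables, the constants depend only on $d,p,k,\theta,\mathrm{C}_2(\Psi)$, which reduces to the fact that $\mathrm{C}_2(\Psi^{-1})$ and the Harnack constant $N_0$ of $\trho$ in \eqref{230130541} depend only on $d$ and $\mathrm{C}_2(\Psi)$. Everything else is mechanical application of the chain rule, Leibniz, and the finite-overlap property of the shells $\{\mathrm{supp}(\zeta_{0,(n)})\}$.
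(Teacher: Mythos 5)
Your proof is correct and follows essentially the same route as the paper's, just with the modular lemmas unfolded. Where the paper invokes Proposition~\ref{220527502}.(7) (which reduces $\|\cdot\|_{H^k_{p,\theta}}$ to $\sum_i \|D^i\cdot\|_{L_{p,\theta+ip}}$) and Lemma~\ref{21.09.29.4}.(3) (which commutes $\Psi$ through derivatives) and then handles $k=0$ directly, you reconstruct the content of those two lemmas inline via the Leibniz rule, the scaling $e^n\simeq\rho$ on $\mathrm{supp}(\zeta_{0,(n)})$, and finite overlap, closing the lower bound by induction on $k$.
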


For the case $-\gamma\in\bN$, an equivalent norm of $\Psi H_{p,\theta}^{\gamma}(\Omega)$ is introduced in Corollary~\ref{21.05.26.3}.

\begin{remark}
	If $\Psi$ is a regularization of a Harnack function $\psi$, then we have
	$$
	\|f\|^p_{\Psi H_{p,\theta}^{k}(\Omega)}\simeq_N  \sum_{|\alpha|\leq k}\int_{\Omega}\big|\rho^{|\alpha|}D^\alpha f\big|^p\psi^{-p}\rho^{\theta-d}\dd x
	$$
	where $N=N(d,p,k,\theta,\mathrm{C}_2(\Psi),\mathrm{C}_3(\psi,\Psi))$.
\end{remark}
\vspace{1mm}

The remainder of this subsection presents the properties of $\Psi H_{p,\theta}^{\gamma}(\Omega)$ that are used in Subsection~\ref{0043}.
Specifically, we focus on the generalization from $H_{p,\theta}^{\gamma}(\Omega)$ to $\Psi H_{p,\theta}^{\gamma}(\Omega)$.
While the properties of $H_{p,\theta}^{\gamma}(\Omega)$ are provided in Appendix~\ref{0082}, we list the properties of $H_{p,\theta}^{\gamma}(\Omega)$ in Lemma~\ref{21.05.20.3}, which are directly used in this subsection and Subsection~\ref{0043}.

We denote
\begin{align*}
	\cI=\{d,\,p,\,\gamma,\,\theta\}\quad\text{and}\quad \cI'=\{d,\,p,\,\gamma,\,\theta,\,\mathrm{C}_2(\Psi)\}\,.
\end{align*}
\begin{lemma}[see Proposition~\ref{220527502}]\label{21.05.20.3}\,\,
	
	\begin{enumerate}
		\item For any $s<\gamma$,
		$$
		\|f\|_{H_{p,\theta}^{s}(\Omega)}\lesssim_{\cI,s}\|f\|_{H_{p,\theta}^{\gamma}(\Omega)}\,.
		$$
		
		\item For any $\eta\in C_c^{\infty}(\bR_+)$,
		\begin{align*}
			\sum_{n\in\bZ}e^{n\theta}\|\eta\big(e^{-n}\trho(e^n\cdot)\big)f(e^n\cdot)\|^p_{H^{\gamma}_{p}}\lesssim_{\cI,\eta}\|f\|_{H^{\gamma}_{p,\theta}(\Omega)}^p.
		\end{align*}
		
		\item For any $s\in\bR$,
		$$
		\|\trho^{\,s} f\|_{H^{\gamma}_{p,\theta}(\Omega)}\simeq_{\cI,s} \|f\|_{H^{\gamma}_{p,\theta+sp}(\Omega)}\,.
		$$
		
		\item For any multi-index $\alpha$,
		$$
		\|D^{\alpha}f\|_{H^{\gamma}_{p,\theta}(\Omega)}\lesssim_{\cI,\alpha}\|f\|_{H^{\gamma+|\alpha|}_{p,\theta-|\alpha|p}(\Omega)}.
		$$
		
		\item Let $k\in\bN_0$ such that $|\gamma|\leq k$. If $a\in C^k_{\mathrm{loc}}(\Omega)$ satisfies
		\begin{align*}
			|a|_{k}^{(0)}:=\sup_{\Omega}\sum_{|\alpha|\leq k}\rho^{|\alpha|}|D^{\alpha}a|<\infty\,\,,
		\end{align*}
		then
		$$
		\|af\|_{H^{\gamma}_{p,\theta}(\Omega)}\lesssim_{\cI}|a|_{k}^{(0)}\|f\|_{H^{\gamma}_{p,\theta}(\Omega)}.
		$$
	\end{enumerate}
\end{lemma}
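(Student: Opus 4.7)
The plan is to reduce each of the five assertions to the corresponding fact on $\bR^d$ via the defining partition of unity $\{\zeta_{0,(n)}\}_{n\in\bZ}$ and the scaling $x\mapsto e^n x$. The starting observation is that on $\mathrm{supp}(\zeta_{0,(n)})$ one has $\trho\simeq e^n$ by \eqref{230130542}, so the weight $e^{n\theta}$ in Definition~\ref{220610533} encodes the size of $\rho^\theta$ on the $n$-th dyadic shell, and differentiations pick up factors of $e^{-n}$ after pulling back by $x\mapsto e^n x$. Properties (1) and (4) then follow quickly: for (1), the Bessel-potential embedding $H_p^\gamma(\bR^d)\hookrightarrow H_p^s(\bR^d)$ applied to $g=(\zeta_{0,(n)}f)(e^n\cdot)$ is term-wise, and summation in $n$ gives the conclusion. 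For (4), $D_i[(\zeta_{0,(n)}f)(e^n\cdot)]=e^{-n}(\partial_i[\zeta_{0,(n)}f])(e^n\cdot)$; expanding by Leibniz and using $|D^\alpha\zeta_{0,(n)}|\lesssim e^{-n|\alpha|}$ from \eqref{230130542}, the resulting pieces are either of the desired scaled form for $D_i f$ or harmless lower-order terms, and the factor $e^{-n}$ shifts the weight from $\theta$ to $\theta-p$; iterating gives arbitrary $|\alpha|$.

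The core technical input is (5), which I would prove first. On $\mathrm{supp}(\zeta_{0,(n)})$, $\rho\simeq e^n$, so the rescaled function $a(e^n\cdot)$ satisfies, for $|\alpha|\le k$, $|D^\alpha[a(e^n\cdot)](y)|=e^{n|\alpha|}|D^\alpha a(e^n y)|\lesssim e^{n|\alpha|}\rho(e^ny)^{-|\alpha|}|a|_k^{(0)}\lesssim |a|_k^{(0)}$. After a smooth extension to $\bR^d$ with the same $C^k$ bound, $a(e^n\cdot)$ becomes a uniform-in-$n$ pointwise multiplier on $W_p^k(\bR^d)=H_p^k(\bR^d)$; for nonnegative integer $\gamma\le k$ this is direct from the Leibniz rule, and for negative $\gamma$ with $|\gamma|\le k$ one obtains it by duality against the nonnegative case (using that the dual of $H_p^\gamma$ is an $H_{p'}^{-\gamma}$-type space). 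Summing the local estimates over $n$ gives (5). Property (3) is then a special case: on $\mathrm{supp}(\zeta_{0,(n)})$, $e^{-ns}\trho^{\,s}$ and its multiplicative inverse satisfy the hypothesis of (5) with constants depending only on $s$ and $\mathrm{C}_2(\trho)$ (which is absorbed into $d$ via \eqref{230130541}), so multiplication by $\trho^{\,s}$ is equivalent to multiplication by the scalar $e^{ns}$, which rescales the $n$-th weight from $e^{n\theta}$ to $e^{n(\theta+sp)}$.

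For (2) the main obstacle is that $\eta(e^{-n}\trho(e^n\cdot))$ is not literally one of the chosen cutoffs. The remedy is to insert the partition of unity on the right shell: write $\eta(e^{-n}\trho(e^n\cdot))f(e^n\cdot)=\sum_m\eta(e^{-n}\trho(e^n\cdot))\,\zeta_{0,(m)}(e^n\cdot)f(e^n\cdot)$ and note that, because $\eta$ has compact support in $\bR_+$, only a uniformly bounded set of indices $m$ with $|m-n|\le M(\eta)$ contribute. For each such $m$, one rewrites the factor as $\eta(e^{-n}\trho(e^n\cdot))\zeta_{0,(m)}(e^n\cdot)=b_{n,m}(\,\cdot\,)$; a direct chain-rule computation using \eqref{230130541} shows that $b_{n,m}$ and all its derivatives are bounded uniformly in $n,m$, so by the $H_p^\gamma(\bR^d)$ multiplier theorem $\|b_{n,m}\,(\zeta_{0,(m)}f)(e^n\cdot)\|_{H_p^\gamma}\lesssim\|(\zeta_{0,(m)}f)(e^n\cdot)\|_{H_p^\gamma}$, and finally a change of variable $y=e^{n-m}z$ relates this to $\|(\zeta_{0,(m)}f)(e^m\cdot)\|_{H_p^\gamma}$ with a constant depending only on $|n-m|\le M(\eta)$. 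Multiplying by $e^{n\theta}=e^{m\theta}\cdot e^{(n-m)\theta}$ and summing over $n$ (and the finite $m$) recovers $\|f\|_{H^\gamma_{p,\theta}(\Omega)}^p$. The main subtlety throughout is keeping track that all constants are bounded uniformly in $n$, which is exactly what the Harnack bounds \eqref{230130541} on $\trho$ deliver.
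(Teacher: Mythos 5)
Your proposal follows essentially the same route as the paper's (see Propositions~\ref{230129407} and \ref{220527502}): decompose on dyadic shells using $\{\zeta_{0,(n)}\}$, pull back by $x\mapsto e^nx$ so that $\trho\simeq e^n$ normalizes the weight, and reduce each claim to the corresponding multiplier/embedding statement on $\bR^d$, summing over $n$ at the end. The only substantive caveats are small: (i) your inline re-derivation of the $\bR^d$ pointwise-multiplier bound for item~(5) covers only integer $\gamma$ (Leibniz for $\gamma\geq 0$, duality for $\gamma<0$), but the lemma is stated for arbitrary real $\gamma$ with $|\gamma|\leq k$; the paper avoids this by simply invoking the known multiplier theorem \eqref{2209051106} (\emph{cf.} \cite[Corollary~2.8.2]{triebel2}), and your argument would need a complex-interpolation step between the integer cases to reach fractional $\gamma$; (ii) your ``smooth extension to $\bR^d$'' of $a(e^n\cdot)$ is effectively the paper's localization by the wider cutoff $\zeta_{1,(n)}$, which is the cleaner way to make that step precise. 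Otherwise the decomposition, the use of the bounds \eqref{230130541}--\eqref{230130542} to control the rescaled cutoffs, and the change-of-variable bookkeeping all match the paper; the fact that you derive (3) from the term-wise multiplier estimate rather than from (2) as the paper does is an immaterial reordering.
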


\begin{remark}
	Lemma~\ref{21.05.20.3} also holds if $f$ is replaced by $\Psi^{-1}f$.
	Therefore Lemmas~\ref{21.05.20.3}.(1), (3), and (5) remain valid when $H_{\ast,\ast}^{\ast}(\Omega)$ is replaced by $\Psi H_{\ast,\ast}^{\ast}(\Omega)$.
\end{remark}

\begin{lemma}\label{21.09.29.4}\,\,
	
	\begin{enumerate}
		\item $C_c^{\infty}(\Omega)$ is dense in $\Psi H_{p,\theta}^{\gamma}(\Omega)$.
		
		\item  $\Psi H_{p,\theta}^{\gamma}$ is a reflexive Banach space with the dual $\Psi^{-1}H_{p',\theta'}^{-\gamma}(\Omega)$, where
		$$
		\frac{1}{p}+\frac{1}{p'}=1\quad\text{and}\quad \frac{\theta}{p}+\frac{\theta'}{p'}=d\,.
		$$
		Moreover, for any $f\in\cD'(\Omega)$, we have
		$$
		N^{-1}\|f\|_{\Psi H_{p,\theta}^{\gamma}(\Omega)}\leq \sup_{g\in C_c^{\infty}(\Omega),g\neq 0}\frac{\langle f,g\rangle }{\|g\|_{\Psi^{-1}H_{p',\theta'}^{-\gamma}(\Omega)}}\leq N\|f\|_{\Psi H_{p,\theta}^{\gamma}(\Omega)}
		$$
		where $N=N(\cI')$.
		
		\item For any $k,\,l\in\bN_0$,
		$$
		\|\big(D^k\Psi\big) D^lf\|_{H^{\gamma}_{p,\theta}(\Omega)}\leq N\|\Psi f\|_{H^{\gamma+l}_{p,\theta-(k+l)p}(\Omega)}
		$$
		where $N=N(\cI',l,k)>0$.
		
		\item Let $\Phi$ be a regular Harnack function on $\Omega$, and there exist a constant $N_0>0$ such that
		$$
		\Psi\leq N_0\Phi \quad\text{on}\,\,\,\Omega\,.
		$$
		Then
		$$
		\|\Psi f\|_{H^{\gamma}_{p,\theta}(\Omega)}\leq N \|\Phi f\|_{H^{\gamma}_{p,\theta}(\Omega)}.
		$$
		where $N=N(\cI',\mathrm{C}_2(\Phi),N_0)>0$.
		
		\item Let $p'\in(1,\infty)$, $\gamma',\,\theta'\in\bR$, and $\Psi'$ be a regular Harnack function on $\Omega$, 
		if $f\in \Psi H_{p,\theta}^{\gamma}(\Omega)\cap \Psi'H_{p',\theta'}^{\gamma'}(\Omega)$, then there exists $\{f_n\}_{n\in\bN}\subset  C_c^{\infty}(\Omega)$ such that
		$$
		\|f-f_n\|_{\Psi H_{p,\theta}^{\gamma}(\Omega)}+\|f-f_n\|_{\Psi' H_{p',\theta'}^{\gamma'}(\Omega)}\rightarrow 0\quad\text{as}\,\,\,n\rightarrow\infty\,.
		$$
	\end{enumerate}
	
\end{lemma}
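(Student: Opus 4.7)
Parts (1) and (4) reduce directly to the corresponding facts for $H_{p,\theta}^{\gamma}(\Omega)$. For (1), if $f=\Psi g$ with $g\in H_{p,\theta}^{\gamma}(\Omega)$, pick $g_n\in C_c^{\infty}(\Omega)$ with $g_n\to g$ in $H_{p,\theta}^{\gamma}$ (this density is a known property of $H_{p,\theta}^{\gamma}(\Omega)$, proven in the appendix). Since $\Psi\in C^{\infty}(\Omega)$ and each $g_n$ has compact support in $\Omega$, $\Psi g_n\in C_c^{\infty}(\Omega)$, and $\|f-\Psi g_n\|_{\Psi H_{p,\theta}^{\gamma}}=\|g-g_n\|_{H_{p,\theta}^{\gamma}}\to 0$. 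For (4), by Example~\ref{21.05.18.2}.(3) both $\Phi^{-1}$ and $\Psi$ are regular Harnack functions, so the ratio $a:=\Psi/\Phi$ is smooth on $\Omega$, bounded by $N_0$, and by the Leibniz rule satisfies $|D^{\alpha}a|\lesssim \rho^{-|\alpha|}a\leq N_0\rho^{-|\alpha|}$ for every $\alpha$; hence $|a|_k^{(0)}<\infty$ for any $k$, and Lemma~\ref{21.05.20.3}.(5) gives $\|\Psi f\|_{H_{p,\theta}^{\gamma}}=\|a\cdot\Phi f\|_{H_{p,\theta}^{\gamma}}\lesssim\|\Phi f\|_{H_{p,\theta}^{\gamma}}$.

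For (3), the key observation is that $b:=\trho^{\,k}\Psi^{-1}D^k\Psi$ satisfies $|b|_m^{(0)}<\infty$ for every $m$: indeed, $\Psi^{-1}$ is itself a regular Harnack function (Example~\ref{21.05.18.2}.(3)), so Leibniz gives $|D^{\alpha}(\Psi^{-1}D^k\Psi)|\lesssim\rho^{-k-|\alpha|}$, and multiplying by $\trho^{\,k}$ leaves a bounded function after each derivative. Writing $(D^k\Psi)(D^l f)=\trho^{\,-k}\,b\cdot(\Psi D^l f)$ and applying parts (3) and (5) of Lemma~\ref{21.05.20.3} reduces the claim to the estimate $\|\Psi D^l f\|_{H_{p,\theta-kp}^{\gamma}(\Omega)}\lesssim \|\Psi f\|_{H_{p,\theta-(k+l)p}^{\gamma+l}(\Omega)}$. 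I would prove this by induction on $l$: the Leibniz expansion $\Psi D^l f=D^l(\Psi f)-\sum_{j=1}^{l}\binom{l}{j}(D^j\Psi)(D^{l-j}f)$ combines Lemma~\ref{21.05.20.3}.(4) on the first term with the inductive hypothesis applied with $(k,l)$ replaced by $(k+j,\,l-j)$ for $j\geq 1$.

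For (2), I transport the known duality $(H_{p,\theta}^{\gamma}(\Omega))^{*}\simeq H_{p',\theta'}^{-\gamma}(\Omega)$ (the unweighted case, from the appendix) along the isometry $g\mapsto\Psi g$ from $H_{p,\theta}^{\gamma}(\Omega)$ onto $\Psi H_{p,\theta}^{\gamma}(\Omega)$. Its adjoint identifies the dual of $\Psi H_{p,\theta}^{\gamma}(\Omega)$ isometrically with $\Psi^{-1}H_{p',\theta'}^{-\gamma}(\Omega)$ via the canonical distributional pairing $\langle\Psi^{-1}h,\Psi g\rangle=\langle h,g\rangle$; the density of $C_c^{\infty}(\Omega)$ established in (1) then shows that the supremum over test functions realizes the dual norm. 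Reflexivity is inherited from $H_{p,\theta}^{\gamma}(\Omega)$.

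Finally, (5) is a simultaneous approximation argument using the dyadic partition of unity $\{\zeta_{0,(n)}\}$ from~\eqref{230130543}. Writing $f=\sum_{n}\zeta_{0,(n)}f$, one first truncates to $|n|\leq N$ (and, if $\Omega$ is unbounded, additionally multiplies by a spatial cutoff); the tails are small in both norms simultaneously because each norm is computed as an $\ell^p$-sum of dyadic pieces over the same index set. Each truncated piece has support compactly contained in $\Omega$ and lies where $\trho$, $\Psi$, and $\Psi'$ are each comparable to positive constants, so convolution with a standard mollifier $\phi_{\epsilon}$ of sufficiently small radius yields $C_c^{\infty}$ approximants that converge in both $\Psi H_{p,\theta}^{\gamma}$ and $\Psi'H_{p',\theta'}^{\gamma'}$ as $\epsilon\to 0$. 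A diagonal extraction produces the desired sequence. The main obstacle is mollification in the scales $H_{p,\theta}^{\gamma}$ with $\gamma<0$; I expect to handle it via the Bessel-potential realization underlying the definition in~\eqref{2207201137}, which ensures that convolution remains norm-continuous on each dyadic piece.
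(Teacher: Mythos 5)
Your proof is correct, and parts (1), (2), (4) match the paper's argument: (1) and (2) are obtained by transporting the unweighted density and duality facts along the isometry $g\mapsto\Psi g$, exactly as the paper does, and for (4) the paper likewise reduces matters to showing $|\Psi\Phi^{-1}|_{k}^{(0)}<\infty$ and invoking Lemma~\ref{21.05.20.3}.(5); your regular-Harnack-function justification of that seminorm bound is a valid way to fill in what the paper states without comment. For (5), the paper simply cites Lemma~\ref{22.04.11.3} from the appendix, whose proof is precisely the dyadic-truncation, spatial cut-off, and mollification scheme you sketch — including the point that each truncated piece lives on a compact dyadic shell where all the relevant weights are comparable to constants, so convolution converges in $X_p^\gamma$ at any regularity.

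The only genuine divergence from the paper is in (3). You first use the bounded-multiplier trick (Lemma~\ref{21.05.20.3}.(3), (5)) to reduce to $\|\Psi D^l f\|_{H_{p,\theta-kp}^{\gamma}}\lesssim\|\Psi f\|_{H_{p,\theta-(k+l)p}^{\gamma+l}}$ and then argue by induction on $l$ via the expansion $\Psi D^l f = D^l(\Psi f)-\sum_{j\geq 1}\binom{l}{j}(D^j\Psi)(D^{l-j}f)$, re-applying the multiplier trick and the inductive hypothesis with $(k,l)\mapsto(k+j,l-j)$ to the tail terms; this does close, since $l-j<l$ and the resulting index $\gamma+l-j\leq\gamma+l$ is absorbed by Lemma~\ref{21.05.20.3}.(1). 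The paper avoids the induction entirely with a different Leibniz expansion: it writes $\Psi D^l f = \Psi D^l(\Psi^{-1}\cdot\Psi f) = \sum_{n=0}^l\binom{l}{n}\Psi\,D^{l-n}(\Psi^{-1})\cdot D^n(\Psi f)$ and applies the already-established inequality $\|(D^m\Psi^{-1})g\|_{H_{p,\theta}^\gamma}\lesssim\|\Psi^{-1}g\|_{H_{p,\theta-mp}^\gamma}$ (i.e.\ your estimate with $\Psi$ replaced by $\Psi^{-1}$, which is again a regular Harnack function) to each summand, finishing with Lemma~\ref{21.05.20.3}.(4) and (1) in one pass. Your inductive route is correct but slightly longer; the paper's $\Psi^{-1}\cdot\Psi f$ factorization is the tidier way to organize the same cancellations.
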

\begin{proof} 
	(1), (2) When $\Psi\equiv 1$, the results can be found in \cite{Lo1} (or see Proposition~\ref{220527502111} of this paper).
	Since the map $f\mapsto \Psi^{-1}f$ is an isometric isomorphism from $\Psi H_{p,\theta}^{\gamma}(\Omega)$ to $H_{p,\theta}^{\gamma}(\Omega)$, there is nothing to prove.
	
	(3) Since $\Psi$ and $\trho$ are regular Harnack functions, we obtain that for any $k,\,m\in\bN_0$,
	$$
	\Big|\frac{D^k\Psi}{\trho^{\,-k}\Psi}\Big|^{(0)}_m\leq N(d,k,m,\mathrm{C}_2(\Psi))\,.
	$$
	By Lemma~\ref{21.05.20.3}.(5) and (3), we have
	\begin{align}\label{21.11.16.2}
		\|\big(D^k\Psi\big) f\|_{H^{\gamma}_{p,\theta}(\Omega)}\lesssim_{\cI',k}\|\trho^{\,-k}\Psi f\|_{H^{\gamma}_{p,\theta}(\Omega)}\lesssim_{\cI',k}\|\Psi f\|_{H^{\gamma}_{p,\theta-kp}(\Omega)}\,.
	\end{align}
	Therefore we only need to prove that for any $l\in\bN$,
	$$
	\|\Psi D^lf\|_{H^{\gamma}_{p,\theta}(\Omega)}\lesssim_{\cI',l}\|\Psi f\|_{H^{\gamma+l}_{p,\theta-lp}(\Omega)}\,.
	$$
	Recall that $\Psi^{-1}$ is a regular Harnack function, and $\mathrm{C}_2(\Psi^{-1})$ can be chosen to depend only on $\mathrm{C}_2(\Psi)$ and $d$.
	It follows from Leibniz's rule, \eqref{21.11.16.2}, and Lemma~\ref{21.05.20.3}.(4) and (1) that
	\begin{alignat*}{2}
		\|\Psi D^l(\Psi^{-1}\Psi f)\|_{H^{\gamma}_{p,\theta}(\Omega)}&\lesssim_{d,l} &&\sum_{n=0}^l\|\Psi D^{l-n}(\Psi^{-1})\cdot D^n(\Psi f)\|_{H^{\gamma}_{p,\theta}(\Omega)}\\
		&\lesssim_N &&\sum_{n=0}^l\|\Psi^{-1} \Psi D^n(\Psi f)\|_{H^{\gamma}_{p,\theta-(l-n)p}(\Omega)}\\
		&\lesssim_N&&\|\Psi f\|_{H^{\gamma+l}_{p,\theta-lp}(\Omega)}\,.
	\end{alignat*}
	
	(4) For any $k\in\bN_0$, 
	$$
	|\Psi\Phi^{-1}|^{(0)}_k\leq N(d,k,\mathrm{C}_2(\Psi),\mathrm{C}_2(\Phi), N_0)\,.
	$$
	Therefore it follows from Lemma~\ref{21.05.20.3}.(5). that
	$$
	\|\Psi f\|_{H_{p,\theta}^{\gamma}(\Omega)}=\|\Psi \Phi^{-1}(\Phi f)\|_{H_{p,\theta}^{\gamma}(\Omega)}\lesssim_N \|\Phi f\|_{H_{p,\theta}^{\gamma}(\Omega)}\,.
	$$
	
	(5) This follows from Lemma~\ref{22.04.11.3}.
\end{proof}

\begin{remark}\label{2212141032}
	Lemmas~\ref{220512433} and \ref{21.05.20.3}.(1) imply that $C_c^{\infty}(\Omega)$ is continuously embedded in $\Psi H_{p,\theta}^{\gamma}(\Omega)$.
	Due to Lemmas~\ref{21.09.29.4}.(1), (2), and that $C_c^{\infty}(\Omega)$ is separable, we obtain that $\Psi H_{p,\theta}^{\gamma}(\Omega)$ is a separable and reflexive Banach space.
\end{remark}

\begin{remark}\label{21.10.05.2}
	From Lemma~\ref{21.09.29.4}.(4), it follows thatfor regular Harnack functions $\Psi$ and $\Phi$, if 
	$N^{-1}\Phi\leq \Psi\leq N\Phi$
	for some constant $N>0$,
	then $\Psi H_{p,\theta}^{\gamma}(\Omega)$ coincides with $\Phi H_{p,\theta}^{\gamma}(\Omega)$.
	Therefore, applying Lemma~\ref{21.05.20.3}.(3), we see that if $\Psi$ is a regularization of $\rho^\sigma$ ($\sigma\in\bR$), then $\Psi H_{p,\theta}^{\gamma}(\Omega)= H_{p,\theta-\sigma p}^{\gamma}(\Omega)$.
\end{remark}

\begin{lemma}\label{22.02.16.1}
	There exist linear maps 
	$$
	\Lambda_0\,:\,\Psi H_{p,\theta}^{\gamma}\rightarrow \Psi H_{p,\theta}^{\gamma+1}(\Omega)\quad\text{and}\quad \,\Lambda_1,\,\ldots,\,\Lambda_d:\Psi H_{p,\theta}^{\gamma}\rightarrow \Psi H_{p,\theta-p}^{\gamma+1}(\Omega)
	$$
	such that for any $f\in \Psi H_{p,\theta}^{\gamma}(\Omega)$, 
	\begin{align*}
		\begin{gathered}
			f=\Lambda_0 f+\sum_{i=1}^dD_i(\Lambda_if)\quad\text{and}\\
			\|\Lambda_0 f\|_{\Psi H_{p,\theta}^{\gamma+1}(\Omega)}+\sum_{i=1}^d\|\Lambda_i f\|_{\Psi H_{p,\theta-p}^{\gamma+1}(\Omega)}\lesssim_{\cI'}\|f\|_{\Psi H_{p,\theta}^{\gamma}(\Omega)}\,.
		\end{gathered}
	\end{align*}	
\end{lemma}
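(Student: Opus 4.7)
The plan is to reduce the weighted statement to the unweighted case ($\Psi \equiv 1$) and then handle the unweighted case via a partition-of-unity and Bessel-potential argument. Concretely, suppose I can construct linear maps $L_0 : H^{\gamma}_{p,\theta}(\Omega) \to H^{\gamma+1}_{p,\theta}(\Omega)$ and $L_i : H^{\gamma}_{p,\theta}(\Omega) \to H^{\gamma+1}_{p,\theta-p}(\Omega)$ ($i=1,\ldots,d$) satisfying $g = L_0 g + \sum_{i=1}^d D_i L_i g$ together with the corresponding norm bounds. I will then set
\[
\Lambda_0 f := \Psi L_0(\Psi^{-1}f) - \sum_{i=1}^d (D_i\Psi)\,L_i(\Psi^{-1}f), \qquad \Lambda_i f := \Psi L_i(\Psi^{-1}f),
\]
and the decomposition $f = \Lambda_0 f + \sum_i D_i \Lambda_i f$ will follow at once from the Leibniz rule applied to $\Psi \cdot D_i L_i(\Psi^{-1}f)$.

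For the weighted norm bounds, $\|\Lambda_i f\|_{\Psi H^{\gamma+1}_{p,\theta-p}} = \|L_i(\Psi^{-1}f)\|_{H^{\gamma+1}_{p,\theta-p}}$ and $\|\Psi L_0(\Psi^{-1}f)\|_{\Psi H^{\gamma+1}_{p,\theta}} = \|L_0(\Psi^{-1}f)\|_{H^{\gamma+1}_{p,\theta}}$ are immediate from the unweighted estimates. The subtle term is $\Psi^{-1}(D_i\Psi) L_i(\Psi^{-1}f)$; writing it as $a\cdot \trho^{\,-1} L_i(\Psi^{-1}f)$ with $a := \trho\, \Psi^{-1} D_i\Psi$, the Leibniz rule together with $\Psi^{\pm 1}$ being regular Harnack functions yields $|D^{\alpha} a| \lesssim \rho^{-|\alpha|}$, hence $|a|_k^{(0)} < \infty$ for every $k$. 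Lemma~\ref{21.05.20.3}~(5) then bounds the $H^{\gamma+1}_{p,\theta}$-norm of $a\cdot \trho^{\,-1} L_i(\Psi^{-1}f)$ by that of $\trho^{\,-1} L_i(\Psi^{-1}f)$, and Lemma~\ref{21.05.20.3}~(3) converts this to $\|L_i(\Psi^{-1}f)\|_{H^{\gamma+1}_{p,\theta-p}}$, controlled by $\|f\|_{\Psi H^{\gamma}_{p,\theta}}$.

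For the unweighted decomposition, I localize via the partition of unity $\{\zeta_{0,(n)}\}$. Setting $g_n(x) := (\zeta_{0,(n)}g)(e^n x)$, each $g_n$ lies in $H^{\gamma}_p(\bR^d)$ with support in a fixed bounded annulus and $\sum_n e^{n\theta}\|g_n\|^p_{H^{\gamma}_p}$ is equivalent to $\|g\|^p_{H^{\gamma}_{p,\theta}(\Omega)}$. On $\bR^d$ I invoke the trivial Bessel identity
\[
g_n = (1-\Delta)^{-1} g_n - \sum_{i=1}^d D_i\bigl[D_i(1-\Delta)^{-1} g_n\bigr] =: G_{n,0} + \sum_{i=1}^d D_i G_{n,i},
\]
together with the Fourier-multiplier boundedness of $(1-\Delta)^{-1}$ and $D_i(1-\Delta)^{-1}$, yielding $\|G_{n,0}\|_{H^{\gamma+1}_p} + \sum_i \|G_{n,i}\|_{H^{\gamma+1}_p} \lesssim \|g_n\|_{H^{\gamma}_p}$. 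Rescaling via $D_{y_i}[G_{n,i}(e^{-n}y)] = e^{-n}(D_i G_{n,i})(e^{-n}y)$ produces $(\zeta_{0,(n)}g)(y) = G_{n,0}(e^{-n}y) + e^n\sum_i D_{y_i} G_{n,i}(e^{-n}y)$, so summing in $n$ gives $g = L_0 g + \sum_i D_i L_i g$ with $L_0 g := \sum_{n\in\bZ} G_{n,0}(e^{-n}\,\cdot\,)$ and $L_i g := \sum_{n\in\bZ} e^n G_{n,i}(e^{-n}\,\cdot\,)$.

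The main obstacle will be the norm bounds for $L_0 g$ and $L_i g$. Because $(1-\Delta)^{-1}$ is non-local, each $G_{n,0}(e^{-n}\,\cdot\,)$ spreads beyond the dyadic shell $\{\trho \simeq e^n\}$ supporting $\zeta_{0,(n)}$, producing off-diagonal cross terms in the discrete representation of the $H^{\gamma+1}_{p,\theta}(\Omega)$-norm. The remedy exploits the exponential decay of the Bessel kernel: at unit scale in $x$ this yields exponential decay at scale $e^n$ in $y$, so the contribution of the $n$-th piece to the $m$-th dyadic block decays geometrically in $|m-n|$. A Schur-type summation in the dyadic index, combined with Lemma~\ref{21.05.20.3}~(2) (which provides equivalent norms under arbitrary $C_c^{\infty}(\bR_+)$-cutoffs rather than the specific $\zeta_0$), then yields $\|L_0 g\|_{H^{\gamma+1}_{p,\theta}(\Omega)} + \sum_i \|L_i g\|_{H^{\gamma+1}_{p,\theta-p}(\Omega)} \lesssim \|g\|_{H^{\gamma}_{p,\theta}(\Omega)}$; the detailed off-diagonal estimates are deferred to Appendix~\ref{2302131019}.
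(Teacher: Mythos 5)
Your reduction from the weighted to the unweighted case is exactly the paper's: setting
$\Lambda_0 f = \Psi\widetilde{\Lambda}_0(\Psi^{-1}f) - \sum_i (D_i\Psi)\,\widetilde{\Lambda}_i(\Psi^{-1}f)$ and
$\Lambda_i f = \Psi\widetilde{\Lambda}_i(\Psi^{-1}f)$, and controlling the Leibniz term $\Psi^{-1}(D_i\Psi)\,\widetilde{\Lambda}_i(\Psi^{-1}f)$ via Lemma~\ref{21.09.29.4}.(3) (or, equivalently, Lemma~\ref{21.05.20.3}.(3) and (5)). That part is fine and matches the paper.

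The gap is in the unweighted construction. You define $L_0 g = \sum_n G_{n,0}(e^{-n}\cdot)$ and $L_i g = \sum_n e^n G_{n,i}(e^{-n}\cdot)$ directly, without re-cutting off each piece, and then argue that the resulting globally supported pieces can be controlled by exploiting exponential decay of the Bessel kernel and a Schur-type sum in the dyadic index. The claim that ``the contribution of the $n$-th piece to the $m$-th dyadic block decays geometrically in $|m-n|$'' is not true for $m<n$: there, the $m$-th shell $\{\trho\approx e^m\}$ and the support of $g_n$ (after the dilation $x=e^{-n}y$) are separated by only an $O(1)$ gap at the scale $e^n$, so the Bessel-kernel factor is merely bounded away from $1$, not geometrically small. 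Worse, the tangential spread of $G_{n,0}(e^{m-n}\,\cdot)$ over a region of size $\sim e^{n-m}$ can make the relevant $L_p$-norms grow as $n-m\to\infty$, so the Schur sum you invoke does not converge. This is precisely the obstruction that Lemma~\ref{220527700} is designed to avoid: there, each Bessel-smoothed piece is immediately multiplied back by $\zeta_{1,(n)}$, which both keeps every term supported in a fixed dyadic annulus (so that Proposition~\ref{230129407} and Proposition~\ref{220527502}.(5) apply termwise with no off-diagonal sum) and, crucially, still reproduces the identity $f=\Lambda_0 f+\sum_i D_i\Lambda_i f$ because the Leibniz commutator $\sum_i e^n(D_i\zeta_{1,(n)})\cdot L_i[\cdots](e^{-n}\cdot)$ is telescoped into $\Lambda_0$. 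Without this re-localization your argument, as written, has a genuine hole that cannot be patched by the exponential decay of the kernel alone.
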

\begin{proof}
	It is provided in Lemma~\ref{220527700} that there exists linear maps
	$$
	\widetilde{\Lambda}_0,\,\ldots,\,\widetilde{\Lambda}_d\,:\,H_{p,\theta}^{\gamma}(\Omega)\rightarrow \cD'(\Omega)
	$$
	such that for any $g\in H_{p,\theta}^{\gamma}(\Omega)$, 
	\begin{equation}\label{2208280050}
		\begin{aligned}
			g=\widetilde{\Lambda}_0 g+\sum_{i=1}^dD_i(\widetilde{\Lambda}_ig)\quad\text{and}\quad \|\widetilde{\Lambda}_0 g\|_{H_{p,\theta}^{\gamma+1}(\Omega)}+\sum_{i=1}^d\|\widetilde{\Lambda}_i g\|_{H_{p,\theta-p}^{\gamma+1}(\Omega)}\lesssim_{\cI}\| g\|_{H_{p,\theta}^{\gamma}(\Omega)}\,.	
		\end{aligned}
	\end{equation}
	For $f\in \Psi H_{p,\theta}^{\gamma}$ ($\Leftrightarrow\Psi^{-1}f\in H_{p,\theta}^{\gamma}(\Omega)$), put
	\begin{align*}
		&\Lambda_0f=\Psi \widetilde{\Lambda}_0(\Psi^{-1}f)-\sum_{i=1}^d\big(D_i\Psi\big)\times \widetilde{\Lambda}_i(\Psi^{-1}f)\,;\\
		&\Lambda_if=\Psi \widetilde{\Lambda}_i(\Psi^{-1}f)\qquad \qquad \text{for}\,\,i=1,\,\ldots,\,d\,.
	\end{align*}
	Then we have
	$$
	\Big(\Lambda_0+\sum_{i=1}^dD_i\Lambda_i\Big)f=\Psi\Big(\widetilde{\Lambda}_0+\sum_{i=1}^dD_i\widetilde{\Lambda}_i\Big)\big(\Psi^{-1}f\big)=f\,.
	$$
	Moreover, Lemma~\ref{21.09.29.4}.(3) and \eqref{2208280050} imply that
	\begin{alignat*}{2}
		&&&\|\Psi^{-1}\Lambda_0f\|_{H_{p,\theta}^{\gamma+1}(\Omega)}+\sum_{i=1}^d\|\Psi^{-1}\Lambda_if\|_{H_{p,\theta-p}^{\gamma+1}(\Omega)}\\
		&\leq\,&&\|\widetilde{\Lambda}_0(\Psi^{-1} f)\|_{H_{p,\theta}^{\gamma+1}(\Omega)}+\sum_{i=1}^d\Big(\|\Psi^{-1}D\Psi\cdot \widetilde{\Lambda}_i(\Psi^{-1}f)\|_{H_{p,\theta}^{\gamma+1}(\Omega)}+\|\widetilde{\Lambda}_i(\Psi^{-1}f)\|_{H_{p,\theta-p}^{\gamma+1}(\Omega)}\Big)\\
		&\lesssim_{\cI'}\,&&\|\widetilde{\Lambda}_0(\Psi^{-1} f)\|_{H_{p,\theta}^{\gamma+1}(\Omega)}+\sum_{i=1}^d\|\widetilde{\Lambda}_i(\Psi^{-1}f)\|_{H_{p,\theta-p}^{\gamma+1}(\Omega)}\\
		&\lesssim_\cI\,&&\|\Psi^{-1}f\|_{H_{p,\theta}^{\gamma}(\Omega)}\,.
	\end{alignat*}
	Therefore the proof is completed.
\end{proof}

\begin{corollary}\label{21.05.26.3}
	For any $n\in\bN$ and $f\in\cD'(\Omega)$,
	\begin{align*}
		\|f\|_{\Psi H_{p,\theta}^{\gamma}(\Omega)}\simeq_{\cI',n} \inf\Big\{\sum_{|\alpha|\leq n}\|f_{\alpha}\|_{\Psi H_{p,\theta-|\alpha|p}^{\gamma+n}(\Omega)}:\,f=\sum_{|\alpha|\leq n}D^{\alpha}f_{\alpha}\Big\}\,.
	\end{align*}
\end{corollary}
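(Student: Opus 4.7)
My plan is to establish the two-sided equivalence separately. The upper bound (that any decomposition of $f$ controls $\|f\|_{\Psi H_{p,\theta}^{\gamma}(\Omega)}$) is a direct consequence of Lemma~\ref{21.09.29.4}.(3) together with the embedding Lemma~\ref{21.05.20.3}.(1), while the lower bound (existence of a decomposition with controlled norm) proceeds by induction on $n$, iterating the one-step splitting Lemma~\ref{22.02.16.1}.

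For the upper bound, given a decomposition $f=\sum_{|\alpha|\leq n}D^\alpha f_\alpha$, I would apply Lemma~\ref{21.09.29.4}.(3) with $\Psi$ replaced by $\Psi^{-1}$ (itself a regular Harnack function by Example~\ref{21.05.18.2}.(3)), $k=0$, and $l=|\alpha|$, which gives
\[
\|D^\alpha f_\alpha\|_{\Psi H_{p,\theta}^{\gamma}(\Omega)}=\|\Psi^{-1}D^\alpha f_\alpha\|_{H_{p,\theta}^{\gamma}(\Omega)}\lesssim_{\cI'}\|\Psi^{-1}f_\alpha\|_{H_{p,\theta-|\alpha|p}^{\gamma+|\alpha|}(\Omega)}=\|f_\alpha\|_{\Psi H_{p,\theta-|\alpha|p}^{\gamma+|\alpha|}(\Omega)}.
\]
Using the $\Psi$-version of Lemma~\ref{21.05.20.3}.(1) (via the isometry $g\mapsto\Psi^{-1}g$) and $|\alpha|\leq n$, this is further dominated by $\|f_\alpha\|_{\Psi H_{p,\theta-|\alpha|p}^{\gamma+n}(\Omega)}$. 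Summing over $\alpha$ and taking the infimum over decompositions yields the desired estimate.

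For the lower bound I would induct on $n\in\bN$. The case $n=1$ is exactly Lemma~\ref{22.02.16.1}, taking $f_0:=\Lambda_0 f$ and $f_{e_i}:=\Lambda_i f$. For the inductive step from $n$ to $n+1$, I would first apply Lemma~\ref{22.02.16.1} once to $f\in\Psi H_{p,\theta}^{\gamma}(\Omega)$, obtaining
\[
f=\Lambda_0 f+\sum_{i=1}^{d}D_i\Lambda_i f,\qquad \Lambda_0 f\in\Psi H_{p,\theta}^{\gamma+1}(\Omega),\ \Lambda_i f\in\Psi H_{p,\theta-p}^{\gamma+1}(\Omega),
\]
with both pieces bounded by $\|f\|_{\Psi H_{p,\theta}^{\gamma}(\Omega)}$. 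Then I would apply the inductive hypothesis at level $n$ to each of these $d+1$ functions, producing $\Lambda_0 f=\sum_{|\beta|\leq n}D^\beta g_\beta$ with $g_\beta\in\Psi H_{p,\theta-|\beta|p}^{\gamma+n+1}(\Omega)$ and $\Lambda_i f=\sum_{|\beta|\leq n}D^\beta h_{i,\beta}$ with $h_{i,\beta}\in\Psi H_{p,\theta-(|\beta|+1)p}^{\gamma+n+1}(\Omega)$. Substituting gives
\[
f=\sum_{|\beta|\leq n}D^\beta g_\beta+\sum_{i=1}^{d}\sum_{|\beta|\leq n}D^{\beta+e_i}h_{i,\beta},
\]
which I would reindex by $\alpha$ with $|\alpha|\leq n+1$: the coefficient $f_\alpha$ collects $g_\alpha$ (when $|\alpha|\leq n$) and the contribution $h_{i(\alpha),\,\alpha-e_{i(\alpha)}}$ for some fixed selection rule $i(\alpha)$ (e.g., the smallest $i$ with $\alpha_i\geq 1$). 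Since $h_{i,\beta}\in\Psi H_{p,\theta-|\beta+e_i|p}^{\gamma+n+1}(\Omega)$, the weight indices line up and only boundedly many pieces contribute to each $f_\alpha$, completing the induction.

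The only real bookkeeping is verifying that the weight shift $\theta\mapsto\theta-p$ introduced by each $D_i$-factor in Lemma~\ref{22.02.16.1} matches the shift $|\beta|p\mapsto(|\beta|+1)p$ in the reindexing; this is a clean cancellation. I do not anticipate a genuine analytic obstacle, because Lemma~\ref{22.02.16.1} and Lemma~\ref{21.09.29.4}.(3) already package all the underlying work; the corollary is effectively a bootstrapped reformulation.
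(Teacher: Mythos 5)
Your proof is correct. The direction that produces a decomposition with controlled norm matches the paper's: both iterate Lemma~\ref{22.02.16.1} (the paper writes ``repeatedly applying''; your induction on $n$ just makes the iteration explicit). The other direction is where you genuinely diverge. The paper estimates $\|f\|_{\Psi H_{p,\theta}^{\gamma}(\Omega)}$ through the duality characterization of Lemma~\ref{21.09.29.4}.(2), transferring the derivative onto the test function $g$ and using Lemma~\ref{21.09.29.4}.(3) to bound $\|\Psi D^{\alpha}g\|_{H_{p',\theta'+|\alpha|p'}^{-\gamma-n}(\Omega)}$ by $\|\Psi g\|_{H_{p',\theta'}^{-\gamma}(\Omega)}$. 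You instead apply Lemma~\ref{21.09.29.4}.(3), with $\Psi^{-1}$ in the role of $\Psi$ (legitimate since $\Psi^{-1}$ is again a regular Harnack function with constants controlled by $\mathrm{C}_2(\Psi)$), directly to each $D^{\alpha}f_{\alpha}$, and then close with the triangle inequality and the embedding Lemma~\ref{21.05.20.3}.(1) to raise the regularity index to $\gamma+n$. Both routes rest on the same derivative lemma; yours skips the dual-space detour and is a bit more elementary, while the paper's fits naturally next to the duality statement it has just proved. One imprecise phrase worth flagging: in your inductive reindexing there is no ``selection rule'' $i(\alpha)$ to choose --- every $h_{i,\beta}$ with $\beta+e_i=\alpha$ must land in $f_{\alpha}$, so $f_{\alpha}=g_{\alpha}+\sum_{\{i\,:\,\alpha_i\geq1\}}h_{i,\alpha-e_i}$ (with $g_{\alpha}$ omitted when $|\alpha|=n+1$); your immediately following remark that boundedly many pieces contribute reflects this correctly, and the weight-shift bookkeeping you verify is exactly the cancellation needed.
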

\begin{proof}
	Repeatedly applying Lemma~\ref{22.02.16.1}, we obtain linear maps
	$$
	\Lambda_{n,\alpha}:\Psi H_{p,\theta}^{\gamma}(\Omega)\rightarrow \Psi H_{p,\theta-|\alpha|p}^{\gamma+n}(\Omega)\,,
	$$
	indexed by multi-indices $\alpha$ with $|\alpha|\leq n$, such that for any $f\in\Psi H_{p,\theta}^{\gamma}(\Omega)$,
	\begin{align*}
		f=\sum_{|\alpha|\leq n}D^{\alpha}\big(\Lambda_{n,\alpha}f\big)\quad \text{and}\quad \sum_{|\alpha|\leq n}\|\Lambda_{n,\alpha}f\|_{\Psi H_{p,\theta-|\alpha|p}^{\gamma+n}(\Omega)}\lesssim_{\cI',n}\|f\|_{\Psi H_{p,\theta}^{\gamma}(\Omega)}\,.
	\end{align*}
	Therefore we obtain that for any $f\in\cD'(\Omega)$,
	$$
	\inf\Big\{\sum_{|\alpha|\leq n}\| f_{\alpha}\|_{\Psi H_{p,\theta-|\alpha|p}^{\gamma+n}(\Omega)}:\,f=\sum_{|\alpha|\leq n}D^{\alpha}f_{\alpha}\Big\}\lesssim_{\cI',n}\| f\|_{\Psi H_{p,\theta}^{\gamma}(\Omega)}\,.
	$$
	
	For the inverse inequality, let $f=\sum_{|\alpha|\leq n}D^{\alpha}f_{\alpha}$ where $f_{\alpha}\in H^{\gamma+n}_{p,\theta-|\alpha|p}(\Omega)$.
	It follows from Lemma~\ref{21.09.29.4}.(2) and (3) that for any $g\in C_c^{\infty}(\Omega)$,
	\begin{alignat*}{2}
		|\langle f,g\rangle |\,&\leq &&\sum_{|\alpha|\leq n}\big|\big\langle \Psi^{-1}f_{\alpha},\Psi D^{\alpha}g\rangle\big|\\
		&\lesssim_{\cI',n} &&\sum_{|\alpha|\leq n}\Big(\|\Psi^{-1}f_{\alpha}\|_{H_{\theta-|\alpha|p}^{\gamma+n}(\Omega)}\|\Psi D^{\alpha}g\|_{H_{p',\theta'+|\alpha|p'}^{-\gamma-n}(\Omega)}\Big)\\
		&\lesssim_{\cI',n} \Big(&&\sum_{|\alpha|\leq n}\|\Psi^{-1}f_{\alpha}\|_{H_{\theta-|\alpha|p}^{\gamma+n}(\Omega)}\Big)\|\Psi g\|_{H^{-\gamma}_{p',\theta'}(\Omega)}\,,
	\end{alignat*}
	where $p'=p/(p-1)$ and $\theta'/p'=d-\theta/p$.
	By taking the infimum over $\{f_\alpha\}_{|\alpha|\leq n}$ and applying Lemma~\ref{21.09.29.4}.(2), we have
	$$
	\| f\|_{\Psi H_{p,\theta}^{\gamma}(\Omega)}\lesssim_{\cI',n}\inf\Big\{\sum_{|\alpha|\leq n}\|  f_{\alpha}\|_{\Psi H^{\gamma+n}_{p,\theta-|\alpha|p}(\Omega)}:\,f=\sum_{|\alpha|\leq n}D^{\alpha}f_{\alpha}\Big\}\,.
	$$
	Therefore the proof is completed.
\end{proof}

We end this subsection with Proposition~\ref{220512537}, which is a Sobolev-H\"older embedding theorem for the spaces $\Psi H_{p,\theta}^{\gamma}(\Omega)$.
This proposition is not used in Subsection~\ref{0043}.
However, it provides H\"older estimates for solutions obtained in Theorem~\ref{21.09.29.1}.
For $k\in\bN_0$, $\alpha\in(0,1]$ and $\delta\in\bR$, we define the weighted H\"older norm
\begin{align*}
	|f|^{(\delta)}_{k,\alpha}:=\sum_{i=0}^k\sup_{\Omega}\left|\rho^{\delta+i}D^if\right|+\sup_{x,y\in\Omega}\frac{\left|\big(\trho^{\delta+k+\alpha} D^kf\big)(x)-\big(\trho^{\delta+k+\alpha} D^kf\big)(y)\right|}{|x-y|^{\alpha}}\,.
\end{align*}

\begin{prop}\label{220512537}
	Let $k\in\bN_0$, $\alpha\in(0,1]$.
	\begin{enumerate}
		\item For any $\delta\in\bR$,
		\begin{align*}
			\left|\Psi^{-1} f\right|^{(\delta)}_{k,\alpha}\simeq_{N} &\,\sum_{i=0}^k\sup_{x\in\Omega}\left|\Psi(x)^{-1}\rho(x)^{\delta+i} D^if(x)\right|\\
			&+\sup_{x\in\Omega}\bigg(\Psi^{-1}(x)\rho^{\delta+k+\alpha}(x)\sup_{y:|y-x|<\frac{\rho(x)}{2}}\frac{\left|D^kf(x)-D^kf(y)\right|}{|x-y|^{\alpha}}\bigg)\,,
		\end{align*}
		where $N=N(d,k,\alpha,\delta,\mathrm{C}_2(\Psi))$.
		
		\item
		If $\alpha\in(0,1)$ and $k+\alpha\leq \gamma-d/p$,
		then for any $f\in \Psi H^{\gamma}_{p,\theta}(\Omega)$,
		\begin{align*}
			\left|\Psi^{-1} f\right|^{(\theta/p)}_{k,\alpha}\leq N \|f\|_{\Psi H^{\gamma}_{p,\theta}(\Omega)}\,,
		\end{align*}
		where $N=N(\cI',k,\alpha)$.
	\end{enumerate}
\end{prop}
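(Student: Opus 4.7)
The plan is to reduce both parts to their unweighted versions via the Leibniz rule together with the pointwise bounds
\[
|D^m\Psi|\lesssim \rho^{-m}\Psi,\qquad |D^m(\Psi^{-1})|\lesssim \rho^{-m}\Psi^{-1}\qquad (m\in\bN_0),
\]
where the second bound follows from Example~\ref{21.05.18.2}.(3) applied with $\sigma=-1$. Throughout, $\trho\simeq\rho$ with $|D^m\trho|\lesssim\trho^{\,1-m}$ by \eqref{230130541}, so powers of $\trho$ and $\rho$ may be freely interchanged at the cost of constants depending on $d$, $k$, $\alpha$, $\delta$.

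For part (1), the inequality ``$\lesssim$'' is a direct Leibniz expansion: for each $i\leq k$,
\[
\rho^{\delta+i}|D^i(\Psi^{-1}f)|\lesssim \sum_{j=0}^{i}\rho^{\delta+i}|D^{i-j}(\Psi^{-1})|\,|D^jf|\lesssim \sum_{j=0}^{i}\Psi^{-1}\rho^{\delta+j}|D^jf|,
\]
and the reverse inequality follows by writing $D^jf=D^j(\Psi\cdot\Psi^{-1}f)$ and applying the same expansion with $\Psi$ in place of $\Psi^{-1}$. For the H\"older seminorm in $|\Psi^{-1}f|^{(\delta)}_{k,\alpha}$, I would split the pair $(x,y)$ into a ``near'' regime $|x-y|<\rho(x)/2$ and a ``far'' regime $|x-y|\geq \rho(x)/2$. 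In the far regime, the difference is controlled by the sup-norm terms $\sup_\Omega|\rho^{\delta+k}D^k(\Psi^{-1}f)|$ via the trivial estimate $\rho^{\delta+k+\alpha}(x)\leq 2^\alpha|x-y|^\alpha\rho^{\delta+k}(x)$, and those sup-norm terms are already handled by the equivalence just proved. In the near regime, on $B(x,\rho(x)/2)$ both $\Psi$ and $\rho$ are comparable to their values at $x$ (for $\Psi$ by the regular Harnack property plus Lemma~\ref{21.05.27.3}.(2), for $\rho$ by the triangle inequality), and a mean-value argument gives $|\Psi^{-1}(x)-\Psi^{-1}(y)|\lesssim|x-y|\rho(x)^{-1}\Psi^{-1}(x)$; applying Leibniz to the difference $D^k(\Psi^{-1}f)(x)-D^k(\Psi^{-1}f)(y)$ and using these comparisons pulls the factor $\Psi^{-1}(x)$ outside and leaves the localized seminorm of $D^kf$ on the right-hand side, together with lower-order sup-norm terms already accounted for.

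For part (2), the key input is the corresponding unweighted Sobolev--H\"older embedding for $H^\gamma_{p,\theta}(\Omega)$, available in Appendix~\ref{0082}: whenever $k+\alpha\leq\gamma-d/p$,
\[
|g|^{(\theta/p)}_{k,\alpha}\lesssim_{\cI,k,\alpha}\|g\|_{H^\gamma_{p,\theta}(\Omega)}\qquad\forall\,g\in H^\gamma_{p,\theta}(\Omega).
\]
Applying this with $g=\Psi^{-1}f\in H^\gamma_{p,\theta}(\Omega)$ and recalling that by definition $\|\Psi^{-1}f\|_{H^\gamma_{p,\theta}(\Omega)}=\|f\|_{\Psi H^\gamma_{p,\theta}(\Omega)}$ yields the estimate immediately; dependence of the constant on $\mathrm{C}_2(\Psi)$ enters only through the equivalence in (1) if one wishes to rewrite the left-hand side in the ``pulled-out'' form.

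The main obstacle will be the bookkeeping for the H\"older seminorm in (1)---specifically, matching the localized seminorm on the right-hand side (with $|y-x|<\rho(x)/2$) against the global seminorm built into $|\cdot|^{(\delta)}_{k,\alpha}$, while ensuring that the Harnack-type comparabilities of $\Psi$, $\Psi^{-1}$, and $\trho$ on intrinsic balls produce constants depending only on $d,k,\alpha,\delta,\mathrm{C}_2(\Psi)$. The near/far dichotomy together with the pointwise derivative bounds on $\Psi^{\pm 1}$ is tailored precisely to this purpose, and no additional structure on $\Omega$ is needed beyond what is already encoded in $\rho$ and $\trho$.
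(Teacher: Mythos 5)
Your proposal mirrors the paper's own argument very closely. For part (1), the paper simply states that the result follows "from the direct calculation and the definition of regular Harnack functions" and leaves it to the reader; your Leibniz expansion with the bounds $|D^m\Psi^{\pm1}|\lesssim\rho^{-m}\Psi^{\pm1}$, the near/far split at scale $\rho(x)/2$, and the Harnack comparability of $\Psi$ and $\rho$ on intrinsic balls is precisely the intended calculation and fills the gap correctly. For part (2), the reduction to $\Psi\equiv1$ by applying the unweighted embedding to $g=\Psi^{-1}f$ is exactly the paper's first step.

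The one inaccuracy is the citation: the unweighted Sobolev--H\"older embedding for $H^\gamma_{p,\theta}(\Omega)$ is \emph{not} contained in Appendix~\ref{0082}. That appendix collects Banach-space properties, duality, interpolation, pointwise multiplication, and embeddings between weighted Sobolev/Besov spaces, but no embedding into H\"older classes. The paper instead cites \cite[Theorem~4.3]{Lo1} and, for completeness, reproves it inline inside the proof of this very proposition: one applies the unit-scale embedding $\|\cdot\|_{C^{k,\alpha}}\lesssim\|\cdot\|_{H^\gamma_p(\bR^d)}$ to each localized, rescaled piece $(f\zeta_{0,(n)})(e^n\cdot)$, checks that for $x$ in the $n$-th dyadic shell only $O(1)$ many pieces contribute, and sums using the definition of the $H^\gamma_{p,\theta}(\Omega)$-norm. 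So your logical skeleton is right, but to make the argument self-contained you would need to supply this localize-and-rescale step (or cite Lototsky directly) rather than point to the appendix.
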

\begin{proof}
	(1)
	This result from the direct calculation and the definition of regular Harnack functions.
	Therefore we leave the proof to the reader.
	
	(2)
	We only need prove for $\Psi\equiv 1$, and the result for this case is stated in \cite[Theorem 4.3]{Lo1}. 
	We give a proof for the convenience of the reader.
	
	For $f\in H_{p,\theta}^{\gamma}(\Omega)$, the Sobolev embedding theorem implies
	\begin{align}\label{220614409}
		\left\|\big(f\zeta_{0,(n)}\big)(e^n\,\cdot\,)\right\|_{C^{k,\alpha}}\leq N\left\|\big(f\zeta_{0,(n)}\big)(e^n\,\cdot\,)\right\|_{H_p^{\gamma}}<\infty\,,
	\end{align}
	where $N=N(d,p,\gamma,k,\delta)$.
	Hence $f$ belongs to $C^k_{\mathrm{loc}}(\Omega)$.
	For $x\in\Omega$, take $n_0\in\bZ$ such that $e^{n_0-1}\leq \rho(x)\leq e^{n_0}$.
	If $|x-y|<\frac{\rho(x)}{2}$, then $e^{n_0-2}\leq \rho(y)\leq e^{n_0+2}$.
	Take constants $A$ and $B$ depending only on $d$ such that
	\begin{align*}
		A^{-1}\rho\leq \trho \leq A\rho\,\,,\quad\text{and}\quad\sum_{|n|\leq B}\zeta_0\big(e^nt\big)\equiv 1\quad\text{for all}\quad t\in [(Ae^2)^{-1},Ae^2]\,.
	\end{align*}
	Then we have
	$$
	\sum_{|n-n_0|\leq B}\zeta_{0,(n)}\equiv 1\quad \text{on}\quad U_{n_0}:=\left\{y\,:\,e^{n_0-2}\leq \rho(y)\leq e^{n_0+2}\right\}.
	$$
	Due to $B(x,\rho(x)/2)\subset U_{n_0}$ and \eqref{220614409}, we have
	\begin{alignat*}{2}
		&&&\sum_{i=0}^k\left(\rho^{\theta/p+i}(x)\left|D^if(x)\right|\right)+\rho^{\theta/p+k+\alpha}(x)\sup_{y:|y-x|<\frac{\rho(x)}{2}}\frac{\left|D^kf(x)-D^kf(y)\right|}{|x-y|^{\alpha}}\\
		&\lesssim_N &&\,e^{n_0\theta/p}\bigg(\sum_{i=0}^k\left|D^i\big(f(e^{n_0}\,\cdot\,)\big)(x)\right|\\
		&&&\qquad\qquad  +\sup_{y:e^{-{n_0}}y\in U_{n_0}}\frac{\left|D^k\big(f(e^{n_0}\,\cdot\,)\big)(x)-D^k\big(f(e^{n_0}\,\cdot\,)\big)(y)\right|}{|x-y|^{\alpha}}\bigg)\\
		&\leq &&\sum_{|n-n_0|\leq B}e^{n_0\theta/p}\left\|(f\zeta_{0,(n)})(e^{n}\,\cdot\,)\right\|_{C^{k,\alpha}}\\
		&\lesssim_N &&\left(\sum_{n\in\bZ}e^{n\theta}\left\|(f\zeta_{0,(n)})(e^{n}\,\cdot\,)\right\|_{H_{p}^{\gamma}}^p\right)^{1/p}\,,
	\end{alignat*}
	where $N=N(d,p,\gamma,\theta,k,\delta)$.
	By (1) of this proposition, the proof is completed.
\end{proof}

\vspace{2mm}

\subsection{Solvability of the Poisson equation}\label{0043}\,

Throughout this subsection, we assume \eqref{22082801111}.
The goal of this subsection is to prove the following theorem:

\begin{thm}\label{21.09.29.1}
	Let
	\begin{itemize}
		\item[]$\Omega$ admit the Hardy inequality \eqref{hardy};\vspace{0.5mm}
		\item[]$\psi$ be a superharmonic Harnack function on $\Omega$;\vspace{0.5mm}
		\item[]$\mu\in (-1/p,1-1/p)$,\vspace{0.5mm}
	\end{itemize}
	and suppose that $\Psi$ is a regularization of $\psi$.
	For any $\lambda\geq 0$ and $f\in \Psi^{\mu}H_{p,d+2p-2}^{\gamma}(\Omega)$, the equation
	\begin{align}\label{220613956}
		\Delta u-\lambda u=f
	\end{align}
	has a unique solution $u$ in $\Psi^{\mu}H_{p,d-2}^{\gamma+2}(\Omega)$.
	Moreover, we have
	\begin{align}\label{220606433}
		\|u\|_{\Psi^{\mu}H_{p,d-2}^{\gamma+2}(\Omega)}+\lambda\|u\|_{\Psi^{\mu}H_{p,d+2p-2}^{\gamma}(\Omega)}\leq N\|f\|_{\Psi^{\mu}H_{p,d+2p-2}^{\gamma}(\Omega)},
	\end{align}
	where $N=N(d,p,\gamma,\mu,\mathrm{C}_0(\Omega),\mathrm{C}_2(\Psi),\mathrm{C}_3(\psi,\Psi))$.
\end{thm}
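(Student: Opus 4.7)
The plan is a three-step argument: establish the a priori estimate \eqref{220606433} on smooth, compactly supported functions; construct solutions by approximating $f$ with $C_c^\infty(\Omega)$-data using Lemma~\ref{21.05.25.3}; and close uniqueness via duality. The mechanism linking these steps is the equivalence $\Psi \simeq \psi$ from Definition~\ref{21.10.14.1}.(3), which permits one to swap freely between the superharmonic pointwise object used in the energy estimates of Section~\ref{0030} and the smooth regularization required for the Sobolev-space manipulations of Subsection~\ref{0042}.

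For the a priori bound I first note that $\mu \in (-1/p,1-1/p)$ is equivalent to $c:=-\mu p\in(-p+1,1)$, which is precisely the admissible range in Theorem~\ref{21.05.13.2}. Applied with $\phi=\psi$ to $u\in C_c^\infty(\Omega)$, that theorem yields
\begin{align*}
\|u\|_{\Psi^\mu L_{p,d-2}(\Omega)}^p \;\simeq\; \int_\Omega |u|^p\psi^{-\mu p}\rho^{-2}\,dx \;\lesssim\; \int_\Omega |\Delta u-\lambda u|^p\psi^{-\mu p}\rho^{2p-2}\,dx \;\simeq\; \|f\|_{\Psi^\mu L_{p,d+2p-2}(\Omega)}^p.
\end{align*}
To bootstrap to higher $\gamma$ I invoke Krylov's localization argument (Lemmas~\ref{21.05.13.8}--\ref{21.05.13.9} of the paper): decompose $u$ via the partition of unity $\{\zeta_{0,(n)}\}$ from \eqref{230130543}, rescale each shell $\{\trho\simeq e^n\}$ to unit size, apply classical Bessel potential estimates on $\bR^d$ for $(\Delta-\lambda)$, and sum in the weighted $\ell^p$-sense. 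The bound $|D^k\Psi|\lesssim \rho^{-k}\Psi$ keeps $\Psi^\mu$ essentially constant on each shell, so the weight passes transparently through the rescaling. The outcome is
\begin{align*}
\|u\|_{\Psi^\mu H_{p,d-2}^{\gamma+2}(\Omega)} + \lambda\|u\|_{\Psi^\mu H_{p,d+2p-2}^{\gamma}(\Omega)} \;\lesssim\; \|f\|_{\Psi^\mu H_{p,d+2p-2}^{\gamma}(\Omega)} + \|u\|_{\Psi^\mu L_{p,d-2}(\Omega)},
\end{align*}
in which the last term is absorbed by the preceding zeroth-order estimate. Density (Lemma~\ref{21.09.29.4}.(1)) then extends \eqref{220606433} to all $u\in\Psi^\mu H_{p,d-2}^{\gamma+2}(\Omega)$.

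For existence with $f\in C_c^\infty(\Omega)$, Lemma~\ref{21.05.25.3} supplies $u\in L_{1,\mathrm{loc}}(\Omega)$ solving $\Delta u-\lambda u=f$ distributionally and satisfying the weighted zeroth-order bound. Standard interior $L_p$-regularity then places $u$ in $H_{p,\mathrm{loc}}^{\gamma+2}(\Omega)$, and applying the localization estimate to suitable cutoffs of $u$ and passing to the limit lifts $u$ into $\Psi^\mu H_{p,d-2}^{\gamma+2}(\Omega)$ with \eqref{220606433}. For general $f$, approximation by $f_n\in C_c^\infty(\Omega)$ (Lemma~\ref{21.09.29.4}.(1)) produces a sequence of solutions $u_n$ that is Cauchy in $\Psi^\mu H_{p,d-2}^{\gamma+2}$ by the a priori bound; the limit $u$ solves the equation. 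Uniqueness follows by duality: if $v\in\Psi^\mu H_{p,d-2}^{\gamma+2}$ satisfies $\Delta v-\lambda v=0$, pair $v$ against arbitrary $g\in C_c^\infty(\Omega)$ by invoking the just-proved existence result with dual configuration $(p',-\mu)$, where $p'=p/(p-1)$ and $-\mu\in(-1/p',1-1/p')$; the same $\psi$ remains a valid superharmonic Harnack function since superharmonicity and the Harnack constant are intrinsic to $\psi$. This produces $w$ with $\Delta w-\lambda w=g$ in the dual space, and then $\langle v,g\rangle=\langle v,\Delta w-\lambda w\rangle=\langle\Delta v-\lambda v,w\rangle=0$ by an integration by parts justified through simultaneous $C_c^\infty$-approximation of both factors (Lemma~\ref{21.09.29.4}.(1) applied in both scales), forcing $v\equiv 0$.

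The main obstacle is the simultaneous coexistence of two weight objects. The function $\psi$ is the raw superharmonic function on which the integration-by-parts of Theorem~\ref{21.05.13.2} (via Lemma~\ref{03.30}) depends, and it is exactly this identity that forces the sharp exponent range $c\in(-p+1,1)$; meanwhile $\Psi$ is the smooth regularization indispensable for the rescaling-and-summation localization of Subsection~\ref{0042}, since the chain rule bounds $|D^k\Psi|\lesssim\rho^{-k}\Psi$ are what allow the weight to commute with differentiation at every scale. Every estimate must be phrased in whichever of the two is convenient, with $\mathrm{C}_3(\psi,\Psi)$ absorbing the distortion; this book-keeping is exactly what produces the dependence on $\mathrm{C}_0(\Omega)$, $\mathrm{C}_2(\Psi)$, and $\mathrm{C}_3(\psi,\Psi)$ in the final constant. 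A secondary technical point is carefully handling the truncation-and-limit argument in the existence step, since truncations of the distributional solution from Lemma~\ref{21.05.25.3} do not themselves lie in $C_c^\infty(\Omega)$: one must either employ a cutoff-and-mollification argument together with interior regularity, or identify $u$ directly from the variational pairing supplied by Lemma~\ref{21.09.29.4}.(2) combined with the a priori estimate and weak compactness.
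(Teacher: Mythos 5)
Your overall strategy is sound and roughly parallels the paper's: zeroth-order bound from Theorem~\ref{21.05.13.2}, localization bootstrap via Lemma~\ref{21.05.13.8}, then approximation of $f$ by smooth compactly supported data using Lemma~\ref{21.05.25.3}. But there are two concrete gaps, both concerning the range of $\gamma$ and the weight mismatch between the two spaces in the estimate.

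First, your absorption step only works for $\gamma\geq 0$. After the localization bootstrap you arrive at
\begin{align*}
\|u\|_{\Psi^\mu H_{p,d-2}^{\gamma+2}(\Omega)}+\lambda\|u\|_{\Psi^\mu H^{\gamma}_{p,d+2p-2}(\Omega)}
\lesssim \|f\|_{\Psi^\mu H^{\gamma}_{p,d+2p-2}(\Omega)}+\|u\|_{\Psi^\mu L_{p,d-2}(\Omega)}\,,
\end{align*}
and you dominate the last term by $\|f\|_{\Psi^\mu L_{p,d+2p-2}(\Omega)}$ via Theorem~\ref{21.05.13.2}. To conclude, you need $\|f\|_{\Psi^\mu L_{p,d+2p-2}(\Omega)}\lesssim\|f\|_{\Psi^\mu H^{\gamma}_{p,d+2p-2}(\Omega)}$, but by Lemma~\ref{21.05.20.3}.(1) this inclusion goes the other way when $\gamma<0$. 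The paper handles $\gamma<0$ by first reducing to $\gamma=0$ via Lemma~\ref{21.11.12.1}, whose proof decomposes $f=f^0+\sum_iD_if^i$ with each $f^\alpha$ of strictly higher regularity (Lemma~\ref{22.02.16.1}) and then descends by induction. Without this reduction, the a priori estimate for negative $\gamma$ does not close.

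Second, even for $\gamma\geq 0$, the density step you use to pass from $C_c^\infty(\Omega)$ to general $u$ in $\Psi^\mu H^{\gamma+2}_{p,d-2}(\Omega)$ is insufficient when $\lambda>0$. The source norm $\Psi^\mu H^{\gamma+2}_{p,d-2}(\Omega)$ does not embed into the target norm $\Psi^\mu H^{\gamma}_{p,d+2p-2}(\Omega)$, so a sequence $u_n\to u$ in the former space does not control the $\lambda\|u_n-u\|_{\Psi^\mu H^\gamma_{p,d+2p-2}}$ term, and consequently $\Delta u_n-\lambda u_n$ need not converge to $\Delta u-\lambda u$ in $\Psi^\mu H^\gamma_{p,d+2p-2}(\Omega)$. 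What you need is \emph{simultaneous} approximation in both spaces, which is exactly Lemma~\ref{21.09.29.4}.(5), not part (1). The paper first uses Lemma~\ref{21.05.13.8} (which takes distributional hypotheses and needs no smoothness of $u$) to place $u$ in $\Psi^\mu H^\gamma_{p,d+2p-2}(\Omega)$, and only then invokes part (5). Your duality-based uniqueness is not wrong but is an unnecessary detour that requires the same simultaneous-approximation machinery: once the a priori estimate holds for all $u\in\Psi^\mu H^{\gamma+2}_{p,d-2}(\Omega)$ with $\Delta u-\lambda u\in\Psi^\mu H^\gamma_{p,d+2p-2}(\Omega)$, uniqueness follows by setting $f=0$, with no need to solve the adjoint problem.
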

Recall that $\mathrm{C}_0(\Omega)$ is the constant in \eqref{hardy}, and $\mathrm{C}_2(\Psi)$ and $\mathrm{C}_3(\psi,\Psi)$ are the constants in Definition~\ref{21.10.14.1}.

In Theorem~\ref{21.09.29.1}, one can take $\psi=\Psi=1_{\Omega}$.
Another example of $\psi$ is introduced in Example~\ref{220912411} which is associated with the Green function, and valid for any domain admitting the Hardy inequality.

\begin{remark}\label{220617}
	The spaces $\Psi^{\mu} H_{p,d-2}^{\gamma+2}(\Omega)$ and $\Psi^{\mu} H_{p,d+2p-2}^{\gamma}(\Omega)$ in Theorem~\ref{21.09.29.1} do not depend on the specific choice of $\Psi$ among regularizations of $\psi$ (see Remark~\ref{21.10.05.2}).
	If we take $\Psi$ as $\widetilde{\psi}$ which is the regularization of $\psi$ provided in Lemma~\ref{21.05.27.3}.(1),  
	then Theorem~\ref{21.09.29.1} can be reformulated without including $\Psi$, by taking $\Psi=\widetilde{\psi}$.
	Indeed, $\mathrm{C}_2(\widetilde{\psi})$ and $\mathrm{C}_3(\psi,\widetilde{\psi})$ depend only on $d$ and $\mathrm{C}_1(\psi)$, the constant $N$ in \eqref{220606433} depends only on $d$, $p$, $\gamma$, $\mu$, $\mathrm{C}_0(\Omega)$, and $\mathrm{C}_1(\psi)$.
	Therefore,  
	Additionally, for the case $\gamma\in\bZ$, equivalent norms of $\widetilde{\psi}^{\,\mu}H_{p,d-2}^{\gamma+2}(\Omega)$ and $\widetilde{\psi}^{\,\mu}H_{p,d+2p-2}^{\gamma}(\Omega)$ are provided in Lemma~\ref{220512433} and Corollary~\ref{21.05.26.3}.
\end{remark}

\begin{remark}
	If $\mu\notin (-1/p,1-1/p)$, then Theorem~\ref{21.09.29.1} does not hold in general, as pointed out in \cite[Remark 4.3]{Krylov1999-1}.
	To observe this, consider $\Omega=(0,\pi)$, $\psi(x)=\Psi(x)=\sin x$, and $\gamma=0$, and refer Lemmas~\ref{220512433} and \ref{21.05.13.8}.
	
	For $\mu\geq 1-1/p$, we aim to prove the non-existence of solutions to the equation $\Delta u=f$ in $\Psi^{\mu}H_{p,d-2}^2(\Omega)$, for any fixed $f\in C_c^{\infty}(\Omega)$ such that $f\leq 0$ and $f\not\equiv 0$.
	Assume that there exists $u_1\in \Psi^{\mu}H_{p,d-2}^2(\Omega)$ such that $\Delta u_1=f$.
	Since $\Omega=(0,\pi)$ is bounded, we have
	\begin{align*}
		u_1\in\Psi^{\mu}H_{p,d-2}^2(\Omega)=H_{p,d-\mu p-2}^2(\Omega)\subset H_{p,d-p}^2(\Omega)=\Psi^{1-2/p}H_{p,d-2}^2(\Omega)\,.
	\end{align*}
	Let $u_0$ be the classical solution of the equation
	$$
	\Delta u=f\quad\text{on}\,\,\,(0,\pi)\quad ;\quad u(0)=u(\pi)=0.
	$$
	Then $u_0(x)\simeq \sin x$, which implies that
	$$
	u_0\in L_{p,d-\widetilde{\mu} p-2}(\Omega)\,(=\Psi^{\widetilde{\mu}}L_{p,d-2}(\Omega))\quad\Longleftrightarrow \quad \widetilde{\mu}<1-1/p\,.
	$$
	Due to Lemma~\ref{21.05.13.8}, we have 
	\begin{align}\label{230328442}
		u_0\in \Psi^{\widetilde{\mu}}H^2_{p,d-2}(\Omega)\quad\Longleftrightarrow \quad \widetilde{\mu}<1-1/p\,.
	\end{align}
	Since $u_0,\,u_1\in \Psi^{1-2/p} H_{p,d-2}^2(\Omega)$ and $\Delta u_1=\Delta u_0=f$, by the uniqueness of solutions in $\Psi^{1-2/p}H_{p,d-2}^2(\Omega)$ (see Theorem~\ref{21.09.29.1}), we conclude that $u_1=u_0$, which implies $u_1\in \Psi^{\mu} H_{p,d-2}^2(\Omega)$.
	However, this leads to a contradiction, since $\mu\geq 1-1/p$ (see \eqref{230328442}).
	Therefore, there are no solutions to the equation $\Delta u=f$ in $\Psi^{\mu}H_{p,d-2}^{2}(\Omega)$.
	
	If $\mu< -1/p$, then $1_{\Omega}$ belong to $\Psi^{\mu}H^2_{p,d-2}(\Omega)$ (see Lemma~\ref{220512433}).
	Therefore the equation $\Delta u=0$ has at least two solutions in $\Psi^{\mu}H^2_{p,d-2}(\Omega)$.
	
	Consider the case $\mu=-1/p$.
	For $n\in\bN$, take $\zeta_n\in C_c^{\infty}(\Omega)$ such that
	$$
	1_{\left[\frac{2}{n},\pi-\frac{2}{n}\right]}\leq \zeta_n \leq 1_{\left[\frac{1}{n},\pi-\frac{1}{n}\right]}\quad\text{and}\quad \left|D^k\zeta_n\right|\leq N(k)n^{k}\,.
	$$
	One can observe that
	$$
	\log n\lesssim\left\|\zeta_n\right\|_{H_{p,d-1}^2(\Omega)}^p\quad\text{and}\quad \left\|\Delta \zeta_n\right\|_{L_{p,d+2p-1}(\Omega)}^p\lesssim 1
	$$
	for all large enough $n$.
	Therefore there is no constant $N$ satisfying \eqref{220606433} for $\gamma=0$.
\end{remark}

\begin{example}\label{220912411}
	Let $\Omega\subset \bR^d$ be a domain admitting the Hardy inequality.
	We claim that $\phi_0:=G_{\Omega}(x_0,\,\cdot\,)\wedge 1$ is a superharmonic Harnack function on $\Omega$, where $G_{\Omega}$ is the Green function of the  Poisson equation in $\Omega$ (see Remark~\ref{221022314} for $G_{\Omega}$), and $x_0$ is an arbitrary fixed point of $\Omega$. 
	Note that
	\begin{itemize}
		\item[] $G_{\Omega}(x_0,\,\cdot\,)$ is a positive classical superharmonic function on $\Omega$;
		\item[] $G_{\Omega}(x_0,\,\cdot\,)$ is harmonic on $\Omega\setminus \{x_0\}$\,.
	\end{itemize}
	This implies that $\phi_0$ is a classical superharmonic function on $\Omega$ (see Proposition~\ref{21.05.18.1}.(1)).
	
	For $x\in\Omega$, denote $B(x)=B\big(x,\rho(x)/8\big)$.
	If $|x-x_0|>\rho(x)/4$, then $G_{\Omega}(x_0,\,\cdot\,)$ is harmonic on $B\big(x,\rho(x)/4\big)$.
	By the Harnack inequality, we have
	\begin{align*}
		\sup_{B(x)}\phi_0=\Big(\sup_{y\in B(x)}G_{\Omega}(x_0,y)\Big)\wedge 1\lesssim_d \Big(\inf_{y\in B(x)}G_{\Omega}(x_0,y)\Big)\wedge 1=\inf_{B(x)}\phi_0\,.
	\end{align*}
	If $|x-x_0|\leq \rho(x)/4$, then $\rho(x)\leq 4\rho(x_0)/3$, which implies $B(x)\subset B\big(x_0,\rho(x_0)/2\big)$.
	By Proposition \ref{21.04.23.3}.(3), there exists $\epsilon_0\in(0,1]$ such that $G(x_0,\,\cdot\,)\geq \epsilon_0$ on $B\big(x_0,\rho(x_0)/2\big)$.
	Therefore we have
	\begin{align*}
		\sup_{B(x)}\phi_0&\leq 1\leq \epsilon_0^{-1}\inf_{B(x)}\phi_0\,.
	\end{align*}
	Consequently, $\phi_0$ is a superharmonic Harnack function on $\Omega$.
	
	It is worth noting that $\phi_0$ is the smallest positive classical superharmonic function, up to constant multiples. That is, if $\phi$ is a positive classical superharmonic function on $\Omega$, then there exists $N_0=N(\phi,\Omega,x_0)>0$ such that $\phi_0\leq N_0\phi$ on $\Omega$.
	To prove this, we start by noting that $G_{\Omega}(x_0,\cdot)$ is continuous in $\partial B(x_0,\rho(x_0)/2)$ and $\phi$ has the minimum in $\partial B(x_0,\rho(x_0)/2)$. Indeed, $G_{\Omega}(x_0,\cdot)$ is harmonic and $\phi$ is superharmonic in $\Omega\setminus \{x_0\}$.
	Take $M\geq 1$ such that
	$$
	G_{\Omega}(x_0,\cdot)\leq M\phi\quad\text{on}\quad \partial B(x_0,\rho(x_0)/2)\,.
	$$
	Then we have
	$$
	\phi_0\leq G_{\Omega}(x_0,\cdot)\leq M\phi\quad\text{on}\quad \Omega\setminus B(x_0,\rho(x_0)/2)\,,
	$$
	which follows from properties of $G_{\Omega}$ (see \cite[Lemma 4.1.8]{AG}).
	In addition, we obtain that $\phi_0\leq 1\leq M_1\phi$ on $B\big(x_0,\rho(x_0)/2\big)$, where $M_1^{-1}:=\min_{\overline{B}(x_0,\rho(x_0)/2)}\phi>0$. 
\end{example}

To prove Theorem~\ref{21.09.29.1}, we need the following two lemmas; the proof of Theorem~\ref{21.09.29.1} is provided after the proof of Lemma~\ref{21.11.12.1}:

\begin{lemma}[Higher order estimates]\label{21.05.13.8}
	Let $\lambda\geq 0$, and suppose that $u,\,f\in\cD'(\Omega)$ satisfy
	$$
	\Delta u-\lambda u=f\,.
	$$
	Then for any $s\in\bR$,
	\begin{align}\label{21.09.30.1}
		\| u\|_{\Psi H^{\gamma+2}_{p,\theta}(\Omega)}+\lambda\| u\|_{\Psi  H^{\gamma}_{p,\theta+2p}(\Omega)}\leq N\left(\| u\|_{ \Psi H^{s}_{p,\theta}(\Omega)}+\|f\|_{\Psi H^{\gamma}_{p,\theta+2p}(\Omega)}\right)\,,
	\end{align}
	where $N=N(d,p,\theta,\gamma,C_2(\Psi),s)$.
\end{lemma}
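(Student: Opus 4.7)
The plan is a two-step reduction. First, I would prove the estimate in the case $\Psi \equiv 1$, reducing to the standard weighted $L_p$ theory for the Laplacian developed in \cite{Krylov1999-1,Lo1} (and recalled in Appendix~\ref{0082}). Second, I would use the substitution $v = \Psi^{-1} u$ to reduce the general regular Harnack case to $\Psi \equiv 1$.

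For the $\Psi \equiv 1$ step, the argument is the classical localization-and-rescale: apply the partition of unity $\{\zeta_{0,(n)}\}$ from \eqref{230130542} and consider $u_n(x) := (\zeta_{0,(n)} u)(e^n x)$, which is compactly supported in a fixed annulus in $\mathbb{R}^d$. On $\operatorname{supp}(\zeta_{0,(n)})$ we have $\rho \simeq e^n$, so rescaling $\Delta u - \lambda u = f$ by $e^n$ and multiplying $u$ by $\zeta_{0,(n)}$ yields
\begin{equation*}
\Delta u_n - (e^{2n}\lambda) u_n \;=\; e^{2n}\,\bigl(\zeta_{0,(n)} f\bigr)(e^n\cdot) \;+\; 2 (\nabla \zeta_{0,(n)})(e^n\cdot)\cdot (\nabla u)(e^n\cdot)\, e^n + u(e^n\cdot)\,(\Delta \zeta_{0,(n)})(e^n\cdot)\, e^{2n},
\end{equation*}
and applying the standard $\mathbb{R}^d$-estimate $\|w\|_{H^{\gamma+2}_p}+\mu\|w\|_{H^\gamma_p}\lesssim \|w\|_{H^s_p}+\|(\Delta-\mu)w\|_{H^\gamma_p}$ (uniform in $\mu \geq 0$) to $w=u_n$, multiplying by $e^{n\theta}$, and summing over $n$ gives the $\Psi\equiv 1$ version of \eqref{21.09.30.1}. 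The commutator terms with $\nabla\zeta_{0,(n)}$, $\Delta\zeta_{0,(n)}$ are supported in neighboring annuli and by the bounded overlap in \eqref{230130542} contribute $\lesssim \|u\|_{H^s_{p,\theta}}$.

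For the reduction to $\Psi\equiv 1$, I set $v := \Psi^{-1} u$ and use Leibniz's rule $\Delta(\Psi v) = \Psi\Delta v + 2\nabla\Psi\cdot\nabla v + v\Delta\Psi$ to rewrite the equation as
\begin{equation*}
\Delta v - \lambda v \;=\; \Psi^{-1} f \;-\; 2\Psi^{-1}\nabla\Psi\cdot\nabla v \;-\; \Psi^{-1}(\Delta\Psi)\, v.
\end{equation*}
Applying the $\Psi\equiv 1$ estimate to $v$, the main term contributes $\|\Psi^{-1}f\|_{H^\gamma_{p,\theta+2p}}=\|f\|_{\Psi H^\gamma_{p,\theta+2p}}$. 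For the remainders, the regular Harnack bounds $|D^k\Psi|\lesssim \rho^{-k}\Psi$ imply that the coefficients $\Psi^{-1}\nabla\Psi$ and $\Psi^{-1}\Delta\Psi$ satisfy pointwise bounds by $\rho^{-1}$ and $\rho^{-2}$ respectively, together with the corresponding derivative bounds needed to apply Lemma~\ref{21.05.20.3}.(5). Combined with the weight-shift identity of Lemma~\ref{21.05.20.3}.(3), this gives
\begin{equation*}
\bigl\|\Psi^{-1}\nabla\Psi\cdot\nabla v\bigr\|_{H^\gamma_{p,\theta+2p}} \lesssim \|v\|_{H^{\gamma+1}_{p,\theta}}, \qquad \bigl\|\Psi^{-1}(\Delta\Psi)\,v\bigr\|_{H^\gamma_{p,\theta+2p}} \lesssim \|v\|_{H^\gamma_{p,\theta}}.
\end{equation*}
Both norms are strictly lower order than $\|v\|_{H^{\gamma+2}_{p,\theta}}$, so by the standard interpolation $\|v\|_{H^{\gamma+1}_{p,\theta}}\leq \epsilon\|v\|_{H^{\gamma+2}_{p,\theta}}+N(\epsilon,s)\|v\|_{H^s_{p,\theta}}$ (inherited from Bessel potentials on $\mathbb{R}^d$ via the rescaling) they can be absorbed for small $\epsilon$. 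Translating back via $\|v\|_{H^{\gamma+2}_{p,\theta}}=\|u\|_{\Psi H^{\gamma+2}_{p,\theta}}$ yields \eqref{21.09.30.1}.

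The main technical obstacle is the commutator bookkeeping in the reduction step: one must verify that the coefficients $\Psi^{-1}D^\alpha\Psi$ fall exactly into the class of multipliers of Lemma~\ref{21.05.20.3}.(5), which requires bounding all derivatives of the coefficient by $\rho^{-|\alpha|-k}$. This follows from Example~\ref{21.05.18.2}.(3), together with the fact that $\Psi^{-1}$ is itself a regular Harnack function with $\mathrm{C}_2(\Psi^{-1})$ depending only on $d$ and $\mathrm{C}_2(\Psi)$; a clean way to package this is exactly the content of Lemma~\ref{21.09.29.4}.(3), which was proved precisely for this purpose and can be invoked directly.
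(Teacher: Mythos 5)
Your two-step plan (localize-and-rescale to prove the case $\Psi\equiv 1$, then substitute $v=\Psi^{-1}u$ and treat the extra terms as perturbations) is structurally sound and close in spirit to the paper's argument, which instead folds $\Phi=\Psi^{-1}$ directly into the localized function $v_n(x)=\zeta_0(e^{-n}\trho(e^nx))\Phi(e^nx)u(e^nx)$ so that the $\Psi$-removal and the localization happen in a single pass. Either ordering is fine; your commutator bookkeeping via Lemma~\ref{21.09.29.4}.(3) and Lemma~\ref{21.05.20.3} is exactly the mechanism the paper uses.

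There is, however, a real gap in the way you dispose of the lower-order commutator terms. In both your $\Psi\equiv 1$ localization step and your $\Psi$-reduction step, the ``error'' terms are genuinely of order $\gamma+1$ (they involve $\nabla u$ or $\nabla v$), so what you actually get is
\begin{align*}
\| v\|_{H^{\gamma+2}_{p,\theta}(\Omega)}+\lambda\| v\|_{H^{\gamma}_{p,\theta+2p}(\Omega)}\lesssim \| v\|_{H^{\gamma+1}_{p,\theta}(\Omega)}+\|\Psi^{-1}f\|_{H^{\gamma}_{p,\theta+2p}(\Omega)}\,,
\end{align*}
not a bound involving $\|v\|_{H^s_{p,\theta}}$ directly when $s<\gamma+1$. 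You then invoke interpolation $\|v\|_{H^{\gamma+1}_{p,\theta}}\leq\epsilon\|v\|_{H^{\gamma+2}_{p,\theta}}+N_{\epsilon,s}\|v\|_{H^s_{p,\theta}}$ and absorb the $\epsilon$-term. But absorption requires the a priori knowledge that $\|v\|_{H^{\gamma+2}_{p,\theta}}<\infty$, which is precisely the qualitative conclusion of the lemma; and for $s<\gamma+1$ even the quantity $\|v\|_{H^{\gamma+1}_{p,\theta}}$, and hence the right-hand side of the localized $\bR^d$ estimate, is not a priori finite from the hypothesis $\|v\|_{H^s_{p,\theta}}<\infty$. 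So the absorption step, as written, does not close.

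The paper avoids this entirely: Step 1 is carried out only for $s\geq\gamma+1$ (WLOG $s=\gamma+1$), where the error term $\|v\|_{H^{\gamma+1}_{p,\theta}}$ already sits on the right-hand side and no absorption or interpolation is needed, and the qualitative finiteness of $\tilde f_n$ follows directly. Step 2 then reaches arbitrary $s<\gamma+1$ by iterating the Step 1 estimate with $(\gamma,s)$ replaced by $(\gamma,\gamma+1)$, $(\gamma-1,\gamma)$, $\dots$, $(\gamma-k,\gamma+1-k)$, each iteration reducing the smoothness of the norm appearing on the right by one unit. You should replace the absorption argument by this downward bootstrap (or equivalently by an induction on $\lceil\gamma+1-s\rceil$); with that change, the rest of your proposal, including the use of Lemma~\ref{21.09.29.4}.(3), Lemma~\ref{21.05.20.3}.(3) and (5), and the $\bR^d$ Bessel-potential estimate, goes through and matches the paper.
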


\begin{proof}
	We denote $\Phi=\Psi^{-1}$ so that $\mathrm{C}_2(\Phi)$ depends only on $d$ and $\mathrm{C}_2(\Psi)$.
	
	\textbf{Step 1.} First, we consider the case $s\geq \gamma+1$. 
	We can certainly assume that 
	$$
	\|\Phi u\|_{ H^{s}_{p,\theta}(\Omega)}+\|\Phi f\|_{ H^{\gamma}_{p,\theta+2p}(\Omega)}<\infty\,,
	$$
	for if not, there is nothing to prove.
	Since
	\begin{align*}
		\|\Phi u\|_{H_{p,\theta}^{\gamma+1}(\Omega)}\lesssim_{d,p,s,\gamma} \|\Phi u\|_{H_{p,\theta}^{s}(\Omega)}
	\end{align*}
	(see Lemma~\ref{21.05.20.3}.(1)), we only need to prove for $s=\gamma+1$.
	Put
	\begin{align}\label{2210241127}
		v_n(x)=\zeta_0\big(e^{-n}\trho(e^nx)\big)\Phi(e^nx) u(e^nx)\,.
	\end{align}
	Since
	$$
	\sum_{n\in\bZ}e^{n\theta}\left\|v_n\right\|_{H_p^{\gamma+1}(\bR^d)}^p=\|\Phi u\|_{H_{p,\theta}^{\gamma+1}(\Omega)}^p<\infty\,,
	$$
	we have $v_n\in H_p^{\gamma+1}(\bR^d)$.
	Observe that
	\begin{align}\label{2301261222}
		\Delta v_n-e^{2n}\lambda v_n=\widetilde{f}_n\,,
	\end{align}
	where
	\begin{align*}
		\widetilde{f}_n(x)=\,&e^{2n}\zeta_0\big(e^{-n}\trho(e^nx)\big)\big(\Phi f\big)(e^nx)\\
		&-e^{2n}\zeta_0\big(e^{-n}\trho(e^nx)\big)\big(\Phi\Delta u\big)(e^nx)+\Delta v_n(t,x)\\
		=\,&e^{2n}\zeta_0\big(e^{-n}\trho(e^nx)\big)(\Phi f)(e^nx)\\
		&+2e^n\zeta_0'\big(e^{-n}\trho(e^nx)\big)\big(\nabla\trho\cdot \nabla(\Phi u)\big)(e^nx)\\
		&+2e^{2n}\zeta_0\big(e^{-n}\trho(e^nx)\big)\big(\nabla u\cdot\nabla\Phi\big)(e^nx)\\
		&+\zeta_0''\big(e^{-n}\trho(e^nx)\big)\big(|\nabla\trho|^2\Phi u\big)(e^nx)\\
		&+e^n\zeta_0'\big(e^{-n}\trho(e^nx)\big)\big((\Delta\trho)\Phi u\big)(e^nx)\\
		&+e^{2n}\zeta_0\big(e^{-n}\trho(e^nx)\big)\big((\Delta\Phi)u\big)(e^nx)\,.
	\end{align*}
	Make use of Lemmas~\ref{21.05.20.3}.(1) - (3) and \ref{21.09.29.4}.(3) to obtain
	\begin{align}\label{22.04.15.1}
		\begin{split}
			&\sum_{n\in\bZ}e^{n\theta}\big\|\widetilde{f}_n\big\|_{H^{\gamma}_p(\bR^d)}^p\\
			\lesssim_N &\left\|\Phi f\right\|^p_{H_{p,\theta+2p}^{\gamma}(\Omega)}
			+\left\|\trho_x(\Phi u)_x\right\|^p_{H_{p,\theta+p}^{\gamma}(\Omega)}
			+\left\|\Phi_x u_x\right\|^p_{H_{p,\theta+2p}^{\gamma}(\Omega)}\\
			&+\left\|\trho_x\trho_x\Phi u\right\|^p_{H_{p,\theta}^{\gamma}(\Omega)}
			+\left\|\trho_{xx}\Phi u\right\|^p_{H_{p,\theta+p}^{\gamma}(\Omega)}
			+\left\|\Phi_{xx} u\right\|^p_{H_{p,\theta+2p}^{\gamma}(\Omega)}\\
			\lesssim_N & \|\Phi f\|^p_{H_{p,\theta+2p}^{\gamma}(\Omega)}
			+\|\Phi u\|^p_{H^{\gamma+1}_{p,\theta}(\Omega)}<\infty\,,
		\end{split}
	\end{align}
	where $N=N(d,p,\gamma,\theta,\mathrm{C}_2(\Psi))$.
	Hence $\tilde{f}_n$ belongs to $H^{\gamma}_p(\bR^d)$, for all $n\in\bZ$.
	
	Due to \eqref{2301261222} and that $v_n\in H_p^{\gamma+1}(\bR^d)$ and $\widetilde{f}_n\in H_p^{\gamma}(\bR^d)$, we have 
	\begin{align*}
		\begin{gathered}
			V_n:=(1-\Delta)^{\gamma/2}v_n\in H_p^1(\bR^d)\,\,,\,\,\,\,F_n:=(1-\Delta)^{\gamma/2}\widetilde{f}_n\in L_p(\bR^d)\,;\\
			\Delta V_n-(e^{2n}\lambda+1)V_n=F_n-V_n\,.
		\end{gathered}
	\end{align*}
	It is implied by classical results for the Poisson equation in $\bR^d$ (see, \textit{e.g.}, \cite[Theorem 4.3.8, Theorem 4.3.9]{Krylov2008})
	that
	\begin{equation}\label{220506140}
		\begin{alignedat}{2}
			\|v_n\|_{H_p^{\gamma+2}(\bR^d)}+e^{2n}\lambda\|v_n\|_{H_p^{\gamma}(\bR^d)}&=&&\left\|V_n\right\|_{H_p^2(\bR^d)}+e^{2n}\lambda\left\|V_n\right\|_{L_p(\bR^d)}\\
			&\leq&& \left\|\Delta V_n\right\|_{L_p(\bR^d)}+(e^{2n}\lambda+1)\left\|V_n\right\|_{L_p(\bR^d)}\\
			&\lesssim_{d,p}&& \|F_n-V_n\|_{L_p(\bR^d)}\\
			&\leq &&\|\widetilde{f}_n\|_{H_p^{\gamma}(\bR^d)}+\|v_n\|_{H_p^{\gamma}(\bR^d)}\,.
		\end{alignedat}
	\end{equation}
	Combine \eqref{220506140} and \eqref{22.04.15.1} to obtain that
	\begin{align*}
		\begin{split}
			\|\Phi u\|_{H^{\gamma+2}_{p,\theta}(\Omega)}^p+\lambda^p\|\Phi u\|_{H^{\gamma}_{p,\theta+2p}(\Omega)}^p=\,\,\,\,& \sum_{n\in\bZ}e^{n\theta}\Big(\|v_n\|_{H^{\gamma+2}_p(\bR^d)}^p+(e^{2n}\lambda)^p\|v_n\|_{H^{\gamma}_p(\bR^d)}^p\Big)\\
			\lesssim_N& \sum_{n\in\bZ}e^{n\theta}\left(\|v_n\|_{H^{\gamma}_p(\bR^d)}^p+\|\tilde{f}_n\|_{H^{\gamma}_p(\bR^d)}^p\right)\\
			\lesssim_N&\|\Phi u\|_{H^{\gamma+1}_{p,\theta}(\Omega)}^p+\|\Phi f\|^p_{H_{p,\theta+2p}^{\gamma}(\Omega)}\,.
		\end{split}
	\end{align*}
	Therefore the case $s=\gamma+1$ is proved.
	Consequently, \eqref{21.09.30.1} holds for all $s\geq \gamma+1$.

	\textbf{Step 2.} Recall that for the case $s\geq\gamma+1$, \eqref{21.09.30.1} is proved in Step 1.
	For $s<\gamma+1$, take $k\in\bN$ such that
	$$
	\gamma+1-k\leq s<\gamma+2-k\,,
	$$
	and repeatedly apply \eqref{21.09.30.1} with $(\gamma,s)$ replaced by $(\gamma,\gamma+1)$, $(\gamma-1,\gamma)$, ..., $(\gamma-k,\gamma+1-k)$.
	Then we have
	\begin{align*}
		&\|\Phi u\|_{H^{\gamma+2}_{p,\theta}(\Omega)}+\lambda\|\Phi u\|_{H^{\gamma}_{p,\theta+2p}(\Omega)}\\
		\lesssim_N &\|\Phi u\|_{H^{\gamma+1}_{p,\theta}(\Omega)}+\|\Phi f\|_{H_{p,\theta+2p}^{\gamma}(\Omega)}\\
		\lesssim_N &\,\,\,\,\cdots\\
		\lesssim_N &\|\Phi u\|_{H^{\gamma-k+1}_{p,\theta}(\Omega)}+\|\Phi f\|_{H_{p,\theta+2p}^{\gamma}(\Omega)}\,.
	\end{align*}
	Since $\|\Phi u\|_{H^{\gamma-k+1}_{p,\theta}(\Omega)}\lesssim \|\Phi u\|_{H^{s}_{p,\theta}(\Omega)}$ (see Lemma~\ref{21.05.20.3}.(1)), the proof is completed.
\end{proof}

\begin{lemma}\label{21.11.12.1}
	Let $\lambda\geq 0$, and suppose the following: 
	\begin{itemize}
		\item[]For any $f\in \Psi H_{p,\theta+2p}^{\gamma}(\Omega)$, in $\Psi H_{p,\theta}^{\gamma+2}(\Omega)$ there exists a unique solution $u$ of equation \eqref{220613956}.
		For this solution, we have
		\begin{align}\label{220606837}
			\|u\|_{\Psi H_{p,\theta}^{\gamma+2}(\Omega)}+\lambda\|u\|_{\Psi H_{p,\theta+2p}^{\gamma}(\Omega)}\leq N_{\gamma}\|f\|_{\Psi H_{p,\theta+2p}^{\gamma}(\Omega)}\,,
		\end{align}
		where $N_{\gamma}$ is a constant independent of $f$ and $u$.
	\end{itemize}
	Then for all $s\in\bR$, the following holds:
	\begin{itemize}
		\item[]
		For any $f\in \Psi H_{p,\theta+2p}^{s}(\Omega)$, in $\Psi H_{p,\theta}^{s+2}(\Omega)$ there exists a unique solution $u$ of equation \eqref{220613956}.
		For this solution, we have
		\begin{align}\label{22.04.01.1}
			\|u\|_{\Psi H_{p,\theta}^{s+2}(\Omega)}+\lambda\|u\|_{\Psi H_{p,\theta+2p}^{s}(\Omega)}\leq N_s\|f\|_{\Psi H_{p,\theta+2p}^{s}(\Omega)}\,,
		\end{align}
		where $N_s$ is a constant depends only on $d,\,p,\,\gamma,\,\theta,\,\mathrm{C}_2(\Psi),\,N_{\gamma}$ and $s$.
	\end{itemize}
\end{lemma}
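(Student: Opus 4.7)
The plan is to split on whether $s$ is above or below $\gamma$. For $s \geq \gamma$ I bootstrap via the higher-order estimate of Lemma~\ref{21.05.13.8}; for $s < \gamma$ I use duality, which is available because $L := \Delta - \lambda$ is formally self-adjoint on $C_c^\infty(\Omega)$ and the spaces $\Psi H_{p,\theta}^{\gamma}(\Omega)$ are reflexive with dual $\Psi^{-1} H_{p',\theta'}^{-\gamma}(\Omega)$ (Lemma~\ref{21.09.29.4}.(2) and Remark~\ref{2212141032}).

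For $s \geq \gamma$, given $f \in \Psi H_{p,\theta+2p}^{s}(\Omega)$, the embedding in Lemma~\ref{21.05.20.3}.(1) places $f$ in $\Psi H_{p,\theta+2p}^{\gamma}(\Omega)$, and the hypothesis produces a unique $u \in \Psi H_{p,\theta}^{\gamma+2}(\Omega)$ with $Lu = f$ and the $\gamma$-level estimate. I apply Lemma~\ref{21.05.13.8} with its role of $\gamma$ taken by $s$ and its auxiliary $s$ taken to be $\gamma + 2$ to obtain
\[
\|u\|_{\Psi H_{p,\theta}^{s+2}(\Omega)} + \lambda \|u\|_{\Psi H_{p,\theta+2p}^{s}(\Omega)} \lesssim \|u\|_{\Psi H_{p,\theta}^{\gamma+2}(\Omega)} + \|f\|_{\Psi H_{p,\theta+2p}^{s}(\Omega)} \lesssim \|f\|_{\Psi H_{p,\theta+2p}^{s}(\Omega)},
\]
where the second inequality uses the assumed $\gamma$-level estimate and the embedding. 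Uniqueness at level $s$ inherits from uniqueness at level $\gamma$ via $\Psi H_{p,\theta}^{s+2}(\Omega) \hookrightarrow \Psi H_{p,\theta}^{\gamma+2}(\Omega)$.

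For $s < \gamma$, I set $p' = p/(p-1)$ and $\theta^{\ast} = p'(d - \theta/p)$. Using self-adjointness of $L$ on $C_c^\infty(\Omega)$, extended by density via Lemma~\ref{21.09.29.4}.(1), the Banach adjoint of the hypothesized isomorphism gives that
\[
L \colon \Psi^{-1} H_{p',\theta^{\ast} - 2p'}^{-\gamma}(\Omega) \longrightarrow \Psi^{-1} H_{p',\theta^{\ast}}^{-\gamma - 2}(\Omega)
\]
is a bounded isomorphism with inverse norm at most $N_\gamma$; this is exactly the hypothesis of the lemma for the dual data $(p', \theta^{\ast} - 2p', \Psi^{-1})$ at regularity index $-\gamma - 2$, noting that $\mathrm{C}_2(\Psi^{-1})$ is controlled by $\mathrm{C}_2(\Psi)$ through Example~\ref{21.05.18.2}.(3). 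Applying the already-established $s \geq \gamma$ case to this dual problem upgrades the isomorphism to every regularity index $\tilde s \geq -\gamma - 2$. Dualizing once more, reflexivity together with the identifications $(\Psi^{-1} H_{p',\theta^{\ast}}^{\tilde s}(\Omega))^{\ast} \cong \Psi H_{p,\theta}^{-\tilde s}(\Omega)$ and $(\Psi^{-1} H_{p',\theta^{\ast}-2p'}^{\tilde s + 2}(\Omega))^{\ast} \cong \Psi H_{p,\theta+2p}^{-\tilde s - 2}(\Omega)$ returns the isomorphism $L \colon \Psi H_{p,\theta}^{s+2}(\Omega) \to \Psi H_{p,\theta+2p}^{s}(\Omega)$ for every $s = -\tilde s - 2 \leq \gamma$, yielding existence, uniqueness, and the $\|u\|_{\Psi H_{p,\theta}^{s+2}}$ part of \eqref{22.04.01.1}.

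The $\lambda$-portion of the estimate follows from one further application of Lemma~\ref{21.05.13.8} (its $\gamma$ replaced by $s$, its auxiliary $s$ by any index $\leq s+2$), since the $\|u\|_{\Psi H_{p,\theta}^{\cdot}}$ term on its right-hand side is dominated by the already-controlled $\|u\|_{\Psi H_{p,\theta}^{s+2}}$. I expect the main obstacle to be the bookkeeping of the two dualizations: one must verify $L^{\ast\ast} = L$ distributionally (using reflexivity and the formal self-adjointness extended by density), and track that every constant depends only on $d, p, \gamma, \theta, \mathrm{C}_2(\Psi), N_\gamma$, and $s$, since $p'$, $\theta^{\ast}$, and $\mathrm{C}_2(\Psi^{-1})$ are functions of these data alone.
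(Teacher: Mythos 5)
Your proposal is correct and takes a genuinely different route from the paper's proof of this lemma. Your Step~1 (for $s\geq\gamma$: embed, invoke the hypothesis, and upgrade via Lemma~\ref{21.05.13.8}) is the same as the paper's. The divergence is in $s<\gamma$: the paper argues by downward induction on $s$, using the representation $f=f^0+\sum_i D_i f^i$ from Lemma~\ref{22.02.16.1} to write each datum at regularity $s_0$ as a combination of higher-regularity data, solving at level $s_0+1$, and then correcting the commutator error $\sum_i D_i\big(\Delta(\trho v^i)-\trho\Delta v^i\big)$ with a second application of the $s_0+1$ solvability. You instead dualize: you identify the Banach adjoint of the hypothesized isomorphism with the distributional $\Delta-\lambda$ acting on the dual triple $(p',\theta'-2p',\Psi^{-1})$ at regularity $-\gamma-2$, push that up with the already-proved $s\geq\gamma$ case, and dualize back using reflexivity and $C_c^\infty$-density. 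Your duality argument buys you a shorter, less computational derivation with no explicit induction or commutator bookkeeping; the paper's constructive decomposition buys you an argument that carries over verbatim to the parabolic analogue (Lemma~\ref{22.04.15.0430}), where the initial condition destroys the self-adjointness that your duality relies on. Two small notational cautions worth flagging explicitly in a write-up: (i) the adjoint transfers only the \emph{first} part of \eqref{220606837} into the exact shape required by the dual hypothesis --- the $\lambda$-term transfers with a mismatched weight/regularity index --- but since your Step~1 only ever uses the first part of \eqref{220606837}, this is harmless; (ii) the inverse norm of the adjoint is controlled by $N_\gamma$ only up to the duality constants of Lemma~\ref{21.09.29.4}.(2), which depend on $\cI'$, so the final constant $N_s$ depends on $d,p,\gamma,\theta,\mathrm{C}_2(\Psi),N_\gamma,s$ as required, but it is not literally $N_\gamma$.
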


\begin{proof}
	To prove the uniqueness of solutions, let us assume that $\overline{u}\in \Psi H_{p,\theta}^{s+2}(\Omega)$ satisfies $\Delta \overline{u}-\lambda \overline{u}=0$.
	By Lemma~\ref{21.05.13.8}, $\overline{u}$ belongs to $\Psi H_{p,\theta}^{\gamma+2}(\Omega)$. 
	Due to the assumption of this lemma, in $\Psi H_{p,\theta}^{\gamma+2}(\Omega)$, the zero distribution is the unique solution for the equation $\Delta u-\lambda u=0$.
	Consequently, $\overline{u}$ is also the zero distribution, and the uniqueness of solutions is proved.
	Thus it remains to show the existence of solutions and estimate \eqref{22.04.01.1}.
	
	\textbf{Step 1.} We first consider the case $s> \gamma$.
	Let $f\in\Psi H^{s}_{p,\theta+2p}(\Omega)$.
	Due to $\Psi H^s_{p,\theta+2p}(\Omega)\subset \Psi H^{\gamma}_{p,\theta+2p}(\Omega)$, $f$ belongs to $\Psi H^{\gamma}_{p,\theta+2p}(\Omega)$, and hence there exists a solution $u\in \Psi H^{\gamma+2}_{p,\theta}(\Omega)$ of equation \eqref{220613956}.
	It follows from Lemma~\ref{21.05.13.8}, \eqref{220606837}, and Lemma~\ref{21.05.20.3}.(1) that
	\begin{alignat*}{2}
		\left\|u\right\|_{\Psi H^{s+2}_{p,\theta}(\Omega)}+\lambda\left\|u\right\|_{\Psi H^{s}_{p,\theta+2p}(\Omega)}\,&\lesssim_N&&\left\|u\right\|_{\Psi H^{\gamma+2}_{p,\theta}(\Omega)}+\left\|f\right\|_{\Psi H^{s}_{p,\theta+2p}(\Omega)}\\
		&\leq && N_{\gamma}\|f\|_{\Psi H_{p,\theta+2p}^{\gamma}(\Omega)}+\|f\|_{\Psi H_{p,\theta+2p}^s(\Omega)}\\
		&\lesssim_N &&(N_{\gamma}+1)\left\|f\right\|_{\Psi H^{s}_{p,\theta+2p}(\Omega)}\,,
	\end{alignat*}
	where $N=N(d,p,\theta,\gamma,\mathrm{C}_2(\Psi),s)$.
	Therefore $u$ belongs to $\Psi H_{p,\theta}^{s+2}(\Omega)$, and the proof is completed.
	
	\textbf{Step 2.} Consider the case $s<\gamma$.
	Since the case $s\geq\gamma$ is proved in Step 1, by mathematical induction, it is sufficient to show that if this lemma holds for $s=s_0+1$, then this also holds for $s=s_0$.
	
	Let us assume that this lemma holds for $s=s_0+1$. 
	For $f\in \Psi H_{p,\theta+2p}^{s_0}(\Omega)$, by Lemma~\ref{22.02.16.1}, there exists 
	$$
	f^0\in \Psi H_{p,\theta+2p}^{s_0+1}(\Omega)\quad \text{and}\quad f^1,\,\ldots,\,f^d\in \Psi H_{p,\theta+p}^{s_0+1}(\Omega)
	$$
	such that $f=f^0+\sum_{i=1}^dD_if^i$ and
	\begin{align}\label{220506335}
		\left\|f^0\right\|_{\Psi H_{p,\theta+2p}^{s_0+1}(\Omega)}+\sum_{i=1}^d\left\|\trho^{\,-1}f^i\right\|_{\Psi H_{p,\theta+2p}^{s_0+1}(\Omega)}
		\leq N\|f\|_{\Psi H_{p,\theta+2p}^{s_0}(\Omega)}\,,
	\end{align}
	where $N=N(d,p,\theta,s_0,\mathrm{C}_2(\Psi))$.
	Due to the assumption that this lemma holds for $s=s_0+1$, there exist $v^0,\,\cdots,\,v^d\in \Psi H_{p,d-2}^{s_0+3}(\Omega)$ such that
	\begin{align*}
		\Delta v^0-\lambda v^0=f^0\quad\text{and}\quad \Delta v^i-\lambda v^i=\trho^{\,-1} f^i\quad\text{for }i=1,\,\ldots,\,d\,,
	\end{align*}
	and
	\begin{alignat}{2}\label{21.09.30.100}
		&&&\left\|v^0\right\|_{\Psi H^{s_0+3}_{p,\theta}(\Omega)}+\lambda\left\|v^0\right\|_{\Psi H^{s_0+1}_{p,\theta+2p}(\Omega)}+\sum_{i=1}^d\left(\left\|v^i\right\|_{\Psi H^{s_0+3}_{p,\theta}(\Omega)}+\lambda\left\|v^i\right\|_{\Psi H^{s_0+1}_{p,\theta+2p}(\Omega)}\right)\nonumber\\
		&\leq&&N_{s_0+1}\bigg(\left\|f^0\right\|_{\Psi H^{s_0+1}_{p,\theta+2p}(\Omega)}+\sum_{i=1}^d\left\|\trho^{\,-1} f^i\right\|_{\Psi H^{s_0+1}_{p,\theta+2p}(\Omega)}\bigg)\\
		&\lesssim_N\,&&N_{s_0+1}\|f\|_{\Psi H_{p,\theta+2p}^{s_0}(\Omega)}\,,\nonumber
	\end{alignat}
	where the last inequality follows from \eqref{220506335}. 
	Put $v=v^0+\sum_{i=1}^dD_i\big(\trho v^i\big)$, and observe that
	$$
	\Delta v-\lambda v=f +\sum_{i=1}^dD_i\big(\Delta(\trho v^i)-\trho\Delta v^i\big)\,.
	$$
	By Lemmas~\ref{21.05.20.3} and \ref{21.09.29.4}.(3), we have
	\begin{alignat*}{2}
		&&&\left\|D_i\left(\Delta(\trho v^i)-\trho\Delta v^i\right)\right\|_{\Psi H^{s_0+1}_{p,\theta+2p}(\Omega)}\\
		&\lesssim_N\,&& \|\Delta(\trho v^i)-\trho\Delta v^i\|_{\Psi H^{s_0+2}_{p,\theta+p}(\Omega)}\\
		&\leq \,&& \sum_{k=1}^d\left(\|D_{kk}\trho\cdot v^i\|_{\Psi H^{s_0+2}_{p,\theta+p}(\Omega)}+\|D_k\trho\cdot D_kv^i\|_{\Psi H^{s_0+2}_{p,\theta+p}(\Omega)}\right)\\
		&\lesssim_N\,&& \|v^i\|_{\Psi H^{s_0+3}_{p,\theta}(\Omega)}<\infty\,,
	\end{alignat*}
	where $N=N(d,p,\theta,s_0,\mathrm{C}_2(\Psi))$.
	Due to the assumption that this lemma holds for $s=s_0+1$, there exists $w\in \Psi H^{s_0+3}_{p,\theta}(\Omega)$ such that
	$$
	\Delta w-\lambda w=\sum_{i=1}^dD_i\big(\Delta(\trho v^i)-\trho\Delta v^i\big)\quad(=\Delta v-\lambda v-f),
	$$
	and
	\begin{equation}\label{21.09.30.200}
		\begin{alignedat}{2}
			&&&\|w\|_{\Psi H^{s_0+3}_{p,\theta}(\Omega)}+\lambda\|w\|_{\Psi H^{s_0+1}_{p,\theta+2p}(\Omega)}\\
			&\leq&& N_{s_0+1}\sum_{i=1}^d\left\|D_i\left(\Delta(\trho v^i)-\trho\Delta v^i\right)\right\|_{\Psi H^{s_0+1}_{p,\theta+2p}(\Omega)}\\
			&\lesssim_N\,&& N_{s_0+1}\sum_{i=1}^d\|v^i\|_{\Psi H^{s_0+3}_{p,\theta}(\Omega)}\,.
		\end{alignedat}
	\end{equation}
	Put
	$$
	u=v-w=v^0+\sum_{i=1}^d D_i(\trho v^i)-w\,.
	$$
	Then $u$ satisfies $\Delta u-\lambda u = f$.
	Moreover, by \eqref{21.09.30.100} and \eqref{21.09.30.200}, we obtain \eqref{22.04.01.1} for $s=s_0$.
\end{proof}

\begin{proof}[Proof of Theorem~\ref{21.09.29.1}]
	By Lemma~\ref{21.11.12.1}, we only need to prove for $\gamma=0$.
	
	\textbf{\textit{A priori} estimates.} Assume that $u\in\Psi^{\mu}H_{p,d-2}^2(\Omega)$ and $\Delta u-\lambda u\in\Psi^{\mu}L_{p,d+2p-2}(\Omega)$.
	By Lemma~\ref{21.05.13.8}, we obtain
	\begin{align}\label{220506424}
		\begin{split}
			&\|u\|_{\Psi^{\mu}H_{p,d-2}^{2}(\Omega)}+\lambda\|u\|_{\Psi^{\mu}L_{p,d+2p-2}(\Omega)}\\
			\lesssim_N&\|u\|_{\Psi^{\mu}L_{p,d-2}(\Omega)}+\|\Delta u-\lambda u\|_{\Psi^{\mu}L_{p,d+2p-2}(\Omega)}<\infty\,,
		\end{split}
	\end{align}
	where $N=N(d,p,\mu,\mathrm{C}_2(\Psi))$.
	Due to \eqref{220506424} and Lemma~\ref{21.09.29.4}.(5), whether $\lambda=0$ or $\lambda>0$, there exists $u_n\in C_c^{\infty}(\Omega)$ such that
	$$
	\lim_{n\rightarrow\infty}\Big(\|u-u_n\|_{\Psi^{\mu}H_{p,d-2}^{2}(\Omega)}+\lambda\|u-u_n\|_{\Psi^{\mu} L_{p,d+2p-2}(\Omega)}\Big) =0\,.
	$$
	This implies
	$$
	\lim_{n\rightarrow \infty}\big\|\big(\Delta-\lambda\big)(u-u_n)\big\|_{\Psi^{\mu}L_{p,d+2p-2}(\Omega)}= 0\,.
	$$
	Since $\Psi$ is a regularization of the superharmonic Harnack function $\psi$, Theorem~\ref{21.05.13.2} and Lemma~\ref{220512433} imply
	\begin{equation}\label{221213517}
		\begin{aligned}
			\|u_n\|_{\Psi^{\mu}L_{p,d-2}(\Omega)}\,&\simeq_N\int_{\Omega}|u_n|^p\psi^{-\mu p}\rho^{-2}\dd x\\
			&\lesssim_{N}\int_{\Omega}|\Delta u_n-\lambda u_n|^p \psi^{-\mu p}\rho^{2p-2} \dd x\\
			&\simeq_N \|\Delta u_n-\lambda u_n\|_{\Psi^{\mu}L_{p,d+2p-2}(\Omega)}\,,
		\end{aligned}
	\end{equation}
	where $N=N(d,p,\mu,\mathrm{C}_0(\Omega),\mathrm{C}_2(\Psi),\mathrm{C}_3(\psi,\Psi))$.
	By letting $n\rightarrow \infty$, we obtain \eqref{221213517} for $u$ instead of $u_n$.
	By combining this with \eqref{220506424}, we obtain  the $\textit{a priori}$ estimates,
	\begin{align}\label{220506452}
		\begin{split}
			\,&\| u\|_{\Psi^{\mu}H_{p,d-2}^{2}(\Omega)}+\lambda\| u\|_{\Psi^{\mu}L_{p,d+2p-2}(\Omega)}\\
			\lesssim_N\,&\|u\|_{\Psi^{\mu}L_{p,d-2}(\Omega)}+\|\Delta u-\lambda u\|_{\Psi^{\mu}L_{p,d+2p-2}(\Omega)}\\
			\lesssim_N\,&\|\Delta u-\lambda u\|_{\Psi^{\mu}L_{p,d+2p-2}(\Omega)}\,.
		\end{split}
	\end{align}
	Note that \eqref{220506452} also implies the uniqueness of solutions.
	
	\textbf{Existence of solutions.}
	Since $C_c^{\infty}(\Omega)$ is dense in $\Psi^{\mu}L_{p,d+2p-2}(\Omega)$, for  $f\in \Psi^{\mu}L_{p,d+2p-2}(\Omega)$ there exists $f_n\in C_c^{\infty}(\Omega)$ such that $f_n\rightarrow f$ in $\Psi^{\mu}L_{d,d+2p-2}(\Omega)$.
	Lemmas~\ref{21.05.25.3} and \ref{220512433} yield that for each $n\in\bN$, there exists $u_n\in\Psi^{\mu}L_{p,d-2}^2(\Omega)$ such that 
	$$
	\Delta u_n-\lambda u_n=f_n\,.
	$$
	Due to Lemma~\ref{21.05.13.8}, $u_n\in\Psi^{\mu}H_{p,d-2}^2(\Omega)$.
	Since $f_n\rightarrow f$ in $\Psi^{\mu} L_{p,d+2p-2}(\Omega)$, it follows from \eqref{220506452} that
	$$
	\|u_n-u_m\|_{\Psi^{\mu}H_{p,d-2}^2(\Omega)}\leq N\|f_n-f_m\|_{\Psi^{\mu}L_{p,d+2p-2}}\rightarrow 0
	$$
	as $n,\,m\rightarrow \infty$.
	Therefore there exists $u\in\Psi^{\mu}H_{p,d-2}^2(\Omega)$ such that $u_n$ converges to $u$ in $\Psi^{\mu}H_{p,d-2}^2(\Omega)$.
	Since $u_n$ and $f_n$ converge to $u$ and $f$ in the sense of distribution, respectively (see Lemma~\ref{21.09.29.4}.(2)), $u$ is a solution of equation \eqref{220613956}.
\end{proof}

We end this subsection with a global uniqueness of solutions.

\begin{thm}[Global uniqueness]\label{220530526}
	Suppose that \eqref{hardy} holds for $\Omega$, and for each $i=1,\,2$, $\Psi_i$ is a regularlization of a superharmonic Harnack function, $p_i\in(1,\infty)$, $\gamma_i\in\bR$, and $\mu_i\in (-1/p_i,1-1/p_i)$.
	For $f\in \bigcap_{i=1,2}\Psi_i^{\mu_i} H_{p_i,d+2p_i-2}^{\gamma_i}(\Omega)$ and $i=1,\,2$, let $u^{(i)}\in\Psi_i^{\mu_i} H_{p_i,d-2}^{\gamma_i+2}(\Omega)$ be solutions of the equation
	\begin{align*}
		\Delta u-\lambda u=f\,.
	\end{align*}
	Then $u^{(1)}=u^{(2)}$ in $\cD'(\Omega)$.
\end{thm}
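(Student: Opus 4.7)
The strategy is to build a single sequence of smooth approximations whose corresponding solutions lie in both function spaces simultaneously and converge to $u^{(1)}$ in one space and to $u^{(2)}$ in the other, forcing $u^{(1)}=u^{(2)}$ as distributions.

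Since each $\Psi_i^{\mu_i}$ is a regular Harnack function (Example~\ref{21.05.18.2}.(3)) and $f$ lies in the intersection $\Psi_1^{\mu_1} H_{p_1,d+2p_1-2}^{\gamma_1}(\Omega)\cap \Psi_2^{\mu_2} H_{p_2,d+2p_2-2}^{\gamma_2}(\Omega)$, Lemma~\ref{21.09.29.4}.(5) produces a sequence $\{f_n\}\subset C_c^{\infty}(\Omega)$ with $f_n\to f$ simultaneously in both spaces. Next, for each $n$ Lemma~\ref{21.05.25.3} supplies a \emph{single} distribution $u_n\in L_{1,\mathrm{loc}}(\Omega)$ solving $\Delta u_n-\lambda u_n=f_n$ whose weighted bound \eqref{220613103} holds for every admissible triple $(p,\mu,\phi)$ at once, so applying it with $(p_i,\mu_i,\psi_i)$ places the same $u_n$ in $\Psi_i^{\mu_i}L_{p_i,d-2}(\Omega)$ for both $i=1,2$.

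Invoking the higher-order estimate of Lemma~\ref{21.05.13.8} with initial regularity $s=0$ bootstraps $u_n$ into $\Psi_i^{\mu_i}H_{p_i,d-2}^{\gamma_i+2}(\Omega)$ with
\begin{align*}
\|u_n\|_{\Psi_i^{\mu_i} H_{p_i,d-2}^{\gamma_i+2}(\Omega)}\lesssim \|u_n\|_{\Psi_i^{\mu_i}L_{p_i,d-2}(\Omega)}+\|f_n\|_{\Psi_i^{\mu_i}H_{p_i,d+2p_i-2}^{\gamma_i}(\Omega)}\lesssim\|f_n\|_{\Psi_i^{\mu_i}H_{p_i,d+2p_i-2}^{\gamma_i}(\Omega)}\,.
\end{align*}
Applied to differences $u_n-u_m$ and $f_n-f_m$, this shows $\{u_n\}$ is Cauchy in each of the two Banach spaces $\Psi_i^{\mu_i}H_{p_i,d-2}^{\gamma_i+2}(\Omega)$; let $U^{(i)}$ denote its limit there. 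Passing to the limit in $\Delta u_n-\lambda u_n = f_n$ via the continuous embeddings into $\cD'(\Omega)$ (Lemma~\ref{21.09.29.4}.(2)) yields $\Delta U^{(i)}-\lambda U^{(i)}=f$, and the uniqueness half of Theorem~\ref{21.09.29.1} within each space forces $U^{(i)}=u^{(i)}$.

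Since convergence in $\Psi_i^{\mu_i}H_{p_i,d-2}^{\gamma_i+2}(\Omega)$ implies convergence in $\cD'(\Omega)$, the one sequence $\{u_n\}$ converges distributionally to both $u^{(1)}$ and $u^{(2)}$; uniqueness of distributional limits then yields $u^{(1)}=u^{(2)}$. The one delicate point is ensuring that a \emph{common} $u_n$ lies in both $\Psi_i^{\mu_i}H_{p_i,d-2}^{\gamma_i+2}(\Omega)$, because neither space is contained in the other a priori; this is afforded precisely by the universal quantifier over $(p,\mu,\phi)$ in Lemma~\ref{21.05.25.3}.(3), which lets the Green-function representative (see Remark~\ref{221022314}) be tested against both weight systems at once and is what distinguishes the argument from a naive two-sided approximation.
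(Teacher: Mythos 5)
Your proposal is essentially the paper's proof: the same single sequence $\{f_n\}\subset C_c^\infty(\Omega)$ from Lemma~\ref{21.09.29.4}.(5), the same common solutions $u_n$ from Lemma~\ref{21.05.25.3} lying in both zeroth-order weighted spaces, and the same bootstrap via Lemma~\ref{21.05.13.8}. The one cosmetic difference is the final step: the paper applies the a priori estimate of Theorem~\ref{21.09.29.1} directly to the differences $u_n-u^{(i)}$ (which solve $(\Delta-\lambda)v=f_n-f$ in $\Psi_i^{\mu_i}H_{p_i,d-2}^{\gamma_i+2}(\Omega)$), obtaining $u_n\to u^{(i)}$ immediately; you instead show $\{u_n\}$ is Cauchy and then invoke uniqueness to identify the limit with $u^{(i)}$. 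The paper's route is slightly cleaner because your Cauchy step implicitly uses linearity of the Lemma~\ref{21.05.25.3} construction to transfer estimate~\eqref{220613103} to the difference $u_n-u_m$ (justified by the Green-function representation you cite in Remark~\ref{221022314}, but worth flagging explicitly), whereas the paper's differences $u_n-u^{(i)}$ land in a Banach space where the a priori estimate applies without that observation.
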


\begin{proof}
	By Lemma~\ref{21.09.29.4}.(5), there exist $\{f_n\}\subset  C_c^{\infty}(\Omega)$ such that
	$$
	f_n\rightarrow f\quad\text{in}\quad \bigcap_{i=1,2}\Psi_i^{\mu_i} H_{p_i,d+2p_i-2}^{\gamma_i}(\Omega)\,.
	$$
	By Lemmas~\ref{21.05.25.3} and \ref{220512433}, for each $n\in\bN$, there exists $u_n\in \bigcap\limits_{i=1,2}\Psi_i^{\mu_i} L_{p_i,d-2}(\Omega)$ such that
	$$
	\Delta u_n-\lambda u_n=f_n\,.
	$$
	Lemma~\ref{21.05.13.8} yields that $u_n\in \bigcap\limits_{i=1,2}\Psi_i^{\mu_i} H_{p_i,d-2}^{\gamma_i+2}(\Omega)$.
	Since
	$$
	(\Delta-\lambda)\big(u_n-u^{(1)}\big)=(\Delta-\lambda)\big(u_n-u^{(2)}\big)=f_n-f\,,
	$$
	Theorem~\ref{21.09.29.1} implies that
	$$
	u_n\rightarrow u^{(1)}\,\,\,\,\text{in}\,\,\,\, \Psi_1^{c_1} H_{p_1,d-2}^{\gamma_1+2}(\Omega)\,\,,\,\, \text{and}\quad u_n\rightarrow u^{(2)}\,\,\,\,\text{in}\,\,\,\, \Psi_2^{c_2} H_{p_2,d-2}^{\gamma_2+2}(\Omega)\,.
	$$
	Consequently, by Lemma~\ref{21.09.29.4}.(2),
	$$
	\langle u^{(1)},g\rangle =\lim_{n\rightarrow \infty}\langle u_n,g\rangle=\langle u^{(2)},g\rangle
	$$
	for all $g\in C_c^{\infty}(\Omega)$.
\end{proof}

\vspace{2mm}

\mysection{Parabolic equations}\label{0050}

For $0<\nu_1\leq \nu_2<\infty$ and $T\in (0,\infty]$, we denote
\begin{itemize}
	\item $\mathrm{M}(\nu_1,\nu_2)$ : the set of all $d\times d$ real-valued symmetric matrices $(\alpha^{ij})_{d\times d}$ satisfying
	$$
	\nu_1|\xi|^2\leq \sum_{i,j=1}^d\alpha^{ij}\xi_i\xi_j\leq \nu_2|\xi|^2\qquad\forall\,\, \xi\in\bR^d;
	$$
	
	\item $\cM_T(\nu_1,\nu_2)$ : the set of all $\cL:=\sum_{i,j=1}^da^{ij}(\cdot)D_{ij}$, where $\{a^{ij}(\cdot)\}_{i,j=1,....,d}$ is a family of time measurable function on $\bR_+$ such that $\big(a^{ij}(t)\big)_{d\times d}\in \mathrm{M}(\nu_1,\nu_2)$ for all $t\in(0,T]$.
\end{itemize}
Throughout this section we assume that $\Omega$ be a domain in $\bR^d$,
\begin{align}\label{2205241211}
	T\in(0,\infty]\,\,,\,\,\,\,0<\nu_1\leq \nu_2<\infty\,\,,\,\,\text{and}\,\,\,\,\cL\in\cM_T(\nu_1,\nu_2)\,,
\end{align}
and use the convention that $(0,T]=(0,\infty)$ and $[0,T]=[0,\infty)$ if $T=\infty$.
We deal with the equation
\begin{align}\label{2206201120}
	\partial_tu=\cL u+f:=\sum_{i,j=1}^da^{ij}(t)D_{ij} u + f\,\,,\quad t\in(0,T]\quad;\quad u(0,\cdot)=u_0\,,
\end{align}
repeating the arguments in Sections~\ref{0030} and \ref{0040}.

\vspace{2mm}

\subsection{Key estimates for parabolic equations}\label{0051}
Unlike $\Delta$, the operator $\cL$ in \eqref{2206201120} consists of variable coefficients. 
Hence Lemma~\ref{03.30}.(2) is not applied directly.
For this reason we introduce the following definition:

\begin{defn}\label{21.11.10.1}
Let $\phi$ be a positive superharmonic function on $\Omega$.
For $\delta\in (0,1]$ and $p\in(1,\infty)$, by $I(\phi,p,\delta)$ we denote the set of all constants $\mu\in (-\frac{1}{p},1-\frac{1}{p})$ satisfying the following: there exists a constant $\mathrm{C}_4>0$ such that the inequality 
\begin{align}\label{21.07.12.1}
	\int_{\Omega\cap\{u\neq 0\}}|u|^{p-2}|\nabla u|^2\phi^{-\mu p} \dd x\leq \mathrm{C}_4\int_{\Omega}\big(-\sum_{i,j=1}^d\alpha^{ij}D_{ij}u\big)\cdot u|u|^{p-2}\phi^{-\mu p} \dd x
\end{align}
holds for all $u\in C_c^{\infty}(\Omega)$ and $(\alpha^{ij})_{d\times d}\in \mathrm{M}(\delta,1)$.
\end{defn}
\vspace{1mm}

We employ the set $I(\phi,p,\delta)$ to state the main theorems in this section, specifically Theorems \ref{05.11.2} and \ref{21.05.13.9}. 
According to Lemma~\ref{03.30}, when $\delta=1$, $I(\phi,p,1)$ coincides with $(-1/p,1-1/p)$.
Notably, even for $\delta\in(0,1)$ and without additional assumptions on $\Omega$, $\phi$, and $p$, the following proposition guarantees the existence of a non-empty interval contained in $I(\phi,\delta,p)$.

\begin{prop}\label{05.11.1}
Let $\phi$ be a positive superharmonic function on $\Omega$, $p\in(1,\infty)$, and $\delta\in(0,1]$.
\begin{enumerate}
	\item If
	\begin{align}\label{22.04.12.1029}
		\mu\in \Big(-\frac{(p-1)/p}{p(\delta^{-1/2}+1)/2-1},\frac{(p-1)/p}{p(\delta^{-1/2}-1)/2+1}\Big)\,,
	\end{align}
	then $\mu\in I(\phi,p,\delta)$, and the constant $\mathrm{C}_4$  in \eqref{21.07.12.1} can be chosen to depend only on $\delta,p$ and $\mu$.
	In particular, $I(\phi,p,\delta)=(-1+1/p,1/p)$.
	
	\item Suppose that for any $(\alpha^{ij})_{d\times d}\in\mathrm{M}(\delta,1)$, $\sum\limits_{i,j=1}^d\alpha^{ij}D_{ij}\phi\leq 0$ in the sense of distribution.
	then
	$$
	I(\phi,\delta,p)=\big(-1/p,1-1/p\big)\,.
	$$
	Moreover for any $\mu\in \big(-1/p,1-1/p\big)$, the constant $\mathrm{C}_4$ in \eqref{21.07.12.1} can be chosen to depend only on $d,\delta,p$ and $\mu$.
\end{enumerate}
\end{prop}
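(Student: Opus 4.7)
The plan is to prove both parts via an integration-by-parts identity adapted from Lemma~\ref{03.30}.(2). Writing $c:=-\mu p\in(-p+1,1)$ and $a(X,Y):=\sum_{i,j}\alpha^{ij}X_iY_j$, Lemma~\ref{21.04.23.4} gives, for $u\in C_c^{\infty}(\Omega)$,
\begin{equation*}
\int(-\alpha^{ij}D_{ij}u)\,u|u|^{p-2}\phi^c\dd x
= (p-1)\int a(\nabla u,\nabla u)|u|^{p-2}\phi^c\dd x
-\frac{1}{p}\int|u|^p\alpha^{ij}D_{ij}(\phi^c)\dd x,
\end{equation*}
and the regularization scheme in Proposition~\ref{21.04.23.3} reduces matters to smooth $\phi$.

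For Part (2), the hypothesis $\alpha^{ij}D_{ij}\phi\leq 0$ gives $\alpha^{ij}D_{ij}(\phi^c)=c\phi^{c-1}\alpha^{ij}D_{ij}\phi+c(c-1)\phi^{c-2}a(\nabla\phi,\nabla\phi)\leq 0$ for $c\in[0,1]$, so the last term in the identity is non-negative; combined with ellipticity $a(\nabla u,\nabla u)\geq\delta|\nabla u|^2$, this yields \eqref{21.07.12.1} for $\mu\in(-1/p,0]$ with $\mathrm{C}_4=(\delta(p-1))^{-1}$. For $\mu\in(0,1-1/p)$, i.e., $c\in(-p+1,0)$, I would derive an $a$-form analogue of Lemma~\ref{03.30}.(1),
\begin{equation*}
(1-c)\int|u|^p a(\nabla\phi,\nabla\phi)\phi^{c-2}\dd x\leq p\int|u|^{p-2}u\,a(\nabla u,\nabla\phi)\phi^{c-1}\dd x,
\end{equation*}
obtained from a single integration by parts that uses $\alpha^{ij}D_{ij}\phi\leq 0$, and then apply Cauchy--Schwarz on the cross term exactly as in Case~2 of the proof of Lemma~\ref{03.30}.(2).

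For Part (1), only $\Delta\phi\leq 0$ is available, so $\alpha^{ij}D_{ij}(\phi^c)$ may have positive contributions. I would decompose $\alpha^{ij}=\delta\delta^{ij}+\beta^{ij}$ with $\beta$ positive semi-definite and spectrum contained in $[0,1-\delta]$. A further integration by parts on the $\beta$-part yields
\begin{equation*}
-\frac{1}{p}\int|u|^p\alpha^{ij}D_{ij}(\phi^c)\dd x=-\frac{\delta}{p}\int|u|^p\Delta(\phi^c)\dd x+c\int|u|^{p-2}u\,b(\nabla u,\nabla\phi)\phi^{c-1}\dd x,
\end{equation*}
where $b(X,Y):=\beta^{ij}X_iY_j$. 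For $c\in[0,1]$ the first summand on the right is non-negative since $\Delta(\phi^c)\leq 0$, and the residual $b$-cross term is controlled by Cauchy--Schwarz in the $b$-form combined with Lemma~\ref{03.30}.(1); using $b(X,X)\leq(1-\delta)|X|^2$ one obtains $\int b(\nabla\phi,\nabla\phi)|u|^p\phi^{c-2}\dd x\leq(1-\delta)(p/(1-c))^2\int|\nabla u|^2|u|^{p-2}\phi^c\dd x$. The case $c\in(-p+1,0)$ is handled symmetrically via Cauchy--Schwarz on the full $a$-form cross term together with Lemma~\ref{03.30}.(1) and ellipticity.

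The main technical obstacle is the precise tuning of Young's inequality to recover the endpoints of~\eqref{22.04.12.1029}; the factor $(\delta^{-1/2}\pm 1)/2$ arises from balancing the $\delta^{-1/2}$ in the ellipticity bound $|\nabla u|^2\leq\delta^{-1}a(\nabla u,\nabla u)$ against the Hardy constant $p/(1-c)$ from Lemma~\ref{03.30}.(1), with optimal Young parameter $\lambda=\delta^{-1/2}$. A crude application of Young's inequality yields strictly smaller ranges, so splitting the cross term between its $\delta\Delta$- and $\beta$-components and following with a weighted AM--GM optimization is essential to extend $\mu$ to the full interval claimed.
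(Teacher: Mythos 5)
Your argument for Part~(2) is correct and takes a genuinely different route from the paper. You work directly from the integration-by-parts identity, using the hypothesis $\alpha^{ij}D_{ij}\phi\leq 0$ to deduce $\alpha^{ij}D_{ij}(\phi^c)\leq 0$ for $c\in[0,1]$ and, for $c<0$, to derive an $a$-form analogue of Lemma~\ref{03.30}.(1) that feeds into the Cauchy--Schwarz step of Case~2 of Lemma~\ref{03.30}.(2). The paper instead substitutes $y\mapsto\mathrm{B}y$ with $\mathrm{B}^2=(\alpha^{ij})_{d\times d}$ and invokes Lemma~\ref{03.30}.(2) for $\Delta$ on $\mathrm{B}^{-1}\Omega$. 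Your route avoids the Jacobian $\det(\mathrm{B}^{-1})\in[1,\delta^{-d/2}]$, so your $\mathrm{C}_4$ is in fact independent of $d$; this is a mild sharpening of the stated dependence.

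For Part~(1) there is a genuine gap. Set $A=\int|\nabla u|^2|u|^{p-2}\phi^c\dd x$ and $B=\int b(\nabla u,\nabla u)|u|^{p-2}\phi^c\dd x\in[0,(1-\delta)A]$. Your split $\alpha=\delta I+\beta$, pointwise Cauchy--Schwarz in the $b$-form, $b\leq(1-\delta)I$, and Lemma~\ref{03.30}.(1) together give (dropping the non-negative $\delta\Delta(\phi^c)$ term)
\begin{equation*}
\int(-\alpha^{ij}D_{ij}u)\,u|u|^{p-2}\phi^c\dd x\ \geq\ (p-1)\delta A+(p-1)B-|c|\,\frac{p\sqrt{1-\delta}}{1-c}\sqrt{AB}\,.
\end{equation*}
Since $\beta$ is arbitrary, $B$ must be treated as unknown in $[0,(1-\delta)A]$, and minimizing the right-hand side over $B$ (no Young parameter can do better than this exact minimization) gives positivity only when $p|c|/(1-c)<2(p-1)\sqrt{\delta/(1-\delta)}$ (for $\delta\leq 1/2$; the endpoint $B=(1-\delta)A$ governs for $\delta>1/2$). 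By contrast, \eqref{22.04.12.1029} is equivalent to $p|c|/(1-c')<2(p-1)\sqrt{\delta}$ with $c'=\frac{(p-2)c}{2(p-1)}$. These do not agree: for $p=2$, $\delta=1/2$, $c>0$ the proposition asserts $c<1/\sqrt{2}$, while the decomposition argument yields only $c<1/2$. Choosing the Young parameter $\lambda=\delta^{-1/2}$, as you conjecture, is strictly worse than the exact minimization above, so the proposed AM--GM tuning cannot close the gap. The missing idea is the paper's change of unknown $v:=u\phi^{c/(2p-2)}$, which turns the identity into the exact algebraic statement
\begin{align*}
\int(-\alpha^{ij}D_{ij}u)\,u|u|^{p-2}\phi^c\dd x=(p-1)\int\alpha^{ij}v_iv_j\,|v|^{p-2}\phi^{c'}\dd x-\frac{c^2}{4(p-1)}\int|v|^p\,\alpha^{ij}\phi_i\phi_j\,\phi^{c'-2}\dd x\,.
\end{align*}
Here the cross term has vanished and no splitting of $\alpha$ is needed: ellipticity $\alpha^{ij}v_iv_j\geq\delta|\nabla v|^2$, the one-sided bound $\alpha^{ij}\phi_i\phi_j\leq|\nabla\phi|^2$, and Lemma~\ref{03.30}.(1) applied at exponent $c'$ rather than $c$ yield positivity of $\kappa'=\delta(p-1)-\frac{1}{4(p-1)}\big(\frac{pc}{1-c'}\big)^2$ precisely under \eqref{22.04.12.1029}. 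You would also need to show $\int|u|^{p-2}|\nabla u|^2\phi^c\dd x\lesssim\int|v|^{p-2}|\nabla v|^2\phi^{c'}\dd x$; the paper handles this by separate arguments for $c\geq 0$ and $c\in(-p+1,0)$, a step absent from your proposal.
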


Proposition~\ref{05.11.1}.(2) is used for results on convex domains and domains satisfying the totally vanishing exterior Reifenberg condition; see Subsections~\ref{convex} and \ref{ERD}, respectively.

\begin{proof}[Proof of Proposition~\ref{05.11.1}]
(1) Let $\mu$ satisfy \eqref{22.04.12.1029}. By the same argument as in the beginning of the proof of Lemma~\ref{03.30}.(2) and (3), it sufficies to prove \eqref{21.07.12.1} only for $u\in C_c^{\infty}(\Omega)$ and a positive smooth superharmonic function $\phi$ on a neighborhood of $\text{supp}(u)$.

Put $c=-\mu p\in(-p+1,1)$ and $v=u\phi^{c/(2p-2)}$. 
Due to Lemmas~\ref{21.04.23.4}, \ref{03.30}.(1), and that $(\alpha^{ij})\in\mathrm{M}(\delta,1)$, we have
\begin{align*}
	&-\sum_{i,j}\int_{\Omega}\alpha^{ij}u_{ij}|u|^{p-2}u\phi^c\dd x\\
	=\,&(p-1)\sum_{i,j}\int_{\Omega}\alpha^{ij}u_iu_j|u|^{p-2}\phi^cdx+c\sum_{i,j}\int_{\Omega}\alpha^{ij}|u|^{p-2}uu_i\phi_j\phi^{c-1}\dd x\\
	=\,&(p-1)\int_{\Omega}\Big(\sum_{i,j}\alpha^{ij}v_iv_j\Big)|v|^{p-2}\phi^{c'}\dd x-\frac{c^2}{4(p-1)}\int_{\Omega}|v|^p\Big(\sum_{i,j}\alpha^{ij}\phi_i\phi_j\Big)\phi^{c'-2}\dd x\\
	\geq \,&(p-1)\delta \int_{\Omega}|\nabla v|^2|v|^{p-2}\phi^{c'}\dd x-\frac{c^2}{4(p-1)}\int_{\Omega}|v|^p|\nabla \phi|^2\phi^{c'-2}\dd x\\
	\geq\,&\kappa'\int_{\Omega}|\nabla v|^2|v|^{p-2}\phi^{c'}\dd x\,,
\end{align*}
where $c':=\frac{(p-2)c}{2p-2}\in(-p+1,1)$ and 
$$
\kappa'=\delta(p-1)-\frac{1}{4(p-1)}\Big(\frac{pc}{1-c'}\Big)^2\,.
$$
One can observe that  $\kappa'>0$ if and only if $\mu$ satisfies \eqref{22.04.12.1029}. 
Therefore we only need to show that
\begin{align}\label{220606927}
	\int_{\Omega}|u|^{p-2}|\nabla u|^2\phi^{c} \dd x\leq N(p,\mu)\int_{\Omega}|v|^{p-2}|\nabla v|^2\phi^{c'} \dd x\,.
\end{align}
Note that
\begin{align*}
	&\int_{\Omega\cap\{u\neq 0\}}|v|^{p-2}|\nabla v|^2\phi^{c'} \dd x\\
	\geq\,&\int_{\Omega\cap\{u\neq 0\}}|u|^{p-2}|\nabla u|^2\phi^{c} \dd x+\frac{c}{p-1}\int_{\Omega\cap\{u\neq 0\}}|u|^{p-2}u(\nabla u \cdot \nabla \phi)\phi^{c-1} \dd x\,.
\end{align*}

If $c\in[0,1)$, then $\Delta(\phi^c)\leq 0$ on $\text{supp}(u)$ (see \eqref{22.04.12.1104}), which implies
$$
\frac{c}{p-1}\int_{\Omega\cap\{u\neq 0\}}|u|^{p-2}u(\nabla u \cdot \nabla \phi)\phi^{c-1} \dd x=-\frac{1}{p(p-1)}\int_{\Omega}|u|^{p}\Delta(\phi^c) \dd x\geq 0\,.
$$
Therefore \eqref{220606927} holds.

If $c\in(-p+1,0)$, then Lemma~\ref{03.30}.(1) implies
\begin{align*}
	&\int_{\Omega\cap\{u\neq 0\}}|u|^{p-2}|\nabla u|^2\phi^{c} \dd x+\frac{c}{p-1}\int_{\Omega\cap\{u\neq 0\}}|u|^{p-2}u(\nabla u \cdot \nabla \phi)\phi^{c-1} \dd x\\
	\geq\,& \int_{\Omega\cap\{u\neq 0\}}|u|^{p-2}|\nabla u|^2\phi^{c} \dd x\\
	&+\frac{c}{p-1}\Big(\int_{\Omega\cap\{u\neq 0\}}|u|^{p-2}|\nabla u|^2\phi^{c} \dd x\Big)^{1/2}\Big(\int_{\Omega}|u|^{p}\phi^{c-2}|\nabla \phi|^2 \dd x\Big)^{1/2}\\
	\geq\,&\frac{p-1+c}{(p-1)(1-c)}\int_{\Omega\cap\{u\neq 0\}}|u|^{p-2}|\nabla u|^2\phi^{c} \dd x\,.
\end{align*}
Since $p-1+c>0$, the proof is completed.

(2) For a fixed $\mathrm{A}=(\alpha^{ij})_{d\times d}\in \mathrm{M}(\delta,1)$, take $\mathrm{B}\in\mathrm{M}(\sqrt{\delta},1)$ such that $\mathrm{B}^2=\mathrm{A}$.
We denote $u_{\mathrm{B}}(y)=u(\mathrm{B}y)$ and $\phi_{\mathrm{B}}(y)=\phi(\mathrm{B}y)$.
Since 
$$
\Delta \phi_{\mathrm{B}}=\sum_{i,j=1}^d\alpha^{ij}\big(D_{ij}\phi\big)(\mathrm{B}\,\cdot\,)\leq 0
$$
on $\mathrm{B}^{-1}\Omega:=\{\mathrm{B}^{-1}x\,:\,x\in\Omega\}$ (in the sense of distribution),
Lemma~\ref{03.30}.(2) implies that for any $u\in C_c^{\infty}(\Omega)$,
\begin{equation}\label{230328715}
	\begin{alignedat}{2}
		&&&\int_{\Omega\cap\{u\neq 0\}}|u|^{p-2}|\nabla u|^2\phi^{-\mu p}\dd x\\
		&\leq &&\delta^{-1}\int_{\mathrm{B}^{-1}\Omega \cap \{u_\mathrm{B}\neq 0\}}|u_\mathrm{B}|^{p-2}|\nabla u_\mathrm{B}|^2 \phi_{\mathrm{B}}^{-\mu p}\dd y\\
		&\lesssim_{p,\mu,\delta}\,&&\int_{\mathrm{B}^{-1}\Omega}(-\Delta u_\mathrm{B})\cdot u_\mathrm{B}|u_\mathrm{B}|^{p-2}\phi_{\mathrm{B}}^{-\mu p}\dd y\\
		&= &&\det(\mathrm{B}^{-1})\int_{\Omega}\big(-\alpha^{ij}D_{ij}u\big)\cdot u|u|^{p-2}\phi^{-\mu p} \dd x\,.
	\end{alignedat}
\end{equation}
Since $\det(\mathrm{B}^{-1})=\big(\det(\mathrm{A})\big)^{-1/2}\in[1, \delta^{-d/2}]$, it follows that the last term in \eqref{230328715} is positive, and thus the proof is completed.
\end{proof}

Theorem~\ref{05.11.2} and Lemma~\ref{21.05.25.300} are counterparts of Theorem~\ref{21.05.13.2} and Lemma~\ref{21.05.25.3}, respectively.

\begin{thm}\label{05.11.2}
Suppose that
\begin{align*}
	&\text{$\Omega$ admits the Hardy inequality \eqref{hardy}}\,;\\
	&\text{$\phi$ is a positive superharmonic function on $\Omega$}\,;\\
	&\text{$p\in (1,\infty)$, and $\mu\in I(\phi,p,\nu_1/\nu_2)$}\,.
\end{align*}
Then for any $u\in C_c^{\infty}\big([0,T]\times \Omega\big)$ and $f:=\partial_tu-\cL u$, we have	\begin{align}\label{230328758}
	\begin{split}
		&\sup_{0\leq t\leq T}\int_{\Omega}|u(t,\,\cdot)|^p\phi^{-\mu p}\dd x+\int_0^T\int_{\Omega}|u|^{p}\phi^{-\mu p}\rho^{-2}\dd x\dd t\\
		\leq\,& N \left(\int_{\Omega}|u(0,\cdot)|^p\phi^{-\mu p}\dd x+\int_0^T\int_{\Omega}|f|^p\phi^{-\mu p}\rho^{2p-2}\dd x\dd t\right)\,,
	\end{split}
\end{align}
where $N=N(p,\mu,\mathrm{C}_0(\Omega),\mathrm{C}_4)$.
\end{thm}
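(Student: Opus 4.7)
The plan is to carry out a standard weighted energy-type estimate: test the equation against $u|u|^{p-2}\phi^{-\mu p}$, apply the coercivity inequality built into $I(\phi,p,\nu_1/\nu_2)$ to extract a gradient term, use the Hardy inequality (via Lemma~\ref{03.30}.(3)) to convert the gradient term into a weighted $\rho^{-2}$ term, and then close the estimate with Hölder and Young's inequalities against $f$. All integrals encountered are finite because $u\in C_c^{\infty}([0,T]\times\Omega)$ is compactly supported in the spatial variable and $\phi^{-\mu p}$ is locally integrable on $\Omega$ (Remark~\ref{2303081145} applies since $\mu>-1/p$ gives $-\mu p<1$).

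First, I would compute $\frac{d}{dt}\int_{\Omega}|u|^p\phi^{-\mu p}\dd x=p\int_{\Omega}(\partial_tu)\, u|u|^{p-2}\phi^{-\mu p}\dd x$, using $\partial_tu=\cL u+f$ to split the right-hand side. Next, since $(a^{ij}(t)/\nu_2)_{d\times d}\in \mathrm{M}(\nu_1/\nu_2,1)$ for each $t\in(0,T]$ and $\mu\in I(\phi,p,\nu_1/\nu_2)$, inequality \eqref{21.07.12.1} applied to $u(t,\cdot)\in C_c^{\infty}(\Omega)$ yields
\begin{align*}
\nu_2\int_{\Omega\cap\{u(t,\cdot)\neq 0\}}|u|^{p-2}|\nabla u|^2\phi^{-\mu p}\dd x\leq \mathrm{C}_4\int_{\Omega}(-\cL u)\,u|u|^{p-2}\phi^{-\mu p}\dd x.
\end{align*}
Combined with Lemma~\ref{03.30}.(3) (which uses the Hardy inequality \eqref{hardy} and the superharmonicity of $\phi$ with exponent $c=-\mu p\in(-p+1,1)$) this gives
\begin{align*}
\int_{\Omega}|u|^{p}\phi^{-\mu p}\rho^{-2}\dd x\leq N_0\int_{\Omega}(-\cL u)\,u|u|^{p-2}\phi^{-\mu p}\dd x,
\end{align*}
with $N_0=N_0(p,\mu,\mathrm{C}_0(\Omega),\mathrm{C}_4,\nu_2)$.

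For the forcing term, Hölder's inequality with conjugate exponents $p$ and $p/(p-1)$, splitting the weight as $\phi^{-\mu p}=\phi^{-\mu}\cdot\phi^{-\mu(p-1)}$ and $1=\rho^{(2p-2)/p}\cdot\rho^{-(2p-2)/p}$, gives
\begin{align*}
\Bigl|\int_{\Omega}fu|u|^{p-2}\phi^{-\mu p}\dd x\Bigr|\leq \Bigl(\int_{\Omega}|f|^p\phi^{-\mu p}\rho^{2p-2}\dd x\Bigr)^{1/p}\Bigl(\int_{\Omega}|u|^p\phi^{-\mu p}\rho^{-2}\dd x\Bigr)^{(p-1)/p}.
\end{align*}
Young's inequality with a small parameter $\epsilon>0$ absorbs a fraction of the $|u|^p\rho^{-2}$ term into the left-hand side, leaving
\begin{align*}
\frac{1}{p}\frac{d}{dt}\int_{\Omega}|u|^p\phi^{-\mu p}\dd x+\kappa\int_{\Omega}|u|^p\phi^{-\mu p}\rho^{-2}\dd x\leq C\int_{\Omega}|f|^p\phi^{-\mu p}\rho^{2p-2}\dd x
\end{align*}
with $\kappa,C>0$ depending only on $p,\mu,\mathrm{C}_0(\Omega),\mathrm{C}_4$. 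Integrating from $0$ to $t$ and taking supremum over $t\in[0,T]$ yields \eqref{230328758}.

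The main technical obstacle is really just the bookkeeping: verifying that the definition of $I(\phi,p,\nu_1/\nu_2)$ applies with the rescaled coefficient matrix $(a^{ij}/\nu_2)_{d\times d}$, and combining the Hardy inequality output (indexed by $c=-\mu p$) with the parabolic coercivity inequality to produce a single $\rho^{-2}$ weight. No new analytic idea is required beyond what was developed for the elliptic case in Theorem~\ref{21.05.13.2}; the novelty here is purely the time integration and the use of Young's inequality to absorb the Hardy-weighted term.
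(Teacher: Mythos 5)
Your proposal is correct and follows essentially the same path as the paper's proof: test the equation against $u|u|^{p-2}\phi^{-\mu p}$, invoke the coercivity inequality encoded in $I(\phi,p,\nu_1/\nu_2)$ (after rescaling the matrix by $\nu_2$), combine with Lemma~\ref{03.30}.(3) to turn the gradient term into the $\rho^{-2}$-weighted term, and close with Young's inequality and integration in time. The only superficial difference is that you phrase the energy identity in differential form ($\frac{d}{dt}\int|u|^p\phi^{-\mu p}\dd x = \cdots$) before integrating, whereas the paper integrates over $(0,t_0]\times\Omega$ directly; this is a cosmetic rearrangement and does not change the argument.
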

\begin{proof}
For a fixed $t_0\in(0,T]$ and $\epsilon>0$, integrate
$$
p\,\big(\partial_tu\big) u|u|^{p-2}\,\phi^{-\mu p}-p\,\sum_{i,j=1}^da^{ij}(t)D_{ij} u\cdot u|u|^{p-2}\phi^{-\mu p}=pf\cdot u|u|^{p-2}\phi^{-\mu p}
$$
over $(0,t_0]\times \Omega$, and apply Young's inequality, to obtain
\begin{equation}\label{230328742}
	\begin{alignedat}{2}
		&\int_{\Omega}|u(t_0,\,\cdot)|^p\phi^{-\mu p}\dd x&&+p\int_0^{t_0}\int_{\Omega}\Big(-\sum_{i,j}a^{ij}(t)D_{ij} u\Big) u|u|^{p-2}\phi^{-\mu p}\dd x\dd t\\
		\leq\,& \int_{\Omega}|u(t_0,\,\cdot)|^p\phi^{-\mu p}\dd x&&+\epsilon^{-p+1}\int_0^{t_0}\int_{\Omega}|f|^p\phi^{-\mu p}\rho^{2p-2}\dd x\dd t\\
		&&&+(p-1)\epsilon\int_0^{t_0}\int_\Omega|u|^{p}\phi^{-\mu p} \rho^{-2}\dd x\dd t\,,
	\end{alignedat}
\end{equation}
for any $\epsilon>0$.
Due to Lemma~\ref{03.30}.(3) and that $\mu\in I(\phi,p,\nu_1/\nu_2)$, we have
\begin{equation}\label{230328743}
	\begin{alignedat}{2}
		\int_0^{t_0}\int_{\Omega}|u|^{p}\phi^{-\mu p}\rho^{-2}\dd x\dd t&\lesssim_{p,\mu,\mathrm{C}_1(\Omega)}&& \int_0^{t_0}\int_{\Omega}|\nabla u|^2|u|^{p-2}\phi^{-\mu p}\dd x\dd t\\
		&\leq_{\nu_2,\mathrm{C}_4} &&\int_0^{t_0}\int_{\Omega}\Big(-\sum_{i,j}a^{ij}(t)D_{ij} u\Big) u|u|^{p-2}\phi^{-\mu p}\dd x\dd t\,.
	\end{alignedat}
\end{equation}
By combining \eqref{230328742} and \eqref{230328743}, taking the supremum over $t_0\in(0,T]$, and choosing a small enough $\epsilon>0$, we obtain \eqref{230328758}; note that since $\phi^{-\mu p}$ is locally integrable (see Proposition~\ref{21.04.23.3}), the first term in \eqref{230328743} is finite.
\end{proof}

Recall that $\langle F,\zeta\rangle$ is the result of application of $F\in \cD'(\Omega)$ to $\zeta\in C_c^{\infty}(\Omega)$.
\begin{lemma}[Existence of a weak solution]\label{21.05.25.300}
Let $\Omega$ admit the Hardy inequality \eqref{hardy}. For any $u_0\in C_c^{\infty}(\Omega)$ and $f\in C_c^{\infty}\big([0,T]\times \Omega\big)$, there exists a measurable function $u:[0,T]\times \Omega\rightarrow \bR$ satisfying the following:
\begin{enumerate}
	\item $u(t,\,\cdot\,)\in L_{1,\mathrm{loc}}(\Omega)$ for each $t\in[0,T]$, and $u\in L_{1,\mathrm{loc}}([0,T]\times \Omega)$.
	
	\item For any $\zeta\in C_c^{\infty}(\Omega)$ and $t\in[0,T]$,
	\begin{align}\label{2206301251}
		\big\langle u(t,\cdot),\zeta\big\rangle=\langle u_0,\zeta\rangle+\int_0^t\big\langle\Delta u(s,\cdot)+f(s,\cdot),\zeta\big\rangle\,\dd s\,.
	\end{align}
	
	\item For any $p\in(1,\infty)$, $\mu\in(-1/p,1-1/p)$ and positive superharmic function $\phi$ on $\Omega$,
	\begin{align}\label{2206131031}
		\begin{split}
			&\sup_{t\in[0,T]}\int_{\Omega}|u(t,\,\cdot\,)|^p\phi^{-\mu p}\dd x+\int_0^T\int_{\Omega}|u|^p\phi^{-\mu p}\rho^{-2}\dd x\dd t\\
			&\leq N\Big(\int_0^{T}\int_{\Omega}|f|^p\phi^{-\mu p}\rho^{2p-2}\dd x\dd s+\int_{\Omega}|u_0|^p\phi^{-\mu p}\dd x\Big)
		\end{split}
	\end{align}
	where $N=N(p,c,\mathrm{C}_0(\Omega))$.
\end{enumerate}
\end{lemma}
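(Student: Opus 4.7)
The strategy parallels the proof of Lemma~\ref{21.05.25.3}, with the elliptic maximum principle and Theorem~\ref{21.05.13.2} replaced by the parabolic maximum principle and Theorem~\ref{05.11.2}. Fix a smooth bounded exhaustion $\{\Omega_n\}_{n\in\bN}$ of $\Omega$ with $\mathrm{supp}(u_0)\cup \mathrm{supp}(f)\subset [0,T]\times\Omega_1$, $\overline{\Omega_n}\subset \Omega_{n+1}$, and $\bigcup_n\Omega_n=\Omega$. Choose $U_0\in C_c^\infty(\Omega_1)$ with $U_0\geq |u_0|$ and $F\in C_c^\infty([0,T]\times\Omega_1)$ with $F\geq |f|$, and decompose $u_0=u_{0,1}-u_{0,2}$, $f=f_1-f_2$ with $u_{0,i}:=(U_0\pm u_0)/2\geq 0$ and $f_i:=(F\pm f)/2\geq 0$. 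Classical parabolic theory on the smooth bounded domain $\Omega_n$ provides, for each $i\in\{1,2\}$, a solution $V_n^{(i)}\in C^\infty([0,T]\times\overline{\Omega_n})$ to
$$
\partial_tV_n^{(i)}=\Delta V_n^{(i)}+f_i\,\,\text{in}\,\,(0,T]\times\Omega_n\,,\quad V_n^{(i)}(0,\cdot)=u_{0,i}\,,\quad V_n^{(i)}\big|_{[0,T]\times\partial\Omega_n}\equiv 0\,.
$$

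By the parabolic maximum principle $V_n^{(i)}\geq 0$, and comparing $V_{n+1}^{(i)}$ with the zero extension of $V_n^{(i)}$ on $(0,T]\times\Omega_n$ yields the monotonicity $V_n^{(i)}1_{\Omega_n}\leq V_{n+1}^{(i)}1_{\Omega_{n+1}}$. Define the pointwise limits $v^{(i)}:=\lim_{n\to\infty}V_n^{(i)}1_{\Omega_n}$ and set $u:=v^{(1)}-v^{(2)}$. For the uniform estimate, apply Theorem~\ref{05.11.2} (with $\cL=\Delta$, so that $(-1/p,1-1/p)=I(\phi,p,1)$ by Proposition~\ref{05.11.1}.(1)) to $U_n:=V_n^{(1)}-V_n^{(2)}$, which solves the heat equation in $\Omega_n$ with data $(u_0,f)$ and vanishes on $\partial\Omega_n$, to obtain
\begin{align*}
\sup_{t\in[0,T]}\int_\Omega\big|U_n1_{\Omega_n}\big|^p\phi^{-\mu p}\dd x &+\int_0^T\!\!\int_\Omega\big|U_n1_{\Omega_n}\big|^p\phi^{-\mu p}\rho^{-2}\dd x\dd t\\
&\lesssim\int_\Omega|u_0|^p\phi^{-\mu p}\dd x+\int_0^T\!\!\int_\Omega|f|^p\phi^{-\mu p}\rho^{2p-2}\dd x\dd t\,,
\end{align*}
with the constant independent of $n$. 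The main technical point is justifying this application, since Theorem~\ref{05.11.2} is stated for test functions in $C_c^\infty([0,T]\times\Omega)$ whereas $U_n1_{\Omega_n}$ is only continuous across $\partial\Omega_n$; this is handled either by a short space-time mollification of $U_n$ followed by a limit, or, more directly, by inspecting the proof of Theorem~\ref{05.11.2} and observing that its integration-by-parts step requires only $U_n\in C^2(\overline{\Omega_n})$ vanishing on $\partial\Omega_n$ (so that the boundary terms vanish), both of which hold here.

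Since $U_n1_{\Omega_n}=V_n^{(1)}1_{\Omega_n}-V_n^{(2)}1_{\Omega_n}\to u$ pointwise, and in $L_{1,\mathrm{loc}}([0,T]\times\Omega)$ by monotone convergence of each $V_n^{(i)}1_{\Omega_n}$, Fatou's lemma upgrades the uniform bound to part~(3) for $u$. Local integrability in part~(1) then follows by specializing part~(3) to $\phi\equiv 1$ on any relatively compact subdomain (noting that $\phi\equiv 1$ is trivially superharmonic). For the weak formulation~(2), fix $\zeta\in C_c^\infty(\Omega)$ and choose $N$ with $\mathrm{supp}(\zeta)\subset\Omega_N$; for every $n\geq N$ the classical identity
$$
\langle U_n(t,\cdot),\zeta\rangle=\langle u_0,\zeta\rangle+\int_0^t\Big(\langle U_n(s,\cdot),\Delta\zeta\rangle+\langle f(s,\cdot),\zeta\rangle\Big)\,\dd s
$$
holds; combining the pointwise monotone convergence of each $V_n^{(i)}1_{\Omega_n}$ with the $L_{1,\mathrm{loc}}$ bound from~(3) justifies passing to the limit $n\to\infty$ via dominated convergence on both sides, delivering~(2) for $u$.
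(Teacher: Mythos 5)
Your structure mirrors the paper's own proof (same exhaustion $\{\Omega_n\}$, same splitting of $u_0$ and $f$ into nonnegative pieces via dominating cut-offs $U_0,F$, parabolic comparison for monotonicity, ``repeat the proof of Theorem~\ref{05.11.2}'' to get the uniform estimate, Fatou, then dominated convergence for the weak formulation). There is, however, a genuine gap in how you obtain local integrability of the monotone limits.

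You apply the uniform estimate only to $U_n:=V_n^{(1)}-V_n^{(2)}$, which controls $|U_n|$ but says nothing about $V_n^{(1)}$ and $V_n^{(2)}$ individually: both could grow while their difference stays small. Yet the two places where you pass to the limit both require a bound on the \emph{individual} pieces. First, you assert that $U_n1_{\Omega_n}\to u$ in $L_{1,\mathrm{loc}}$ ``by monotone convergence of each $V_n^{(i)}1_{\Omega_n}$''; but monotone convergence only tells you $\int V_n^{(i)}1_{\Omega_n}\nearrow\int v^{(i)}$, which could be $+\infty$ — to conclude $v^{(i)}\in L_{1,\mathrm{loc}}$ you need a uniform bound on $V_n^{(i)}1_{\Omega_n}$, not on their difference. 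Second, in passing to the limit in the identity $\langle U_n(t,\cdot),\zeta\rangle=\langle u_0,\zeta\rangle+\int_0^t(\langle U_n,\Delta\zeta\rangle+\langle f,\zeta\rangle)\dd s$, the natural dominating function for $V_n^{(i)}1_{\Omega_n}$ is $v^{(i)}$, and dominated convergence again needs $v^{(i)}\in L_{1,\mathrm{loc}}$ — the ``$L_{1,\mathrm{loc}}$ bound from~(3)'' that you cite controls $u=v^{(1)}-v^{(2)}$, not $v^{(1)}$ and $v^{(2)}$ separately. The fix is exactly what the paper does: apply the uniform estimate (with, e.g., $p=2$, $\mu=0$, $\phi\equiv 1$) to $V_n^{(i)}1_{\Omega_n}$ with the nonnegative data $(u_{0,i},f_i)$, then invoke monotone convergence to get $\int_0^T\int_\Omega|v^{(i)}|^2\rho^{-2}\dd x\dd t<\infty$, hence $v^{(i)}\in L_{1,\mathrm{loc}}$. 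Once that is in place, the rest of your argument goes through. A smaller omission: the paper treats $T=\infty$ by first deriving the estimate on $(0,K]$ and letting $K\to\infty$; your write-up does not address this case.
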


\begin{proof}
We repeat the argument of Lemma~\ref{21.05.25.3}.
Take a sequence of infinitely smooth bounded domains $\{\Omega_n\}_{n\in\bN}$ such that
$$
\text{supp}(u_0)\subset \Omega_1\,\,,\quad \text{supp}(f)\subset [0,T]\times\Omega_1\,\,,\quad \overline{\Omega_n}\subset \Omega_{n+1}\,\,,\quad \bigcup_{n}\Omega_n=\Omega\,.
$$
For $h\in C_c^{\infty}(\Omega_1)$ and $H\in C_c^{\infty}([0,T]\times \Omega_1)$, by $R_{n}(h,H)$ we denote the classical solution $U\in C^{\infty}\big([0,T]\times \overline{\Omega_n}\big)$ of the equation
$$
\partial_tU=\Delta U+H1_{\Omega_1}\quad \text{on}\,\,(0,T]\times\Omega_n\quad; \quad U|_{[0,T]\times\partial\Omega_n}\equiv 0\quad\text{and}\quad U(0,\cdot\,)=h1_{\Omega_1}\,.
$$
We first claim that
\begin{align}\label{2206131250}
	\begin{split}
		&\sup_{t\in [0,T]}\int_{\Omega}|R_{n}(h,H)(t,\,\cdot\,)1_{\Omega_n}|^p\phi^c\dd x+\int_0^T\int_{\Omega}|R_{n}(h,H)1_{\Omega_n}|^{p}\rho^{-2}\phi^c\dd x\dd s\\
		&\leq N(p,c,\mathrm{C}_0(\Omega))\left(\int_0^T\int_{\Omega}|H|^p\phi^c\rho^{2p-2}\dd x\dd s+\int_{\Omega}|h|^p\phi^c\dd x\right)\,,
	\end{split}
\end{align}
for all $p\in(1,\infty)$, $c\in(-p+1,1)$ and positive superharmonic functions $\phi$.
Note that $\overline{\Omega_n}$ is a compact subset of $\Omega$, and for each $t\in[0,T]$,
\begin{align*}
	R_{n}(h,H)(t,\cdot\,)\in C^{\infty}(\overline{\Omega_n})\,\,,\,\, R_{n}(h,H)(t,\cdot\,)|_{\partial\Omega_n}\equiv 0\,,
\end{align*}
which implies that $R_{n}(h,H)(t,\,\cdot\,)1_{\Omega_n}$ satisfies condition \eqref{22.01.25.2}.
If $T<\infty$, then we can repeat the proof of Theorem~\ref{05.11.2} for $R_n(h,H)1_{\Omega_n}$ in place of $u$, using Lemma~\ref{03.30}.
This gives us \eqref{2206131250}.
For the case $T=\infty$, we first obtain \eqref{2206131250} for $K\in(0,\infty)$ instead of $T$.
Then, by letting $K\rightarrow \infty$, we obtain \eqref{2206131250} even for the case $T=\infty$.

Take $U_0\in C_c^{\infty}(\Omega)$ and $F\in C_c^{\infty}\big([0,T]\times \Omega\big)$ such that $|u_0|\leq U_0$ and $|f|\leq F$ (recall that $[0,T]:=[0,\infty)$ when $T=\infty$), and put
$$
u_0^1=\frac{U_0+u_0}{2}\,\,,\quad u_0^2=\frac{U_0-u_0}{2}\,\,,\quad  f^1=\frac{F+f}{2}\,\,,\quad f^2=\frac{F-f}{2}\,.
$$
so that these functions are nonnegative, $u_0=u_0^1-u_0^2$, and $f=f^1-f^2$.

For $v_n:=R_n(u_0^1,f^1)1_{\Omega_n}$, the maximum principle implies that
$$
0\leq v_n\leq v_{n+1}\quad\text{on}\quad [0,T]\times\Omega\,.
$$
We denote the pointwise limit of $v_n$ by $v$.
Apply the monotone convergence theorem to \eqref{2206131250} with $(h,H,\phi,p,c)=(u_0^1,f^1,1_{\Omega},2,0)$ to obtain
\begin{align*}
	\sup_{t\in [0,T]}\int_{\Omega}|v(t,\,\cdot\,)|^2\dd x+\int_0^T\int_{\Omega}|v|^{2}\rho^{-2}\dd x\dd t\lesssim \int_0^T\int_{\Omega}|f^1|^2\rho^{2}\dd x\dd t+\int_{\Omega}|u_0^1|^2\dd x\,.
\end{align*}
This implies that $v(t,\cdot\,)\in L_{1,\mathrm{loc}}(\Omega)$ for each  $t\in(0,T]$, and $v\in L_{1,\mathrm{loc}}\big([0,T]\times\Omega\big)$.

We next claim that for any $t\in(0,T]$,
\begin{align}\label{2302011002}
	\big\langle v(t,\cdot),\zeta\big\rangle =\langle u_0^1,\zeta\rangle +\int_0^t\big\langle \Delta v(s,\cdot)+f^1(s,\cdot),\zeta\big\rangle \dd s\,.
\end{align}
For a fixed $\zeta\in C_c^{\infty}(\Omega)$, take $N\in\bN$ such that $\mathrm{supp}(\zeta)\subset \Omega_N$.
Since $v_n:=R_n(u_0^1,f^1)1_{\Omega_n}$, we obtain that for any $n\geq N$,
$$
\int_{\Omega} v_n(t,\cdot)\zeta\dd x =\int_{\Omega} u_0^1\,\zeta\dd x +\int_0^t\int_{\Omega}\Big(v_n(s,\cdot)\Delta \zeta+f^1(s,\cdot)\zeta\Big)\dd x \dd s\,.
$$
Since $0\leq v_n\leq v$, $v(t,\cdot)\in L_{1,\mathrm{loc}}(\Omega)$ for each $t\in[0,T]$, and $v\in L_{1,\mathrm{loc}}([0,T]\times \Omega)$, the Lebesgue dominated convergence theorem implies \eqref{2302011002}.
By the same argument,
$$
w(t,x):=\lim\limits_{n\rightarrow\infty}R_n(u_0^2,f^2)(t,x)1_{\Omega_n}(x)
$$
satsifes that $w(t,\cdot\,)\in L_{1,\mathrm{loc}}(\Omega)$ for each $t\in (0,T]$, and $w\in L_{1,\mathrm{loc}}\big([0,T]\times\Omega\big)$.
In addition, for any $t\in(0,T]$ and $\zeta\in C_c^{\infty}(\Omega)$,
$$
\big\langle w(t,\cdot),\zeta\big\rangle =\langle u_0^2,\zeta\rangle +\int_0^t\big\langle \Delta w(s,\cdot)+f^2(s,\cdot),\zeta\big\rangle \dd s\,.
$$

Put
$$
u:=v-w=\lim\limits_{n\rightarrow\infty}R_{n}(u_0,f)1_{\Omega_n}\,.
$$
Then $u(t,\,\cdot\,)\in L_{1,\mathrm{loc}}(\Omega)$ for each $t\in [0,T]$, $u\in L_{1,\mathrm{loc}}\big([0,T]\times \Omega\big)$, and $u$ satisfies \eqref{2206301251}.
By applying Fatou's lemma to \eqref{2206131250} with $(h,H)=(u_0,f)$, \eqref{2206131031} is obtained.
\end{proof}

\vspace{2mm}

\subsection{Function spaces for parabolic equations}\label{0052}
Throughout this subsection, we assume \eqref{22082801111}.
This subsection introduces the function spaces $\Psi B_{p,\theta}^{\gamma}(\Omega)$, $\Psi\bH_{p,\theta}^{\gamma}(\Omega)$, and $\Psi\cH_{p,\theta}^{\gamma}(\Omega)$.
These spaces correspond to the initial data $u_0$, the force term $f$, and the solution $u$ for equation \eqref{2206201120}, respectively.

For $n\in\bZ$ and $s\in (0,1]$, by $B_p^{n+s}=B_p^{n+s}(\bR^d)$ we denote the Besov space whose norm is given by
\begin{align*}
\|f\|_{B_p^{n+s}(\bR^d)}:=\big\|(1-\Delta)^{n/2}f\big\|_{L_p(\bR^d)}+\big[(1-\Delta)^{n/2}f\big]_{B_p^s(\bR^d)}\,,
\end{align*}
where $(1-\Delta)^{n/2}f$ is introduced in \eqref{2207201137}, and
\begin{align*}
[f]_{B_p^s(\bR^d)}:=\bigg(\int_{\bR^d}\int_{\bR^d}\frac{|f(x+h)-2f(x)+f(x-h)|^p}{|h|^{d+s p}}\dd h\,\dd x\bigg)^{1/p}\,.
\end{align*}
Note that $B_p^{n+s}(\bR^d)$ coincides with $B_{p,p}^{n+s}(\bR^d)$ introduced in \cite[Definition 2.3.1/2]{triebel2}, and for any $\gamma,\,s\in\bR$, 
\begin{align*}
\|f\|_{B_p^{\gamma+s}(\bR^d)}\simeq_{d,p,\gamma,s} \|(1-\Delta)^{s/2}f\|_{B_p^{\gamma}(\bR^d)}
\end{align*}
(see, \textit{e.g.}, \cite[Theorem 2.3.8/(i), Remark 2.5.12/2]{triebel2}).
If $n\in\bN_0$ and  $s\in(0,1)$, then $B_{p}^{n+s}$ also coincides with the Sobolev-Slobodeckij space
\begin{align*}
W_p^{n+s}(\bR^d):=\,&\Big\{f\in W_p^{n}\,:\,\int_{\bR^d}\int_{\bR^d}\frac{|D^nf(x)-D^nf(y)|^p}{|x-y|^{d+sp}}\dd y\,\dd x<\infty\Big\}\\
=\,&\Big\{f\in W_p^{n}\,:\,\int_{\bR^d}\int_{\{y:|y-x|< 1\}}\frac{|D^nf(x)-D^nf(y)|^p}{|x-y|^{d+sp}}\dd y\,\dd x<\infty\Big\}
\end{align*}
(see, \textit{e.g.}, \cite[Theorem 2.5.7/(i)]{triebel2}).


Let $\zeta_0\in C_c^{\infty}(\bR_+)$, $\trho$, and $\{\zeta_{0,(n)}\}_{n\in\bN}$ be the functions used in Definition~\ref{220610533} (for $H_{p,\theta}^{\gamma}(\Omega)$); recall \eqref{230130541} - \eqref{230130542}.
Similar to the space $H_{p,\theta}^{\gamma}(\Omega)$, we define
\begin{align*}
\begin{gathered}
	B_{p,\theta}^{\gamma}(\Omega)=\Big\{f\in\cD'(\Omega)\,:\,\|f\|^p_{B^{\gamma}_{p,\theta}(\Omega)}:=\sum_{n\in\bZ}e^{n\theta}\|\big(\zeta_{0,(n)}f\big)(e^n\cdot)\|_{B^{\gamma}_p(\bR^d)}^p<\infty\Big\}\,.
\end{gathered}
\end{align*}
As mentioned in \cite[Remark 3.6]{Lo1}, $B_{p,\theta}^{\gamma}(\Omega)$ has properties similar to $H_{p,\theta}^{\gamma}(\Omega)$.
Properties of $B_{p,\theta}^{\gamma}(\Omega)$ are provided in Appendix~\ref{0082}.

For a regular Harnack function $\Psi$, we denote
\begin{align*}
\Psi B_{p,\theta}^{\gamma}(\Omega)=\big\{\Psi g\,:\,g\in B_{p,\theta}^{\gamma}(\Omega)\big\}\quad\text{and}\quad \|f\|_{\Psi B_{p,\theta}^{\gamma}(\Omega)}=\|\Psi^{-1}f\|_{B_{p,\theta}^{\gamma}(\Omega)}\,.
\end{align*}
The following equaivalent norm on $\Psi B_{p,\theta}^{\gamma}$ is provided in Proposition~\ref{220418435}:
if $k\in\bN_0$ and $\alpha\in (0,1)$, then
\begin{align}\label{2206171215}
\|f\|_{\Psi B_{p,\theta}^{k+\alpha}}^p\simeq_N\,&\sum_{i=0}^k\int_{\Omega}|\rho^i D^if|^p\Psi^{-p}\rho^{\theta-d}\dd x\\
& +\int_{\Omega}\Big(\int_{y:|y-x|<\frac{\rho(x)}{2}}\frac{|D^{k}f(x)-D^{k}f(y)|^p}{|x-y|^{d+\alpha p}}dy\Big)\Psi(x)^{-p}\rho(x)^{(k+\alpha)p +\theta-d}\dd x\,,\nonumber
\end{align}
where $N=N(d,p,k,\alpha,\mathrm{C}_2(\Psi))$.

\begin{lemma}\label{22.04.15.148}
\,\,
\begin{enumerate}
	\item Lemmas~\ref{21.05.20.3}.(1)-(4), \ref{21.09.29.4}, and \ref{22.02.16.1} hold with $B_{\ast,\ast}^{\ast}(\Omega)$ and $B_\ast^{\ast}$, instead of $H_{\ast,\ast}^{\ast}(\Omega)$ and $H_\ast^{\ast}$.
	
	\item Let $k\in\bN_0$ with $|\gamma|<k$. If $a\in C_{\mathrm{loc}}^{k}(\Omega)$ satisfies $|a|^{(0)}_k<\infty$, then
	$$
	\|af\|_{B_{p,\theta}^{\gamma}(\Omega)}\leq N |a|^{(0)}_k\|f\|_{B_{p,\theta}^{\gamma}(\Omega)}
	$$
	where $N=N(d,p,\gamma,\theta,k)$.
	
	\item If $\gamma'>\gamma$, then
	$$
	\|f\|_{\Psi H_{p,\theta}^{\gamma}(\Omega)}+\|f\|_{\Psi B_{p,\theta}^{\gamma}(\Omega)}\leq N\min\big(\|f\|_{\Psi H_{p,\theta}^{\gamma'}(\Omega)},\|f\|_{\Psi B_{p,\theta}^{\gamma'}(\Omega)}\big)\,.
	$$
	where $N=N(d,p,\gamma,\gamma',\theta)$.
	
	\item If $p\geq 2$, then
	$$
	\|f\|_{\Psi B_{p,\theta}^{\gamma}(\Omega)}\leq N\|f\|_{\Psi H_{p,\theta}^{\gamma}(\Omega)}\,,
	$$ 
	and if $1<p\leq 2$, then 
	$$
	\|f\|_{\Psi H_{p,\theta}^{\gamma}(\Omega)}\leq N\|f\|_{\Psi B_{p,\theta}^{\gamma}(\Omega)}\,.
	$$
	Here $N=N(d,p,\gamma,\theta)$.
\end{enumerate}
\end{lemma}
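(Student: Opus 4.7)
The unifying strategy is to reduce each of the four statements to its counterpart on $\bR^d$ through the dyadic localization $\{\zeta_{0,(n)}\}_{n\in\bZ}$ used in Definition~\ref{220610533}, and then invoke the corresponding classical result for the Besov space $B_p^\gamma(\bR^d)=B_{p,p}^\gamma(\bR^d)$. Throughout, I would exploit the fact that $B_p^\gamma(\bR^d)$ enjoys the same structural properties used in the proofs for $H_p^\gamma(\bR^d)$: invariance under translation and dyadic rescaling, lifting $(1-\Delta)^{s/2}:B_p^{\gamma}\to B_p^{\gamma-s}$, continuity of $D^\alpha:B_p^\gamma\to B_p^{\gamma-|\alpha|}$, duality $(B_p^\gamma)^{*}=B_{p'}^{-\gamma}$, and density of $C_c^\infty$ (all classical; see, e.g., \cite{triebel2}, Sections~2.3 and 2.6).

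For (1), I would verify that the proofs in Appendix~\ref{0082} of Lemmas~\ref{21.05.20.3}.(1)--(4), \ref{21.09.29.4}, and \ref{22.02.16.1} never use anything specific to $H_p^\gamma(\bR^d)$ beyond the structural properties listed above. Since each such step admits a literal Besov counterpart, the $n$-by-$n$ assembly via $\sum_n\zeta_{0,(n)}\equiv 1$ and the rescalings $x\mapsto e^n x$ transport the arguments to $B_{p,\theta}^\gamma(\Omega)$ verbatim; the regularization step in the proof of Lemma~\ref{21.09.29.4}.(5) requires only the $B$-analogue of Lemma~\ref{22.04.11.3} which follows from density of $C_c^\infty$ in $B_p^\gamma(\bR^d)$.

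For (2), I would retrace the proof of Lemma~\ref{21.05.20.3}.(5), whose only non-trivial input is a pointwise-multiplier theorem on $\bR^d$. The Besov version states that if $a\in C^k(\bR^d)$ has bounded derivatives up to order $k$ and $|\gamma|<k$, then $f\mapsto af$ is bounded on $B_p^\gamma(\bR^d)$ (see, e.g., \cite{triebel2}, Section~2.8). The strict inequality $|\gamma|<k$ here, rather than $|\gamma|\leq k$ as in the Bessel-potential case, reflects the need for an extra derivative to absorb the difference quotient in the Besov seminorm. Combined with the scaling $|D^\alpha(a\circ(e^n\cdot))|\leq e^{n|\alpha|}|D^\alpha a|$, together with $|a|_k^{(0)}$ dominating $\rho^{|\alpha|}|D^\alpha a|$, localization yields the desired bound.

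For (3) and (4), after localization everything reduces to inclusions between $B_p^\gamma(\bR^d)$ and $H_p^\gamma(\bR^d)$. Part (3) follows from the elementary embeddings $H_p^{\gamma'}\hookrightarrow H_p^\gamma$ and $B_p^{\gamma'}\hookrightarrow B_p^\gamma$ for $\gamma'>\gamma$, together with the cross-embeddings $B_p^{\gamma'}\hookrightarrow H_p^\gamma$ and $H_p^{\gamma'}\hookrightarrow B_p^\gamma$ obtained by inserting an intermediate regularity $\gamma<\gamma''<\gamma'$ and using the universal chain $B_{p,1}^{\gamma''}\hookrightarrow F_{p,2}^{\gamma''}\hookrightarrow B_{p,\infty}^{\gamma''}$. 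Part (4) uses the identifications $H_p^\gamma=F_{p,2}^\gamma$ and $B_p^\gamma=B_{p,p}^\gamma$ together with the classical Besov--Triebel embedding
\[
B_{p,\min(p,2)}^\gamma\hookrightarrow F_{p,2}^\gamma\hookrightarrow B_{p,\max(p,2)}^\gamma\,,
\]
which yields $B_p^\gamma\hookrightarrow H_p^\gamma$ when $p\leq 2$ and $H_p^\gamma\hookrightarrow B_p^\gamma$ when $p\geq 2$; the resulting $\bR^d$-level inequalities are then summed against $e^{n\theta}$ to obtain the weighted versions on $\Omega$. The main obstacle is not a genuine conceptual difficulty but the careful verification in (1) that every transfer step (cutoff, rescaling, and $\ell^p(e^{n\theta})$ summation) is compatible with the Besov seminorm; the only substantively new analytic input is the Besov multiplier theorem used in (2), which is responsible for the slightly stronger hypothesis $|\gamma|<k$.
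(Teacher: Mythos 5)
Your proposal is correct and follows essentially the same route as the paper: the paper also proves the lemma by reducing everything to the $\bR^d$-level Besov counterparts via the dyadic localization (Propositions~\ref{220527502111} and \ref{220527502} in the Appendix are stated for $X\in\{H,B\}$ uniformly, and the map $f\mapsto\Psi^{-1}f$ is an isometric isomorphism), then noting that the remaining items are obtained by repeating the $H$-proofs verbatim with $B$ in place of $H$. Your identification of the Besov pointwise-multiplier theorem as the source of the strict inequality $|\gamma|<k$ in (2), and of the classical $B_{p,p}^\gamma$--$F_{p,2}^\gamma$ embeddings for (3) and (4), matches the paper's use of \cite[Prop.~2.3.2/2]{triebel2}.
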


Lemma~\ref{22.04.15.148}, except for the counterparts of Lemmas~\ref{21.09.29.4} and  \ref{22.02.16.1} (in Lemma~\ref{22.04.15.148}.(1)), follows from Propositions~\ref{220527502111} and \ref{220527502}.
The excepted counterparts are proved by repeating the proofs of Lemmas~\ref{21.09.29.4} and \ref{22.02.16.1} with $H_{\ast,\ast}^{\ast}(\Omega)$ replaced by $B_{\ast,\ast}^{\ast}(\Omega)$; we left the proof to the reader.  

\begin{remark}\label{22.04.18.5}
By repeating the argument of Corollary~\ref{21.05.26.3} with using the counterpart of Lemma~\ref{22.02.16.1} in Lemma~\ref{22.04.15.148}.(1), we obtain that for any $n\in\bN$,
\begin{align*}
	\|f\|_{\Psi B_{p,\theta}^{\gamma}(\Omega)}\simeq_N \inf\Big\{\sum_{|\alpha|\leq n}\|f_{\alpha}\|_{\Psi B_{p,\theta-|\alpha|p}^{\gamma+n}(\Omega)}:\,f=\sum_{|\alpha|\leq n}D^{\alpha}f_{\alpha}\Big\}\,,
\end{align*}
where $N=N(d,p,\gamma,\theta,\mathrm{C}_2(\Psi),n)$.
\end{remark}
\vspace{1mm}

Next, we define function spaces for parabolic equations, following Krylov \cite{Krylov1999-1}.
Let $u$ be $\cD'(\Omega)$-valued function on $[0,T]$.
$\partial_tu$ denote a function $f:(0,T)\rightarrow \cD'(\Omega)$ satisfying the following condition: for any $\zeta\in C_c^{\infty}(\Omega)$,
\begin{align}\label{2208221058}
\begin{split}
	&\big\langle f(\,\cdot\,),\zeta\big\rangle\in L_{1,\mathrm{loc}}\big([0,T]\big)\,\,,\,\,\text{and}\\
	&\big\langle u(t),\zeta\big\rangle=\big\langle u(0),\zeta\big\rangle+\int_0^t\big\langle f(s),\zeta\big\rangle \,\dd s\quad\text{for all}\quad t\in (0,T].
\end{split}
\end{align}
In this situation, we also say that $\partial_tu=f$ in the sense of distribution (on $\Omega$).

Since $C_c^{\infty}(\Omega)$ is a separable topological vector space, if $\partial_tu=f$ and $\partial_tu=g$ in the sense of distribution, then $f(s)=g(s)$ for almost every $s\in[0,T]$.

We denote
\begin{equation}\label{221015645}
\begin{alignedat}{2}
	&\bH_{p}^{\gamma}(\bR^d,T)=L_p\big((0,T);H^{\gamma}_{p}(\bR^d)\big)\,\,,\quad &&\bL_{p}(\bR^d,T)=\bH_{p}^0(\bR^d,T)\,\,,\\
	&\bH^{\gamma}_{p,\theta}(\Omega,T)=L_p\big((0,T); H^{\gamma}_{p,\theta}(\Omega)\big)\,\,,\quad &&\bL_{p,\theta}(\Omega,T)=\bH_{p,\theta}^0(\Omega,T)\,\,,\\
	&\Psi\bH^{\gamma}_{p,\theta}(\Omega,T)=L_p\big((0,T);\Psi H^{\gamma}_{p,\theta}(\Omega)\big)\,\,,\quad &&\Psi\bL_{p,\theta}(\Omega,T)=\Psi\bH_{p,\theta}^0(\Omega,T)\,.
\end{alignedat}
\end{equation}

\begin{defn}
By $\Psi\cH^{\gamma+2}_{p,\theta}(\Omega,T)$ we denote the space of all functions $u:[0,T]\rightarrow \cD'(\Omega)$ satisfying the following condition:
$$
\text{$u\in \Psi\bH^{\gamma+2}_{p,\theta}(\Omega,T)$, $u(0)\in \Psi B^{\gamma+2-2/p}_{p,\theta+2}(\Omega)$, and there exists $\partial_tu$ in $\Psi\bH^{\gamma}_{p,\theta+2p}(\Omega,T)$.}
$$
The norm in $\Psi\cH^{\gamma}_{p,\theta}(\Omega,T)$ is defined by
\begin{align*}
	\|u\|_{\Psi\cH^{\gamma+2}_{p,\theta}(\Omega,T)}=\|u\|_{\Psi\bH^{\gamma+2}_{p,\theta}(\Omega,T)}+\|u(0)\|_{\Psi B^{\gamma+2-2/p}_{p,\theta+2}(\Omega)}+\|\partial_t u\|_{\Psi\bH^{\gamma}_{p,\theta+2p}(\Omega,T)}\,.
\end{align*}
\end{defn}
\vspace{1mm}

For the case $\Psi\equiv 1_{\Omega}$, we denote $\cH_{p,\theta}^{\gamma+2}(\Omega,T)=1_{\Omega}\,\cH_{p,\theta}^{\gamma+2}(\Omega,T)$.

\begin{remark}\label{2212141039}
The initial data space $\Psi B_{p,\theta+2}^{\gamma+2-2/p}(\Omega)$ coincides with
\begin{align*}
	\mathrm{Tr}_0:=\big\{u(0)\,|\,u:[0,\infty)\rightarrow \cD'&(\Omega)\,\,\,\text{satisfies that}\\
	&u\in \Psi \bH_{p,\theta}^{\gamma+2}(\Omega,\infty)\,\,\,\text{and}\,\,\, \partial_t u\in \Psi \bH_{p,\theta+2p}^{\gamma}(\Omega,\infty)\big\}\,.
\end{align*}
Note that for $u\in \Psi \bH_{p,\theta}^{\gamma+2}(\Omega,\infty)$, if there exists $f\in\Psi \bH_{p,\theta+2p}^{\gamma}(\Omega,\infty)$ such that
$$
\langle u(t)-u(s),\zeta \rangle=\int_s^t\langle f(r),\zeta\rangle\dd r\qquad \forall\,\,0<s<t<\infty\,\,,\,\,\zeta\in C_c^{\infty}(\Omega),
$$
then $u(0)\in \cD'(\Omega)$ is (uniquely) well defined to satisfy \eqref{2208221058}, by
\begin{align*}
	\langle u(0), \zeta\rangle:=\int_0^1\Big(\langle u(s),\zeta\rangle-\int_0^s\langle \partial_t u(r),\zeta \rangle \dd r\Big)\dd s\,.
\end{align*}
The space $\mathrm{Tr}_0$ is rewritten in the Bochner sense:
\begin{align*}
\mathrm{Tr}_0=\big\{u(0)\,|\,u:[0,\infty)\rightarrow X_0&+X_1\,\,\,\text{satisfies that}\\
&u\in L_p(\bR_+;X_0)\,\,,\partial_t u\in L_p(\bR_+;X_1)\big\}\,,
\end{align*}
where $X_0=\Psi H_{p,\theta}^{\gamma+2}(\Omega)$, $X_1=\Psi H_{p,\theta+2p}^{\gamma}(\Omega)$, and $\partial_t u$ is understood as the weak derivative of $u:\bR_+\rightarrow X_0+X_1$ in the Bochner sense.

It follows from the trace theorem (see, \textit{e.g.}, \cite[Theorem 1.8.2]{triebel}) and Proposition~\ref{220527502111}.(5) that
$$
\mathrm{Tr}_0=[X_0,X_1]_{1/p,p}=\Psi B_{p,\theta+2}^{\gamma+2-2/p}(\Omega)\,,
$$
where $[X_0,X_1]_{\nu,p}$ is the real interpolation space of $X_0$ and $X_1$.
Actually, the second equality is implied by Proposition~\ref{220527502111}.(5) and that the map $u\mapsto \Psi^{-1}u$ is isometric isomorphism from $\Psi H_{p,\theta}^{\gamma+2}(\Omega)$ (resp. $\Psi H_{p,\theta+2p}^{\gamma}(\Omega)$, $\Psi B_{p,\theta+2}^{\gamma+2-2/p}(\Omega)$) to  $H_{p,\theta}^{\gamma+2}(\Omega)$ (resp. $H_{p,\theta+2p}^{\gamma}(\Omega)$, $B_{p,\theta+2}^{\gamma+2-2/p}(\Omega)$).
In addition, we also obtain that 
\begin{alignat*}{2}
	&&&\|f\|_{\Psi B_{p,\theta+2}^{\gamma+2-2/p}(\Omega)}\\
	&\simeq_N\,&& \|f\|_{[X_0,X_1]_{1/p,p}}\\
	&\simeq_p\,&&\inf\Big\{\|u\|_{L_p(\bR_+;X_0)}+\|\partial_t u\|_{L_p(\bR_+;X_1)}\,\,\big|\,\,u:[0,\infty
	)\rightarrow X_0+X_1\,\,\,\,\text{satisfies}\\
	&&&\qquad\qquad\qquad\qquad\qquad\quad\, u\in L_p(\bR_+;X_0)\,\,,\,\, \partial_t u\in L_p(\bR_+;X_1)\,\,,\,\,u(0)=f\Big\}\\
	&=\,&&\inf\left\{\|u\|_{\Psi \bH_{p,\theta}^{\gamma+2}(\Omega,\infty)}+\|\partial_t u\|_{\Psi \bH_{p,\theta+2p}^{\gamma}(\Omega,\infty)}\,\,\big|\,\,u\in \Psi \cH_{p,\theta}^{\gamma+2}(\Omega,\infty)\,\,,\,\, u(0)=f\right\}\,,
\end{alignat*}
where $N=N(d,p,\theta,\gamma)$.
\end{remark}

\begin{prop}\label{2205241111}
\,\,

\begin{enumerate}
	\item $\Psi\cH^{\gamma+2}_{p,\theta}(\Omega,T)$ is a Banach space.
	
	\item $C_c^{\infty}\big([0,\infty)\times \Omega\big)$ is dense in $\Psi\cH^{\gamma+2}_{p,\theta}(\Omega,T)$.
\end{enumerate}
\end{prop}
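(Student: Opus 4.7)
The plan is to first reduce both claims to the case $\Psi \equiv 1_\Omega$ via the isometric isomorphism $u \mapsto \Psi^{-1}u$ from $\Psi\cH_{p,\theta}^{\gamma+2}(\Omega,T)$ onto $\cH_{p,\theta}^{\gamma+2}(\Omega,T)$; since $\Psi$ depends only on $x$, this map commutes with $\partial_t$ and preserves the identity \eqref{230417643}. For part (1), a Cauchy sequence $\{u_n\}$ in $\cH_{p,\theta}^{\gamma+2}(\Omega,T)$ yields three Cauchy sequences, in $\bH_{p,\theta}^{\gamma+2}(\Omega,T)$, in the initial-datum space $B_{p,\theta+2}^{\gamma+2-2/p}(\Omega)$, and in $\bH_{p,\theta+2p}^{\gamma}(\Omega,T)$, converging to $u$, $v_0$, and $f$ respectively. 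Testing against any $\zeta \in C_c^\infty(\Omega)$, the duality pairings are continuous on each of these three Banach spaces by Lemmas~\ref{21.09.29.4}.(2) and \ref{22.04.15.148}.(1), hence $\langle u_n(\cdot),\zeta\rangle \to \langle u(\cdot),\zeta\rangle$ and $\langle \partial_t u_n(\cdot),\zeta\rangle \to \langle f(\cdot),\zeta\rangle$ in $L_p((0,T))$, and $\langle u_n(0),\zeta\rangle \to \langle v_0,\zeta\rangle$. Extracting a subsequence and passing to the limit in \eqref{230417643} yields
\begin{align*}
\langle u(t),\zeta\rangle = \langle v_0,\zeta\rangle + \int_0^t \langle f(s),\zeta\rangle\,ds
\end{align*}
for a.e.\ $t \in (0,T]$. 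Since the right-hand side is absolutely continuous in $t$, redefining $u$ on a $t$-null set extends the identity to all $t \in (0,T]$, placing $u$ in $\cH_{p,\theta}^{\gamma+2}(\Omega,T)$ with $u(0) = v_0$ and $\partial_t u = f$.

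For part (2), I will approximate $u \in \cH_{p,\theta}^{\gamma+2}(\Omega,T)$ in two stages. First, extend to $\bR$ by setting $\widetilde{\partial_t u} := (\partial_t u)\,1_{(0,T)}$ and $\tilde u(t) := u(0) + \int_0^t \widetilde{\partial_t u}(s)\,ds$, so that $\tilde u$ is constant outside $[0,T]$, and mollify in time: $u^\epsilon := \eta_\epsilon *_t \tilde u$ for a standard smooth kernel $\eta_\epsilon$. Standard Bochner mollification in the spaces $H_{p,\theta}^{\gamma+2}(\Omega)$ and $H_{p,\theta+2p}^{\gamma}(\Omega)$ yields $u^\epsilon \to u$ in $\bH_{p,\theta}^{\gamma+2}(\Omega,T)$ and $\partial_t u^\epsilon \to \partial_t u$ in $\bH_{p,\theta+2p}^{\gamma}(\Omega,T)$, and the real-interpolation description of the trace space provided in Remark~\ref{2212141039} gives continuity of $u \mapsto u(0)$, so also $u^\epsilon(0) \to u(0)$ in $B_{p,\theta+2}^{\gamma+2-2/p}(\Omega)$. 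Second, with $u^\epsilon$ now $C^\infty$ in $t$ with values in $H_{p,\theta}^{\gamma+2}(\Omega)$, I approximate spatially by a piecewise-linear-in-$t$ curve with values in $C_c^\infty(\Omega)$: pick finitely many nodes $0 = t_0 < t_1 < \cdots < t_M$ covering the relevant time interval, choose $C_c^\infty(\Omega)$ approximants $v_j^k$ to $u^\epsilon(t_j,\cdot)$ via Lemma~\ref{21.09.29.4}.(1), interpolate linearly in $t$, smooth the joints by a further short time-mollification, and multiply by a smooth cutoff in $t$ supported in $[0,\infty)$ (for finite $T$, vanishing beyond $T + 1$). This produces elements of $C_c^\infty([0,\infty) \times \Omega)$ that approximate $u^\epsilon$ in the full $\cH$-norm.

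The main obstacle is arranging the spatial approximation so that the initial-value convergence $u^{\epsilon,k}(0) \to u(0)$ in $B_{p,\theta+2}^{\gamma+2-2/p}(\Omega)$ occurs simultaneously with the bulk convergences in $\bH_{p,\theta}^{\gamma+2}$ and $\bH_{p,\theta+2p}^{\gamma}$. This is handled by including $t_0 = 0$ among the grid nodes and choosing $v_0^k$ to approximate $u^\epsilon(0,\cdot)$ simultaneously in $H_{p,\theta}^{\gamma+2}(\Omega)$ and in $B_{p,\theta+2}^{\gamma+2-2/p}(\Omega)$. The required joint density is the direct analogue of Lemma~\ref{21.09.29.4}.(5), which is available because the proof of that lemma (via Lemma~\ref{22.04.11.3}) applies equally to Besov spaces by Lemma~\ref{22.04.15.148}.(1); the simultaneous smoothing is then realized by a single mollification and cutoff in $x$. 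A diagonal choice $\epsilon_k \to 0$ and $k \to \infty$ produces the final approximating sequence in $C_c^\infty([0,\infty) \times \Omega)$.
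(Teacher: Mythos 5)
Your approach is essentially correct but takes a genuinely different route from the paper, especially in part (2), and a few technical points need attention.

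For part (1), both proofs reduce to $\Psi\equiv 1$. The paper invokes the estimate \eqref{2204160319} to conclude that $u^n-u^n(0)\to u-u_0$ in $C\big([0,T\wedge K];H_{p,\theta+2p}^{\gamma}(\Omega)\big)$, so the integral identity passes to the limit for \emph{every} $t$ directly. You instead obtain the identity for a.e.\ $t$ and then redefine on a null set. The exceptional null set of $t$'s that you obtain depends on the test function $\zeta$: to produce a single null set that works for all $\zeta$ you need a diagonalization over a countable set dense in $H_{p',\theta'}^{-\gamma}(\Omega)$, using the separability recorded in Remark~\ref{2212141032}. This is easily supplied, but without it the redefinition step as written has a gap. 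The paper's route via \eqref{2204160319} avoids the issue entirely.

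For part (2), the strategy is genuinely different. The paper applies the spatial cutoff $\Lambda_ku=\sum_{|n|\leq k}\zeta_{0,(n)}u$ first; by Proposition~\ref{220527502}.(8), the compactly-supported-in-annulus distribution $\Lambda_k u$ then lives in the \emph{unweighted Euclidean} parabolic spaces $\bH_p^{\gamma+2}(\bR^d,T)$, $\bH_p^\gamma(\bR^d,T)$, $B_p^{\gamma+2-2/p}(\bR^d)$, where the standard space-time mollification-and-cutoff argument is available; the resulting Euclidean approximants are then brought back to $\Omega$ by multiplying with $\sum_{|n|\leq k+1}\zeta_{0,(n)}$. Your proposal keeps everything inside the weighted spaces on $\Omega$: mollify first in $t$, then approximate in $x$ via piecewise-linear-in-$t$ curves with $C_c^\infty(\Omega)$ nodes. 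This trades the slick Euclidean reduction for a more hands-on construction, which is workable but requires tracking several simultaneous approximation errors (nodes must approximate $u^\epsilon(t_j)$ in both $H_{p,\theta}^{\gamma+2}(\Omega)$ and $H_{p,\theta+2p}^\gamma(\Omega)$ to a precision small relative to the grid spacing so that the piecewise-constant difference quotients converge to $\partial_t u^\epsilon$).

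There is one genuine gap in your first stage. You extend $\tilde u$ to be constant, equal to $u(0)$, for $t<0$. But $u(0)$ lies only in $B_{p,\theta+2}^{\gamma+2-2/p}(\Omega)$, which is neither a subset of $H_{p,\theta}^{\gamma+2}(\Omega)$ nor of $H_{p,\theta+2p}^{\gamma}(\Omega)$ in general, so $\tilde u(t)\notin H_{p,\theta}^{\gamma+2}(\Omega)$ for $t<0$. With a two-sided kernel the mollification $u^\epsilon(t)$ for $0<t<\epsilon$ then averages over values outside $H_{p,\theta}^{\gamma+2}(\Omega)$, and $u^\epsilon\notin \bH_{p,\theta}^{\gamma+2}(\Omega,T)$ is possible. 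The remedy is either to use a one-sided (forward-looking) kernel, so that for $t>0$ the mollification only samples $\tilde u$ on $(0,T)$, or to first translate in time, working with $u(\cdot+\delta)$ whose initial value $u(\delta)$ does lie in $H_{p,\theta}^{\gamma+2}(\Omega)$. With either fix the trace-theorem argument (Remark~\ref{2212141039}) then gives $u^\epsilon(0)\to u(0)$ in $B_{p,\theta+2}^{\gamma+2-2/p}(\Omega)$ as you claim. Note also that the case $T=\infty$ requires a large-time cutoff before the finite piecewise-linear grid can be formed; you address the finite-$T$ endpoint but not this one.
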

\begin{proof}
The mapping $u\mapsto \Psi^{-1}u$ is an isometric isomorphism from $\Psi\cH^{\gamma+2}_{p,\theta}(\Omega,T)$ to $\cH^{\gamma+2}_{p,\theta}(\Omega,T)$.
Therefore, we only need to consider the case $\Psi\equiv 1_{\Omega}$.
In this case, (1) and (2) of this proposition are implied by the arguments presented in \cite[Remark 5.5]{Krylov1999-1} and \cite[Remark 3.8]{Krylov2001}, respectively.
We give proofs for the convenience of the reader.

(1) Since $\cH_{p,\theta}^{\gamma+2}(\Omega,T)$ is a normed vector space, we only need to prove the completeness.
By Lemma~\ref{21.09.29.4}.(2), for any $v\in\cH_{p,\theta}^{\gamma+2}(\Omega,T)$ and $S\in (0,T]$, we have $v-v(0)\in C\big([0,S];H_{p,\theta+2p}^{\gamma}(\Omega)\big)$ with
\begin{align}\label{2204160319}
	\sup_{t\in[0,S]}\|v(t)-v(0)\|_{H_{p,\theta+2p}^{\gamma}(\Omega)}^p\leq N\cdot S^{1-1/p}\|\partial_t v\|_{\bH_{p,\theta+2p}^{\gamma}(\Omega,T)}\,,
\end{align}
where $N$ is independent of $v$ and $S$.
Let $\{u^n\}_{n\in\bN}$ be a Cauchy sequence in $\cH_{p,\theta}^{\gamma+2}(\Omega,T)$.
Then there exists
\begin{align*}
	(u_0,f):=\lim_{n\rightarrow \infty}(u^n(0),\partial_t u^n)\quad\text{in}\,\,\,B_{p,\theta+2}^{\gamma+2-2/p}(\Omega)\times \bH_{p,\theta+2p}^{\gamma}(\Omega,T).
\end{align*}
Moreover, due to \eqref{2204160319}, there exists $u:[0,T]\rightarrow \cD'(\Omega)$ such that for any $K\in\bN$,
\begin{align*}
	u-u_0=\lim_{n\rightarrow\infty }\big(u^n-u^n(0)\big)\quad\text{in}\quad C\big([0,T\wedge K];H_{p,\theta+2p}^{\gamma}(\Omega)\big)\,.
\end{align*}
Therefore, by Lemma~\ref{21.09.29.4}.(2), we have
\begin{align*}
	\begin{split}
		\big\langle u(t),\zeta\big\rangle =\lim_{n\rightarrow \infty}\big\langle u^n(t),\zeta\big\rangle\,&=\lim_{n\rightarrow \infty}\bigg(\langle u_0^n,\zeta\rangle +\int_0^t\big\langle \partial_t u^n(s),\zeta\big\rangle \dd s\bigg)\\
		&=\langle u_0,\zeta\rangle +\int_0^t\big\langle f(s),\zeta\big\rangle \dd s
	\end{split}
\end{align*}
for all $t\in[0,T]$ and $\zeta\in C_c^{\infty}(\Omega)$.
Since $u_n$ is a Cauchy sequence in $\bH_{p,\theta}^{\gamma+2}(\Omega,T)$, we also obtain that
$$
u=\lim_{n\rightarrow \infty}u^n\quad\text{in}\quad \bH_{p,\theta}^{\gamma+2}(\Omega,T)\,.
$$
Consequently, $u\in \cH_{p,\theta}^{\gamma+2}(\Omega,T)$ with $\partial_t u=f$ and $u(0)=u_0$, and $u^n\rightarrow u$ in $\cH_{p,\theta}^{\gamma+2}(\Omega,T)$.

(2) In this proof, we use results in Appendix~\ref{0082}.
For $f\in \cD'(\Omega)$, we denote
$$
\Lambda_kf=\sum_{|n|\leq k}\zeta_{0,(n)}f\,.
$$
Due to \eqref{2209121038}, for $u\in\cH_{p,\theta}^{\gamma+2}(\Omega)$ we have
$$
\big\|\Lambda_ku-u\big\|_{\cH_{p,\theta}^{\gamma+2}(\Omega,T)}\rightarrow 0\quad\text{as}\,\,k\rightarrow \infty\,.
$$
Therefore we only need to show that each $\Lambda_ku$ belongs to the closure of $C_c^{\infty}\big([0,\infty)\times \Omega\big)$ in $\cH_{p,\theta}^{\gamma+2}(\Omega,T)$.
By Proposition~\ref{220527502}.(8), we obtain that
$$
\Lambda_ku\in\bH_p^{\gamma+2}(\bR^d,T)\,\,,\,\,\,\,\Lambda_k\big(\partial_t u\big)\in\bH_p^{\gamma}(\bR^d,T)\,\,,\,\,\text{and}\,\,\,\,\Lambda_ku(0)\in B_p^{\gamma+2-2/p}(\bR^d)\,.
$$
Note that $\partial_t \big(\Lambda_ku\big)=\Lambda_k\big(\partial_t u\big)$ in the sense of distribution on $\bR^d$.
By a standard mollification and cut-off argument, there exist $v_{k,m}\in C_c^{\infty}\big([0,\infty)\times \bR^d\big)$ such that
\begin{align*}
	I_{k,m}:=\,&\|v_{k,m}-\Lambda_ku\|_{\bH_p^{\gamma+2}(\bR^d,T)}\\
	&+\|\partial_t v_{k,m}-\partial_t \Lambda_k u\|_{\bH_p^{\gamma}(\bR^d,T)}+\|v_{k,m}(0,\cdot)-\Lambda_ku(0)\|_{B_p^{\gamma+2-2/p}(\bR^d)}
\end{align*}
converges to $0$ as $m\rightarrow \infty$.
Put
$$
u_{k,m}=v_{k,m}\sum_{|n|\leq k+1}\zeta_{0,(n)}\,.
$$
Since
$$
\sum_{|n|\leq k+1}\zeta_{0,(n)}\in \bigcap_{l\in\bN}C^l(\bR^d)\qquad\forall\,\,\,k\in\bN\,,
$$
Proposition~\ref{220527502}.(8) implies
\begin{align*}
	\|\Lambda_ku-u_{k,m}\|_{\cH_{p,\theta}^{\gamma+2}(\Omega,T)}=\Big\|(\Lambda_ku-v_{k,m})\sum_{|n|\leq k+1}\zeta_{0,(n)}\Big\|_{\cH_{p,\theta}^{\gamma+2}(\Omega,T)}\leq N I_{k,m}\,,
\end{align*}
where $N$ is independent of $m$.
Since the last term converges to $0$ as $m\rightarrow\infty$, the proof is completed.
\end{proof}

\begin{lemma}\label{2205241011}
Let $\Psi'$ be a regular Harnack function, $p'\in(1,\infty)$ and $\gamma',\,\theta'\in\bR$.
If $f\in \Psi \bH_{p,\theta}^{\gamma}(\Omega,T)\cap \Psi'\bH_{p',\theta'}^{\gamma'}(\Omega,T)$, then for any $\epsilon>0$, there exist $g\in C_c^{\infty}((0,T)\times\Omega)$ such that
$$
\|f-g\|_{\Psi \bH_{p,\theta}^{\gamma}(\Omega,T)}+\|f-g\|_{\Psi'\bH_{p',\theta'}^{\gamma'}(\Omega,T)}<\epsilon\,.
$$
\end{lemma}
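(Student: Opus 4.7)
The plan is to approximate $f$ by smooth, compactly supported functions through three successive stages: spatial localization using the partition of unity $\{\zeta_{0,(n)}\}$ so that the support is confined to a bounded strip in the distance-to-boundary; further cut-off in time and in the horizontal directions, yielding compact support in $(0, T) \times \Omega$; and finally a joint mollification in $(t, x)$. The key observation throughout is that once $f$ is localized to a region compactly contained in $(0, T) \times \Omega$, the weights $\Psi^{-1}$, $(\Psi')^{-1}$, $\rho^{\theta}$, $\rho^{\theta'}$ are all bounded above and below, so the two parabolic norms become equivalent to the standard $L_p\big((0, T); H_p^{\gamma}(\bR^d)\big)$-type norms where the usual mollification arguments apply.

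First, I would set
$$
f_{\delta, N, R}(t, x) := \chi_\delta(t)\, \eta_R(x)\, \Big(\sum_{|n| \leq N} \zeta_{0,(n)}(x)\Big)\, f(t, x),
$$
where $\chi_\delta \in C_c^{\infty}((0, T))$ increases to $1_{(0, T)}$ as $\delta \to 0$, and $\eta_R \in C_c^{\infty}(\bR^d)$ satisfies $\eta_R \equiv 1$ on $B_R(0)$. For each fixed $t$, the tail decay of the series defining the $H_{p,\theta}^{\gamma}(\Omega)$-norm (Definition~\ref{220610533}) together with the usual trick that $\zeta_{0,(m)} \zeta_{0,(n)} = 0$ for $|m-n| \geq 2$ shows $f_{\delta, N, R}(t, \cdot) \to f(t, \cdot)$ in $\Psi H_{p,\theta}^{\gamma}(\Omega)$ as $\delta \to 0$ and $N, R \to \infty$, while Lemma~\ref{21.05.20.3}.(5) provides the uniform bound $\|f_{\delta, N, R}(t)\|_{\Psi H_{p,\theta}^{\gamma}(\Omega)} \lesssim \|f(t)\|_{\Psi H_{p,\theta}^{\gamma}(\Omega)}$. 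Dominated convergence in the time variable then yields $f_{\delta, N, R} \to f$ in $\Psi \bH_{p,\theta}^{\gamma}(\Omega, T)$, and the analogous argument gives convergence in $\Psi' \bH_{p',\theta'}^{\gamma'}(\Omega, T)$.

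For the final step, take a standard mollifier $\phi_\epsilon \in C_c^{\infty}(\bR \times \bR^d)$. Since $f_{\delta, N, R}$ is supported in a compact set $K \subset (0, T) \times \Omega$, for $\epsilon$ sufficiently small the convolution $g_\epsilon := f_{\delta, N, R} * \phi_\epsilon$ lies in $C_c^{\infty}((0, T) \times \Omega)$. On $K$ the weights $\Psi^{-1}, (\Psi')^{-1}, \rho^{\theta}, \rho^{\theta'}$ are bounded above and below, so the parabolic norms restricted to functions supported in $K$ are equivalent to the standard Bochner--Bessel norms $L_p\big((0, T); H_p^{\gamma}(\bR^d)\big)$ (and similarly in the primed variables). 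The standard $L_p$-continuity of mollification then yields $g_\epsilon \to f_{\delta, N, R}$ in both parabolic norms, and a diagonal choice of $(\delta, N, R, \epsilon)$ produces a single $g \in C_c^{\infty}((0, T) \times \Omega)$ satisfying the claimed estimate.

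The main obstacle is verifying the simultaneous convergence in Step 1 in both weighted norms without losing the uniform domination required by the dominated convergence theorem; this is handled by exploiting the series structure of $H_{p,\theta}^{\gamma}$ together with Lemma~\ref{21.05.20.3}.(5). The remainder reduces to routine mollification on a compact subset of $(0, T) \times \Omega$, where all weighted norms degenerate into standard Bessel potential norms.
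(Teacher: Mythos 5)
Your proof is correct, but it takes a different route from the paper's. The paper's proof separates the time and space variables: it first mollifies and cuts off in $t$ to produce $F\in C_c^{\infty}\big((0,T);X\big)$ where $X:=\Psi H_{p,\theta}^{\gamma}(\Omega)\cap\Psi' H_{p',\theta'}^{\gamma'}(\Omega)$, then approximates $F$ by a finite tensor sum $\sum_i\eta_i(t)f_i(x)$ with $\eta_i\in C_c^{\infty}((0,T))$ and $f_i\in X$, and finally invokes the already-established elliptic density result, Lemma~\ref{21.09.29.4}.(5) (itself proved via the $\Lambda_{i,j,k}$-construction of Lemma~\ref{22.04.11.3}), to replace each $f_i$ by an element of $C_c^{\infty}(\Omega)$. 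You instead perform the annular/ball cutoff and joint space-time mollification directly on the parabolic function, essentially re-running the $\Lambda_{i,j,k}$-construction in the joint variable $(t,x)$ and adding a time cutoff. Both routes are sound. The paper's is more modular---it reuses the elliptic lemma and avoids re-verifying the weighted-multiplier and mollification estimates in the parabolic variable---while yours is more self-contained and makes the mechanism explicit. A few points worth tightening in your write-up: (i) the ball cutoff $\eta_R$ by itself does not satisfy the weighted pointwise-multiplier bound $|\eta_R|_{k}^{(0)}<\infty$ uniformly in $R$, but the product with the annular cutoff $\sum_{|n|\leq N}\zeta_{0,(n)}$ does, since $\rho$ is bounded on the latter's support, so your invocation of Lemma~\ref{21.05.20.3}.(5) should be phrased for the product; (ii) the dominated-convergence step involves three parameters, so it is cleanest to take the limits sequentially (say $N\to\infty$, then $R\to\infty$, then $\delta\to 0$); (iii) the final joint mollification step silently uses that $(1-\Delta)^{\gamma/2}$ commutes with the spatial convolution and that on the compact set $K\subset(0,T)\times\Omega$ the weighted norm is equivalent to the unweighted one (Proposition~\ref{220527502}.(8)), which you do mention but should keep explicit since two different exponents $p,p'$ and two different orders $\gamma,\gamma'$ are in play.
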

\begin{proof}
We denote $X:=\Psi H_{p,\theta}^{\gamma}(\Omega)\cap \Psi'H_{p',\theta'}^{\gamma'}(\Omega)$, and
$$
\|g\|_X:=\|g\|_{\Psi H_{p,\theta}^{\gamma}(\Omega)}+\|g\|_{\Psi'H_{p',\theta'}^{\gamma'}(\Omega)}\,.
$$
By a standard molification and cut-off argument, for any $\epsilon>0$, there exist $F\in C_c^{\infty}\left((0,T);X\right)$ such that
$$
\|f-F\|_{\Psi \bH_{p,\theta}^{\gamma}(\Omega,T)}+\|f-F\|_{\Psi'\bH_{p',\theta'}^{\gamma'}(\Omega,T)}<\epsilon\,.
$$
This yields that for any $\epsilon>0$, there exists $\eta_1,\,\ldots,\,\eta_N\in C_c^{\infty}\big((0,T)\big)$ and $f_1,\,\ldots,\,f_N\in X$ such that
$$
\|\,f-\widetilde{f}\,\|_{L_p((0,T];X)}<\epsilon\,\,,\,\,\text{where}\,\,\,\, \widetilde{f}(t,\cdot)=\sum_{i=1}^N\eta_i(t)f_i(\cdot)\,.
$$
Due to Lemma~\ref{21.09.29.4}.(5), the proof is completed.
\end{proof}

We end this subsection with the following parabolic embedding theorem for the space $\Psi\cH_{p,\theta}^{\gamma+2}(\Omega)$, which is used in Subsections~\ref{fatex} and \ref{0074}:

\begin{prop}\label{2204160313}
Let $\beta\in\bR$ satisfy $1/p<\beta\leq 1$.
Then for any $u\in\Psi\cH^{\gamma+2}_{p,\theta}(\Omega,T)$, and $0\leq s< t\leq T$,
\begin{align*}
	\|u(t)-u(s)\|_{\Psi H_{p,\theta+2p\beta}^{\gamma+2-2\beta}(\Omega)}\leq  N |t-s|^{\beta-1/p}\Big(\|u\|_{\Psi \bH_{p,\theta}^{\gamma+2}(\Omega,T)}+\|\partial_t u\|_{\Psi \bH_{p,\theta+2p}^{\gamma}(\Omega,T)}\Big)
\end{align*}
where $N=N(d,p,\gamma,\theta,\beta)$.
\end{prop}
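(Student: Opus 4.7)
The plan is to reduce to the unweighted case $\Psi\equiv 1_\Omega$ via the isometric isomorphism $u\mapsto \Psi^{-1}u$, which commutes with $\partial_t$ and preserves all the relevant norms up to constants depending on $\mathrm{C}_2(\Psi)$; and by Proposition~\ref{2205241111} to assume $u\in C_c^{\infty}([0,\infty)\times \Omega)$, establishing the estimate with constants depending only on $d,p,\gamma,\theta,\beta$.

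The endpoint $\beta=1$ is immediate: from $u(t)-u(s)=\int_s^t\partial_\tau u\,d\tau$, Minkowski's integral inequality in $H_{p,\theta+2p}^\gamma(\Omega)$ together with H\"older's inequality in the time integral give
\begin{align*}
\|u(t)-u(s)\|_{H_{p,\theta+2p}^\gamma(\Omega)}\le |t-s|^{1-1/p}\|\partial_tu\|_{\bH_{p,\theta+2p}^\gamma(\Omega,T)}\,.
\end{align*}

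For $\beta\in(1/p,1)$, I would exploit the dyadic-shell structure defining the $H_{p,\theta}^\gamma(\Omega)$-scale. Writing $\tilde u_n(t,y):=(\zeta_{0,(n)}u(t,\cdot))(e^n y)$, all three norms appearing in the statement admit an $\ell^p$-representation of the form $\sum_n e^{n(\,\cdot\,)}\|\,\cdot\,\|_{H_p^{(\,\cdot\,)}(\bR^d)}^p$. Introduce the parabolic rescaling $v_n(t',y):=\tilde u_n(e^{2n}t',y)$ on the scaled interval $(0,e^{-2n}T)$, so that $\partial_{t'}v_n=e^{2n}(\partial_t\tilde u_n)(e^{2n}\,\cdot\,)$. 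On each shell I would apply the classical unweighted parabolic Sobolev embedding on $\bR^d$,
\begin{align*}
\|v(t')-v(s')\|_{H_p^{\gamma+2-2\beta}(\bR^d)}\le N|t'-s'|^{\beta-1/p}\bigl(\|v\|_{L_p((0,T');H_p^{\gamma+2})}+\|\partial_{t'}v\|_{L_p((0,T');H_p^\gamma)}\bigr),
\end{align*}
valid for $\beta>1/p$ with $N=N(d,p,\gamma,\beta)$ \emph{independent} of the interval length $T'$. Undoing the rescaling produces the factors $(e^{-2n}|t-s|)^{\beta-1/p}$, $e^{-2n/p}$, and $e^{2n(p-1)/p}$ on the right-hand side; the exponents in $n$ simplify to $-2n\beta$ and $2n(1-\beta)$ respectively, which, after raising to the $p$-th power and multiplying by the shell weight $e^{n(\theta+2p\beta)}$, give exactly $e^{n\theta}$ and $e^{n(\theta+2p)}$. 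Summing in $n$ yields precisely $|t-s|^{\beta-1/p}\bigl(\|u\|_{\bH_{p,\theta}^{\gamma+2}(\Omega,T)}+\|\partial_tu\|_{\bH_{p,\theta+2p}^\gamma(\Omega,T)}\bigr)$.

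The main obstacle is the per-shell unweighted embedding on $\bR^d$ with interval-length-independent constant. This is a standard mixed-regularity result: by complex interpolation one has $[L_p((0,T');H_p^{\gamma+2}(\bR^d)),W^{1,p}((0,T');H_p^\gamma(\bR^d))]_\beta\hookrightarrow W^{\beta,p}((0,T');H_p^{\gamma+2-2\beta}(\bR^d))$, followed by the one-dimensional Sobolev embedding $W^{\beta,p}\hookrightarrow C^{\beta-1/p}$ (available precisely because $\beta>1/p$); the translation invariance of the $H_p(\bR^d)$-scale (equivalently the UMD property of $L_p(\bR^d)$ together with the Mikhlin--Weis multiplier theorem) yields the interval-independent bound. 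A self-contained alternative uses the pointwise interpolation inequality $\|f\|_{H_p^{\gamma+2-2\beta}}\le C\|f\|_{H_p^{\gamma+2}}^{1-\beta}\|f\|_{H_p^{\gamma}}^{\beta}$ on $\bR^d$, applied to increments $f=v(t')-v(s')$, combined with a Peetre $K$-functional argument that trades a pointwise-in-time $H_p^{\gamma+2}$-bound for the $L_p$-in-time control available on the right-hand side.
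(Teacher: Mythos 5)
Your proposal is correct and follows essentially the same route as the paper. You make the parabolic rescaling $v_n(t',y)=\tilde u_n(e^{2n}t',y)$ explicit and then invoke an unweighted parabolic Sobolev embedding with interval-length-independent constant; the paper instead applies \cite[Theorem 7.3]{Krylov2001} directly to $u_n(t,x)=u(t,e^nx)\zeta_{0,(n)}(e^nx)$ with the parameter $a=e^{-np}$, which encodes exactly that anisotropic rescaling internally. Your exponent bookkeeping ($e^{-2n/p}$, $e^{2n(p-1)/p}$, collapsing to $e^{n\theta}$ and $e^{n(\theta+2p)}$ after multiplying by $e^{n(\theta+2p\beta)}$) reproduces the paper's display \eqref{230219256}. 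One minor difference worth noting: you reduce to $u\in C_c^\infty([0,\infty)\times\Omega)$ by density, which then requires a small limiting argument to recover the estimate at fixed $t,s$ (pass to the limit using $u_n(t)-u_n(0)=\int_0^t\partial_r u_n\,dr$ converging in $H_{p,\theta+2p}^\gamma$ together with weak lower semicontinuity in the higher-regularity norm). The paper avoids this entirely by observing that the cited theorem — being a consequence of \cite[Theorem 7.2]{aapp} — only requires $v\in\bH_p^{\gamma+2}(\bR^d,T)$ and $\partial_tv\in\bH_p^\gamma(\bR^d,T)$, with no condition on $v(0)$, so it applies shellwise to $u_n$ as is. Both are legitimate.
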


\begin{proof}
The map $f\mapsto \Psi^{-1}f$ is an isometric isomorphism from $\Psi H_{p,\theta'}^{\gamma'}(\Omega)$ (resp. $\Psi B_{p,\theta'}^{\gamma'}(\Omega)$, $\Psi \cH_{p,\theta'}^{\gamma'}(\Omega,T)$) to $H_{p,\theta'}^{\gamma'}(\Omega)$ (resp. $B_{p,\theta'}^{\gamma'}(\Omega)$, $\cH_{p,\theta'}^{\gamma'}(\Omega,T)$), for all $\gamma',\theta'\in\bR$.
Therefore we only need to prove this proposition for the case $\Psi\equiv 1$.
The proof of this case is provided in \cite{Krylov2001}, and we introduce this proof for reader's convenience.
Since $u\in\cH_{p,\theta}^{\gamma+2}(\Omega,T)$,
$$
u_n(t,x):=u(t,e^nx)\zeta_{0,(n)}(e^nx)\in \bH_{p}^{\gamma+2}(\bR^d,T)
$$
satisfies that 
$$
\big(\partial_tu_n\big)(s)=\partial_t u(s,e^n\,\cdot\,)\zeta_{0,(n)}(e^n\,\cdot)\quad;\quad u_n(0,\cdot\,)=u_0(e^n\,\cdot\,)\zeta_{0,(n)}(e^n\,\cdot\,)
$$
in the sense of distribution on $\bR^d$.
Since $u_n\in \bH_p^{\gamma+2}(\bR^d,T)$ and $\partial_t u_n\in\bH_p^{\gamma}(\bR^d,T)$, by \cite[Theorem 7.3]{Krylov2001} with $a=e^{-np}$, we obtain
\begin{align}\label{230219256}
	\begin{split}
		&e^{n(2p\beta)}\|u_n(t)-u_n(s)\|_{H_{p}^{\gamma+2-2\beta}(\bR^d)}^p\\
		\leq N &|t-s|^{\beta p-1}\int_0^T\Big(\|u_n(r,\cdot)\|_{H_p^{\gamma+2}(\bR^d)}^p+e^{2np}\|\partial_t u_n(r,\cdot\,)\|_{H_p^{\gamma}(\bR^d)}^p\Big)\dd r
	\end{split}
\end{align}
where $N=N(d,p,\gamma,\beta)$.
In fact, \cite[Theorem 7.3]{Krylov2001} considers the case that $v\,(=u_n)\in \bH_p^{\gamma+2}(\bR^d,T)$ satisfies $\partial_t v\in \bH_p^{\gamma}(\bR^d,T)$ and $v(0)\in H_p^{\gamma+2-2/p}(\bR^d)$.
However, \eqref{230219256} can be obtained without any additional assumptions on $u_n$.
This is because \cite[Theorem 7.3]{Krylov2001} is a consequence of \cite[Theorem 7.2]{aapp}, and the proof of \cite[Theorem 7.2]{aapp} only requires that $v\in \bH_{p}^{\gamma+2}(\bR^d,T)$ and $\partial_t v\in\bH_{p}^{\gamma}(\bR^d,T)$,  without assuming $v(0)\in H_p^{\gamma+2-2/p}(\bR^d)$.

Consequently we obtain
\begin{alignat*}{3}
	&&&\|u(t)-u(s)\|_{H_{p,\theta+2p\beta}^{\gamma+2-2\beta}(\Omega)}^p\\
	&=\,&&\sum_{n\in\bZ}e^{n(\theta+2p\beta)}\|u_n(t)-u_n(s)\|_{H_{p}^{\gamma+2-\beta}(\bR^d)}^p\\
	&\lesssim_N&&|t-s|^{\beta p-1}\int_0^T\sum_{n\in\bZ}e^{n\theta}\Big(\|u_n(r,\cdot)\|_{H_p^{\gamma+2}(\bR^d)}^p+e^{2np}\|\partial_t u_n(r,\cdot\,)\|_{H_p^{\gamma}(\bR^d)}^p\Big)\dd r\\
	&=&&|t-s|^{\beta p-1}\int_0^T\Big(\|u(r,\cdot)\|_{H_{p,\theta}^{\gamma+2}(\Omega)}^p+\|\partial_t u(r,\cdot)\|_{H_{p,\theta+2p}^{\gamma}(\Omega)}^p\Big)\dd r\,.
\end{alignat*}
\end{proof}

\vspace{2mm}

\subsection{Solvability of parabolic equations}\label{0053}
In this subsection, assuming \eqref{22082801111} and \eqref{2205241211}, we introduce the main theorem of this section.
\begin{thm}\label{22.02.18.6}
Let
\begin{itemize}
	\item[]$\Omega$ admit the Hardy inequality \eqref{hardy};\vspace{0.5mm}
	\item[]$\psi$ be a superharmonic Harnack function on $\Omega$;\vspace{0.5mm}
	\item[]$\mu\in I(\psi,p,\nu_1/\nu_2)$.\vspace{0.5mm}
\end{itemize}
and suppose that $\Psi$ is a regularization of $\psi$.
Then for any
$$
u_0\in \Psi^{\mu}B^{\gamma+2-2/p}_{p,d}(\Omega)\quad\text{and}\quad f\in \Psi^{\mu} \bH^{\gamma}_{p,d+2p-2}(\Omega,T)\,,
$$
the equation
\begin{align}\label{220616124}
	\partial_t u=\cL u + f \quad\text{in}\,\,\,(0,T]\quad;\quad u(0,\cdot)=u_0
\end{align}
has a unique solution $u$ in $\Psi^{\mu}\,\cH^{\gamma+2}_{p,d-2}(\Omega,T)$.
Moreover, for this solution $u$, we have
$$
\|u\|_{\Psi^{\mu}\cH^{\gamma+2}_{p,d-2}(\Omega,T)}\leq N\Big(\|u_0\|_{\Psi^{\mu}B^{\gamma}_{p,d}(\Omega)}+ \| f\|_{\Psi^{\mu}\bH^{\gamma}_{p,d+2p-2}(\Omega,T)}\Big)\,,
$$
where $N=N(d,p,\gamma,\mu,\mathrm{C}_0(\Omega),\mathrm{C}_2(\Psi),\mathrm{C}_3(\psi,\Psi), \mathrm{C}_4)$.
\end{thm}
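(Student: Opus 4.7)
The strategy parallels the elliptic argument culminating in Theorem~\ref{21.09.29.1}, with Theorem~\ref{05.11.2} and Lemma~\ref{21.05.25.300} replacing Theorem~\ref{21.05.13.2} and Lemma~\ref{21.05.25.3}. First I would reduce to the case $\gamma=0$: mimicking Lemma~\ref{21.11.12.1}, one shows by induction on $\gamma$ (combined with the decomposition $f=f^0+\sum_iD_i f^i$ from the parabolic counterpart of Lemma~\ref{22.02.16.1}, available via Lemma~\ref{22.04.15.148}.(1)) that solvability in one regularity level propagates to all $\gamma\in\bR$. This uses the fact that in the definition of $\Psi\cH^{\gamma+2}_{p,d-2}(\Omega,T)$, the trace space $\Psi B^{\gamma+2-2/p}_{p,d}(\Omega)$ is exactly the interpolation space identified in Remark~\ref{2212141039}.

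For the case $\gamma=0$, I first derive a parabolic higher-order estimate analogous to Lemma~\ref{21.05.13.8}. Setting $\Phi=\Psi^{-\mu}$ and $v_n(t,x)=\zeta_{0,(n)}(e^nx)\Phi(e^nx)u(t,e^nx)$, one checks that $v_n$ satisfies a parabolic equation on $\bR^d$ with coefficients $a^{ij}(e^{2n}t)$ and right-hand side $\tilde f_n$ that splits (as in \eqref{22.04.15.1}) into a main term coming from $\Phi f$ and commutator terms controlled by $\|\Phi u\|_{H^{\gamma+1}_{p,\theta}(\Omega)}$. The classical $L_p$-theory for parabolic equations in $\bR^d$ with bounded, measurable-in-time, continuous-in-$x$ (here constant-in-$x$) coefficients in $\mathrm{M}(\nu_1,\nu_2)$ — see, e.g., \cite[Theorem 7.3]{Krylov2001} or the standard results referenced in Proposition~\ref{2204160313} — then yields
\[
\|u\|_{\Psi^\mu\bH^{2}_{p,d-2}(\Omega,T)}+\|\partial_tu\|_{\Psi^\mu\bH^{0}_{p,d+2p-2}(\Omega,T)}\lesssim \|u\|_{\Psi^\mu\bL_{p,d-2}(\Omega,T)}+\|f\|_{\Psi^\mu\bH^{0}_{p,d+2p-2}(\Omega,T)}+\|u(0)\|_{\Psi^\mu B^{2-2/p}_{p,d}(\Omega)},
\]
with the trace term estimated via Remark~\ref{2212141039}.

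Next, for the zeroth-order estimate, pick an approximating sequence $u_n\in C_c^\infty([0,T]\times\Omega)$ converging to $u$ in the topology coming from the previous step (density is provided by Proposition~\ref{2205241111}.(2)). Applying Theorem~\ref{05.11.2} to each $u_n$ — here is where the hypothesis $\mu\in I(\psi,p,\nu_1/\nu_2)$ enters, feeding in the constant $\mathrm{C}_4$ via \eqref{21.07.12.1} — gives
\[
\sup_{t\leq T}\|u_n(t)\|^p_{\Psi^\mu L_{p,d}(\Omega)}+\|u_n\|^p_{\Psi^\mu\bL_{p,d-2}(\Omega,T)}\lesssim \|u_n(0)\|^p_{\Psi^\mu L_{p,d}(\Omega)}+\|(\partial_t-\cL)u_n\|^p_{\Psi^\mu\bL_{p,d+2p-2}(\Omega,T)},
\]
and passing to the limit yields the same estimate for $u$. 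Combined with the higher-order estimate and absorbing $\|u(0)\|_{\Psi^\mu L_{p,d}}\lesssim\|u_0\|_{\Psi^\mu B^{2-2/p}_{p,d}}$, this produces the full a priori bound, which in turn delivers uniqueness.

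For existence, approximate $(u_0,f)$ by $(u_0^k,f^k)\in C_c^\infty(\Omega)\times C_c^\infty([0,T]\times\Omega)$ in the norm on the right-hand side of the theorem (density via the analogues of Lemmas~\ref{21.09.29.4}.(5) and \ref{2205241011}). For each $k$, Lemma~\ref{21.05.25.300} produces a function $u^k$; Lemma~\ref{220512433} (applied via $\Psi^\mu$, using that $\Psi^\mu$ is a regularization of $\psi^\mu$ by Example~\ref{21.05.18.2}.(3) and Remark~\ref{21.10.05.2}) shows $u^k$ lies in $\Psi^\mu\cH^{\gamma+2}_{p,d-2}(\Omega,T)$ after the higher-order bootstrap. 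The a priori estimate makes $\{u^k\}$ Cauchy; its limit is the desired solution. \textbf{Main obstacle.} The principal technical difficulty is constructing the solution $u^k$ with the necessary regularity from the weak solution given by Lemma~\ref{21.05.25.300}, since that lemma is stated only for $\cL=\Delta$ and produces only minimal regularity. One needs a variant of Lemma~\ref{21.05.25.300} for the variable-coefficient operator $\cL$, which is readily available via the classical theory on bounded smooth subdomains $\Omega_n$ exhausting $\Omega$ — the key point is that the zeroth-order estimate \eqref{2206131031} generalizes, since the proof of Theorem~\ref{05.11.2} already handles arbitrary $\cL\in\cM_T(\nu_1,\nu_2)$ through the hypothesis $\mu\in I(\psi,p,\nu_1/\nu_2)$. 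Once this existence-with-regularity step is in place, the rest is bookkeeping.
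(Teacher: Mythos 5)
Your overall architecture matches the paper's proof: reduce to $\gamma=0$ via a parabolic analogue of Lemma~\ref{21.11.12.1} (this is Lemma~\ref{22.04.15.0430} in the paper), establish a higher-order estimate (Lemma~\ref{21.05.13.9}), get the zeroth-order estimate from Theorem~\ref{05.11.2} via density, and combine. Two small slips: in the higher-order estimate you define $v_n(t,x)=\zeta_{0,(n)}(e^nx)\Phi(e^nx)u(t,e^nx)$ but then refer to coefficients $a^{ij}(e^{2n}t)$; you need the time rescaling $u(e^{2n}t,e^nx)$ in the definition for that to be consistent (the paper points this out below Lemma~\ref{21.05.13.9}). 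This is cosmetic. And the zeroth-order estimate lands in $\Psi^\mu L_{p,d-2}(\Omega)$, not $\Psi^\mu L_{p,d}(\Omega)$, for the second term; again easily repaired.

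The genuine divergence is in the existence step, and here the paper's route is both simpler and avoids a pitfall you underestimate. You propose to extend Lemma~\ref{21.05.25.300} to general $\cL\in\cM_T(\nu_1,\nu_2)$ by running the same exhaustion-by-smooth-domains argument, calling this ``readily available via the classical theory.'' But the operator $\cL=\sum a^{ij}(t)D_{ij}$ has coefficients that are only \emph{measurable} in time, so the classical solution $U\in C^\infty([0,T]\times\overline{\Omega_n})$ that Lemma~\ref{21.05.25.300} relies on simply does not exist in general for $\cL$; you would first have to mollify the coefficients in time, then pass to the limit, which is doable but is precisely the kind of technical step your ``readily available'' glosses over. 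The paper sidesteps this entirely: it constructs existence only for $\cL_0=\nu_1\Delta$ (where Lemma~\ref{21.05.25.300} applies literally, after a time rescaling) and then uses the \emph{method of continuity} along the segment $\cL_s=(1-s)\nu_1\Delta+s\cL$, noting that every $\cL_s$ stays in $\cM_T(\nu_1,\nu_2)$ so the a priori estimate (which you have correctly derived) holds with a constant independent of $s$. That one observation dispenses with the entire difficulty you flag as the ``main obstacle.'' Your route would work if you actually carried out the mollification, but the method-of-continuity argument is the clean way to finish, and it is the content of the paper's Step ``Existence of solutions'' that your proposal is missing.
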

Recall that $\mathrm{C}_0(\Omega)$ is the constant in \eqref{hardy}, $\mathrm{C}_2(\Psi)$ and $\mathrm{C}_3(\psi,\Psi)$ are the constants in Definition~\ref{21.10.14.1}, and $\mathrm{C}_4$ is the constant in Definition~\ref{21.11.10.1}.

\begin{remark}\label{230212657}
As mentioned in Remark~\ref{220617}, Theorem \ref{22.02.18.6} can be reformulated without including $\Psi$.
In addition, when considering the case $\gamma\in\bN_0$, an equivalent norm of $\Psi \bH_{p,\theta}^{\gamma}(\Omega,T)$ is implied by Lemma \ref{220512433}.
An equivalent norm of $\Psi^{\mu}B_{p,d}^{\gamma+2-2/p}(\Omega)$ is also provided by \eqref{2206171215} when $p\neq 2$, and Lemmas~\ref{22.04.15.148}.(4) and \ref{220512433} when $p=2$.
For equivalent norms in the case where $-\gamma\in \bN$, Corollary \ref{21.05.26.3} and Remark~\ref{22.04.18.5} can be used.
\end{remark}
\vspace{1mm}

The proof of Theorem~\ref{22.02.18.6} is parallel with the proof of  Theorem~\ref{21.09.29.1}.
We begin with introducing a well known counterpart of \eqref{220506140}.
\begin{lemma}\label{22.04.14.1}
Suppose that $u\in \bH^{\gamma+1}_p(\bR^d,T)$ and $f\in\bH^{\gamma}_p(\bR^d,T)$ satisfies $u(0,\cdot)\in B^{\gamma+2-2/p}_p(\bR^d)$, and
\begin{align}\label{220524332}
	\partial_t u=\cL u+f\quad\text{in}\,\, t\in (0,T]
\end{align}
in the sense of distributions on $\bR^d$. Then $u\in\bH^{\gamma+2}_p(\bR^d,T)$, and
\begin{align*}
	\|u\|_{\bH^{\gamma+2}_{p}(\bR^d,T)}\leq N\Big(\|u\|_{\bH^{\gamma+1}_{p}(\bR^d,T)}+\| f\|_{\bH^{\gamma}_{p}(\bR^d,T)}+\|u(0,\cdot)\|_{B^{\gamma+2-2/p}_{p}(\bR^d)}\Big)
\end{align*}
where $N=N(d,p,\nu_1,\nu_2)$.
\end{lemma}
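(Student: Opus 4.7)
The plan is to reduce to $\gamma=0$ and then invoke the classical $L_p$-theory for parabolic operators on $\bR^d$ whose coefficients depend only on $t$.

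First, since the $a^{ij}$ are $x$-independent, $\cL$ commutes with the Bessel potential $\Lambda^\gamma:=(1-\Delta)^{\gamma/2}$, which is an isometric isomorphism $H_p^{s+\gamma}(\bR^d)\to H_p^s(\bR^d)$ and $B_p^{s+\gamma}(\bR^d)\to B_p^s(\bR^d)$ for every $s\in\bR$. Setting $v:=\Lambda^\gamma u$, $g:=\Lambda^\gamma f$, $v_0:=\Lambda^\gamma u_0$, the triple $(v,g,v_0)$ lies in $\bH^1_p(\bR^d,T)\times \bL_p(\bR^d,T)\times B_p^{2-2/p}(\bR^d)$ and still satisfies $\partial_t v=\cL v+g$ with $v(0,\cdot)=v_0$. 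Hence it suffices to prove the $\gamma=0$ statement, and by linearity one may split $v=v^{\mathrm{h}}+v^{\mathrm{inh}}$, where $v^{\mathrm{h}}$ solves the homogeneous equation with datum $v_0$ and $v^{\mathrm{inh}}$ solves the inhomogeneous equation with zero initial datum.

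Both pieces admit explicit Fourier representations, since the symbol of $\cL$ is $-A(t,\xi):=-\sum_{i,j}a^{ij}(t)\xi_i\xi_j$ with $\nu_1|\xi|^2\le A(t,\xi)\le\nu_2|\xi|^2$:
\[
\widehat{v^{\mathrm{h}}}(t,\xi)=e^{-\int_0^t A(s,\xi)\,\dd s}\,\widehat{v_0}(\xi),\qquad \widehat{v^{\mathrm{inh}}}(t,\xi)=\int_0^t e^{-\int_s^t A(r,\xi)\,\dd r}\widehat{g}(s,\xi)\,\dd s.
\]
The family of symbols $|\xi|^2 e^{-\int_s^t A(r,\xi)\,\dd r}$, $0\le s<t\le T$, satisfies Mihlin-type multiplier bounds uniformly in $s,t$, with constants depending only on $d,p,\nu_1,\nu_2$. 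Combining these with the Calder\'on--Zygmund inequality in $t$ yields $\|v^{\mathrm{inh}}_{xx}\|_{\bL_p(\bR^d,T)}\lesssim \|g\|_{\bL_p(\bR^d,T)}$. For $v^{\mathrm{h}}$ one invokes the parabolic trace theorem: as recalled in Remark~\ref{2212141039}, $B_p^{2-2/p}(\bR^d)$ is precisely the trace space at $t=0$ of $\{w\in\bH^2_p(\bR^d,T):\partial_t w\in\bL_p(\bR^d,T)\}$, which together with the multiplier bound for $e^{-\int_0^t A(s,\xi)\,\dd s}$ gives $\|v^{\mathrm{h}}\|_{\bH^2_p(\bR^d,T)}\lesssim \|v_0\|_{B_p^{2-2/p}(\bR^d)}$.

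It remains to identify the hypothesised $v$ with $v^{\mathrm{h}}+v^{\mathrm{inh}}$. Their difference $w\in\bH^1_p(\bR^d,T)$ satisfies $\partial_t w=\cL w$ distributionally with $w(0)=0$. Pairing with arbitrary $\zeta\in C_c^\infty((0,T)\times\bR^d)$ and integrating by parts reduces the matter to solvability of the formally adjoint backward problem in $\bH^1_{p'}(\bR^d,T)$; this is handled by the same Fourier construction (the symbol being symmetric in $\xi$), and forces $w\equiv 0$. Thus $v=v^{\mathrm{h}}+v^{\mathrm{inh}}\in\bH^2_p(\bR^d,T)$ and the claimed estimate follows. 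The delicate step is the uniform Mihlin multiplier bound for the parabolic symbol under mere time-measurability of the $a^{ij}$; this is the crux of Krylov's $L_p$-theory, and one can alternatively invoke his results in \cite{Krylov2008, Krylov2001} directly instead of re-deriving the multiplier estimate.
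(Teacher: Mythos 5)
Your proof is correct in outline and shares the two essential ingredients with the paper's argument: applying the Bessel potential $(1-\Delta)^{\gamma/2}$ to reduce to $\gamma=0$, and splitting into a piece absorbing the initial datum and a piece absorbing the forcing. The route is nevertheless genuinely different in how it handles the pieces. The paper chooses $\overline u$ to solve the \emph{constant-coefficient heat equation} $\partial_t\overline u=\Delta\overline u$ with $\overline u(0)=u(0)$, invokes Ladyzhenskaya--Solonnikov--Uraltseva for the bound $\|\overline u_{xx}\|_{\bL_p}\lesssim\|u(0)\|_{B_p^{2-2/p}}$, and then treats $w=u-\overline u$ (which has zero initial datum but a modified forcing $f+(\cL-\Delta)\overline u$) by a single citation to Krylov's theorem for time-measurable coefficients. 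You instead solve the homogeneous equation \emph{with the operator $\cL$ itself} via the exact Fourier formula $\widehat{v^{\mathrm h}}=e^{-\int_0^t A}\widehat{v_0}$, and derive the inhomogeneous bound by redoing the Mihlin/Calder\'on--Zygmund analysis that underlies Krylov's theorem. Both routes are valid; the paper's choice of $\Delta$ for the homogeneous lift is a small simplification because the heat semigroup's smoothing is classical, whereas your choice forces you to re-derive the time-dependent multiplier bounds (you rightly flag this as the delicate step and note it can be replaced by citing Krylov). Two minor caveats on your version: the symbol $|\xi|^2 e^{-\int_s^t A(r,\xi)\,\dd r}$ has Mihlin constants that degenerate like $(t-s)^{-1}$, not uniformly, and it is precisely this singularity that the Calder\'on--Zygmund step in $t$ must absorb — so "uniformly in $s,t$" should be read as "after factoring out $(t-s)^{-1}$"; and your uniqueness step via the adjoint backward problem is a point the paper leaves implicit when it applies Krylov's estimate to the given $w$, so making it explicit as you do is a useful addition, not a flaw. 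Finally, the paper handles $T=\infty$ by proving the $T<\infty$ case and passing to the limit, while your Fourier formulas handle $T=\infty$ directly; both work.
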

\begin{proof}
We first consider the case $T<\infty$.
By applying the operator $(1-\Delta)^{\gamma/2}$ to both sides of \eqref{220524332}, we only need to prove for $\gamma=0$.
Since $u(0)\in B_p^{2-2/p}$, \cite[Section 4.3]{lady} yields that there exists $\overline{u}\in\bH_p^2(\bR^d,T)$ such that
$$
\partial_t \overline{u}=\Delta \overline {u}\quad\text;\quad \overline{u}(0)=u(0)\,,
$$
and
$$
\|\overline{u}_{xx}\|_{\bL_p(\bR^d,T)}\lesssim_{d,p}\|u_0\|_{B_p^{2-2/p}(\bR^d)}\,.
$$
Put $w=u-\overline{u}$ so that
$$
\partial_t w=\cL w+f+(\cL-\Delta)\overline{u}\quad;\quad w(0)=0\,.
$$
It is implied by \cite[Theorem 1.2]{Krylov2001-1} that
$$
\|w_{xx}\|_{\bL(\bR^d,T)}\lesssim_{d,p,\nu_1,\nu_2}\|f\|_{\bL_p(\bR^d,T)}+\|(\cL-\Delta)\overline{u}\|_{\bL(\bR^d,T)}.
$$ 
Therefore we have
\begin{equation}\label{220524304}
	\begin{alignedat}{3}
		\|u\|_{\bH_p^2(\bR^d,T)}&\lesssim_{d,p} &&\|u\|_{\bL_p(\bR^d,T)}+\|u_{xx}\|_{\bL_p(\bR^d,T)}\\
		&\leq&&\|u\|_{\bL_p(\bR^d,T)}+\|\overline{u}_{xx}\|_{\bL_p(\bR^d,T)}+\|w_{xx}\|_{\bL_p(\bR^d,T)}\\
		&\lesssim_{d,p,\nu_1,\nu_2}&&\|u\|_{\bL_p(\bR^d,T)}+\|u_0\|_{B_p^{2-2/p}(\bR^d,T)}+\|f\|_{\bL_p(\bR^d,T)}\,.
	\end{alignedat}
\end{equation}

For $T=\infty$, obtain \eqref{220524304} with $T$ replaced by $K\in\bN$, and let $K\rightarrow\infty$.
\end{proof}

The next two lemmas are driven along the same lines as the proofs of Lemmas~\ref{21.05.13.8} and \ref{21.11.12.1}, respectively.
We leave the proofs to the reader.
For proving Lemma~\ref{21.05.13.9}, put $v_n(t,x)=\zeta_{0,(n)}(e^nx)\Psi^{-1}(e^nx)u(e^{2n}t,e^nx)$ (\textit{cf}. \eqref{2210241127}).

\begin{lemma}[Higher order estimates]\label{21.05.13.9}
Let $s\in \bR$, and let $u\in\Psi\bH^{s+2}_{p,\theta}(\Omega,T)$ and  $f\in \Psi\bH^{\gamma}_{p,\theta+2p}(\Omega,T)$ satisfy $u(0,\,\cdot\,)\in B_{p,\theta+2}^{\gamma+2-2/p}(\Omega)$, and 
$$
\partial_t u=\cL u+f\quad \text{in}\,\, (0,T]
$$
in the sense of distributions.
Then
$u$ belongs to $\Psi\cH^{\gamma}_{p,\theta}(\Omega,T)$, and
\begin{align*}
	\begin{split}
		\|u\|_{\Psi \cH^{\gamma+2}_{p,\theta}(\Omega,T)}\leq N\Big(\|u\|_{\Psi \bH^{s}_{p,\theta}(\Omega,T)}+\| f\|_{\Psi \bH^{\gamma}_{p,\theta+2p}(\Omega,T)}+\|u(0)\|_{\Psi B^{\gamma+2-2/p}_{p,\theta+2}(\Omega)}\Big)
	\end{split}
\end{align*}
where $N=N(d,p,\gamma,\theta,\nu_1,\nu_2,\mathrm{C}_2(\Psi),s)$.
\end{lemma}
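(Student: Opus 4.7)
The plan is to parallel the two-step proof of Lemma~\ref{21.05.13.8}: first localize and rescale in a parabolic fashion to reduce matters to the classical estimate of Lemma~\ref{22.04.14.1} on $\bR^d$, then bootstrap via the same downward induction in $\gamma$ to cover every $s\in\bR$. As in the elliptic case, I may assume without loss of generality that the right-hand side is finite, and write $\Phi=\Psi^{-1}$ throughout (noting $\mathrm{C}_2(\Phi)$ depends only on $d$ and $\mathrm{C}_2(\Psi)$).

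\textbf{Step 1 (the core estimate, $s=\gamma+1$).} Following the hint, for each $n\in\bZ$ I set
\[
v_n(t,x):=\zeta_{0,(n)}(e^nx)\,\Phi(e^nx)\,u(e^{2n}t,e^nx),\qquad t\in(0,e^{-2n}T],
\]
so that $v_n(0,\cdot)\in B_p^{\gamma+2-2/p}(\bR^d)$ and (expanding $D_{ij}$ via the product rule exactly as in \eqref{22.04.15.1}) one computes
\[
\partial_t v_n-\sum_{i,j}a^{ij}(e^{2n}t)D_{ij}v_n=\widetilde f_n
\]
in the sense of distributions on $(0,e^{-2n}T)\times\bR^d$, where $\widetilde f_n$ is the sum of $e^{2n}\zeta_{0,(n)}(e^n\cdot)\Phi(e^n\cdot)f(e^{2n}t,e^n\cdot)$ and commutator terms involving at most one spatial derivative of $u$, weighted by derivatives of $\zeta_{0,(n)}$, $\Phi$, or $\trho$. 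Since $\big(a^{ij}(e^{2n}t)\big)\in\mathrm{M}(\nu_1,\nu_2)$ on $(0,e^{-2n}T]$, Lemma~\ref{22.04.14.1} applies and yields
\[
\|v_n\|_{\bH_p^{\gamma+2}(\bR^d,e^{-2n}T)}\le N\Big(\|v_n\|_{\bH_p^{\gamma+1}(\bR^d,e^{-2n}T)}+\|\widetilde f_n\|_{\bH_p^{\gamma}(\bR^d,e^{-2n}T)}+\|v_n(0)\|_{B_p^{\gamma+2-2/p}(\bR^d)}\Big)
\]
with $N=N(d,p,\nu_1,\nu_2)$ independent of $n$.

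\textbf{Step 2 (summation and identification of norms).} Multiplying both sides raised to the $p$-th power by $e^{n(\theta+2)}$ and summing over $n\in\bZ$, the change of variables $s=e^{2n}t$ in each time integral produces exactly the target weighted norms: the left-hand side becomes $\|u\|_{\Psi\bH_{p,\theta}^{\gamma+2}(\Omega,T)}^p$, the first right-hand term becomes $\|u\|_{\Psi\bH_{p,\theta}^{\gamma+1}(\Omega,T)}^p$, and the initial data term becomes $\|u(0)\|_{\Psi B_{p,\theta+2}^{\gamma+2-2/p}(\Omega)}^p$ straight from the definition of the latter space. The principal piece of $\widetilde f_n$ picks up a factor $e^{2np}$ from the $e^{2n}$ in front, so after the time change of variables the correct weight $e^{n(\theta+2p)}$ emerges, giving $\|f\|_{\Psi\bH_{p,\theta+2p}^{\gamma}(\Omega,T)}^p$. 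The commutator terms are controlled, via Lemmas~\ref{21.05.20.3} and \ref{21.09.29.4}.(3), by $\|u\|_{\Psi\bH_{p,\theta}^{\gamma+1}(\Omega,T)}^p$, exactly mirroring \eqref{22.04.15.1}.

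\textbf{Step 3 (induction in $s$).} If $s\ge\gamma+1$, the trivial embedding $\Psi\bH^{s}_{p,\theta}\subset\Psi\bH^{\gamma+1}_{p,\theta}$ (Lemma~\ref{21.05.20.3}.(1)) finishes the proof. If $s<\gamma+1$, pick $k\in\bN$ with $\gamma+1-k\le s<\gamma+2-k$ and iterate the Step~1/2 estimate with $(\gamma,\gamma+1)$ replaced successively by $(\gamma-1,\gamma),\ldots,(\gamma-k,\gamma+1-k)$, at each stage using Lemma~\ref{22.04.15.148}.(3) (together with the trivial bounds on the $f$ and initial-data norms) to absorb lower-regularity norms of $f$ and $u(0)$ into the larger ones stated in the lemma. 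One final application of Lemma~\ref{21.05.20.3}.(1) replaces $\|u\|_{\Psi\bH^{\gamma-k+1}_{p,\theta}}$ by $\|u\|_{\Psi\bH^{s}_{p,\theta}}$.

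\textbf{Main obstacle.} The arithmetic bookkeeping of the parabolic scaling — namely verifying that the factor $e^{-2n}$ introduced by $s=e^{2n}t$ combines with the $e^{2n}$ from rescaling $D_{ij}$ (and with the $e^{2np}$ hidden in $\|\widetilde f_n\|^p$) to produce precisely the weights $e^{n(\theta+2)}$, $e^{n(\theta+2p)}$ demanded by $\Psi\bH_{p,\theta}^{\gamma+2}$ and $\Psi\bH_{p,\theta+2p}^{\gamma}$ — is the only delicate point, and the reason time must be rescaled (unlike in Proposition~\ref{2204160313}). The commutator estimates and the induction step are routine transcriptions of the elliptic argument.
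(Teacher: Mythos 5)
Your proposal matches the paper's intended proof precisely: the paper explicitly states that Lemma~\ref{21.05.13.9} is proved by paralleling Lemma~\ref{21.05.13.8} with the parabolically rescaled localization $v_n(t,x)=\zeta_{0,(n)}(e^nx)\Psi^{-1}(e^nx)u(e^{2n}t,e^nx)$, which is exactly your Step~1, and your scaling bookkeeping (the factors $e^{n(\theta+2)}$, $e^{n(\theta+2p)}$ arising from the substitution $s=e^{2n}t$) is correct. The only small gap is that after establishing $\|u\|_{\Psi\bH^{\gamma+2}_{p,\theta}(\Omega,T)}$, you should note that $\|\partial_t u\|_{\Psi\bH^{\gamma}_{p,\theta+2p}(\Omega,T)}=\|\cL u+f\|_{\Psi\bH^{\gamma}_{p,\theta+2p}(\Omega,T)}$ is bounded by Lemma~\ref{21.05.20.3}.(4), completing the $\Psi\cH^{\gamma+2}_{p,\theta}$ norm estimate, but this is routine.
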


\begin{lemma}\label{22.04.15.0430}
Assume the following:
\begin{itemize}
	\item[]For any $u_0\in \Psi B_{p,\theta+2}^{\gamma+2-2/p}(\Omega)$ and $f\in \Psi \bH_{p,\theta+2p}^{\gamma}(\Omega)$,  \eqref{220616124} has a unique solution $u$ in $\Psi \cH_{p,\theta}^{\gamma+2}(\Omega)$.
	Moreover, we have
	\begin{align*}
		\|\Psi^{-1}u\|_{\cH_{p,\theta}^{\gamma+2}(\Omega)}\leq N_{\gamma}\Big(\|\Psi^{-1}f\|_{H_{p,\theta+2p}^{\gamma}(\Omega)}+\|\Psi^{-1}u_0\|_{B_{p,\theta+2}^{\gamma+2-2/p}(\Omega)}\Big)
	\end{align*}
	where $N_{\gamma}$ is a constant independent of $u_0$, $f$ and $u$.
\end{itemize}
Then for all $s\in\bR$, the following holds:
\begin{itemize}
	\item[]
	For any $u_0\in\Psi B_{p,\theta+2}^{s+2-2/p}(\Omega)$ and $f\in \Psi\bH_{p,\theta+2p}^{s}(\Omega)$,  \eqref{220616124} has a unique solution $u$ in $\Psi \cH_{p,\theta}^{s+2}(\Omega)$.
	Moreover, we have
	\begin{align*}
		\|\Psi^{-1}u\|_{\cH_{p,\theta}^{s+2}(\Omega)}\leq N_{s}\Big(\|\Psi^{-1}f\|_{H_{p,\theta+2p}^{s}(\Omega)}+\|\Psi^{-1}u_0\|_{B_{p,\theta+2}^{\gamma+2-2/p}(\Omega)}\Big)
	\end{align*}
	where $N_s$ is a constant depending only on $d,\,p,\,\gamma,\,\theta,\,\nu_1,\,\nu_2,\,\mathrm{C}_2(\Psi),\,N_{\gamma},\,s$.
\end{itemize}
\end{lemma}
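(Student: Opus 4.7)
The proof of Lemma~\ref{22.04.15.0430} follows the template of Lemma~\ref{21.11.12.1}, with Lemma~\ref{21.05.13.9} replacing Lemma~\ref{21.05.13.8}, and with the parabolic/Besov counterparts of Lemma~\ref{22.02.16.1} collected in Lemma~\ref{22.04.15.148}.(1) handling the decomposition of both $f$ and the initial datum $u_0$. Uniqueness is immediate: if $\bar u\in\Psi\cH^{s+2}_{p,\theta}(\Omega,T)$ solves $\partial_t\bar u=\cL\bar u$ with $\bar u(0)=0$, then Lemma~\ref{21.05.13.9} lifts $\bar u$ into $\Psi\cH^{\gamma+2}_{p,\theta}(\Omega,T)$, whence the hypothesised uniqueness at level $\gamma$ gives $\bar u=0$.

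For the existence part with $s>\gamma$, given $(u_0,f)\in\Psi B^{s+2-2/p}_{p,\theta+2}(\Omega)\times\Psi\bH^{s}_{p,\theta+2p}(\Omega,T)$ the embedding in Lemma~\ref{22.04.15.148}.(3) places $(u_0,f)$ at level $\gamma$ as well, so the hypothesis produces a solution $u\in\Psi\cH^{\gamma+2}_{p,\theta}(\Omega,T)$ to \eqref{220616124}; Lemma~\ref{21.05.13.9} then upgrades $u$ to $\Psi\cH^{s+2}_{p,\theta}(\Omega,T)$ with the desired quantitative estimate.

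For $s<\gamma$, I induct on $\lceil\gamma-s\rceil$, so it suffices to promote validity at $s_0+1$ to validity at $s_0$. Given $(u_0,f)$ at level $s_0$, apply the $H$-decomposition of Lemma~\ref{22.02.16.1} and its Besov analogue from Lemma~\ref{22.04.15.148}.(1) to write $f=f^0+\sum_{i=1}^{d}D_if^i$ and $u_0=u_0^0+\sum_{i=1}^{d}D_iu_0^i$ with the improved regularity $f^0,\ \trho^{-1}f^i\in\Psi\bH^{s_0+1}_{p,\theta+2p}(\Omega,T)$ and $u_0^0,\ \trho^{-1}u_0^i\in\Psi B^{s_0+3-2/p}_{p,\theta+2}(\Omega)$. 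The inductive hypothesis at $s_0+1$ furnishes $v^0,v^1,\ldots,v^d\in\Psi\cH^{s_0+3}_{p,\theta}(\Omega,T)$ solving $\partial_tv^0=\cL v^0+f^0$ with $v^0(0)=u_0^0$ and $\partial_tv^i=\cL v^i+\trho^{-1}f^i$ with $v^i(0)=\trho^{-1}u_0^i$. Setting $v:=v^0+\sum_{i=1}^{d}D_i(\trho\, v^i)$ one obtains $v(0)=u_0$ and, since $\cL$ has spatially constant coefficients,
\begin{align*}
\partial_tv-\cL v=f+\sum_{i=1}^{d}D_i\bigl(\cL(\trho\, v^i)-\trho\,\cL v^i\bigr),
\end{align*}
where each commutator equals $\sum_{j,k}a^{jk}\bigl[(D_{jk}\trho)\,v^i+2(D_k\trho)(D_jv^i)\bigr]$, first-order in $v^i$ with coefficients controlled through the regular-Harnack bounds on $\trho$. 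The counterparts of Lemmas~\ref{21.05.20.3}.(3)--(4) and \ref{21.09.29.4}.(3) (collected in Lemma~\ref{22.04.15.148}.(1)) therefore put $g:=\sum_{i=1}^{d}D_i\bigl(\cL(\trho v^i)-\trho\cL v^i\bigr)$ in $\Psi\bH^{s_0+1}_{p,\theta+2p}(\Omega,T)$. Invoking the inductive hypothesis once more with data $(0,-g)$ yields $w\in\Psi\cH^{s_0+3}_{p,\theta}(\Omega,T)$ with $\partial_tw=\cL w-g$, $w(0)=0$, and $u:=v-w\in\Psi\cH^{s_0+2}_{p,\theta}(\Omega,T)$ is the sought solution. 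Collecting the decomposition estimates with the two applications of the inductive bound gives the norm inequality with a constant $N_{s_0}$ depending on the advertised parameters.

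The main obstacle, absent from Lemma~\ref{21.11.12.1}, is the presence of the initial datum: the Besov decomposition of $u_0$ must be carried out in tandem with the $H$-space decomposition of $f$ so that the auxiliary Cauchy problems for the $v^i$ lie exactly within the inductive hypothesis, and the commutator source $g$ must be shown to enjoy the improved regularity $\Psi\bH^{s_0+1}_{p,\theta+2p}$ so that the corrector $w$ is solvable at the higher level as well. This is what makes the verification in Lemma~\ref{22.04.15.148}.(1) that Besov spaces admit the same structural decomposition as $H$-spaces the crucial input; once it is in place, the bookkeeping of weighted indices through $\trho$ and the commutator is routine.
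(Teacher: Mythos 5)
Your proof is correct and takes exactly the intended approach: the paper itself does not spell out a proof for this lemma but explicitly says it follows the template of Lemma~\ref{21.11.12.1} (with Lemma~\ref{21.05.13.9} in place of Lemma~\ref{21.05.13.8} and the Besov counterparts of the decomposition from Lemma~\ref{22.04.15.148}.(1)), which is precisely what you do. The one place where you need to supply something genuinely new relative to the elliptic model --- the parallel Besov-space decomposition of the initial datum and the matching of the initial conditions $v^0(0)=u_0^0$, $v^i(0)=\trho^{-1}u_0^i$ so that $v(0)=u_0$ --- is handled correctly.

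There is a small sign slip in the displayed commutator identity. Because the parabolic operator is $\partial_t-\cL$ while the elliptic one is $\Delta-\lambda$ (so $\cL$ appears with the opposite sign to $\Delta$), one has
\begin{align*}
\partial_t(\trho v^i)-\cL(\trho v^i)=\trho(\partial_t v^i-\cL v^i)-\bigl(\cL(\trho v^i)-\trho\cL v^i\bigr)\,,
\end{align*}
so the correct identity is
\begin{align*}
\partial_t v-\cL v=f-\sum_{i=1}^{d}D_i\bigl(\cL(\trho v^i)-\trho\cL v^i\bigr)\,,
\end{align*}
with a minus, not a plus, in contrast to the elliptic case where $\Delta v-\lambda v=f+\sum_i D_i(\Delta(\trho v^i)-\trho\Delta v^i)$. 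Once this sign is corrected your subsequent step is internally consistent: solving $\partial_t w-\cL w=-g$ with $w(0)=0$ and setting $u=v-w$ does give $\partial_t u-\cL u=f$ and $u(0)=u_0$. The estimates and the rest of the argument are unaffected.
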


\begin{proof}[Proof of Theorem~\ref{22.02.18.6}]
By Lemma~\ref{22.04.15.0430}, we only need to prove for $\gamma=0$.

\textbf{\textit{A priori} estimates.} Make use of Theorem~\ref{05.11.2} and Lemmas~\ref{220512433} and \ref{22.04.15.148}.(3) to obtain that for any $u\in C_c^{\infty}([0,T]\times\Omega)$, 
\begin{align*}
	\begin{split}
		\|u\|_{\Psi^{\mu}\bL_{p,d-2}(\Omega,T)}&\lesssim_N \|u(0,\cdot)\|_{\Psi^{\mu}L_{p,d}(\Omega)}+\|\partial_t u-\cL u\|_{\Psi^{\mu}\bL_{p,d+2p-2}(\Omega,T)}\\
		&\lesssim_N \|u(0,\cdot)\|_{\Psi^{\mu}B_{p,d}^{2-2/p}(\Omega)}+\|\partial_t u-\cL u\|_{\Psi^{\mu}\bL_{p,d+2p-2}(\Omega,T)}\,.
	\end{split}
\end{align*}
By combining this with Lemma~\ref{21.05.13.9}, we have
\begin{equation}\label{2206081117}
\begin{aligned}
	\|u\|_{\Psi^{\mu}\cH_{p,d-2}^2(\Omega,T)}\lesssim_N&\|u\|_{\Psi^{\mu}\bL_{p,d-2}(\Omega,T)}\\
	&+\|u(0,\cdot)\|_{\Psi^{\mu}B_{p,d}^{2-2/p}(\Omega)}+\|\partial_t u-\cL u\|_{\Psi^{\mu}\bL_{p,d+2p-2}(\Omega,T)}\\
	 \lesssim_N&\|u(0,\cdot)\|_{\Psi^{\mu}B_{p,d}^{2-2/p}(\Omega)}+\|\partial_t u-\cL u\|_{\Psi^{\mu}\bL_{p,d+2p-2}(\Omega,T)}\,,
\end{aligned}
\end{equation}
where $N=N(d,p,\mu,\mathrm{C}_0,\mathrm{C}_2(\Psi),\mathrm{C}_3(\psi,\Psi),\mathrm{C}_4)$.
By Proposition~\ref{2205241111}, \eqref{2206081117} also holds for all $u\in\Psi^{\mu} \cH_{p,d-2}^2(\Omega,T)$.
Therefore the \textit{a priori} estimates are obtained.
The uniqueness of solutions also follows from \eqref{2206081117}.

\textbf{Existence of solutions.}
We first consider the case $\cL=\nu_1\Delta$.
Let
$$
(f,u_0)\in\Psi^{\mu}\bL_{p,d+2p-2}(\Omega,T)\times \Psi^{\mu}B_{p,d}^{2-2/p}(\Omega)=:\cF\,.
$$
By Lemma~\ref{2205241011} and Lemma \ref{22.04.15.148}.(1) (the counterpart of Lemma~\ref{21.05.20.3}.(1), $C_c^{\infty}([0,T]\times\Omega)\times C_c^{\infty}(\Omega)$ is dense in $\cF$.
Therefore there exists $(f^{(n)},u^{(n)}_{0})\in C_c^{\infty}([0,T]\times\Omega)\times C_c^{\infty}(\Omega)$ such that $(f^{(n)},u_{0}^{(n)})\rightarrow (f,u_0)$ in $\cF$.
Make use of Lemmas~\ref{21.05.25.300} and \ref{21.05.13.9} to obtain that there exists a solution $u^{(n)}\in \Psi^{\mu}\cH_{p,d-2}^{2}(\Omega,T)$ of the equation
\begin{align*}
	\partial_t u^{(n)}=\nu_1\Delta u^{(n)}+f_n\quad;\quad u^{(n)}(0)=u^{(n)}_{0}\,.
\end{align*}
By \eqref{2206081117}, $u^{(n)}$ is a Cauchy sequence in $\Psi^{\mu}\cH_{p,d-2}^2(\Omega,T)$.
Since $\Psi^{\mu}\cH_{p,d-2}^{2}(\Omega,T)$ is a Banach space, there exists $u\in \Psi^{\mu}\cH_{p,d-2}^2(\Omega,T)$ such that $u^{(n)}\rightarrow u$ in $\Psi^{\mu}\cH_{p,d-2}^2(\Omega,T)$.
We also obtain that
$$
\lim_{n\rightarrow \infty}\big(f^{(n)},u^{(n)}_{0}\big)=\lim_{n\rightarrow \infty}\big(\partial_t u^{(n)}-\nu_1\Delta u^{(n)},u^{(n)}(0)\big)=\big(\partial_t u-\nu_1\Delta u,u(0)\big)\quad\text{in}\,\,\,\cF\,,
$$
which implies $\partial_t u-\nu_1\Delta u=f$ and $u(0)=u_0$.
Therefore, we have proven the theorem for the case $\cL=\nu_1\Delta$.

Let us consider a general $\cL:=\sum_{i,j}a^{ij}D_{ij}\in\cM_T(\nu_1,\nu_2)$, where $(a^{ij}(t))_{d\times d}\in\mathrm{M}(\nu_1,\nu_2)$ for all $t\in(0,T]$.
For $s\in[0,1]$, put
$$
\cL_s=\sum_{i,j=1}^d\big((1-s)\nu_1\delta^{ij}+sa^{ij}\big)D_{ij}\,.
$$
Since
\begin{align}\label{2206081243}
	\nu_1|\xi|^2\leq \sum_{i,j=1}^d\big((1-s)\nu_1\delta^{ij}+sa^{ij}(t)\big)\xi_i\xi_j\leq \nu_2|\xi|^2
\end{align}
for all $t\in(0,T]$, we have $\cL_s\in \cM_T(\nu_1,\nu_2)$.
It follows from \eqref{2206081117} that
\begin{align*}
	\|u\|_{\Psi^{\mu}\cH_{p,d-2}^2(\Omega,T)}\leq N\Big(\|u(0,\cdot)\|_{\Psi^{\mu}B_{p,d}^{2-2/p}(\Omega)}+\|\partial_t u-\cL_s u\|_{\Psi^{\mu}\bL_{p,d+2p-2}(\Omega,T)}\Big)
\end{align*}
for all $u\in \Psi^{\mu}\cH_{p,d-2}^{2}(\Omega,T)$ and $s\in[0,1]$, where $N$ is the constant in \eqref{2206081117}.
In particular, $N$ is independent of $s$.
Since the unique solvability for $\cL_0$, the method of continuity (see, \textit{e.g.}, \cite[Theorem 5.2]{GT}) yields the unique solvability for $\cL_1$.
\end{proof}

We end this subsection with the global uniqueness of solutions.
\begin{thm}[Global uniqueness]\label{220821002901}
Suppose that \eqref{hardy} holds for $\Omega$, and for $k=1,\,2$, 
\begin{equation*}
	\begin{gathered}
		\text{$\psi_k$ is a superharmonic Harnack functions on $\Omega$}\,\,,\quad\text{$\Psi_k$ is a regularization of $\psi_k$}\,\,,\\
		\gamma_k\in\bR\,\,,\,\,\,\,p_k\in(1,\infty)\quad \text{and}\quad \mu_k\in I(\psi_k,p_k,\nu_2/\nu_1)\,.
	\end{gathered}
\end{equation*}
Let
$$
f\in\bigcap_{k=1,2}\Psi_k^{\,\mu_k}\bH_{p_k,d+2p_k-2}^{\gamma_k}(\Omega,T)\quad\text{and}\quad u_0\in\bigcap_{k=1,2}B_{p_k,d}^{\gamma_k+2-2/p_k}(\Omega),
$$
and for each $k=1,\,2$,
$u^{(k)}\in\Psi_k^{\,\mu_k}\cH_{p_k,d-2}^{\gamma_k+2}(\Omega,T)$ be the solution to the equation
$$
\partial_t u^{(k)}=\cL u^{(k)}+f\quad;\quad u^{(k)}(0)=u_0\,.
$$
Then $u^{(1)}(t)=u^{(2)}(t)$ in the sense of distribution, for all $t\in[0,T]$.
\end{thm}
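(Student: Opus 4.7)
The plan is to follow the pattern of Theorem~\ref{220530526}: approximate the data by smooth compactly supported functions, construct a single canonical weak solution for each approximation that simultaneously lies in \emph{both} solution spaces, and then exploit the \textit{a priori} estimate of Theorem~\ref{22.02.18.6} in each space separately to identify the distributional limit with both $u^{(1)}$ and $u^{(2)}$.

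First I would use Lemma~\ref{2205241011} to choose $f_n\in C_c^{\infty}\big((0,T)\times\Omega\big)$ with
\[
\|f_n-f\|_{\Psi_1^{\mu_1}\bH^{\gamma_1}_{p_1,d+2p_1-2}(\Omega,T)}+\|f_n-f\|_{\Psi_2^{\mu_2}\bH^{\gamma_2}_{p_2,d+2p_2-2}(\Omega,T)}\to 0\,,
\]
and the Besov counterpart of Lemma~\ref{21.09.29.4}.(5) (valid by Lemma~\ref{22.04.15.148}.(1)) to pick $u_{0,n}\in C_c^{\infty}(\Omega)$ with
\[
\|u_{0,n}-u_0\|_{\Psi_1^{\mu_1}B^{\gamma_1+2-2/p_1}_{p_1,d}(\Omega)}+\|u_{0,n}-u_0\|_{\Psi_2^{\mu_2}B^{\gamma_2+2-2/p_2}_{p_2,d}(\Omega)}\to 0\,.
\]

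Next, for each $n$ I would construct a canonical weak solution $u_n$ of $\partial_t u_n=\cL u_n+f_n$ with $u_n(0)=u_{0,n}$. Concretely, I would adapt Lemma~\ref{21.05.25.300} to the operator $\cL$: exhaust $\Omega$ by smooth bounded subdomains $\Omega_m$ containing the supports of $f_n$ and $u_{0,n}$, solve the zero-Dirichlet problem for $\cL$ on each $\Omega_m$ (classical parabolic theory handles time-measurable, space-independent symmetric coefficients), split into the $\pm$ parts, apply the maximum principle for monotonicity, and take the monotone limit as $m\to\infty$. The uniform weighted bound is supplied by Theorem~\ref{05.11.2} applied on each $\Omega_m$: for \emph{every} positive superharmonic function $\phi$ on $\Omega$ and every $\mu\in I(\phi,p,\nu_1/\nu_2)$, the monotone convergence theorem and Fatou's lemma yield
\[
\sup_{t\in[0,T]}\!\!\int_\Omega|u_n(t)|^p\phi^{-\mu p}\dd x+\!\!\int_0^T\!\!\int_\Omega|u_n|^p\phi^{-\mu p}\rho^{-2}\dd x\dd t\lesssim \int_\Omega|u_{0,n}|^p\phi^{-\mu p}\dd x+\!\!\int_0^T\!\!\int_\Omega|f_n|^p\phi^{-\mu p}\rho^{2p-2}\dd x\dd t\,.
\]
Applied with $(\phi,p,\mu)=(\psi_k,p_k,\mu_k)$ for $k=1,2$, this places $u_n$ in $\Psi_k^{\mu_k}\bL_{p_k,d-2}(\Omega,T)$, and then the higher-order parabolic estimate of Lemma~\ref{21.05.13.9} (taking $s=-2$, $\gamma=\gamma_k$) upgrades this to
\[
u_n\in\Psi_1^{\mu_1}\cH^{\gamma_1+2}_{p_1,d-2}(\Omega,T)\,\cap\,\Psi_2^{\mu_2}\cH^{\gamma_2+2}_{p_2,d-2}(\Omega,T)\,.
\]

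With $u_n$ in hand, the \textit{a priori} bound \eqref{2206081117} from the proof of Theorem~\ref{22.02.18.6} applied to $u_n-u^{(k)}$ (which solves the parabolic equation with data $(f_n-f, u_{0,n}-u_0)$) gives, for each $k=1,2$,
\[
\|u_n-u^{(k)}\|_{\Psi_k^{\mu_k}\cH^{\gamma_k+2}_{p_k,d-2}(\Omega,T)}\lesssim \|f_n-f\|_{\Psi_k^{\mu_k}\bH^{\gamma_k}_{p_k,d+2p_k-2}(\Omega,T)}+\|u_{0,n}-u_0\|_{\Psi_k^{\mu_k}B^{\gamma_k+2-2/p_k}_{p_k,d}(\Omega)}\,,
\]
which tends to zero. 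Proposition~\ref{2204160313} with $\beta=1$ (and the embedding of $\cH^{\gamma_k+2}_{p_k,d-2}$ into $C([0,T];H^{\gamma_k}_{p_k,d+2p_k-2})$ implicit there) shows that convergence in $\Psi_k^{\mu_k}\cH^{\gamma_k+2}_{p_k,d-2}(\Omega,T)$ implies convergence in $\cD'(\Omega)$ at each fixed $t\in[0,T]$, so for every $\zeta\in C_c^{\infty}(\Omega)$ and every $t$,
\[
\langle u^{(1)}(t),\zeta\rangle=\lim_{n\to\infty}\langle u_n(t),\zeta\rangle=\langle u^{(2)}(t),\zeta\rangle\,.
\]

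The main obstacle is Step~2: the paper states Lemma~\ref{21.05.25.300} only for $\Delta$, and I would need to verify that the subdomain-exhaustion / monotone-limit / maximum-principle construction carries over to $\cL\in\cM_T(\nu_1,\nu_2)$ with merely time-measurable coefficients. The maximum principle (needed for the sign-splitting and monotonicity in $m$) holds in this generality, and Theorem~\ref{05.11.2} is already stated for $\cL$, so the uniform bounds needed to pass to the limit are available; the remaining work is mainly a careful transcription of the argument in Lemma~\ref{21.05.25.300} with $\Delta$ replaced by $\cL$.
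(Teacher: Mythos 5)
Your Step~1 (approximating $(f,u_0)$ by $C_c^{\infty}$ data), the final convergence step via the $C([0,T];\cD'(\Omega))$-type embedding, and the use of the a priori estimate \eqref{2206081117} in each space all match the paper. The genuine gap is precisely the one you flag: you propose to build the joint approximating solution $u_n$ directly for a general $\cL\in\cM_T(\nu_1,\nu_2)$ by adapting Lemma~\ref{21.05.25.300}, but that lemma is proved only for $\Delta$, and for $\cL$ with merely \emph{time-measurable} coefficients the subdomain solutions on $\Omega_m$ are not classical $C^{\infty}([0,T]\times\overline{\Omega_m})$, so the extension-by-zero does not verify condition~\eqref{22.01.25.2} directly, and the application of the Theorem~\ref{05.11.2}-style energy argument on $\Omega_m$, the maximum-principle splitting, and the Fatou passage all need to be re-justified at a lower regularity class. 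This is not a routine transcription; it is essentially a re-proof of existence for $\cL$, which the paper deliberately avoids.

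The paper's proof sidesteps all of this with a two-step argument you do not use: in Step~1 it does your construction only for $\cL_0=\nu_1\Delta$, where Lemma~\ref{21.05.25.300} is available verbatim, concluding that the map $u\mapsto(\partial_t u-\cL_0 u,\,u(0))$ is a bijection from $X:=\Psi_1^{\mu_1}\cH^{\gamma_1+2}_{p_1,d-2}\cap\Psi_2^{\mu_2}\cH^{\gamma_2+2}_{p_2,d-2}$ onto the joint data space $Y$; in Step~2 it interpolates $\cL_r=(1-r)\nu_1\Delta+r\cL\in\cM_T(\nu_1,\nu_2)$ and applies the method of continuity in $X$, using the $r$-uniform a priori bound $\|u\|_X\lesssim\|(\partial_t u-\cL_r u,\,u(0))\|_Y$ supplied by Theorem~\ref{22.02.18.6}. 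This yields unique solvability in $X\subset X_k$ for the general $\cL$ without ever constructing an approximate solution for $\cL$ itself, so $u^{(1)}=u^{(2)}=$ the $X$-solution. If you want to keep your route, you must actually carry out the $\cL$-analogue of Lemma~\ref{21.05.25.300}; otherwise, inserting the method-of-continuity step closes the gap cleanly.
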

\begin{proof}
We denote 
\begin{align*}
	\begin{gathered}
		X_k=\Psi_k^{\,\mu_k}\cH_{p_k,d-2}^{\gamma_k+2}(\Omega,T)\,\,,\,\,\,\,X=X_1\cap X_2\\
		Y_k=\Psi_k^{\,\mu_k}\bH_{p_k,d+2p_k-2}^{\gamma_k}(\Omega,T)\times \Psi_k^{\,\mu_k}B_{p_k,d}^{\gamma_k+2-2/p_k}(\Omega)\,\,,\,\,\,\,Y=Y_1\cap Y_2\,.
	\end{gathered}
\end{align*}
\textbf{Step 1.} We first consider the case $\cL=\nu_1\Delta$.
For $(f,u_0)\in Y$, by Lemmas~\ref{2205241011} and ~\ref{22.04.15.148}.(1) (the counterpart of Lemma~\ref{21.05.20.3}.(1)), there exists $(f_n,u_{0,n})\in C_c^{\infty}\big((0,T]\times\Omega\big)\times C_c^{\infty}(\Omega)$ such that
$$
(f_n,u_{0,n})\rightarrow (f,u_0)\quad\text{in}\quad Y\,.
$$
Since $\mu_k\in(-1/p_k,1-1/p_k)$, it follows from Lemmas~\ref{21.05.25.300} and \ref{21.05.13.9} that there exists $u_n\in X=X_1\cap X_2$ such that
$$
\partial_t u_n=\nu_1 \Delta u_n+f_n\quad;\quad u_n(0)=u_{0,n}\,.
$$
By Theorem~\ref{22.02.18.6}, we have
$$
\lim_{n\rightarrow \infty}\|u_n-u^{(k)}\|_{X_k}\lesssim \lim_{n\rightarrow \infty}\|(f_n,u_{0,n})-(f,u_0)\|_{Y_k}=0
$$
for each $k=1,\,2$.
Due to \eqref{2204160319} and that $u_{0,n}\rightarrow u_0$ in $\cD'(\Omega)$ (see the counterpart of  Lemma~\ref{21.09.29.4}.(2)), we obtain that
$$
u^{(1)}(t)=u^{(2)}(t)\quad\text{in}\,\,\,\cD'(\Omega)\,\,,\,\,\,\,\text{for all}\,\,t\in[0,T]\,.
$$
Therefore the case $\cL=\nu_1\Delta$ is proved.

We can also observe that
$u^{(1)}(\,\cdot\,)$($=u^{(2)}(\,\cdot\,)$) is the unique solution of the equation
\begin{align}\label{2206211148}
	\partial_t u=\nu_1\Delta u+f\quad;\quad u(0)=u_0\,,
\end{align}
in the class $X$.
This is because $X_1\cap X_2$ and $X_1$ admits the unique solution to the equation \eqref{2206211148}, and $X_1\cap X_2\subset X_1$.

\textbf{Step 2.} Let $\cL\in\cM_T(\nu_1,\nu_2)$.
For $r\in[0,1]$, denote $\cL_r:=(1-r)\nu_1\Delta+r\cL$.
Due to \eqref{2206081243}, Theorem~\ref{22.02.18.6} implies that 
$$
\|u\|_X=\|u\|_{X_1}+\|u\|_{X_2}\leq N\|\big(\partial_t u-\cL_ru,u(0)\big)\|_Y\quad\text{for all}\,\,\,u\in X
$$
where $N$ is independent of $u$ and $r\in[0,1]$.
In addition, by the result in Step 1, the map $u\mapsto \big(\partial_t u-\cL_0 u, u(0)\big)$ is a bijective map from $X$ to $Y$.
Therefore the method of continuity yields that  for any $(f,u_0)\in Y$, there exists a unique solution $u\in X=X_1\cap X_2$ of the equation
\begin{align}\label{2204160344}
	\partial_t u=\cL u+f\quad;\quad u(0)=u_0\,.
\end{align}
For each $k=1,\,2$,  $u^{(k)}$ is the unique solution of equation \eqref{2204160344} in $X_k$, which implies $u=u^{(k)}$.
Consequently, $u^{(1)}(t)=u(t)=u^{(2)}(t)$ for all $t\in (0,T]$.
\end{proof}

\vspace{2mm}

\mysection{Application I - Domain with fat exterior or thin exterior}\label{app.}

In this section, we introduce applications of Sections~\ref{0040} and \ref{0050} to domains satisfying fat exterior or thin exterior conditions. 
The notions of fat exterior and thin exterior are closely related to the geometry of a domain $\Omega$, namely the Hausdorff dimension and Aikawa dimension of $\Omega^c$.

For a set $E\subset \bR^d$, the Hausdorff dimension of $E$ is defined by 
$$
\dim_{\cH}(E):=\inf\big\{\lambda\geq 0\,:\,H^{\lambda}_{\infty}(E)=0\big\}\,,
$$
where
$$
\cH_{\infty}^{\lambda}(E):=\inf \Big\{\sum_{i\in\bN}r_i^{\lambda}\,:\,E\subset \bigcup_{i\in\bN}B(x_i,r_i)\quad\text{where }x_i\in E\text{ and }r_i>0\Big\}\,.
$$
The Aikawa dimension of $E$, denoted by $\dim_{\cA}(E)$, is defined by the infimum of $\beta\geq 0$ for which
$$
\sup_{p\in E,\,r>0}\frac{1}{r^{\beta}}\int_{B_r(p)}\frac{1}{d(x,E)^{d-\beta}}\dd x< \infty\,,
$$
with considering $\frac{1}{0}=+\infty$.

\begin{remark}\label{22.02.24.1}\,\,
	
	(i) The Aikawa dimension is defined through integration.
	However, this dimension equals the Assouad dimension (see \cite[Theorem 1.1]{LT}).
	The Assouad dimension is defined in terms of a covering property, similar to the Hausdorff dimension and Minkowski dimension.
	Specifically, the Assouad dimension of a set $E$ is the infimum of $\beta\geq 0$ for which there exists $N_\beta>0$ such that, for any $\epsilon\in(0,1)$, each subset $F\subset E$ can be covered by at most $N_\beta\epsilon^{-\beta}$ balls of radius $r=\epsilon\cdot \mathrm{diam}(F)$.
	
	(ii) For any $E\subset \bR^d$,
	$$
	\dim_{\cH}(E)\leq \dim_{\cA}(E)
	$$
	and the equality does not hold in general (see \cite[Section 2.2]{lehr}).
	However, if $E$ is Alfors regular, for example, if $E$ has a self-similar property such as Cantor set or Koch snowflake set, then $\dim_{\cH}(E)$ equals $\dim_{\cA}(E)$; see \cite[Lemma 2.1]{lehr} and \cite[Theorem 4.14]{Mattila}.
\end{remark}
\vspace{1mm}

Koskela and Zhong \cite{KZ} established the dimensional dichotomy results for domains admitting the Hardy inequality,  using the Hausdorff and Minkowski dimension.
Their result can be expressed through Hausdorff and Aikawa dimension, as shown in \cite[Theorem 5.3]{lehr}.

\begin{prop}[see Theorem 5.3 of \cite{lehr}]\label{22.02.07.1}
	Suppose a domain $\Omega\subset \bR^d$ admits the Hardy inequality.
	Then there is a constant $\epsilon>0$ such that for each $p\in\partial\Omega$ and $r>0$, either
	$$
	\dim_{\cH}\big(\Omega^c\cap \overline{B}(p,4r)\big)\geq d-2+\epsilon\quad\text{or}\quad \dim_{\cA}\big(\Omega^c\cap \overline{B}(p,r)\big)\leq d-2-\epsilon\,.
	$$
\end{prop}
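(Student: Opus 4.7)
The plan is to follow the Koskela--Zhong dichotomy strategy, rephrased in the Aikawa setting as in \cite{lehr}. Fix a boundary point $p\in\partial\Omega$ and scale $r>0$, and argue by contradiction: assume there exist arbitrarily small $\epsilon>0$ and configurations $(p,r)$ for which both
\begin{align*}
\dim_{\cH}\bigl(\Omega^c\cap \overline{B}(p,4r)\bigr) < d-2+\epsilon
\quad\text{and}\quad
\dim_{\cA}\bigl(\Omega^c\cap \overline{B}(p,r)\bigr) > d-2-\epsilon
\end{align*}
hold simultaneously, and derive a contradiction with the Hardy inequality constant $\mathrm{C}_0(\Omega)$.

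First I would convert the Hausdorff dimension bound into a capacity estimate: if $\dim_{\cH}(E)<d-2+\epsilon$, then for any $\lambda\in(d-2,d-2+\epsilon)$ one has $\mathcal{H}^\lambda_\infty(E)<\infty$, and by a Frostman-type argument the Riesz $(d-2)$-capacity of $E=\Omega^c\cap\overline{B}(p,4r)$ is small relative to $r^{d-2}$, with the smallness quantified by $\epsilon$. Simultaneously I would use the Aikawa-dimension assumption (via the integral definition applied to $E_0=\Omega^c\cap\overline{B}(p,r)$) to deduce that $E_0$ carries a non-negligible ``fat'' portion: the failure of $\dim_{\cA}(E_0)\leq d-2-\epsilon$ supplies, at some scale $\leq r$, a ball in which $\Omega^c$ has substantial relative content when measured by the $\rho^{-(d-\beta)}$ integral for $\beta$ near $d-2$.

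Next I would build a test function $\varphi\in C_c^\infty(\Omega)$ adapted to these two pieces of information: using the small Riesz capacity, one can find a smooth cut-off that equals $1$ outside a small neighborhood of $\Omega^c\cap\overline{B}(p,4r)$ and has Dirichlet energy controlled by the capacity, hence by a small multiple of $r^{d-2}$; the Aikawa-density condition forces the integral $\int(\varphi/\rho)^2\,dx$ taken over the $r$-ball to be a definite fraction of $r^{d-2}$, because $\rho(x)^{-2}$ is large precisely where $\Omega^c$ is dense. Plugging $\varphi$ into \eqref{hardy} and comparing the two sides yields $\mathrm{C}_0(\Omega)\gtrsim \epsilon^{-1}$, which for sufficiently small $\epsilon$ violates the assumption that $\mathrm{C}_0(\Omega)$ is finite.

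The main obstacle, and the reason the proof in \cite{KZ} and \cite{lehr} is delicate, is the quantitative coupling between the two dimensional conditions: one needs the capacity-from-Hausdorff step to be scale-invariant (so that the threshold $\epsilon$ depends only on $d$ and $\mathrm{C}_0(\Omega)$, never on $p$ or $r$), and one needs the Aikawa-density to be extractable at a scale compatible with the annular region where the cut-off $\varphi$ lives. A naive covering argument gives only non-uniform constants, so the bookkeeping must be done with Whitney-type decompositions and a careful iteration over dyadic scales. For the full quantitative argument I would defer to \cite[Theorem~5.3]{lehr}, which carries out exactly this balance and produces the claimed $\epsilon=\epsilon(d,\mathrm{C}_0(\Omega))$.
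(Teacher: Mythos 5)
The paper gives no proof of this proposition: it is imported as a black box, with the heading ``see Theorem~5.3 of~\cite{lehr},'' and the surrounding text only recalls that Koskela--Zhong~\cite{KZ} established the dichotomy for domains admitting a Hardy inequality and that Lehrb\"ack~\cite{lehr} recast it using the Aikawa dimension. So there is no in-paper argument to compare your sketch against. Your proposal is a reasonable high-level rendering of the Koskela--Zhong strategy --- Hausdorff-thinness controls a Riesz capacity, Aikawa non-thinness supplies mass for $\int(\varphi/\rho)^2$, and a test function built from these two facts would violate~\eqref{hardy} --- and you are right that the hard part is making the two thresholds match uniformly over $p$, $r$; deferring to~\cite[Theorem~5.3]{lehr} for that is exactly what the paper does.

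Two small cautions for the record. First, ``small Hausdorff dimension implies small Riesz $(d-2)$-capacity'' is not automatic at the critical exponent: a set of Hausdorff dimension exactly $d-2$ can have positive $(d-2)$-capacity, so the Frostman step only gives smallness when combined with a quantitative content bound at a strictly larger exponent, which is where the $4r$-vs-$r$ ball discrepancy in the statement and the self-improvement across dyadic scales in~\cite{KZ} actually enter. Second, the Aikawa direction of the dichotomy is not a single-scale density statement: the failure of $\dim_{\cA}\le d-2-\epsilon$ is an assertion about a supremum over all interior scales, and turning that into a lower bound for $\int(\varphi/\rho)^2$ at the scale where $\varphi$ lives requires precisely the Whitney iteration you gesture at. These are not gaps in your reasoning so much as confirmation that the complete argument cannot be a one-shot test-function computation; since you flag this and hand off to~\cite{lehr}, your proposal is consistent with the paper's treatment.
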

\vspace{1mm}

For a deeper discussion of the dimensional dichotomy, we refer the reader to \cite{ward}. 
In virtue of Proposition~\ref{22.02.07.1}, we consider domains $\Omega\subset \bR^d$ which satisfy one of the following situations:
\begin{enumerate}
	\item(Fat exterior) There exists $\epsilon\in(0,1)$ and $c>0$ such that
	\begin{align}\label{220617253}
		\cH^{d-2+\epsilon}_{\infty}\big(\Omega^c\cap \overline{B}(p,r)\big)\geq cr^{d-2+\epsilon}\quad\text{for all }p\in \partial\Omega\,\,,\,\,\,\,r>0\,.
	\end{align}
	
	\item(Thin exterior) $\dim_{\cA}(\Omega^c)<d-2$.
\end{enumerate}
It is mentioned in detail in Subsections~\ref{fatex} and \ref{0062} that if a domain satisfyies one of these situations, then this domain admits the Hardy inequality.

In this section and Section~\ref{app2.}, for various domains $\Omega\subset \bR^d$, we construct superharmonic functions equivalent to powers of boundary distance functions $\rho^{\alpha}:=d(\,\cdot\,,\partial\Omega)^{\alpha}$.
It is provided in Remark~\ref{220617557} that for each $p\in(1,\infty)$, these superharmonic functions imply ranges of $\theta\in\bR$ for which the following statement holds:
\begin{statement}[$\Omega,p,\theta$]\label{22.02.19.1}
	For any $\gamma\in\bR$, the following hold:
	\begin{enumerate}
		\item For any $\lambda \geq 0$ and  $f\in H_{p,\theta+2p}^{\gamma}(\Omega)$, the equation
		$$
		\Delta u-\lambda u=f
		$$
		has a unique solution $u$ in $H^{\gamma+2}_{p,\theta}(\Omega)$.
		Moreover, we have
		\begin{align}\label{2205241155}
			\|u\|_{H^{\gamma+2}_{p,\theta}(\Omega)}+\lambda\|u\|_{H_{p,\theta+2p}^{\gamma}(\Omega)}\leq N_1\|f\|_{H_{p,\theta+2p}^{\gamma}(\Omega)}\,,
		\end{align}
		where $N_1$ is a constant independent of $u$, $f$, and $\lambda$.
		
		\item Let $T\in(0,\infty]$.
		For any $u_0\in B^{\gamma+2-2/p}_{p,\theta+2}(\Omega)$ and $f\in \bH_{p,\theta+2p}^{\gamma}(\Omega,T)$, the equation
		\begin{align*}
			u_t=\Delta u+f\quad\text{on }\Omega\times (0,T]\quad;\quad u(0,\cdot)=u_0\,.
		\end{align*}
		has a unique solution $u$ in $\cH^{\gamma+2}_{p,\theta}(\Omega)$.
		Morever, we have
		\begin{align}\label{2205241156}
			\|u\|_{\cH^{\gamma+2}_{p,\theta}(\Omega)}\leq N_2\big(\|u_0\|_{B^{\gamma+2-2/p}_{p,\theta+2}(\Omega)}+\|f\|_{H_{p,\theta+2p}^{\gamma}(\Omega)}\big)\,,
		\end{align}
		where $N_2$ is a constant independent of $u$, $f$, and $T$.
	\end{enumerate}
\end{statement}

\begin{remark}\label{220617557}
	Let $\Omega$ admit the Hardy inequality \eqref{hardy} and suppose that for a fixed $\alpha\in\bR\setminus\{0\}$, there exists a superharmonic function $\psi$ and a constant $M>0$ such that
	$$
	M^{-1}\rho^{\alpha}\leq \psi\leq M\rho^{\alpha}\,.
	$$
	Then $\psi$ is a superharmonic Harnack function, and $\Psi:=\trho^{\,\alpha}$ is a regularization of $\psi$.
	Furthermore, the constants $\mathrm{C}_2(\Psi)$ and $\mathrm{C}_3(\Psi,\psi)$ can be chosen to depend only on $d$, $\alpha$ and $M$.
	In this case, Lemmas~\ref{21.05.20.3}.(3) and \ref{22.04.15.148}.(1) (the counterpart of Lemma~\ref{21.05.20.3}.(3)) imply that for any $p\in(1,\infty)$ and $\gamma,\,\theta\in\bR$, there exists $N=N(d,p,\gamma,\alpha,\mu,M)$ such that
	\begin{align*}
		\|f\|_{\Psi^{\mu}H_{p,\theta}^{\gamma}(\Omega)}\simeq_N \|f\|_{H_{p,\theta-\alpha\mu}^{\gamma}(\Omega)}\quad\text{and}\quad \|f\|_{\Psi^{\mu}B_{p,\theta}^{\gamma}(\Omega)}\simeq_N \|f\|_{B_{p,\theta-\alpha\mu}^{\gamma}(\Omega)}\,.
	\end{align*}
	Therefore, due to Theorems~\ref{21.09.29.1} and \ref{22.02.18.6} (with Proposition~\ref{05.11.1}.(1)), we conclude that Statement~\ref{22.02.19.1} $(\Omega,p,\theta)$ holds for all $p\in(1,\infty)$ and 
	\begin{align*}
		&\theta\in\big(d-2-(p-1)\alpha,d-2+\alpha\big)\quad\text{if}\,\,\alpha>0\,;\\
		&\theta\in\big(d-2+\alpha,d-2-(p-1)\alpha\big)\quad\text{if}\,\,\alpha<0\,.
	\end{align*}
	Moreover, $N_1$ (in \eqref{2205241155}) and $N_2$ (in \eqref{2205241156}) depend only $d,\,p,\,\gamma,\,\theta,\,\mathrm{C}_0(\Omega),\,\alpha$ and $M$.
\end{remark}
\vspace{1mm}

We collect basic properties of classical superharmonic functions, which are used in this section and Section \ref{app2.}. 

\begin{prop}\label{21.05.18.1}\
	Let $\Omega$ be an open set in $\bR^d$.
	\begin{enumerate}
		\item Let $\phi_1,\,\phi_2$ be classical superharmonic functions on $\Omega$. Then $\phi_1\wedge \phi_2$ is also a classical superharmonic function on $\Omega$.
		
		\item Let $\{\phi_{\alpha}\}$ be a family of positive classical superharmonic functions on $\Omega$.
		Then $\phi:=\inf_{\alpha}\phi_{\alpha}$ is a superharmonic function on $\Omega$.
		
		\item Let $\phi_1,\,\phi_2$ be positive classical superharmonic functions on $\Omega$. For any $\alpha\in(0,1)$, $\phi_1^{\alpha}\phi_2^{1-\alpha}$ is also a classical superharmonic fucntion on $\Omega$; in particular, $\phi_1^{\alpha}$ is a classical superharmonic function for all $\alpha\in(0,1)$.
		
		
		\item Let $\Omega_1$ and $\Omega_2$ be open sets in $\bR^d$ and $\phi_i$ be a classical superharmonic function on $\Omega_i$, for $i=1,\,2$.
		Suppose that 
		\begin{alignat*}{2}
			&\liminf_{x\rightarrow x_1,x\in\Omega_2}\phi_2(x)\geq \phi_1(x_1)\quad &&\text{for all}\quad  x_1\in \Omega_1\cap \partial\Omega_2\,;\\
			&\liminf_{x\rightarrow x_2,x\in\Omega_1}\phi_1(x)\geq \phi_2(x_2)\quad &&\text{for all}\quad  x_2\in \Omega_2\cap \partial\Omega_1\,.
		\end{alignat*}
		Then the function
		\begin{align*}
			\phi(x):=
			\begin{cases}
				\phi_1(x)&\quad x\in\Omega_1\setminus\Omega_2\\
				\phi_1(x)\wedge \phi_2(x) &\quad x\in \Omega_1\cap \Omega_2\\
				\phi_2(x)&\quad x\in\Omega_2\setminus\Omega_1\\
			\end{cases}
		\end{align*}
		is also a classical superharmonic function on $\Omega$.
		
	\end{enumerate}
\end{prop}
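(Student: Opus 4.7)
Proposition~\ref{21.05.18.1} collects standard facts about classical superharmonic functions; all four parts can be found in \cite{AG} or deduced by direct verification of Definition~\ref{21.01.19.1}. My plan is to verify each item using the three defining properties (lower semi-continuity, super-mean-value inequality, non-triviality on components), with part (2) requiring an additional passage to the lower semi-continuous regularization to address a measurability subtlety.

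For (1), lower semi-continuity of $\phi_1\wedge\phi_2$ is classical, and non-triviality on each connected component holds because $\phi_1\wedge\phi_2\leq\phi_1\not\equiv+\infty$. The super-mean-value property follows from the two inequalities
$$
\frac{1}{m(B_r(x))}\int_{B_r(x)}(\phi_1\wedge\phi_2)\,dy \;\leq\; \frac{1}{m(B_r(x))}\int_{B_r(x)}\phi_i\,dy \;\leq\; \phi_i(x),\qquad i=1,2,
$$
by taking the minimum over $i$ of the right-hand side. Part~(3) is analogous: I would write $\phi_1^{\alpha}\phi_2^{1-\alpha}=\exp\bigl(\alpha\log\phi_1+(1-\alpha)\log\phi_2\bigr)$ to get lower semi-continuity as a composition of LSC maps with the continuous increasing functions $\log$ and $\exp$, and combine H\"older's inequality with the mean value property of each $\phi_i$:
$$
\frac{1}{m(B_r(x))}\int_{B_r(x)}\phi_1^{\alpha}\phi_2^{1-\alpha}\,dy \;\leq\; \Bigl(\tfrac{1}{m(B_r(x))}\int_{B_r(x)}\phi_1\Bigr)^{\alpha}\Bigl(\tfrac{1}{m(B_r(x))}\int_{B_r(x)}\phi_2\Bigr)^{1-\alpha} \;\leq\; \phi_1(x)^{\alpha}\phi_2(x)^{1-\alpha}.
$$

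For (2), the pointwise infimum $\phi:=\inf_{\alpha}\phi_{\alpha}$ trivially satisfies the super-mean-value inequality: from $\phi\leq\phi_{\alpha}$ one gets $\tfrac{1}{m(B_r(x))}\int_{B_r(x)}\phi\leq\tfrac{1}{m(B_r(x))}\int_{B_r(x)}\phi_{\alpha}\leq\phi_{\alpha}(x)$, and taking the infimum over $\alpha$ on the right yields the inequality for $\phi$. The main obstacle is that an uncountable infimum of measurable functions need not be Borel measurable, so $\phi\in L^1_{\mathrm{loc}}$ is not automatic. I would overcome this by passing to the lower semi-continuous regularization $\hat{\phi}(x):=\liminf_{y\to x}\phi(y)$, which is LSC, bounded above by any fixed $\phi_{\alpha_0}\in L^1_{\mathrm{loc}}$ (hence locally integrable), non-negative, and a short verification shows it inherits the super-mean-value inequality from $\phi$. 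Thus $\hat{\phi}$ is classical superharmonic, and Remark~\ref{22.01.25.1} delivers $\Delta\hat{\phi}\leq 0$ in the distributional sense; since $\hat{\phi}=\phi$ outside a Lebesgue-null (in fact polar) set, identifying $\phi$ with $\hat{\phi}$ as a distribution gives the desired superharmonicity.

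For (4), I exploit the fact---immediate from Definition~\ref{21.01.19.1}---that classical superharmonicity is a local property, reducing the problem to a neighborhood of each point $x_0\in\Omega_1\cup\Omega_2$. On $\Omega_1\setminus\overline{\Omega_2}$ and on $\Omega_2\setminus\overline{\Omega_1}$, $\phi$ agrees locally with $\phi_1$ and $\phi_2$, respectively, and on the overlap $\Omega_1\cap\Omega_2$ it equals $\phi_1\wedge\phi_2$, which is classical superharmonic by (1). The crucial case is $x_0\in\Omega_1\cap\partial\Omega_2$ (the case $x_0\in\Omega_2\cap\partial\Omega_1$ is symmetric), where $\phi(x_0)=\phi_1(x_0)$: the hypothesis $\liminf_{y\to x_0,\,y\in\Omega_2}\phi_2(y)\geq\phi_1(x_0)$ combined with the LSC of $\phi_1$ yields LSC of $\phi$ at $x_0$, and since $\phi\leq\phi_1$ on a neighborhood of $x_0$ contained in $\Omega_1$ with equality at $x_0$, the super-mean-value inequality for $\phi_1$ transfers directly to $\phi$ on all small balls centered at $x_0$, completing the verification.
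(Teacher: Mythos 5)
Your argument is correct, but it takes a more self-contained route than the paper: the paper's ``proof'' of Proposition~\ref{21.05.18.1} is one sentence of citations to \cite{AG} (Theorem 3.7.5 for (2), Corollary 3.4.4 for (3), Corollary 3.2.4 for (4), with (1) immediate from the definition), whereas you verify each item directly from Definition~\ref{21.01.19.1}. Items (1), (3), and (4) are clean and complete: (1) via monotonicity of the average and stability of LSC under finite minima, (3) via H\"older plus LSC of $\exp(\alpha\log\phi_1+(1-\alpha)\log\phi_2)$, and (4) by reducing to a local check at points of $\Omega_1\cap\partial\Omega_2$ and using the hypothesis on the $\liminf$. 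For item (2) you correctly identify the genuine subtlety --- an uncountable infimum need not be measurable, so ``$\phi\in L^1_{\mathrm{loc}}$'' is not automatic --- and the correct repair, namely passing to the LSC regularization $\hat\phi$ and invoking the Brelot--Cartan theorem that $\hat\phi=\phi$ off a polar set. The one step you leave as ``a short verification'' is in fact the crux of the classical proof and is worth recording: from $\hat\phi\leq\phi\leq\phi_\alpha$ together with the super-mean-value property of each $\phi_\alpha$ one deduces $\phi(y)\geq\tfrac{1}{m(B_r(y))}\int_{B_r(y)}\hat\phi$ for every admissible $y$; then one chooses $y_n\to x$ with $\phi(y_n)\to\hat\phi(x)$ and passes to the limit using the continuity of $y\mapsto\int_{B_r(y)}\hat\phi$ (legitimate since $\hat\phi$ is locally integrable, being LSC and dominated by a fixed $\phi_{\alpha_0}$). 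What your approach buys is transparency and independence from \cite{AG}; what the paper's approach buys is brevity and the automatic inclusion of the quasi-everywhere identification that makes the statement of (2) literally well-posed.
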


For the proof of Proposition~\ref{21.05.18.1}, (1) follows from the definition of classical superharmonic functions, (2) and (3) can be found in \cite[Theorem 3.7.5, Corollary 3.4.4]{AG}, respectively, and (4) is implied by \cite[Corollary 3.2.4]{AG}.

\vspace{2mm}

\subsection{Domain with fat exterior : Harmonic measure decay property}\label{fatex}\,\,

This subsection begins by introducing a relation among the condition \eqref{220617253}, classical potential theory, and the Hardy inequality; see the paragraph below Remark~\ref{21.07.06.1}.

We first recall notions in classical potential theory.
For a bounded open set $U\subset\bR^d$ ($d\geq 2$) and a bounded Borel function $f$ on $\partial U$, the Perron-Wiener-Brelot solution (abbreviated to `PWB solution') of the equation
\begin{align}\label{2208021143}
	\Delta u=0\quad\text{in}\,\,\,U\quad;\quad u=f\quad\text{on}\,\,\,\partial U
\end{align}
is defined by
\begin{equation}\label{2301051056}
	\begin{aligned}
		u(x):=\inf\big\{\phi(x)\,:\,\text{$\phi$ is a superharmonic function on $U$ and}&\\
		\text{$\underset{y\rightarrow z}{\liminf}\,\phi(y)\geq f(z)$ for all $z\in\partial U$}&\,\big\}\,.
	\end{aligned}
\end{equation}
For a Borel set $E\subset \partial U$, $w(\,\cdot\,,U,E)$ denotes the PWB solution $u$ of the equation 
\begin{align*}
	\Delta u=0\quad\text{in}\,\,\,U\quad;\quad u=1_E\quad\text{on}\,\,\,\partial U\,,
\end{align*}
which is also called the \textit{harmonic measure} of $E$ over $U$.

\begin{remark}\label{220802309}
	A bounded open set $U$ is said to be regular if, for any $f\in C(\partial U)$, the PWB solution of equation \eqref{2208021143} belongs to $C(\overline{U})$ and satisfies \eqref{2208021143} pointwisely.
	One of the equivalent conditions for $U$ to be regular is provided by N. Wiener \cite{NW} (see with \cite[Theorem 7.7.2]{AG}), which is called the Wiener criterion.
\end{remark}
\vspace{1mm}

We fix an arbitrary open set $\Omega\subset \bR^d$, $d\geq 2$ (not necessarily bounded).
For $p\in\partial\Omega$ and $r>0$, we denote
$$
w(\,\cdot\,,p,r)=w\big(\,\cdot\,,\Omega\cap B_r(p),\Omega\cap \partial B_r(p)\big)
$$ 
(see Figure~\ref{230113736} below); note that $\Omega\cap \partial B_r(p)$ is a relatively open subset of $\partial\big(\Omega\cap B_r(p)\big)$.
\begin{figure}[h]
	\begin{tikzpicture}[> = Latex]
		\begin{scope}
			\begin{scope}[scale=0.8]
				\clip (-4,-1.8) rectangle (4,2.8);
				
				\begin{scope}[shift={(0.5,0)}]
					\fill[gray!10] (-0.5,-0.5) .. controls +(0,1) and +(0.3,-0.6) ..(-1.5,1.9) .. controls +(-0.3,0.6) and +(0.2,-0.2) .. (-1.7,4) -- (4,4) -- (4,0) .. controls +(-0.2,0) and +(0.5,0.5) .. (2.1,-0.5) .. controls +(-0.5,-0.5) and +(1.2,0.1)..(-0.5,-0.5);
					
				\end{scope}

				\begin{scope}[shift={(0.5,0)}]
					\clip (-0.5,-0.5) .. controls +(0,1) and +(0.3,-0.6) ..(-1.5,1.9) arc (acos(-5/13):0:2.6)  .. controls +(-0.5,-0.5) and +(1.2,0.1)..(-0.5,-0.5);
					\foreach \i in {-4,-3.6,...,3}
					{\draw (\i,-2.8)--(\i+1.5,2.8);}
					\path[fill=gray!10] (-0.35,0.05) rectangle (1.35,0.85);
					
					\draw[gray!10,line width=6pt] (2.1,-0.5) arc (0:acos(-5/13):2.6);
					\draw[gray!10,line width=6pt] (2.1,-0.5) .. controls +(-0.5,-0.5) and  +(1.2,0.1)..	(-0.5,-0.5) .. controls +(0,1) and +(0.3,-0.6) ..(-1.5,1.9);
					
				\end{scope}
				
				\begin{scope}[shift={(0.5,0)}]
					\draw[dashed] (-0.5,-0.5) circle (2.6);
					\draw
					(-0.5,-0.5) .. controls +(0,1) and +(0.3,-0.6) ..(-1.5,1.9) .. controls +(-0.3,0.6) and +(0.2,-0.2) .. (-1.7,4);
					\draw
					(4,0) .. controls +(-0.2,0) and +(0.5,0.5) .. (2.1,-0.5) .. controls +(-0.5,-0.5) and  +(1.2,0.1)..	(-0.5,-0.5);

					\draw[ line width=0.8pt] (2.1,-0.5) arc (0:acos(-5/13):2.6);
					\draw[line width=0.8pt] (2.1,-0.5) .. controls +(-0.5,-0.5) and  +(1.2,0.1)..	(-0.5,-0.5) .. controls +(0,1) and +(0.3,-0.6) ..(-1.5,1.9);
					\draw[fill=gray!10] (2.1,-0.5) circle (0.08);
					\draw[fill=gray!10] (-1.5,1.9) circle (0.08);
					\draw[fill=black] (-0.5,-0.5) circle (0.05);
				\end{scope}

			\end{scope}
			
			\begin{scope}[shift={(0.4,0)}]
				\draw (0.42,0.35) node  {$\Delta u=0$} ;
				\draw (1.8,1.1) node (1) {$u=1$};
				\draw (0.42,-0.9) node (0) {$u=0$};
				
			\end{scope}

		\end{scope}

	\end{tikzpicture}
	\caption{$u:=w(\,\cdot\,,p,r)$}\label{230113736}
\end{figure}
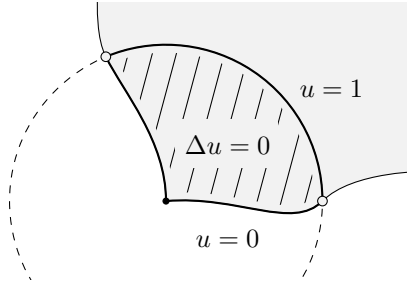

Here are basic properties of $w(\,\cdot\,,p,r)$ which can be found in \cite[Chapter 6]{AG}.

\begin{prop}\label{220621237}\,\,
	
	\begin{enumerate}
		\item $w(\,\cdot\,,p,r)$ is harmonic on $\Omega\cap B_r(p)$ with values in $[0,1]$.\vspace{0.5mm}
		\item For any $x_0\in \Omega\cap \partial B_r(p)$, $\underset{\substack{x\rightarrow x_0\\x\in\Omega\cap B_r(p)}}{\lim}w(x,p,r)=1$.
	\end{enumerate}
\end{prop}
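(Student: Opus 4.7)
My plan is to derive both assertions directly from the standard Perron--Wiener--Brelot (PWB) framework, which in our setting is applied to the bounded open set $U := \Omega \cap B_r(p)$ with boundary data $f = 1_E$, where $E = \Omega \cap \partial B_r(p)$. In particular, the function $w(\cdot, p, r)$ is, by definition \eqref{2301051056}, the infimum over the Perron upper family
\[
\mathcal{U}_f = \{\,\phi : \phi\text{ is superharmonic on } U,\ \liminf_{y\to z}\phi(y) \geq f(z)\ \forall z \in \partial U\,\}.
\]
All of the ingredients I will use are developed in Chapter 6 of \cite{AG}, so the argument is essentially a verification that their hypotheses are met for our specific $U$ and $f$.

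For (1), I will invoke the fundamental dichotomy for the upper PWB solution: the infimum of $\mathcal{U}_f$ is either identically $\pm\infty$ on each component of $U$, or harmonic there. The constant function $1$ trivially belongs to $\mathcal{U}_f$, so $w(\cdot,p,r)\leq 1$ and in particular $w(\cdot,p,r)\not\equiv +\infty$ on any component. To obtain $w(\cdot,p,r)\geq 0$, I will either (a) apply the analogous maximum principle to the lower PWB solution $\underline{w}$ associated with $f \geq 0$, using the fact that the Borel set $E$ is resolutive (so $\underline{w} = w$), or (b) observe directly that the subharmonic function $0$ belongs to the lower Perron family, forcing $w\geq 0$. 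Either route gives $w(\cdot,p,r)\in[0,1]$ and harmonic on $U$.

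For (2), the key point is to show that every $x_0 \in \Omega \cap \partial B_r(p)$ is a \emph{regular} boundary point of $U$ for the Dirichlet problem, because then the continuity of $f = 1_E$ at $x_0$ (where $f \equiv 1$ on a relative neighborhood of $x_0$ in $\partial U$) forces $w(x,p,r) \to 1$. Regularity at $x_0$ follows from the existence of a local barrier, which is supplied by the exterior ball condition: since $x_0 \in \partial B_r(p)$ and $U \subset B_r(p)$, the closed complement of $B_r(p)$ furnishes an exterior ball tangent to $\partial U$ at $x_0$. Explicitly, taking $y$ to be the center of a small exterior ball at $x_0$, the function
\[
b(x) =
\begin{cases}
|x-y|^{2-d} - |x_0-y|^{2-d}, & d\geq 3,\\
-\log|x-y| + \log|x_0-y|, & d=2,
\end{cases}
\]
serves as a (negative of a) barrier at $x_0$, and Bouligand's theorem then yields regularity.

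No real obstacle is expected here; the only subtlety is to check that the general PWB theory from \cite[Chapter 6]{AG} applies at all (resolutivity of the Borel data $1_E$, and the dichotomy result for upper PWB solutions), both of which are standard. I will simply cite the relevant theorems rather than reprove them.
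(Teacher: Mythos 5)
Your proposal is correct and follows exactly the standard Perron--Wiener--Brelot arguments that the paper silently delegates to \cite[Chapter 6]{AG} (it gives no proof, only the citation). You have merely unpacked that reference: harmonicity and the bound $0\le w\le 1$ come from the PWB dichotomy together with the minimum principle, and boundary regularity at points of $\Omega\cap\partial B_r(p)$ follows from the exterior-ball barrier inherited from $\partial B_r(p)$.
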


For convenience, based on Proposition~\ref{220621237}, we consider $w(\,\cdot\,,p,r)$ to be continuous on $\Omega\cap \overline{B}(p,r)$ with $w(x,p,r)=1$ for $x\in\Omega\cap \partial B(p,r)$.

\begin{defn}\label{2301021117}
	A domain $\Omega$ is said to satisfy the \textit{local harmonic measure decay property} with exponent $\alpha>0$ (abbreviated to `$\mathbf{LHMD}(\alpha)$'), if there exists a constant $M_{\alpha}>0$ depending only on $\Omega$ and $\alpha$ such that
	\begin{align}\label{21.08.03.1}
		w(x,p,r)\leq M_{\alpha}\left(\frac{|x-p|}{r}\right)^{\alpha}\quad\text{for all}\,\,\, x\in \Omega\cap B(p,r)
	\end{align}
	whenever $p\in\partial \Omega$ and $r>0$.
\end{defn}
\vspace{1mm}

It is worth noting that if $\Omega$ satisfies $\mathbf{LHMD}(\alpha)$ for some $\alpha>0$, then $\Omega$ is regular (see, \textit{e.g.}, \cite[Theorem 6.6.4]{AG}).

\begin{remark}
	$\mathbf{LHMD}$ is closely related to the H\"older continuity of the PWB solutions.
	We temporarily assume that $\Omega$ is a bounded regular domain (see Remark~\ref{220802309}).
	For $\alpha\in(0,1]$ and $f\in C(\partial \Omega)$, by $H_\Omega f$ we denote the PWB solution $u$ of the equation
	$$
	\Delta u=0\quad\text{on}\,\,\,\Omega\,\,\, ;\quad u|_{\partial \Omega}\equiv f\,,
	$$
	and denote
	$$
	\|H_\Omega\|_{\alpha}:=\sup_{\substack{f\in C^{0,\alpha}(\partial \Omega)\\f\not\equiv 0}}\frac{\|H_\Omega f\|_{C^{0,\alpha}(\Omega)}}{\|f\|_{C^{0,\alpha}(\partial\Omega)}}\,.
	$$
	The following are provided in \cite[Theorem 2, Theorem 3]{aikawa2002}:
	\begin{enumerate}
		\item $\|H_\Omega\|_1=\infty$.
		
		\item For $\alpha\in(0,1)$, if $\|H_\Omega\|_{\alpha}<\infty$ then $\Omega$ satisfies $\mathbf{LHMD}(\alpha)$.
		Conversely, if $\Omega$ satisfies $\mathbf{LHMD}(\alpha')$ for some $\alpha'>\alpha$, then $\|H_\Omega\|_{\alpha}<\infty$.
	\end{enumerate}
\end{remark}

\begin{remark}\label{22.02.18.4}
	Let $\Omega$ be a bounded domain, and suppose that for a constant $\alpha>0$, there exist constants $r_0,\,\widetilde{M}\in(0,\infty)$ such that
	\begin{align*}
		w(x,p,r)\leq \widetilde{M}\left(\frac{|x-p|}{r}\right)^{\alpha}\quad\text{for all}\quad x\in \Omega\cap B(p,r)
	\end{align*}
	whenever $p\in\partial\Omega$ and $r\in (0, r_0]$.
	Then $\Omega$ satisfies $\mathbf{LHMD}(\alpha)$, where $M_{\alpha}$ in \eqref{21.08.03.1} depends only on $\alpha,\,\widetilde{M}$ and $\mathrm{diam}(\Omega)/r_0$.
	Indeed, for a fixed $p\in\partial\Omega$, the function 
	$$w(x,p,r_0)1_{\Omega\cap B(p,r_0)}+1_{\Omega\setminus B(p,r_0)}$$
	is a classical superharmonic function on $\Omega$ (see Propositions~\ref{21.05.18.1}.(4) and \ref{220621237}). 
	In addition, the definition of harmonic measures implies the following:
	\begin{align*}
		&\text{If $r>r_0$ then }w(\,\cdot\,,p,r)\leq w(\,\cdot\,,p,r_0)\text{ on $\Omega\cap B(p,r_0)$,; }\\
		&\text{If  $r\geq \text{diam}(\Omega)$, then $w(\,\cdot\,,p,r)\equiv 0$}.
	\end{align*}
	Therefore for any $r>0$,
	$$
	w(x,p,r)\leq \widetilde{M}\left(\frac{\mathrm{diam}(\Omega)}{r_0}\vee 1\right)^{\alpha}\left(\frac{|x-p|}{r}\right)^{\alpha}\quad\text{for all}\,\,\,x\in\Omega\cap B(p,r)\,.
	$$
\end{remark}

\begin{remark}\label{21.07.06.1}
	For an open ball $B\subset \bR^d$ and compact set $K\subset B$,
	\begin{align}\label{230324942}
		\mathrm{Cap}(K,B):=\inf\left\{\|\nabla f\|_2^2\,:\,f\in C_c^{\infty}(B)\,\,\,,\,\,\, f\geq 1\,\,\text{on}\,\,K\right\},
	\end{align}
	denotes the capacity of $K$ relative to $B$.
	Ancona establishes the following in \cite[Lemma 3, Theorem 1, Theorem 2]{AA}:
	\begin{enumerate}
		\item $\Omega$ satisfies $\mathbf{LHMD}(\alpha)$ for some $\alpha\in(0,1)$ if and only if there exists $\epsilon_0$ such that
		\begin{align}\label{22.02.22.1}
			\inf_{p\in\partial\Omega,r>0}\frac{\mathrm{Cap}(\Omega^c\cap \overline{B}(p,r),B(p,2r))}{r^{d-2}}\geq \epsilon_0>0\,.
		\end{align}
		Here, $\epsilon_0$ and $(\alpha,M_{\alpha})$ depend only on each other and $d$.
		
		\item If $\Omega$ satisfies \eqref{22.02.22.1}, then the Hardy inequality \eqref{hardy} holds for $\Omega$, where $\mathrm{C}_0(\Omega)$ depends only on $d$ and $\epsilon_0$.
		
		\item If $\Omega\subset \bR^2$ is a planar domain and admits the Hardy inequality, then \eqref{22.02.22.1} holds for some $\epsilon_0$.
	\end{enumerate}
	The condition \eqref{22.02.22.1} is also called the capacity density condition or uniformly fat exterior condition.
	A well-known sufficient condition to satisfy \eqref{22.02.22.1} is
	\begin{align}\label{Acondition}
		\inf_{p\in\partial\Omega,r>0}\frac{m\big(\Omega^c\cap \overline{B}(p,r)\big)}{r^d}\geq \epsilon_1>0\,,
	\end{align}
	where $m$ is the Lebesgue measure on $\bR^d$.
	Indeed, if $f\in C_c^{\infty}\big(B(p,2r)\big)$ satisfies $f\geq 1$ on $\Omega^c\cap \overline{B}(p,r)$, then  the Poincar\'e inequality implies
	$$
	r^{-d+2}\int_{B(p,2r)}|\nabla f|^2\dd x\gtrsim_d r^{-d}\int_{B(p,2r)}|f|^2\dd x\geq \frac{m\big(\Omega^c\cap \overline{B}(p,r)\big)}{r^d}\,.
	$$
	Therefore, \eqref{Acondition} implies \eqref{22.02.22.1}, where $\epsilon_0$ depends only on $d$ and $\epsilon_1$.
	%
	
	For a deeper discussion of the capacity density condition, we refer the reader to \cite{Kinhardy,lewis} and the references given therein.
\end{remark}
\vspace{1mm}

We finally introduce the relation between \eqref{220617253} and the local harmonic measure decay property.
It was established by Lewis \cite[Theorem 1]{lewis} that if $\Omega$ satisfies the capacity density condition \eqref{22.02.22.1}, then there exist constants $c,\,\epsilon>0$ depending only on $d,\,\epsilon_0$ such that
$$
\cH_{\infty}^{d-2+\epsilon}\big(\Omega^c\cap \overline{B}(p,r)\big)\geq c\, r^{d-2+\epsilon}
$$
for all $p\in\partial\Omega$ and $r>0$.
Conversely, it is well known (see, \textit{e.g.}, \cite[Theorem B]{aikawa1997} or \cite[Theorem 5.9.6]{AG}) that 
for any $\epsilon>0$ and a compact set $E\subset B_1(0)$, we have
$$
\cH_{\infty}^{d-2+\epsilon}(E)\leq N(d,\epsilon)\cdot \mathrm{Cap}\big(E,B_2(0)\big)\,.
$$
Therefore, due to Remark~\ref{21.07.06.1}.(1), \eqref{220617253} holds for some $\epsilon,\,c>0$ if and only if $\Omega$ satisfies $\mathbf{LHMD}(\alpha)$ for some $\alpha>0$.
%
%

Based on this discussion, we consider domains satisfying $\mathbf{LHMD}(\alpha)$ for some $\alpha>0$, instead of \eqref{220617253}.
This condition is implied by geometric conditions introduced in Section~\ref{app2.}, and the value of $\alpha$ reflects each geometric condition; see Theorem~\ref{22.02.18.3}, Remark~\ref{2209071100}, and Corollary~\ref{22.07.16}.
In the rest of this subsection, we construct appropriate superharmonic functions related to $\alpha$ (see Remark~\ref{220617557}).
The results in this subsection are crucially used in Subsection~\ref{app2.}.




\begin{thm}\label{21.11.08.1}
	Let $\Omega$ satisfy $\mathbf{LHMD}(\alpha)$, $\alpha>0$.
	Then for any $\beta\in(0,\alpha)$, there exists a superharmonic function $\phi$ on $\Omega$ satisfying
	$$
	N^{-1}\rho^{\beta}\leq \phi\leq N\rho^{\beta}
	$$
	where $N=N(\alpha,\beta,M_{\alpha})>0$.
\end{thm}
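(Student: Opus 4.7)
The plan is to build $\phi$ by first constructing, for each $p \in \partial\Omega$, a positive classical superharmonic function $\phi_p$ on $\Omega$ satisfying $\phi_p(x) \simeq |x-p|^\beta$, and then taking $\phi := \inf_{p \in \partial\Omega} \phi_p$, invoking Proposition~\ref{21.05.18.1}(2). The basic building block is the extended harmonic measure
\[
u_{p,r}(x) := \begin{cases} w(x,p,r) & x \in \Omega \cap B(p,r),\\ 1 & x \in \Omega \setminus B(p,r). \end{cases}
\]
First I would verify that $u_{p,r}$ is a positive classical superharmonic function on all of $\Omega$ by applying Proposition~\ref{21.05.18.1}(4) with $\Omega_1 = \Omega \cap B(p,r)$, $\phi_1 = w(\,\cdot\,,p,r)$, $\Omega_2 = \Omega$, $\phi_2 \equiv 1$; the required boundary-limit condition on $\Omega \cap \partial B(p,r)$ is exactly Proposition~\ref{220621237}(2), and the other compatibility condition is vacuous since $\Omega_1 \cap \partial \Omega_2 = \emptyset$.

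Next I would define
\[
\phi_p(x) := \beta\int_0^\infty r^{\beta-1}\, u_{p,r}(x)\, dr
\]
and split the integral at $r = |x-p|$. For $r \le |x-p|$ one has $u_{p,r}(x)=1$, contributing exactly $|x-p|^\beta$. For $r > |x-p|$ the hypothesis $\mathbf{LHMD}(\alpha)$ gives $u_{p,r}(x) = w(x,p,r) \le M_\alpha (|x-p|/r)^\alpha$, and since $\beta < \alpha$,
\[
\beta\int_{|x-p|}^\infty r^{\beta-1}\, w(x,p,r)\, dr \;\le\; \beta M_\alpha |x-p|^\alpha \int_{|x-p|}^\infty r^{\beta-\alpha-1}\, dr = \tfrac{\beta M_\alpha}{\alpha-\beta}\, |x-p|^\beta.
\]
Thus $|x-p|^\beta \le \phi_p(x) \le \bigl(1 + \tfrac{\beta M_\alpha}{\alpha-\beta}\bigr)|x-p|^\beta$, so in particular $\phi_p$ is finite and positive on $\Omega$. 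To see $\phi_p$ is classical superharmonic: Fubini turns the integrated mean-value inequalities for $u_{p,r}$ into a mean-value inequality for $\phi_p$; Fatou applied to the LSC functions $u_{p,r}$ gives LSC of $\phi_p$; and the upper bound forbids $\phi_p \equiv +\infty$.

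Finally $\phi := \inf_{p\in\partial\Omega}\phi_p$ is a positive superharmonic function on $\Omega$ by Proposition~\ref{21.05.18.1}(2) (local integrability follows from the pointwise upper bound below). Taking infima in the sandwiching inequality yields
\[
\rho(x)^\beta \;=\; \inf_{p \in \partial\Omega}|x-p|^\beta \;\le\; \phi(x) \;\le\; \Bigl(1 + \tfrac{\beta M_\alpha}{\alpha-\beta}\Bigr)\inf_{p \in \partial\Omega}|x-p|^\beta \;=\; \Bigl(1 + \tfrac{\beta M_\alpha}{\alpha-\beta}\Bigr)\rho(x)^\beta,
\]
so we may take $N = 1 + \beta M_\alpha/(\alpha-\beta)$, which depends only on $\alpha$, $\beta$, $M_\alpha$. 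The only place where real care is required is the verification that each $\phi_p$ is a \emph{classical} superharmonic function (rather than only a distributional one), since Proposition~\ref{21.05.18.1}(2) takes classical superharmonic inputs; this is the main technical step, but it reduces to a straightforward application of Fubini and Fatou once $u_{p,r}$ has been identified as classical superharmonic via Proposition~\ref{21.05.18.1}(4).
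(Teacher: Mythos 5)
Your proposal is correct, but it takes a genuinely different route from the paper. The paper's proof (and the analogous construction in the proof of Theorem~\ref{22.02.19.6}) is \emph{discrete}: it fixes $r_0$ small and a slack parameter $\eta$, sets $\phi_{p,k}(x)=r_0^{k\beta}\bigl((1-\eta)w(x,p,r_0^k)+\eta\bigr)$, and takes $\phi_p=\inf_k\phi_{p,k}$ over dyadic-type shells $|x-p|<r_0^k$; the point of $r_0$ and $\eta$ is to enforce the ordering $\phi_{p,k}\le\phi_{p,k+1}$ on $\Omega\cap\partial B(p,r_0^{k+1})$ so that Proposition~\ref{21.05.18.1}.(4) can be applied telescopically, and then a locally-finite argument reduces $\phi_p$ to a minimum of finitely many classical superharmonic functions near each shell. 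Your proposal is \emph{continuous}: after the same initial gluing via Proposition~\ref{21.05.18.1}.(4) (making the extended harmonic measure $u_{p,r}$ classical superharmonic on all of $\Omega$), you superpose over all scales with weight $\beta r^{\beta-1}\,dr$, which entirely avoids the shell-matching bookkeeping and produces the clean constant $N=1+\beta M_\alpha/(\alpha-\beta)$. The trade-off is that you must justify that the integral preserves the classical-superharmonic property, which requires a small amount of care with measurability: for Tonelli you need $(r,y)\mapsto u_{p,r}(y)$ to be jointly measurable, which follows because $r\mapsto u_{p,r}(y)$ is non-increasing (by the comparison of harmonic measures as in Remark~\ref{22.02.18.4}) while $y\mapsto u_{p,r}(y)$ is LSC, so $u$ is jointly Lebesgue measurable; together with Fatou for lower semicontinuity and your upper estimate ruling out $\phi_p\equiv+\infty$, this completes the verification. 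Both proofs then finish identically by taking $\phi=\inf_{p\in\partial\Omega}\phi_p$ and invoking Proposition~\ref{21.05.18.1}.(2). Your integral version is arguably tidier and makes the dependence of $N$ on $(\alpha,\beta,M_\alpha)$ explicit; the paper's discrete version is more elementary in that it only ever minimizes finitely many superharmonic functions at a time.
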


Before proving Theorem~\ref{21.11.08.1}, we look at the following corollary:

\begin{corollary}\label{22.02.19.3}
	Let $\Omega\subset\bR^d$ satisfy $\mathbf{LHMD}(\alpha)$, $\alpha>0$.
	For any $p\in(1,\infty)$ and $\theta\in\bR$ satisfying
	$$
	d-2-(p-1)\alpha<\theta<d-2+\alpha\,,
	$$
	Statement~\ref{22.02.19.1} $(\Omega,p,\theta)$ holds.
	In addition, $N_0$ (in \eqref{2205241155}) and $N_1$ (in \eqref{2205241156}) depends only on $d$, $p$, $\gamma$, $\theta$, $\alpha$, $M_{\alpha}$.
\end{corollary}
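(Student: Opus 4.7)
The plan is to chain together three results already in place: Remark~\ref{21.07.06.1} (LHMD implies the Hardy inequality), Theorem~\ref{21.11.08.1} (construction of a superharmonic function comparable to $\rho^\beta$), and Remark~\ref{220617557} (translation of such a superharmonic function into the statement of Statement~\ref{22.02.19.1}). The corollary is essentially immediate once these pieces are assembled, so there is no genuine obstacle; the only point requiring care is matching the ranges of the weight exponents.

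First I would observe that $\mathbf{LHMD}(\alpha)$ is equivalent to the capacity density condition \eqref{22.02.22.1} with constant $\epsilon_0 = \epsilon_0(d,\alpha,M_\alpha)$, by Remark~\ref{21.07.06.1}.(1); and by Remark~\ref{21.07.06.1}.(2) this forces the Hardy inequality \eqref{hardy} to hold for $\Omega$ with $\mathrm{C}_0(\Omega)$ depending only on $d,\,\alpha,\,M_\alpha$. This disposes of one hypothesis of Remark~\ref{220617557}.

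Next, fix $p\in(1,\infty)$ and $\theta\in(d-2-(p-1)\alpha,\,d-2+\alpha)$. Because the interval $(d-2-(p-1)\beta,\,d-2+\beta)$ increases continuously in $\beta$ and exhausts $(d-2-(p-1)\alpha,\,d-2+\alpha)$ as $\beta\nearrow\alpha$, I can pick $\beta\in(0,\alpha)$ with
\[
d-2-(p-1)\beta < \theta < d-2+\beta.
\]
Theorem~\ref{21.11.08.1} then produces a superharmonic function $\phi$ on $\Omega$ satisfying $N^{-1}\rho^\beta\le\phi\le N\rho^\beta$ with $N=N(\alpha,\beta,M_\alpha)$.

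Finally, I would invoke Remark~\ref{220617557} with the exponent $\alpha$ of that remark replaced by $\beta$. Since $\phi$ is superharmonic and comparable to $\rho^\beta$, the remark delivers exactly Statement~\ref{22.02.19.1} $(\Omega,p,\theta)$ for every $\theta$ in $(d-2-(p-1)\beta,\,d-2+\beta)$, with constants $N_1,\,N_2$ depending only on $d,\,p,\,\gamma,\,\theta,\,\mathrm{C}_0(\Omega),\,\beta,\,N$. Tracing dependencies back through the previous two paragraphs, these constants depend only on $d,\,p,\,\gamma,\,\theta,\,\alpha,\,M_\alpha$, as claimed. Since the chosen $\theta$ was an arbitrary point of $(d-2-(p-1)\alpha,\,d-2+\alpha)$, this completes the proof.
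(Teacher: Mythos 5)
Your proposal is correct and takes essentially the same approach as the paper's own proof: choose $\beta\in(0,\alpha)$ with $d-2-(p-1)\beta<\theta<d-2+\beta$, obtain the superharmonic function $\phi\simeq\rho^\beta$ from Theorem~\ref{21.11.08.1}, verify the Hardy inequality via Remark~\ref{21.07.06.1}.(1) and (2), and conclude with Remark~\ref{220617557}. The only difference is cosmetic ordering of steps.
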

\begin{proof}[Proof of Corollary \ref{22.02.19.3}]
	Take $\beta\in(0,\alpha)$ such that
	$$
	d-2-(p-1)\beta<\theta<d-2+\beta.
	$$
	It follows from Theorem~\ref{21.11.08.1} that there exists a superharmonic function $\phi$ such that
	$$
	N^{-1}\rho^{\beta}\leq \phi\leq N\rho^{\beta}\,,
	$$
	where $N=N(\alpha,\,\beta,\,M_{\alpha})$.
	Remarks~\ref{21.07.06.1}.(1) and (2) yield that $\Omega$ admits the Hardy inequality \eqref{hardy}, where $\mathrm{C}_0(\Omega)$ can be chosen to depend only on $d,\alpha$ and $M_{\alpha}$ (in \eqref{21.08.03.1}). 
	Therefore, by Remark~\ref{220617557}, the proof is completed.
\end{proof}

\begin{proof}[Proof of Theorem~\ref{21.11.08.1}]
	The following construction is a combination of \cite[Theorem 1]{AA} and \cite[Lemma 2.1]{KenigToro}.
	Recall that $M_{\alpha}$ is the constant in \eqref{21.08.03.1}, and $\beta<\alpha$.
	Take $r_0\in(0,1)$ small enough to satisfy $M_{\alpha} r_0^{\alpha} < r_0^{\beta}$,  and take $\eta\in(0,1)$ small enough to satisfy
	$$
	(1-\eta)M_{\alpha} r_0^{\alpha}+\eta\leq r_0^{\beta}\,.
	$$
	For $w(x,p,r)$, we shall need only the following properties (see Proposition~\ref{220621237} and Definition~\ref{2301021117}):
	\begin{align*}
		\begin{split}
			&\text{$w(\cdot,p,r)$ is a classical superharmonic function on $\Omega\cap B(p,r)$}\,;\\
			&\text{$w(\cdot,p,r)=1$  on $\Omega\cap \partial B(p,r)$}\,;\\
			&\text{$0\leq w(\cdot,p,r)\leq M_{\alpha}r_0^{\alpha}$  on $\Omega\cap B(p,r_0r)$}\,.
		\end{split}
	\end{align*}
	For $p\in\partial\Omega$ and $k\in\bZ$, put
	$$
	\phi_{p,k}(x)=r_0^{k\beta}\big((1-\eta)\, w(x,p,r_0^k)+\eta\big).
	$$
	Then $\phi_{p,k}$ is a classical superharmonic function on $\Omega\cap B(p,r_0^k)$,
	\begin{alignat*}{2}
		&\quad\,\, \phi_{p,k}\leq  r_0^{(k+1)\beta}\qquad\,\,&&\text{on}\quad \Omega\cap \overline{B}(p,r_0^{k+1})\,,\\
		&\quad\,\, \phi_{p,k}=r_0^{k\beta}\qquad\qquad &&\text{on}\quad \Omega\cap \partial B(p,r_0^k)\,,\\
		&\eta\cdot r_0^{k\beta}\leq \phi_{p,k}\leq r_0^{k\beta}\quad\,\,&&\text{on}\quad \Omega\cap B(p,r_0^k)\,.
	\end{alignat*}
	For $p\in\partial\Omega$ and $x\in\Omega$, we denote
	$$
	\phi_p(x)=\inf\{\phi_{p,k}(x)\,:\,|x-p|< r_0^{k}\}.
	$$
	
	If we prove the following:
	\begin{align}
		&\text{$\phi_p$ is a classical superharmonic function on $\Omega$ ;}\label{221230124}\\
		&\eta|x-p|^{\beta}\leq \phi_p(x)\leq r_0^{-\beta}|x-p|^{\beta}\,,\label{221230125}
	\end{align}
	then $\phi:=\inf_{p\in\partial\Omega}\phi_p$ is superharmonic on $\Omega$ (see Proposition~\ref{21.05.18.1}.(2)) and satisfies
	$$
	\eta \rho(x)^{\beta}\leq \phi(x) \leq r_0^{-\beta}\rho(x)^{\beta}\,.
	$$
	Therefore the proof is completed.
	
	To obtain \eqref{221230124} and \eqref{221230125}, we only need to prove each of the following, respectively: for each $k_0\in\bZ$,
	\begin{align*}
		&\text{$\phi_p$ is a classical superharmonic function on $\{x\in\Omega\,:\,r_0^{k_0+2}<|x-p|<r_0^{k_0}\}$;}\\
		&\text{$\eta\,r_0^{k_0\beta}\leq \phi_p\leq r_0^{k_0\beta}$ on $\{x\in\Omega\,:\,r_0^{k_0+1}\leq |x-p|<r_0^{k_0}\}.$}
	\end{align*}
	
	\noindent
	\textbf{-} \eqref{221230124} : 
	For $x\in\Omega\cap B(p,r_0^{k_0})$, put
	\begin{align*}
		v_{p,k_0}(x)=
		\begin{cases}
			\phi_{p,k_0}(x) &\text{if}\quad r_0^{k_0+1}\leq |x-p|< r_0^{k_0}\\
			\phi_{p,k_0}(x)\wedge \phi_{p,k_0+1}(x)&\text{if}\quad |x-p|< r_0^{k_0+1}\,.
		\end{cases}
	\end{align*}
	Since $\phi_{p,k_0}\leq \phi_{p,k_0+1}$ on $\Omega\cap \partial B(p,r_0^{k_0+1})$, Proposition~\ref{21.05.18.1}.(4) implies that $v_{p,k_0}$ is a classical superharmonic function on $\Omega\cap B(p,r_0^{k_0})$.
	We denote
	$$
	U_{k_0}=\{x\in\Omega\,:\,r_0^{k_0+2}<|x-p|<r_0^{k_0}\}\,.
	$$
	For $x\in U_{k_0}$, we have
	$$
	\phi_p(x)=v_{p,k_0}(x)\wedge \inf\{\phi_{p,k}(x)\,:\,k\leq k_0-1\}.
	$$
	Moreover, if $\eta\,r_0^{k\beta}\geq r_0^{k_0\beta}$ then
	$$
	v_{p,k_0}(x)\leq \phi_{p,k_0}(x)\leq r_0^{k_0\beta}\leq \eta\,r_0^{k\beta}\leq \phi_{p,k}(x)\,.
	$$
	Therefore
	\begin{align*}
		\phi_p(x)=v_{p,k_0}(x)\wedge \inf\{\phi_{p,k}(x)\,:\, k\leq k_0-1\quad\text{and}\quad \eta\, r_0^{k\beta}\leq r_0^{k_0\beta}\}\,,
	\end{align*}
	which implies that $\phi_p$ is the minimum of finitely many classical superharmonic functions, on $U_{k_0}$.
	Consequently, by Proposition~\ref{21.05.18.1}.(1), $\phi_p$ is a classical superharmonic function on $U_{k_0}$.
	
	\noindent
	\textbf{-} \eqref{221230125} : 
	Let $x\in\Omega$ satisfy $r_0^{k_0+1}\leq |x-p|< r_0^{k_0}$.
	Since
	$$
	\phi_{p,k_0}(x)\leq r_0^{k_0\beta}\,\,,\quad\text{and}\quad \phi_{p,k}(x)\geq \eta r_0^{k\beta}\geq \eta r_0^{k_0\beta}\quad\text{for all}\quad k\leq k_0\,,
	$$
	we obtain that $\eta\,r_0^{k_0\beta}\leq \phi_p(x)\leq r_0^{k_0\beta}$.
\end{proof}

%
%

\vspace{2mm}

\subsection{Further results for domains with fat exterior}\label{fatex2}\,

In this subsection, we introduce a unweighted solvablity results and embedding theorems for the Poisson and heat equations in domains satisfying the capacity density condition \eqref{22.02.22.1}.

Recall that $\mathring{W}_p^1(\Omega)$ denotes the closure of $C_c^{\infty}(\Omega)$ in
$$
W_p^1(\Omega):=\big\{f\in\cD'(\Omega)\,:\,\|f\|_{W_p^1(\Omega)}:=\|f\|_p+\|\nabla f\|_p<\infty\big\}\,.
$$
Note that $W_p^1(\Omega)$ is a Banach space, and therefore $\mathring{W}_p^1(\Omega)$ is also a Banach space.

\begin{thm}\label{230210356}
	Let $\Omega$ satisfy the capacity density condition \eqref{22.02.22.1} and 
	\begin{equation}\label{2304044112}
		\begin{aligned}
			\lambda\geq 0\quad \text{if}\quad d_{\Omega}<\infty\quad\text{and}\quad  	\lambda> 0\quad \text{if}\quad d_{\Omega}=\infty\,,
		\end{aligned}	
	\end{equation}
	where $d_{\Omega}:=\sup_{x\in\Omega}d(x,\partial\Omega)$.
	Then there exists $\epsilon\in(0,1)$ depending only on $d$, $\epsilon_0$ (in \eqref{22.02.22.1}) such that for any $p\in(2-\epsilon,2+\epsilon)$, the following holds:
	\begin{itemize}
		\item[] For any $f^0,\,\ldots,\,f^d\in L_p(\Omega)$, the equation
		\begin{align}\label{230203750}
			\Delta u-\lambda u=f^0+\sum_{i=1}^dD_if^i
		\end{align}
		has a unique solution $u$ in $\mathring{W}^{1}_{p}(\Omega)$.
		Moreover, we have
		\begin{align}\label{230203814}
			\begin{split}
				&\|\nabla u\|_{L_p(\Omega)}+\big(\lambda^{1/2}+d_\Omega^{-1}\big)\|u\|_{L_p(\Omega)}\\
				\leq N(d,p,\epsilon_0)\,& \Big(\min\big(\lambda^{-1/2},d_\Omega\big)\|f^0\|_{L_p(\Omega)}+\sum_{i=1}^d\|f^i\|_{L_p(\Omega)}\Big)\,.
			\end{split}
		\end{align}
	\end{itemize}
\end{thm}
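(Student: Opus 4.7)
Plan. The strategy is to reduce this unweighted $L_p$-statement to the weighted Poisson theory of Corollary~\ref{22.02.19.3}, applied at the critical weight $\theta=d-p$ for which $H^1_{p,d-p}(\Omega)$ coincides with $\mathring{W}^1_p(\Omega)$, and then to extract the sharp scaling via a dilation argument. For the norm identification, under the capacity density condition \eqref{22.02.22.1}, the $L_2$-Hardy inequality of Remark~\ref{21.07.06.1}.(2) is known to self-improve to an $L_p$-Hardy inequality for $p$ in a neighborhood of $2$ with constants depending only on $d$ and $\epsilon_0$; together with Remark~\ref{230215958}.(1) this yields $\mathring{W}^1_p(\Omega)=H^1_{p,d-p}(\Omega)$ with equivalent norms, and in particular $\|u\|_{\mathring{W}^1_p}\simeq \|\nabla u\|_{L_p}$.

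Next, I would identify $f:=f^0+\sum_{i=1}^d D_if^i$ as an element of $H^{-1}_{p,d+p}(\Omega)$ via Corollary~\ref{21.05.26.3} with $n=1$, $\gamma=-1$, $\theta=d+p$, which gives
$$
\|f\|_{H^{-1}_{p,d+p}(\Omega)}\;\lesssim\; \|\rho f^0\|_{L_p(\Omega)}+\sum_{i=1}^d\|f^i\|_{L_p(\Omega)}.
$$
Since \eqref{22.02.22.1} is equivalent to $\mathbf{LHMD}(\alpha)$ for some $\alpha=\alpha(d,\epsilon_0)>0$ by Remark~\ref{21.07.06.1}.(1), Corollary~\ref{22.02.19.3} at $\theta=d-p$ and $\gamma=-1$ applies whenever both $p-2<(p-1)\alpha$ and $2-p<\alpha$ hold; both inequalities reduce to $\alpha>0$ at $p=2$ and therefore remain valid on an interval $(2-\epsilon,2+\epsilon)$ with $\epsilon=\epsilon(d,\epsilon_0)>0$. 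For such $p$ the corollary produces the unique $u\in H^1_{p,d-p}(\Omega)=\mathring{W}^1_p(\Omega)$ solving $\Delta u-\lambda u=f$, together with
$$
\|u\|_{H^1_{p,d-p}(\Omega)}+\lambda\|u\|_{H^{-1}_{p,d+p}(\Omega)}\;\leq\; N\|f\|_{H^{-1}_{p,d+p}(\Omega)}.
$$

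To obtain the sharp form \eqref{230203814}, I would rescale by $R:=\min(\lambda^{-1/2},d_\Omega)$, which is finite by \eqref{2304044112}. Setting $v(y):=u(Ry)$ on $\widetilde\Omega:=R^{-1}\Omega$ produces $\Delta v-\widetilde\lambda v=\widetilde f$ with $\widetilde\lambda:=R^2\lambda\leq 1$ and $\widetilde d_{\widetilde\Omega}:=R^{-1}d_\Omega\geq 1$, and $\widetilde\Omega$ still satisfies \eqref{22.02.22.1} with the same $\epsilon_0$. Applying the previous estimate to $v$ on $\widetilde\Omega$ and undoing the dilation converts the normalized inequality into
$$
\|\nabla u\|_{L_p(\Omega)}+R^{-1}\|u\|_{L_p(\Omega)}\;\lesssim\; R\|f^0\|_{L_p(\Omega)}+\sum_{i=1}^d\|f^i\|_{L_p(\Omega)},
$$
which is precisely \eqref{230203814} in view of $R^{-1}\simeq\lambda^{1/2}+d_\Omega^{-1}$ and $R=\min(\lambda^{-1/2},d_\Omega)$.

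The principal obstacle lies in the unbounded regime $d_\Omega=\infty$ with $\lambda>0$: after scaling one lands in the case $\widetilde d_{\widetilde\Omega}=\infty$ with $\widetilde\lambda=1$, where the bound $\|\rho\widetilde f^0\|_{L_p}\leq \widetilde d_{\widetilde\Omega}\|\widetilde f^0\|_{L_p}$ is vacuous. The $\widetilde f^0$-contribution to $\|\widetilde f\|_{H^{-1}_{p,d+p}}$ must then be treated not through the weighted embedding $L_{p,d+p}\hookrightarrow H^{-1}_{p,d+p}$ but by coupling $\widetilde f^0$ against the $\widetilde\lambda\widetilde u$-term, for instance via the exponential decay of the Green function of $\Delta-1$ at unit scale, or equivalently by a direct testing of the equation against $|\widetilde u|^{p-2}\widetilde u$ once existence has been established. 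A parallel technical input, needed already for the norm identification in the first step, is the quantitative $L_p$-self-improvement of the Hardy inequality from its $L_2$ version under \eqref{22.02.22.1}, with the interval of admissible $p$ depending only on $d$ and $\epsilon_0$.
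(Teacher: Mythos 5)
Your strategy—reducing the unweighted statement to the weighted theory of Corollary~\ref{22.02.19.3} at $\theta=d-p$ after identifying $\mathring{W}_p^1(\Omega)$ with $H_{p,d-p}^1(\Omega)$, and using the $L_p$-Hardy self-improvement plus a dilation argument to normalize—is the same plan the paper carries out. You also correctly locate the critical technical obstruction. However, there are two genuine gaps.

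First, the claimed identity $\mathring{W}_p^1(\Omega)=H_{p,d-p}^1(\Omega)$ with $\|u\|_{\mathring{W}_p^1}\simeq\|\nabla u\|_{L_p}$ does not hold when $d_\Omega=\infty$: Remark~\ref{230215958}.(1), which you cite, assumes $\Omega$ bounded, and for unbounded $\Omega$ the weighted norm $\|\rho^{-1}u\|_{L_p}$ does not control $\|u\|_{L_p}$. The paper avoids this by proving only the single inclusion $\mathring{W}_p^1(\Omega)\subset H_{p,d-p}^1(\Omega)$ (for uniqueness) and, for existence, establishing that $L_{p,d}(\Omega)\cap H_{p,d-p}^1(\Omega)\subset\mathring{W}_p^1(\Omega)$ via density and then producing the solution inside that smaller intersection. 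You would need to restructure your argument around these two one-sided inclusions.

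Second, and more seriously, you acknowledge that in the normalized case $\widetilde\lambda=1$, $\widetilde d_{\widetilde\Omega}=\infty$ the embedding $\|\rho\widetilde f^0\|_{L_p}\lesssim\widetilde d_{\widetilde\Omega}\|\widetilde f^0\|_{L_p}$ fails, but you do not supply an argument to bridge this; you only gesture at Green-function decay or a direct $|\widetilde u|^{p-2}\widetilde u$ test. Neither of those is carried out, and neither is what the paper does. The paper's key move at this point (Step 2.1 of its proof) is a structural decomposition: it first solves $\Delta v-v=\widetilde\rho^{\,-1}f^0$ in $H_{p,d-p}^2(\Omega)$—which is legitimate because $\widetilde\rho^{\,-1}f^0\in L_{p,d+p}(\Omega)$ irrespective of $d_\Omega$—and then observes that $u:=v\,\widetilde\rho+w$ absorbs the $f^0$-contribution, since the correction $\widetilde f:=f^0-\Delta(\widetilde\rho\,v)+\widetilde\rho\,v=-2\sum_iD_i(vD_i\widetilde\rho)+v\Delta\widetilde\rho$ lives in $H_{p,d+p}^{-1}(\Omega)$ with a bound controlled by $\|v\|_{L_{p,d}(\Omega)}$. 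This decomposition is the ingredient missing from your proposal; without it the argument does not close in the $d_\Omega=\infty$ case.
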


\begin{proof}
	We first note the following two result which follows from \eqref{22.02.22.1}:
	\begin{itemize}
		\item[(a)] By Remark~\ref{21.07.06.1}.(1), there exists $\alpha\in(0,1)$ such that $\Omega$ satisfies $\mathbf{LHMD}(\alpha)$.
		Due to Corollary~\ref{22.02.19.3}, Statement~\ref{22.02.19.1} $(\Omega,p,d-p)$ holds for
		\begin{align*}
			2-\alpha<p<2+\frac{\alpha}{1-\alpha}\,,
		\end{align*}
		and $N_1$ (in \eqref{2205241155}) depends only on $d,p,\gamma,\epsilon_1$.

		\item[(b)] It is implied by \cite[Theorem 1, Theorem 2]{lewis} (or see \cite[Theorem 3.7, Corollary 3.11]{Kinhardy}) that there exists $p_0\in(1,2)$ depending only on $d$ and $\epsilon_0$ such that for any $p>p_0$,
		\begin{align}\label{230213147}
			\int_{\Omega}\Big|\frac{u(x)}{\rho(x)}\Big|^p\dd x\leq N(d,p,\epsilon_0)\int_{\Omega}|\nabla u|^p\dd x\quad \forall\,\, u\in C_c^{\infty}(\Omega)\,.
		\end{align}
		Due to Lemma~\ref{21.09.29.4}.(1) and the definition of $\mathring{W}_p^1(\Omega)$, $C_c^{\infty}(\Omega)$ is dense in $\mathring{W}_p^1(\Omega)$ and $H_{p,d-p}^1(\Omega)$, separately. Therefore \eqref{230213147} implies that $\mathring{W}_p^1(\Omega)\subset H_{p,d-p}^1(\Omega)$.
	\end{itemize}
	Take $\epsilon\in(0,1)$ such that $\epsilon\leq \alpha$ and $\epsilon\leq 2-p_0$.
	We consider a fixed $p\in (2-\epsilon,2+\epsilon)$.
	
	We will use Lemma~\ref{220512433}, Corollary~\ref{21.05.26.3}, $d_{\Omega}^{\,-1}\|u\|_p\leq \|\rho^{-1}u\|_p$, and $\|\rho f\|_p\leq d_{\Omega}\|f\|_p$, without mentioning.
	
	\vspace{2mm}
	
	\textbf{Step 1. Uniqueness of solutions.}
	
	Suppose that $u\in \mathring{W}_p^1(\Omega)$ satisfies $\Delta u-\lambda u=0$.
	By (a) in this proof, $u$ belongs to $H_{p,d-p}^1(\Omega)$, which implies $u\equiv 0$.
	Therefore, the uniqueness of solutions is proved.
	
	%

	\vspace{2mm}
	\textbf{Step 2. Existence of solutions and estimate \eqref{230203814}.}
	
	To prove the existence of solutions, it is enough to find a solution in $L_{p,d}(\Omega)\cap H_{p,d-p}^1(\Omega)$.
	Indeed, if $u\in L_{p,d}(\Omega)\cap H_{p,d-p}^1(\Omega)$, then there exists $u_n\in C_c^{\infty}(\Omega)$ such that $u_n\rightarrow u$ in $L_{p,d}(\Omega)\cap H_{p,d-p}^1(\Omega)$ (see Lemma~\ref{21.09.29.4}.(5)).
	Therefore,
	$$
	\|u_n-u\|_{W_p^1(\Omega)}\lesssim \|u_n-u\|_{L_{p,d}(\Omega)}+\|u_n-u\|_{H_{p,d-p}^1(\Omega)}\rightarrow 0\,,
	$$
	which implies that $u\in \mathring{W}_p^1(\Omega)$.
	
	Without loss of generality, we can assume that $\lambda=0$ or $\lambda=1$ by dilation.
	Note that $\epsilon_0$ in \eqref{22.02.22.1} is invariant even if $\Omega$ is replaced by $r\Omega=\{rx\,:\,x\in\Omega\}$, for any $r>0$.

	\textbf{- Step 2.1.} Consider the case $\lambda=1$. 
	Since $\trho^{-1}f^0\in L_{d+p}(\Omega)$ and Statement~\ref{22.02.19.1} $(\Omega,p,d-p)$ holds, there exists $v\in H_{p,d-p}^2(\Omega)$ such that 
	$$
	\Delta v- v=\trho^{\,-1}f^0
	$$
	and $\|v\|_{H_{p,d-p}^2(\Omega)}+\|v\|_{L_{p,d+p}(\Omega)} \lesssim_{d,p,\epsilon_0}\left\|\trho^{-1} f^0\right\|_{L_{p,d+p}(\Omega)}$
	(see (a) in this proof)
	By Proposition~\ref{220527502}.(9) and Lemma~\ref{21.05.20.3}.(1), we have
	\begin{equation}\label{230403637}
		\begin{alignedat}{2}
			&&&\|v\|_{L_{p,d}(\Omega)}+\left(\|v\|_{L_{p,d+p}(\Omega)}+\|v\|_{H_{p,d}^1(\Omega)}\right)\\
			&\lesssim_{d,p}\,&& \left(\|v\|_{H_{p,d-p}^1(\Omega)}+\|v\|_{H_{p,d+p}^{-1}(\Omega)}\right)+\left(\|v\|_{L_{p,d+p}(\Omega)}+\|v\|_{H_{p,d-p}^2(\Omega)}\right)\\
			&\lesssim_{d,p}&&\|v\|_{H_{p,d-p}^2(\Omega)}+\|v\|_{L_{p,d+p}(\Omega)}\\
			&\lesssim_{d,p,\epsilon_0}&& \left\|\trho^{-1} f^0\right\|_{L_{p,d+p}(\Omega)}\\
			&\simeq_{d,p}&& \|f^0\|_p\,.
		\end{alignedat}
	\end{equation}
	Observe that $\widetilde{f}:=f^0-\Delta(\trho v)+\trho v$ satisfies
	\begin{align*}
		\widetilde{f}=-2\Big[\sum_{i= 1}^dD_i\big(vD_i\trho )\Big]+v\Delta \trho
	\end{align*}
	and therefore
	\begin{align}\label{230213214}
		\begin{split}
			\big\|\widetilde{f}\big\|_{H_{p,d+p}^{-1}(\Omega)}\,&\lesssim_{d,p} \left\|v \trho_x\right\|_{L_{p,d}(\Omega)}+\left\|v\trho_{xx}\right\|_{L_{p,d+p}(\Omega)}\\
			&\lesssim_{d,p,\epsilon_0} \|v\|_{L_{p,d}(\Omega)}\lesssim_{d,p,\epsilon_0}\left\|f^0\right\|_p\,,
		\end{split}
	\end{align}
	where the last inequality follows from \eqref{230403637}.
	Since Statement~\ref{22.02.19.1} $(\Omega,p,d-p)$ holds, there exists $w\in H_{p,d-p}^1(\Omega)$ such that
	$$
	\Delta w- w=\sum_{i= 1}^dD_if^i+\widetilde{f}
	$$
	and $$\|w\|_{H_{p,d-p}^1(\Omega)}+\|w\|_{H^{-1}_{p,d+p}(\Omega)}\lesssim_{d,p,\epsilon_0}\sum_{i= 1}^d\|f^i\|_{L_{p,d}(\Omega)}+\big\|\widetilde{f}\big\|_{H_{p,d+p}^{-1}(\Omega)}\lesssim \sum_{i=0}^d\|f^i\|_{p}
	$$
	(see (a) in this proof).
	Therefore, by Proposition~\ref{220527502}.(9), Lemma~\ref{21.05.20.3}.(1), and \eqref{230213214}, we have
	\begin{equation}\label{2304041137}
		\begin{alignedat}{2}
			&&&\|w\|_{L_{p,d-p}(\Omega)}+\big(\|w\|_{L_{p,d}(\Omega)}+\|w\|_{H_{p,d-p}^1(\Omega)}\big)\\
			&\lesssim_{d,p}&&\|w\|_{H_{p,d-p}^1(\Omega)}+\|w\|_{H^{-1}_{p,d+p}(\Omega)} \lesssim_{d,p,\epsilon_0} \sum_{i\geq 0}\left\|f^i\right\|_p\,.
		\end{alignedat}
	\end{equation}
	Put $u=v\trho+w$.
	Then $u$ is a solution of equation \eqref{230203750} and satisfies
	\begin{equation}\label{230204149}
		\begin{alignedat}{2}
			&&&\|u_x\|_p+(1+d_\Omega^{-1})\|u\|_p\\
			&\lesssim_{d,p} &&\|u\|_{L_{p,d}(\Omega)}+\|u\|_{H_{p,d-p}^1(\Omega)}\\
			&\lesssim_{d,p}&& \|w\|_{L_{p,d}(\Omega)}+\|w\|_{H_{p,d-p}^1(\Omega)}+\|v\|_{L_{p,d+p}(\Omega)}+\|v\|_{H_{p,d}^1(\Omega)}\\
			&\lesssim_{d,p,\epsilon_0} &&\sum_{i\geq 0}\|f^i\|_p\,,
		\end{alignedat}
	\end{equation}
	Here, the second inequality follows from Lemma~\ref{21.05.20.3}.(3), and the last inequality follows from \eqref{230403637} and \eqref{2304041137}.
	
	Note that \eqref{230204149} also implies that $u\in L_{p,d}(\Omega)\cap H_{p,d-p}^1(\Omega)$.
	
	\textbf{- Step 2.2.} Consider the case $d_{\Omega}<\infty$, and observe that
	\begin{equation}\label{230404924}
		\begin{aligned}
			\|f^0+\sum_{i\geq 1}D_if^i\|_{H_{p,d+p}^{-1}(\Omega)}&\lesssim_{d,p}\|f^0\|_{L_{p,d+p}(\Omega)}+\sum_{i\geq 1}\|f^i\|_{L_{p,d}(\Omega)}\\
			&\leq d_{\Omega}\|f^0\|_p+\sum_{i\geq 1}\|f^i\|_p<\infty\,,
		\end{aligned}
	\end{equation}
	Since Statement~\ref{22.02.19.1} $(\Omega,p,d-p)$ holds, either $\lambda=0$ or $\lambda=1$, there exists $\widetilde{u}\in H_{p,d-p}^1(\Omega)$ such that
	$$
	\Delta \widetilde{u}-\lambda \widetilde{u}=f^0+\sum_{i\geq 1}D_if^i\,,
	$$
	and
	\begin{align}\label{230404925}
		\|\widetilde{u}\|_{H_{p,d-p}^1(\Omega)}+\lambda \|\widetilde{u}\|_{H_{p,d+p}^{-1}(\Omega)}\lesssim \|f^0+\sum_{i\geq 1}D_if^i\|_{H_{p,d+p}^{-1}(\Omega)}
	\end{align}
	(see (a) in this proof).
	By Proposition~\ref{220527502}.(9), \eqref{230404924}, and \eqref{230404925}, we obtain that
	\begin{equation}\label{230204150}
		\begin{alignedat}{2}
			&&&\|\nabla\widetilde{u}\|_{L_p(\Omega)}+d_\Omega^{-1}\|\widetilde{u}\|_{L_p(\Omega)}+\lambda^{1/2}\|\widetilde{u}\|_{L_p(\Omega)}\\
			&\lesssim_{d,p}\,&& \|\widetilde{u}\|_{H_{p,d-p}^1(\Omega)}+\lambda \|\widetilde{u}\|_{H_{p,d+p}^{-1}(\Omega)}\\
			&\lesssim_{d,p,\epsilon_0}\,&& d_{\Omega}\|f^0\|_p+\sum_{i\geq 1}\|f^i\|_p\,.
		\end{alignedat}
	\end{equation}
	Due to \eqref{230204150}, we have $\widetilde{u}\in L_{p,d}(\Omega)\cap H_{p,d-p}^1(\Omega)$.
	
	\textbf{- Step 2.3.}
	The existence of solutions is proved in Steps 2.1 and 2.2, for all $\lambda$ and $d_{\Omega}$ satisfying \eqref{2304044112}.
	For the cases where $d_{\Omega}=\infty$ and $\lambda=1$, and $d_{\Omega}<\infty$ and$\lambda=0$, estimate \eqref{230203814} is proved in \eqref{230204149} and \eqref{230204150}, respectively.
	Therefore, we only need prove estimate \eqref{230203814} in the remaining case where $d_{\Omega}<\infty$ and $\lambda=1$.
	Since $u$ in Step 2.1 and $\widetilde{u}$ in Step 2.2 are the same (due to the result in Step 1), \eqref{2208131023} can be obtained by combining \eqref{230204149} and \eqref{230204150}.
\end{proof}

\begin{thm}\label{230210357}
	Let $\Omega$ satisfy the capacity density condition \eqref{22.02.22.1} and 
	\begin{equation*}
		\begin{aligned}
			T\leq \infty\quad \text{if}\quad d_{\Omega}<\infty\quad\text{and}\quad  	T<\infty\quad \text{if}\quad d_{\Omega}=\infty\,,
		\end{aligned}	
	\end{equation*}
	where $d_{\Omega}:=\sup_{x\in\Omega}d(x,\partial\Omega)$.
	Then for any $\nu_1,\,\nu_2\in\bR$ with $0<\nu_1\leq \nu_2<\infty$, there exists $\epsilon\in(0,1)$ depending only on $d$, $\epsilon_0$ (in \eqref{22.02.22.1}), $\nu_1$, $\nu_2$ such that the following holds:
	\begin{itemize}
		\item[]
		Suppose that $p\in(2-\epsilon,2+\epsilon)$ and $\cL\in \cM_T(\nu_1,\nu_2)$.
		Then for any $f^0,\,\ldots,\,f^d\in L_p((0,T]\times \Omega)$, the equation
		\begin{align}\label{2208131023}
			\partial_tu=\cL u+f^0+\sum_{i=1}^dD_if^i\quad\text{in}\quad (0,T]\quad;\quad u(0,\cdot)=0
		\end{align}
		has a unique solution $u$ in $L_p\big((0,T];\mathring{W}^{1}_{p}(\Omega)\big)$ (see \eqref{2208221058} for the definition of equation \eqref{2208131023}).
		Moreover, we have
		\begin{align}\label{230204228}
			\begin{split}
				&\|\nabla u\|_{L_p((0,T]\times \Omega)}+\big(T^{-1/2}+(d_\Omega)^{-1}\big)\|u\|_{L_p((0,T]\times \Omega)}\\
				\leq N(d,p,\epsilon_0) \,&\Big(\min(T^{1/2},d_\Omega)\|f^0\|_{L_p((0,T]\times \Omega)}+\sum_{i=1}^d\|f^i\|_{L_p((0,T]\times \Omega)}\Big)\,.
			\end{split}
		\end{align}
	\end{itemize}
\end{thm}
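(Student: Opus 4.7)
\textbf{Proof proposal for Theorem~\ref{230210357}.}
The plan is to parallel the elliptic argument in Theorem~\ref{230210356}, upgrading its two main ingredients to the parabolic setting with a variable-coefficient operator $\cL\in\cM_T(\nu_1,\nu_2)$. The ingredients are: (a) the Hardy-type inequality of Lewis, which yields the continuous embedding $\mathring W_p^1(\Omega)\subset H_{p,d-p}^1(\Omega)$ for $p>p_0(d,\epsilon_0)$, and (b) the solvability of $\partial_tu=\cL u+F$ in $\cH_{p,d-p}^{\gamma+2}(\Omega,T)$ for any $\gamma\in\bR$, with $F\in\bH_{p,d+p}^{\gamma}(\Omega,T)$ and $u_0\in B_{p,d-p+2}^{\gamma+2-2/p}(\Omega)$. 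For (b) I would apply Theorem~\ref{22.02.18.6} with the superharmonic Harnack function $\psi\simeq \rho^\beta$ produced by Theorem~\ref{21.11.08.1} (valid since $\Omega$ satisfies $\mathbf{LHMD}(\alpha)$ for some $\alpha=\alpha(d,\epsilon_0)$ by Remark~\ref{21.07.06.1}.(1)), and take the weight exponent $\mu=(p-2)/\beta$ so that $\Psi^\mu H_{p,d-2}^{\gamma+2}=H_{p,d-p}^{\gamma+2}$ via Remark~\ref{21.10.05.2} and Lemma~\ref{21.05.20.3}.(3). Proposition~\ref{05.11.1}.(1) guarantees $\mu\in I(\psi,p,\nu_1/\nu_2)$ provided $p$ is close enough to $2$; this is where the dependence $\epsilon=\epsilon(d,\epsilon_0,\nu_1,\nu_2)$ enters.

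For uniqueness, suppose $u\in L_p((0,T];\mathring W_p^1(\Omega))$ solves the homogeneous problem with $u(0)=0$. Step (a) gives $u\in\bH_{p,d-p}^1(\Omega,T)$, while $\partial_tu=\cL u$ lies in $\bH_{p,d+p}^{-1}(\Omega,T)$ by Lemma~\ref{21.05.20.3}.(4); hence $u\in\cH_{p,d-p}^1(\Omega,T)$ with trivial trace, and ingredient (b) with $\gamma=-1$ forces $u\equiv0$.

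For existence I would mimic Step 2.1 of Theorem~\ref{230210356} verbatim, replacing the elliptic solvability by its parabolic counterpart (b). Split $f:=f^0+\sum_{i=1}^dD_if^i$ and solve $\partial_tv=\cL v+\trho^{-1}f^0$ with $v(0)=0$ in $\cH_{p,d-p}^2(\Omega,T)$, using $\|\trho^{-1}f^0\|_{\bL_{p,d+p}(\Omega,T)}\simeq\|f^0\|_{L_p}$. The parabolic embeddings for $\cH_{p,d-p}^2$ (obtained from Proposition~\ref{220527502}.(9) applied slicewise in $t$) then control $v$ in $L_p((0,T];L_{p,d+p}\cap H_{p,d}^1\cap L_{p,d})$. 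Define $\widetilde f:=f^0-\cL(\trho v)+\trho\cL v=-2\sum a^{ij}(D_i\trho)(D_jv)-(\cL\trho)v$; by the same bounds as in \eqref{230213214} (with $\cL$ in place of $\Delta$, which is harmless since $\|a^{ij}\|_\infty\leq\nu_2$), $\widetilde f\in\bH_{p,d+p}^{-1}(\Omega,T)$ with norm $\lesssim\|f^0\|_{L_p}$. Solve $\partial_tw=\cL w+\sum D_if^i+\widetilde f$ with $w(0)=0$ in $\cH_{p,d-p}^1(\Omega,T)$, and set $u:=v\trho+w$. Then $u$ solves \eqref{2208131023}, and the weighted bounds combine as in \eqref{230204149} to give $u\in L_p((0,T];L_{p,d}\cap H_{p,d-p}^1)$ with the desired norm estimate. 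When $d_\Omega<\infty$, the shortcut of Step 2.2 (solving directly at $\gamma=-1$ since $f\in\bH_{p,d+p}^{-1}$) provides the $T^{-1/2}$-free half of \eqref{230204228}; the two bounds are merged as in Step 2.3 using the uniqueness from above. Finally, membership in $L_p((0,T];\mathring W_p^1(\Omega))$ follows by approximating $u$ via Lemma~\ref{2205241011} with $C_c^\infty((0,T]\times\Omega)$ in the joint topology of $L_p((0,T];L_{p,d})\cap L_p((0,T];H_{p,d-p}^1)$, which dominates the $L_p((0,T];W_p^1)$-norm.

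\textbf{Main obstacle.} The delicate point is not any one of the above steps individually, but the simultaneous compatibility of the two restrictions on $p$: Lewis's inequality requires $p>p_0(d,\epsilon_0)$, while extending the solvability in $\cH_{p,d-p}^{\gamma+2}$ from $\Delta$ to the general $\cL\in\cM_T(\nu_1,\nu_2)$ requires the weight exponent $\mu=(p-2)/\beta$ to lie in the interval of Proposition~\ref{05.11.1}.(1), which shrinks to $\{0\}$ as $\nu_1/\nu_2\to 0$. This is what forces $\epsilon$ in the statement to depend on $\nu_1,\nu_2$ as well as on $d,\epsilon_0$, and careful bookkeeping of constants across Theorem~\ref{22.02.18.6}, Theorem~\ref{21.11.08.1}, and the Lewis inequality is needed to make the dependence explicit.
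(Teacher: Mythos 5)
Your proposal takes the same route as the paper's: combine (i) Lewis's Hardy inequality, which gives $\mathring{W}_p^1(\Omega)\subset H_{p,d-p}^1(\Omega)$ for $p$ above some $p_0(d,\epsilon_0)$, with (ii) the weighted parabolic solvability furnished by Theorem~\ref{22.02.18.6} together with Theorem~\ref{21.11.08.1} and Proposition~\ref{05.11.1}.(1), and take $p$ close enough to $2$ that both constraints hold simultaneously; as you note, the second constraint is what makes $\epsilon$ depend on $\nu_1,\nu_2$. Two small slips in the details: with $\Psi\simeq\rho^\beta$ the alignment $\Psi^{\mu}H_{p,d-2}^{\gamma}=H_{p,d-p}^{\gamma}$ via Lemma~\ref{21.05.20.3}.(3) forces $\mu=(p-2)/(\beta p)$, not $(p-2)/\beta$; and the parabolic commutator should be computed against the full operator, i.e.\ $\widetilde f:=f^0-\partial_t(\trho v)+\cL(\trho v)=2\sum_{i,j}a^{ij}(D_i\trho)(D_jv)+(\cL\trho)v$, where $f^0$ cancels because $\trho$ is time-independent --- whereas in your expression $f^0-\cL(\trho v)+\trho\cL v$ the $f^0$ survives and the sign is wrong.

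The one point that would not go through as sketched is the control of $\|v\|_{\bL_{p,d}(\Omega,T)}$ in your analogue of Step~2.1. Proposition~\ref{220527502}.(9) interpolates $L_{p,d}$ between $H_{p,d-p}^1$ and $H_{p,d+p}^{-1}$, so you need $\|v\|_{\bH_{p,d+p}^{-1}(\Omega,T)}$; but the $\cH_{p,d-p}^2(\Omega,T)$ norm only contains $\|\partial_tv\|_{\bL_{p,d+p}}$, not $\|v\|_{\bL_{p,d+p}}$, so ``slicewise interpolation'' does not deliver it by itself. The paper's recipe is to first dilate so that $T=1$ or $T=\infty$ (the latter falls under the $d_\Omega<\infty$ branch) and then, for $T=1$, use the time-Poincar\'e inequality $\|v\|_{\bL_{p,d+p}(\Omega,1)}\leq\|\partial_tv\|_{\bL_{p,d+p}(\Omega,1)}$, valid because $v(0)=0$; this dilation and Poincar\'e step is also exactly where the factor $\min(T^{1/2},d_\Omega)$ enters \eqref{230204228}. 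Your sketch omits this and treats it as an automatic consequence of $\cH_{p,d-p}^2$-membership, which it is not.
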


\begin{proof}
	We introduce the expression `$\mathrm{Statement}_{\nu_1,\nu_2}\,(\Omega,p,\theta)$ holds' to indicate that
	\begin{align*}
		\begin{gathered}
			\text{` Statement~\ref{22.02.19.1}}\,(\Omega,p,\theta).(2)\,\,\text{holds for $\Delta$ replaced by arbitary}\\
		\text{$\cL\in\cM_T(\nu_1,\nu_2)$. In addition, $N_2$ (in \eqref{2205241156}) depends only}\\
		\text{on $d$, $p$, $\theta$, $\epsilon_0$, $\nu_1$, $\nu_2$. '}
	\end{gathered}	
\end{align*}

Remarks~\ref{21.07.06.1}.(1), (2) and Theorem~\ref{21.11.08.1} imply the following:
\begin{itemize}
	\item[-] $\Omega$ admits the Hardy inequality \eqref{hardy}, where $\mathrm{C}_0(\Omega)$ can be chosen to depend only on $d$ and $\epsilon_0$.
	
	\item[-] There exists $\alpha>0$ and a superharmonic function $\phi$ on $\Omega$ such that
	$$
	N^{-1}\rho^\alpha\leq \phi\leq N\rho^\alpha\,,
	$$
	where $\alpha$ and $N$ depend only on $d$ and $\epsilon_0$.
\end{itemize}
Therefore, due to Theorem~\ref{22.02.18.6} (with $\Psi=\trho^{\alpha}$) and Proposition~\ref{05.11.1}.(1), if $\theta\in\bR$ satisfies
\begin{align}\label{230404901}
	-\frac{(p-1)\alpha}{p(\sqrt{\nu_2/\nu_1}-1)/2+1}<\theta-d+2<\frac{(p-1)\alpha}{p(\sqrt{\nu_2/\nu_1}+1)/2-1}\,,
\end{align}
then Statement$_{\nu_1,\nu_2}(\Omega,p,\theta)$ holds.
The first term in \eqref{230404901} goes to $-\alpha\sqrt{\nu_1/\nu_2}$ as $p\rightarrow 2$, while the second term in \eqref{230404901} goes to $\alpha\sqrt{\nu_1/\nu_2}$ as $p\rightarrow 2$.
Therefore, there exists $\epsilon_1>0$ (which depends only on $\nu_1$, $\nu_2$, and $\alpha$) such that if $p\in(2-\epsilon_1,2+\epsilon_1)$, then $\theta:=d-p$ satisfy \eqref{230404901}, and thus $\mathrm{Statement}_{\nu_1,\nu_2}\,(\Omega,p,d-p)$ holds.

By (a) in the proof of Theorem \ref{230210356}, there exists $p_0\in(1,2)$ such that for any $p>p_0$, $\mathring{W}_p^1(\Omega)\subset H_{p,d-p}^1(\Omega)$.

Take $\epsilon\in(0,\epsilon_1)$ such that $2-\epsilon>p_0$.
Then for any $p\in(2-\epsilon,2+\epsilon)$, $\mathrm{Statement}_{\nu_1,\nu_2}\,(\Omega,p,d-p)$ holds and $\mathring{W}_p^1(\Omega)\subset H_{p,d-p}^1(\Omega)$.
\vspace{1mm}

\textbf{Step 1. Uniqueness of solutions.}
Suppose that $u\in L_p((0,T];\mathring{W}_p^1(\Omega))$ satisfies
$$
\partial_tu=\Delta u\quad;\quad u(0,\cdot)\equiv 0\,.
$$
Since $\mathring{W}_p^1(\Omega)\subset H_{p,d-p}^1(\Omega)$, we have $L_p((0,T];\mathring{W}_p^1(\Omega))\subset \bH_{p,d-p}^1(\Omega,T)$.
Therefore, by Lemma~\ref{21.05.13.9}, $u\in \cH_{p,d-p}^1(\Omega,T)$.
Since Statement$_{\nu_1,\nu_2}\,(\Omega,p,d-p)$ holds, $u\equiv 0$.

\vspace{1mm}
\textbf{Step 2. Existence of solutions and estimate \eqref{230204228}.}
Proof of the existence of solutions and estimate \eqref{230204228} is left to the reader, as it can be shown in a similar way by following Steps 2.1 - 2.3 in the proof of Theorem~\ref{230210356}, with the following details:
\begin{itemize}
	\item[-] To prove the existence of solutions, it is enough to find a solution in $\bL_{p,d}(\Omega,T)\cap \bH_{p,d-p}^1(\Omega,T)$.
	It is because if $u\in \bL_{p,d}(\Omega,T)\cap \bH_{p,d-p}^1(\Omega,T)$, there exists $u_n\in C_c^{\infty}((0,T)\times \Omega)$ such that $u_n\rightarrow u$ in $\bL_{p,d}(\Omega,T)$ and $ \bH_{p,d-p}^1(\Omega,T)$, separately (see Lemma~\ref{2205241011}).
	Since $L_{p,d}(\Omega)\cap H_{p,d-p}^1(\Omega)\subset \mathring{W}_p^1(\Omega)$ (see Step 2 in the proof of Theorem~\ref{230210356}), $\{u_n\}_{n\in\bN}$ is a Cauchy sequence in $L_p\big((0,T];\mathring{W}_p^1(\Omega)\big)$.
	Therefore
	$$
	u=\lim_{n\rightarrow \infty}u_n\quad\text{in}\,\,\,\,L_p\big((0,T];\mathring{W}_p^1(\Omega)\big)\,.
	$$
	
	\item[-] Without loss of generality, we can assume that $T=1$ or $T=\infty$ by dilation.
	
	\item[-] For the case $T=1$, note that if $v\in\cH_{p,d-p}^{n+2}(\Omega,1)$ satisfies $v(0)\equiv 0$, then
	\begin{align*}
		\begin{split}
			\|v\|_{\bH_{p,d+p}^n(\Omega,1)}^p&\,= \int_0^1\|v(t)\|_{H_{p,d+p}^n(\Omega)}^p\dd t\leq \int_0^1\Big(\int_0^t\|v_t(s)\|_{H_{p,d+p}^n(\Omega)}\dd s\Big)^p\dd t\\
			&\,\leq \|v_t\|_{\bH_{p,d+p}^n(\Omega,1)}^p\leq \|v\|_{\cH_{p,d-p}^{n+2}(\Omega,1)}\,.
		\end{split}
	\end{align*}
\end{itemize}
\end{proof}

\begin{remark}
Actually, from the proofs Theorem~\ref{230210356} and Theorem \ref{230210357}, it can observed that for a fixed $p\in(1,\infty)$, the assertion in Theorem~\ref{230210356} (resp. Theorem \ref{230210357}) holds if Statement~\ref{22.02.19.1} $(\Omega,p,d-p)$ holds (resp. $\mathrm{Statement}_{\nu_1,\nu_2}\,(\Omega,p,d-p)$ holds) and $\mathring{W}_p^1(\Omega)\subset H_{p,d-p}^1(\Omega)$.
Note that if
$$
\inf_{\substack{p\in\partial\Omega\\r>0}}\frac{m(\Omega^c \cap B_r(p))}{m(B_r(p))}>0
$$
(where $m$ is the Lebesgue measure on $\bR^d$),
then the $L_p$-Hardy inequality holds (see \cite[Example 3.6, Corollary 3.11]{Kinhardy}), and therefore we hve $\mathring{W}_p^1(\Omega)\subset H_{p,d-p}^1(\Omega)$.
\end{remark}

In the next theorems, we discuss the embedding theorems, Propositions~\ref{220512537} and \ref{2204160313}.
For a fixed $\epsilon\in(0,1]$, let $p$ be large enough such that $p>d$ and $\epsilon>1/p$.
Then it follows from Proposition~\ref{220512537} that if $f\in \Psi^{1-\epsilon} H_{p,d+2p-2}^{-1}(\Omega)$ and $u\in \Psi^{1-\epsilon}H_{p,d-2}^1(\Omega)$ satisfy $\Delta u=f$, then
\begin{align*}
u(x)\lesssim \|f\|_{\Psi^{1-\epsilon}H_{p,2p-2}^{-1}(\Omega)}\cdot \rho(x)^{-(d-2)/p}\Psi^{1-\epsilon}(x)\,.
\end{align*}
In Theorems \ref{220602323} and \ref{220602322}, we modify this type of estimates to delete the term $\rho^{-(d-2)/p}$ using Theorem~\ref{21.11.08.1}.

\begin{thm}\label{220602323}
Let $\Omega$ satisfy $\mathbf{LHMD}(\alpha)$, $\lambda\geq 0$, and $\psi$ be a superharmonic Harnack function on $\Omega$.
Suppose that $\delta$ and $\epsilon$ are positive constants such that
\begin{align}
	0<\delta<\frac{\alpha d}{\alpha+d-2}\wedge 1\qquad \text{and}\qquad 
	\epsilon\in
	\begin{cases}\label{21.05.17.2}
		(\delta/2,1+\delta/2)&\text{if}\quad d=2\,;\\[1mm]
		(\frac{\alpha+d-2}{\alpha d}\delta,1] &\text{if}\quad d\geq 3\,.
	\end{cases}
\end{align}
If $f^0,\,f^i,\,\ldots,\,f^d$ are measurable functions on $\Omega$ with
\begin{align}\label{2304121049}
	F:=\Big\||\psi^{-1+\epsilon}\rho^{2-\delta}f^0|+\sum_{i=1}^d|\psi^{-1+\epsilon}\rho^{1-\delta} f^i|\Big\|_{L_{d/\delta}(\Omega,\dd x)}<\infty\,,
\end{align}
then the equation
\begin{align}\label{220621310}
	\Delta u-\lambda u=f^0+\sum_{i=1}^dD_if^i
\end{align}
has a unique solution $u$ in $\widetilde{\psi}^{1-\epsilon}H_{d/\delta,0}^1(\Omega)$,
where $\widetilde{\psi}$ is the regularization of $\psi$ in Lemma~\ref{21.05.27.3}.(1).
Moreover, we have
\begin{align}\label{220906540}
	|u(x)|+\rho(x)^{1-\delta}\sup_{y\in B_{\rho(x)/2}(x)}\frac{|u(x)-u(y)|}{|x-y|^{1-\delta}}\leq N  F\cdot\big(\psi(x)\big)^{1-\epsilon}
\end{align}
for all $x\in\Omega$, where $N=N(d,p,\mathrm{C}_1(\psi),\alpha,M_{\alpha},\delta,\epsilon)$.
\end{thm}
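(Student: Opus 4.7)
The plan is to derive Theorem~\ref{220602323} from the solvability theorem for the Poisson equation (Theorem~\ref{21.09.29.1}) by constructing a superharmonic Harnack function $\chi$ that combines the boundary geometry of $\Omega$ (via Theorem~\ref{21.11.08.1}) with the given function $\psi$, so that the natural weighted Sobolev space provided by Theorem~\ref{21.09.29.1} is identified with the target space $\widetilde{\psi}^{\,1-\epsilon}H^1_{d/\delta,0}(\Omega)$. Note first that $\Omega$ admits the Hardy inequality by Remark~\ref{21.07.06.1}, and Theorem~\ref{21.11.08.1} provides, for every $\beta\in(0,\alpha)$, a positive classical superharmonic function $\phi_\beta$ with $\phi_\beta\simeq \rho^\beta$.

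For $a,b\geq 0$ with $a+b=1$, Proposition~\ref{21.05.18.1}.(3) together with Example~\ref{21.05.18.2} ensures that $\chi:=\phi_\beta^{a}\psi^{b}$ is a positive classical superharmonic function and a Harnack function, and a regularization $\widetilde{\chi}$ is equivalent to $\widetilde{\rho}^{\,a\beta}\widetilde{\psi}^{\,b}$. I then choose $a,b,\beta,\mu$, with $p=d/\delta$, to satisfy
\begin{align*}
a+b=1,\qquad b\mu=1-\epsilon,\qquad a\mu\beta p=d-2,\qquad \beta\in(0,\alpha),\qquad \mu\in(-1/p,\,1-1/p).
\end{align*}
For $d=2$ (so $d-2=0$), one takes $a=0$, $b=1$, $\mu=1-\epsilon$, and the admissible range of $\epsilon$ becomes $(\delta/2,1+\delta/2)$. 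For $d\geq 3$, a direct calculation shows the system has a solution with $a,b\in(0,1)$ and $\mu$ slightly below $1-\delta/d$ exactly when $\epsilon>(d-2+\alpha)\delta/(\alpha d)$, which matches the hypothesis in \eqref{21.05.17.2}. Using Lemma~\ref{21.05.20.3}.(3) to absorb the $\widetilde{\rho}^{\,a\mu\beta}$ factor into the second weight index then yields the norm identification
\begin{align*}
\|f\|_{\widetilde{\chi}^{\,\mu} H^\gamma_{p,\theta}(\Omega)}\simeq \|f\|_{\widetilde{\psi}^{\,1-\epsilon}H^\gamma_{p,\theta-(d-2)}(\Omega)}\qquad\forall\,\gamma,\theta\in\bR.
\end{align*}

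Theorem~\ref{21.09.29.1} applied to $\chi$, $\mu$, and $\gamma=-1$ now delivers a unique solution $u\in\widetilde{\chi}^{\,\mu} H^1_{p,d-2}(\Omega)=\widetilde{\psi}^{\,1-\epsilon}H^1_{p,0}(\Omega)$ of equation \eqref{220621310}, provided the right-hand side belongs to $\widetilde{\chi}^{\,\mu}H^{-1}_{p,d+2p-2}(\Omega)$. Decomposing the $H^{-1}$-norm via Corollary~\ref{21.05.26.3} through the representation $f=f^0+\sum_iD_if^i$, passing to the $\widetilde{\psi}^{\,1-\epsilon}$ weighting via the identification above, and applying the $L_{p,\theta}$-characterization from Lemma~\ref{220512433}, one sees that the $f^0$-contribution is $\bigl(\int_\Omega|\widetilde{\psi}^{-(1-\epsilon)}f^0|^p\rho^{(2-\delta)p}\dd x\bigr)^{1/p}$ and each $f^i$-contribution is $\bigl(\int_\Omega|\widetilde{\psi}^{-(1-\epsilon)}f^i|^p\rho^{(1-\delta)p}\dd x\bigr)^{1/p}$, where I have used $\delta p=d$ to match exponents; the total is comparable to $F$.

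Finally, the pointwise H\"older bound \eqref{220906540} is obtained by applying Proposition~\ref{220512537}.(2) to $u$ with $\Psi=\widetilde{\psi}^{\,1-\epsilon}$, $\theta=0$, $\gamma=1$, $k=0$, and H\"older exponent $1-\delta$; the hypothesis $k+(1-\delta)\leq \gamma-d/p$ holds with equality because $p=d/\delta$, and Proposition~\ref{220512537}.(1) converts the abstract weighted H\"older norm into the explicit form appearing in \eqref{220906540}. The main obstacle will be the bookkeeping in the second paragraph: the constraints on $(a,b,\beta,\mu)$ must collapse precisely to the hypothesis \eqref{21.05.17.2}, and the cases $d=2$ and $d\geq 3$ require separate parameter assignments.
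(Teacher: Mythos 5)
Your proposal is correct and follows essentially the same route as the paper: both split into $d=2$ (where $\chi=\psi$, $\mu=1-\epsilon$ suffices) and $d\geq 3$ (where one interpolates $\phi_{\beta}^{a}\psi^{b}$ with $a\mu\beta p=d-2$, $b\mu=1-\epsilon$, $\beta$ close to $\alpha$, exactly the paper's parametrization $t=\mu^{-1}(1-\epsilon)$, $\alpha_1=\beta$), identify the weighted spaces via Lemma~\ref{21.05.20.3}.(3), bound the right-hand side through Corollary~\ref{21.05.26.3} and $\delta p=d$, and conclude with Theorem~\ref{21.09.29.1} and Proposition~\ref{220512537}. No meaningful divergence from the paper's argument.
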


\begin{proof}
Put $p=d/\delta$, and note that Corollary~\ref{21.05.26.3} implies
\begin{align}\label{220707248}
	\|f^0+\sum_{i=1}^dD_i f^i\|_{\widetilde{\psi}^{1-\epsilon}H^{-1}_{p,2p}(\Omega)}\lesssim F\,.
\end{align}

\textbf{Case 1.} $d=2$.

Observe that $\delta=2/p$ and $1-\epsilon\in(-1/p,1-1/p)$.
Due to \eqref{220707248} and $d-2=0$,
this corollary is implied by Theorem~\ref{21.09.29.1} and Proposition~\ref{220512537}.

\textbf{Case 2.} $d\geq 3$.

Take $\alpha_1\in(0,\alpha)$ such that
$$
\epsilon>\frac{\alpha_1+d-2}{\alpha_1d}\delta\,,
$$
and put
$$
\mu=\frac{d-2}{\alpha_1\,p}+(1-\epsilon)\,\,,\,\,\text{and}\,\,\,\,t=\mu^{-1}(1-\epsilon)\,,
$$
so that
$$
\mu\in(0,1-1/p)\,\,,\,\,\,\,t\in[0,1]\,\,,\,\,\,\,\mu t=1-\epsilon\,\,,\,\,\,\,\alpha_1\mu(1-t)=(d-2)/p.
$$
By Theorem~\ref{21.11.08.1}, there exists a superharmonic function $\phi_0$ such that $\phi_0\simeq_{N} \rho^{\alpha_1}$ where $N=N(\alpha,M_{\alpha},\alpha_1)$.
Put $\Psi=\trho^{\,\alpha_1(1-t)}\big(\widetilde{\psi}\big)^t$ which is a regularization of the superharmonic function $\phi_0^{1-t}\psi^{t}$ (see Proposition~\ref{21.05.18.1}.(3)).
Note that, by Lemma~\ref{21.05.20.3}.(3), $$
\Psi^{\mu}H_{p,\theta+d-2}^{\gamma}(\Omega)=\widetilde{\psi}^{1-\epsilon}H_{p,\theta}^{\gamma}(\Omega)
$$
for all $\gamma,\,\theta\in\bR$.
By \eqref{220707248} and Theorem~\ref{21.09.29.1}, equation \eqref{220621310} has a unique solution $u$ in the class $\Psi^{\mu}H_{p,d-2}^1(\Omega)=\widetilde{\psi}^{1-\epsilon}H_{p,0}^{1}(\Omega)$.
Furthermore,  Proposition~\ref{220512537} implies \eqref{220906540} for this $u$.
\end{proof}

\begin{thm}\label{220602322}
Let $\Omega$ satisfy $\mathbf{LHMD}(\alpha)$, $T\in(0,\infty)$, and $\psi$ be a superharmonic Harnack function on $\Omega$. Suppose that $\beta_x,\,\beta_t,\,\delta$ and $\epsilon$ are constants in $(0,1)$ such that
\begin{align}\label{220602312}
	\beta_x+2\beta_t\leq 1-\delta\qquad\text{and}\qquad \frac{\delta}{d+2}+\alpha^{-1}\Big(\frac{d}{d+2}\delta+2\beta_t\Big)<\epsilon\leq 1\,.
\end{align}
If $f^0,\,\ldots,\,f^d:(0,T]\times\Omega\rightarrow \bR$ and $u_0:\Omega\rightarrow \bR$ are measurable functions satisfy
\begin{align*}
	&\Big\||\psi^{-1+\epsilon}\rho^{2-2\beta_t-\delta}f^0|+\sum_{i=1}^d|\psi^{-1+\epsilon}\rho^{1-2\beta_t-\delta} f^i|\Big\|_{L_{(d+2)/\delta}((0,T]\times\Omega,\dd x\dd t)}\\
	&+\Big\|\psi^{-1+\epsilon}\rho^{-2\beta_t-\delta}|u_0|+\psi^{-1+\epsilon}\rho^{1-2\beta_t-\delta}|\nabla u_0|\Big\|_{L_{d/\delta}(\Omega,\dd x)}\\
	=:\,&F+I<\infty\,,
\end{align*}
then the equation
\begin{align}\label{2304201115}
u_t=\Delta u+f^0+D_if^i\quad;\quad u(0)=u_0
\end{align}
has a unique solution $u$ in $\widetilde{\psi}^{1-\epsilon}\cH_{p,-2-2\beta_tp}^{1}(\Omega,T)$,
where $\widetilde{\psi}$ is the regularization of $\psi$ in Lemma~\ref{21.05.27.3}.(1).
Moreover, we have
\begin{align*}
	\frac{\big|\widetilde{\psi}^{-1+\epsilon}\big(u(t,\cdot)-u(s,\cdot)\big)\big|_{\beta_x}^{(0)}}{|t-s|^{\beta_t}}\leq N(F+I)
\end{align*}
for all $s,\,t\in[0,T]$ with $t>s$, where $N=N(d,p,\mathrm{C}_1(\psi),\alpha,M_{\alpha},\beta_x,\beta_t,\delta,\epsilon)$ (see Proposition \eqref{220512537} for the definition of $|\cdot|_{\beta_x}^{(0)}$).
%
\end{thm}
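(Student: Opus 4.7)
The argument parallels that of Theorem~\ref{220602323}, replacing Theorem~\ref{21.09.29.1} by Theorem~\ref{22.02.18.6} and adding the temporal component through Proposition~\ref{2204160313}. Set $p:=(d+2)/\delta$ and pick $\alpha_1\in(0,\alpha)$ close enough to $\alpha$ that $\frac{\delta}{d+2}+\alpha_1^{-1}(\frac{d\delta}{d+2}+2\beta_t)<\epsilon$ still holds by \eqref{220602312}. Theorem~\ref{21.11.08.1} supplies a positive classical superharmonic $\phi_0$ with $\phi_0\simeq\rho^{\alpha_1}$, and Proposition~\ref{21.05.18.1}.(3) makes $\phi_0^{1-t}\psi^{t}$ classical superharmonic for $t\in[0,1]$. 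Define
\[
\mu:=(1-\epsilon)+\alpha_1^{-1}\bigl(\tfrac{d}{p}+2\beta_t\bigr),\qquad t:=\tfrac{1-\epsilon}{\mu}\in[0,1],\qquad \Psi:=\trho^{\,\alpha_1(1-t)}\widetilde{\psi}^{\,t}.
\]
Then $\Psi$ is a regularization of $\phi_0^{1-t}\psi^{t}$ (by Example~\ref{21.05.18.2} and Lemma~\ref{21.05.27.3}), the displayed inequality for $\epsilon$ translates exactly into $\mu<1-1/p$, and $\mu>0>-1/p$. Since $\cL=\Delta$ corresponds to $\nu_1/\nu_2=1$, Proposition~\ref{05.11.1}.(1) gives $\mu\in I(\phi_0^{1-t}\psi^{t},p,1)$. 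A direct computation yields $\Psi^{\mu}\simeq\trho^{\,d/p+2\beta_t}\widetilde{\psi}^{\,1-\epsilon}$, so Lemma~\ref{21.05.20.3}.(3) (with its $B$-space analogue in Lemma~\ref{22.04.15.148}.(1)) identifies, for $\gamma=-1$ in Theorem~\ref{22.02.18.6},
\[
\Psi^{\mu}\cH_{p,d-2}^{1}=\widetilde{\psi}^{1-\epsilon}\cH_{p,-2-2\beta_tp}^{1},\quad \Psi^{\mu}\bH_{p,d+2p-2}^{-1}=\widetilde{\psi}^{1-\epsilon}\bH_{p,2p-2-2\beta_tp}^{-1},
\]
and $\Psi^{\mu}B_{p,d}^{1-2/p}=\widetilde{\psi}^{1-\epsilon}B_{p,-2\beta_tp}^{1-2/p}$.

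Corollary~\ref{21.05.26.3} applied to $f^{0}+\sum_{i}D_{i}f^{i}$, together with the equivalent norms in Lemma~\ref{220512433}, gives $\|f^{0}+\sum_{i}D_{i}f^{i}\|_{\widetilde{\psi}^{1-\epsilon}\bH_{p,2p-2-2\beta_tp}^{-1}(\Omega,T)}\lesssim F$; indeed, since $p\delta=d+2$, the integrand $\widetilde{\psi}^{-(1-\epsilon)p}|f^{0}|^{p}\rho^{2p-2-2\beta_tp-d}$ collapses to $|\widetilde{\psi}^{-1+\epsilon}\rho^{2-2\beta_t-\delta}f^{0}|^{p}$ and similarly for the $f^{i}$ with $\rho^{1-2\beta_t-\delta}$. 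The same calculation identifies $I\simeq\|u_{0}\|_{\widetilde{\psi}^{1-\epsilon}H_{q,-2q\beta_t}^{1}(\Omega)}$ with $q:=d/\delta$. The initial-datum bound then follows from a shell-by-shell weighted Sobolev--Besov embedding: the identity $1-d/q+d/p=1-2/p$ puts us on the sharp scaling of the classical embedding $W_{q}^{1}(\bR^{d})\hookrightarrow B_{p}^{1-2/p}(\bR^{d})$, and applying this embedding to each dyadic piece $(\widetilde{\psi}^{-1+\epsilon}u_{0}\zeta_{0,(n)})(e^{n}\cdot)$ together with $\theta_{q}/q=\theta_{p}/p=-2\beta_t$ and the elementary inequality $\|\cdot\|_{\ell^{p}}\leq\|\cdot\|_{\ell^{q}}$ (valid since $p\geq q$) yields $\|u_{0}\|_{\widetilde{\psi}^{1-\epsilon}B_{p,-2p\beta_t}^{1-2/p}}\lesssim\|u_{0}\|_{\widetilde{\psi}^{1-\epsilon}H_{q,-2q\beta_t}^{1}}\lesssim I$. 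Theorem~\ref{22.02.18.6} then supplies the unique solution $u\in\widetilde{\psi}^{1-\epsilon}\cH_{p,-2-2\beta_tp}^{1}(\Omega,T)$ together with $\|u\|_{\widetilde{\psi}^{1-\epsilon}\cH_{p,-2-2\beta_tp}^{1}(\Omega,T)}\lesssim F+I$.

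For the joint H\"older estimate I apply Proposition~\ref{2204160313} with $\beta:=\beta_{t}+1/p$, which lies in $(1/p,1]$ because $\beta_t\leq(1-\delta)/2\leq 1-1/p$. This yields
\[
\|u(t,\cdot)-u(s,\cdot)\|_{\Psi^{\mu}H_{p,d+2p\beta_{t}}^{1-2\beta_{t}-2/p}(\Omega)}\lesssim|t-s|^{\beta_{t}}\|u\|_{\widetilde{\psi}^{1-\epsilon}\cH_{p,-2-2\beta_tp}^{1}(\Omega,T)}.
\]
Proposition~\ref{220512537}.(2) then applies with $(k,\alpha,\gamma,\theta)=(0,\beta_{x},1-2\beta_{t}-2/p,d+2p\beta_{t})$, the admissibility condition $\beta_{x}\leq\gamma-d/p=1-2\beta_{t}-\delta$ being exactly \eqref{220602312}, and Proposition~\ref{220512537}.(1) translates the resulting weighted H\"older seminorm, using $\Psi^{-\mu}\rho^{d/p+2\beta_{t}}\simeq\widetilde{\psi}^{-1+\epsilon}$ and $\rho\simeq\trho$, into $|\widetilde{\psi}^{-1+\epsilon}(u(t,\cdot)-u(s,\cdot))|_{\beta_{x}}^{(0)}$, completing the proof. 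The main technical obstacle is the weighted Sobolev--Besov embedding for the initial datum: the integrability jump from $L_{d/\delta}$ in $I$ to $L_{(d+2)/\delta}$ in the target Besov norm is handled only because the sharp scaling holds and $\widetilde{\psi}$ is Harnack, so that the weights behave essentially like constants on every dyadic shell.
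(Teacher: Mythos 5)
Your argument reproduces the paper's proof almost step for step: the same choice of $p=(d+2)/\delta$, the same $\alpha_1\in(0,\alpha)$, the same $\mu$, $t$, and $\Psi=\trho^{\alpha_1(1-t)}\widetilde{\psi}^{\,t}$, the same space identifications via $\Psi^\mu\simeq\trho^{\,d/p+2\beta_t}\widetilde{\psi}^{\,1-\epsilon}$, the same invocation of Theorem~\ref{22.02.18.6} with $\gamma=-1$, and the same combination of Propositions~\ref{2204160313} (with $\beta=\beta_t+1/p$) and~\ref{220512537} for the H\"older estimate. The only cosmetic difference is that you re-derive the initial-datum bound shell-by-shell using the classical embedding $W^1_q\hookrightarrow B^{1-2/p}_p$ and the $\ell^q\hookrightarrow\ell^p$ inclusion, whereas the paper simply cites Proposition~\ref{220527502}.(3); the content is the same.
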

\begin{proof}
Take $\alpha_1\in(0,\alpha)$ such that
$$
\epsilon>\frac{\delta}{d+2}+\alpha_1^{-1}\Big(\frac{d}{d+2}\delta+2\beta_t\Big)\,,
$$
and put
$$
p=\frac{d+2}{\delta}\,\,,\,\,\,\,\mu=\alpha_1^{-1}\Big(\,\frac{d}{p}+2\beta_t\,\Big)+(1-\epsilon)\,\,,\,\,\,\,t=\mu^{-1}(1-\epsilon)\,,
$$
so that
$$
\mu\in(0,1-1/p\big)\,\,,\,\,\,\,t\in[0,1]\,\,,\,\,\,\,\mu t=1-\epsilon\,\,,\,\,\,\, \alpha_1\mu(1-t)=\frac{d}{p}+2\beta_t.
$$
By Theorem~\ref{21.11.08.1}, there exists a superharmonic function  $s$ satisfying $s\simeq \rho^{\alpha_1}$.
Put
$$
\Psi=\trho^{\,\alpha_1(1-t)}\widetilde{\psi}^{t}
$$
which is a regularization of the superharmonic function $s^{1-t}\psi^t$(see  Proposition~\ref{21.05.18.1}.(3)).
Note that
\begin{align*}
	\Psi^{\mu}\cH_{p,d-2}^{1}(\Omega,T)\,&=\widetilde{\psi}^{1-\epsilon}\cH_{p,-2-2\beta_tp}^{1}(\Omega,T)\,,\\
	\Psi^{\mu}\bH_{p,d+2p-2}^{-1}(\Omega,T)\,&=\widetilde{\psi}^{1-\epsilon}\bH_{p,-2+2(1-\beta_t)p}^{-1}(\Omega,T)\,,\\
	\quad \Psi^{\mu}B_{p,d}^{1-2/p}(\Omega)\,&=\widetilde{\psi}^{1-\epsilon}B_{p,-2\beta_t p}^{1-2/p}(\Omega)\,.
\end{align*}
Corollary~\ref{21.05.26.3} implies
$$
\big\|f^0+\sum_{i=1}^dD_if^i\big\|_{\Psi^{\mu}\bH_{p,d+2p-2}^{-1}(\Omega,T)}\simeq \big\|f^0+\sum_{i=1}^dD_if^i\big\|_{\widetilde{\psi}^{\,1-\epsilon}\bH_{p,2p-2-2\beta_tp}^{-1}(\Omega,T)}\leq  NF\,,
$$
and Proposition~\ref{220527502}.(3) implies
$$
\big\|u_0\big\|_{\Psi^{\mu}B_{p,d}^{1-2/p}(\Omega)}\simeq \big\|u_0\big\|_{\widetilde{\psi}^{1-\epsilon}B_{p,-2\beta_tp}^{1-2/p}(\Omega)}\lesssim N\big\|u_0\big\|_{\widetilde{\psi}^{1-\epsilon}H_{d/\delta,-2\beta_td/\delta}^{1}(\Omega)}\simeq I\,.
$$
Thererfore Theorem \ref{22.02.18.6} implies that there exists a unique solution $u\in \Psi^{\mu}\cH_{p,d-2}^1(\Omega,T)=\widetilde{\psi}^{1-\epsilon}\cH_{p,-2-2\beta_tp}^1(\Omega,T)$ of equation \eqref{2304201115}.
Moreover, by Propositions~\ref{220512537} and \ref{2204160313} (with $\beta=\beta_t+1/p$), we obtain
\begin{align*}
	\big|\widetilde{\psi}^{-1+\epsilon}\big(u(t,\cdot)-u(s,\cdot)\big)\big|_{\beta_x}^{(0)}\lesssim\,&\|u(t)-u(s)\|_{\widetilde{\psi}^{1-\epsilon}H_{p,0}^{d/p+\beta_x}}\\
	\lesssim\,&|t-s|^{\beta_t}\|u\|_{\widetilde{\psi}^{1-\epsilon}\cH_{p,-2-2\beta_tp}^1}\\
	\simeq\,&|t-s|^{\beta_t}\|u\|_{\Psi^{\mu}\cH_{p,d-2}^1}\\
	\lesssim\,&|t-s|^{\beta_t}\big(F+I\big)\,.
\end{align*}
\end{proof}

\begin{remark}
Assume that $\Omega$ is a bounded domain satisfying the capacity density condition \eqref{22.02.22.1}.
By Remark~\ref{21.07.06.1}, $\Omega$ satisfies $\mathbf{LHMD}(\alpha)$ for some $\alpha>0$.
Let $\epsilon,\,\delta_0\in(0,1]$ and $f^0$, $f^1$, $\ldots$, $f^d$ be measurable functions on $\Omega$ such that
\begin{align*}
	\widetilde{F}_{\delta_0}:=\Big\|\psi^{-1+\epsilon}|\rho^{2-\delta_0}f^0|+\sum_i\psi^{-1+\epsilon}|\rho^{1-\delta_0}f^i|\Big\|_{L_{\infty}(\Omega)}<\infty\,.
\end{align*}
Take small enough $\delta\in (0,\delta_0]$ such that \eqref{21.05.17.2} holds.
Then, since $\Omega$ is bounded, $F$ in \eqref{2304121049} satisfies
$$
F\lesssim_d \mathrm{diam}(\Omega)^{\delta_0}\widetilde{F}_{\delta_0}\,,
$$
where $\mathrm{diam}(\Omega)$ is the diameter of $\Omega$.
By Theorem~\ref{220602323}, the solution $u$ of the equation
$$
\Delta u=f^0+D_if^i
$$
satisfies
\begin{align*}
	|u(x)|+\rho(x)^{1-\delta_0}\sup_{y\in B_{\rho(x)/2}(x)}\frac{|u(x)-u(y)|}{|x-y|^{1-\delta_0}}\lesssim \mathrm{diam}(\Omega)^{\delta_0}F_{\delta_0}\cdot\big(\psi(x)\big)^{1-\epsilon}.
\end{align*}
\end{remark}

\begin{remark}\label{230112512}
The existences of solutions in Theorem~\ref{220602323} (resp. \ref{220602322}) follows from Theorem~\ref{21.09.29.1} (resp. \ref{22.02.18.6}).
Therefore, the global uniqueness theorem, Theorem~\ref{220530526} (resp. \ref{220821002901}), also holds for the solutions in this corollary.

For example, suppose that $f^0\in L_{2,d+2}(\Omega)$ and $f^1,\,\ldots,\,f^d\in L_{2,d}(\Omega)$ under the same assumption as in Theorem \ref{220602323}.
Then Theorem~\ref{21.09.29.1} implies a solution $u\in H_{2,d-2}^1(\Omega)$ of equation \eqref{220621310}.
Furthermore, due to Theorem~\ref{220602323} and Theorem~\ref{220530526}, this $u$ satisfies estimate \eqref{220906540}.
\end{remark}

\vspace{2mm}

\subsection{Domain with thin exterior : Aikawa dimension}\label{0062}\,

The notion of the Aikawa dimension was first introduced by Aikawa \cite{aikawa1991} to observe the quasiadditivity of the Riesz capacity.
We recall the definition of the Aikawa dimension.
For a set $E\subset\bR^d$, the Aikawa dimension of $E$, denoted by $\dim_{\cA}(E)$, is defined by
$$
\dim_{\cA}(E)=\inf\Big\{\beta\geq 0\,:\,\sup_{p\in E,\,r>0}\frac{1}{r^\beta}\int_{B_{r}(p)}\frac{1}{d(y,E)^{d-\beta}}\dd y<\infty\Big\}
$$
with considering $\frac{1}{0}=\infty$.

In this subsection, we assume that $d\geq 3$, and $\Omega$ satisfies
$$
\beta_0:=\dim_{\cA}\Omega^c<d-2\,.
$$

%

\begin{thm}\label{21.10.18.1}
For a constant $\beta<d-2$, if there exists a constant $A_{\beta}$ such that
\begin{align}\label{22.02.08.2}
	\sup_{p\in \Omega^c,\,r>0}\frac{1}{r^\beta}\int_{B_{r}(p)}\frac{1}{d(y,\Omega^c)^{d-\beta}}\dd y\leq A_{\beta}<\infty\,,
\end{align}
then the function
$$
\phi(x):=\int_{\bR^d}|x-y|^{-d+2}\rho(y)^{-d+\beta}\dd y
$$
is a superharmonic function on $\bR^d$ with $-\Delta\phi=N(d)\rho^{-d+\beta}$.
Moreover, we have
\begin{align}\label{21.11.10.4}
	N^{-1}\rho(x)^{-d+2+\beta}\leq \phi(x)\leq N \rho(x)^{-d+2+\beta}\,.
\end{align}
where $N=N(d,\beta,A_\beta)$.
\end{thm}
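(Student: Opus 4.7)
The plan is to recognize $\phi$ as a constant multiple of the Newtonian potential of the nonnegative density $\rho^{-d+\beta}$. Once this is set up, the identity $-\Delta\phi = N(d)\rho^{-d+\beta}$, and hence superharmonicity, will follow from the classical distributional identity $-\Delta|x|^{-d+2} = (d-2)|\bS^{d-1}|\delta_0$ on $\bR^d$, provided $\phi$ is locally integrable on $\bR^d$ and we may move the Laplacian inside the integral. The substance of the proof is therefore the two-sided bound \eqref{21.11.10.4}.

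For the upper bound, I will fix $x\in\Omega$, set $R:=\rho(x)$, and pick $p\in\partial\Omega$ with $|x-p|=R$. Since $\Omega$ is open, $p\in\Omega^c$, so the hypothesis \eqref{22.02.08.2} applies at $p$. I decompose $\bR^d$ into the ball $B_{R/2}(x)$, on which $\rho\geq R/2$ and the contribution is directly controlled by $R^{-d+2+\beta}$, together with dyadic shells $A_k:=\{y:2^kR<|x-y|\leq 2^{k+1}R\}$ for $k\geq -1$. Each $A_k$ is contained in $B_{2^{k+2}R}(p)$, so \eqref{22.02.08.2} yields $\int_{A_k}\rho^{-d+\beta}\,\dd y\leq N A_\beta(2^kR)^\beta$; combining with $|x-y|^{-d+2}\simeq (2^kR)^{-d+2}$ on $A_k$, the contribution of $A_k$ is bounded by a constant times $(2^kR)^{\beta-d+2}$, giving a geometric series in $2^{\beta-d+2}$ which converges precisely because $\beta<d-2$, and sums to a constant multiple of $R^{-d+2+\beta}$.

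The lower bound is nearly immediate: restrict the integral defining $\phi(x)$ to $y\in B_{R/4}(p)$. On this ball $|x-y|\leq 5R/4$ gives $|x-y|^{-d+2}\gtrsim R^{-d+2}$, while $\rho(y)\leq |y-p|\leq R/4$ (because $\rho(p)=0$) forces the negative power $\rho(y)^{-d+\beta}\geq (R/4)^{-d+\beta}$ pointwise. Multiplying by $|B_{R/4}(p)|\simeq R^d$ yields $\phi(x)\gtrsim R^{-d+2+\beta}$.

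Once \eqref{21.11.10.4} is in hand, $\phi$ is locally bounded on $\Omega$, and local integrability on all of $\bR^d$ follows by Fubini: the hypothesis \eqref{22.02.08.2} places $\rho^{-d+\beta}$ in $L_{1,\mathrm{loc}}(\bR^d)$, and the Newtonian kernel is integrable against any compactly supported bounded density. Passing the distributional Laplacian through the integral then produces $-\Delta\phi=N(d)\rho^{-d+\beta}\geq 0$ in $\cD'(\bR^d)$, giving both the claimed identity and superharmonicity. The main obstacle is the dyadic estimate in the upper bound, which is where the Aikawa-type assumption \eqref{22.02.08.2} enters essentially and where the restriction $\beta<d-2$ is exactly what keeps the geometric series summable; the lower bound and the Newtonian-potential identity are, by comparison, routine.
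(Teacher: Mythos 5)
Your proposal is correct and takes essentially the same route as the paper: a dyadic decomposition around $x$ for the upper bound, invoking the Aikawa hypothesis \eqref{22.02.08.2} at the nearest boundary point $p$ so that the summability of $2^{k(\beta-d+2)}$ (here $\beta<d-2$ is used) closes the estimate, followed by the Fubini argument to pass $-\Delta$ through the integral. Your only variation is in the lower bound, which you read off the ball $B_{\rho(x)/4}(p)$ near the boundary where $\rho(y)\leq\rho(x)/4$ makes $\rho(y)^{-d+\beta}$ large, whereas the paper reads both bounds off the term $I_0\simeq\rho(x)^{-d+2+\beta}$ coming from $B(x,\rho(x)/2)$, where $\rho(y)\simeq\rho(x)$; both are one-line computations, so the difference is cosmetic.
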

Before proving Theorem~\ref{21.10.18.1}, we first look at corollaries of this theorem.

\begin{corollary}\label{22.02.24.2}
The Hardy inequality \eqref{hardy} holds on $\Omega$, where $\mathrm{C}_0(\Omega)$ depends only on $d,\beta_0$ and $\{A_{\beta}\}_{\beta> \beta_0}$.
\end{corollary}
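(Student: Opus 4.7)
The plan is to derive the Hardy inequality from the positive superharmonic function $\phi$ supplied by Theorem~\ref{21.10.18.1} via a classical ground-state representation. Since $\beta_{0}:=\dim_{\cA}(\Omega^{c})<d-2$, I fix any $\beta\in(\beta_{0},d-2)$ with $A_{\beta}<\infty$ (e.g.\ $\beta=(\beta_{0}+d-2)/2$). Theorem~\ref{21.10.18.1} then produces a positive classical superharmonic function $\phi$ on $\bR^{d}$ satisfying $-\Delta\phi=N(d)\rho^{-d+\beta}$ distributionally and $\phi\simeq\rho^{-d+2+\beta}$ on $\Omega$, so on $\Omega$ one has the pointwise lower bound $-\Delta\phi/\phi\geq c\rho^{-2}$ with $c=c(d,\beta,A_{\beta})>0$.

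The analytic core is the ground-state inequality
\[
\int_{\Omega}\frac{-\Delta\phi}{\phi}\,f^{2}\,\dd x\leq\int_{\Omega}|\nabla f|^{2}\,\dd x,\qquad f\in C_{c}^{\infty}(\Omega).
\]
For smooth positive $\phi$ on a neighborhood of $\mathrm{supp}(f)$ this is immediate: $f^{2}/\phi\in C_{c}^{\infty}(\Omega)$ may be used as a test function, and after writing $\int\nabla\phi\cdot\nabla(f^{2}/\phi)\,\dd x$ explicitly, the cross term $2f\,\nabla f\cdot\nabla\phi/\phi$ is absorbed via $2|ab|\leq a^{2}+b^{2}$. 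To handle the $\phi$ at hand, which is merely classical superharmonic, I will apply this smooth identity to the mollifications $\phi^{(\epsilon)}$ from Proposition~\ref{21.04.23.3}.(4) (with the $\Omega$ there replaced by $\bR^{d}$); for $\epsilon<d(\mathrm{supp}(f),\partial\Omega)$ these are smooth, positive, and superharmonic on a neighborhood of $\mathrm{supp}(f)$.

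The limit $\epsilon\searrow 0$ is routine since $\mathrm{supp}(f)\subset\Omega$ is compact, so $\rho$ is bounded below there and $\rho^{-d+\beta}$ is continuous and bounded on a neighborhood of $\mathrm{supp}(f)$; thus $-\Delta\phi^{(\epsilon)}=N(d)(\rho^{-d+\beta})\ast\widetilde{\zeta}_{\epsilon}\to N(d)\rho^{-d+\beta}$ uniformly on $\mathrm{supp}(f)$, while $\phi^{(\epsilon)}\nearrow\phi$ pointwise on $\mathrm{supp}(f)$ with a uniform positive lower bound (by Proposition~\ref{21.04.23.3}.(4) together with the continuity and positivity of $\phi$ there). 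Fatou's lemma applied to the nonnegative integrands transfers the ground-state inequality to $\phi$ itself, and combining with $-\Delta\phi/\phi\geq c\rho^{-2}$ yields \eqref{hardy} with $\mathrm{C}_{0}(\Omega)=1/c$, depending only on $d$, $\beta$, and $A_{\beta}$, hence only on $d$, $\beta_{0}$, and $\{A_{\beta}\}_{\beta>\beta_{0}}$. No serious obstacle arises; the only delicate point is the compatibility of mollification with the distributional identity $-\Delta\phi=N(d)\rho^{-d+\beta}$, which is unproblematic because the right-hand side is continuous on $\Omega$.
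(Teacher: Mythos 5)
Your proof is correct and follows essentially the same route as the paper's own (alternative) argument: both invoke the superharmonic function $\phi$ from Theorem~\ref{21.10.18.1}, apply the ground-state inequality $\int(-\Delta s/s)f^2\leq\int|\nabla f|^2$ to mollifications $\phi^{(\epsilon)}$, and pass to the limit using the pointwise lower bound $-\Delta\phi/\phi\gtrsim\rho^{-2}$. The only differences are cosmetic (you derive the ground-state inequality by Young's inequality, while the paper cites \cite[Lemma 3.5.1]{BEL} and the completed-square identity; your limit step uses uniform convergence plus Fatou, while the paper bounds $-\Delta\phi^{(\epsilon)}\geq N_1(\rho+\epsilon)^{-2}\phi^{(\epsilon)}$ directly and uses monotone convergence), and neither affects the argument's validity.
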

\begin{proof}
We first note that this corollary is implied by \cite[Theorem 3]{aikawa1991}, which establishes that if $p\in (1,\infty)$, $\alpha>0$, $\gamma\in\bR$ satisfy
$$
-(p-1)\big(d-\dim_{\cA}(\Omega^c)\big)<\gamma<d-\dim_{\cA}(\Omega^c)-\alpha p\,,
$$
then
$$
\int_{\bR^d}\frac{|u(x)|^p}{\rho(x)^{\alpha p+\gamma}}\dd x\leq N\int_{\bR^d}\frac{\big|(-\Delta)^{\alpha/2}u(x)\big|^p}{\rho(x)^{\gamma}}\dd x\qquad \text{for all}\quad u\in C_c^{\infty}(\bR^d),
$$
where $N=N(d,\{A_{\beta}\}_{\beta>\dim_{\cA}(\Omega^c)},p,\alpha,\gamma)$, and $(-\Delta)^{\alpha/2}u:=\cF^{-1}\big(\,|\cdot|^{\alpha}\cF(u)\big)$.
Actually, \cite[Theorem 3]{aikawa1991} is more general than this corollary, and the proof is based on Muckenhoupt's $A_p$ weight theory.

Considering only Corollary~\ref{22.02.24.2}, this result can be proved differently.
We first note the following inequality provided in \cite[Lemma 3.5.1]{BEL}: if $f\in C_c^{\infty}(\bR^d)$ and $s>0$ is a smooth superharmonic function on a neighborhood of $\text{supp}(f)$, then
\begin{align}\label{220829005500}
	\int_{\bR^d}\frac{-\Delta s}{s}|f|^2 \dd x\leq \int_{\bR^d} |\nabla f|^2\dd x \quad\text{for all}\quad f\in C_c^{\infty}(\bR^d)
\end{align}
(the proof of this inequality is based on integrating $\big|\nabla f -(f/s)\nabla s\big|^2$ and performing integration by parts).
Take any $\beta\in(\beta_0,d-2)$, and let $\phi$ be the function in Theorem~\ref{21.10.18.1}, so that 
\begin{align}\label{220913400}
	-\Delta \phi\geq N_1\rho^{-2}\phi>0
\end{align}
where $N_1=N(d,\beta,A_{\beta})>0$.
Fix $f\in C_c^{\infty}(\Omega)$.
For $0<\epsilon<d\big(\text{supp}(f),\partial\Omega\big)$, let $\phi^{(\epsilon)}$ be the mollification of $\phi$ in \eqref{21.04.23.1}.
Observe that
$$
-\Delta\big(\phi^{(\epsilon)}\big)\geq N_1^{-1}\big(\rho^{-2}\phi\big)^{(\epsilon)}\geq N_1^{-1}(\rho+\epsilon)^{-2}\phi^{(\epsilon)}\quad\text{on}\quad \bR^d\,,
$$
where $N_1$ is in \eqref{220913400}.
By appling the monotone convergence theorem to \eqref{220829005500} with $s=\phi^{(\epsilon)}$(see Lemma~\ref{21.04.23.5}), we obtain \eqref{hardy} with $\mathrm{C}_0(\Omega)=N_1$.
\end{proof}



\begin{corollary}\label{22.02.19.300}
For any $p\in(1,\infty)$ and $\theta\in\bR$ satisfying
$$
\beta_0<\theta<(d-2-\beta_0)p+\beta_0\,,
$$
Statement~\ref{22.02.19.1} $(\Omega,p,\theta)$ holds.
In addition, $N_1$ in \eqref{2205241155} and $N_2$ in \eqref{2205241156} depend only on $d$, $p$, $\gamma$, $\theta$, $\beta_0$, $\{\cA_{\beta}\}_{\beta>\beta_0}$.
\end{corollary}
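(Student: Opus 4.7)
The plan is to mirror the structure of the proof of Corollary~\ref{22.02.19.3} in the fat exterior case: we pick an appropriate exponent $\beta$ strictly between $\beta_0$ and $d-2$, use Theorem~\ref{21.10.18.1} to produce a superharmonic function equivalent to $\rho^{-(d-2-\beta)}$, verify the Hardy inequality via Corollary~\ref{22.02.24.2}, and then invoke Remark~\ref{220617557} with the negative exponent $\alpha=-(d-2-\beta)$.

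Concretely, fix $p\in(1,\infty)$ and $\theta$ in the stated range. I would first observe that the two conditions $\beta_0<\theta$ and $\theta<(d-2-\beta_0)p+\beta_0$ together ensure the existence of some $\beta\in(\beta_0,d-2)$ satisfying simultaneously
\begin{equation*}
\beta<\theta\qquad\text{and}\qquad \theta<(d-2-\beta)p+\beta,
\end{equation*}
because the upper cutoff $(d-2-\beta)p+\beta$ is continuous and strictly decreasing in $\beta$ (since $p>1$) and equals the upper bound of the hypothesis when $\beta=\beta_0$. Since $\beta>\beta_0=\dim_{\cA}(\Omega^c)$, the defining property of the Aikawa dimension provides a constant $A_\beta$ making \eqref{22.02.08.2} valid.

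Next, I would apply Theorem~\ref{21.10.18.1} with this $\beta$ to obtain a superharmonic function $\phi$ on $\bR^d$ (in particular on $\Omega$) satisfying
\begin{equation*}
N^{-1}\rho^{-(d-2-\beta)}\leq\phi\leq N\rho^{-(d-2-\beta)}
\end{equation*}
for some $N=N(d,\beta,A_\beta)$. By Corollary~\ref{22.02.24.2}, $\Omega$ admits the Hardy inequality with constant $\mathrm{C}_0(\Omega)$ depending only on $d$, $\beta_0$, $\{A_{\beta'}\}_{\beta'>\beta_0}$. With $\alpha:=-(d-2-\beta)<0$, Remark~\ref{220617557} then yields Statement~\ref{22.02.19.1}$(\Omega,p,\theta')$ for every $\theta'\in(d-2+\alpha,\,d-2-(p-1)\alpha)=(\beta,\,(d-2-\beta)p+\beta)$, a range which by construction contains our $\theta$. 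This delivers the conclusion, and the dependence of $N_1$, $N_2$ tracks through: they depend only on $d,p,\gamma,\theta,\beta,M$, where $M$ and $\beta$ are controlled by $\beta_0$ and $\{A_{\beta'}\}_{\beta'>\beta_0}$.

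There is no substantial obstacle here; the corollary is essentially a packaging statement. The only mildly delicate point is the bookkeeping of the parameter range, namely checking that the two strict inequalities defining the window for $\theta$ translate, via the continuous decreasing map $\beta\mapsto(d-2-\beta)p+\beta$ on $(\beta_0,d-2)$, into a nonempty admissible set for $\beta$—this is exactly the computation above and requires only that $p>1$ so that the map is strictly monotone.
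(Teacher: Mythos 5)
Your proof is correct and follows exactly the paper's own argument: choose $\beta\in(\beta_0,d-2)$ close enough to $\beta_0$ so that $\beta<\theta<(d-2-\beta)p+\beta$, invoke Theorem~\ref{21.10.18.1} for the superharmonic function $\phi\simeq\rho^{-(d-2-\beta)}$ together with Corollary~\ref{22.02.24.2} for the Hardy inequality, and conclude via Remark~\ref{220617557} with $\alpha=-(d-2-\beta)<0$. Your explicit check that the admissible set for $\beta$ is nonempty (using that $\beta\mapsto(d-2-\beta)p+\beta$ is strictly decreasing for $p>1$) is a small helpful elaboration of a step the paper leaves implicit.
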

\begin{proof}[Proof of Corollary \ref{22.02.19.300}]
Choose $\beta\in(\beta_0,d-2)$ satisfying
$$
\beta<\theta<(d-2-\beta)p+\beta\,.
$$
By Theorem~\ref{21.10.18.1}, there exists a superharmonic function $\phi$ satisfying $\phi\sim \rho^{-d+2+\beta}$, and therefore 
by Remark~\ref{220617557}, the proof is completed.
\end{proof}

\begin{proof}[Proof of Theorem~\ref{21.10.18.1}]
We first prove \eqref{21.11.10.4}.
For a fixed $x\in \bR^d$, there exists $p_x\in \partial\Omega$ such that $|x-p_x|=\rho(x)=:\rho_x$.
Put
$$
E_0=B(x,2^{-1}\rho_x) \quad\text{and}\quad E_j=B(x,2^{j-1}\rho_x)\setminus B(x,2^{j-2}\rho_x)
$$
for $j=1,\,2,\,\ldots$, and put
$$
I_j=\int_{E_j}|x-y|^{-d+2}\rho(y)^{-d+\beta}dy
$$
for $j=0,\,1,\,2,\,\ldots$, so that $\phi(x)=\sum\limits_{j\in\bN_0}I_j$.
If $y\in E_0$ then $\frac{1}{2}\rho_x\leq \rho(y)\leq 2 \rho_x$, which implies 
$$
I_0\simeq_{d,\beta} \rho_x^{-d+\beta}\int_{B(x,\rho_x/2)}|x-y|^{-d+2}\dd y\simeq_d \rho_x^{-d+2+\beta}\,.
$$
For $I_j$, $j\geq 1$, observe that
\begin{align*}
	0\leq \sum_{j=1}^{\infty}I_j\,&\leq\sum_{j=1}^{\infty}(2^{j-2}\rho_x)^{-d+2}\int_{B(x,2^{j-1}\rho_x)}\rho(y)^{-d+\beta}\dd y\\
	&\leq \sum_{j=1}^{\infty}2^{-j(d-2)}\rho_x^{-d+2}\int_{B(p_x,2^j\rho_x)}\rho(y)^{-d+\beta}\dd y\\
	&\leq N\Big(\sum_{j=1}^{\infty}2^{-j(d-2-\beta)}\Big)\rho_x^{-d+2+\beta}\,.
\end{align*}
where $N=N(d,\beta,A_{\beta})$.
Since the summation in the last term is finit, \eqref{21.11.10.4} is proved.

To prove that $-\Delta \phi=N(d)\phi$ in the sense of distribution, recall that 
$$
-\Delta_x\Big(|x-y|^{-d+2}\Big)=N(d)\,\delta_0(x-y)
$$
in the sense of distribution, where $\delta_0(\cdot)$ is the Dirac delta distribution.
Due to \eqref{22.02.08.2} and $\phi\simeq \rho^{-d+2+\beta}$, $\phi$ is locally integrable.
Therefore, by the Fubini theorem, for any $\zeta\in C_c^{\infty}(\bR^d)$ we obtain
\begin{align*}
	\int_{\bR^d}\phi(x)\big(-\Delta\zeta)(x)\dd x\,&=\int_{\bR^d}\Big(\int_{\bR^d}|x-y|^{-d+2}(-\Delta\zeta)(x)\dd x\Big)\rho(y)^{-d+\beta}\dd y\\
	&=N(d) \int_{\bR^d}\zeta(y)\rho(y)^{-d+\beta}\dd y\,.
\end{align*}
\end{proof}

\vspace{2mm}

\mysection{Application II - Various domains with fat exterior}\label{app2.}

In this section, we present results for the exterior cone condition, convex domains, the exterior Reifenberg condition, and Lipschitz cones.
These domains and conditions imply the fat exterior condition.

Throughout this section, we consider a domain $\Omega\subsetneq \bR^d$, $d\geq 2$.

\vspace{2mm}

\subsection{Exterior cone condition and exterior line segment condition}\label{0071}

\begin{defn}[Exterior cone condition]
	For $\delta\in [0,\frac{\pi}2)$ and $R\in (0,\infty]$, a domain $\Omega\subset\bR^d$ is said to satisfy the \textit{exterior $(\delta,R)$-cone condition} if for every $p\in\partial\Omega$, there exists a unit vector $e_p\in\bR^d$ such that
	\begin{align}\label{21.08.03.4}
		\{x\in B_R(p)\,:\,(x-p)\cdot e_p\geq |x-p|\cos\delta\}\subset \Omega^c\,. 
	\end{align}
\end{defn}
\vspace{1mm}

Note that the left hand side of \eqref{21.08.03.4} is obtained by applying a translation and a rotation to the set
$$
\{x=(x_1,\ldots,x_d)\in B_R(0)\,:\,x_1\geq|x|\cos\delta\}\,.
$$

The exterior $(0,R)$-cone condition can be called the \textit{exterior $R$-line segment condition}, due to
$$
\{x\in B_R(p)\,:\,(x-p)\cdot e_p\geq |x-p|\}=\{p+re_p\,:\,r\in[0,R)\}\,.
$$
For examples of the exterior cone condition and exterior line segment condition, see Figure~\ref{230212745} below.
\begin{figure}[h]\centering
	\begin{tikzpicture}[>=Latex]
		\begin{scope}[shift={(-4.5,0)}]
			\clip (-1.5,-1.5) rectangle (1.5,1.5);
			\begin{scope}[name prefix = p-]
				\coordinate (A) at (0,0.7);
				\coordinate (B) at (-1.4,0.35);
				\coordinate (C) at (0,-1.4);
				\coordinate (D) at (1.4,0.35);
			\end{scope}
			
			\begin{scope}
				\draw[fill=gray!20]
				(p-A) .. controls +(-0.5,{sqrt(3)*0.5}) and +(0,0.7)..
				(p-B) .. controls +(0,-0.7) and +(-0.5,0.5)..
				(p-C) .. controls +(0.5,0.5) and +(0,-0.7) ..
				(p-D) .. controls +(0,0.7) and +(0.5,{sqrt(3)*0.5}) .. (p-A);
			\end{scope}
			
		\end{scope}
		\node[align=center] at (-4.5,-2.34) {A. Lipschitz boundary\\condition};

		\begin{scope}
			\clip (-1.5,-1.7) rectangle (1.5,1.5);
			\begin{scope}[name prefix = p-]
				\coordinate (A) at (0,0.7);
				\coordinate (B) at (-1.4,0.35);
				\coordinate (C) at (0,-1.4);
				\coordinate (D) at (1.4,0.35);
			\end{scope}
			
			\begin{scope}
				\draw[fill=gray!20]
				(p-A) .. controls +(-0.5,{sqrt(3)*0.5}) and +(0,0.7)..
				(p-B) .. controls +(0,-1) and +(0,1.4)..
				(p-C) .. controls +(0,1.4) and +(0,-1) ..
				(p-D) .. controls +(0,0.7) and +(0.5,{sqrt(3)*0.5}) .. (p-A);
			\end{scope}
			\draw[dashed] (p-C) circle (0.25);
			
		\end{scope}
		\node[align=center] at (0,-2.4) {B. Exterior\\$(\frac{\pi}{3},\infty)$-cone condition};
		\node[align=center] at (0, -3.5) {(does not satisfy Lipschitz\\boundary conddition)};
		
		\begin{scope}[shift={(4.5,0)}]
			\clip (-1.5,-1.7) rectangle (1.5,1.5);
			\begin{scope}[name prefix = p-]
				\coordinate (A) at (0,0.35);
				\coordinate (B) at (-1.4,0.45);
				\coordinate (C) at (0,-1.4);
				\coordinate (D) at (1.4,0.45);
			\end{scope}
			
			\begin{scope}
				\draw[fill=gray!20]
				(p-A) ..controls +(0,1.1) and +(0,0.9) ..
				(p-B) .. controls +(0,-1.1) and +(0,1.4)..
				(p-C) .. controls +(0,1.4) and +(0,-1.1) ..
				(p-D) .. controls +(0,0.9) and +(0,1.1) .. (p-A);
			\end{scope}
			\draw[dashed] (p-A) circle (0.25);
			\draw[dashed] (p-C) circle (0.25);
		\end{scope}
		\node[align=center] at (4.5,-2.37) {C. Exterior $\infty$-line\\segment condition};
		\node[align=center] at (4.5, -3.5) {(does not satisfy $(\delta,R)$-cone \\condition, $\forall\,\,\,\delta,\,R>0$)};
		
		%
		%
		%
		%
		
	\end{tikzpicture}
	\caption{Examples for exterior cone condition}\label{230212745}
\end{figure}
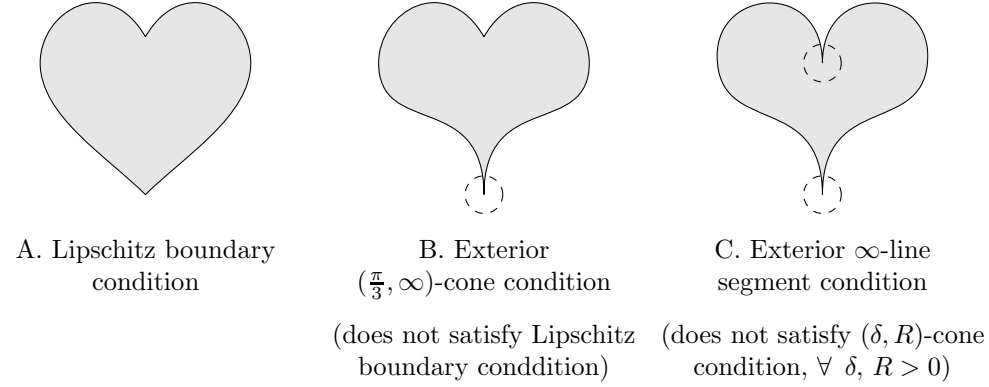

\begin{example}\label{220816845}
	Suppose that  there exists $K,\,R\in(0,\infty]$ such that for any $p\in\partial\Omega$, there exists a function $f_p\in C(\bR^{d-1})$ such that
	\begin{align}
		|f_p(y')-f_p(z')|\leq K|y'-z'|\quad\text{for all}\quad y',\,z'\in\bR^{d-1}\,\,\,,\,\,\,\,\,\text{and} \qquad\,\,\, \label{209021056}\\
		\Omega\cap B_R(p)=\big\{y=(y',y_d)\in\bR^{d-1}\times \bR\,:\,y_d>f_p(y')\quad\text{and}\quad |y|<R\big\}\label{209021055}
	\end{align}
	where $(y',y_d)=(y_1,\cdots,y_d)$ in \eqref{209021055} is an orthonormal coordinate system centered at $p$.
	Then $\Omega$ satisfies the exterior $(\delta,R)$-cone condition, where $\delta=\textmd{arctan}(1/K)\in[0,\pi/2)$.
	
	Moreover, if $f\in C(\bR^{d-1})$ satisfies \eqref{209021056} with $f_p=f$, then a domain 
	$$
	\big\{(x',x_n)\in\bR^{d-1}\times \bR\,:\,x_n>f(x')\big\}
	$$
	satisfies the exterior $(\delta,\infty)$-cone condition, where $\delta=\arctan(1/K)$.
\end{example}

For $\delta\in(0,\pi)$, 
let 
$$
E_{\delta}:=\{\sigma\in \partial B_1(0)\,:\,\sigma_1>-\cos\delta\}
$$
(see Figure \ref{2304201145} below).
\begin{figure}[h]
	\begin{tikzpicture}[> = Latex]

		\begin{scope}
			\begin{scope}
				\clip (-0.95,-1.6)--(1.6,-1.6)--(1.6,1.6)--(-0.95,1.6);
				\draw[line width=0.6,fill=gray!40] (0,0) circle (1.5);
			\end{scope}
			\draw[line width=0.6,fill=gray!15] (-0.9*40/41,0) circle [x radius=1.2*9/41, y radius=1.2];
			
			\draw[fill=black] (0,0) circle (0.03);
			\draw[->,line width=0.4] (-2,0)--(2,0);
		\end{scope}

		\begin{scope}
			\draw[dashed] (0,0) -- (-1.2,1.6);
			\draw (-0.4,0.2) node  {$\delta$};
			\draw (-0.3,0) arc(180:180-atan(4/3):0.3);
		\end{scope}

		%
		%
		%
		%
		%
		%
		
	\end{tikzpicture}
	\caption{$E_{\delta}$}\label{2304201145}
\end{figure}
\noindent
We denote the first eigenvalue of the Dirichlet spherical Laplacian on $E_{\delta}$ as $\Lambda_{\delta}$  (see Proposition~\ref{230413525}.(1)).
Alternatively, we can express $\Lambda_{\delta}$ as follows: 
\begin{align}\label{230212803}
	\Lambda_{\delta}=\inf_{f\in F_{\pi-\delta}}\frac{\int_0^{\pi-\delta}|f'(\theta)|^2(\sin\theta)^{d-2}  d\theta}{\int_0^{\pi-\delta}|f(\theta)|^2(\sin\theta)^{d-2} d\theta}\,,
\end{align}
where $F_{\pi-\delta}$ is the set of all non-zero Lipschitz continuous function $f:[0,\pi-\delta]\rightarrow \bR$ such that $f(\pi-\delta)=0$ (see \cite{FH}).

We also define
$$
\lambda_{\delta}:=-\frac{d-2}{2}+\sqrt{\Big(\frac{d-2}{2}\Big)^2+\Lambda_{\delta}}\,,
$$
and when $d=2$, we define $\lambda_0=\frac{1}{2}$.

The following quantitative information of $\Lambda_{\delta}$ and $\lambda_{\delta}$ is provided in \cite{BCG}:
\begin{prop}\label{230212757}
	Let $\delta\in(0,\pi)$.
	
	\begin{enumerate}
		\item If $d=2$ then $\lambda_{\delta}=\sqrt{\Lambda_{\delta}}=\frac{\pi}{2(\pi-\delta)}>\frac{1}{2}$.
		\item If $d=4$ then $\lambda_{\delta}=-1+\sqrt{1+\Lambda_{\delta}}=\frac{\delta}{\pi-\delta}$.
		\item For $d\geq 3$,
		$$
		\Lambda_{\delta}\geq \left(\int_0^{\pi-\delta}(\sin t)^{-d+2}\Big(\int_0^t(\sin r)^{d-2}\dd r\Big)\,\dd t\right)^{-1}.
		$$
		Moreover, $\Lambda_{\pi/2}= d-1$, $\lim\limits_{\delta\searrow 0}\Lambda_{\delta}=0$, and $\lim\limits_{\delta\nearrow\pi}\Lambda_{\delta}=+\infty$.
	\end{enumerate}
\end{prop}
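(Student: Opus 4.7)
Since $E_\delta$ is rotationally symmetric about $e_1$, any candidate in the Rayleigh characterization~\eqref{230212803} can be replaced by its polar-angle symmetrization without increasing the ratio, so it is enough to work with functions of the polar angle $\theta$ from $e_1$. The spherical Laplacian on such functions is $(\sin\theta)^{2-d}\partial_\theta\bigl((\sin\theta)^{d-2}\partial_\theta\,\cdot\,\bigr)$, which explains the weight in~\eqref{230212803} and reduces each statement to a one-dimensional weighted eigenvalue problem on $(0,\pi-\delta)$ with Dirichlet condition at $\theta=\pi-\delta$ and the natural condition at $\theta=0$.

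\textbf{Parts (1) and (2).} For $d=2$ the weight is trivial, so~\eqref{230212803} is the Dirichlet-Neumann eigenvalue problem on $(0,\pi-\delta)$ with first eigenpair $f(\theta)=\cos\bigl(\pi\theta/(2(\pi-\delta))\bigr)$ and $\Lambda_\delta=\pi^2/\bigl(4(\pi-\delta)^2\bigr)$; then $\lambda_\delta=\sqrt{\Lambda_\delta}=\pi/(2(\pi-\delta))>1/2$ because $\delta>0$. For $d=4$ I would substitute $g(\theta):=f(\theta)\sin\theta$. Expanding $|g'|^2=|f'\sin\theta+f\cos\theta|^2$ and integrating the cross term $(f^2)'\sin\theta\cos\theta$ by parts, both boundary contributions vanish ($\sin\theta\cos\theta=0$ at $\theta=0$ and $f(\pi-\delta)=0$), and the resulting algebra collapses to
\[
\int_0^{\pi-\delta}|f'|^2\sin^2\theta\,d\theta=\int_0^{\pi-\delta}|g'|^2\,d\theta-\int_0^{\pi-\delta}|g|^2\,d\theta,
\]
while $\int|f|^2\sin^2\theta\,d\theta=\int|g|^2\,d\theta$ is trivial. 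Since $g\in H_0^1(0,\pi-\delta)$ (both endpoints vanish), the Rayleigh quotient becomes $\int|g'|^2/\int|g|^2-1$, whose infimum is the classical Dirichlet eigenvalue $\pi^2/(\pi-\delta)^2$ minus $1$. Hence $1+\Lambda_\delta=\pi^2/(\pi-\delta)^2$ and $\lambda_\delta=\delta/(\pi-\delta)$.

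\textbf{Part (3).} For the weighted Hardy lower bound I would write $f(t)=-\int_t^{\pi-\delta}f'(s)\,ds$, apply Cauchy-Schwarz with the split $f'(s)=\bigl(f'(s)(\sin s)^{(d-2)/2}\bigr)\cdot (\sin s)^{-(d-2)/2}$, multiply by $(\sin t)^{d-2}$, integrate in $t\in(0,\pi-\delta)$, and exchange the order of integration via Fubini on $\{0<t<s<\pi-\delta\}$ to obtain
\[
\int_0^{\pi-\delta}|f|^2(\sin t)^{d-2}dt\leq \Bigl(\int_0^{\pi-\delta}(\sin s)^{-(d-2)}\int_0^s(\sin r)^{d-2}dr\,ds\Bigr)\int_0^{\pi-\delta}|f'|^2(\sin s)^{d-2}ds,
\]
which is the stated bound. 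The identity $\Lambda_{\pi/2}=d-1$ I would obtain by observing that $\sigma\mapsto\sigma_1$ is a degree-$1$ spherical harmonic on $\bS^{d-1}$, positive on $E_{\pi/2}=\{\sigma_1>0\}$ and vanishing on its boundary; as a positive Dirichlet eigenfunction of $-\Delta_{\bS}$ with eigenvalue $1\cdot(1+d-2)=d-1$, it is necessarily the ground state. The divergence $\lim_{\delta\nearrow\pi}\Lambda_\delta=+\infty$ follows immediately from the lower bound together with the asymptotic $(\sin t)^{-(d-2)}\int_0^t(\sin r)^{d-2}dr\sim t/(d-1)$ as $t\to 0$, giving denominator $\sim(\pi-\delta)^2/(2(d-1))$ and thus $\Lambda_\delta\gtrsim(\pi-\delta)^{-2}$.

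\textbf{Main obstacle.} The subtle point is $\lim_{\delta\searrow 0}\Lambda_\delta=0$ for $d=3$. For $d\geq 4$ a piecewise-linear cutoff equal to $1$ on $[0,\pi-2\delta]$ and linear to $0$ on $[\pi-2\delta,\pi-\delta]$ already works, since $(\sin\theta)^{d-2}\lesssim\delta^{d-2}$ on the gradient support forces the numerator to be $O(\delta^{d-3})$ while the denominator is bounded below. For $d=3$ this crude test function yields only $O(1)$, reflecting that the $2$-capacity of a single point in $\bS^2$ vanishes only logarithmically. I would instead pass to the variable $u=-\log(\pi-\theta)$: for $\theta$ near $\pi$ one has $\sin\theta\,d\theta\sim e^{-2u}du$ and $|f'|^2\sim|g'|^2 e^{2u}$ with $g(u):=f(\pi-e^{-u})$, so $\int|f'|^2\sin\theta\,d\theta\sim\int|g'|^2\,du$ on this range. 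Taking $g$ piecewise linear, equal to $1$ at $u=\log(2/\pi)$ (i.e.\ $\theta=\pi/2$) and $0$ at $u=\log(1/\delta)$, drives the numerator to $0$ at the rate $1/\log\bigl(\pi/(2\delta)\bigr)$, while the denominator stays bounded below because $f\equiv 1$ on $[0,\pi/2]$. The remaining pieces (spherical symmetrization, the eigenvalue/eigenfunction identification for $d=2,\,4$, and the computations in Part~(3)) are standard one-variable calculus.
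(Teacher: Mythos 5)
The paper provides no proof of this proposition; it is stated as "provided in \cite{BCG}" and the reader is referred there. Your proposal supplies a self-contained derivation, and I have checked it: the argument is correct. A few remarks.

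Your reduction to the one-variable Rayleigh quotient is essentially already encoded in~\eqref{230212803}, so the symmetrization remark is a motivation rather than a needed step. Part~(1) is a direct reading of~\eqref{230212803} with trivial weight; the factor $\pi/(2(\pi-\delta))$ comes out of the mixed Dirichlet--Neumann problem exactly as you describe. For part~(2), the substitution $g=f\sin\theta$ is clean: expanding $|g'|^2$, the cross term $(f^2)'\sin\theta\cos\theta$ integrates by parts with vanishing boundary contributions (the factor $\sin\theta\cos\theta$ kills $\theta=0$, and $f(\pi-\delta)=0$ kills the other end), giving $\int_0^{\pi-\delta}|f'|^2\sin^2\theta\,d\theta=\int_0^{\pi-\delta}|g'|^2\,d\theta-\int_0^{\pi-\delta}|g|^2\,d\theta$; the extremizer $g_*(\theta)=\sin(\pi\theta/(\pi-\delta))$ pulls back to a genuine Lipschitz element $f_*=g_*/\sin\theta$ of $F_{\pi-\delta}$ (bounded at $\theta=0$ by L'H\^opital), so equality in $1+\Lambda_\delta=\pi^2/(\pi-\delta)^2$ is attained, and you do not rely only on a density argument. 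The Hardy inequality in part~(3) is the standard fundamental-theorem-of-calculus plus Cauchy--Schwarz plus Fubini argument, and the constant you extract is exactly the one in the statement. The identification $\Lambda_{\pi/2}=d-1$ via the linear spherical harmonic $\sigma_1$ is correct (a positive Dirichlet eigenfunction is necessarily the ground state), and $\Lambda_\delta\gtrsim(\pi-\delta)^{-2}$ follows from the lower bound with the small-$t$ asymptotic of the integrand, as you indicate.

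The one place that genuinely requires care --- and you correctly identify it --- is $\lim_{\delta\searrow 0}\Lambda_\delta=0$ for $d=3$, where the naive linear cutoff fails because the point has vanishing $2$-capacity in $\bS^2$ only logarithmically. Your logarithmic test function (equivalently the $u=-\log(\pi-\theta)$ change of variables) drives the Rayleigh quotient to zero at rate $1/\log(\pi/(2\delta))$, which is exactly the correct order. The estimate $\int_\delta^{\pi/2}\frac{\sin u}{u^2}\,du\leq\int_\delta^{\pi/2}\frac{du}{u}=\log\bigl(\pi/(2\delta)\bigr)$ then closes the argument. In short: the proposal is a complete and correct proof where the paper merely cites \cite{BCG}, and the technique (explicit eigenfunctions for $d=2,4$, Hardy inequality for the lower bound, a logarithmic cutoff for the $d=3$ vanishing limit) is the expected one.
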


Note that when $d=3$, $\Lambda_{\delta}\geq \frac{1}{2}|\log\,\sin\frac{\delta}{2}|^{-1}$.

\begin{remark}\label{220716637}
	For each $\delta>0$, there is a function $F\in C\big(\overline{E_{\delta}}\big)\cap C^{\infty}(E_{\delta})$ such that
	$$
	F>0\quad\text{and}\quad   \Delta_\bS F+\Lambda_{\delta}F=0\quad\text{on}\,\,\,\,E_{\delta}\,\,\,,\,\,\,\text{and}\,\,\,  F|_{\overline{E_{\delta}}\setminus E_{\delta}}\equiv 0
	$$
	(see, \textit{e.g.}, \cite[Section 5]{FH}), where $C^\infty(E_{\delta})$ and $\Delta_\bS$ are introduced in Subsection~\ref{0074}.
	It follows from \eqref{220716632} that the function
	$$
	v_{\delta}(x):=|x|^{\lambda_{\delta}}F(x/|x|)
	$$
	is harmonic on
	$$
	U_{\delta}:=\big\{y\in B_1(0)\,:\,y_1>-|y|\cos\delta \big\}\,,
	$$
	and vanishes on $\partial U_{\delta}\cap B_1(0)$.
\end{remark}
\vspace{1mm}

With help of $\lambda_\delta$, we state main results of this subsection.
\begin{thm}\label{22.02.18.3}
	For
	\begin{align*}
		\delta\in[0,\pi/2)\quad\text{if}\,\,\,\, d=2\,\,,\,\,\,\,\text{and}\,\,\quad\delta\in (0,\pi/2)\quad\text{if}\,\,\,\, d\geq 3\,,
	\end{align*}
	let $\Omega\subset \bR^d$ satisfy the exterior $(\delta,R)$-cone condition, where
	$$
	R\in (0,\infty]\quad\text{if}\,\,\,\,\text{$\Omega$\, is\, bounded}\,\,,\,\,\,\, \text{and}\,\,\quad R=\infty\quad\text{if}\,\,\,\,\text{$\Omega$\, is\, unbounded.}
	$$
	Then $\Omega$ satisfies $\mathbf{LHMD}(\lambda_{\delta})$, where $M_{\lambda_{\delta}}$ in \eqref{21.08.03.1} depends only on $d$, $\delta$ and $\mathrm{diam}(\Omega)/R$.
	If $\Omega$ is unbounded (and $R=\infty$), then for we can drop the dependence of $M_{\lambda_{\delta}}$ on $\mathrm{diam}(\Omega)/R$ .
\end{thm}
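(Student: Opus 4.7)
The proof will construct a barrier out of the function $v_\delta(y) = |y|^{\lambda_\delta} F(y/|y|)$ from Remark~\ref{220716637}, use it to bound the harmonic measure on a canonical cone complement, and then transfer the estimate to $\Omega$ via the exterior cone condition and domain monotonicity for PWB solutions.

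First I would fix $p\in\partial\Omega$ and $r>0$ with $r\le R$, and use the exterior $(\delta,R)$-cone condition to produce $e_p$ with the truncated cone $C_p^r:=\{x\in B_r(p):(x-p)\cdot e_p\geq|x-p|\cos\delta\}$ lying in $\Omega^c$. Thus $\Omega\cap B_r(p)\subset U_r:=B_r(p)\setminus C_p^r$. Extending $w(\cdot,p,r)$ by zero across $U_r\setminus\Omega$ yields a subharmonic function on $U_r$ (the extension is permissible since $\partial\Omega\cap B_r(p)$ receives zero boundary data, and $U_r$ is regular for the Dirichlet problem), so by the maximum principle
$$w(x,p,r)\le w\bigl(x,U_r,\partial U_r\cap\partial B_r(p)\bigr)\qquad\text{for all }x\in\Omega\cap B_r(p).$$
After the change of variables $y=(x-p)/r$ and a rotation sending $e_p$ to $-e_1$, the right-hand side becomes $W(y):=w(y,U_\delta^{(1)},\partial B_1\cap E_\delta)$, where $U_\delta^{(1)}=\{y\in B_1:y_1>-|y|\cos\delta\}$. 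The theorem reduces to showing the scale-invariant estimate $W(y)\le M_{\lambda_\delta}|y|^{\lambda_\delta}$ for $y\in U_\delta^{(1)}$, with $M_{\lambda_\delta}=M_{\lambda_\delta}(d,\delta)$.

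The decay of $W$ near $0$ will be proved via the barrier $v_\delta$, which is harmonic and positive on $U_\delta^{(1)}$, continuous on $\overline{U_\delta^{(1)}}\setminus\{0\}$, and vanishes on the lateral cone boundary $\partial U_\delta^{(1)}\cap B_1$. Because $F|_{\partial E_\delta}=0$, one cannot directly majorize $W$ by a single multiple of $v_\delta$ on $U_\delta^{(1)}$. I would instead use the self-similarity of the cone: establish the one-step inequality $\sup_{\partial B_\theta\cap U_\delta^{(1)}}W\le\theta^{\lambda_\delta}\sup_{\partial B_1\cap U_\delta^{(1)}}W$ for a fixed $\theta\in(0,1)$, by comparing $W$ with $Mv_\delta$ on the annular sector $U_\delta^{(1)}\cap(B_1\setminus\overline B_\rho)$ and passing to $\rho\downarrow 0$ (using the Phragmén--Lindelöf-type fact that $W$ grows no faster than $|y|^{-d+2}$ near $0$, so the boundary contribution at $\partial B_\rho$ is killed by $v_\delta/|y|^{-d+2}\to 0$). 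The constant $M$ from the boundary Harnack comparison on one scale is then iterated through dyadic shells $B_{\theta^k}\setminus B_{\theta^{k+1}}$; scale invariance guarantees the same $M$ at each step, giving $W(y)\le M_{\lambda_\delta}|y|^{\lambda_\delta}$ on all of $U_\delta^{(1)}$.

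For the range $r>R$ (which only arises when $\Omega$ is bounded), I would invoke Remark~\ref{22.02.18.4} to extend the decay estimate from small scales $r\le R$ to all $r$, at the cost of a constant depending on $\mathrm{diam}(\Omega)/R$; when $\Omega$ is unbounded, $R=\infty$ and this extension is unnecessary, so $M_{\lambda_\delta}$ depends only on $d$ and $\delta$. The planar case $d=2$ with $\delta=0$ (exterior line segment, $\lambda_0=1/2$) is handled identically, with $v_0(y)$ being (up to a rotation) $\mathrm{Im}\,\sqrt{y_1+iy_2}$ on the slit disc. The main obstacle is the barrier step: because $v_\delta$ degenerates both at the vertex and at the rim of the top cap, the comparison must be done on a single dyadic shell and then iterated, and one must verify that the tail of $W$ near the vertex does not contribute when passing $\rho\downarrow 0$. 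This is the point where the sharp exponent $\lambda_\delta$ (rather than some smaller H\"older exponent) enters, and it is what distinguishes this estimate from a naive iteration argument that would yield only a weaker exponent such as $\lambda_\delta/2$.
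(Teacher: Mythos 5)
Your reduction to the canonical cone (exterior cone condition, domain monotonicity for PWB solutions, change of variables) matches the paper's exactly, as does the treatment of $r>R$ via Remark~\ref{22.02.18.4} and the $d=2$, $\delta=0$ case via the conformal map. The gap is in the barrier step, where you overcomplicate the argument and leave it with an unresolved circularity.

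You correctly observe that since $F|_{\partial E_\delta}=0$, one cannot majorize $W:=w(\cdot,U_\delta,E_\delta)$ by a single multiple of $v_\delta$ on \emph{all} of $U_\delta$: the pointwise comparison fails on the spherical cap $E_\delta$ near its rim, where $W=1$ and $v_\delta\to 0$. But the fix is not a dyadic iteration; it is to apply the boundary Harnack principle (Proposition~\ref{21.10.18.4}) on a compactly contained subdomain $U=U_\delta\cap B_{1/2}$ that stays \emph{away} from $E_\delta$. With $D=U_\delta$, $A=(\partial U_\delta)\cap B_1$ (the lateral cone boundary, where both $W$ and $v_\delta$ vanish), and $U=U_\delta\cap B_{1/2}$, the hypothesis $\partial U\cap\partial D\subset A$ holds, and the principle gives
$$
\frac{W(x)}{v_\delta(x)}\leq N\,\frac{W(x_0)}{v_\delta(x_0)}=:N_0 \qquad\text{for all }x\in U_\delta\cap B_{1/2},
$$
so $W(x)\le N_0 v_\delta(x)\le N_0|x|^{\lambda_\delta}$ there; on $U_\delta\setminus B_{1/2}$ one trivially has $W\le1\le 2^{\lambda_\delta}|x|^{\lambda_\delta}$. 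The constant $N_0$ depends only on $d$ and $\delta$ because $D$, $A$, $U$ do. That is the entire estimate — no annular sectors, no Phragm\'en--Lindel\"of, no iteration.

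By contrast, your proposed one-step inequality $\sup_{\partial B_\theta\cap U_\delta}W\le\theta^{\lambda_\delta}\sup_{\partial B_1\cap U_\delta}W$ is obtained ``by comparing $W$ with $Mv_\delta$ on the annular sector $\ldots$ and passing to $\rho\downarrow 0$.'' But the boundary of that annular sector includes $\partial B_1\cap E_\delta$, where no finite multiple of $v_\delta$ dominates $W\equiv1$ — the exact obstruction you yourself raised. You then defer to ``the boundary Harnack comparison on one scale'' to repair this, but once boundary Harnack is in hand, it already yields the comparison on the entire subdomain $U_\delta\cap B_{1/2}$ with the sharp exponent built in through $v_\delta$; iterating a shell-by-shell decay adds nothing and, as you correctly worry in your last paragraph, a na\"ive shell iteration without the explicit eigenfunction comparison would only produce a non-sharp H\"older exponent. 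So the plan as written does not close: the annular max-principle step fails, and the boundary-Harnack patch makes the iteration superfluous.
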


Before the proof of Theorem~\ref{22.02.18.3}, we state a corollary which follows from Theorem~\ref{22.02.18.3} and Corollary~\ref{22.02.19.3}.

\begin{corollary}\label{221026914}
	Let $p\in(1,\infty)$.
	Under the same assumption of Theorem \ref{22.02.18.3}, if $\theta\in\bR$ satisfies
	$$
	-2-(p-1)\lambda_{\delta}<\theta-d<-2+\lambda_{\delta}\,,
	$$
	then Statement~\ref{22.02.19.1} $(\Omega,p,\theta)$ holds.
	In addition, $N_1$ in \eqref{2205241155} and $N_2$ in \eqref{2205241156} depend only on $d,\,p,\,\theta,\,\gamma,\,\delta,\,\mathrm{diam}(\Omega)/R$, and if $\Omega$ is unbounded (and $R=\infty$) then we can drop the dependence of $N_1$ and $N_2$ on $\mathrm{diam}(\Omega)/R$.
\end{corollary}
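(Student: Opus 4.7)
The statement is a direct composition of two results established earlier in the paper, so the plan is to chain them together and carefully track constants.

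First, I would invoke Theorem~\ref{22.02.18.3} under the hypotheses on $\delta$, $R$, and $\Omega$. This yields that $\Omega$ satisfies $\mathbf{LHMD}(\lambda_{\delta})$, with the constant $M_{\lambda_{\delta}}$ appearing in \eqref{21.08.03.1} depending only on $d$, $\delta$, and $\mathrm{diam}(\Omega)/R$ when $\Omega$ is bounded, and only on $d$ and $\delta$ when $\Omega$ is unbounded and $R=\infty$. No further work is needed at this stage.

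Next, I would feed this harmonic measure decay property into Corollary~\ref{22.02.19.3} with $\alpha=\lambda_{\delta}$. That corollary asserts: whenever $p\in(1,\infty)$ and $\theta\in\bR$ satisfy
\begin{equation*}
d-2-(p-1)\lambda_{\delta}<\theta<d-2+\lambda_{\delta},
\end{equation*}
Statement~\ref{22.02.19.1} $(\Omega,p,\theta)$ holds with $N_1$ and $N_2$ depending only on $d,p,\gamma,\theta,\lambda_{\delta}$ and $M_{\lambda_{\delta}}$. The hypothesis $\theta$-range in Corollary~\ref{221026914} is phrased in terms of $\theta-d$ rather than $\theta$, but these are identical after shifting by $d$, so the ranges coincide.

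Finally, to conclude the precise dependence of constants stated in Corollary~\ref{221026914}, I would note that $\lambda_{\delta}$ is a function of $d$ and $\delta$ alone (by its definition via $\Lambda_{\delta}$, see \eqref{230212803} and Proposition~\ref{230212757}), while the ranges of parameters on which $M_{\lambda_{\delta}}$ depends were recorded in the first step. Substituting these dependencies into the list for $N_1,\,N_2$ from Corollary~\ref{22.02.19.3} immediately gives that $N_1,\,N_2$ depend only on $d,\,p,\,\gamma,\,\theta,\,\delta,\,\mathrm{diam}(\Omega)/R$ in the bounded case, and only on $d,\,p,\,\gamma,\,\theta,\,\delta$ in the unbounded case with $R=\infty$. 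Since the proof is purely a chaining of previously established results, there is no substantive obstacle; the only thing to watch is the bookkeeping of the parameter shift $\theta \leftrightarrow \theta-d$ and the pass-through of the dependencies of $M_{\lambda_{\delta}}$.
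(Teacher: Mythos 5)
Your proof is correct and follows exactly the route the paper intends: the text explicitly states that Corollary~\ref{221026914} follows from Theorem~\ref{22.02.18.3} together with Corollary~\ref{22.02.19.3}, which is precisely the chaining you carried out. Your bookkeeping of the $\theta \leftrightarrow \theta-d$ shift and of the constant dependencies (using that $\lambda_\delta$ depends only on $d$ and $\delta$) is accurate.
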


To prove Theorem~\ref{22.02.18.3} we use the boundary Harnack principle on Lipschitz domains.

\begin{prop}[see Theorem 1 of \cite{Wu}]\label{21.10.18.4}
	Let $D$ be a bounded Lipschitz domain, $A$ be a relatively open subset of $\partial D$, and $U$ be a subdomain of $D$ with $\partial U\cap \partial D\subset A$ (see Figure~\ref{2304131241} below).
	Then there exists $N=N(D,A,U)>0$ such that if $u, v$ are positive harmonic funtion on $D$, and vanish on $E$, then
	$$
	\frac{u(x)}{v(x)}\leq N\frac{u(x_0)}{v(x_0)}\quad\text{for any}\quad x_0,\,x\in U\,.
	$$
\end{prop}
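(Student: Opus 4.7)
\medskip
\noindent\textbf{Proof proposal.}
The plan is to reduce the global comparison on $U$ to a local boundary Harnack statement near each point of $\overline{\partial U \cap \partial D}\subset A$, and then to chain local estimates together. Because $\overline{U}\setminus A$ is a compact subset of $D$, the ordinary interior Harnack inequality will handle everything away from $\partial D$; the substantive work is concentrated in neighborhoods of boundary points $Q\in\overline{\partial U\cap \partial D}$, where the Lipschitz structure of $\partial D$ must be exploited.

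First I would establish a Carleson-type estimate at each such $Q$: after flattening $\partial D\cap B(Q,2r_Q)$ via the Lipschitz chart, one can choose a non-tangential ``corkscrew'' reference point $A_Q\in D$ with $|A_Q-Q|\simeq d(A_Q,\partial D)\simeq r_Q$, and prove that any positive harmonic function $h$ in $D\cap B(Q,2r_Q)$ that vanishes continuously on $\partial D\cap B(Q,2r_Q)$ satisfies
\begin{equation*}
\sup_{D\cap B(Q,r_Q)} h \;\leq\; C\,h(A_Q),
\end{equation*}
with $C$ depending only on the Lipschitz constant. The standard route here is a box argument (Kemper--Dahlberg--Wu): compare $h$ inside a Lipschitz cylinder with the harmonic measure of its top/lateral faces, and iterate on a Whitney-type dyadic family to absorb boundary values. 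Once this is in hand, applying it to both $u$ and $v$, combined with the fact that an interior Harnack chain in $D\cap B(Q,2r_Q)$ produces a two-sided bound $v(x)\simeq v(A_Q)$ at the chain's endpoints, yields the local boundary Harnack inequality
\begin{equation*}
\frac{u(x)}{v(x)} \;\leq\; C\,\frac{u(A_Q)}{v(A_Q)} \qquad \forall\, x\in D\cap B(Q,r_Q/2).
\end{equation*}

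To globalize, cover $\overline{U}$ by finitely many balls: near $\overline{\partial U\cap\partial D}$ use balls $B(Q_i,r_{Q_i}/2)$ as above, and cover the rest by balls whose closures lie in $D$. Because $\partial U\cap \partial D\subset A$, one can choose all boundary balls so that $B(Q_i,2r_{Q_i})\cap(\partial D\setminus A)=\emptyset$, ensuring the local estimate applies with harmonic $u,v$ vanishing on the relevant portion of $\partial D$. For any $x,x_0\in U$, connect them by a chain of overlapping balls from this finite cover; on each boundary ball apply the local boundary Harnack inequality to transfer the ratio $u/v$ from one reference corkscrew point to the next, while on interior balls apply the classical Harnack inequality to $u$ and $v$ separately. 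The total number of balls, hence the final constant $N$, depends only on $D$, $A$, and $U$. The principal obstacle is the Carleson estimate in Step 1: it is the only place where the Lipschitz regularity of $\partial D$ is genuinely used, and handling it requires a careful barrier/harmonic-measure construction on a reference cylinder rather than any purely potential-theoretic argument.
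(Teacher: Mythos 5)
The paper does not actually prove this proposition---as the label indicates, it is quoted verbatim from Theorem~1 of Wu's paper \cite{Wu}, so there is no in-paper proof to compare against. What can be assessed is whether your sketch would yield a correct proof of the cited result, and it has a genuine gap in the local step.

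Your global structure is the right one (Kemper--Dahlberg--Wu: local boundary estimate near $\overline{\partial U\cap\partial D}$, interior Harnack away from $\partial D$, then chain over a finite cover of $\overline{U}$, using $\partial U\cap\partial D\subset A$ to keep all boundary balls inside $A$), and the Carleson estimate
$\sup_{D\cap B(Q,r_Q)} h \leq C\,h(A_Q)$
is the correct first ingredient. The problem is the step where you pass from Carleson to the local boundary Harnack inequality. You argue that an interior Harnack chain in $D\cap B(Q,2r_Q)$ gives a two-sided bound $v(x)\simeq v(A_Q)$, but that only holds for $x$ at distance $\gtrsim r_Q$ from $\partial D$: the constant in the chain estimate grows like a power of $r_Q/\mathrm{dist}(x,\partial D)$, so it gives no uniform lower bound on $v(x)/v(A_Q)$ as $x\to\partial D$. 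Yet $x$ near the boundary is precisely where both $u$ and $v$ vanish and where the ratio $u/v$ must be controlled---this is the entire content of the boundary Harnack principle, and your argument as written does not reach it. What is actually needed at this step is a harmonic-measure comparison: from Carleson and the maximum principle one gets an upper bound $u(x)\lesssim u(A_Q)\,\omega^x(\partial(D\cap B(Q,2r_Q))\setminus\partial D)$; from the interior Harnack ball around $A_Q$ and the maximum principle one gets a lower bound $v(x)\gtrsim v(A_Q)\,\omega^x(\Delta)$ for a surface box $\Delta\subset\partial D\cap B(Q,r_Q)$; and one then needs the doubling property (equivalently, the boundary Harnack principle for harmonic measure itself) of Lipschitz domains to compare the two harmonic-measure quantities. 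Without this middle layer the local estimate does not follow, and the remainder of the argument, correct as it is, has nothing to chain.
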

\begin{figure}[h]\centering
	\begin{tikzpicture}[> = Latex]
		\begin{scope}[scale=1]
			\begin{scope}
				\draw[fill=gray!10] (-2,0.3)..controls +(-0.3,-0.3) and +(-0,0.3).. (-3,-0.5).. controls +(0,-0.5) and +(-0.3,-0.1) .. (-2,-1.2)..controls +(0.3,0.1) and +(-0.3,0) ..(-1.5,-1.2) .. controls +(0.2,0) and +(-0.6,-0.36).. (0.5,-1.3)..controls +(0.2,-0.12) and +(-0.5,-0.2) .. (1.5,-1.2).. controls +(0.25,0.1) and +(-0.2,-0.1) .. (2,-1.2).. controls +(0.2,-0.1) and +(0,-0.5) .. (3,-0.5)..controls +(0,0.5) and +(0.2,-0.1)..(2,0.3) .. controls +(-0.8,0.4) and +(0.8,0).. (-2,0.3);
				%
			\end{scope}
			
			\begin{scope}
				\fill[gray!20] (-1,-0.5)..controls +(-0.3,-0.15) and +(-0,0.3)..(-1.5,-1.2) .. controls +(0.2,0) and +(-0.6,-0.36).. (0.5,-1.3)..controls +(0.2,-0.12) and +(-0.5,-0.2) .. (1.5,-1.2)..  controls +(0,0.3) and +(0.1,-0.1) .. (1,-0.5)..controls +(-0.2,0.2) and +(0.3,0.15) .. (-1,-0.5);
				\draw[dashed] (1.5,-1.2).. controls +(0,0.3) and +(0.1,-0.1)..(1,-0.5)..controls +(-0.2,0.2) and +(0.3,0.15) .. (-1,-0.5).. controls +(-0.3,-0.15) and +(-0,0.3)..(-1.5,-1.2);
			\end{scope}
			
			\begin{scope}
				\draw[line width=0.7,Parenthesis-Parenthesis] (-2,-1.2)..controls +(0.3,0.1) and +(-0.3,0) ..(-1.5,-1.2) .. controls +(0.2,0) and +(-0.6,-0.36).. (0.5,-1.3)..controls +(0.2,-0.12) and +(-0.5,-0.2) .. (1.5,-1.2).. controls +(0.25,0.1) and +(-0.2,-0.1) .. (2,-1.2);
			\end{scope}
			
		\end{scope}
		\begin{scope}
			\draw (-1,0) node {$D$};
			\draw (0,-1) node {$U$};
			\draw (1.5,-1.6) node {$A$};
		\end{scope}
	\end{tikzpicture}
	\caption{$D$, $A$, and $U$ in Proposition~\ref{21.10.18.4}}\label{2304131241}
\end{figure}
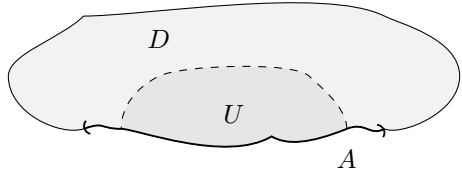

The boundary Harnack principle has also been established for a more general class of domains, so-called  non-tangentially accessible domains, by Jerison and Kenig \cite{Kenig2}.  

\begin{proof}[Proof of Theorem~\ref{22.02.18.3}]
	
	By Remark~\ref{22.02.18.4}, it is sufficient to prove that there exists a constant $M>0$ such that 
	\begin{align*}
		w(x,p,r)\leq M\Big(\frac{|x-p|}{r}\Big)^{\lambda_{\delta}}\quad\text{for all}\,\,\,x\in\Omega\cap B(p,r)
	\end{align*}
	whenever $p\in\partial\Omega$ and $r\in(0,R)$.
	For any $p\in\partial\Omega$, there exists a unit vector $e_p\in\bR^d$ such that
	$$
	C_{p}:=\{y\in B_R(p)\,:\,(y-p)\cdot e_p\geq |y-p|\cos\delta\}\subset \Omega^c\,.
	$$
	Since
	$$
	\Omega\cap B_r(p)\subset B_r(p)\setminus C_{p}\quad\text{and}\quad \Omega\cap \partial B_r(p)\subset \partial B_r(p)\setminus C_{p}\,,
	$$
	we have
	\begin{align}\label{2301111002}
		w(x,p,r)\leq w\big(x,\,B_r(p)\setminus C_{p}\,,\,\partial B_r(p)\setminus C_{p}\,\big),
	\end{align}
	by directly applying the definition of $w(\cdot,p,r)$ (see \eqref{2301051056}).
	Consider a rotation map $T$ such that $T(e_p)=(-1,0,\ldots,0)$, and put $T_0(x)=r^{-1}T(x-p)$. 
	Then
	\begin{align}\label{2301111003}
		w\big(x,\,B_r(p)\setminus C_{p}\,,\,\partial B_r(p)\setminus C_{p}\,\big)=w\big(T_0(x),U_{\delta},E_{\delta}\big),
	\end{align}
	where
	$$
	U_{\delta}=\{y\in B_1(0)\,:\,y_1> -|y|\cos\delta\}\,\,\,\,\text{and}\,\,\,\, E_{\delta}=\{y\in \partial B_1(0)\,:\,y_1>- |y|\cos\delta\}\,.
	$$
	Due to \eqref{2301111002} and \eqref{2301111003}, it is sufficient to show that there exists a constant $M>0$ depending only on $d$ and $\delta$ such that 
	\begin{align}\label{2304131242}
		w(x,U_{\delta},E_{\delta})\leq M|x|^{\lambda_{\delta}}\quad\text{for all}\quad x\in U_{\delta}\,,
	\end{align}

	- \textbf{Case 1.} $\delta>0$.
	
	Put $v(x)=|x|^{\lambda_{\delta}}F_0(x/|x|)$
	where $F_0$ is the first Dirichlet eigenfunction of spherical laplacian on $E_{\delta}\subset \partial B_1(0)$, with $\sup_{E_{\delta}} F_0=1$ (see Remark~\ref{220716637}).
	Note that $U_{\delta}$ is a bounded Lipschitz domain, and $w(\,\cdot\,,U_{\delta},E_{\delta})$ and $v$ are positive harmonic functions on $U_{\delta}$, and vanish on $\partial U_{\delta}\cap B_{1}$.
	By applying Proposition~\ref{21.10.18.4} for $D=U_{\delta}$, $A=(\partial U_{\delta})\cap B_{1}(0)$, and $U=U_{\delta}\cap B_{1/2}(0)$, we obtain that there exists a constant $N_0=N_0(d,\delta)>0$ such that 
	$$
	w(x,U_{\delta},E_{\delta})\leq N_0 v(x)\leq N_0|x|^{\lambda_{\delta}}\quad\text{for}\quad x\in U_{\delta}\cap B_{1/2}(0).
	$$
	Therefore \eqref{2304131242} is obtained, where $M_0=N_0\vee 2^{\lambda_0}$.
	
	- \textbf{Case 2.} $\delta=0$ and $d=2$.
	
	We consider $\bR^2$ as $\bC$.
	Note 
	$$
	U_0=\{re^{i\theta}\,:\,r\in(0,1),\,\theta\in(-\pi,\pi)\}\,\,,\,\,\,\, E_0=\{e^{i\theta}\,:\,\theta\in(-\pi,\pi)\}\,.
	$$
	Observe that a function $s$ is a classical superharmonic function on $U_0$ if and only if $s(z^2)$ is a classical superharmonic function on $B_{1}(0)\cap \bR_+^2$ (use Proposition~\ref{21.04.23.3}).
	It is implied by the definition of PWB solutions (see \eqref{2301051056}) that
	$$
	w(z^2,U_0,E_0)=w\big(z,B_1(0)\cap \bR_+^2,\partial B_1(0)\cap \bR_+^2\big)\,.
	$$
	Since the map $z=(z_1,z_2)\mapsto z_1$ is harmonic on $B_1(0)\cap \bR^2_+$, by Proposition~\ref{21.10.18.4} with $D=B_1(0)\cap \bR^2_+$, we obtain that
	\begin{align}\label{220906722}
		w\big(z,B_1(0)\cap \bR_+^2,\big(\partial B_1(0)\big)\cap \bR_+^2\big)\leq N |z|\quad\text{for}\,\,\,z\in B_{1/2}(0)\cap \bR_+^2\,,
	\end{align}
	where $N$ depends on nothing.
	Therefore the proof is completed.
\end{proof}

\vspace{2mm}

\subsection{Convex domains}\label{convex}\,

We recall the definition of convex set.
A set $E\subset \bR^d$ is said to be \textit{convex} if $(1-t)x+ty\in E$ for any $x,\,y\in E$ and $t\in [0,1]$.

\begin{remark}\label{2209071233}
	We claim that for an open set $\Omega\subset \bR^d$, $\Omega$ is convex if and only if for any $p\in\partial\Omega$, there exists a unit vector $e_p\in \bR^d$ such that
	\begin{align}\label{21.08.17.1}
		\Omega\subset \{x\,:\,(x-p)\cdot e_p<0\}=:U_p
	\end{align}
(see Figure \ref{2304201156} below).
	\begin{figure}[h]\centering
		\begin{tikzpicture}[>=Latex]
			\begin{scope}
				\clip (3,-2) -- (3,1.8) -- (-3,1.8) -- (-3,-2);

				\begin{scope}
					\draw[fill=black] (2,-0.3) circle (0.05);
				\end{scope}

				\begin{scope}
					\fill[gray!30] (2,-0.3) -- (1, 1) -- (-1.5,0.7) arc(120:240:1) -- (-1,-1.2) -- (1,-1) -- (2,-0.3); 
				\end{scope}

				\begin{scope}
					\draw (2,-0.3) -- (1, 1) -- (-1.5,0.7) arc(120:240:1) -- (-1,-1.2) -- (1,-1) -- (2,-0.3); 
					\draw[line width=0.8] ($(2,-0.3)+3*(0.195,0.65)$) -- ($(2,-0.3)+{-3}*(0.24,0.8)$);
				\end{scope}
				
			\end{scope}
			
			\begin{scope}
				\draw[->] (2,-0.3) -- +(${0.5}*(1,-0.3)$);
				\draw (2.6,-0.75) node {$e_p$};
				\draw (2.8,1.1) node {$F_p$};
				\draw[white](3,0) circle(0.1);
			\end{scope}
			
		\end{tikzpicture}
		\caption{$e_p$ in \eqref{21.08.17.1}, and $F_p$ in \eqref{230114328}}\label{2304201156}
	\end{figure}

	Let $\Omega$ be a convex domain, and fix $p\in \partial\Omega$.
	Since the set $\{p\}$ is convex and disjoint from $\Omega$, the hyperplane separation theorem (see, \textit{e.g.}, \cite[Theorem 3.4.(a)]{rudin}) implies that there exists a unit vector $e_p\in\bR^d$ such that \eqref{21.08.17.1} holds.
	
	Conversely, suppose that for every $p\in\partial\Omega$, there exist a unit vector $e_p$ satisfying \eqref{21.08.17.1}.
	Then $E:=\bigcap_{p\in\partial\Omega}U_p$ is convex, $\Omega\subset E$, and $E\cap \partial\Omega=\emptyset$. These imply $E=\Omega$. Therefore our claim is proved.
\end{remark}

\begin{remark}\label{2209071100}
	The argument to obtain \eqref{220906722} also implies that for any $d\in\bN$,
	$$
	w\big(x,B_1(0)\cap \bR_+^d,(\partial B_1(0))\cap \bR_+^d\big)\leq N(d) |x|\quad \text{for all}\,\,\,x\in B_1(0)\cap \bR^d_+\,.
	$$
	By translation, dilation and rotation, we obtain that for a convex domain $\Omega$ and $p\in\partial\Omega$, 
	$$
	w(x,p,r)\leq w(x,B_r(p)\cap U_p,\big(\partial B_r(p)\big)\cap U_p)\leq N(d)\frac{|x-p|}{r}
	$$
	for all $x\in B_r(p)\cap \Omega$, where $U_p$ is the set on the right-hand side of \eqref{21.08.17.1}.
	Consequently, $\Omega$ satisfies $\mathbf{LHMD}(1)$, where $M_1$ in \eqref{21.08.03.1} depends only on $d$.
	
	This result also implies that the Hardy inequality \eqref{hardy} holds on $\Omega$, where $\mathrm{C}_0(\Omega)$ depends only on $d$ (see Remark~\ref{21.07.06.1}).
	However, it is worth noting that Marcus, Mizel and Pinchover \cite[Theorem 11]{MMP} provided that for a convex domain $\Omega$, \eqref{hardy} holds where $\mathrm{C}_0(\Omega)=4$, and $\mathrm{C}_0(\Omega)$ cannot be chosen less than 4.
\end{remark}
\vspace{1mm}

Krylov \cite{Krylov1999-1} provided results for the Poisson equation and parabolic equations in $\bR_+^d$.
In this subsection, we extend this result for convex domains; see Corollary~\ref{2208131026}.
Recall the definitions of $\mathrm{M}(\nu_1,\nu_2)$ and $\cM_T(\nu_1,\nu_2)$ in the front of Section~\ref{0050}.

\begin{thm}\label{22.02.19.5} Let $\Omega$ be a convex domain.
	For any $(\alpha^{ij})_{d\times d}\in\bigcup\limits_{0<\nu\leq 1}\mathrm{M}(\nu^2,1)$,
	$$
	\sum_{i,j=1}^d\alpha^{ij}D_{ij}\rho\leq 0
	$$
	in the sense of distribution.
\end{thm}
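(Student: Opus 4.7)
The strategy is to show that the distance function $\rho$ is \emph{concave} on the convex domain $\Omega$, and then to deduce the distributional inequality $\sum_{i,j}\alpha^{ij}D_{ij}\rho\leq 0$ from concavity via a standard mollification argument.

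For the concavity of $\rho$, fix $x,y\in\Omega$ and $t\in[0,1]$, and set $z=(1-t)x+ty$ and $r=(1-t)\rho(x)+t\rho(y)$. For an arbitrary unit vector $u\in\bR^d$ and $0\leq\alpha\leq r$, I would write
\[
z+\alpha u=(1-t)\Big(x+\tfrac{\alpha\rho(x)}{r}u\Big)+t\Big(y+\tfrac{\alpha\rho(y)}{r}u\Big),
\]
and observe that the two points on the right lie in $B(x,\rho(x))\subset\Omega$ and $B(y,\rho(y))\subset\Omega$ respectively, so by convexity $z+\alpha u\in\Omega$. This shows $B(z,r)\subset\Omega$, hence $\rho(z)\geq r$, which is exactly concavity. (Alternatively, one may invoke Remark~\ref{2209071233}: each supporting half-space $U_p\supset\Omega$ produces an affine majorant $\ell_p(x)=-(x-p)\cdot e_p$ of $\rho$, and $\rho$ can be shown to equal the infimum of these affine functions on $\Omega$; an infimum of affine functions is concave.)

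Next, let $\rho^{(\epsilon)}$ denote the standard mollification of $\rho$ as in \eqref{21.04.23.1} (extended by $0$ outside $\Omega$ is not needed here since we work on compact subsets of $\Omega$). On any open $U$ with $\overline{U}\subset\Omega$ and for $\epsilon<d(U,\partial\Omega)$, convexity of $-\rho$ is preserved under convolution with a nonnegative kernel, so $\rho^{(\epsilon)}\in C^\infty(U)$ is concave on $U$. Consequently $D^2\rho^{(\epsilon)}(x)$ is a negative semi-definite symmetric matrix at every $x\in U$, and for any positive semi-definite $(\alpha^{ij})_{d\times d}$ (in particular for any element of $\bigcup_{0<\nu\leq 1}\mathrm{M}(\nu^2,1)$) we have the pointwise inequality
\[
\sum_{i,j=1}^d\alpha^{ij}D_{ij}\rho^{(\epsilon)}(x)=\mathrm{tr}\!\big((\alpha^{ij})\,D^2\rho^{(\epsilon)}(x)\big)\leq 0\qquad\forall\, x\in U.
\]

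Finally, I pass to the limit $\epsilon\searrow 0$ in the sense of distributions. For a test function $\zeta\in C_c^\infty(\Omega)$ with $\zeta\geq 0$, choose $U$ so that $\mathrm{supp}(\zeta)\subset U\Subset\Omega$; since $\rho^{(\epsilon)}\to\rho$ uniformly on $\overline{U}$, integration by parts against $\sum_{i,j}\alpha^{ij}D_{ij}\zeta$ gives
\[
\int_\Omega\rho\sum_{i,j}\alpha^{ij}D_{ij}\zeta\,\dd x=\lim_{\epsilon\searrow 0}\int_U\rho^{(\epsilon)}\sum_{i,j}\alpha^{ij}D_{ij}\zeta\,\dd x=\lim_{\epsilon\searrow 0}\int_U\Big(\sum_{i,j}\alpha^{ij}D_{ij}\rho^{(\epsilon)}\Big)\zeta\,\dd x\leq 0,
\]
which is exactly the statement that $\sum_{i,j}\alpha^{ij}D_{ij}\rho\leq 0$ in the sense of distributions on $\Omega$. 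The only substantive step is the concavity of $\rho$; once this is in hand the rest is a routine approximation, so I expect no serious obstacle.
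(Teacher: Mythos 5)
Your proof is correct, and it takes a genuinely different (and in fact slightly more general) route than the paper. The paper's proof first establishes $\rho=\inf_{p\in\partial\Omega}W_p$ where each $W_p(x)=(p-x)\cdot e_p$ is affine, then for $\mathrm{A}=(\alpha^{ij})\in\mathrm{M}(\nu^2,1)$ takes the symmetric square root $\mathrm{B}=\sqrt{\mathrm{A}}$ so that $\alpha^{ij}D_{ij}$ becomes the ordinary Laplacian in the transformed coordinates $y=\mathrm{B}^{-1}x$; since each $W_p(\mathrm{B}\cdot)$ is affine, hence harmonic, Proposition~\ref{21.05.18.1}.(2) (an infimum of positive classical superharmonic functions is superharmonic) gives that $\rho(\mathrm{B}\cdot)$ is superharmonic, and a change of variables in the distributional pairing concludes. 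You instead record the underlying geometric fact — $\rho$ is concave on convex $\Omega$ — directly, mollify to get a smooth concave approximant with negative semi-definite Hessian, and finish with the elementary linear-algebra fact that $\mathrm{tr}(AM)\leq 0$ whenever $A\geq 0$ and $M\leq 0$, then pass to the limit in the distributional pairing. Your route avoids both the $\sqrt{\mathrm{A}}$ trick and the appeal to potential theory, and it even establishes the conclusion for an arbitrary positive semi-definite coefficient matrix, not just the uniformly elliptic ones in $\bigcup_{\nu}\mathrm{M}(\nu^2,1)$ (the paper's argument needs $\mathrm{A}$ invertible to form $\mathrm{B}^{-1}$). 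Both proofs ultimately rest on the same observation — $\rho$ is an infimum of affine functions over supporting half-spaces, i.e., $\rho$ is concave — but package it through different machinery, with yours being the more elementary and slightly more flexible one.
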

We temporarily assume Theorem~\ref{22.02.19.5} and prove Corollary~\ref{2208131026}.
\begin{corollary}\label{2208131026}
	Let $\Omega\subset \bR^d$ be convex, $p\in(1,\infty)$, $\gamma\in\bR$, and $\theta\in\bR$ with
	$$
	-p-1<\theta-d<-1\,.
	$$
	\begin{enumerate}
		\item For any $\lambda \geq 0$ and $f\in H_{p,\theta+2p}^{\gamma}(\Omega)$, 
		the equation
		$$
		\Delta u-\lambda u=f\,.
		$$
		has a unique solution $u$ in $H^{\gamma+2}_{p,\theta}(\Omega)$.
		Moreover, we have
		\begin{align}\label{220920553}
			\|u\|_{H^{\gamma+2}_{p,\theta}(\Omega)}+\lambda \|u\|_{H^{\gamma}_{p,\theta+2p}(\Omega)}\leq N_1\|f\|_{H_{p,\theta+2p}^{\gamma}(\Omega)}\,,
		\end{align}
		where $N_1=N(d,p,\gamma,\theta)$.
		
		\item Let $T\in(0,\infty]$ and $\cL\in\cM_T(\nu,\nu^{-1})$ for some $\nu\in(0,1]$.
		For any $u_0\in B^{\gamma+2-2/p}_{p,\theta+2}(\Omega)$ and $f\in \bH_{p,\theta+2p}^{\gamma}(\Omega,T)$, the equation
		$$
		\partial_t u=a^{ij}D_{ij} u+f\quad\text{on }\Omega\times (0,T]\quad;\quad u(0,\cdot)=u_0
		$$
		has a unique solution $u$ in $\cH^{\gamma+2}_{p,\theta}(\Omega)$.
		Moreover, we have
		\begin{align}\label{220920554}
			\|u\|_{\cH^{\gamma+2}_{p,\theta}(\Omega)}\leq N_2\big(\|u_0\|_{B^{\gamma+2-2/p}_{p,\theta+2}(\Omega)}+\|f\|_{H_{p,\theta+2p}^{\gamma}(\Omega)}\big)\,,
		\end{align}
		where $N_2=N(d,p,\theta,\gamma,\nu)$.
	\end{enumerate}
	In particular, $\Omega$ is not necessarily bounded, and $N_1$ and $N_2$ are independent of $\Omega$.
\end{corollary}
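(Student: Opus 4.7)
The plan is to deduce both parts from Theorem~\ref{21.09.29.1} and Theorem~\ref{22.02.18.6}, taking the distance function itself as the superharmonic Harnack weight. Concretely, I would set $\psi:=\rho$ and let $\Psi:=\trho$ be its smooth regularization from Lemma~\ref{21.05.27.3}.(1). Example~\ref{21.05.18.2}.(1) gives $\mathrm{C}_1(\rho)\leq 3$, and $\mathrm{C}_2(\trho)$, $\mathrm{C}_3(\rho,\trho)$ depend only on $d$. The superharmonicity $\Delta\rho\leq 0$ follows from Theorem~\ref{22.02.19.5} applied with the identity matrix, and Remark~\ref{2209071100} supplies the Hardy inequality \eqref{hardy} on $\Omega$ with $\mathrm{C}_0(\Omega)$ depending only on $d$.

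With $\psi$ and $\Psi$ fixed, I would match the weight exponent by choosing $\mu:=(d-2-\theta)/p$, so that the hypothesis $-p-1<\theta-d<-1$ translates exactly to $\mu\in(-1/p,1-1/p)$. Lemma~\ref{21.05.20.3}.(3) applied with $s=-\mu$ yields the identifications $\Psi^{\mu}H_{p,d-2}^{\gamma+2}(\Omega)=H_{p,\theta}^{\gamma+2}(\Omega)$ and $\Psi^{\mu}H_{p,d+2p-2}^{\gamma}(\Omega)=H_{p,\theta+2p}^{\gamma}(\Omega)$ with constants depending only on $d,p,\gamma,\mu$. Inserting this into Theorem~\ref{21.09.29.1} delivers part (1) together with estimate \eqref{220920553}; since the constant produced there depends only on $d,p,\gamma,\mu,\mathrm{C}_0(\Omega),\mathrm{C}_2(\Psi),\mathrm{C}_3(\psi,\Psi)$, all of which reduce to $d,p,\gamma,\theta$, the resulting $N_1$ is independent of $\Omega$.

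For part (2) the same $\psi$, $\Psi$, and $\mu$ work, but one must additionally verify $\mu\in I(\rho,p,\nu^2)$ so that Theorem~\ref{22.02.18.6} applies to $\cL\in\cM_T(\nu,\nu^{-1})$. This is where the strength of Theorem~\ref{22.02.19.5} is genuinely used: it delivers $\sum_{i,j}\alpha^{ij}D_{ij}\rho\leq 0$ for every $(\alpha^{ij})\in\mathrm{M}(\nu^2,1)$, which is exactly the hypothesis of Proposition~\ref{05.11.1}.(2). Consequently $I(\rho,p,\nu^2)=(-1/p,1-1/p)$ with $\mathrm{C}_4$ depending only on $d,p,\nu,\mu$. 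Theorem~\ref{22.02.18.6}, combined with the analogous norm identifications for the $\bH$- and $B$-spaces from the counterpart of Lemma~\ref{21.05.20.3}.(3) in Lemma~\ref{22.04.15.148}.(1), then gives unique solvability in $\cH_{p,\theta}^{\gamma+2}(\Omega_T)$ and estimate \eqref{220920554} with $N_2$ depending only on $d,p,\gamma,\theta,\nu$.

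The bulk of the argument is a bookkeeping exercise assembling pieces already developed. The one place that requires any care is matching parameters in part (2): one must confirm that the ellipticity ratio $\nu_1/\nu_2=\nu^2$ appearing in Theorem~\ref{22.02.18.6} coincides with the $\delta=\nu^2$ needed in Proposition~\ref{05.11.1}.(2), and that the family $\mathrm{M}(\nu^2,1)$ for which Theorem~\ref{22.02.19.5} guarantees $\sum\alpha^{ij}D_{ij}\rho\leq 0$ is exactly the one obtained by rescaling the coefficients of $\cL$. Once that matching is made, the rest of the proof is mechanical.
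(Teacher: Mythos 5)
Your argument reproduces the paper's proof. The paper likewise takes $\psi=\rho$, $\Psi=\trho$, invokes convexity for the Hardy inequality with $\mathrm{C}_0(\Omega)=4$, uses Theorem~\ref{22.02.19.5} together with Proposition~\ref{05.11.1}.(2) to get $I(\rho,p,\nu^2)=(-1/p,1-1/p)$, sets $\mu=-(\theta-d+2)/p$ (the same as your $(d-2-\theta)/p$), and concludes via Theorems~\ref{21.09.29.1} and \ref{22.02.18.6} after the norm identification from Lemma~\ref{21.05.20.3}.(3). The parameter-matching check you flag for part (2) ($\nu_1/\nu_2=\nu^2$, matching the $\delta$ in Proposition~\ref{05.11.1}.(2)) is exactly the right point and holds.
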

\begin{proof}[Proof of Corollary \ref{2208131026}]
	Since $\Omega$ is convex, \eqref{hardy} holds on $\Omega$ where $\mathrm{C}_0(\Omega)=4$.
	Put $\Psi=\trho$ which is the regularization of $\rho$ in Lemma~\ref{21.05.27.3}.(1) so that constants $\mathrm{C}_2(\trho)$ and $\mathrm{C}_3(\rho,\trho)$(in Definition~\ref{21.10.14.1}) can be chosen to depend only on $d$.
	It follows from Proposition~\ref{05.11.1}.(2) that for any $\mu\in (-1/p,1-1/p)$, $\mu\in I(\rho,\nu^2,p)$, and the constant $\mathrm{C}_4$ in \eqref{21.07.12.1} can be chosen to depend only on $\mu$, $p$ and $\nu$.
	Putting
	$$
	\mu=-\frac{\theta-d+2}{p}\in\Big(-\frac{1}{p},1-\frac{1}{p}\,\Big)
	$$
	and applying Theorems~\ref{21.09.29.1} and \ref{22.02.18.6}, we finish the proof.
\end{proof}

\begin{proof}[Proof of Theorem~\ref{22.02.19.5}]
	For $p\in\partial\Omega$, put $W_p(x)=(p-x)\cdot e_p$, where $e_p$ is a unit vector satisfying \eqref{21.08.17.1}.
	We first claim that
	\begin{align}\label{22.02.17.10}
		\rho(x)=\inf_{p\in\partial\Omega}W_p(x)\quad\text{for all}\,\,\,x\in\Omega\,.
	\end{align}
	For a fixed $x\in\Omega$, we have
	$$
	\inf_{p\in\partial\Omega}W_p(x)= \inf_{p\in\partial\Omega}d(x,F_p)\geq \rho(x)\,,
	$$ 
	where 
	\begin{align}\label{230114328}
		F_p:=\{y\in\bR^d:(y-p)\cdot e_p=0\}\subset \Omega^c
	\end{align}
(see Figure \ref{2304201156} above).
	For the inverse inequality, take $p_x\in\partial\Omega$ such that $|x-p_x|=\rho(x)$.
	Since 
	$$
	B\big(x,\rho(x)\big)\subset \Omega\quad\text{and}\quad p_x\in\partial B\big(x,\rho(x)\big)\,,
	$$
	we obtain that $e_{p_x}=(p_x-x)/|p_x-x|$.
	Therefore
	$$
	\inf_{p\in\partial\Omega}W_p(x)\leq W_{p_x}(x)=|p_x-x|=\rho(x).
	$$
	
	Let $\mathrm{A}=(\alpha^{ij})_{d\times d}\in \mathrm{M}(\nu^2,1)$, $\nu\in(0,1]$, and take $\mathrm{B}\in \mathrm{M}(\nu,1)$ such that $\mathrm{B}^2=\mathrm{A}$.
	For any $p\in\partial\Omega$, 
	$$
	\Delta \big(W_{p}(\mathrm{B}\,\cdot\,)\big)\equiv 0\quad\text{on}\,\,\mathrm{B}^{-1}\Omega\,.
	$$ 
	Due to \eqref{22.02.17.10} and Proposition~\ref{21.05.18.1}.(2), we obtain that $\rho(\mathrm{B}\,\cdot\,)$ is a infimum of classical superharmonic functions, and therefore $\rho(\mathrm{B}\,\cdot\,)$ is a superharmonic function.
	Consequently we have
	$$
	\langle\alpha^{ij}D_{ij}\rho,\zeta\rangle=\det(\mathrm{A})^{1/2}\big\langle\Delta (\rho(\mathrm{B}\,\cdot\,)),\zeta(\mathrm{B}\,\cdot\,)\big\rangle\leq 0
	$$
	for any $\zeta\in C_c^{\infty}(\Omega)$ with $\zeta\geq 0$.
\end{proof}

\vspace{2mm}

\subsection{Exterior Reifenberg condition}\label{ERD}\,

The notion of the vanishing Reifenberg condition was introduced by Reifenberg \cite{Reifcondition}, and has been extensively studied in the literature (see, \textit{e.g.}, \cite{Relliptic, CKL, KenigToro3, TT}).
The following definition can be found in \cite{Relliptic, KenigToro3}:
For $\delta\in(0,1)$ and $R>0$, a domain $\Omega\subset \bR^d$ is said to satisfy the $(\delta,R)$-\textit{Reifenberg condition}, if for every $p\in\partial\Omega$ and $r\in(0,R]$, there exists a unit vector $e_{p,r}\in\bR^d$ such that
\begin{align}\label{22.02.26.41}
	\begin{split}
		&\Omega\cap B_r(p)\subset \{x\in B_r(p)\,:\,(x-p)\cdot e_{p,r}<\delta r\}\quad\text{and}\\
		&\Omega\cap B_r(p)\supset \{x\in B_r(p)\,:\,(x-p)\cdot e_{p,r}>-\delta r\}\,.
	\end{split}
\end{align}
In addition, $\Omega$ is said to satisfy the \textit{vanishing Reifenberg condition} if for any $\delta\in(0,1)$, there exists $R_{\delta}>0$ such that $\Omega$ satisfies the $(\delta,R_{\delta})$-Reifenberg condition.
Note that the vanishing Reifenberg condition is weaker than the $C^1$-boundary condition; 
see Example~\ref{220910305}.(2) and (3).

It was established by Kenig and Toro \cite[Lemma 2.1]{KenigToro} that if a bounded domain satisfies the vanishing Reifenberg condition, then this domain also satisfies $\mathbf{LHMD}(1-\epsilon)$ for all $\epsilon\in(0,1)$.
Combining this with Corollary~\ref{22.02.19.3}, we obtain that Statement~\ref{22.02.19.1} $(\Omega,p,\theta)$ holds for all $\theta\in(d-p-1,d-1)$.
Furthermore, in addition to the Poisson and heat equations, there have been studies on elliptic and parabolic equations with variable coefficients on domains satisfying the vanishing Reifenberg condition (see, \textit{e.g.}, \cite{Relliptic, Rparabolic, Reifweight2, DongKim})

In this subsection, we present the totally vanishing exterior Reifenberg condition which is a generalization of the Reifenberg condition, and we obtain a result similar to Corollary~\ref{2208131026} for domains satisfying the totally vanishing exterior Reifenberg condition; see Definition~\ref{2209151117} and Corollary~\ref{22.07.17.109}.

\begin{defn}[Exterior Reifenberg condition]\label{2209151117}\,\,
	
	\begin{enumerate}
		
		\item
		By $\mathbf{ER}_{\Omega}$ we denote the set of all $(\delta,R)\in[0,1]\times\bR_+$ satisfying the following: for each $p\in\partial\Omega$, and each connected component $\Omega_{p,R}^{(i)}$ of $\Omega\cap B(p,R)$, there exists a unit vector $e_{p,R}^{(i)}\in\bR^d$ such that
		\begin{align}\label{22.02.26.4}
			\Omega_{p,R}^{(i)}\subset \{x\in B_R(p)\,:\,(x-p)\cdot e_{p,R}^{(i)}<\delta R\}\,.
		\end{align}
		By $\delta(R):=\delta_{\Omega}(R)$ we denote the infimum of $\delta$ such that $(\delta,R)\in \mathbf{ER}_{\Omega}$.
		
		\item For $\delta\in[0,1]$, we say that $\Omega$ satisfies the \textit{totally $\delta$-exterior Reifenberg condition} (abbreviate to `$\langle\mathrm{TER}_{\delta}\rangle$'), if there exist $0<R_0\leq R_\infty<\infty$ such that
		\begin{align}\label{220916111}
			\delta_{\Omega}(R)\leq \delta\quad\text{whenever}\quad R\leq R_0\,\,\,\text{or}\,\,\,R\geq R_\infty\,.
		\end{align}
		
		\item We say that $\Omega$ satisfies the \textit{totally vanishing exterior Reifenberg condition} (abbreviate to `$\langle\mathrm{TVER}\rangle$'), if
		$\Omega$ satisfies the $\delta$-condition for all $\delta\in(0,1]$. In other word,
		$$
		\lim_{R\rightarrow 0}\delta_{\Omega}(R)=\lim_{R\rightarrow \infty}\delta_{\Omega}(R)=0\,.
		$$
	\end{enumerate}
\end{defn}
\vspace{1mm}
For a comparison between the Refenberg condition and $\langle\mathrm{TVER}\rangle$, see Figure~\ref{230212856} and Example~\ref{220910305} below.

In this subsection, we provide results on domains satisfying $\langle\mathrm{TER}_{\delta}\rangle$ for sufficiently small $\delta>0$.
However, our main interest is thd condition $\langle\mathrm{TVER}\rangle$.

\begin{figure}[h]\centering
	\begin{tikzpicture}
		\begin{scope}[shift={(0,4.5)}]
			\begin{scope}
				\clip (-5,-1.5) rectangle (5,1);
				\draw[decoration={Koch, Koch angle=24, Koch order=4}] 
				decorate {(-5,-0.6) -- (-3,-0.3)};
				\draw[decoration={Koch, Koch angle=20, Koch order=4}] 
				decorate {(-1.8,-0.1)--(-3,-0.3)};
				\draw[decoration={Koch, Koch angle=15, Koch order=4}] 
				decorate {(-1.8,-0.1) -- (-0.3,0)};
				\draw[decoration={Koch, Koch angle=16, Koch order=4}] 
				decorate {(1.5,-0.1)--(-0.3,0)};
				\draw[decoration={Koch, Koch angle=23, Koch order=4}] 
				decorate {(3.5,-0.3)--(1.5,-0.1)};
				\draw[decoration={Koch, Koch angle=19, Koch order=4}] 
				decorate {(3.5,-0.3) -- (5,-0.6)};
			\end{scope}
			\draw (-5,-1.5) rectangle (5,1);
			
		\end{scope}
		
		\begin{scope}[scale=0.7, shift={(-6.5,0)}]
			\draw[decorate, decoration={random steps,segment length=2, amplitude=0.5}, fill=gray!20]
			(2,0) arc(0:180:2) .. controls +(0,-1) and +(-0.5,0) .. (-1,-1.5)..controls +(0.5,0) and +(-0.5,0) .. (0,-1).. controls +(0.5,0) and +(-0.5,0) .. (1,-1.5)..controls +(0.5,0) and +(0,-1)..(2,0);
			
			\begin{scope}
				\draw[dashed] (0,2) circle (0.5);
				\draw[->] (0.4,2.6) -- (1,3.8);
					%
					%
					%
			\end{scope}
			
			\node[align=center] at (0,-3.5) {Vanishing\\Reifenberg condition};
		\end{scope}

		\begin{scope}[scale=0.7]
			\begin{scope}
				\draw[decorate, decoration={random steps,segment length=2, amplitude=0.5}, fill=gray!20]
				(2,0) arc(0:180:2);
				\fill[gray!20] (-2,0)--(0,-2) -- (2,0)--(0,2);
				\fill[white]
				(0,-2) ..controls +(0,2) and +(-0.5,0) .. (2,0)--(2,-2);
				\draw (0,-2) ..controls +(0,2) and +(-0.5,0) .. (2,0);
				\fill[white]
				(0,-2) ..controls +(0,2) and +(0.5,0) .. (-2,0)--(-2,-2);
				\draw (0,-2) (0,-2) ..controls +(0,2) and +(0.5,0) .. (-2,0);
			\end{scope}
			\draw[dashed] (0,2) circle (0.5);
			\draw[->] (0,2.8) -- (0,3.8); 
			
			\node[align=center] at (0,-3.5) {Totally vanishing exterior\\Reifenberg condition\\(Definition~\ref{221013228})};
			
		\end{scope}

		\begin{scope}[scale=0.7, shift={(6.5,0)}]
			\begin{scope}
				\draw[decorate, decoration={random steps,segment length=2, amplitude=0.5}, fill=gray!20]
				(2,0) arc(0:180:2);
				\fill[gray!20] (-2,0)--(0,-2) -- (2,0)--(0,2);
				\fill[white]
				(0,-2) ..controls +(0,2) and +(-0.5,0) .. (2,0)--(2,-2);
				\draw (0,-2) ..controls +(0,2) and +(-0.5,0) .. (2,0);
				\fill[white]
				(0,-2) ..controls +(0,2) and +(0.5,0) .. (-2,0)--(-2,-2);
				\draw (0,-2) (0,-2) ..controls +(0,2) and +(0.5,0) .. (-2,0);

				\draw[fill=white, shift={(0,0.5)}] (0,0) arc(0:180:0.5) .. controls +(0,-0.5) and +(-0.5,0) .. (0,0) .. controls +(0.5,0) and +(0,-0.5) .. (1,0) arc(0:180:0.5);
			\end{scope}
			\draw[dashed] (0,2) circle (0.5);
			\draw[->] (-0.4,2.6) -- (-1,3.8); 
			
			\node[align=center] at (0,-3.5) {Totally vanishing exterior\\Reifenberg condition\\(Definition~\ref{2209151117})};
			
		\end{scope}
	\end{tikzpicture}
	
	\caption{Totally vanishing exterior Reifenberg condition}\label{230212856}
\end{figure}
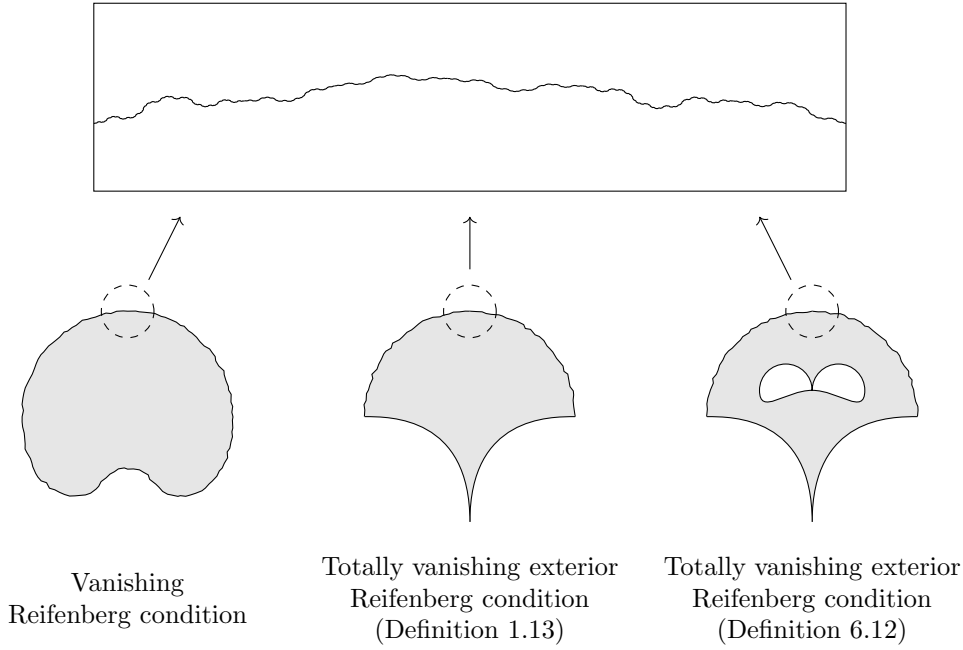

\begin{remark}\label{220918306}\,\,
	We claim that for any $R>0$, $\big(\delta(R),R\big)\in\mathbf{ER}_\Omega$. Take a sequence $\{\delta_n\}_{n\in\bN}$ such that $(\delta_n,R)\in \mathbf{ER}_\Omega$ and $\delta_n\rightarrow \delta(R)$.
	Let $p\in\partial\Omega$, and let $\Omega^{(i)}$ be a connected component of $\Omega\cap B(p,R)$.
	There exists a unit vector $e_n$ such that 
	\begin{align}\label{230115252}
		\Omega^{(i)}\subset \{x\in B_R(p)\,:\,(x-p)\cdot e_n<\delta_n R\}\,.
	\end{align}
	Since $\{e_n\}_{n\in\bN}\subset \partial B(0,1)$, there exists a subsequence $\{e_{n_k}\}_{k\in\bN}$ such that $e_p:=\lim\limits_{k\rightarrow \infty}e_{n_k}$ exists in $\partial B(0,1)$.
	It is impliled by \eqref{230115252} that 
	\begin{align*}
		\Omega^{(i)}\subset \{x\in B_R(p)\,:\,(x-p)\cdot e_p<\delta(R) R\}\,.
	\end{align*}
	Therefore $(\delta(R),R)\in \mathbf{ER}_\Omega$.
\end{remark}

%
%

\begin{example}\label{220910305}
\,\,

\begin{enumerate}
	\item
	If $\Omega$ satisfies the $(\delta,R_1)$-Reifenberg condition,
	then $\delta(R)\leq \delta$ for all $R\leq R_1$, indeed the first line of \eqref{22.02.26.41} implies \eqref{22.02.26.4} with $e_{p,r}^{(i)}=e_{p,r}$.
	Moreover, if $\Omega$ is bounded, then Propositoin~\ref{220918248} implies $\delta(R)\leq \frac{\mathrm{diam}(\Omega)}{R}$.
	Therefore, if $\Omega$ is a bounded domain satisfying the vanishing Reifenberg condition, then $\Omega$ also satisfies $\langle\mathrm{TVER}\rangle$.
	
	\item 
	By $\lambda_*(\bR^{d-1})$ we denote the little Zygmund class which is the set of all $f\in C(\bR^{d-1})$ such that
	$$
	\lim_{h\rightarrow 0}\sup_{x\in\bR^{d-1}}\frac{|f(x+h)-2f(x)+f(x-h)|}{|h|}=0\,.
	$$
	For $f\in\lambda_*(\bR^{d-1})$, put
	$$
	\Omega=\{(x',x_d)\in\bR^{d-1}\times \bR\,:\,x_d>f(x')\}\,.
	$$
	Then, as mentioned in \cite[Example 1.4.3]{CKL} (see also \cite[Theorem 6.3]{RA}), $\Omega$ satisfies the vanishing Reifenberg condition, which implies $\lim\limits_{R\rightarrow 0}\delta_{\Omega}(R)=0$.
	Moreover, since $A:=\|f\|_{C(\bR^{d-1})}<\infty$, Proposition~\ref{220918248} implies that $\delta(R)\leq \frac{2\|f\|_{C(\bR^{d-1})}}{R}$.
	Therefore $\Omega$ satisfies $\langle\mathrm{TVER}\rangle$.
	
	\item Suppose that $\Omega$ is bounded, and for any $p\in\partial\Omega$ there exists $R>0$ and $f\in\lambda_{\ast}(\bR^{d-1})$ such that
	$$
	\Omega\cap B(p,R)=\big\{y=(y',y_n)\in\bR^{d-1}\times \bR\,:\,|y|<R\,\,\,\text{and}\,\,\,y_n>f(y')\big\}\,,
	$$
	where $(y',y_n)=(y_1,\,\ldots\,,y_n)$ is an orthonormal coordinate system centered at $p$. Then $\Omega$ satisfies the vanishing Reifenberg condition, and therefore $\Omega$ satisfies $\langle\mathrm{TVER}\rangle$.
	
	\item Let $\Omega$ satisfy the exterior $R_0$-ball condition, \textit{i.e.}, there exists $R_0>0$ such that for any $p\in\partial\Omega$, there exists $q\in\bR^d$ satisfying $|p-q|=R_0$ and $B(q,R_0)\subset \Omega^c$.
	Then $\delta(R)\leq R/(2R_0)$, and therefore $\lim\limits_{R\rightarrow 0}\delta(R)=0$.
	
	\item If a domain $\Omega$ is an intersection of domains satisfying the totally vanishing Reifenberg condition, then $\Omega$ satisfies $\langle\mathrm{TVER}\rangle$.
\end{enumerate}
\end{example}

A sufficient condition for $\lim\limits_{R\rightarrow \infty}\delta_{\Omega}(R)=0$ is that $\delta_{\Omega}(R)\lesssim 1/R$.
We now provide an equivalent condition for $\Omega$ to satisfy $\delta_{\Omega}(R)\lesssim 1/R$.
Note that the definition of $\delta_{\Omega}(R)$ implies that $R\delta_{\Omega}(R)$ increases as $R\rightarrow \infty$, and therefore if $\delta_{\Omega}(r_0)>0$ for some $r_0>0$, then $\delta_{\Omega}(R)\gtrsim 1/R$ as $R\rightarrow \infty$.

\begin{prop}\label{220918248}
\begin{align*}
	\sup_{R>0}R\,\delta_{\Omega}(R)=\sup_{p\in\partial\Omega}d\big(p,\partial(\Omega_{\mathrm{c.h.}})\big)\,,
\end{align*}
where $\Omega_{\mathrm{c.h.}}$ is the convex hull of $\Omega$, \textit{i.e.},
$$
\Omega_{\mathrm{c.h.}}:=\big\{(1-t)x+ty\,:\,x,\,y\in\Omega\,\,,\,\,t\in[0,1]\big\}\,.
$$
\end{prop}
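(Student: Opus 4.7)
The plan is to realize $R\delta_\Omega(R)$ as a supremum of support-function quantities over the components of $\Omega\cap B(p,R)$ and then pass to the limit $R\to\infty$. Set
$$h_i(p,R):=\inf_{e\in S^{d-1}}\sup_{x\in\Omega_{p,R}^{(i)}}(x-p)\cdot e.$$
Unwinding the definition of $\mathbf{ER}_\Omega$ and using that each $\Omega_{p,R}^{(i)}$ is open gives
$$R\delta_\Omega(R)=\sup_{p\in\partial\Omega}\sup_i h_i(p,R),$$
and the paper has already observed that $R\delta_\Omega(R)$ is non-decreasing in $R$, so the task reduces to computing $\lim_{R\to\infty}\sup_{p,i}h_i(p,R)$.

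The key tool will be a convex-analytic identity: for every closed convex $K\subset\bR^d$ with $p\in\overline{K}$,
$$\inf_{e\in S^{d-1}}\sup_{x\in K}(x-p)\cdot e=d(p,\partial K),$$
with the convention $d(p,\emptyset)=+\infty$. The direction $\ge d(p,\partial K)$ is immediate from $B(p,d(p,\partial K))\subset K$. For the direction $\le$, I would choose $e^{*}=(x_0-p)/|x_0-p|$ with $x_0$ a closest boundary point and argue that any $z\in K$ with $(z-p)\cdot e^{*}>d(p,\partial K)$ would place $x_0$ in the interior of $\mathrm{conv}\bigl(\overline{B}(p,d(p,\partial K))\cup\{z\}\bigr)\subset K$, contradicting $x_0\in\partial K$; the case $p\in\partial K$ is handled by a supporting hyperplane at $p$.

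Applying this lemma with $K=\overline{\Omega_{\mathrm{c.h.}}}$ and using $\Omega_{p,R}^{(i)}\subset\Omega_{\mathrm{c.h.}}$ gives the upper bound $\sup_R R\delta_\Omega(R)\le\sup_p d(p,\partial\Omega_{\mathrm{c.h.}})$. For the matching lower bound at a fixed $p\in\partial\Omega$ and $\varepsilon>0$, I plan to exhaust $\Omega$ by compact sets $K_n\nearrow\Omega$ so that $\bigcup_n\mathrm{conv}(K_n)=\Omega_{\mathrm{c.h.}}$, and then prove $\inf_e\sup_{x\in K_n}(x-p)\cdot e\to d(p,\partial\Omega_{\mathrm{c.h.}})$ by extracting a convergent subsequence of near-minimizers on the compact sphere $S^{d-1}$ and passing to the limit along the monotone $\mathrm{conv}(K_n)$ (Dini's theorem is unavailable since the support function of $\Omega_{\mathrm{c.h.}}-p$ may take the value $+\infty$ in some directions when $\Omega$ is unbounded). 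Once $n$ is fixed with this inf $\ge d(p,\partial\Omega_{\mathrm{c.h.}})-\varepsilon$, I will use the path-connectedness of the domain $\Omega$ to build a bounded connected open $V_n\subset\Omega$ containing $K_n$ (finitely many open balls in $\Omega$ covering $K_n$, joined by open tubes around paths in $\Omega$); for any $R$ with $V_n\subset B(p,R)$, the set $K_n$ sits inside a single component $C^{*}_R$ of $\Omega\cap B(p,R)$, and hence
$$\sup_i h_i(p,R)\ge\inf_e\sup_{x\in C^{*}_R}(x-p)\cdot e\ge\inf_e\sup_{x\in K_n}(x-p)\cdot e\ge d(p,\partial\Omega_{\mathrm{c.h.}})-\varepsilon.$$
The main obstacle is precisely this interplay between the component structure of $\Omega\cap B(p,R)$ at finite $R$ and the full convex hull of $\Omega$: a small-$R$ component can have very small $h_i$ while $\Omega_{\mathrm{c.h.}}$ is much larger, and only once $R$ is large enough for a sufficiently convex-spread compact piece of $\Omega$ to reside in a single component does $h_i(p,R)$ approach $d(p,\partial\Omega_{\mathrm{c.h.}})$.
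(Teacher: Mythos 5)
Your proof is correct, and it reaches the same goal via broadly the same three ingredients as the paper (convexity, path-connectedness of $\Omega$, and compactness of $S^{d-1}$), but the packaging is genuinely different. The paper reduces the identity to the threshold equivalence
$\sup_{R}R\,\delta_\Omega(R)\le N_0$ iff $\sup_{p}d(p,\partial\Omega_{\mathrm{c.h.}})\le N_0$, splitting this into (Step 1) an intermediate half-space characterization (there exists $e_p$ with $\Omega\subset\{(x-p)\cdot e_p<N_0\}$), obtained from a nested sequence $\widetilde{\Omega}_n$ of components of $\Omega\cap B_n(p)$ that exhausts $\Omega$ and a convergent subsequence of unit vectors $e_n$, and (Step 2) an elementary translation between that half-space condition and $d(p,\partial\Omega_{\mathrm{c.h.}})\le N_0$, using the supremum $\alpha_0$ of $\alpha$ with $p+\alpha e_p\in\overline{\Omega_{\mathrm{c.h.}}}$ in one direction and a supporting hyperplane at a closest boundary point in the other. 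You instead work with the exact quantities: you rewrite $R\,\delta_\Omega(R)=\sup_{p,i}h_i(p,R)$, isolate a standalone support-function identity $\inf_{e}\sup_{x\in K}(x-p)\cdot e=d(p,\partial K)$ for closed convex $K\ni p$, deduce the upper bound at once from $\Omega_{p,R}^{(i)}\subset\Omega_{\mathrm{c.h.}}$, and prove the matching lower bound by exhausting with compacts $K_n$, extracting a convergent subsequence of minimizing directions, and then packaging $K_n$ inside a single component $C^*_R$ via a hand-built bounded connected neighbourhood $V_n$. Both routes are sound: the paper's nested-component exhaustion $\widetilde{\Omega}_n$ is a slicker way to get what your $V_n$ construction achieves and avoids building tubes; on the other hand, your explicit support-function lemma makes the convex-geometric core a reusable statement and your identity for $R\,\delta_\Omega(R)$ is arguably a cleaner starting point. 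One small caveat you should pin down if you write this up: the interior-point argument in your lemma (that $x_0\in\mathrm{int}\,\mathrm{conv}(\overline{B}(p,r)\cup\{z\})$ when $(z-p)\cdot e^*>r$) is true but not completely immediate and deserves a sentence of quantitative justification; and, as the paper implicitly uses, the fact that $\Omega_{\mathrm{c.h.}}$ (defined via two-point combinations) coincides with the usual convex hull relies on $\Omega$ being connected.
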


\begin{remark}\label{230104535}\,\,

\begin{enumerate}
	\item $\Omega_{\mathrm{c.h.}}$ is an open set, and the smallest convex set containing $\Omega$.
	
	\item Proposition~\ref{220918248} implies that $\delta_{\Omega}(\,\cdot\,)\equiv 0$ if and only if $\Omega$ is convex.
\end{enumerate}
\end{remark}

\begin{proof}[Proof of Proposition~\ref{220918248}]\,\,
We only need to prove that for $0<N_0<\infty$,
\begin{align}\label{230104440}
	\sup_{R>0}R\,\delta_{\Omega}(R)\leq N_0\quad  \Longleftrightarrow \quad \sup_{p\in\partial\Omega}d\big(p,\partial(\Omega_{\mathrm{c.h.}})\big)\leq N_0\,.
\end{align}

\textbf{Step 1.}
We first claim that the LHS of \eqref{230104440} holds
if and only if for any $p\in\partial\Omega$, there exists a unit vector $e_p$ such that
\begin{align}\label{220918300}
	\Omega\subset \{x\in\bR^d\,:\,(x-p)\cdot e_p<N_0\}\,.
\end{align}
The `if' part is obvious, and therefore we only need to prove the `only if' part.
Therefore we assume that the LHS of \eqref{230104440} holds. 
Fix $p\in\partial\Omega$, and take $\{\widetilde{\Omega}_n\}_{n\in\bN}$ satisfying the following:
\begin{enumerate}
	\item $\widetilde{\Omega}_n$ is a connceted component of $\Omega\cap B_n(p)$;
	\item $\widetilde{\Omega}_1\subset \widetilde{\Omega}_2\subset \widetilde{\Omega}_3\subset \cdots\,.$
\end{enumerate}
Since $\Omega$ is a domain, $\Omega$ is path connected, which implies
\begin{align}\label{2209201131}
	\bigcup_{n\in\bN}\widetilde{\Omega}_n=\Omega\,.
\end{align}
Since $R\delta(R)\leq N_0$, for each $n\in\bN$ there exists $e_n\in\partial B_1(0)$ such that
\begin{align}\label{2209201230}
	\widetilde{\Omega}_n\subset \{x\in\bR^d\,:\,(x-p)\cdot e_n<N_0\}\,.
\end{align}
Since $\partial B_1(0)$ is compact, there exists a subsequence $\{e_{n_k}\}$ such that
$$
\exists \lim_{k\rightarrow \infty}e_{n_k}=:e_p\in\partial B_1(0).
$$
Due to \eqref{2209201131} and \eqref{2209201230}, we obtain that \eqref{220918300} holds for this $e_p$.

\textbf{Step 2.}
Due to \eqref{230104440}, we only need to prove the following: for $p\in\partial\Omega$,
$$
\text{\eqref{220918300} holds for some $e_p\in\partial B_1(0)$} \quad \Longleftrightarrow \quad d\big(p,\partial(\Omega_{\mathrm{c.h.}})\big)\leq N_0\,.
$$

($\Rightarrow$) Observe that
$$
p\in \partial\Omega\subset\overline{\Omega_{\mathrm{c.h.}}}\subset \{x\in\bR^d\,:\,(x-p)\cdot e_p\leq N_0\}\,.
$$
Put
$$
\alpha_0=\sup\{\alpha\geq 0\,:\,p+\alpha e_p\in\overline{\Omega_{\mathrm{c.h.}}}\}\,.
$$
Then $p+\alpha_0e_p\in \partial(\Omega_{\mathrm{c.h.}})$, and therefore $d\big(p,\partial(\Omega_{\mathrm{c.h.}})\big)\leq \alpha_0\leq N_0$.

($\Leftarrow$) Take $q\in\partial(\Omega_{\mathrm{c.h.}})$ such that
$$
|p-q|=d\big(p,\partial(\Omega_{\mathrm{c.h.}})\big)\leq N_0\,.
$$
Due to Remarks~\ref{2209071233} and \ref{230104535}.(1), there exists a unit vector $\widetilde{e}_q$ such that
$$
\Omega_{\mathrm{c.h.}}\subset \{x\in\bR^d\,:\,(x-q)\cdot \widetilde{e}_q<0\}\,.
$$
This implies that for any $x\in\Omega\subset \Omega_{\mathrm{c.h.}}$, 
$$
(x-p)\cdot \widetilde{e}_q<(q-p)\cdot\widetilde{e}_q\leq |p-q|\leq  N_0\,.
$$
Therefore \eqref{220918300} holds for $e_p:=\widetilde{e}_q$.
\end{proof}

\begin{remark}
From Step 1 in the proof of Proposition~\ref{220918248}, it can be observed that this proposition remains valid even if the definition of $\delta_{\Omega}(R)$ is replaced by the infimum of $\delta>0$ such that, for any $p\in\partial\Omega$, there exists a unit vector $e_{p,R}$ satisfying \eqref{230203624} with $r=R$.
\end{remark}
\vspace{1mm}

Now we state the main result of this subsection.
We temporarily assume Theorem~\ref{22.02.19.6} and Corollary~\ref{22.07.16} (they are proved in the end of this subsection), and prove Corollary~\ref{22.07.17.109}.

\begin{thm}\label{22.02.19.6}
For any $\nu\in(0,1]$ and $\epsilon\in(0,1)$, there exists $\delta_1>0$ depending only on $d,\,\epsilon,\,\nu$ such that if 
$\Omega$ satisfies $\langle\mathrm{TER}_{\delta}\rangle$,
then there exists a measurable function $\phi:\Omega\rightarrow \bR$ satisfying the following:
\begin{enumerate}
	\item For any $(\alpha^{ij})_{d\times d}\in\mathrm{M}(\nu^2,1)$, $\alpha^{ij}D_{ij}\phi\leq 0$ in the sense of distribution
	
	\item There exists $N=N(d,\nu,\epsilon,R_0/R_\infty)>0$ such that
	$$
	N^{-1}\rho(x)^{1-\epsilon}\leq \phi(x)\leq N\rho(x)^{1-\epsilon}\quad\text{for all}\,\,\,x\in\Omega\,,
	$$
	where $R_0$ and $R_\infty$ are constants in \eqref{220916111}.
\end{enumerate}
\end{thm}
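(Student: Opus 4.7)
The plan is to construct $\phi$ as an infimum of concave building blocks coming from the Reifenberg half-space approximations, in the spirit of Theorem~\ref{22.02.19.5} (in which $\rho$ itself is superharmonic for every $\alpha^{ij}D_{ij}$ on a convex domain) combined with the multi-scale infimum device from the proof of Theorem~\ref{21.11.08.1}. The central observation is that for any affine function $\ell(x) = \delta R - (x-p)\cdot e$ and any $\epsilon \in (0,1)$, the power $\ell^{1-\epsilon}$ is concave on $\{\ell > 0\}$, so $\alpha^{ij}D_{ij}(\ell^{1-\epsilon}) \le 0$ pointwise for every $(\alpha^{ij}) \in \mathrm{M}(\nu^2,1)$; concavity is preserved under infimum, and the infimum of classical $\alpha^{ij}D_{ij}$-superharmonic functions is $\alpha^{ij}D_{ij}$-superharmonic in the distributional sense (the analog of Proposition~\ref{21.05.18.1}.(2), which for $\alpha^{ij}D_{ij}$ follows from concavity directly, or equivalently by reducing to $\Delta$ via the substitution $\mathrm{B}^2 = A$ used in the proof of Proposition~\ref{05.11.1}.(2)).

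\emph{Construction and estimates.} Fix a dyadic ratio $r_0 \in (0,1)$ and a constant $M > 1+\delta$, both depending on $\delta,\epsilon,\nu$. For each $p \in \partial\Omega$, each $k \in \bZ$ with $r_0^k \le R_0$ or $r_0^k \ge R_\infty$, and each connected component $\Omega_{p,k,i} := \Omega_{p,r_0^k}^{(i)}$ of $\Omega \cap B(p, r_0^k)$, the condition $\langle\mathrm{TER}_\delta\rangle$ supplies a unit vector $e_{p,k}^{(i)}$ with $\ell_{p,k,i}(x) := \delta r_0^k - (x-p)\cdot e_{p,k}^{(i)}$ lying in $(0, (1+\delta) r_0^k]$ on $\Omega_{p,k,i}$, and the block
\[
\phi_{p,k,i}(x) := \bigl(\min\{\ell_{p,k,i}(x),\ M r_0^k\}\bigr)^{1-\epsilon}
\]
is concave on $\Omega_{p,k,i}$. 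Set $\phi(x) := \inf\{\phi_{p,k,i}(x) : (p,k,i) \text{ admissible},\ x \in \Omega_{p,k,i}\}$; this yields (1). The upper bound $\phi(x) \le N \rho(x)^{1-\epsilon}$ follows by choosing $p$ closest to $x$ and $r_0^k$ the smallest admissible scale $\ge 2\rho(x)/\delta$: in the "regular" ranges $\rho(x) \le \delta R_0/2$ and $\rho(x) \ge \delta R_\infty/2$ one gets $r_0^k \simeq \rho(x)$ and hence $\phi_{p,k,i}(x) \lesssim \rho(x)^{1-\epsilon}$, while in the intermediate range $\rho(x) \in (\delta R_0/2,\ \delta R_\infty/2)$ one is forced to take $r_0^k \simeq R_\infty$, losing precisely the factor $(R_\infty/R_0)^{1-\epsilon}$ that accounts for the $R_0/R_\infty$-dependence in the theorem. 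The lower bound $\phi(x) \ge N^{-1} \rho(x)^{1-\epsilon}$ rests on the geometric lemma that for any admissible $(p,k,i)$ with $x \in \Omega_{p,k,i}$ and $|x-p| \le r_0^k/2$, the segment from $x$ in direction $e_{p,k}^{(i)}$ of length $\ell_{p,k,i}(x)$ stays inside $B(p, r_0^k)$ (when $\delta$ is small) and ends on $\partial H_{p,k}^{(i)}$, hence outside $\Omega_{p,k,i}$; since a line segment contained in $B(p, r_0^k)$ cannot pass between two distinct components of $\Omega \cap B(p, r_0^k)$ without crossing $\Omega^c$, it enters $\Omega^c$ at some point within length $\ell_{p,k,i}(x)$, giving $\rho(x) \le \ell_{p,k,i}(x)$. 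Combined with the truncation $M r_0^k$ and the trivial $r_0^k > |x-p| \ge \rho(x)$ for active blocks, this produces $\phi_{p,k,i}(x) \gtrsim \rho(x)^{1-\epsilon}$ uniformly.

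\emph{Main obstacle.} The principal technical difficulty is rigorously passing from the fact that each block $\phi_{p,k,i}$ is defined and concave only on its own component $\Omega_{p,k,i}$ to the distributional inequality $\alpha^{ij}D_{ij}\phi \le 0$ on all of $\Omega$: one must verify that the infimum over an $x$-dependent indexing family still satisfies the operator inequality, which is handled by noting that near any fixed $x_0 \in \Omega$ the set of active blocks is neighborhood-stable, so locally $\phi$ is an infimum of a fixed family of classical $\alpha^{ij}D_{ij}$-superharmonic functions and the analog of Proposition~\ref{21.05.18.1}.(2) applies. A secondary obstacle is the geometric lemma above, where the one-sided exterior nature of $\langle\mathrm{TER}_\delta\rangle$ is crucial (together with $\delta_1$ small) to ensure the segment does not prematurely exit $B(p,r_0^k)$ before reaching the hyperplane; the final calibration of $\delta_1, r_0, M$ against $\epsilon, \nu$ then produces the desired two-sided bound with a constant $N$ depending only on $d, \nu, \epsilon$, and $R_0/R_\infty$.
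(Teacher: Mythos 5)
Your proposal identifies the right high-level strategy (a multi-scale infimum of operator-superharmonic building blocks, one per boundary point, component, and dyadic scale, in the spirit of Theorem~\ref{21.11.08.1}), and your observation that powers $\ell^{1-\epsilon}$ of positive affine functions satisfy $\alpha^{ij}D_{ij}(\ell^{1-\epsilon})\le 0$ for every positive-definite $(\alpha^{ij})$ is correct. However, the construction differs from the paper's in a way that introduces two genuine gaps, both traceable to the same missing feature: your block $\phi_{p,k,i}$ does \emph{not} saturate to a fixed ceiling value near the boundary sphere $\partial B(p,r_0^k)$, whereas the paper's building block $w_{p,R}$ (Lemma~\ref{21.08.24.1}) is engineered so that $w_{p,R}\equiv 1$ on $\{|x-p|>(1-\delta)R\}$, which is exactly what makes the gluing across $\partial B(p,R)$ legitimate via Proposition~\ref{21.05.18.1}.(4). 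With $M>1+\delta$ your truncation is vacuous (since $\ell_{p,k,i}\le(1+\delta)r_0^k < Mr_0^k$ everywhere on $\Omega_{p,k,i}$), so the block $\phi_{p,k,i}=\ell_{p,k,i}^{1-\epsilon}$ is never capped, and nothing prevents it from being small near $\partial B(p,r_0^k)\cap\Omega$.

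The first gap is in the superharmonicity of the infimum. Your statement that ``near any fixed $x_0\in\Omega$ the set of active blocks is neighborhood-stable'' is false at $x_0\in\partial B(p,r_0^k)\cap\Omega$: the block $(p,k,i)$ is active on the inside of the sphere and absent on the outside. To glue across that sphere (the situation of Proposition~\ref{21.05.18.1}.(4)) one needs $\liminf_{x\to x_0,\,|x-p|<r_0^k}\phi_{p,k,i}(x)$ to be at least the value of the infimum over the remaining blocks at $x_0$. The paper guarantees this by forcing $w_{p,R}$ to its maximum near the sphere; your block has no such guarantee, and indeed $\ell_{p,k,i}$ can tend to $0$ on the outer annulus, so the infimum can jump \emph{up} as $x$ crosses the sphere — killing superharmonicity. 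The second gap is in the lower bound $\phi\gtrsim\rho^{1-\epsilon}$. Your geometric lemma ($\rho(x)\le\ell_{p,k,i}(x)$) requires the segment $x\mapsto x+\ell e_{p,k}^{(i)}$ to stay inside $B(p,r_0^k)$, which you only justify for $|x-p|\le r_0^k/2$. But $\phi$ is an infimum over \emph{all} admissible blocks, including those for which $|x-p|$ is near $r_0^k$. A computation of $|y-p|^2$ for $y=x+\ell e$ shows that if $\ell(x)<\rho(x)$ the segment forcibly exits the ball only when $|x-p|\gtrsim r_0^k(1-\delta^2)^{1/2}$, and in that thin outer shell the dichotomy gives only $\ell(x)\gtrsim\min\{\rho(x),\,(r_0^k-|x-p|)/\delta\}$, which can be much smaller than $\rho(x)$. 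Consequently $\phi(x)=\inf\phi_{p,k,i}(x)$ can drop well below $\rho(x)^{1-\epsilon}$ there. You also leave the bridging between the small-scale regime $r_0^k\le R_0$ and the large-scale regime $r_0^k\ge R_\infty$ at the level of a remark, whereas the paper inserts an explicit concave radial interpolation $f(t)=\alpha_1-\beta_1 t^{\gamma}$ (with $\gamma=-\nu^{-2}d+2<0$) and verifies its compatibility at both ends; without such a bridge the infimum is not controlled in the gap $(\delta^{k_0},1)$. The structural lesson from the paper's proof is that the blocks must be chosen so that they \emph{attain their ceiling value} on an annulus near the outer sphere (achieved via the radial profile $W^{(i)}(x)=\frac{1-(4R^{-1}|x-q|)^{2-\nu^{-2}d}}{1-2^{2-\nu^{-2}d}}$ centered at a point $q$ offset from $p$ in the direction $e_{p,R}^{(i)}$); your affine-power blocks lack this property, and the proposal does not supply a replacement for it.
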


\begin{corollary}\label{22.07.16}
For any $\epsilon\in(0,1)$, there exists $\delta_2>0$ depending only on $d,\,\epsilon$ such that if 
$\Omega$ satisfies $\langle\mathrm{TER}_{\delta}\rangle$,
then $\Omega$ satisfies $\mathbf{LHMD}(1-\epsilon)$.
Moreover, $M_{1-\epsilon}$ in \eqref{21.08.03.1} depends only on $d,\,\epsilon$ and $R_0/R_\infty$, where $R_0$ and $R_\infty$ are constants in \eqref{220916111}.
\end{corollary}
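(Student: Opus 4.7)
The plan is to derive $\mathbf{LHMD}(1-\epsilon)$ from the existence of the superharmonic function produced by Theorem~\ref{22.02.19.6}. First, set $\delta_2 := \delta_1(d, 1, \epsilon)$, where $\delta_1$ is the constant from Theorem~\ref{22.02.19.6} applied with $\nu = 1$ and the given $\epsilon$. Given any $\Omega$ satisfying $\langle\mathrm{TER}_{\delta}\rangle$ with $\delta \leq \delta_2$, Theorem~\ref{22.02.19.6} yields a superharmonic function $\phi$ on $\Omega$ with $N^{-1}\rho^{1-\epsilon} \leq \phi \leq N\rho^{1-\epsilon}$, where $N = N(d, \epsilon, R_0/R_\infty)$. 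By Remark~\ref{22.01.25.1}, I will replace $\phi$ with its classical superharmonic representative, keeping the two-sided bound (possibly adjusting $N$).

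Next I derive the harmonic-measure decay estimate from $\phi$. Fix $p \in \partial\Omega$, $r > 0$, and $x \in \Omega \cap B_r(p)$; the case $|x-p| \geq r/2$ is trivial since $w(x,p,r) \leq 1 \leq 2^{1-\epsilon}(|x-p|/r)^{1-\epsilon}$, so assume $|x-p| < r/2$. On $U := \Omega \cap B_r(p)$, I will construct a classical superharmonic majorant of $w(\,\cdot\,,p,r)$ of the form
\begin{align*}
V(y) \;:=\; A\,r^{-(1-\epsilon)}\,\phi(y) \;+\; \Psi_{p,r}(y),
\end{align*}
where $A>0$ is a constant and $\Psi_{p,r}$ is a non-negative classical superharmonic function on $U$ arranged so that $\liminf_{y\to z} V(y) \geq 1$ for every $z \in \Omega \cap \partial B_r(p)$ and $\liminf_{y\to z} V(y) \geq 0$ for every $z\in \partial\Omega\cap \overline{B_r(p)}$. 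The Perron--Wiener--Brelot comparison principle (used exactly as in \eqref{2301051056}) then gives $w(\,\cdot\,,p,r) \leq V$ on $U$. Combining with the pointwise bound $\phi(y) \leq N|y-p|^{1-\epsilon}$ (which follows from $\rho(y) \leq |y-p|$ and Theorem~\ref{22.02.19.6}) and an analogous bound on $\Psi_{p,r}$ of order $(|y-p|/r)^{1-\epsilon}$, we obtain $w(x,p,r) \leq M_{1-\epsilon}(|x-p|/r)^{1-\epsilon}$.

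The main obstacle is the construction of $\Psi_{p,r}$ with the correct behavior uniformly in $(p,r)$, since $\phi$ alone does not majorize $1$ on the outer sphere $\Omega\cap\partial B_r(p)$ (points of this sphere may come arbitrarily close to $\partial\Omega$, where $\phi$ is small). For scales $r \leq R_0$ or $r \geq R_\infty$, the hypothesis $\delta_\Omega(r) \leq \delta_2$ ensures that $\Omega\cap B_r(p)$ lies in the thin slab $\{y \in B_r(p) : (y-p)\cdot e_{p,r} < \delta r\}$, so that $\Psi_{p,r}$ can be taken to be the harmonic measure of the outer spherical cap in this slab (which is close to a half-ball and whose harmonic measure decays linearly in $|y-p|/r$, hence at rate $(|y-p|/r)^{1-\epsilon}$ with room to spare once $\delta_2$ is small enough). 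For intermediate scales $R_0 < r < R_\infty$, a finite chain of Harnack-type comparisons across the at most $\lceil \log(R_\infty/R_0)\rceil$ dyadic scales bridges the gap; this step is what introduces the dependence of $M_{1-\epsilon}$ on $R_0/R_\infty$ while leaving $\delta_2$ depending only on $d$ and $\epsilon$.
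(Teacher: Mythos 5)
Your proposal identifies the right superharmonic function but extracts the wrong object from Theorem~\ref{22.02.19.6}, and this creates exactly the obstacle you yourself flag. You take only the final conclusion of that theorem (a function $\phi$ with $\phi\simeq\rho^{1-\epsilon}$), which is tiny near every part of $\partial\Omega$, so it cannot majorize $1$ on the outer sphere $\Omega\cap\partial B_r(p)$ when that sphere approaches the boundary. You then try to patch this with auxiliary functions $\Psi_{p,r}$ and a chain argument at intermediate scales, but none of this is carried through, and it is considerably more machinery than the problem needs.

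The paper instead uses the intermediate objects produced in the \emph{proof} of Theorem~\ref{22.02.19.6}, not just its statement: for each $p\in\partial\Omega$ there is a classical superharmonic function $\phi_p$ (constructed in Steps~1--3 of that proof, see \eqref{230413212} and \eqref{2209161229}) satisfying $N_0^{-1}|x-p|^{1-\epsilon}\le\phi_p(x)\le N_0|x-p|^{1-\epsilon}$ on all of $\Omega$, with $N_0=N(d,\epsilon,R_\infty/R_0)$. The crucial point is that the lower bound is in terms of $|x-p|$ (distance to the single boundary point $p$), not $\rho(x)$ (distance to the whole boundary). Consequently $N_0 r^{-1+\epsilon}\phi_p\ge1$ on all of $\Omega\cap\partial B_r(p)$ (where $|x-p|=r$), and the PWB definition \eqref{2301051056} of $w(\cdot,p,r)$ immediately gives $w(x,p,r)\le N_0 r^{-1+\epsilon}\phi_p(x)\le N_0^2(|x-p|/r)^{1-\epsilon}$. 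No auxiliary $\Psi_{p,r}$, no slab harmonic measure, and no intermediate-scale bridging argument is required: the function $\phi_p$ already handles all scales uniformly because the gluing across scales was done inside the proof of Theorem~\ref{22.02.19.6}. The lesson is that the infimum $\phi=\inf_p\phi_p$ should not be taken before applying the comparison; the per-point functions $\phi_p$ are exactly what the PWB comparison requires.
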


\begin{corollary}\label{22.07.17.109}
Let $p\in(1,\infty)$, $\theta\in\bR$, $\gamma\in\bR$, $\nu\in(0,1]$ with
$$
-p-1<\theta-d<-1\,.
$$
Then there exists $\delta>0$ depending only on $d,p,\epsilon,\nu$ such that if
$\Omega$ satisfies $\langle\mathrm{TER}_{\delta}\rangle$,
then the assertions of (1) and (2) in Corollary~\ref{2208131026} hold for this $\Omega$, where $N_0$ in \eqref{220920553} depends only on $d$, $p$, $\gamma$, $\theta$, $R_0/R_\infty$, and $N_1$ in \eqref{220920554} depends only on $d$, $p$, $\gamma$, $\theta$, $\nu$, $R_0/R_\infty$.
Here, $R_0$ and $R_\infty$ are constants in \eqref{220916111}.

%
\end{corollary}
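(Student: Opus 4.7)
\textbf{Proof plan for Corollary~\ref{22.07.17.109}.} The strategy is to apply the weighted solvability theorems (Theorems~\ref{21.09.29.1} and \ref{22.02.18.6}) with a superharmonic Harnack function supplied by Theorem~\ref{22.02.19.6}, and then translate the conclusions back to the unweighted spaces $H^{\gamma+2}_{p,\theta}(\Omega)$ and $\cH^{\gamma+2}_{p,\theta}(\Omega_T)$ via Lemma~\ref{21.05.20.3}.(3) and Remark~\ref{21.10.05.2}.

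Given $(p,\theta,\nu)$ with $-p-1<\theta-d<-1$, I would first choose an auxiliary parameter $\epsilon=\epsilon(p,\theta)\in(0,1)$ small enough that
\[\mu:=\frac{d-2-\theta}{(1-\epsilon)p}\in\Bigl(-\tfrac1p,\,1-\tfrac1p\Bigr).\]
This is feasible because $-1<d-2-\theta<p-1$ leaves a strict margin on both sides; e.g.\ when $d-2-\theta>0$ any $\epsilon<1-\tfrac{p(d-2-\theta)}{p-1}$ works, and similarly for the lower bound. With this $\epsilon$ fixed I would set $\delta:=\min\bigl(\delta_1(d,\epsilon,\nu),\,\delta_2(d,\epsilon)\bigr)$, where $\delta_1,\delta_2$ are the constants provided by Theorem~\ref{22.02.19.6} and Corollary~\ref{22.07.16}; the resulting $\delta$ then depends only on $d,p,\theta,\nu$.

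Under the assumption $\langle\mathrm{TER}_\delta\rangle$, Corollary~\ref{22.07.16} yields $\mathbf{LHMD}(1-\epsilon)$, whence by Remark~\ref{21.07.06.1}.(2) the Hardy inequality \eqref{hardy} holds with $\mathrm{C}_0(\Omega)$ depending only on $d,\epsilon,R_0/R_\infty$. Simultaneously, Theorem~\ref{22.02.19.6} supplies $\phi$ with $\phi\simeq \rho^{1-\epsilon}$ and $\alpha^{ij}D_{ij}\phi\le 0$ in $\cD'(\Omega)$ for every $(\alpha^{ij})\in\mathrm{M}(\nu^2,1)$. Specialising to $(\alpha^{ij})=\delta^{ij}$ shows that $\phi$ is superharmonic, and combining $\phi\simeq\rho^{1-\epsilon}$ with Example~\ref{21.05.18.2}.(1),(3) and Lemma~\ref{21.05.27.3} shows that $\phi$ is a Harnack function for which $\Psi:=\trho^{\,1-\epsilon}$ serves as a regularization, with $\mathrm{C}_2(\Psi),\mathrm{C}_3(\phi,\Psi)$ controlled by $d,\epsilon,\nu,R_0/R_\infty$. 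Since $\cL\in\cM_T(\nu,\nu^{-1})$ corresponds to $\nu_1/\nu_2=\nu^2$, Proposition~\ref{05.11.1}.(2) applied to $\phi$ gives $I(\phi,p,\nu^2)=(-1/p,1-1/p)$, so the chosen $\mu$ lies in this set and the associated constant $\mathrm{C}_4$ depends only on $d,p,\nu,\mu$.

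With these ingredients in hand, Theorem~\ref{21.09.29.1} (applied with $\psi=\phi$) yields unique solvability of $\Delta u-\lambda u=f$ in $\Psi^\mu H^{\gamma+2}_{p,d-2}(\Omega)$ with the required estimate, and Theorem~\ref{22.02.18.6} yields the analogous parabolic solvability in $\Psi^\mu\cH^{\gamma+2}_{p,d-2}(\Omega_T)$, both with constants depending only on the parameters listed in the corollary. It remains to identify the weighted spaces with their unweighted counterparts: Lemma~\ref{21.05.20.3}.(3), its $B$-analogue in Lemma~\ref{22.04.15.148}.(1), and the induced identification on $\cH$ give
\[\Psi^\mu H^{s}_{p,\eta}(\Omega)=H^{s}_{p,\eta-(d-2-\theta)}(\Omega),\qquad \Psi^\mu B^{s}_{p,\eta}(\Omega)=B^{s}_{p,\eta-(d-2-\theta)}(\Omega),\]
with equivalence constants depending only on $d,p,s,\eta,\theta,\epsilon$. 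Specialising to $\eta\in\{d-2,d,d+2p-2\}$ turns the weighted estimates into precisely \eqref{220920553} and \eqref{220920554}. The only delicate point in the argument is the opening algebraic step: one must verify that a single $\epsilon=\epsilon(p,\theta)$ produces an admissible $\mu\in(-1/p,1-1/p)$ uniformly in $\theta$ across the open range $(d-p-1,d-1)$. Once this feasibility is secured, everything else is a bookkeeping exercise chaining the reductions above.
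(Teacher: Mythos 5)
Your proposal follows the paper's own proof almost verbatim: choose $\epsilon$ so that $\mu=\dfrac{d-2-\theta}{(1-\epsilon)p}\in(-\tfrac1p,1-\tfrac1p)$, take $\delta=\delta_1\wedge\delta_2$ from Theorem~\ref{22.02.19.6} and Corollary~\ref{22.07.16}, obtain the Hardy inequality via Corollary~\ref{22.07.16} and Remark~\ref{21.07.06.1}, take $\phi$ from Theorem~\ref{22.02.19.6} and check $\mu\in I(\phi,p,\nu^2)$ by Proposition~\ref{05.11.1}.(2), use $\Psi=\trho^{\,1-\epsilon}$ as regularization, apply Theorems~\ref{21.09.29.1} and \ref{22.02.18.6}, and unwind weights via Lemma~\ref{21.05.20.3}.(3). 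One small arithmetic slip in your feasibility check: for $d-2-\theta>0$ the constraint from $\mu<1-\tfrac1p$ is $\epsilon<1-\tfrac{d-2-\theta}{p-1}$, not $\epsilon<1-\tfrac{p(d-2-\theta)}{p-1}$; the latter can go nonpositive for $\theta$ still inside the admissible range, whereas the correct bound stays positive. This does not affect the conclusion, since the correct computation shows a valid $\epsilon\in(0,1)$ always exists when $\theta-d\in(-p-1,-1)$.
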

\begin{proof}[Proof of Corollary \ref{22.07.17.109}]
Take $\epsilon\in(0,1)$ such that
$$
-p-1+(p-1)\epsilon<\theta-d<-1-\epsilon
$$
and put
$$
\mu=-\frac{\theta-d+2}{p(1-\epsilon)}\in\big(-\frac{1}{p},1-\frac{1}{p}\big)\,.
$$
Put $\delta=\delta_1\wedge \delta_2>0$, where $\delta_1$ and $\delta_2$ are constants in Theorem~\ref{22.02.19.6} and Corollary~\ref{22.07.16}, respectively, for given $\epsilon$ and $\nu$.

By Corollary~\ref{22.07.16} and Remark~\ref{21.07.06.1}, the Hardy inequality \eqref{hardy} holds on $\Omega$ where $\mathrm{C}_0(\Omega)$ depends only on $d, \epsilon, R_{\infty}/R_0$.
Let $\phi$ be the function in Theorem~\ref{22.02.19.6}.
Due to Proposition~\ref{05.11.1}.(2), we obtain that $\mu\in I(\phi,\nu^2,p)$ and $\mathrm{C}_4$ in \eqref{21.07.12.1} can be chosen to depend only on $\mu$, $\nu$, and $p$.
Put $\Psi=\trho^{\,1-\epsilon}$ which is a regularization of $\phi$.
Then $\mathrm{C}_2(\Psi)$ and $\mathrm{C}_3(\phi,\Psi)$ can be chosen to depend only on $d$, $\epsilon$, $\nu$ and $R_0/R_{\infty}$.
By applying Theorem~\ref{21.09.29.1} and Theorem~\ref{22.02.18.6}, the proof is completed.
\end{proof}

To prove Theorem~\ref{22.02.19.6}, we need to construct functions used instead of the harmonic measure.

\begin{lemma}\label{21.08.24.1}
Suppose that $(\delta,R)\in\mathbf{ER}_{\Omega}$.
For any $\nu\in(0,1)$ and $p\in\partial\Omega$, there exists a continuous function $w_{p,R}:\Omega\rightarrow (0,1]$ satisfying the following:
\begin{enumerate}
	\item For any $\mathrm{B}\in\mathrm{M}(\nu,1)$, $w_{p,R}(\mathrm{B}\,\cdot\,)$ is a classical superharmonic function on $\mathrm{B}^{-1}\Omega$\,.
	
	\item $w_{p,R}=1\text{ on }\{x\in\Omega\,:\,|x-p|>(1-\delta)R\}$\,.
	\item $w_{p,R}\leq M\delta$ on $\Omega\cap B(p,\delta R)$\,.
\end{enumerate}
Here, $M$ depends only on $\nu$ and $d$.
In particular, $M$ is independent of $\delta$.
\end{lemma}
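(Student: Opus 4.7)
\textbf{Proof plan for Lemma \ref{21.08.24.1}.}

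The plan is to construct $w_{p,R}$ explicitly by patching together local barrier functions, one on each connected component of $\Omega\cap B_R(p)$, and then extending by $1$ on the rest of $\Omega$. By translation and scaling, assume $p=0$ and $R=1$; the dependence of $M$ on $\nu$ and $d$ is preserved, while $\delta$ is scale-invariant. For each connected component $\Omega^{(i)}$ of $\Omega\cap B_1(0)$, the hypothesis $(\delta,1)\in\mathbf{ER}_{\Omega}$ provides a unit vector $e^{(i)}$ with $\Omega^{(i)}\subset\{x\cdot e^{(i)}<\delta\}$. First I would build a continuous function $v^{(i)}:\Omega^{(i)}\to(0,1]$ that (a) satisfies $v^{(i)}\le M\delta$ on $\Omega^{(i)}\cap B(0,\delta)$, (b) satisfies $v^{(i)}(x)\to 1$ as $x$ approaches $\Omega^{(i)}\cap\partial B_1(0)$ (and more strongly, $v^{(i)}=1$ on $\Omega^{(i)}\cap\{|x|\ge 1-\delta\}$), and (c) satisfies $\alpha^{ij}D_{ij}v^{(i)}\le 0$ on $\Omega^{(i)}$ for every $\alpha=B^2\in\mathrm{M}(\nu^2,1)$.

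The central tool for (c) is that a classical superharmonic function for every $\alpha\in\mathrm{M}(\nu^2,1)$ can be built as a minimum of concave-of-affine functions, since if $f$ is concave and $L$ is affine then $\alpha^{ij}D_{ij}(f\circ L) = f''(L)\,\alpha^{ij}L_iL_j\le 0$, and because the minimum of such functions remains in this class. On the convex set $K^{(i)}:=B_{1-\delta}(0)\cap\{x\cdot e^{(i)}\le\delta\}$, which contains $\Omega^{(i)}\cap B_{1-\delta}(0)$, I would take $v^{(i)}$ of the form $\min\bigl(1,\,\eta^{(i)}(x)\bigr)$ where $\eta^{(i)}$ is concave on $K^{(i)}$ (or, more flexibly, has a Hessian lying in the cone of matrices $H$ with $\sum\lambda_i^+(H)\le\nu^2\sum|\lambda_i^-(H)|$, which is precisely the dual cone of $\mathrm{M}(\nu^2,1)$). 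The function $\eta^{(i)}$ is to be engineered using the half-space bound $x\cdot e^{(i)}<\delta$ so that $\eta^{(i)}\ge 1$ on $K^{(i)}\cap\{|x|\ge 1-\delta\}$ (this gives property (b)) and $\eta^{(i)}\le M\delta$ on $K^{(i)}\cap B(0,\delta)$ (this gives property (a) with $M$ dimensionless). Outside $K^{(i)}$ the function $v^{(i)}$ is simply set to $1$. The gluing of $v^{(i)}$ and the constant $1$ across the sphere $\partial B_{1-\delta}(0)$ inside $\Omega$ is handled through Proposition \ref{21.05.18.1}.(4): both pieces agree in value on the interface (by the boundary property (b) of $\eta^{(i)}$) and each is superharmonic on its side.

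Then I would define
\[
w_{p,R}(x) = \begin{cases} v^{(i)}(x), & x\in\Omega^{(i)},\\ 1, & x\in\Omega\setminus\bigcup_i\Omega^{(i)}, \end{cases}
\]
and verify the three requirements. Continuity follows from the matching at $\Omega\cap\partial B_1(0)$ (using $v^{(i)}\equiv 1$ on $\Omega^{(i)}\cap\{|x|\ge 1-\delta\}$) together with the fact that the components $\Omega^{(i)}$ are clopen in $\Omega\cap B_1(0)$, so no continuity issue arises between different components. Condition (2) is immediate from the construction since on $\Omega^{(i)}\cap\{|x|>1-\delta\}$ we have $v^{(i)}\equiv 1$, and on $\Omega\setminus B_1(0)$ we defined $w_{p,R}\equiv 1$. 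Condition (3) is immediate from the bound on each $v^{(i)}$. For condition (1), at every point $y\in B^{-1}\Omega$ we check superharmonicity of $w_{p,R}(B\,\cdot)$ locally: on $B^{-1}\Omega^{(i)}$ it follows from the $\alpha$-superharmonicity of $v^{(i)}$ with $\alpha=B^2$; on $B^{-1}(\Omega\setminus\overline{B_1(0)})$ the function is constantly $1$; and at points mapped to $\Omega\cap\partial B_1(0)$, the value is $1$ and $w_{p,R}\le 1$ nearby, so the mean value inequality holds trivially (combined with lower semicontinuity from continuity).

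The main obstacle will be the explicit design of $\eta^{(i)}$ so that the constant $M$ is independent of $\delta$. A purely affine choice $A-Bx\cdot e^{(i)}$ forces $M\gtrsim 1/\delta$, and a globally concave $\eta^{(i)}$ defined on a convex set whose convex hull of the far-annular portion contains $0$ is obstructed by the concave minimum principle. The way out is to exploit that $\eta^{(i)}$ only needs $\alpha^{ij}D_{ij}\eta^{(i)}\le 0$ for $\alpha\in\mathrm{M}(\nu^2,1)$, which is strictly weaker than NSD Hessian, allowing mildly positive Hessian eigenvalues up to a factor $\nu^2$ relative to the negative ones; combined with the half-space confinement $\Omega^{(i)}\subset\{x\cdot e^{(i)}<\delta\}$, this extra freedom is what produces a bound of the form $M\delta$ with $M=M(\nu,d)$. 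The verification of (1) at the sphere $\partial B_1(0)\cap\Omega$ (where gluing to the constant $1$ occurs) and the delicate choice of $\eta^{(i)}$ achieving both the $M\delta$ bound and the value $1$ at $|x|=1-\delta$ without contradicting superharmonicity are the two places where the argument needs most care.
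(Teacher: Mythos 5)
Your scaffolding is sound and matches the paper's: rescale to $p=0$, $R=1$; work component-by-component using the unit vectors $e^{(i)}$ from the exterior Reifenberg condition; glue local barriers to the constant $1$ via Proposition~\ref{21.05.18.1}.(4); and use the dual-cone description of the Hessian condition, which you characterize correctly as $\sum\lambda_i^+(H)\le\nu^2\sum|\lambda_i^-(H)|$. You also correctly identify the two dead ends (a function of $x\cdot e^{(i)}$ alone yields $M\gtrsim 1/\delta$; a globally concave $\eta^{(i)}$ is blocked by the concave minimum principle because $0$ is interior to $K^{(i)}$). But the actual construction of the barrier --- which is the entire content of the lemma --- is left as an acknowledged open step, and the route you sketch toward filling it (functions engineered directly from the half-space coordinate) does not appear to lead anywhere, precisely for the reasons you yourself raise.

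The missing idea is to stop trying to build $\eta^{(i)}$ out of the coordinate $x\cdot e^{(i)}$ and instead translate the \emph{center} of a radial barrier to the exterior side of the separating half-plane. Concretely, the paper takes $q = p + R(\delta+1/4)\,e^{(i)}$, so that $\Omega^{(i)}_{p,R}\cap B(q,R/4)=\emptyset$, and sets
$$
W^{(i)}(x)=\frac{1-\big(4R^{-1}|x-q|\big)^{2-\nu^{-2}d}}{1-2^{2-\nu^{-2}d}}\,.
$$
Because $2-\nu^{-2}d<0$, the profile $f(t)=\frac{1-(4t/R)^{2-\nu^{-2}d}}{1-2^{2-\nu^{-2}d}}$ has $f'>0$, $f''<0$, and the computation \eqref{2209161108} (i.e., $\alpha^{kl}D_{kl}f(|x|)\le\nu^2 f''+\frac{d-\nu^2}{|x|}f'$) combined with the specific power shows $\alpha^{kl}D_{kl}W^{(i)}\le 0$ for every $\alpha\in\mathrm{M}(\nu^2,1)$ off $q$. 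This radial form is precisely the object that populates your dual cone: one negative (radial) Hessian eigenvalue paired against $d-1$ positive (tangential) ones. The half-space condition enters only \emph{geometrically}: it guarantees $|x-q|\ge R/4$ for $x\in\Omega^{(i)}_{p,R}$, while $|x-p|\le\delta R$ forces $|x-q|\le R/4+2\delta R$, so $W^{(i)}(x)\le M_0\big(4R^{-1}|x-q|-1\big)\le 8M_0\delta$ with $M_0=M_0(d,\nu)$. For $|x-p|\ge(1-\delta)R$ (and $\delta\le 1/8$, the complementary trivial case $\delta>1/8$ being handled by $w\equiv 1$, $M=8$), one has $|x-q|\ge R/2$, hence $W^{(i)}\ge 1$; this is what makes $w_{p,R}:=W^{(i)}\wedge 1$ on $\Omega^{(i)}_{p,R}$, $:=1$ on $\Omega\setminus B_R(p)$, match continuously and satisfy all three properties with $M$ depending only on $d,\nu$. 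Your proposal never reaches this construction, so as written there is a genuine gap where the barrier is supposed to appear.
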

\begin{proof}[Proof of Lemma~\ref{21.08.24.1}]
If $\delta>1/8$, then by putting $w_{p,R}\equiv 1$ and $M=8$, this lemma is proved.
Therefore we only need to consider the case $\delta\leq 1/8$.
For a fixed $p\in\partial\Omega$, let $\big\{\Omega^{(i)}_{p,R}\big\}$ be the set of all connected components of $\Omega\cap B(p,R)$.
For each $i$, take a unit vector $e_{p,R}^{(i)}$ satisfying \eqref{22.02.26.4}.
Put
\begin{align}\label{230116526}
	q=p+R(\delta+1/4)e^{(i)}_{p,R}
\end{align}
so that 
\begin{align}\label{220920414}
	|p-q|=R(\delta+1/4)\quad\text{and}\quad \Omega_{p,R}^{(i)}\cap B(q,R/4)\neq \emptyset
\end{align}
(see Figure~\ref{230116528} below).

\begin{figure}[h]
	\begin{tikzpicture}[>=Latex]
		
		\begin{scope}[scale=0.7]
			\clip (0,0) circle (3.3);		
			
			\begin{scope}[rotate=90]	
				\begin{scope}
					\clip[shift={(-0.15,0)}](-0.2,-{sqrt(10)}) .. controls +(0.1,0.5) and +(0,-.5) .. (0.3,-1.5) .. controls +(0,0.5) and +(0.1,-0.5) .. (0.06,-0.6) .. controls +(-0.1,0.5) and +(0,-0.5) .. (0,0) .. controls +(0,0.5) and +(-0.05,-0.5) .. (0.06,0.6).. controls +(0.05,0.5) and +(0,-0.4) .. (0.25,1.5).. controls +(0,+0.4) and +(-0.2,-0.75) .. (.25,{sqrt(10)}) arc({atan(3)}:270:{sqrt(11)});
					\clip (0,0) circle (3);
					\foreach \i in {-7,-6.5,...,0}	
					{\draw (\i,-3.5)--(\i+4,3.5);}
				\end{scope}
				
				\draw[decoration={Koch, Koch angle=20, Koch order=3}] 
				decorate {(0,0) -- (0.06,0.6)};
				\draw[decoration={Koch, Koch angle=15, Koch order=3}] 
				decorate {(0.25,1.5)--(0.06,0.6)};
				\draw[decoration={Koch, Koch angle=20, Koch order=3}] 
				decorate {(0.25,1.5) -- (0.25,{sqrt(10)})};
				\draw[decoration={Koch, Koch angle=24, Koch order=3}] 
				decorate {(0.06,-0.6)--(0,0)};
				\draw[decoration={Koch, Koch angle=20, Koch order=3}] 
				decorate {(0.06,-0.6) -- (0.3,-1.5)};
				\draw[decoration={Koch, Koch angle=15, Koch order=3}] 
				decorate {(0.3,-1.5) -- (-0.2,-{sqrt(10)})};
				
				\draw (0,0) circle ({sqrt(10)});

				\draw[fill=black] (0,0) circle (0.06);
				
				\draw (1.15,0) circle (0.75);
				\draw[fill=black] (1.15,0) circle (0.06);	

			\end{scope}
			

			
		\end{scope}
		
		\begin{scope}

			\fill[white] (-1,-1.2) circle (0.5);
			\draw (-1,-1.2) node {$\Omega_{p,R}^{(i)}$};
			
			\fill[white] (-0.14,-0.25) circle (0.23);
			
			\draw (-0.1,-0.25) node {$p$};
			
			\draw (-0.14,1) node {$q$};
			
			\draw[dashed] (-4,0.28) -- (0,0.28);
			\draw[dashed] (4,0.28) -- (0,0.28);	
			\draw[dashed] (4,0)-- (0,0) ;	
			\draw[->] (3,0)--(3,1.7);
			\draw (3.6,1) node {$e_{p,R}^{(i)}$};
		\end{scope}
	\end{tikzpicture}
	\caption{$q$ and $B(q,R/4)$  in \eqref{230116526}, \eqref{220920414}}\label{230116528}
\end{figure}

%
%

Put
\begin{align}\label{230104814}
W^{(i)}(x)=\frac{1-(4R^{-1}|x-q|)^{2-\nu^{-2}d}}{1-2^{2-\nu^{-2}d}}\,.
\end{align}
Then we have
$$
\sum_{k,l}\alpha^{kl}D_{kl}\,W^{(i)}\leq 0\quad \text{on}\,\,\,\, \bR^d\setminus\{q\},\quad \text{for all }(\alpha^{kl})_{d\times d}\in\mathrm{M}(\nu^2,1)\,.
$$
Indeed, for $f\in C^2(\bR_+)$, if $f'\geq 0$ and $f''\leq 0$ then
\begin{align}\label{2209161108}
\begin{split}
	&\sum_{k,l=1}^d\alpha^{kl}D_{kl}\big(f(|x|)\big)\\
	=\,\,&\frac{\sum_{i,j}\alpha^{kl}x_kx_l}{|x|^2}f''(|x|)+\left(\frac{\sum_{k}\alpha^{kk}}{|x|}-\frac{\sum_{k,l}\alpha^{kl}x_kx_l}{|x|^3}\right)f'(|x|)\\
	\leq \,\,&\nu^2 f''(|x|)+\frac{d-\nu^2}{|x|}f'(|x|)\,.
\end{split}
\end{align}
Observe that
\begin{alignat*}{2}
0\leq W^{(i)}(x)\,&\leq M_0\big(4R^{-1}|x-q|-1\big)\quad&&\text{if}\,\,\,|x-q|\geq R/4\,;\\
W^{(i)}(x)\,&\geq 1\quad&&\text{if}\,\,\,|x-q|\geq R/2\,,
\end{alignat*}
where $M_0$ is a constant depends only on $\nu$ and $d$.
Due to \eqref{220920414} and that $\delta<\frac{1}{8}$, for $x\in\Omega_{p,R}^{(i)}$,
\begin{alignat*}{2}
&\text{if\quad $|x-p|\leq\delta R$}\,\,,\,\,\,&&\text{then\quad$\frac{R}{4}\leq |x-q|\leq \frac{R}{4}+2\delta R$}\,;\\
&\text{if\quad$|x-p|\geq (1-\delta)R$}\,\,,\,\,\,&&\text{then\quad$|x-q|\geq \frac{(3-8\delta)R}{4}\geq \frac{R}{2}$}\,.
\end{alignat*}
Therefore we obtain that
\begin{alignat*}{2}
\quad 0\leq &W^{(i)}(x)\leq 8M_0\delta\qquad&&\text{if}\,\,\,|x-p|\leq \delta R\\
\quad &W^{(i)}(x)\geq 1\qquad&&\text{if}\,\,\,|x-p|\geq (1-\delta)R\,.
\end{alignat*}
Put
\begin{align*}
w_{p,R}(x)=
\begin{cases}
	W^{(i)}(x)\wedge 1&\text{if}\,\,\,x\in\Omega^{(i)}_{p,R}\\
	1&\text{if}\,\,\,x\in\Omega\setminus B(p,R)\,.
\end{cases}
\end{align*}
Then $w_{p,R}$ is continuous on $\Omega$, and satisfies (2) and (3) of this lemma.
(1) of this lemma follows from \eqref{230104814} and Proposition~\ref{21.05.18.1}.
\end{proof}

\begin{proof}[Proof of Theorem~\ref{22.02.19.6}]
We only need to prove for $\nu\in(0,1)$.
Let $M>0$ be the constant in Lemma~\ref{21.08.24.1}.
For a fixed $\epsilon\in (0,1)$, take small enough $\delta\in(0,1)$ such that $M\delta< \delta^{1-\epsilon}$, and take small enough $\eta\in(0,1)$ such that
$$
(1-\eta)M\delta+\eta\leq \delta^{1-\epsilon}\,.
$$
We assume that $\Omega$ satisfies \eqref{220916111} for this $\delta$.
By using dilation and Remark~\ref{220918306}, without lose of generality, we assume that $(\delta,R)\in\mathbf{ER}_{\Omega}$ whenever $R\leq \widetilde{R}_0:=R_0/R_\infty\,\,(\leq 1)$ or $R\geq 1$.

\textbf{Step 1.}
Put
$$
k_0=\min\big\{k\in\bN\,:\,\delta^k\leq \widetilde{R}_0\big\}\quad\text{and}\quad \cI=\{k\in\bZ\,:\,k\leq 0\,\,\,\text{or}\,\,\, k\geq k_0\}\,,
$$
so that $(\delta,\delta^k)\in\mathbf{ER}_\Omega$ for every $k\in\cI$.
For each $p\in\partial\Omega$ and $k\in\cI$, put
\begin{align*}
\phi_{p,k}=\delta^{k(1-\epsilon)}\Big((1-\eta)\, w_{p,\delta^k}+\eta\Big)\,,
\end{align*}
where $w_{p,\delta^k}$ is the function $w_{p,R}$ in Lemma~\ref{21.08.24.1} with $R=\delta^k$.
Note that
\begin{alignat*}{2}
&\qquad \phi_{p,k}(x)\leq \delta^{(k+1)(1-\epsilon)}&&\text{on}\quad \Omega\cap \overline{B}(p,\delta^{k+1})\,;\\
&\qquad \phi_{p,k}(x)=\delta^{k(1-\epsilon)} &&\text{on}\quad \Omega\cap \partial B(p,\delta^k)\,;\\
&\eta\cdot \delta^{k(1-\epsilon)}\leq \phi_{p,k}\leq \delta^{k(1-\epsilon)}\qquad&&\text{on}\quad \Omega\cap B(p,\delta^k)\,.
\end{alignat*}
Put
\begin{alignat*}{2}
&\phi_p^{(1)}(x):=\inf\{\phi_{p,k}(x)\,:\,k\geq k_0\,\,,\,\,|x-p|< \delta^k\}\quad&&\text{for}\quad |x-p|< \delta^{k_0}\,;\\
&\phi_p^{(2)}(x):=\inf\{\phi_{p,k}(x)\,:\,k\leq 0\,\,,\,\,|x-p|< \delta^k\}\quad&&\text{for}\quad |x-p|>\delta\,.
\end{alignat*}
The similar argument with the proof of Theorem~\ref{21.11.08.1} implies that for any $\mathrm{B}\in\mathrm{M}(\nu,1)$, $\phi_p^{(1)}(\mathrm{B}\,\cdot\,)$ and $\phi_p^{(2)}(\mathrm{B}\,\cdot\,)$ are classical superharmonic functions on
$$
\{\mathrm{B}^{-1}x\,:\,x\in\Omega\cap B(p,\delta^k)\}\quad\text{and}\quad \{\mathrm{B}^{-1}x\,:\,x\in \Omega\setminus \overline{B}(p,\delta)\}\,,
$$
respectively.
Moreover, for each $i\in\{1,2\}$, $\phi_p^{(i)}(x)$ satisfies
\begin{align}\label{2209161228}
\eta |x-p|^{1-\epsilon}\leq \phi_p^{(i)}(x)\leq \delta^{-1+\epsilon}|x-p|^{1-\epsilon}
\end{align}
on its domain.
\vspace{1mm}

\textbf{Step 2.}
Observe that
\begin{equation}\label{230105945}
\begin{alignedat}{2}
	&\phi_p^{(1)}(x)\leq \phi_{p,k_0}(x)\leq \delta^{(k_0+1)(1-\epsilon)}\quad&&\text{if}\,\,\,|x-p|=\delta^{k_0+1}\,\,;\\
	&\phi_p^{(1)}(x)=\phi_{p,k_0}(x)=\delta^{k_0(1-\epsilon)}\quad &&\text{if}\,\,\,|x-p|=\delta^{k_0}\,.
\end{alignedat}
\end{equation}
Put $\gamma=-\nu^{-2}d+2<0$ and take $\alpha_1,\,\beta_1\in\bR$ such that $f(t):=\alpha_1-\beta_1t^{\gamma}$ satisfies
\begin{align}\label{23010594511}
f(\delta^{k_0+1})=\delta^{(k_0+1)(1-\epsilon)}\quad\text{and}\quad f(\delta^{k_0})=\delta^{k_0(1-\epsilon)}\,.
\end{align}
Since $f(\delta^{k_0+1})<f(\delta^{k_0})$, we have $\beta_1>0$, which implies the following:
\begin{itemize}
\item Due to \eqref{2209161108}, for any $(\alpha^{ij})_{d\times d}\in\mathrm{M}(\nu^2,1)$,
$$
\sum_{i,\,j}\alpha^{ij}D_{ij}\Big(f\big(|\cdot-p|\big)\Big)\leq 0\,;
$$

\item $f(t)$ increases as $t\rightarrow \infty$.
In particular, $f(t)\geq \delta^{(k_0+1)(1-\epsilon)}$ for all $t\geq \delta^{k_0+1}$.
\end{itemize}
Put
\begin{align*}
\widetilde{\phi}_p^{(1)}:=
\begin{cases}
	\phi_p^{(1)}&\quad \text{on}\,\,\,\big\{x\in\Omega\,:\,|x-p|\leq \delta^{k_0+1}\big\}\\
	\phi_p^{(1)}\wedge f(|\,\cdot\,-p|) &\quad \text{on}\,\,\,\big\{x\in\Omega\,:\,\delta^{k_0+1}<|x-p|< \delta^{k_0}\big\}\\
	f(|\,\cdot\,-p|)&\quad \text{on}\,\,\,\big\{x\in\Omega\,:\,|x-p|\geq \delta^{k_0}\big\}\,.
\end{cases}
\end{align*}
Due to Proposition~\ref{21.05.18.1}.(4), \eqref{230105945}, and \eqref{23010594511}, we obtain that for any $\mathrm{B}\in\mathrm{M}(\nu,1)$, $\widetilde{\phi}_p^{(1)}(\mathrm{B}\,\cdot\,)$ is a classical superharmonic function on $\mathrm{B}^{-1}\Omega$.

Take $\alpha_2>0,\,\beta_2\in\bR$ such that
\begin{align}\label{2301051008}
\alpha_2\eta\delta^{1-\epsilon}+\beta_2=f(\delta)\quad\text{and}\quad \alpha_2\delta^{-1+\epsilon}+\beta_2=f(1)\,.
\end{align}
Then, due to \eqref{2209161228} and \eqref{2301051008}, $\widetilde{\phi}_p^{(2)}:=\alpha_2\phi_p^{(2)}+\beta_2$ satisfies that
\begin{equation}\label{230105950}
\begin{alignedat}{2}
	&\widetilde{\phi}_p^{(1)}(x)=f(\delta)\leq \widetilde{\phi}_p^{(2)}(x)\quad&&\text{on}\,\,\,\{x\in\Omega\,:\,|x-p|=\delta\}\quad(\text{in the sense of limit});\\
	&\widetilde{\phi}_p^{(1)}(x)=f(1)\geq \widetilde{\phi}_p^{(2)}(x)\quad &&\text{on}\,\,\,\{x\in\Omega\,:\,|x-p|=1\}
\end{alignedat}
\end{equation}
(see \eqref{2209161228}).
%
Due to Proposition~\ref{21.05.18.1}.(4), \eqref{230105950}, and that $\alpha_2>0$, the function
\begin{align*}
\phi_p(x):=
\begin{cases}
	\widetilde{\phi}_p^{(1)}&\quad \text{on}\,\,\,\big\{x\in\Omega\,:\,|x-p|\leq \delta\big\}\,;\\
	\widetilde{\phi}_p^{(1)}\wedge \widetilde{\phi}_p^{(2)} &\quad \text{on}\,\,\,\big\{x\in\Omega\,:\,\delta<|x-p|< 1\big\}\,;\\
	\widetilde{\phi}_p^{(2)}&\quad \text{on}\,\,\,\big\{x\in\Omega\,:\,|x-p|\geq 1\big\}\,,
\end{cases}
\end{align*}
satisfies that for any $\mathrm{B}\in\mathrm{M}(\nu,1)$,
\begin{align}\label{230413212}
\text{$\phi_p(\mathrm{B}\,\cdot\,)$ is a classical superharmonic function on $\mathrm{B}^{-1}\Omega$.}
\end{align}
\vspace{1mm}

\textbf{Step 3.}
We claim that for every $x\in\Omega$,
\begin{align}\label{2209161229}
N^{-1}|x-p|^{1-\epsilon}\leq \phi_p(x)\leq N|x-p|^{1-\epsilon}\,,
\end{align}
where $N=N(d,\epsilon,\nu,\widetilde{R}_1)>0$.
Note that
\begin{align}\label{230105954}
\phi_p=
\begin{cases}
	\vspace{1mm}
	\,\,\, \phi_p^{(1)}&\quad \text{on}\,\,\,\big\{x\in\Omega\,:\,|x-p|\leq \delta^{k_0+1}\big\}\\
	\vspace{1mm}
	\,\,\,\phi_p^{(1)}\wedge f(|\,\cdot\,-p|)&\quad \text{on}\,\,\,\big\{x\in\Omega\,:\,\delta^{k_0+1}<|x-p|< \delta^{k_0}\big\}\\
	\vspace{1mm}
	\,\,\,f(|\,\cdot\,-p|)&\quad \text{on}\,\,\,\big\{x\in\Omega\,:\,\delta^{k_0}\leq |x-p|\leq \delta\big\}\\
	\vspace{1mm}
	\,\,\,f(|\,\cdot\,-p|)\wedge \widetilde{\phi}_p^{(2)} &\quad \text{on}\,\,\,\big\{x\in\Omega\,:\,\delta<|x-p|< 1\big\}\\
	\vspace{1mm}
	\,\,\,\widetilde{\phi}_p^{(2)}&\quad \text{on}\,\,\,\big\{x\in\Omega\,:\,|x-p|\geq 1\big\}\,.
\end{cases}
\end{align}

\textbf{Step 3.1)} It is provided in \eqref{2209161228} that 
\begin{align}\label{230105957}
\eta|x-p|^{1-\epsilon}\leq \phi_p^{(1)}(x)\leq \delta^{-1+\epsilon}|x-p|^{1-\epsilon}\quad\text{on}\quad \big\{x\in\Omega\,:\,|x-p|<\delta^{k_0}\big\}\,.
\end{align}

\textbf{Step 3.2)} Since 
$$
f(t)=\alpha_1-\beta_1t^{\gamma}\,\,,\,\,\,\,\beta_1>0\,\,,\,\,\text{and}\,\,\,\,\gamma<-d+2\leq 0\,,
$$
we have
\begin{align}\label{2301059571}
\delta^{(k_0+1)(1-\epsilon)}=f(\delta^{k_0+1})\leq f\big(|x-p|\big)\leq f(1)\quad\text{if}\quad \delta^{k_0+1}<|x-p|< 1
\end{align}
(this implies that $f(1)=\alpha_1-\beta_1>0$).

\textbf{Step 3.3)} Note that $\widetilde{\phi}_p^{(2)}=\alpha_2\phi_p^{(2)}+\beta_2$ and $\alpha_2>0$. 
Take $K\geq 1$ such that 
$$
\alpha_2\eta K^{1-\epsilon}\geq 2|\beta_2|\,.
$$
For $x\in \Omega$ satisfying $\delta<|x-p|<K$, it follows from \eqref{2209161228}, \eqref{2301051008}, and \eqref{2301059571} that
\begin{equation*}
\begin{aligned}
	&\widetilde{\phi}_p^{(2)}(x)\geq \alpha_2\eta\delta^{1-\epsilon}+\beta_2=f(\delta)\geq \delta^{(k_0+1)(1-\epsilon)}\quad\text{and}\\
	&\widetilde{\phi}_p^{(2)}(x)\leq \alpha_2\delta^{-1+\epsilon}K^{1-\epsilon}+\beta_2\,.
\end{aligned}
\end{equation*}
Therefore there exists $N=N(\delta,\alpha_2,\beta_2,K)$ such that
\begin{align}\label{2301048512}
N^{-1}|x-p|^{1-\epsilon}\leq \widetilde{\phi}^{(2)}_p(x)\leq N|x-p|^{1-\epsilon}\,.
\end{align}

\textbf{Step 3.4)} If $|x-p|\geq K$, then 
$$
2|\beta_2|\leq \alpha_2\eta K^{1-\epsilon}\leq \alpha_2\eta|x-p|^{1-\epsilon}\,.
$$
Due to \eqref{2209161228}, we have
\begin{align}\label{2301048513}
\frac{\eta\alpha_2}{2}|x-p|^{1-\epsilon}\leq \widetilde{\phi}_p^{(2)}(x)\leq \alpha_2\big(\delta^{-1+\epsilon}+\frac{\eta}{2}\,\big)|x-p|^{1-\epsilon}\,.
\end{align}

Since $k_0,\eta,\,\alpha_1\,\beta_1,\,\alpha_2,\,\beta_2,\,K$ depend only on $d,\,\nu,\,\epsilon,\,\delta,\,\widetilde{R}_0$, 
\eqref{230105954} - \eqref{2301048513} imply \eqref{2209161229}.

\vspace{1mm}

\textbf{Step 4.}
Put $\phi(x):=\inf\limits_{p\in\partial\Omega}\phi_{p}(x)$.
Then 
$$
N^{-1}\rho(x)\leq \phi(x)\leq N\rho(x)\,,
$$
where $N$ is the same constant as in \eqref{2209161229}.
For any fixed $\mathrm{B}\in \mathrm{M}(\nu,1)$, due to \eqref{230413212} and Proposition~\ref{21.05.18.1}.(2), $\phi(\mathrm{B}\,\cdot\,)$ is superharmonic on $\mathrm{B}^{-1}\Omega$.
\end{proof}

\begin{proof}[Proof of Corollary~\ref{22.07.16}]
For a given $\epsilon>0$, let $\delta$ be the constant in Theorem~\ref{22.02.19.6} with $\nu=1/2$, and supposes that \eqref{220916111} holds for this $\delta$.
The proof of Theorem~\ref{22.02.19.6} (see \eqref{230413212} and \eqref{2209161229}) implies that for each $p\in\partial\Omega$, there exists a classical superharmonic function $\phi_p$ such that
$$
N_0^{-1} |x-p|^{1-\epsilon}\leq \phi_p(x)\leq N_0 |x-p|^{1-\epsilon}\quad\text{for any}\,\,\,x\in\Omega\,,
$$
where $N_0=N(d,\epsilon,R_\infty/R_0)>0$.
Note that 
$$
N_0r^{-1+\epsilon}\phi_p\geq 1\quad\text{on}\quad \Omega\cap \partial B_r(p)\,.
$$
From the definition of the harmonic measure $w(\,\cdot\,,p,r)$ (see \eqref{2301051056}), we obtain that if $r>0$ and $x\in \Omega\cap B_r(p)$, then
$$
w(x,p,r)\leq N_0 r^{-1+\epsilon}\phi_p(x)\leq N_0^2\Big(\frac{|x-p|}{r}\Big)^{1-\epsilon}\,.
$$
Therefore we obtain \eqref{21.08.03.1} with $\alpha=1-\epsilon$ and $M_{\alpha}=N_0^2$.
\end{proof}

\vspace{2mm}

\subsection{Conic domains}\label{0074}\,

$\bS^{d-1}$ denotes the set $\{x\in\bR^d\,:\,|x|=1\}$, and $A_{\bS}$ denotes the surface measure on $\bS^{d-1}$. 
Note that for any nonnegative Borel function $F$ on $\bR^{d}\setminus\{0\}$,
$$
\int_{\bR^{d}\setminus\{0\}} F(x)\dd x=\int_0^{\infty}\Big(\int_{\bS^{d-1}}F(r\sigma)\dd A_{\bS}(\sigma)\Big)r^{d-1}\dd r\,.
$$
Let $\cM$ be a relatively open set of $\bS^{d-1}$, and define
\begin{align*}
\Omega=\big\{x\in\bR^d\setminus\{0\}\,:\,\frac{x}{|x|}\in\cM\big\}
\end{align*}
which is the conic domain generated by $\cM$ (see Figure \ref{230222651} below). 
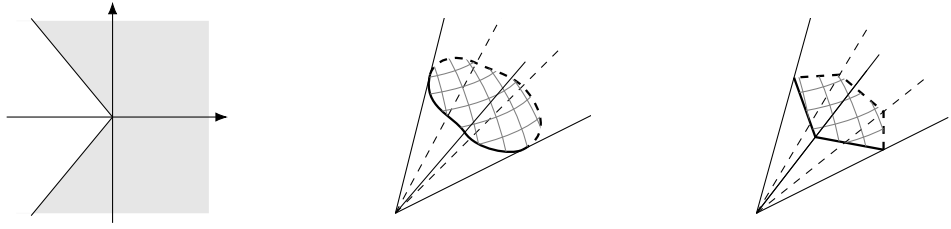
\begin{figure}[ht]
\begin{tikzpicture}[> = Latex, scale=0.8]

\begin{scope}
	
	\begin{scope}[shift={(-6,0)}, scale=0.8]
		\fill[gray!20] (0,0)--(-1.65,-2.0)--(-2.0,-2.0)--(2.0,-2.0)--(2.0,2.0)--(-2.0,2.0)--(-1.65,2.0)--(0,0);
		
		\clip (-2.2,-2.25)--(2.4,-2.25)--(2.4,2.4)--(-2.4,2.4);	
		\draw[->] (-2.2,0) -- (2.4,0);
		\draw[->] (0,-2.2) -- (0,2.4);
		\draw (0,0) -- (-1.69125,2.05);
		\draw (0,0) -- (-1.69125,-2.05);
		\begin{scope}
		\end{scope}
	\end{scope}

	\begin{scope}[shift={(0,-.2)}]
		\clip (-2.2,-2)--(2.4,-2)--(2.4,2.4)--(-2.4,2.4);	
		\begin{scope}[shift={(-1.3,-1.4)}, scale=1.2]
			
			\begin{scope}[scale=0.9]
				\clip (0,0) circle (3.6) ;
				\clip (-3,-3) -- (3,-3) -- (3,3) -- (-3,3);
				\draw[dashed] (0,0) -- (4.4,4.4);
				\draw[dashed] (0,0) -- (2.8,5.2);
				\draw (0,0) -- (4.4,2.2);
				\draw (0,0) -- (1.2,4.8);
			\end{scope}


			\begin{scope}[scale=1.6]
				\draw[dashed,line width=0.3mm] (1.1,0.55) .. controls (1.4,0.7) and (1.2,1) .. (1.1,1.1) .. controls (1.0,1.2) and (0.8,1.25) .. (0.7,1.3) .. controls (0.5,1.4) and (0.33,1.32) .. (0.3,1.2);
			\end{scope}
			
			\begin{scope}[scale=1.6]
				\draw[line width=0.3mm] (0.3,1.2) .. controls (0.23,0.92) and (0.5,0.85) .. (0.6,0.7) .. controls (0.7,0.55) and (1,0.5) .. (1.1,0.55);
			\end{scope}
			
			\begin{scope}[scale=1.6]
				\clip (1.1,0.55) .. controls (1.4,0.7) and (1.2,1) .. (1.1,1.1) .. controls (1.0,1.2) and (0.8,1.25) .. (0.7,1.3) .. controls (0.5,1.4) and (0.33,1.32) .. (0.3,1.2) .. controls (0.23,0.92) and (0.5,0.85) .. (0.6,0.7) .. controls (0.7,0.55) and (1,0.5) .. (1.1,0.55);

				%
				\draw[gray] (0,1.7) arc(90:0:0.54 and 1.95);
				\draw[gray] (0,1.7) arc(90:0:0.795 and 1.92);
				\draw[gray] (0,1.7) arc(90:0:1.03 and 1.85);
				\draw[gray] (0,1.7) arc(90:0:1.23 and 1.787);
				\draw[gray] (0,1.7) arc(90:0:1.4 and 1.71);
				
				\draw[gray] (-0.15,1.14) arc(-90:0:1.7*0.565 and 0.6*0.565);
				\draw[gray] (-0.15,1.00) arc(-90:0:1.7*0.66 and 0.6*0.66);
				\draw[gray] (-0.15,0.84) arc(-90:0:1.7*0.745 and 0.6*0.745);
				\draw[gray] (-0.15,0.675) arc(-90:0:1.7*0.82 and 0.6*0.82);
				\draw[gray] (-0.15,0.5) arc(-90:0:1.7*0.89 and 0.6*0.89);
			\end{scope}
			
			\draw[thin] (0,0) -- (1.8,2.1);
		\end{scope}
	\end{scope}
	
	\begin{scope}[shift={(6,-.2)}]
		\clip (-2.2,-2)--(2.4,-2)--(2.4,2.4)--(-2.4,2.4);		
		\begin{scope}[shift={(-1.3,-1.4)}, scale=1.2]
			
			\begin{scope}[scale=0.9]
				\clip (0,0) circle (3.3) ;
				\clip (-3,-3) -- (3,-3) -- (3,3) -- (-3,3);
				\draw (0,0) -- (4.4,2.2); 
				\draw[dashed] (0,0) -- (5,4);
				\draw[dashed] (0,0) -- (3,5);
				\draw (0,0) -- (1.3,4.7);
				\draw (0,0) -- (1.70,2.20);
			\end{scope}
			
			
			\begin{scope}[scale=1.6]
				\draw[dashed,line width=0.3mm] (1.1,0.55) -- (1.1, 0.88) -- (0.72, 1.2) -- (0.325, 1.175);
			\end{scope}
			
			\begin{scope}[scale=1.6]
				\draw[line width=0.3mm] (0.325, 1.175) -- (0.51, 0.66) -- (1.1,0.55);
			\end{scope}

			\begin{scope}[scale=1.6]
				\clip (1.1,0.55)-- (1.1, 0.88) -- (0.72, 1.2) -- (0.325, 1.175) -- (0.51, 0.66) -- (1.1,0.55);
				
				%
				
				\draw[gray] (0,1.7) arc(90:0:0.54 and 1.95);
				\draw[gray] (0,1.7) arc(90:0:0.795 and 1.92);
				\draw[gray] (0,1.7) arc(90:0:1.03 and 1.85);
				\draw[gray] (0,1.7) arc(90:0:1.23 and 1.787);
				\draw[gray] (0,1.7) arc(90:0:1.4 and 1.71);
				
				\draw[gray] (-0.15,1.14) arc(-90:0:1.7*0.565 and 0.6*0.565);
				\draw[gray] (-0.15,1.00) arc(-90:0:1.7*0.66 and 0.6*0.66);
				\draw[gray] (-0.15,0.84) arc(-90:0:1.7*0.745 and 0.6*0.745);
				\draw[gray] (-0.15,0.675) arc(-90:0:1.7*0.82 and 0.6*0.82);
				\draw[gray] (-0.15,0.5) arc(-90:0:1.7*0.89 and 0.6*0.89);
			\end{scope}

			\draw (0,0) -- (1.70,2.20);
		\end{scope}

	\end{scope}
	

\end{scope}
\end{tikzpicture}
\caption{Conic domains}\label{230222651}
\end{figure}

\noindent
We denote
$$
B_R^{\Omega}=\Omega\cap B_R(0)\quad\text{and}\quad Q_R^{\Omega}=(1-R^2,1]\times B_R^{\Omega}\,.
$$

In this subsection, we suppose that $\cM$ satisfies Assumption \ref{2207111151}; this assumption is satisfied if $\cM$ is a Lipschitz domain in $\bS^{d-1}$.
We prove that if $u$ satisfies
\begin{align*}
\begin{cases}
u_t=\Delta u\quad&\text{in}\quad Q_1^{\Omega}\,;\\
u=0\quad&\text{on}\quad (0,1]\times\big((\partial \Omega)\cap B_1(0)\big)\,,
\end{cases}
\end{align*}
then for any $\lambda\in(0,\lambda_0)$ and  $0<R<1$,
\begin{align}\label{2301111236}
|u(t,x)|\lesssim_{\cM,\lambda,r} |x|^{\lambda}\sup_{Q_1^{\Omega}}|u|\quad\text{whenever}\quad (t,x)\in Q_R^{\Omega}
\end{align}
(see Remark~\ref{2301111204}), where $\lambda_0$ is the constant defined in \eqref{220818848}.

\begin{remark}
As shown in \cite{Kozlov}, estimate \eqref{2301111236} is closely related to Heat kernel estimates.
In \cite[Lemma 3.9]{Kozlov}, Kozlov and Nazarov used the type of estimate \eqref{2301111236} to obtain estimates for the kernel $G$ of parabolic equations in $C^{1,1}$-cones.

%
\end{remark}
\vspace{1mm}

Before state the main result of this subsection, Theorem~\ref{2208221223}, we introduce spherical gradient and spherical Laplacian, avoding notions of differential geometry.
For a function $f$ on $\cM$, we denote $F_f(x)=f(x/|x|)$.
We denote
\begin{align*}
C^{\infty}(\cM)&=\text{the set of all $f:\cM\rightarrow\bR$ for which $F_f\in C^{\infty}(\Omega)$}\,;\\
C_c^{\infty}(\cM)&=\{f\in C^{\infty}(\cM)\,:\,\mathrm{supp}(f)\subset \cM\}\,.
\end{align*}
The spherical gradient and spherical Laplacian of $f\in C^{\infty}(\cM)$, denoted by $\nabla_\bS f$ and $\Delta_\bS f$, are defined by
\begin{align}\label{230203650}
\nabla_\bS f=\nabla F_f|_{\cM}\quad\text{and}\quad \Delta_\bS f=\Delta F_f|_{\cM}\,.
\end{align}
A direct calculation gives the following:
\begin{itemize}
\item For any $f\in C_c^{\infty}(\cM)$ and $g\in C^{\infty}(\cM)$\,,
\begin{align*}
\int_{\cM}\big(\nabla_\bS f,\nabla_\bS g\big)_d\dd A_\bS=-\int_{\cM}(\Delta_\bS f)\cdot g\,\dd A_\bS\,,
\end{align*}
where $(\,\cdot\,,\,\cdot\,)_d$ is the inner product on $\bR^d$.

\item For any $F\in C^{\infty}(\Omega)$,
\begin{align}\label{220818255}
|\nabla F|^2=|D_r F|^2+\frac{1}{r^2}|\nabla_\bS F|^2\,.
\end{align}

\item For a function $F\in C^{\infty}(\Omega)$,
\begin{align}\label{220716632}
\Delta F=D_{rr}F+\frac{d-1}{r}D_r F+\frac{1}{r^2}\Delta_\bS F\,.
\end{align}
\end{itemize}
In \eqref{220818255} and \eqref{220716632}, $F$ is also considered a function on $\bR_+\times\cM$ defined as $(r,\sigma)\mapsto F(r\sigma)$.
We leave it to the reader to verify that $\nabla_{\bS}$ (resp. $\Delta_{\bS}$) is equivalent with the gradient (resp. Laplace-Beltrami) operator implied by standard differential structure on $\bS^{d-1}$; see \cite{JJ} for the standard differential structure on $\bS^{d-1}$.

We make certain assumption about $\cM$ to applicate the results in Subsections~\ref{0040} and \ref{0050}.

\begin{assumption}\label{2207111151}
We denote $\partial_\bS \cM=\overline{\cM}\setminus\cM$.
\begin{enumerate}
\item $\cM$ is a connected (relatively) open set of $\bS^{d-1}$ with $\overline{\cM}\neq \bS^{d-1}$.
\item
\begin{align}\label{220820634}
	\inf_{\substack{p\in\partial_\bS\cM \\ r\in(0,1]}}\frac{A_\bS\big(\{\sigma\in \bS^{d-1}\setminus\cM\,:\,|\sigma-p|<r\}\big)}{r^{d-1}}>0\,,
\end{align}

\item Let $w_0(\sigma)$ be the first (positive) Dirichlet eigenfunction of the spherical Laplacian $\Delta_\bS$ on $\cM$ (see Proposition~\ref{230413525}.(1)).
There exist constants $A,\,N>0$ such that
\begin{align}\label{2208181123}
	w_0(\sigma)\geq N^{-1}d(\sigma,\partial_\bS\cM)^A\,.
\end{align}
\end{enumerate}
\end{assumption}
%

By $\mathring{W}_2^1(\cM)$, we denotes the closure of $C_c^{\infty}(\cM)$ in
$$
W_2^1(\cM):=\{f\in\cD'(\cM)\,:\,\|f\|_{L_2(\cM)}+\|\nabla_{\bS}f\|_{L_2(\cM)}<\infty\}\,.
$$

\begin{prop}\label{230413525}\,

\begin{enumerate}
\item If Assumption~\ref{2207111151}.(1) holds, then 
\begin{align}\label{230107830}
	\Lambda_0:=\inf_{\substack{w\in C_c^{\infty}(\cM)\\w\not\equiv 0}}\frac{\int_{\cM}|\nabla_\bS w|^2\dd A_\bS}{\int_{\cM}|w|^2\dd A_\bS}>0\,,
\end{align}
and there exists a unique $w_0$ in $C^{\infty}(\cM)\cap \mathring{W}_2^1(\cM)$ such that
\begin{align}\label{230413250}
	w_0>0\quad,\quad \int_{\cM}|w_0|^2\dd A_\bS=1\quad,\quad\Delta_\bS w_0+\Lambda_0w_0=0\,.
\end{align}
Moreover, $w_0$ is bounded on $\cM$.	
Furthermore, the function
\begin{align}\label{220818845}
	W_0(x):=|x|^{\lambda_0}w_0(x/|x|)
\end{align}
is a positive harmonic function on $\Omega$, where
\begin{align}\label{220818848}
	\lambda_0=-\frac{d-2}{2}+\sqrt{\Lambda_0+\Big(\frac{d-2}{2}\Big)^2}>0\,.
\end{align}

\item If Assumption~\ref{2207111151}.(2) holds, then 
\begin{align*}
	\inf_{p\in\partial\Omega,\, r>0}\frac{m\big(\Omega^c\cap B_r(p)\big)}{r^d}>0\,.
\end{align*}

\item Let $-e_d\notin \overline{\cM}$ and define $\phi_d$ to be the stereographic projection from $\bS^{d-1}\setminus\{-e_d\}$ to $\bR^{d-1}$ given by
\begin{align}\label{2208221047}
	\phi_d(\sigma_1,\ldots,\sigma_{d-1},\sigma_d)=\Big(\frac{\sigma_1}{1+\sigma_d},\ldots,\frac{\sigma_{d-1}}{1+\sigma_d}\Big)\,.
\end{align}
If $\phi_d(\cM)$ is a John domain in $\bR^{d-1}$ (see Remark~\ref{220819318} for the definition of a John domain), then Assumption~\ref{2207111151}.(3) holds.
\end{enumerate}
\end{prop}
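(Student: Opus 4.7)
For (1), the plan is to apply standard variational and spectral theory. The positivity $\Lambda_0 > 0$ follows from the Rellich--Kondrachov-type compact embedding $\mathring{W}_2^1(\cM) \hookrightarrow L_2(\cM)$, which holds because $\overline{\cM}$ is a proper subset of the compact manifold $\bS^{d-1}$. The direct method in the calculus of variations then produces a nonnegative minimizer $w_0 \in \mathring{W}_2^1(\cM)$ of \eqref{230107830} with unit $L_2$-norm (one replaces $w$ by $|w|$ along the minimizing sequence). Elliptic regularity for the Euler--Lagrange equation $\Delta_{\bS} w_0 + \Lambda_0 w_0 = 0$ upgrades $w_0$ to $C^{\infty}(\cM)$, Moser iteration yields boundedness, and the strong maximum principle gives $w_0 > 0$ together with uniqueness up to sign. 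The harmonicity of $W_0$ follows by substitution into \eqref{220716632}: a direct computation gives $\Delta W_0 = |x|^{\lambda_0 - 2} w_0(x/|x|)\bigl(\lambda_0^2 + (d-2)\lambda_0 - \Lambda_0\bigr)$, which vanishes for the $\lambda_0$ in \eqref{220818848}.

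For (2), the plan is to exploit the cone structure and pass the angular density condition on $\cM$ up to $\Omega$. Writing $p = |p|\sigma_0$ with $\sigma_0 \in \partial_{\bS}\cM$ for $p \neq 0$, I will express $m(\Omega^c \cap B_r(p))$ in polar coordinates as $\int_0^\infty\!\int_{\bS^{d-1}\setminus\cM} \mathbf{1}_{B_r(p)}(s\tau)\,s^{d-1}\,\dd A_{\bS}(\tau)\,\dd s$. For $r \leq |p|/2$, I restrict the $\tau$-integral to $\{\tau \in \bS^{d-1}\setminus\cM : |\tau - \sigma_0| < c\,r/|p|\}$ (whose $A_{\bS}$-measure is $\gtrsim (r/|p|)^{d-1}$ by \eqref{220820634}) and the $s$-integral to $s \in (|p| - r,\,|p| + r)$, producing $m(\Omega^c \cap B_r(p)) \gtrsim |p|^{d-1}\cdot r\cdot (r/|p|)^{d-1} = r^d$. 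For $r \geq |p|/2$, monotonicity reduces to the case $p = 0$, where the bound follows from $A_{\bS}(\bS^{d-1}\setminus\overline{\cM}) > 0$ (a consequence of Assumption~\ref{2207111151}.(1)).

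For (3), the strategy is to transport the problem to a John-domain setting and invoke Remark~\ref{220819318}. Since $-e_d \notin \overline{\cM}$, $\phi_d$ is bi-Lipschitz on a neighborhood of the compact set $\overline{\cM}$, so the John property of $\phi_d(\cM)$ in $\bR^{d-1}$ transfers to $\cM$ regarded as a subdomain of $\bS^{d-1}$. I will then work on the bounded open set $U := \Omega \cap (B_2(0) \setminus \overline{B}_{1/2}(0))$, which is bi-Lipschitz-equivalent to $\cM \times (1/2, 2)$; a direct concatenation of paths (first radial, then along a John path inside $\cM$) shows that $\cM \times (1/2, 2)$ inherits the John condition from $\cM$, and hence $U$ is a John domain in $\bR^d$. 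The positive harmonic function $W_0$ is a Harnack function on $\Omega$ by the classical Harnack inequality, and its restriction inherits this property on $U$ (the only new pieces of boundary $\partial B_{1/2}, \partial B_2$ lie at positive distance from $\partial\Omega$, where $W_0$ is smooth and positive). Lemma~\ref{21.05.27.3}.(1) produces a regularization $\widetilde{W}_0$ of $W_0|_U$, and Remark~\ref{220819318} applied on the John domain $U$ with any fixed interior base point yields $\widetilde{W}_0(x) \gtrsim d(x, \partial U)^A$ for some $A > 0$. Restricting to $\sigma \in \cM$ with $|\sigma| = 1$ and using $W_0(\sigma) = w_0(\sigma)$ together with $d(\sigma, \partial U) \simeq d(\sigma, \partial_{\bS}\cM)$ (since the unit sphere lies strictly inside the radial slab $(1/2, 2)$, and the Euclidean and spherical distances to $\partial_{\bS}\cM$ are comparable near $\bS^{d-1}$) delivers \eqref{2208181123}.

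The main obstacle is in part (3): verifying that the bounded slab $U$ is genuinely a John domain in $\bR^d$ and that $W_0|_U$ satisfies the Harnack condition of Definition~\ref{2210261255} uniformly up to the artificial boundaries $\partial B_{1/2} \cup \partial B_2$, so that the machinery of Subsection~\ref{0041} applies. Once these geometric checks are in place, the conclusion reduces to Remark~\ref{220819318} plus the elementary comparison between ambient and spherical distances to $\partial_{\bS}\cM$ near the unit sphere. Parts (1) and (2) rely on essentially standard tools.
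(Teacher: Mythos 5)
Your proof reaches the right conclusions but takes genuinely different routes in all three parts, so let me compare and flag the places where your sketch needs repair.

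For (1), you use the direct method plus Rellich--Kondrachov, Moser iteration on $\bS^{d-1}$, and the strong maximum principle, while the paper cites the corresponding spectral theorems in Grigor'yan's book and proves boundedness by pushing $w_0$ to $\bR^{d-1}$ via the stereographic projection and quoting classical elliptic regularity there; both routes are standard and your harmonicity computation for $W_0$ matches. For (2), you estimate $m(\Omega^c\cap B_r(p))$ directly in polar coordinates, whereas the paper sandwiches $B_r(p)$ between two ``angular $\times$ radial'' product sets for $|p|=1$, $r\in(0,1]$, deduces the equivalence between \eqref{220820634} and the volume density condition near the unit sphere, and then handles $r\geq 2$ separately; your version works but your case split is off---monotonicity to $p=0$ requires something like $r\geq 2|p|$ (so that $B_{r/2}(0)\subset B_r(p)$), and the intermediate range $|p|/2\leq r\lesssim 2|p|$ must be handled either by the small-$r$ estimate with adjusted constants or by monotonicity in $r$ at fixed $p$. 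For (3), the paper is considerably more economical: it transfers $w_0$ to $\widetilde{w}_0=w_0\circ\phi_d^{-1}$ on $U_\cM=\phi_d(\cM)\subset\bR^{d-1}$ (Harnackness survives because $\phi_d$ and $\phi_d^{-1}$ are Lipschitz on compacts, via Lemma~\ref{21.11.16.1}), notes $d(\sigma,\partial_\bS\cM)\simeq d(\phi_d(\sigma),\partial U_\cM)$, and applies Remark~\ref{220819318} directly to $U_\cM$, which is exactly the set hypothesized to be a John domain. Your construction of the slab $U=\Omega\cap(B_2\setminus\overline{B}_{1/2})$ and verification that $W_0|_U$ is Harnack on $U$ are correct (the Harnack condition passes to subdomains because the balls only shrink), but your claimed path construction for the John property of $\cM\times(1/2,2)$---``first radial, then along a John path inside $\cM$''---does not work: during the radial leg, $d(\sigma,\partial_\bS\cM)$ stays frozen while arclength grows, so the lower bound $d(\gamma(t),\partial U)\gtrsim t$ fails if $\sigma$ is close to $\partial_\bS\cM$ and $s$ is close to an endpoint. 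You need a simultaneous (diagonal) path, moving radially toward $s=1$ at a rate proportional to progress along the John path in $\cM$, so that both coordinate distances grow linearly. With that repair your route gives the same conclusion, at the cost of an extra geometric lemma (product of John domains is John) that the paper avoids entirely by staying in $\bR^{d-1}$.
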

\begin{proof}
(1) \eqref{230107830} follows from \cite[Theorems 10.13, 10.18, 10.22]{AGriog}.
It is provided in \cite[Theorem 10.11, Corollary 10.12]{AGriog} that there exists a unique $w_0\in C^{\infty}(\cM)\cap \mathring{W}_2^1(\cM)$ satisfying \eqref{230413250}.

To prove the boundedness of $w_0$, without lose of generality, we assume that $-e_d:=(0,\ldots,0,-1)\notin\cM$.
By $\phi_d$ we denote the stereographic projection from $\bS^{d-1}\setminus\{-e_d\}$ to $\bR^{d-1}$ defined by \eqref{2208221047}.
Then $\phi_d(\cM)$ is a bounded domain in $\bR^{d-1}$.
Consider the function $\widetilde{w}_0:\phi_d(\cM)\rightarrow \bR$ defined as $\widetilde{w}_0(\phi_d(x)):=w_0(x)$.
Then $\widetilde{w}_0$ belongs to $\mathring{W}_2^1\big(\phi_d(\cM)\big)$ and satisfies
$$
\sum_{i,j=1}^da^{ij}D_{ij}\widetilde{w}_0+\sum_{i=1}^db^iD_i\widetilde{w}_0+\Lambda_0\widetilde{w}_0=0\quad\text{in}\quad \phi_d(\cM)\subset \bR^{d-1}\,.
$$
Here, $a^{ij},\,b^{i}\in C^{\infty}(\bR^{d-1})$ ($i,\,j=1,\,\ldots,\,d$) are smooth functions on $\bR^{d-1}$ such that  there exists $\nu>0$ satisfying
$$
\nu|\xi|^2\leq \sum_{i,j=1}^da^{ij}(x)\xi_i\xi_j\leq \nu^{-1}|\xi|^2\qquad\forall\,\,\xi=(\xi_1,\ldots,\xi_d)\in\bR^{d-1}\,\,,\,\,x\in\phi_d(\cM)\,.
$$
The boundedness of $\widetilde{w}_0$ follows from classical results for elliptic equations (see, \textit{e.g.}, \cite[Theorem 3.13.1]{lady2}), and this implies that $w_0$ is bounded.

It directly follows from \eqref{220716632} that the function $W_0$ in \eqref{220818845} is harmonic on $\Omega$.

(2) 
For any $p\in\partial_\bS\cM$ and $r\in(0,1)$, we have
\begin{align*}
&\big\{s\sigma\in\bR^{d}\,:\,s\in(1-r/2,1+r/2),\,\sigma\in \bS^{d-1}\cap B_{r/2}(p)\big\}\\
\subset&\,\,B_r(p)\\
\subset&\,\big\{s\sigma\in\bR^{d}\,:\,s\in(1-r,1+r),\,\sigma\in \bS^{d-1}\cap B_{2r}(p)\big\}\,.
\end{align*}
Therfore \eqref{220820634} holds if and only if
\begin{align}\label{2208201149}
\inf_{\substack{p\in\partial_\bS\cM\\ r\in(0,1]}}\frac{m\big(\Omega^c\cap B_r(p)\big)}{r^d}>0\,,
\end{align}
where $m$ is the Lebesgue measure on $\bR^d$.
Ir $r\geq 2$, then $B_r(p)\supset B_{r/2}(0)$.
Therefore 
\begin{align}\label{2208206491}
\inf_{p\in\partial_\bS\cM,\, r\geq 2}\frac{m\big(\Omega^c\cap B_r(p)\big)}{r^d}\geq\inf_{r\geq 1}\frac{m\big(\Omega^c\cap B_{r}(0)\big)}{(2r)^d}=\frac{A_{\bS}\left(\bS^{d-1}\setminus\cM\right)}{2^dd}>0\,.
\end{align}
Consequently, it is implied by \eqref{2208201149} and \eqref{2208206491} that
\begin{align*}
\inf_{p\in\partial\Omega,\, r>0}\frac{m\big(\Omega\cap B_r(p)\big)}{r^d}>0\,.
\end{align*}

(3) We denote $U_{\cM}:=\phi_d(\cM)$.
It follows from Example~\ref{21.05.18.2}.(2) that $W_0$ (in \eqref{220818845}) is a Harnack function on $\Omega$.
Since $W_0$ is a Harnack function, and $\phi_d$ (resp. $\phi_d^{-1}$) is Lipschitz continuous on $\cM$ (resp. $U_\cM$), we obtain that $\widetilde{w}_0:=w_0\circ \phi_d^{-1}$ is a Harnack on $U_{\cM}$ (see Lemma~\ref{21.11.16.1}).
In addition, $d(\sigma,\partial_\bS\cM)\simeq d\big(\phi_d(\sigma),\partial U_{\cM}\big)$.
By Remark~\ref{220819318}, if $\Omega$ is a John domain, then
$$
\widetilde{w}_0(x')\gtrsim d(x',\partial U_{\cM})^A\qquad\text{for all}\quad x'\in \phi_d(\cM)\,,
$$
and therefore \eqref{2208181123} is proved.
\end{proof}

\begin{thm}\label{2208221223}
Let $\cM\subset \bS^{d-1}\,(d\geq 2)$ satisfy Assuption~\ref{2207111151}, and suppose that $u\in C^{\infty}(Q_1^{\Omega})$ satisfies that
\begin{align*}
&u_t=\Delta u\quad\text{in}\quad Q_1^{\Omega}\,;\\
&\lim_{(t,x)\rightarrow (t_0,x_0)}u(t,x)=0\quad\text{whenever}\quad 0<t_0\leq 1\,\,,\,\,x_0\in(\partial\Omega)\cap B_1.
\end{align*}
Then for any $\epsilon\in(0,1)$ and $R \in(0,1)$,
\begin{align*}
|u(t,x)|\leq N\Big(\sup_{Q_1^{\Omega}}|u|\Big)W_0(x)^{1-\epsilon}\qquad \forall\,\, (t,x)\in Q_R^{\Omega}\,,
\end{align*}
where $W_0$ is the function defined in \eqref{220818845} and $N=N(\cM,\epsilon,R)>0$.
\end{thm}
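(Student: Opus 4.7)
The plan is to apply the weighted Hölder estimate of Theorem~\ref{220602322} with the harmonic Harnack weight $\psi = W_0$, after converting the local problem in $Q_1^{\Omega}$ into a globally defined parabolic equation on $[0,1]\times\Omega$ via a space-time cutoff, and then to bootstrap the exponent $\epsilon$ in order to cover the full range $(0,1)$. The preliminaries are routine: Assumption~\ref{2207111151}.(2) together with Proposition~\ref{230413525}.(2) yields the volume density condition on $\Omega^c$, which by Remark~\ref{21.07.06.1} implies both that $\Omega$ admits the Hardy inequality \eqref{hardy} and that $\Omega$ satisfies $\mathbf{LHMD}(\alpha)$ for some $\alpha=\alpha(\cM)>0$. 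Proposition~\ref{230413525}.(1) produces $W_0 \in C^{\infty}(\Omega)$ positive and harmonic, and Example~\ref{21.05.18.2}.(2) identifies $W_0$ as a regular Harnack function, hence a superharmonic Harnack function in the sense of Definition~\ref{21.10.14.1}.

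To set up the cutoff, fix $R<R'<1$ and pick $\zeta\in C^{\infty}(\bR)$ with $\zeta\equiv 1$ on $[1-R^2,1]$ and $\zeta\equiv 0$ on $(-\infty,1-(R')^2]$, together with a radial $\eta\in C_c^{\infty}(\bR^d)$ equal to $1$ for $|x|\leq R$ and supported in $\{|x|<R'\}$. Set $v(t,x)=u(t,x)\zeta(t)\eta(x)$, extended by zero to $[0,1]\times\Omega$; then $v(0,\cdot)\equiv 0$ and $v$ inherits the vanishing lateral trace of $u$. Using the divergence-form identity $2\zeta\nabla u\cdot\nabla\eta = 2\nabla\!\cdot\!(u\zeta\nabla\eta)-2u\zeta\Delta\eta$ to eliminate $\nabla u$, a direct calculation together with $u_t=\Delta u$ yields
\begin{equation*}
v_t-\Delta v = f^0+\sum_{i=1}^d D_if^i,\qquad f^0 := u\zeta'\eta+u\zeta\Delta\eta,\qquad f^i := -2u\zeta\,\partial_i\eta,
\end{equation*}
with $|f^0|,|f^i|\leq N(R,R')\|u\|_{L_\infty(Q_1^{\Omega})}$ supported in the compact annular set $\{(t,x):t\in[1-(R')^2,1],\ R\leq|x|\leq R'\}\cap([0,1]\times\Omega)$. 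Now invoke Theorem~\ref{220602322} with $\psi=W_0$, $T=1$, zero initial data, and forcings $f^0,f^i$, choosing $\beta_x,\beta_t,\delta\in(0,1)$ small enough that \eqref{220602312} holds for the prescribed $\epsilon$. Since $v(0)=0$, $I=0$, and the sup-norm piece of $|\widetilde{W_0}^{\,-(1-\epsilon)}(v(t,\cdot)-v(0,\cdot))|_{\beta_x}^{(0)}$ delivers $|v(t,x)|\leq NF\cdot W_0(x)^{1-\epsilon}$ on $[0,1]\times\Omega$; on $Q_R^{\Omega}$ we have $v=u$, so this is the desired inequality provided $F<\infty$.

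The main obstacle is verifying $F<\infty$. Using $\rho(x)\simeq|x|\,d(x/|x|,\partial_\bS\cM)$ on the annulus together with the lower bound $w_0(\sigma)\gtrsim d(\sigma,\partial_\bS\cM)^A$ from Assumption~\ref{2207111151}.(3), the required $L_q$-integrability with $q=(d+2)/\delta$ reduces to the convergence of $\int_{\cM} d(\sigma,\partial_\bS\cM)^{\,q(1-2\beta_t-\delta)-Aq(1-\epsilon)}\,dA_\bS(\sigma)$, which holds iff $A(1-\epsilon)<1-2\beta_t-\delta+\delta/(d+2)$; for small $\beta_t,\delta$ this covers only a one-sided neighbourhood $(1-1/A,\,1)$ of $1$. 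For the full range $\epsilon\in(0,1)$ I would bootstrap: once the bound is established on a slightly larger cylinder $Q_{R_0}^{\Omega}$ with $R<R_0<1$ for some $\epsilon_0\in(1-1/A,\,1)$, the improved pointwise control $|u|\lesssim W_0^{1-\epsilon_0}$ replaces the $L_\infty$-bound on $f^i$ in the next cutoff argument, inserting an extra factor $W_0^{1-\epsilon_0}\gtrsim d(\sigma,\partial_\bS\cM)^{A(1-\epsilon_0)}|x|^{\lambda_0(1-\epsilon_0)}$ into the integrand and pushing the integrability threshold down to roughly $\epsilon_0-1/A$; iterating along a finite nested sequence $R<R_K<\cdots<R_1<1$ reaches every $\epsilon\in(0,1)$ in at most $\lceil A\rceil$ steps. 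The uniqueness clause of Theorem~\ref{220602322}, combined with Remark~\ref{230112512}, is what legitimately identifies the solution produced by the theorem at each step with $v$, so that the pointwise bound obtained in one iteration can be substituted into the next.
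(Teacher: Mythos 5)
Your plan follows the paper's in outline: a space--time cutoff producing a divergence--form forcing, an application of the weighted H\"older estimate of Theorem~\ref{220602322} with $\psi=W_0$ and zero initial data, and a finite bootstrap using the Harnack lower bound on $W_0$. That matches the paper's Step~2 and Step~1. However, there is one substantial gap and one smaller, fixable one.

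The substantial gap is the identification of $v$ with the solution produced by Theorem~\ref{220602322}. You write that ``the uniqueness clause of Theorem~\ref{220602322}, combined with Remark~\ref{230112512}, is what legitimately identifies the solution produced by the theorem at each step with $v$,'' but neither tool applies until you have placed $v$ into one of the weighted solution spaces that the uniqueness statements cover. The paper devotes an entire sub-step (Step~2.1) to exactly this point: it first constructs $\widetilde v\in\cH_{2,d-2}^1(\Omega,1)$ solving the same equation with the same forcing (via Lemmas~\ref{21.05.25.300} and~\ref{21.05.13.9}), then applies Theorem~\ref{220602322} with an auxiliary choice of parameters to show that $\widetilde v$ is continuous up to $\overline{Q_1^{\Omega}}$ and vanishes on $\overline{Q_1^{\Omega}}\setminus Q_1^{\Omega}$, and only then invokes the classical maximum principle for smooth bounded solutions to conclude $v\equiv \widetilde v$, hence $v\in\cH_{2,d-2}^1(\Omega,1)$. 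Without some such argument the H\"older bound of Theorem~\ref{220602322} applies to an abstractly constructed solution, not to $v$, and the pointwise inequality for $u$ does not follow; this is the step your proposal elides.

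The smaller issue is the integrability analysis. Your criterion for $F<\infty$ asserts that $\int_{\cM}d(\sigma,\partial_\bS\cM)^{-s}\dd A_\bS<\infty$ iff $s<1$; this threshold is specific to a codimension-one (Lipschitz or smooth) spherical boundary, while Assumption~\ref{2207111151} covers John domains $\cM$ with possibly fractal $\partial_\bS\cM$, for which the threshold is strictly smaller. The paper sidesteps the question entirely by bounding the $L_q$-norm of the forcing (supported in a compact annulus) by its sup-norm, and then converting that sup-norm into a recursion $\sup|W_0^{-1+\epsilon_i}\rho^{\beta_x}u|\lesssim\sup|W_0^{-1+\epsilon_{i+1}}u|$ using the pointwise bound $\rho^{\beta_x}\lesssim W_0^{\beta_x/K}$, $K=A\vee\lambda_0$, so that no integrability of negative powers of $d(\cdot,\partial_\bS\cM)$ is ever invoked. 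Your bootstrap is essentially the same as this recursion, but the number of iterations should be measured against $K=A\vee\lambda_0$ rather than $A$ alone, since the comparison $\rho^K\lesssim W_0$ requires $K\geq\lambda_0$ (to control $|x|^K\leq|x|^{\lambda_0}$) as well as $K\geq A$.
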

Recall that $\mathring{W}_2^1\big(B_1^{\Omega}\big)$ is the closure of $C_c^{\infty}\big(B_1^{\Omega}\big)$ in $W_2^1\big(B_1^{\Omega}\big)$.


\begin{remark}\label{2301111204}
Theorem~\ref{2208221223} implies that if $u$ satisfies the assumptions in Theorem~\ref{2208221223} and $\lambda\in(0,\lambda_0)$, where $\lambda_0$ is in \eqref{220818848}, then
\begin{align}\label{2208221229}
|u(t,x)|\leq N\Big(\sup_{Q_1^{\cD}}|u|\Big)|x|^{\lambda}\quad\text{on}\,\,\,Q_{R}^{\Omega}
\end{align}
where $N=N(\cM,\lambda,R)$.
We note that for $\lambda>\lambda_0$, \eqref{2208221229} does not hold in general.
Observe that $u(t,x):=W_0(x)$ satisfies assumptions in Theorem~\ref{2208221223}.
Due to \eqref{220818845}, there is no constant $N$ satisfying \eqref{2208221229} with $u(t,x)=W_0(x)$ and $\lambda>\lambda_0$.
\end{remark}

\begin{proof}[Proof of Theorem~\ref{2208221223}]\,

\textbf{Step 1.}
Put $K=A\vee \lambda_0$ where $A$ and $\lambda_0$ are the constants in \eqref{2208181123} and \eqref{220818848}, respectively.
From direct calculation (see, \textit{e.g.}, \cite[Lemma 3.4.(1)]{ConicPDE}) we obtain that
\begin{align*}
d(\sigma,\partial\Omega)\leq d(\sigma,\partial_\bS\cM)\leq 2d(\sigma,\partial\Omega)\quad \text{for all}\quad \sigma\in\cM\,.
\end{align*}
Therefore for $x\in\Omega\cap B_1(0)$, we have
$$
\rho(x)^K=d(x,\partial\Omega)^{K}\simeq  |x|^{K}d\big(x/|x|,\partial_\bS\cM\big)^{K}\leq |x|^{\lambda_0}d(x/|x|,\partial_\bS\cM)^A\lesssim W_0(x)\,.
$$
Due to Proposition~\ref{230413525}.(2) and Remark~\ref{21.07.06.1}, $\Omega$ satisfies $\mathbf{LHMD}(\alpha)$ for some $\alpha\in(0,1)$.
Take small enough $\delta\in(0,1)$ such that
$$
\big((d+2)^{-1}+\alpha^{-1}\big)\delta<\epsilon\quad\text{and}\quad 1-\frac{d+4}{d+2}\delta>0\,,
$$
and put 
$$
\beta_t=\frac{\delta}{d+2}\quad \text{and}\quad \beta_x=1-\delta-2\beta_t\,.
$$
Then $\epsilon$, $\delta$, $\beta_t$, $\beta_x$ satisfy \eqref{220602312}.
Put
$$
\epsilon_i=\epsilon+\frac{\beta_x}{K}i\quad \text{for}\quad i\in\bN_0\,\,,\,\,\,\,\text{and}\qquad i_0=\Big[\frac{1-\epsilon}{\beta_x}K\Big]
$$
such that $\epsilon_{i_0}\leq 1<\epsilon_{i_0+1}$.
Since $W_0$ is bounded on $B_1^\Omega$ (see Proposition~\ref{230413525}), we have
$$
\sup_{Q_{1}^\Omega}|W_0^{-1+\epsilon_{i_0+1}}u|\lesssim_{\Omega,\epsilon}\sup_{Q_{1}^{\Omega}}|u|\,,
$$
and therefore we only need to prove that for any $i\in\{0,1,\ldots,i_0\}$ and $0<R_1<R_2\leq 1$,
\begin{align}\label{2208210059}
\sup_{Q_{R_1}^\Omega}|W_0^{-1+\epsilon_{i}}u|\lesssim N(\cD,\epsilon,R,r)\sup_{Q_{R_2}^\Omega}|W_0^{-1+\epsilon_{i+1}}u|\,.
\end{align}

\textbf{Step 2.}
Take $\eta_0\in C^{\infty}\big(\bR\big)$ such that $\eta_0(s)=1$ if $s<R_1^2$ and $\eta_0(s)=0$ if $s>R_2^2$, and put $\eta(t,x):=\eta_0(1-t)\eta_0\big(|x|^2\big)$. Note that
\begin{align*}
\eta(t,x)=
\begin{cases}
	\vspace{0.5mm}
	1\quad&\text{if}\quad t>1-R_1^2\quad\text{and}\quad|x|<R_1\,;\\
	0\quad&\text{if}\quad t<1-R_2^2\quad\text{or}\quad|x|>R_2\,.
\end{cases}
\end{align*}
Put 
\begin{align}\label{2304141114}
v=u\eta\,\,,\,\,\,\,f^0:=\big(\partial_t\eta+\Delta \eta\big)u\,\,,\,\,\,\,f^i:=-2u D_i\eta\quad (i=1,\,\ldots,\,d)\,,
\end{align}
so that $v\in C\big(\overline{Q_1^{\Omega}}\big)\cap C^{\infty}(Q_1^{\Omega})$ satisfies 
\begin{align*}
\partial_tv=\Delta v+f_0+\sum_{i=1}^d D_if^i\quad\text{in}\quad Q_1^{\Omega}\quad ;\quad v\equiv 0\quad\text{on}\quad  \overline{Q_1^{\Omega}}\setminus Q_1^{\Omega}\,.
\end{align*}

\textbf{Step 2.1)} We first claim that $v\in\cH_{2,d-2}^1(\Omega,1)$.
Since
\begin{align*}
\Big\|f^0+\sum_{i=1}^dD_if^i\Big\|_{\bH_{2,d+2}^{-1}(B_1^{\Omega},1)}\lesssim \left\|f^0\right\|_{\bL_{2,d+2}(B_1^{\Omega},1)}+\sum_{i=1}^d\left\|f^i\right\|_{\bL_{2,d}(B_1^{\Omega},1)}\lesssim \sup_{Q_1^{\Omega}}|u|
\end{align*}
(see Lemmas~\ref{220512433} and \ref{21.09.29.4}),
there exists $\widetilde{v}\in \cH_{2,d-2}^1(\Omega,1)$ such that 
\begin{align*}
\partial_t\widetilde{v}=\Delta \widetilde{v}+f^0+\sum_{i=1}^d D_if^i\quad\text{and}\quad \widetilde{v}(0\,\cdot\,)=0\,.
\end{align*}
For the claim in this step, we only need to prove that
\begin{align}\label{230413658}
\widetilde{v}\in C^{\infty}(\Omega)\cap C\big(\overline{Q_1^{\Omega}}\big)\,\,,\,\,\text{and}\quad \widetilde{v}\equiv 0\quad\text{on}\quad \overline{Q_1^{\Omega}}\setminus Q_1^{\Omega}\,.
\end{align}
Indeed, if \eqref{230413658} holds, then the maximum principle yields that $v\equiv \widetilde{v}\in \cH_{2,d-2}^1(B_1^{\Omega},1)$.
Since $\mathrm{supp}\big(v(t,\cdot)\big)\subset \overline{B_{R_2}^{\Omega}}$ for each $t\in[0,1]$, $v$ belongs to $\cH_{2,d-2}^1(\Omega,1)$.

Let us prove \eqref{230413658}.
Since $f^0,f^i\in C^{\infty}(Q_1^{\Omega})$, we obtain that $\widetilde{v}\in C^{\infty}(Q_1^{\Omega})$.
Note that $B_1^{\Omega}$ satisfies \textbf{LHMD}($\alpha'$) for some $\alpha'\in(0,1)$ (see Proposition~\ref{230413525}.(2) and Remark~\ref{21.07.06.1}), and therefore there exists a superharmonic function $\psi$ such that $\psi\simeq (\rho_{B_1^{\Omega}})^{\alpha'/2}$ (see Theorem~\ref{21.11.08.1}), where $\rho_{B_1^{\Omega}}(x):=d(x,\partial B_1^{\Omega})$.
Take $\beta_x',\,\beta_t',\,\delta',\,\epsilon'>0$ such that \eqref{220602312} holds (for $\alpha'$ instead of $\alpha$), and $2\beta_t'+\delta'<1/2$.
Then we have
\begin{align*}
&\Big\||\psi^{-1+\epsilon'}(\rho_{B_1^{\Omega}})^{2-2\beta_t'-\delta'}f^0|+\sum_{i=1}^d|\psi^{-1+\epsilon'}(\rho_{B_1^{\Omega}})^{1-2\beta_t'-\delta'} f^i|\Big\|_{L_{(d+2)/\delta}(Q_1^{\Omega},\dd x\dd t)}\\
\lesssim\,\, &\Big\||(\rho_{B_1^{\Omega}})^{(3-\alpha')/2}f^0|+\sum_{i=1}^d|(\rho_{B_1^{\Omega}})^{(1-\alpha')/2} f^i|\Big\|_{L_{(d+2)/\delta}(Q_1^{\Omega},\dd x\dd t)}\\
\lesssim\,\, & \sup_{Q_1^{\Omega}}|u|<\infty\,.
\end{align*}
Therefore, Theorem~\ref{220602322} and Remark~\ref{230112512} yield that
\begin{align}\label{230413652}
\sup_{Q_1^{\Omega}}t^{-\beta_t'}(\rho_{B_1^{\Omega}})^{-(1-\epsilon')\alpha'/2}|\widetilde{v}|\lesssim \sup_{0<t\leq 1}\frac{\big|\widetilde{\psi}^{-1+\epsilon'}\big(\widetilde{v}(t,\cdot)-\widetilde{v}(0,\cdot)\big)\big|_{\beta_x'}^{(0)}}{|t-0|^{\beta_t'}}<\infty
\end{align}
(for the first inequality, see Proposition~\ref{220512537}).
Since $\widetilde{v}\in C^{\infty}(Q_1^{\Omega})$, \eqref{230413652} implies that $\widetilde{v}\in C\big(\overline{Q_1^{\Omega}}\big)$ and $\widetilde{v}\equiv 0$ on $\overline{Q_1^{\Omega}}\setminus Q_1^{\Omega}$.
Therefore \eqref{230413658} is proved.

\textbf{Step 2.2)}
To prove \eqref{2208210059}, assume that the left hand side of \eqref{2208210059} is finite.

Recall that $\Omega$ admits the Hardy inequality (see Proposition~\ref{230413525}.(2) and Remark~\ref{21.07.06.1}), $v\in\cH_{2,d-2}^1(\Omega,1)$ (in \eqref{2304141114}) is a solution of the equation
$$
\partial_tv=\Delta v+f_0+\sum_{i=1}^d D_if^i\quad;\quad v(0,\cdot)\equiv 0
$$
(see Step 2.1), and that $W_0$ is a regular Harnack function (see Example~\ref{21.05.18.2}.(2)).
Since
\begin{align*}
&\Big\||W_0^{-1+\epsilon_i}\rho^{\beta_x+1}f^0|+|W_0^{-1+\epsilon_i}\rho^{\beta_x}f^i|\Big\|_{L_{(d+2)/\delta}((0,1]\times \cD;\dd t\dd x)}\\
\lesssim_N\,&\sup_{Q_{R_2}^{\Omega}}\big|W_0^{-1+\epsilon_i}\rho^{\beta_x}u\big|\\
\lesssim_N\,&\sup_{Q_{R_2}^{\Omega}}\big|W_0^{-1+\epsilon_{i}+\beta_x/K}u\big|=\sup_{Q_{R_2}^{\Omega}}\big|W_0^{-1+\epsilon_{i+1}}u\big|
\end{align*}
(where $N=N(r,R_1,R_2,\cM)$), Theorem~\ref{220602322} (with Remark~\ref{230112512}) implies that $v\in W_0^{1-\epsilon_i}\cH_{p,-2-2\beta_t p}^1(\Omega,1)$ and
\begin{equation}\label{230413551} 
\begin{alignedat}{2}
	&&&\sup_{(0,1]\times\Omega}t^{-\beta_t}\big|W_0^{-1+\epsilon_i}v|\\
	&\lesssim_{R_1}\,&& \sup_{t\in(0,1]}\frac{\big|W_0^{-1+\epsilon_i}v(t,\cdot)-W_0^{-1+\epsilon_i}v(0,\cdot)\big|_{\beta_x}^{(0)}}{|t-0|^{\beta_t}}\\
	&\lesssim_N\,&&\Big\||W_0^{-1+\epsilon_i}\rho^{\beta_x+1}f^0|+|W_0^{-1+\epsilon_i}\rho^{\beta_x}f^i|\Big\|_{L_{(d+2)/\delta}((1-R_2^2,1]\times \Omega;\dd t\dd x)}\\
	&\lesssim_N\,&&\sup_{Q_{R_2}^{\Omega}}\big|W_0^{-1+\epsilon_{i+1}}u\big|\,.
\end{alignedat}
\end{equation}
Since $v\equiv u$ in $Q_{R_1}^{\Omega}$, \eqref{230413551} implies \eqref{2208210059}.
\end{proof}

\vspace{2mm}

\appendix

\mysection{Weighted Sobolev/Besov spaces}\label{008}

\subsection{Weighted Sobolev/Besov spaces without regular Harnack functions}\label{0082}\,

The spaces $H_{p,\theta}^{\gamma}(\Omega)$ were initially developed for studying partial differential equations in domains, as demonstrated in \cite{KK2004,Krylov1999-1, Lo1}.
Moreover, these spaces, along with similar function spaces like $B_{p,\theta}^{\gamma}(\Omega)$, have also been found in studies on Fourier multipliers arising in harmonic analysis, as seen in works such as \cite{CGHS, GHS, LSJFA}.

In this subsection, we introduce the properties of the spaces $H_{p,\theta}^{\gamma}(\Omega)$ and $B_{p,\theta}^{\gamma}(\Omega)$, which are independent of the previous contents of this paper, except for Subsection~\ref{0041} which is used only for specifying $\trho$ satisfying \eqref{220719515}.
The contents of this subsection are based on the properties of $H_p^{\gamma}(\bR^d)$ and $B_p^{\gamma}(\bR^d)$.

In this section, we assume that
$$
d\in\bN\,\,,\,\,\,\,p\in(1,\infty)\,\,,\,\,\,\,\gamma,\,\theta\in\bR\,\,,\,\,\,\,\text{$\Omega$ is an open set in $\bR^d$}\,,
$$
and denote
$$
\cI=\{d,\,p,\,\gamma,\,\theta\}\,.
$$
By $X_p^{\gamma}$ and $X_{p,\theta}^{\gamma}(\Omega)$, we denote either $H_p^{\gamma}\,(\,=H_p^{\gamma}(\bR^d))$ and $H_{p,\theta}^{\gamma}(\Omega)$, or $B_{p}^{\gamma}\,(\,=B_p^{\gamma}(\bR^d))$ and $B_{p,\theta}^{\gamma}(\Omega)$.

The spaces $H_p^{\gamma}$ and $B_p^{\gamma}$ are introduced in Subsections~\ref{0042} and \ref{0052}, respectively.
Recall the following elementary properties of $X_p^{\gamma}$, which can be found in \cite[Corollary 2.8.2, Theorem 2.10.2]{triebel2}: 
\begin{align}\label{2205070028}
	\|af\|_{X_p^{\gamma}}\lesssim_{d,p,\gamma}\|a\|_{C^{[|\gamma|]+1}}\|f\|_{X_p^{\gamma}}\quad\text{and}\quad \|f(A,\cdot )\|_{X_p^{\gamma}}\lesssim_{d,p,\gamma,A}\|f\|_{X_p^{\gamma}}\,.
\end{align}
We also recall the definitions of $X_{p,\theta}^{\gamma}(\Omega)$.
Fix $\zeta_0\in C_c^{\infty}(\bR_+)$ such that 
\begin{align}\label{220526211}
	\zeta_0\geq 0\,\,,\,\,\,\
	\text{supp}(\zeta_0)\subset [e^{-1},e]\quad\text{and}\quad \sum_{n\in\bZ}\zeta_0(e^n\,\cdot\,)\equiv 1\quad\text{on}\,\,\bR_+\,.
\end{align}
Put $\zeta_1(t)=\zeta_0(e^{-1}t)+\zeta_0(t)+\zeta_0(e t)$, so that
\begin{align}\label{230120312}
	\zeta_1\cdot \zeta_0\equiv \zeta_0\quad\text{on}\quad \bR_+\,.
\end{align}
For $\xi\in C_c^{\infty}(\bR_+)$, we denote
$$
\xi_{(n)}(x)=\xi(e^{-n}\trho(x))\,,
$$
where $\trho(x)$ is the regularization of $\rho(x)$ constructed in Lemma~\ref{21.05.27.3}.(1).
Note the following properties of $\trho$ and $\xi_n(n)$:
\begin{itemize}
	\item For each $k\in\bN_0$, there exists $N_k=N_k(d,k)>0$ such that
	\begin{align}\label{220719515}
		N_0\rho(x)\leq \trho(x)\leq N_0\rho(x)\quad\text{and}\quad |D^k\trho(x)|\leq N_k\trho(x)^{-k+1}
	\end{align}
	for all $k\in\bN_0$ and $x\in\Omega$.
	
	\item Let $\xi\in C_c^{\infty}(\bR_+)$ be supported on $[e^{-K},e^K]\subset \bR_+$, $K\in\bN$.
	For each $n\in\bZ$,
	\begin{equation}\label{220526148}
		\begin{gathered}
			\text{supp}(\xi_{(n)})\subset \{x\in\Omega\,:\,e^{n-K}\leq \trho(x)\leq e^{n+K}\}\,;\\
			\xi_{(n)}\in C^{\infty}(\bR^d)\quad\text{with}\quad |D^{\alpha}\xi_{(n)}|\leq N(\alpha,\xi)\,e^{-n|\alpha|}\,.
		\end{gathered}
	\end{equation}
	In addition, since $\sum_{|n|\leq K+1}\zeta_0(e^n\cdot)\equiv 1$ on $[e^{-K},e^K]$, we have
	\begin{align*}
		\xi_{(n)}\equiv \xi_{(n)}\sum_{|k|\leq N}\zeta_{0,{(n+k)}}\,.
	\end{align*}
\end{itemize}
We denote
\begin{alignat*}{2}
	&X_{p,\theta}^{\gamma}(\Omega)&&=\Big\{f\in\cD'(\Omega)\,:\,\|f\|_{X_{p,\theta}^{\gamma}(\Omega)}^p:=\sum_{n\in\bZ}e^{n\theta}\|\big(\zeta_{(n)}f\big)(e^n\,\cdot\,)\|_{X_p^{\gamma}}^p<\infty\Big\}\,,\\
	&l^{\theta/p}_p(X_p^{\gamma})&&=\Big\{\{f_n\}_{n\in\bZ}\subset X_p^{\gamma}\,:\,\|\{f_n\}\|_{l^{\theta/p}_p(X_p^{\gamma})}^p:=\sum_{n\in\bZ}e^{n\theta}\|f_n\|_{X_p^{\gamma}}^p<\infty\Big\}\,.
\end{alignat*}

For $\xi\in C_c^{\infty}(\bR_+)$, we define the maps
\begin{alignat*}{2}
	&S_{\xi}:\cD'(\Omega)&&\rightarrow \cD'(\bR^d)^{\bZ}:=\big\{\{f_n\}_{n\in\bZ}\,:\,f_n\in\cD'(\bR^d)\big\}\,;\\
	&R_{\xi}:\cD'(\bR^d)^{\bZ}&&\rightarrow \cD'(\Omega)
\end{alignat*}
as
\begin{equation*}
	\begin{alignedat}{2}
		&S_{\xi}f&&:=\big\{(S_{\xi}f)_n\big\}_{n\in\bZ}:=\big\{\big(f\xi_{(n)}\big)(e^n\,\cdot\,)\big\}_{n\in\bZ}\,,\\
		&R_{\xi}\{f_n\}&&:=\sum_{n\in\bZ}\xi_{(n)}(\,\cdot\,)f_n(e^{-n}\,\cdot\,)\,.
	\end{alignedat}
\end{equation*}
Note that, since $\zeta_1\zeta_0\equiv \zeta_0$, $R_{\zeta_1}\circ S_{\zeta_0}$ is the identity map on $\cD'(\Omega)$.
Following \cite{Lo1}, we use the maps $S_{\xi}$ and $R_{\xi}$ to obtain properties of $X_{p,\theta}^{\gamma}(\Omega)$ from the properties of $l_p^{\theta/p}(X_p^{\gamma})$.

We now introduce the properties of $X_{p,\theta}^{\gamma}(\Omega)$. 
Since $\zeta_0$ is fixed and the spaces $X_{p,\theta}^{\gamma}(\Omega)$ are independent of choice of $\zeta_0$ (see Proposition~\ref{220527502}.(5)), the dependence on $\zeta_0$ will be ignored.
For the case $X=H$, Propositions \ref{230129407} - \ref{220527502} follow from \cite[Section 2, 3]{Lo1} and elementary properies of $H_p^{\gamma}$.
Corresponding results for the case $X=B$ can also be obtained in a similar way.
However, it needs to be clearly stated in \cite{Lo1} that the constants in the inequalities in Propositions \ref{230129407} - \ref{220527502} are independent of $\Omega$.
Therefore we provide proof of these propositions to verify the case $X=B$ and to investigate the dependence of the constants in each inequality.

\begin{prop}\label{230129407}
	Let $\xi\in C_c^{\infty}(\bR_+)$.
	For any $f\in X_{p,\theta}^{\gamma}(\Omega)$ and $\{f_n\}_{n\in\bN}\in X_{p}^{\gamma}(\Omega)$,
	\begin{equation}\label{220526250}
		\|S_{\xi}f\|_{l_p^{\theta/p}(X_p^{\gamma})}\leq N\|f\|_{X_{p,\theta}^{\gamma}(\Omega)}\quad\text{and}\quad \|R_{\xi}\{f_n\}\|_{X_{p,\theta}^{\gamma}(\Omega)}\leq N\|\{f_n\}\|_{l_p^{\theta/p}(X_p^{\gamma})}\,,	\end{equation}
	where $N=N(\cI,\zeta,\xi)$.
\end{prop}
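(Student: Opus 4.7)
\textbf{Proof plan for Proposition~\ref{230129407}.}

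The plan is to reduce both inequalities to the elementary multiplier/dilation estimates \eqref{2205070028} on $\bR^d$, by exploiting the fact that the supports of $\xi_{(n)}$ and $\zeta_{0,(n)}$ overlap only for indices differing by a bounded amount. Fix $K\in\bN$ with $\mathrm{supp}(\xi)\subset[e^{-K},e^K]$. The key preliminary observation is that for each $n\in\bZ$ the dilated cutoff
\[
\varphi_{\xi,n}(y):=\xi_{(n)}(e^n y)=\xi\bigl(e^{-n}\trho(e^n y)\bigr)
\]
is smooth, supported in $\{y:e^{-K-1}\le\trho(e^n y)\le e^{K+1}\}$, and satisfies $\|\varphi_{\xi,n}\|_{C^m}\le N(d,m,\xi)$ uniformly in $n$. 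Indeed, each derivative $D^\alpha_y$ produces factors of $e^n$ from the chain rule, which are cancelled exactly by the bound $|D^\alpha\trho(e^n y)|\lesssim \trho(e^n y)^{1-|\alpha|}\simeq e^{n(1-|\alpha|)}$ that holds on the support. The same argument applies to $\zeta_{0,(m)}\xi_{(n)}$ when $|n-m|\le K+1$.

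\emph{First inequality.} Using $\xi_{(n)}\equiv\xi_{(n)}\sum_{|k|\le K+1}\zeta_{0,(n+k)}$, write
\[
(\xi_{(n)}f)(e^n x)=\sum_{|k|\le K+1}\varphi_{\xi,n}(x)\,(\zeta_{0,(n+k)}f)(e^n x)
=\sum_{|k|\le K+1}\varphi_{\xi,n}(x)\,g_{n+k}(e^{-k}x),
\]
where $g_m(y):=(\zeta_{0,(m)}f)(e^m y)$. By \eqref{2205070028} (multiplier estimate with $a=\varphi_{\xi,n}$ uniformly in $n$, and dilation by $e^{-k}$ with $|k|\le K+1$),
\[
\|(\xi_{(n)}f)(e^n\cdot)\|_{X_p^\gamma}\le N(\cI,\xi)\sum_{|k|\le K+1}\|g_{n+k}\|_{X_p^\gamma}.
\]
Raising to the $p$-th power, multiplying by $e^{n\theta}$, summing in $n$, and using the elementary inequality $(\sum_{|k|\le K+1}a_k)^p\le (2K+3)^{p-1}\sum|a_k|^p$ followed by the index shift $n\mapsto n-k$ (which costs $e^{-k\theta}$ with $|k|\le K+1$), one obtains the first bound in \eqref{220526250}.

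\emph{Second inequality.} For $m\in\bZ$, only the terms with $|n-m|\le K+1$ contribute to $\zeta_{0,(m)}R_\xi\{f_n\}$, so
\[
(\zeta_{0,(m)}R_\xi\{f_n\})(e^m x)=\sum_{|n-m|\le K+1}\psi_{m,n}(x)\,f_n(e^{m-n}x),
\]
with $\psi_{m,n}(x):=(\zeta_{0,(m)}\xi_{(n)})(e^m x)$ uniformly bounded in $C^m(\bR^d)$ by the preliminary observation. Applying \eqref{2205070028} again,
\[
\|(\zeta_{0,(m)}R_\xi\{f_n\})(e^m\cdot)\|_{X_p^\gamma}\le N(\cI,\xi)\sum_{|n-m|\le K+1}\|f_n\|_{X_p^\gamma},
\]
and the same elementary/shift argument as above yields $\|R_\xi\{f_n\}\|_{X_{p,\theta}^\gamma(\Omega)}\le N\|\{f_n\}\|_{l_p^{\theta/p}(X_p^\gamma)}$.

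The only subtle point — and the step I would present carefully — is the uniform-in-$n$ smoothness of the rescaled cutoffs $\varphi_{\xi,n}$ and $\psi_{m,n}$, which is where the regularity \eqref{220719515} of $\trho$ is essential; everything else is triangle inequality, two applications of \eqref{2205070028}, and the finite-overlap structure of the partition. This is straightforward but deserves to be spelled out because it is precisely what makes the constant $N$ depend only on $\cI,\zeta_0,\xi$ and not on $\Omega$.
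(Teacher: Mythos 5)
Your proof is correct and follows essentially the same strategy as the paper: exploit the bounded-overlap between the supports of $\xi_{(n)}$ and $\zeta_{0,(m)}$ to reduce each estimate to a finite sum, then apply the multiplier and dilation bounds of \eqref{2205070028} together with an index shift. The only cosmetic difference is that you correctly use the overlap range $|k|\le K+1$ (the paper's \eqref{230129311}-\eqref{230129339} nominally use $|k|\le K$, which is a harmless off-by-one), and you make the uniform $C^m$ bound on the rescaled cutoffs more explicit, which the paper records in \eqref{220526148}.
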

\begin{proof}
	Take $K\in\bN$ such that $\mathrm{supp}(\xi)\subset \left[e^{-K},e^{K}\right]$ so that
	\begin{align}\label{230129311}
		|n-k|>K\quad\Longrightarrow \quad \zeta_{0,(n)} \xi_{(k)}\equiv 0\,.
	\end{align}
	
	Due to \eqref{220526211} and \eqref{230129311}, we have
	\begin{align}\label{230129339}
		\xi_{(n)}=\sum_{|k|\leq K}\xi_{(n)}\zeta_{0,(n+k)}\,.
	\end{align}
	From \eqref{230129339}, \eqref{220526148}, and \eqref{2205070028}, we have
	\begin{alignat*}{2}
		\|S_{\xi}f\|_{l^{\theta/p}_p(X_p^{\gamma})}^p\,&=&&\sum_{n\in\bZ}\|\big(\xi_{(n)}f\big)(e^n\,\cdot\,)\|_{X_p^{\gamma}}^p\\
		&\lesssim_{K,p}&&\sum_{|k|\leq K}\sum_{n\in\bZ}e^{n\theta}\|\xi_{(n)}(e^n\,\cdot\,)\big(\zeta_{0,(n+k)}f\big)(e^n\,\cdot\,)\|_{X_p^{\gamma}}^p\\
		&\lesssim_{\cI,\xi}&&\sum_{|k|\leq K}\sum_{n\in\bZ}e^{n\theta}\|\big(\zeta_{0,(n+k)}f\big)(e^n\,\cdot\,)\|_{X_p^{\gamma}}^p\\
		&\lesssim_{\cI,K}&&\sum_{|k|\leq K}\sum_{n\in\bZ}e^{n\theta}\|\big(\zeta_{0,(n+k)}f\big)(e^{n+k}\,\cdot\,)\|_{X_p^{\gamma}}^p\\
		&\leq &&e^{K|\theta|}\sum_{n\in\bZ}e^{n\theta}\|\big(\zeta_{0,(n)}f\big)(e^{n}\,\cdot\,)\|_{X_p^{\gamma}}^p\,.
	\end{alignat*}
	Therefore the first inequality in \eqref{220526250} is proved.
	
	Due to \eqref{230129339}, \eqref{220526148}, and \eqref{2205070028}, we have
	\begin{alignat*}{2}
		\|R_{\xi}\{f_n\}\|_{X_{p,\theta}^{\gamma}(\Omega)}&=&&\sum_{n\in\bZ}\Big\|\zeta_{0,(n)}\sum_{k\in\bZ}\xi_{(k)}(e^n\,\cdot\,)f_k(e^{n-k}\,\cdot\,)\Big\|_{X_p^{\gamma}(\Omega)}^p\\
		&\lesssim_{K,p}&&\sum_{|k|\leq K}\sum_{n\in\bZ}\Big\|\zeta_{0,(n)}(e^n\,\cdot\,)\xi_{(n+k)}(e^n\,\cdot\,)f_{n+k}(e^n\,\cdot\,)\Big\|_{X_p^{\gamma}(\Omega)}^p\\
		&\lesssim_{\cI,\zeta,\xi,K}&&\sum_{n\in\bZ}\|f_{n+k}(e^n\,\cdot\,)\|_{X_{p}^{\gamma}}^p\\
		&\lesssim_{\cI,K}&& \|\{f_n\}\|_{l_p^{\theta/p}(X_p^{\gamma})}^p\,.
	\end{alignat*}
	Therefore the second inequality in \eqref{220526250} is proved.
\end{proof}

\begin{prop}[Properties of weighted Sobolev/Besov spaces - I]\label{220527502111}
	\,\,
	
	\begin{enumerate}
		\item $X_{p,\theta}^{\gamma}$ is a Banach space.
		
		\item $C_c^{\infty}(\Omega)$ is dense in $X_{p,\theta}^{\gamma}(\Omega)$.
		
		\item $X_{p,\theta}^{\gamma}(\Omega)$ is the dual of $X_{p',\theta'}^{-\gamma}(\Omega)$, where 
		$$
		\frac{1}{p}+\frac{1}{p'}=1\quad\text{and}\quad \frac{\theta}{p}+\frac{\theta'}{p'}=d\,.
		$$
		Furthermore, we have
		\begin{align}\label{220526214}
			\sup_{g\in C_c^{\infty}(\Omega),\,g\neq 0}\frac{\big|(f,g)\big|}{\| g\|_{X_{p',\theta'}^{-\gamma}(\Omega)}}\simeq_{\cI}\|f\|_{X_{p,\theta}^{\gamma}(\Omega)}\,.
		\end{align}
		In particular, $X_{p,\theta}^{\gamma}(\Omega)$ is reflexive.
		
		\item Let $p_i\in(1,\infty)$ and $\gamma_i$, $\theta_i\in\bR$ for $i=0,\,1$.
		For any $t\in (0,1)$,
		\begin{align*}
			&\big[ X^{\gamma_0}_{p_0,\theta_0}(\Omega), X^{\gamma_1}_{p_1,\theta_1}(\Omega)\big]_t\simeq_N X^{\gamma_t}_{p_t,\theta_t}(\Omega)
		\end{align*}
		where $N=N(d,p_i,\theta_i,\gamma_i,t;i=1,2)$.
		Here, $[Y_0,Y_1]_t$ is the complex interpolation space of $Y_0$ and $Y_1$ (see \cite[Section 1.9]{triebel} for the definition and properties of the complex interpolation spaces), and $p_t\in(1,\infty)$ and $\gamma_t,\,\theta_t\in\bR$ are constants satisfying
		\begin{align}\label{220526223}
			\frac{1}{p_t}=\frac{1-t}{p_0}+\frac{t}{p_1}\,\,,\,\,\,\,\gamma_t=(1-t)\gamma_0+t\gamma_1\,\,\,,\,\,\,\,\frac{\theta_t}{p_t}=(1-t)\frac{\theta_0}{p_0}+t\frac{\theta_1}{p_1}\,.
		\end{align}
		
		\item Let $p_i\in(1,\infty)$ and $\gamma_i$, $\theta_i\in\bR$ for $i=0,\,1$, with $\gamma_0\neq \gamma_1$.
		For any $t\in (0,1)$,
		\begin{align*}
			&\big( H^{\gamma_0}_{p_0,\theta_0}(\Omega), H^{\gamma_1}_{p_1,\theta_1}(\Omega)\big)_{t,p_t}\simeq_NB^{\gamma_t}_{p_t,\theta_t}(\Omega)\simeq_N \big(B^{\gamma_0}_{p_0,\theta_0}(\Omega), B^{\gamma_1}_{p_1,\theta_1}(\Omega)\big)_{t,p_t}
		\end{align*}
		where $N=N(d,p_i,\theta_i,\gamma_i,t;i=1,2)$.
		Here, $(Y_0,Y_1)_{t,p_t}$ is the real interpolation space of $Y_0$ and $Y_1$ (see \cite[Section 1.3]{triebel} for the definition and properties of the real interpolation spaces), and $p_t\in(1,\infty)$ and $\gamma_t,\,\theta_t\in\bR$ are constants satisfying \eqref{220526223}.
	\end{enumerate}
\end{prop}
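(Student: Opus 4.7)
The plan is to exploit the retraction/co-retraction structure provided by the operators $S_{\xi}$ and $R_{\xi}$ of Proposition~\ref{230129407}. Since $\zeta_1 \zeta_0 \equiv \zeta_0$ on $\bR_+$, the composition $R_{\zeta_1}\circ S_{\zeta_0}$ equals the identity on $\cD'(\Omega)$, and both maps are bounded between $X_{p,\theta}^{\gamma}(\Omega)$ and the sequence space $l_p^{\theta/p}(X_p^{\gamma})$. This realizes $X_{p,\theta}^{\gamma}(\Omega)$ as a retract of $l_p^{\theta/p}(X_p^{\gamma})$, which transfers completeness, duality, and interpolation properties from the well-understood sequence space (built on the classical spaces $X_p^{\gamma}(\bR^d)$) to $X_{p,\theta}^{\gamma}(\Omega)$.

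\textbf{Items (1) and (3).} For (1), I would first check that $l_p^{\theta/p}(X_p^{\gamma})$ is a Banach space (immediate from completeness of $X_p^{\gamma}$), and then use that a retract of a Banach space is a Banach space: if $\{f^{(k)}\}$ is Cauchy in $X_{p,\theta}^{\gamma}(\Omega)$, then $\{S_{\zeta_0}f^{(k)}\}$ is Cauchy in $l_p^{\theta/p}(X_p^{\gamma})$ with some limit $\{g_n\}$, and $R_{\zeta_1}\{g_n\}$ is the limit of $f^{(k)}$ in $X_{p,\theta}^{\gamma}(\Omega)$. For (3), the dual of $l_p^{\theta/p}(X_p^{\gamma})$ is $l_{p'}^{\theta'/p'}((X_p^{\gamma})^*)$, where one uses the standard duality $(X_p^{\gamma})^* = X_{p'}^{-\gamma}$ together with the change-of-variables weight matching $\frac{\theta}{p}+\frac{\theta'}{p'}=d$ (the factor $d$ comes from the Jacobian $e^{nd}$ when unscaling $(\,\cdot\,)(e^n x)\mapsto(\,\cdot\,)$). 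The pairing $\langle f,g\rangle:=\sum_n e^{nd}\int (S_{\zeta_0}f)_n \cdot (S_{\zeta_1}g)_n\,\dd x$ provides the dual pairing, and the equivalence \eqref{220526214} follows by combining the two bounds in \eqref{220526250} with the sequence-space duality. Reflexivity is then automatic since $X_p^{\gamma}$ is reflexive.

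\textbf{Item (2): density of $C_c^{\infty}(\Omega)$.} This is the main obstacle and requires genuine work beyond formal retraction arguments. For $f\in X_{p,\theta}^{\gamma}(\Omega)$, I would use a three-step approximation. First, truncate: define $f^{(N)}:=\sum_{|n|\leq N}\zeta_{0,(n)}f$. Using \eqref{220526148} and the boundedness estimates of Proposition~\ref{230129407}, show $\|f-f^{(N)}\|_{X_{p,\theta}^{\gamma}(\Omega)}\to 0$ as $N\to\infty$ via the dominated convergence theorem applied to the defining series. Second, for each fixed $N$, observe that $f^{(N)}$ is a finite sum of distributions supported in fixed shells $\{e^{n-1}\leq\trho\leq e^{n+1}\}$, so it suffices to approximate each summand $\zeta_{0,(n)}f$ by $C_c^{\infty}(\Omega)$-functions in $X_{p,\theta}^{\gamma}(\Omega)$. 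After rescaling by $e^n$, this reduces to approximating a compactly supported element of $X_p^{\gamma}(\bR^d)$ by test functions, which is classical (standard mollification, using $C_c^{\infty}(\bR^d)$ density in $X_p^{\gamma}(\bR^d)$). The rescaled mollifications have supports inside the annular shells, hence return (after unscaling) to $C_c^{\infty}(\Omega)$.

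\textbf{Items (4) and (5): interpolation.} Here I would apply the abstract interpolation principle that interpolation functors commute with retracts (see, e.g., \cite[Section 1.2.4]{triebel}): if $X_{p,\theta}^{\gamma}(\Omega)$ is a retract of $l_p^{\theta/p}(X_p^{\gamma})$, then interpolation of the sequence spaces yields interpolation of the domain spaces. For complex interpolation (4), the identity
\begin{equation*}
\bigl[l_{p_0}^{\theta_0/p_0}(X_{p_0}^{\gamma_0}),\,l_{p_1}^{\theta_1/p_1}(X_{p_1}^{\gamma_1})\bigr]_t=l_{p_t}^{\theta_t/p_t}\bigl([X_{p_0}^{\gamma_0},X_{p_1}^{\gamma_1}]_t\bigr)
\end{equation*}
follows from the classical interpolation of vector-valued $l^p$-spaces, combined with $[H_{p_0}^{\gamma_0},H_{p_1}^{\gamma_1}]_t=H_{p_t}^{\gamma_t}$ and the analogous identity for $B$-spaces (both classical, see \cite[Theorem 2.4.2, Theorem 2.4.7]{triebel2}). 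One caveat to check carefully is that the weight exponent matches under the sequence-space interpolation; a direct computation confirms $\theta_t/p_t=(1-t)\theta_0/p_0+t\theta_1/p_1$ is exactly the correct weight. For (5), the argument is identical with real interpolation replacing complex, using the classical identity $(H_{p_0}^{\gamma_0},H_{p_1}^{\gamma_1})_{t,p_t}=B_{p_t}^{\gamma_t}=(B_{p_0}^{\gamma_0},B_{p_1}^{\gamma_1})_{t,p_t}$ (valid under $\gamma_0\neq\gamma_1$, see \cite[Theorem 2.4.2]{triebel2}) combined with real interpolation of weighted vector-valued sequence spaces.
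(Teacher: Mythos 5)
Your proof is correct and takes essentially the same approach as the paper's: realize $X_{p,\theta}^{\gamma}(\Omega)$ as a retract of $l_p^{\theta/p}(X_p^{\gamma})$ via $R_{\zeta_1}\circ S_{\zeta_0}=\mathrm{id}$ and Proposition~\ref{230129407}, then transfer completeness, duality, and interpolation from the sequence space. One small remark on your item (2): the annular shell $\{e^{-1}\le\trho(e^n\cdot)\le e\}$ is unbounded when $\Omega$ is unbounded, so $(\zeta_{0,(n)}f)(e^n\cdot)$ is \emph{not} compactly supported in general and one must invoke the full density of $C_c^{\infty}(\bR^d)$ in $X_p^{\gamma}(\bR^d)$ (which combines mollification with a cut-off at infinity), as you in fact do; the paper sidesteps your truncation step by directly approximating $S_{\zeta_0}f$ in $l_p^{\theta/p}(X_p^{\gamma})$ by finitely supported sequences of test functions, then applying $R_{\zeta_1}$, which makes compact support automatic.
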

\begin{proof}
	
	(1) We only need to prove that if $\{f^{(n)}\}_{n\in\bN}$ is a Cauchy sequence  in $X_{p,\theta}^{\gamma}(\Omega)$, then this sequence converges in $X_{p,\theta}^{\gamma}(\Omega)$.
	Due to \eqref{220526250}, $S_{\zeta_0}f^{(n)}$ is a Cauchy sequence  in $l_p^{\theta/p}(X_{p}^{\gamma})$, and therefore there exists $\lim\limits_{n\rightarrow \infty}S_{\zeta_0}f^{(n)}=:F$ in $l_p^{\theta/p}(X_p^{\gamma})$.
	Put $f=R_{\zeta_1}F\in X_{p,\theta}^{\gamma}(\Omega)$, so that 
	$$
	\|f-f^{(n)}\|_{X_{p,\theta}^{\gamma}(\Omega)}=\|R_{\zeta_1}\big(F-S_{\zeta_0}f^{(n)}\big)\|_{X_{p,\theta}^{\gamma}(\Omega)}\lesssim \|F-S_{\zeta_0}f^{(n)}\|_{l_p^{\theta/p}(X_{p}^{\gamma})}\rightarrow 0
	$$
	as $n\rightarrow \infty$.
	The proof is completed.

	(2) If $f\in C_c^{\infty}(\Omega)$, then $\|\big(\zeta_{0,(n)}f\big)(e^n\,\cdot\,)\|_{X_{p}^{\gamma}}=0$ for all but finitely many $n\in\bZ$. Therefore $C_c^{\infty}(\Omega)\subset X_{p,\theta}^{\gamma}(\Omega)$.
	To prove that $C_c^{\infty}(\Omega)$ is dense in $X_{p,\theta}^{\gamma}(\Omega)$, note that $C_c^{\infty}(\bR^d)$ is dense in $X_p^{\gamma}$.
	For any $f\in X_{p,\theta}^{\gamma}(\Omega)$ and $\epsilon>0$, since $S_{\zeta_0}{f}\in l_p^{\theta/p}(X_p^{\gamma})$, there exists $\{g_n\}_{n\in\bZ}\subset C_c^{\infty}(\bR^d)$ such that $g_n\equiv 0$ for all but finitely many $n$, and 
	$$
	\big\|S_{\zeta_0}f-\{g_n\}\big\|_{l_p^{\theta/p}(X_p^{\gamma})}<\epsilon\,.
	$$
	Since $g_n\equiv 0$ for all but finitely many $n$, $g:=R_{\zeta_1}\{g_n\}$ belongs to $C_c^{\infty}(\Omega)$.
	Due to
	$$
	f-g=R_{\zeta_1}\big(S_{\zeta_0}f-\{g_n\}\big)
	$$
	and \eqref{220526250}, we obtain
	$$
	\|f-g\|_{X_{p,\theta}^{\gamma}(\Omega)}\leq N\big\|S_{\zeta_0}f-\{g_n\}\big\|_{l_p^{\theta/p}(X_p^{\gamma})}\leq N\epsilon\,,
	$$
	where $N=N(\cI)$. Since $N$ is independent of $\epsilon$, the proof is completed.
	
	(3)	Observe that for any $g\in X_{p',\theta'}^{-\gamma}(\Omega)$ and $f\in C_c^{\infty}(\Omega)$,
	\begin{align}\label{220904441}
		\begin{split}
			|\langle g,f\rangle|\,&\leq \sum_{n\in\bZ}\big|\big\langle g,\zeta_{0,(n)} f\big\rangle\big|=\sum_{n\in\bZ}\big|\big\langle \zeta_{1,(n)} g,\zeta_{0,(n)} f\big\rangle\big|\\
			&= \sum_{n\in\bZ}\big|\big\langle(e^{nd}S_{\zeta_1}g)_n,(S_{\zeta_0}f)_n\big\rangle\big|\\
			&\lesssim_{\cI} \|S_{\zeta_1}g\|_{l^{-\theta/p+d}_{p'}(X_{p'}^{-\gamma})}\|S_{\zeta_0}f\|_{l^{\theta/p}_p(X_p^{\gamma})}\\
			&\lesssim_{\cI}\|g\|_{X_{p',\theta'}^{-\gamma}(\Omega)}\|f\|_{X_{p,\theta}^{\gamma}(\Omega)}\,.
		\end{split}
	\end{align}
	For $g\in X_{p',\theta'}^{-\gamma}(\Omega)$, let $\mathbf{L}_g$ be the linear map from $C_c^{\infty}(\Omega)$ to $\bR$ defined by 
	$$
	\mathbf{L}_gf= \langle g,f\rangle\,.
	$$
	Then \eqref{220904441} and (1) of this proposition imply that $\mathbf{L}_g\in \big(X_{p,\theta}^{\gamma}(\Omega)\big)^{\ast}$ with
	\begin{align*}
		\|\mathbf{L}_g\|_{\left(X_{p,\theta}^{\gamma}(\Omega)\right)^{\ast}}=\sup_{f\in C_c^{\infty}(\Omega),\,f\neq 0}\frac{|\langle g,f\rangle |}{\| f\|_{X_{p,\theta}^{\gamma}(\Omega)}}\lesssim_{\cI} \|g\|_{X_{p',\theta'}^{-\gamma}(\Omega)}\,.
	\end{align*}
	In other words, $\mathbf{L}:g\mapsto \mathbf{L}_g$ is a bounded linear operator from $X_{p',\theta'}^{-\gamma}(\Omega)$ to $\big(X_{p,\theta}^{\gamma}(\Omega)\big)^{\ast}$.
	We claim that $\mathbf{L}$ is bijective and for any $g\in X_{p',\theta'}^{-\gamma}(\Omega)$,
	\begin{align}\label{220904854}
		\|g\|_{X_{p',\theta'}^{-\gamma}(\Omega)}\lesssim_{\cI}\|\mathbf{L}_g\|_{\left(X_{p,\theta}^{\gamma}(\Omega)\right)^{\ast}}\,.
	\end{align}
	
	- Injectivity : 
	If $\mathbf{L}_g\equiv 0$, then $\mathbf{L}_gf=\langle g,f\rangle =0$ for all $f\in C_c^{\infty}(\Omega)$.
	Therefore $g$ is the zero distribution.
	
	- Surjectivity : 
	For $\Lambda\in \big(X_{p,\theta}^{\gamma}(\Omega)\big)^\ast$, $\Lambda R_{\zeta_1}$ is in $\big(l^{\theta/p}_p(X_p^{\gamma})\big)^{\ast}\simeq l^{-\theta/p}_{p'}(X_{p'}^{-\gamma})$ (see, \textit{e.g.}, \cite[Theorem 2.11.2]{triebel2}).
	Therefore there exists $\{\widetilde{g}_n\}_{n\in\bZ}\in l^{-\theta/p}_{p'}(X_{p'}^{-\gamma})$ such that
	\begin{equation}\label{22090500341}
		\begin{aligned}
			\begin{gathered}
				\Lambda R_{\zeta_1}\{f_n\}=\sum_n\langle \widetilde{g}_n,f_n\rangle\quad\text{for all $\{f_n\}\in l_p^{\theta/p}(X_p^{\gamma})$}\,,\,\,\text{and} \\
				\big\|\{\widetilde{g}_n\}\big\|_{l^{-\theta/p}_{p'}(X_{p'}^{-\gamma})}\simeq_{\cI}\big\|\Lambda R_{\zeta_1}\big\|_{\left(l^{\theta/p}_p(X_p^{\gamma})\right)^{\ast}}\,.
			\end{gathered}
		\end{aligned}
	\end{equation}
	For any $f\in C_c^{\infty}(\Omega)$,
	\begin{align}\label{230330113}
		\begin{split}
			\Lambda f\,&=\Lambda \big(R_{\zeta_1}S_{\zeta_0}f\big)= \big(\Lambda R_{\zeta_1}\big)\big(S_{\zeta_0}f\big)\\
			&=\sum_{n\in\bN}\big\langle \widetilde{g}_n,(S_{\zeta_0}f)_n\big\rangle
			=\sum_{n\in\bN}e^{-nd}\big\langle  \widetilde{g}_n(e^{-n}\,\cdot\,)\zeta_{0,(n)},f\big\rangle \\
			&=\big\langle R_{\zeta_{0}}\{e^{-nd}\widetilde{g}_n\},f\big\rangle \,.
		\end{split}
	\end{align}
	Since
	$$
	\|\{e^{-nd}\widetilde{g}_n\}\|_{l_{p'}^{\theta'/p'}(X_{p'}^{-\gamma})}=\|\{\widetilde{g}_n\}\|_{l_p^{-\theta/p}(X_{p'}^{-\gamma})}<\infty\,,
	$$
	we have
	\begin{align}\label{230330114}
		\widetilde{g}:=R_{\zeta_0}\{e^{-nd}\widetilde{g}_n\}\in X_{p',\theta'}^{-\gamma}(\Omega)\,,
	\end{align}
	Consequently, \eqref{230330113} and \eqref{230330114} yield $\Lambda=\mathbf{L}_{\widetilde{g}}$, and teh surjectivity is proved.
	
	- \eqref{220904854} :
	Let $g\in X_{p',\theta'}^{-\gamma}(\Omega)$.
	For $\Lambda:=\mathbf{L}_g$, we recall $\{\widetilde{g}_n\}$ and  $\widetilde{g}:=R_{\zeta_0}\{e^{-nd}\widetilde{g}_n\}$ in \eqref{22090500341} -  \eqref{230330114}.
	Since $\mathbf{L}$ is bijective, $\widetilde{g}=g$.
	It is implied by \eqref{220526250}, \eqref{22090500341} -  \eqref{230330114} that
	\begin{align*}
		\|g\|_{X_{p',\theta'}^{-\gamma}(\Omega)}\lesssim\|\{\widetilde{g}_n\}\|_{l_{p'}^{-\theta/p}(X_{p'}^{-\gamma})}\simeq_{\cI}\big\|\Lambda R_{\zeta_1}\big\|_{\left(l^{\theta/p}_p(X_p^{\gamma})\right)^{\ast}}\lesssim \|\Lambda\|_{\left(X_{p,\theta}^{\gamma}(\Omega)\right)^{\ast}}\,.
	\end{align*}

	Although we have only proved
\begin{align}\label{230421100}
	\sup_{f\in C_c^{\infty}(\Omega),\,f\neq 0}\frac{|\langle g,f\rangle |}{\| f\|_{X_{p,\theta}^{\gamma}(\Omega)}}\lesssim_{\cI} \|g\|_{X_{p',\theta'}^{-\gamma}(\Omega)}
\end{align}
for $g\in X_{p',\theta'}^{-\gamma}(\Omega)$, the proofs of \eqref{220904441} and \eqref{220904854} imply that \eqref{230421100} holds for all $g\in\cD'(\Omega)$.

	The reflexivity of $X_{p,\theta}^{\gamma}(\Omega)$ follows from that $\big(X_{p,\theta}^{\gamma}(\Omega)\big)^{\ast\ast}\simeq \big(X_{p',\theta'}^{-\gamma}(\Omega)\big)^{\ast}\simeq X_{p,\theta}^{\gamma}(\Omega)$.

	(4) Although the formula for $\theta$ in \eqref{220526223} is different within \cite[Proposition 2.4]{Lo1}, the formula in \eqref{220526223} is sufficient for our purpose.
	Indeed, this proposition is implied by Proposition~\ref{230129407}, \cite[Theorem 1.2.4]{triebel}, and that 
				\begin{align*}
					\big[ l^{\theta_0/p_0}_{p_0}(X^{\gamma_0}_{p_0}), l^{\theta_1/p_1}_{p_1}(X^{\gamma_1}_{p_1})\big]_t\simeq_{N}  l^{\theta_t/p_t}_{p_t}(X^{\gamma_t}_{p_t})\,,
				\end{align*}
				where $N=N(d,p_i,\gamma_i,t;i=1,2)$ (see, \textit{e.g.}, \cite[Theorem 1.18.1, Theorem 2.4.2/1]{triebel}).
				
				(5) This proposition is implied by Proposition~\ref{230129407}, \cite[Theorem 1.2.4]{triebel}, and that 
				\begin{align*}
					\big( l^{\theta_0/p_0}_{p_0}(H^{\gamma_0}_{p_0}), l^{\theta_1/p_1}_{p_1}(H^{\gamma_1}_{p_1})\big)_{t,p_t}\simeq_N  l^{\theta_t/p_t}_{p_t}(B^{\gamma}_{p_t})
				\end{align*}
				(see, \textit{e.g.}, \cite[Theorem 1.18.1, Theorem 2.4.2/1.(a)]{triebel}).
			\end{proof}

			\begin{prop}[Properties of weighted Sobolev/Besov spaces - II]\label{220527502}
				\,\,
				
				\begin{enumerate}
					\item If $p\geq 2$, then
					$$
					\|f\|_{B_{p,\theta}^{\gamma}(\Omega)}\lesssim_{\cI}\|f\|_{H_{p,\theta}^{\gamma}(\Omega)}\,,
					$$ 
					and if $1<p\leq 2$, then 
					$$
					\|f\|_{H_{p,\theta}^{\gamma}(\Omega)}\lesssim_{\cI}\|f\|_{B_{p,\theta}^{\gamma}(\Omega)}\,.
					$$

					\item For any $s<\gamma$,
					$$
					\|f\|_{H_{p,\theta}^{s}(\Omega)}+\|f\|_{B_{p,\theta}^{s}(\Omega)}\lesssim_{\cI,s}\|f\|_{X_{p,\theta}^{\gamma}(\Omega)}\,.
					$$
					
					\item(Sobolev embedding) Let $p_i\in(1,\infty)$ and $\gamma_i$,  $\theta_i\in\bR$ for $i=0,\,1$, with that
					$$
					\gamma_0>\gamma_1\,\,,\quad \gamma_0-\frac{d}{p_0}=\gamma_1-\frac{d}{p_1}\,\,,\quad \frac{\theta_0}{p_0}=\frac{\theta_1}{p_1}\,.
					$$
					Then
					$$
					\|f\|_{X_{p_1,\theta_1}^{\gamma_1}(\Omega)}+\|f\|_{B_{p_1,\theta_1}^{\gamma_1}(\Omega)}\leq N\|f\|_{X_{p_0,\theta_0}^{\gamma_0}(\Omega)}
					$$
					where $N=N(d,p_i,\gamma_i,\theta_i;i=1,2)$.
					
					\item (Pointwise multiplier) For $k\in\bN_0$, let $a\in C^k_{loc}(\Omega)$ satisfy
					\begin{align*}
						|a|_{k}^{(0)}:=\sup_{\Omega}\sum_{|\alpha|\leq k}\rho^{|\alpha|}|D^{\alpha}a|<\infty\,\,.
					\end{align*}
					If $|\gamma|\leq k$ then
					\begin{align}\label{220608607}
						\|af\|_{H^{\gamma}_{p,\theta}(\Omega)}\lesssim_{\cI}|a|_{k}^{(0)}\|f\|_{H^{\gamma}_{p,\theta}(\Omega)}\,,
					\end{align}
					for all $f\in H^{\gamma}_{p,\theta}(\Omega)$, and if $|\gamma|<k$ then
					$$
					\|af\|_{B^{\gamma}_{p,\theta}(\Omega)}\lesssim_{\cI}|a|_{k}^{(0)}\|f\|_{B^{\gamma}_{p,\theta}(\Omega)}\,,
					$$
					for all $f\in B^{\gamma}_{p,\theta}(\Omega)$.
					
					\item For any $\eta\in C_c^{\infty}(\bR_+)$,
					\begin{align}\label{220526527}
						\sum_{n\in\bZ}e^{n\theta}\big\|\big(\eta_{(n)}f\big)(e^n\cdot)\big\|^p_{X^{\gamma}_{p}}\lesssim_{N}\|f\|_{X^{\gamma}_{p,\theta}(\Omega)}^p\,,
					\end{align}
					where $N=N(\cI,\eta)>0$.
					If $\eta$ additionally satisfies
					\begin{align}\label{220526529}
						\inf_{t\in\bR_+}\Big[\sum_{n\in\bZ}\eta(e^nt)\Big]>0\,,
					\end{align}
					then
					\begin{align}\label{220526530}
						\|f\|_{X^{\gamma}_{p,\theta}(\Omega)}^p\lesssim_{N} \sum_{n\in\bZ}e^{n\theta}\big\|\big(\eta_{(n)}f\big)(e^n\cdot)\big\|^p_{X^{\gamma}_{p}}\,,
					\end{align}
					where $N=N(\cI,\eta)>0$.
					
					\item For any $s\in\bR$,
					\begin{align}\label{220526558}
						\|\trho^{\,s} f\|_{X^{\gamma}_{p,\theta}(\Omega)}\simeq_{\cI,s}\|f\|_{X^{\gamma}_{p,\theta+sp}(\Omega)}\,.
					\end{align}
					
					\item For any $k\in\bN$,
					$$
					\|f\|_{X^{\gamma}_{p,\theta}(\Omega)}\simeq_{\cI,k}\sum_{i=0}^k\|D^if\|_{X^{\gamma-k}_{p,\theta+ip}(\Omega)}.
					$$

					\item For a fixed constant $A>1$, if $f$ is distribution on $\Omega$ and $f$ is supported on $\{x\in\Omega\,:\,A^{-1}\leq \rho(x)\leq A\}$, then $f\in X_{p,\theta}^{\gamma}(\Omega)$ if and only if $f\in X_{p}^{\gamma}$.
					Moreover, we have 
					\begin{align*}
						\|f\|_{X_p^{\gamma}(\bR^d)}\simeq_{\cI,A}\|f\|_{X_{p,\theta}^{\gamma}(\Omega)}\,.
					\end{align*}
					
					\item Let $t\in(0,1)$, and let $p_i\in(1,\infty),\,\theta_i,\,\gamma_i\in\bR$ ($i=1,\,2,\,t$) are constants satisfying \eqref{220526223}.
					Then
					$$
					\|f\|_{X_{p_t,\theta_t}^{\gamma_t}(\Omega)}\lesssim_N \|f\|_{X_{p_0,\theta_0}^{\gamma_0}(\Omega)}^{1-t}\|f\|_{X_{p_1,\theta_1}^{\gamma_1}(\Omega)}^{t}\,.
					$$
				\end{enumerate}
			\end{prop}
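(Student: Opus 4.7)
The overall strategy is to reduce each statement to the corresponding known fact for $X_p^{\gamma}(\bR^d)$ via the decomposition/reconstruction maps $S_{\zeta_0}$ and $R_{\zeta_1}$ from Proposition~\ref{230129407}, exploiting the identity $R_{\zeta_1}\circ S_{\zeta_0}=\mathrm{id}$ on $\cD'(\Omega)$. The general template is: pass to the sequence $\{(\zeta_{0,(n)}f)(e^n\cdot)\}_{n\in\bZ}\in l_p^{\theta/p}(X_p^{\gamma})$, apply the known estimate on $X_p^{\gamma}$ termwise, and sum with weights $e^{n\theta}$.

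For (1), (2), and (3), the corresponding inequalities on $\bR^d$ (see, e.g., \cite[Theorems 2.3.2/(c), 2.3.3/1, 2.7.1, 2.8.1]{triebel2}) are scale-invariant in the required sense (in (3) the Sobolev embedding compensates the change of scale $e^n$ against $\frac{d}{p_0}=\frac{d}{p_1}+\gamma_0-\gamma_1$, which matches exactly the requirement $\theta_0/p_0=\theta_1/p_1$). Applying them to $(\zeta_{0,(n)}f)(e^n\cdot)$, summing in $\ell_p$ with weight $e^{n\theta}$, and then applying the first inequality of \eqref{220526250} yields the claim.

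For (4), the key observation is the scaling identity $|D^{\alpha}\big(a(e^n\cdot)\big)(x)|=e^{n|\alpha|}|D^{\alpha}a|(e^nx)$, so that $\|a(e^n\cdot)\zeta_{1}(e^{-n}\trho(e^n\cdot))\|_{C^k(\bR^d)}\lesssim |a|_k^{(0)}$ uniformly in $n\in\bZ$ (using \eqref{220719515} and \eqref{220526148}). Then $a f=a\sum_n \zeta_{0,(n)}f$ and one estimates each $\|(a\zeta_{0,(n)}f)(e^n\cdot)\|_{X_p^{\gamma}}\lesssim |a|_k^{(0)}\|(\zeta_{0,(n)}f)(e^n\cdot)\|_{X_p^{\gamma}}$ using \eqref{2205070028}, together with the condition $|\gamma|\leq k$ (respectively $|\gamma|<k$ in the Besov case, which is the standard sharp condition for multipliers in $B_p^{\gamma}$).

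For (5), the upper bound \eqref{220526527} is essentially Proposition~\ref{230129407} with $\xi=\eta$. For the lower bound \eqref{220526530} under assumption \eqref{220526529}, set $\widetilde{\eta}(t)=\eta(t)/\sum_{k\in\bZ}\eta(e^kt)$, which lies in $C_c^{\infty}(\bR_+)$ with $\sum_k\widetilde{\eta}(e^k\cdot)\equiv 1$, and repeat the construction of $\zeta_0$ with $\widetilde{\eta}$; the resulting norm is comparable to the original one by the upper bound in both directions.

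For (6), the function $\trho^{\,s}$ is smooth with $|D^{\alpha}\trho^{\,s}|\lesssim \trho^{\,s-|\alpha|}$, so on $\mathrm{supp}(\zeta_{0,(n)})$ we have $\trho^{\,s}(e^n\cdot)\simeq e^{ns}$ with all derivatives of $e^{-ns}\trho^{\,s}(e^n\cdot)$ bounded uniformly in $n$. Thus $\|\trho^{\,s}(e^n\cdot)\cdot (\zeta_{0,(n)}f)(e^n\cdot)\|_{X_p^{\gamma}}\simeq e^{ns}\|(\zeta_{0,(n)}f)(e^n\cdot)\|_{X_p^{\gamma}}$ by (4), and one reads off \eqref{220526558} after summing in $n$ with weight $e^{n(\theta+sp)}$.

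For (7), the direction $\|D^if\|_{X_{p,\theta+ip}^{\gamma-k}}\lesssim \|f\|_{X_{p,\theta}^{\gamma}}$ follows by writing $D^i f=e^{-ni}D^i(f(e^n\cdot))$ after the usual rescaling, while the reverse follows from combining (6) with the corresponding norm equivalence for $X_p^{\gamma}(\bR^d)$ in terms of $X_p^{\gamma-k}(\bR^d)$ of derivatives (this is routine for Bessel potential and Besov spaces). For (8), only finitely many $\zeta_{0,(n)}$ interact with $\mathrm{supp}(f)$, so all but finitely many terms in the series defining $\|f\|_{X_{p,\theta}^{\gamma}(\Omega)}$ vanish, and the remaining ones are comparable to $\|f\|_{X_p^{\gamma}}$ by \eqref{2205070028} and scaling. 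For (9), apply Proposition~\ref{220527502111}.(4) together with the elementary interpolation inequality $\|f\|_{[Y_0,Y_1]_t}\leq \|f\|_{Y_0}^{1-t}\|f\|_{Y_1}^t$ valid in any complex interpolation couple (see, e.g., \cite[Theorem 1.9.3/(f)]{triebel}).

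The main technical obstacle is keeping the constants in each scaled estimate truly uniform in $n\in\bZ$; this is what makes the proofs of (4) and (6) nontrivial, because one must use the $C^k$ multiplier estimates \eqref{2205070028} together with the scale-invariant bounds $|D^{\alpha}\zeta_{0,(n)}|\lesssim e^{-n|\alpha|}$ and $|D^{\alpha}\trho|\lesssim \trho^{\,1-|\alpha|}$ so that after the change of variables $x\mapsto e^nx$ all factors become uniformly bounded in $C^k$.
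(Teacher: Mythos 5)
Your proof plan is correct and follows essentially the same strategy as the paper: reduce each claim to the corresponding termwise $\bR^d$ estimate (termwise embedding, multiplier bound, Sobolev embedding, etc.), apply it to $(\zeta_{0,(n)}f)(e^n\cdot)$, and sum the $e^{n\theta}$-weighted sequence, using the uniform-in-$n$ bounds on $|D^{\alpha}\zeta_{0,(n)}|$ and $|D^{\alpha}\trho|$ to keep the constants scale-independent. The only cosmetic difference is in part~(5), where you normalize $\widetilde{\eta}=\eta/\eta_0$ and argue by comparability of the two induced norms, while the paper decomposes $\zeta_{0,(n)}=\sum_{|k|\le K}\eta_{(n-k)}(\zeta_0/\eta_0)_{(n)}$ directly; both hinge on the same periodized sum $\eta_0(t)=\sum_n\eta(e^nt)$ together with a multiplier estimate, so they amount to the same argument.
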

			
			\begin{proof}[Proof of Proposition~\ref{220527502}]\,\,
				
				(1) This follows from that $H_p^{\gamma}\subset B_p^{\gamma}$ if $p\geq 2$, and $B_p^{\gamma}\subset H_p^{\gamma}$ if $1<p\leq 2$ (see,\textit{e.g.}, \cite[Proposition 2.3.2/2.(iii)]{triebel2}).
				
				(2) This follows from that $X_p^{\gamma}\subset H_p^{s}\cap B_p^s$ (see,\textit{e.g.}, \cite[Proposition 2.3.2/2.(ii)]{triebel2}).
				
				(3) Note that $p_0<p_1$.
				Since $X_{p_0}^{\gamma_0}\subset X_{p_1}^{\gamma_1}\cap B_{p_1}^{\gamma_1}$ (see, \textit{e.g.}, \cite[Theorem 2.7.1]{triebel2}), we have
				\begin{align*}
					&\Big(\sum_{n\in\bZ}e
					^{n\theta_1}\|\big(f\zeta_{0,(n)}\big)(e^n\cdot)\|_{X_{p_1}^{\gamma_1}}^{p_1}\Big)^{1/p_1}+\Big(\sum_{n\in\bZ}e^{n\theta_1}\|\big(f\zeta_{0,(n)}\big)(e^n\cdot)\|_{B_{p_1}^{\gamma_1}}^{p_1}\Big)^{1/p_1}\\
					\leq \,&N\Big(\sum_{n\in\bZ}e^{n\theta_0}\|\big(f\zeta_{0,(n)}\big)(e^n\cdot)\|_{X_{p_0}^{\gamma_0}}^{p_1}\Big)^{1/p_1}\\\leq\,&N\Big(\sum_{n\in\bZ}e^{n\theta_0}\|\big(f\zeta_{0,(n)}\big)(e^n\cdot)\|_{X_{p_0}^{\gamma_0}}^{p_0}\Big)^{1/p_0}\,,
				\end{align*}
				where $N=N(d,p_i,\gamma_i;i=0,1)$.
				
				(4) 
				If either $k\geq |\gamma|$ and $X=H$ or $k>|\gamma|$ and $X=B$, then for any $f\in\cD'(\omega)$ and $a\in C^k(\bR^d)$,
				\begin{align}\label{2209051106}
					\|af\|_{X_p^{\gamma}}\lesssim_{d,p,\gamma}\|a\|_{C^k(\bR^d)}\|f\|_{X_p^{\gamma}}\,.
				\end{align}
				From direct calculation, one can observe that for any $k\in\bN_0$ and $a\in C_{\mathrm{loc}}^k(\Omega)$, 
				$$
				\|a(e^n\cdot\,)\zeta_{1,(n)}\|_{C^k}\leq N(d,k)|a|_k^{(0)}\,.
				$$
				By \eqref{2205070028} and \eqref{2209051106}, we have
				\begin{alignat*}{2}
					\|af\|_{X_{p,\theta}^{\gamma}(\Omega)}^p\,&=&&\sum_{n\in\bZ}e^{n\theta}\|a(e^n\cdot\,)\zeta_{1,(n)}\cdot f(e^n\cdot\,)\zeta_{0,(n)}\|_{X_p^{\gamma}}^p\\
					&\lesssim_{\cI,k}&&\sum_{n\in\bZ}e^{n\theta}|a|_k^{(0)}\|f(e^n\cdot\,)\zeta_{0,(n)}\|_{X_p^{\gamma}}^p=|a|_k^{(0)}\|f\|_{X_{p,\theta}^{\gamma}(\Omega)}^p\,.
				\end{alignat*}
				
				(5) \eqref{220526527} is directly implied by \eqref{220526250}.
				To prove the second assertion, we assume \eqref{220526529}.
				Put $\eta_0(t):=\sum_{n\in\bZ}\eta(e^nt)$, so that
				\begin{align}\label{220527242}
					\eta_0\in C^{\infty}(\bR_+)\,\,,\,\,\,\,\eta_0(e\,\cdot\,)=\eta_0(\,\cdot\,)\,\,,\,\, \text{and}\quad \sum_{n\in\bZ}\big(\eta/\eta_0\big)(e^n\,\cdot\,)= 1\quad\text{on}\quad \bR_+\,.
				\end{align}
				\eqref{220527242} implies that there exists $K\in\bN$ such that
				$$
				\sum_{|k|\leq K}\big(\eta/\eta_0\big)(e^k\cdot)=1\quad\text{on}\,\,\,[e^{-1},e]\,.
				$$
				Therefore we obtain that
				\begin{align}\label{230120246}
					\zeta_{0,(n)}=\zeta_{0,(n)}\sum_{|k|\leq K}(\eta/\eta_0)_{n-k}=\sum_{|k|\leq K}\eta_{(n-k)}\frac{\zeta_{0,(n)}}{\eta_{0,(n-k)}}=\sum_{|k|\leq K}\eta_{(n-k)}\big(\zeta_0/\eta_0\big)_{(n)}\,,
				\end{align}
				where the last inequality follows from the definition of $\eta_0$.
				By \eqref{2205070028}, \eqref{220526148}, and \eqref{230120246}, we have
				\begin{alignat*}{2}
					&&&\sum_{n\in\bZ}e^{n\theta}\|\big(\zeta_{0,(n)}f\big)(e^n\,\cdot\,)\|_{X_{p}^{\gamma}}^p\nonumber\\
					&\lesssim_N&&\sum_{|k|\leq K}\sum_{n\in\bZ}e^{n\theta}\|\big(\zeta_0/\eta_0\big)_{(n)}(e^n\,\cdot\,) \big(\eta_{(n-k)}f\big)(e^n\,\cdot\,)\|_{X_{p}^{\gamma}}^p\\
					&\lesssim_N&&\sum_{|k|\leq K}\sum_{n\in\bZ}e^{n\theta}\| \big(\eta_{(n-k)}f\big)(e^n\,\cdot\,)\|_{X_{p}^{\gamma}}^p\\
					&\lesssim_N&&\sum_{|k|\leq K}\sum_{n\in\bZ}e^{n\theta}\| \big(\eta_{(n-k)}f\big)(e^{n-k}\,\cdot\,)\|_{X_{p}^{\gamma}}^p\\
					&\lesssim_N&&\sum_{n\in\bZ}e^{n\theta}\| 	\big(\eta_{(n)}f\big)(e^{n}\,\cdot\,)\|_{X_{p}^{\gamma}}^p\,,
				\end{alignat*}
				where $N=N(d,p,\gamma,\theta,K)$.
				By \eqref{220608607} and \eqref{220527242}, the proof is completed.
				
				(6) Put $\eta(t)=t^s\zeta_0(t)$. Due to \eqref{220526211}, we have
				$$
				\inf_{t\in\bR_+}\sum_{n\in\bZ}\eta(e^nt)>0\,.
				$$
				Since
				$$
				\trho(x)^s\zeta_{0,(n)}(x)=e^{ns}\big(e^{-n}\trho(x)\big)^s\zeta_{0}(e^{-n}\trho(x))= e^{ns}\eta_{(n)}(x)\,,
				$$
				\eqref{220526558} is implied by (5) of this proposition.
				
				(7) We only need to prove for $k=1$.
				Note that
				\begin{align}\label{2209051118}
					\|\big(\zeta_{0,(n)}f\big)(e^n\cdot)\|_{X_p^{\gamma}}\simeq_{d,p,\gamma}\|\big(\zeta_{0,(n)}f\big)(e^n\cdot)\|_{X_p^{\gamma-1}}+e^n\|\big(D(\zeta_{0,(n)}f)\big)(e^n\cdot)\|_{X_p^{\gamma-1}}\,.
				\end{align}
				By direct calculation, we have
				\begin{align}\label{2207261257}
					\begin{split}
						D\big(\zeta_{0,(n)}f\big)=\zeta_{0,(n)}\big(Df\big)+e^{-n}(\zeta_0')_{(n)}(D\trho)f\,.
					\end{split}
				\end{align}
				By \eqref{220526527} and (4) of this proposition with \eqref{220719515}, we have
				\begin{align}\label{2209051119}
					\sum_{n\in\bZ}e^{n\theta}\|\big((\zeta_0')_{(n)}(D\trho)f\big)(e^n\cdot))\|_{X_p^{\gamma-1}}^p\lesssim_N\|(D\trho) f\|_{X_{p,\theta}^{\gamma-1}(\Omega)} \lesssim_N\|f\|_{X_{p,\theta}^{\gamma-1}(\Omega)}\,,
				\end{align}
				where $N=N(d,p,\theta,\gamma)$.
				By combining \eqref{2209051118} - \eqref{2209051119}, we obtain
				\begin{align*}
					\|f\|_{X_{p,\theta}^{\gamma}(\Omega)}^p&\simeq_\cI\sum_{n\in\bZ}e^{n\theta}\Big(\|\big(\zeta_{0,(n)}f\big)(e^n\cdot)\|_{X_p^{\gamma-1}}^p+e^n\|\big(D(\zeta_{0,(n)}f)\big)(e^n\cdot)\|_{X_p^{\gamma-1}}^p\Big)\\
					& \simeq_\cI \|f\|_{X_{p,\theta}^{\gamma-1}(\Omega)}^p+\|Df\|_{X_{p,\theta+p}^{\gamma-1}(\Omega)}\,.
				\end{align*}
				
				(8) Let $N_0$ be the constant in \eqref{220719515}, and take $B\in\bN$ such that
				$$
				\sum_{|n|\leq B}\zeta_0(e^n\cdot)\equiv 1\quad\text{on}\quad [(2N_0A)^{-1},2N_0A]\,,
				$$
				so that
				$$
				\sum_{|n|\leq B}\zeta_{0,(n)}\equiv 1\quad\text{on}\,\,E:=\{x\in\Omega\,:\,(2A)^{-1}\leq \rho(x)\leq 2A\}\,.
				$$
				Let $f$ be a distribution on $\Omega$ and supported on $\{x\in\Omega\,:\,A^{-1}\leq \rho(x)\leq A\}$.
				Then $f$ is also a distribution on $\bR^d$.
				Since $f\zeta_{0,(n)}\equiv 0$ for all $|n|>B$, it follows from \eqref{2205070028} that
				\begin{align*}
					&\|f\|_{X_{p,\theta}^{\gamma}(\Omega)}^p=\sum_{|n|\leq B}e^{n\theta}\|(\zeta_{0,(n)}f)(e^n\cdot)\|_{X_p^{\gamma}}^p\leq N\,\|f\|_{X_p^{\gamma}}^p\quad\text{and}\\
					&\|f\|_{X_p^{\gamma}}^p=\|\sum_{|n|\leq B}\big(\zeta_{0,(n)}f\big)\|_{X_p^{\gamma}}^p\leq N\sum_{|n|\leq B}e^{n\theta}\|(\zeta_{0,(n)}f)(e^n\cdot)\|_{X_p^{\gamma}}^p\leq N\|f\|_{X_{p,\theta}^{\gamma}}^p\,,
				\end{align*}
				where $N=N(d,p,\theta,\gamma,B)$.
				
				(9) Due to Proposition~\ref{220527502111}.(4) and the interpolation theory (see, \textit{e.g.}, \cite[Theorem 1.9.3/(f)]{triebel} and its proof), we obtain that for any $f\in X_{p_0,\theta_0}^{\gamma_0}(\Omega)\cap X_{p_1,\theta_1}^{\gamma_1}(\Omega)$, 
				\begin{align*}
					\|f\|_{X_{p_t}^{\gamma_t}(\Omega)}\leq N\|f\|_{\left[X_{p_0,\theta_0}^{\gamma_0}(\Omega),X_{p_1,\theta_1}^{\gamma_1}(\Omega)\right]_{t}}\leq \|f\|_{X_{p_0}^{\gamma_0}}^{1-t}\|f\|_{X_{p_1}^{\gamma_1}}^t\,,
				\end{align*}
				where $N=N(d,p_i,\theta_i,\gamma_i, t;i=1,\,2)$ and $\big[X_{p_0,\theta_0}^{\gamma_0}(\Omega),X_{p_1,\theta_1}^{\gamma_1}(\Omega)\big]_{t}$ is the complex interpolation space of  $X_{p_0,\theta_0}^{\gamma_0}(\Omega)$ and $X_{p_1,\theta_1}^{\gamma_1}(\Omega)$.
				Therefore the proof is completed.
			\end{proof}
			
			\begin{remark}
				As stated in \cite[Proposition 2.2.4]{Lo1}, Proposition~\ref{220527502}.(5) can be generalized as the following:
				\begin{itemize}
					\item 
					Let $\{\eta_n\}_{n\in\bZ}\subset C^{\infty}(\Omega)$ satisfies that
					\begin{enumerate}
						\item There exists a constant $\alpha>1$, $k_0\in\bN$ such that
						$$
						\mathrm{supp}(\eta_n)\subset \{x\in\Omega\,:\,\alpha^{n-k_0}< \rho(x)<\alpha^{n+k_0}\}\qquad \forall\quad n\in\bZ\,;
						$$
						\item There exist $\{N_m\}_{m\in\bN_0}\subset \bR_+$ such that for any $m\in\bN_0$, $\sup\limits_{\Omega}|D^m\eta_n|\leq N_m \alpha^{nm}$.
					\end{enumerate}
					Then for any $u\in X_{p,\theta}^{\gamma}(\Omega)$, \eqref{220526527} holds for $\{\eta_n\}$ instead of $\{\eta_{(n)}\}$ (where $N$ in \eqref{220526527} depends only on $d$, $p$, $\theta$, $\gamma$, $\alpha$, $k_0$, $\{N_m\}$). 
					Moreover, if there exists $\epsilon_0>0$ such that $\sum_{n\in\bZ}\eta_n\geq \epsilon_0$ on $\Omega$, then 
					\eqref{220526530} holds for $\{\eta_n\}$ (resp. $\alpha^n, \alpha^{n\theta}$) instead of $\{\eta_{(n)}\}$ (resp. $e^n$, $e^{n\theta}$), (where $N$ in \eqref{220526530} depends only on  $d$, $p$, $\theta$, $\gamma$, $\alpha$, $k_0$, $\{N_m\}$, $\epsilon_0$).
				\end{itemize}	
				\vspace{1mm}
				
				The proof of this statement is almost same with the proof of Proposition~\ref{220527502}.(5); note that there exists $K\in\bN$ depending only on $\alpha$ and $k_0$ such that for any $n_0\in\bZ$,
				\begin{alignat*}{2}
					&\#\{n\in\bZ\,:\,\left[e^{n-1},e^{n+1}\right]\cap \left[\alpha^{n_0-k_0},\alpha^{n_0+k_0}\right]\neq \emptyset\}&&\leq K\,;\\
					&\#\{n\in\bZ\,:\,\left[\alpha^{n-k_0},\alpha^{n+k_0}\right]\cap \left[e^{n_0-1},e^{n_0+1}\right]\neq \emptyset\}&&\leq K\,,
				\end{alignat*}
				where $\# A$ is the number of elements in a set $A$.
				The above statement implies that if $\eta\in C_c^{\infty}(\bR_+)$ satisfies \eqref{220526529}, then
				\begin{align}\label{230127236}
					\|f\|_{X_{p,\theta}^{\gamma}(\bR_+^d)}^p\simeq \sum\limits_{n\in\bZ}e^{n\theta}\|\eta(x_1)f(e^nx)\|_{X_{p}^{\gamma}}^p\qquad \forall \quad f\in\cD'(\bR^d_+)\,,
				\end{align}
				and if $\eta\in C_c^{\infty}(\bR_+)$ satisfies \eqref{220526529} for $2^n$ instead of $e^n$, then 
				\begin{align}\label{230127237}
					\|g\|_{X_{p,\theta}^{\gamma}(\bR^d\setminus\{0\})}^p\simeq \sum\limits_{n\in\bZ}2^{n\theta}\|\eta(|x|)g(2^nx)\|_{X_{p}^{\gamma}}^p\qquad \forall \quad g\in\cD'(\bR^d\setminus\{0\})\,.
				\end{align}
				In \cite{Krylov1999-3, Krylov1999-1, Krylov2001}, the space $H_{p,\theta}^{\gamma}(\bR^d_+)$  is defined by \eqref{230127236}.
				In addition, in \cite{LSJFA} the space $H_{p,\theta}^{\gamma}\big(\bR^d\setminus\{0\}\big)$ is defined by \eqref{230127237}.
			\end{remark}

			\vspace{2mm}

			\subsection{Auxiliary results}\label{2302131019}

			\begin{lemma}\label{220527700}
				Let $p\in(1,\infty)$, $\gamma,\,\theta\in\bR$.
				There exist linear maps
				$$
				\Lambda_i\,:\,X_{p,\theta}^{\gamma}(\Omega)\rightarrow \cD'(\Omega)\quad,\quad i=0,\,1,\,\ldots,\,d\,,
				$$
				such that for any $f\in X_{p,\theta}^{\gamma}(\Omega)$,
				$$
				f=\Lambda_0f+\sum_{i=1}^dD_i(\Lambda_if)
				$$
				and
				\begin{align}\label{220527656}
					\|\Lambda_0 f\|_{X_{p,\theta}^{\gamma+1}(\Omega)}+\sum_{i=1}^d\|\Lambda_i f\|_{X_{p,\theta-p}^{\gamma+1}(\Omega)}\leq N\|f\|_{X_{p,\theta}^{\gamma}(\Omega)}
				\end{align}
				where $N$ depends only on $d$, $p$, $\gamma$, $\theta$.
			\end{lemma}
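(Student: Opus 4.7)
The plan is to reduce the construction on $\Omega$ to the analogous construction on $\bR^d$ via the dyadic decomposition $\{\zeta_{0,(n)}\}_{n\in\bZ}$, using that on $\bR^d$ the identity factors through the Bessel potential. On $\bR^d$, for any $g\in X_p^{\gamma}$, set
\begin{align*}
\widetilde{\Lambda}_0 g := (1-\Delta)^{-1} g \in X_p^{\gamma+2}\,,\qquad \widetilde{\Lambda}_i g := -D_i(1-\Delta)^{-1}g \in X_p^{\gamma+1}\quad(i=1,\ldots,d)\,.
\end{align*}
Then $g = (1-\Delta)(1-\Delta)^{-1}g = \widetilde{\Lambda}_0 g + \sum_{i=1}^d D_i(\widetilde{\Lambda}_i g)$, and by the definition of Bessel potential spaces (together with Proposition~\ref{220527502111}.(2) for the Besov case),
$\|\widetilde{\Lambda}_0 g\|_{X_p^{\gamma+1}} + \sum_{i=1}^d \|\widetilde{\Lambda}_i g\|_{X_p^{\gamma+1}} \lesssim_{d,p,\gamma} \|g\|_{X_p^{\gamma}}$.

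To assemble the global operators, write $f_n := (\zeta_{0,(n)}f)(e^n\cdot)\in X_p^{\gamma}(\bR^d)$, regard $\zeta_{0,(n)}f$ as $f_n(e^{-n}\cdot)$ on $\bR^d$, and localize using the companion cutoff $\zeta_{1,(n)}$ which satisfies $\zeta_{1,(n)}\zeta_{0,(n)}=\zeta_{0,(n)}$ by \eqref{230120312}. Applying the $\bR^d$ decomposition to each $f_n$ and using $\zeta_{1,(n)}\cdot D_i h = D_i(\zeta_{1,(n)} h) - (D_i\zeta_{1,(n)}) h$, one obtains
\begin{align*}
\zeta_{0,(n)}f = \zeta_{1,(n)}\,\widetilde{\Lambda}_0 f_n(e^{-n}\cdot) - e^n\sum_{i=1}^d (D_i\zeta_{1,(n)})\,\widetilde{\Lambda}_i f_n(e^{-n}\cdot) + \sum_{i=1}^d D_i\bigl[\,e^n\zeta_{1,(n)}\,\widetilde{\Lambda}_i f_n(e^{-n}\cdot)\,\bigr]\,,
\end{align*}
and one defines $\Lambda_0 f$ to be the sum over $n\in\bZ$ of the first two terms and $\Lambda_i f$ to be the sum over $n\in\bZ$ of $e^n\zeta_{1,(n)}\,\widetilde{\Lambda}_i f_n(e^{-n}\cdot)$. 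Each summand is supported in $\{e^{n-2}\le\trho\le e^{n+2}\}\subset \Omega$, and summing gives $f=\Lambda_0f+\sum_{i=1}^dD_i(\Lambda_i f)$.

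The key technical point is that the rescaled multipliers
\begin{align*}
\psi_n(x) := \zeta_{1,(n)}(e^n x) = \zeta_1\bigl(e^{-n}\trho(e^n x)\bigr)\,,\qquad \Phi_n^{(i)}(x) := e^n (D_i\zeta_{1,(n)})(e^n x)
\end{align*}
are bounded in $C^k(\bR^d)$ uniformly in $n$, for every $k\in\bN_0$. This follows from \eqref{220719515}: on the support of $\psi_n$ one has $\trho(e^n x)\simeq e^n$, and chain-rule estimates give $|D_x^k[e^{-n}\trho(e^nx)]|\lesssim e^{n(k-1)}\cdot e^{n(1-k)}=1$, with similar bounds for $\Phi_n^{(i)}$. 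Combined with the $\bR^d$-multiplier bound \eqref{2205070028}, this yields
\begin{align*}
\bigl\|\psi_n\,\widetilde{\Lambda}_j f_n\bigr\|_{X_p^{\gamma+1}} + \bigl\|\Phi_n^{(i)}\,\widetilde{\Lambda}_j f_n\bigr\|_{X_p^{\gamma+1}} \lesssim_{d,p,\gamma} \|f_n\|_{X_p^{\gamma}}
\end{align*}
uniformly in $n$.

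Finally, since each summand defining $\Lambda_0 f$ (resp.\ $\Lambda_i f$, $i\ge 1$) is supported in an annulus at scale $e^n$, only finitely many indices interact with any given $\zeta_{0,(m)}$ (those with $|n-m|$ bounded). Applying Proposition~\ref{230129407} to the sequences involved, together with the rescaling factor $e^{np}$ from the $e^n$ prefactor in $\Lambda_i f$, yields
\begin{align*}
\|\Lambda_0 f\|_{X_{p,\theta}^{\gamma+1}(\Omega)}^p + \sum_{i=1}^d \|\Lambda_i f\|_{X_{p,\theta-p}^{\gamma+1}(\Omega)}^p \lesssim \sum_{n\in\bZ} e^{n\theta}\|f_n\|_{X_p^{\gamma}}^p \simeq \|f\|_{X_{p,\theta}^{\gamma}(\Omega)}^p\,.
\end{align*}
The main obstacle will be keeping all the rescalings consistent and verifying the uniform $C^k$-bound for $\psi_n$ and $\Phi_n^{(i)}$; once that bookkeeping is done, the summation is standard because the shift $e^{(m-n)p}$ that arises from misaligning the weight at level $m$ with the annulus at level $n$ is bounded by a constant when $|n-m|$ is bounded.
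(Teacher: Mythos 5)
Your construction is identical to the paper's: you take the Bessel-potential identity $g = (1-\Delta)^{-1}g + \sum_i D_i(-D_i(1-\Delta)^{-1}g)$ on $\bR^d$, localize with $\zeta_{0,(n)}$ and $\zeta_{1,(n)}$, and use the product rule to pull the $D_i$ outside the cutoff, arriving at exactly the same formulas for $\Lambda_0 f$ and $\Lambda_i f$ as in the paper. The paper assembles the estimate by recognizing the ingredients as $R_{\zeta_1}$, $R_{\eta_0}$, $R_{\eta_1}$ applied to $L_i S_{\zeta_0}f$ (with $\eta_0 = \zeta_1'$, $\eta_1(t) = t^{-1}\zeta_1(t)$) so that Proposition~\ref{230129407} applies verbatim, handling the extra $D_i\trho$ and $\trho$ factors by the pointwise-multiplier and weight-shift lemmas; you instead verify by hand that the rescaled cutoffs $\psi_n$, $\Phi_n^{(i)}$ are uniformly bounded in $C^k$ and then sum using the finite overlap of the annuli and the $e^{(n-m)p}$ bound. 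This is the same argument written without the $R_\xi/S_\xi$ shorthand, and the uniform $C^k$ bound you claim does follow from \eqref{220719515} as you indicate.
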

			\begin{proof}
				Recall \eqref{220526211} and \eqref{230120312}.
				Put
				$$
				L_0=(1-\Delta)^{-1}\quad\text{and}\quad L_i=-D_i(1-\Delta)^{-1}\qquad\text{for}\,\,i=1,\,\ldots,\,d\,,
				$$
				which are linear operators on $X_{p}^{\gamma}$.
				It is implied by element properties of $X_p^{\gamma}$ that for any $g\in X_p^{\gamma}$,
				\begin{align}\label{2205101114}
					L_0g+\sum_{i=1}^dD_iL_ig=g\quad\text{and}\quad 
					\sum_{i=0}^d\|L_ig\|_{X_p^{\gamma+1}}\lesssim_{d,p,\gamma}\|g\|_{X_p^{\gamma}}\,.
				\end{align}
				Put
				\begin{align*}
					\Lambda_0f(x)\,		&=\sum_{n\in\bZ}\zeta_{1,(n)}(x)L_0\Big[\big(\zeta_{0,(n)}f\big)(e^n\cdot)\Big](e^{-n}x)\\
					&\quad -\sum_{i=1}^d\sum_{n\in\bZ}e^n\big(D_i\zeta_{1,(n)}\big)(x)L_i\Big[\big(\zeta_{0,(n)}f\big)(e^n\cdot)\Big](e^{-n}x)\\
					&=R_{\zeta_1}\big(L_0S_{\zeta_0}f\big)(x)-\sum_{i=1}^d\big(D_i\trho\big)(x)\cdot R_{\eta_0}\big(L_iS_{\zeta_0}f\big)(x)\,,\\
					\Lambda_if(x)\,&=\sum_{n\in\bZ}e^n\zeta_{1,(n)}(x)L_i\Big[\big(\zeta_{0,(n)}f\big)(e^n\cdot)\Big](e^{-n}x)\\
					&=\trho(x)\cdot R_{\eta_1}\big(L_iS_{\zeta_0}f\big)(x)\qquad\qquad \text{for}\quad i=1,\,\ldots,\,d\,,\\
				\end{align*}
				where $\eta_0(t):=\big(\zeta_1'\big)(t)$, $\eta_1(t):=t^{-1}\zeta_1(t)$, and 
				\begin{align*}
					\begin{gathered}
L_i\{f_n\}:=\{L_if_n\}\quad\text{for}\,\,\, \{f_n\}_{n\in\bZ}\in l_p^{\theta/p}(H_p^{\gamma})\,.
					\end{gathered}	
				\end{align*}
				Due to \eqref{2205101114}, we have 
				\begin{align*}
					\Lambda_0f+\sum_{i=1}^dD_i\Lambda_if=\,&\sum_{n\in\bZ}\left(\zeta_{1,(n)}(\,\cdot\,)\times \Big[\big(L_0+\sum_{i=1}^dD_iL_i\big)\big[(\zeta_{0,(n)}f)(e^n\cdot)\big]\Big](e^{-n}\,\cdot\,)\right)\\
					=\,&\sum_{n\in\bZ}\Big[\zeta_{1,(n)}\zeta_{0,(n)}f\Big]=f\,.
				\end{align*}
				Therefore we only need to prove \eqref{220527656}.
				Due to \eqref{220526250}, \eqref{2205101114}, and Proposition~\ref{220527502}.(5), we have
				\begin{align*}
					&\|\Lambda_0f\|_{H_{p,\theta}^{\gamma+1}(\Omega)}+\sum_{i=1}^d\|\Lambda_if\|_{H_{p,\theta-p}^{\gamma+1}}\\
					\lesssim_N&\|R_{\zeta_1}\big(L_0S_{\zeta_0}f\big)\|_{H_{p,\theta}^{\gamma+1}(\Omega)}+\sum_{i=1}^d\Big(\|R_{\eta_0}\big(L_iS_{\zeta_0}f\big)\|_{H_{p,\theta}^{\gamma+1}(\Omega)}+\|R_{\eta_1}\big(L_iS_{\zeta_0}f\big)\|_{H_{p,\theta}^{\gamma+1}(\Omega)}\Big)\\
					\lesssim_N&\sum_{i=0}^d\|L_iS_{\zeta_0}f\|_{l_p^{\theta/p}(H_p^{\gamma+1})}\,\,\lesssim_N\,\,\|S_{\zeta_0}f\|_{l_p^{\theta/p}(H_p^{\gamma})}\,\,\lesssim_N\,\,\|f\|_{H_{p,\theta}^{\gamma}(\Omega)}\,.
				\end{align*}
				Therefore the proof is completed.
			\end{proof}
			
			Recall that for a regular Harnack function $\Psi$ on $\Omega$,
			$$
			\Psi X_{p,\theta}^{\gamma}(\Omega):=\big\{f\,:\,\Psi^{-1}f\in X_{p,\theta}^{\gamma}(\Omega)\big\}\quad\text{and}\quad \|f\|_{\Psi X_{p,\theta}^{\gamma}(\Omega)}:=\|\Psi^{-1}f\|_{X_{p,\theta}^{\gamma}(\Omega)}\,.
			$$
			
			\begin{lemma}\label{22.04.11.3}
				Let $\eta\in C_c^{\infty}(\bR^d)$ satisfy
				$$
				\eta=1\,\,\,\,\text{on}\,\,B(0,1/2)\,\,,\quad \text{supp}(\eta)\subset B(0,1)\,\,,\quad \int_{\bR^d}\eta \dd x=1\,.
				$$
				For $i\in\bN$, let $N(i)\in\bN$ satisfy
				$$
				\text{supp}\Big(\sum_{|n|\leq i}\zeta_{0,(n)}\Big)\subset \big\{x\in\Omega\,:\,\big(N(i)/2\big)^{-1}\leq \rho(x)\leq N(i)/2\big\}\,.
				$$
				Let $\Lambda_i$, $\Lambda_{i,j}$, $\Lambda_{i,j,k}$ are linear functionals on $\cD'(\Omega)$ defined as
				\begin{align*}
					\Lambda_{i}f:=\Big(\sum_{|n|\leq i}\zeta_{0,(n)}\Big)f\,\,\,,\,\,\,\,\Lambda_{i,j}f(x)=\eta(j^{-1}x) \Lambda_if(x)\,\,\,,\,\,\,\,\Lambda_{i,j,k}f=\big(\Lambda_{i,j}f\big)^{(N(i)^{-1}k^{-1})}\,,
				\end{align*}
				where
				\begin{align*}
					h^{(\epsilon)}(x):=\int_{\bR^d}h(x-\epsilon y)\eta(y)dy:=\Big\langle h,\epsilon^{-d}\eta\big((x-\cdot)/\epsilon\big)\Big\rangle\,.
				\end{align*}
				Then for any regular Harnack function $\Psi$, the following hold:
				\begin{enumerate}
					\item For any $f\in \cD'(\Omega)$, $\Lambda_{i,j,k}f\in C_c^{\infty}(\Omega)$\,.
					\vspace{1mm}
					
					\item  For any $f\in \Psi X_{p,\theta}^{\gamma}(\Omega)$,
					\begin{align*}
						&\sup_i\|\Lambda_if\|_{\Psi X_{p,\theta}^{\gamma}(\Omega)}\leq N_1\|f\|_{\Psi	X_{p,\theta}^{\gamma}(\Omega)}\\
						&\,\sup_j\|\Lambda_{i,j}f\|_{\Psi X_{p,\theta}^{\gamma}(\Omega)}\leq N_2\|f\|_{\Psi X_{p,\theta}^{\gamma}(\Omega)}\\
						&\,\sup_k\|\Lambda_{i,j,k}f\|_{\Psi X_{p,\theta}^{\gamma}(\Omega)}\leq N_3\|f\|_{\Psi X_{p,\theta}^{\gamma}(\Omega)}
					\end{align*}
					where $N_1$, $N_2$, $N_3$ are constants independent of $f$.
					\vspace{1mm}
					
					\item For any $f\in \Psi X_{p,\theta}^{\gamma}(\Omega)$,
					\begin{align*}
						\lim_{k\rightarrow \infty}\Lambda_{i,j,k}f=\Lambda_{i,j}f\,\,\,,\,\,\,\lim_{j\rightarrow \infty}\Lambda_{i,j}f=\Lambda_{i}f\,\,\,,\,\,\,\lim_{i\rightarrow \infty}\Lambda_i f=f\quad\text{in}\quad \Psi H_{p,\theta}^{\gamma}(\Omega)\,.
					\end{align*}
				\end{enumerate}
				%
				%
				%
				%
				%
			\end{lemma}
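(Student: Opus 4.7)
The three parts will be handled in order, with the guiding principle that the strict cutoffs reduce everything to compactly supported distributions on $\Omega$, where Proposition~\ref{220527502}.(8) identifies $X_{p,\theta}^{\gamma}(\Omega)$ with $X_{p}^{\gamma}(\bR^{d})$ up to constants. For (1), note that $\Lambda_{i}f$ is supported in $\{(N(i)/2)^{-1}\leq\rho\leq N(i)/2\}$ by construction, the extra cutoff $\eta(j^{-1}\cdot)$ makes $\Lambda_{i,j}f$ supported in a compact subset $K_{i,j}\subset\Omega$, and the mollification parameter $\epsilon_{k}=1/(N(i)k)$ is strictly smaller than $(N(i)/2)^{-1}=2/N(i)$, so the $\epsilon_{k}$-expanded support of $\Lambda_{i,j,k}f$ is still a compact subset of $\Omega$; the mollification of a distribution against a test function supported in $B(x,\epsilon_{k})\subset\Omega$ is well defined and yields a $C^{\infty}$ function, hence $\Lambda_{i,j,k}f\in C_{c}^{\infty}(\Omega)$.

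For (2), write $g:=\Psi^{-1}f\in X_{p,\theta}^{\gamma}(\Omega)$ and use the obvious identities $\Psi^{-1}\Lambda_{i}f=a_{i}g$ with $a_{i}:=\sum_{|n|\leq i}\zeta_{0,(n)}$, $\Psi^{-1}\Lambda_{i,j}f=a_{i}\eta(j^{-1}\cdot)g$, and $\Psi^{-1}\Lambda_{i,j,k}f=\Psi^{-1}(\Lambda_{i,j}f)^{(\epsilon_{k})}$. The multiplier norm $|a_{i}|_{k}^{(0)}$ is bounded uniformly in $i$ (since at each point only finitely many $\zeta_{0,(n)}$ overlap and $\rho^{|\alpha|}|D^{\alpha}\zeta_{0,(n)}|\leq N$ uniformly), so Proposition~\ref{220527502}.(4) yields $\sup_{i}\|\Lambda_{i}f\|_{\Psi X_{p,\theta}^{\gamma}(\Omega)}\lesssim\|f\|_{\Psi X_{p,\theta}^{\gamma}(\Omega)}$ for $k>|\gamma|$. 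For $\Lambda_{i,j}$, the multiplier $\eta(j^{-1}\cdot)a_{i}$ satisfies $|\eta(j^{-1}\cdot)a_{i}|_{k}^{(0)}\leq N_{i}$ uniformly in $j$ because on the support of $a_{i}$ we have $\rho\leq N(i)/2$ so the awkward factor $\rho^{|\alpha|}j^{-|\alpha|}\leq(N(i)/2)^{|\alpha|}j^{-|\alpha|}$ is bounded for $j\geq 1$. Finally, $\Lambda_{i,j}f$ is supported in a fixed compact $K_{i,j}\subset\{A_{i,j}^{-1}\leq\rho\leq A_{i,j}\}$ independent of $k$; pick $\widetilde{\Psi}\in C_{c}^{\infty}(\Omega)$ equal to $\Psi$ on a neighborhood of the $1$-neighborhood of $K_{i,j}$, so that $\Psi^{-1}\Lambda_{i,j,k}f=\widetilde{\Psi}^{-1}\Lambda_{i,j,k}f$, and apply Proposition~\ref{220527502}.(8), boundedness of multiplication by $\widetilde{\Psi}^{-1}$ on $X_{p}^{\gamma}(\bR^{d})$, and the standard $X_{p}^{\gamma}(\bR^{d})$-boundedness of mollification uniformly in $\epsilon$.

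For (3), the convergence $\Lambda_{i,j,k}f\to\Lambda_{i,j}f$ reduces, via Proposition~\ref{220527502}.(8) and multiplication by a smooth cutoff of $\Psi^{-1}$ as in the previous step, to the classical statement that $h^{(\epsilon)}\to h$ in $X_{p}^{\gamma}(\bR^{d})$ as $\epsilon\to 0$ for a compactly supported $h\in X_{p}^{\gamma}(\bR^{d})$. For $\Lambda_{i,j}f\to\Lambda_{i}f$ and $\Lambda_{i}f\to f$, I work directly from the series definition of the norm. In the first case the difference equals $(\eta(j^{-1}\cdot)-1)a_{i}g$, and in the second case $(a_{i}-1)g$; the key observation is that $(a_{i}-1)\zeta_{0,(n)}\equiv 0$ for $|n|\leq i-1$ because the partition-of-unity property gives $a_{i}\equiv 1$ on $\mathrm{supp}\,\zeta_{0,(n)}$ whenever $|n|\pm 1\leq i$. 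Therefore $\|\Lambda_{i}f-f\|_{\Psi X_{p,\theta}^{\gamma}(\Omega)}^{p}$ is a tail of the convergent series $\sum_{n}e^{n\theta}\|(\zeta_{0,(n)}g)(e^{n}\cdot)\|_{X_{p}^{\gamma}}^{p}$ (times a uniform multiplier constant), hence tends to $0$. For the $j$-convergence, each individual term $\|(\eta(j^{-1}\cdot)-1)(a_{i}g\zeta_{0,(n)})(e^{n}\cdot)\|_{X_{p}^{\gamma}}^{p}$ tends to $0$ as $j\to\infty$ (standard: $\eta(\cdot/j)-1\to 0$ as an $X_{p}^{\gamma}$-multiplier on any fixed element, shown by density of smooth compactly supported functions combined with the uniform bound $\|\eta(\cdot/j)\|_{C^{k}}\lesssim 1$) and is dominated by $N\|(a_{i}g\zeta_{0,(n)})(e^{n}\cdot)\|_{X_{p}^{\gamma}}^{p}$; the dominated convergence theorem then gives the result.

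The main obstacle is the third step of (2) and the corresponding piece of (3): we must manipulate mollification in $\Omega$ through the non-constant weight $\Psi^{-1}$. This is resolved by exploiting the compactness of $\mathrm{supp}(\Lambda_{i,j}f)$ to replace $\Psi^{-1}$ locally by a globally smooth bounded function and then transferring to unweighted $X_{p}^{\gamma}(\bR^{d})$ via Proposition~\ref{220527502}.(8); once this reduction is in place, everything else follows from the standard properties of $X_{p}^{\gamma}(\bR^{d})$ and the dominated convergence argument above.
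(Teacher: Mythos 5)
Your proof is correct and follows the same overall strategy as the paper: reduce everything to compactly supported distributions where the weighted $\Psi X_{p,\theta}^{\gamma}(\Omega)$ norm is comparable to the unweighted $X_p^{\gamma}(\bR^d)$ norm (Proposition~\ref{220527502}.(8)), then invoke the standard $X_p^{\gamma}(\bR^d)$ stability under cutoff and mollification. The differences are at the level of implementation. For the $\Lambda_i$ bound and the $i$-limit, the paper estimates $\|f-\Lambda_i f\|_{\Psi X_{p,\theta}^{\gamma}(\Omega)}^p$ directly as a tail of the series $\sum_n e^{n\theta}\|(\zeta_{0,(n)}\Psi^{-1}f)(e^n\cdot)\|_{X_p^{\gamma}}^p$, which simultaneously produces the uniform bound (since $\|\Lambda_if\|\le\|f\|+\|f-\Lambda_i f\|$) and the convergence as $i\to\infty$; you instead obtain the uniform bound from the pointwise-multiplier estimate Proposition~\ref{220527502}.(4) and the convergence from the tail argument. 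For the $j$-limit the paper again uses Proposition~\ref{220527502}.(8) to reduce to the standard fact that $(1-\eta(j^{-1}\cdot))h\to 0$ in $X_p^{\gamma}(\bR^d)$ for a fixed $h$; your dominated-convergence-on-the-series argument is more hands-on but equally valid. The paper's treatment of the $\Lambda_{i,j,k}$ terms records the comparison $\|g\|_{\Psi X_{p,\theta}^{\gamma}(\Omega)}\simeq\|\Psi^{-1}g\|_{X_p^{\gamma}}\simeq\|g\|_{X_p^{\gamma}}$ for $g$ supported on the fixed compact $K_{i,j}$, which is exactly what your cutoff-of-$\Psi$ device accomplishes.

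One small wording issue: you ask for $\widetilde{\Psi}\in C_c^{\infty}(\Omega)$ equal to $\Psi$ near $K_{i,j}$, but then $\widetilde{\Psi}^{-1}$ blows up at $\partial\operatorname{supp}\widetilde{\Psi}$ and is not a bounded multiplier. What you actually need is a globally smooth, bounded function (with bounded derivatives of all orders) that agrees with $\Psi^{-1}$ on a fixed neighborhood of $K_{i,j}$ and is bounded away from $0$; such a function exists because $\Psi^{-1}$ is smooth and positive on the compact $\overline{K_{i,j}}\subset\Omega$, and it serves as the bounded $X_p^{\gamma}$-multiplier via \eqref{2205070028}. With this adjustment your argument is complete and equivalent in strength to the paper's.
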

			\begin{proof}
				(1) It follows directly from properties of distributions.
				
				(2), (3)
				Note the following elementary properties of $X_p^{\gamma}$: for any $F\in X_p^{\gamma}$,
				\begin{equation}\label{2206091022}
					\begin{aligned}
						\begin{gathered}
							\sup_{\epsilon>0}\|F^{(\epsilon)}\|_{X_p^{\gamma}}+\sup_{j\in\bN}\|\eta(j^{-1}\cdot)F\|_{X_p^{\gamma}}\leq N(d,p,\gamma,\eta)\|h\|_{X_p^{\gamma}}\,,\\
							\lim_{\epsilon\rightarrow 0}F^{(\epsilon)}=\lim_{j\rightarrow 	\infty}\eta(j^{-1}\cdot)F=F\qquad \text{in}\quad X_p^{\gamma}\,.
						\end{gathered}
					\end{aligned}
				\end{equation}
				
				\textbf{Step 1 : $\Lambda_i$}
				
				Let $f\in X_{p,\theta}^{\gamma}(\Omega)$.
				It is implied by \eqref{2205070028} and \eqref{220526148} that
				\begin{align}\label{2209121038}
					\|f-\Lambda_i f\|_{\Psi X_{p,\theta}^{\gamma}(\Omega)}^p\lesssim \sum_{|n|\geq i-1}\big\|\big(\Psi^{-1}f\zeta_{0,(n)}\big)(e^n\cdot)\big\|_{X_p^{\gamma}}^p\leq \|f\|_{\Psi X_{p,\theta}^{\gamma}(\Omega)}^p\,.
				\end{align}
				Therefore we have
				$$
				\sup_i\|\Lambda_if\|_{\Psi X_{p,\theta}^{\gamma}(\Omega)}\leq N\|f\|_{\Psi X_{p,\theta}^{\gamma}(\Omega)}\quad\text{and}\quad \lim_{i\rightarrow\infty}\|f-\Lambda_if\|_{\Psi X_{p,\theta}^{\gamma}(\Omega)}=0\,.
				$$
				where $N=N(d,p,\theta,\gamma)$.
				\vspace{1mm}
				
				\textbf{Step 2 : $\Lambda_{i,j}$}
				
				Note that $\Psi^{-1}\Lambda_if$ and $\Psi^{-1}\Lambda_{i,j}f$ are supported on
				$$
				\big\{x\in\Omega\,:\,N(i)^{-1}\leq \rho(x)\leq N(i)\big\}\,,
				$$
				It is implied by Proposition~\ref{220527502}.(8) and \eqref{2206091022} that
				\begin{align*}
					\begin{gathered}
						\big\|\Lambda_i f-\Lambda_{i,j}f\big\|_{\Psi X_{p,\theta}^{\gamma}(\Omega)}\simeq_{N_2}\big\|\big(1-\eta(j^{-1}\cdot)\big)\Psi^{-1}\Lambda_i f\big\|_{X_{p}^{\gamma}}\rightarrow 0\quad\text{as}\quad j\rightarrow\infty\,,\\
						\big\|\Lambda_{i,j}f\big\|_{\Psi X_{p,\theta}^{\gamma}(\Omega)}\simeq_{N_2}\big\|\Psi^{-1}\Lambda_{i,j}f\big\|_{X_{p}^{\gamma}}\lesssim_{N_2}\big\|\Psi^{-1}\Lambda_{i}f\big\|_{X_{p}^{\gamma}}\simeq_{N_2}\big\|f\big\|_{\Psi X_{p,\theta}^{\gamma}(\Omega)}\,,
					\end{gathered}
				\end{align*}
				where $N_2=N(d,p,\gamma,\theta,i,\eta)$.
				%
				\vspace{1mm}
				
				\textbf{Step 3 : $\Lambda_{i,j,k}$}
				
				Put
				$$
				K_{i,j}=\{x\in\Omega\,:\,N(i)^{-1}\leq \rho(x)\leq N(i)\,,\,\,|x|\leq 2j\}\,,
				$$
				and note that $K_{i,j}$ is compact subset of $\Omega$.
				Since $\Psi,\,\Psi^{-1}\in C^{\infty}(\Omega)$, Proposition~\ref{220527502}.(8) and \eqref{2205070028} yield that if $g\in\cD'(\Omega)$ is supported on $K_{i,j}$, then
				\begin{align}\label{230120545}
					\|g\|_{\Psi X_{p,\theta}^{\gamma}(\Omega)}:=\|\Psi^{-1} g\|_{X_{p,\theta}^{\gamma}(\Omega)}\simeq_N \|\Psi^{-1} g\|_{X_{p}^{\gamma}}\simeq_N\|g\|_{X_{p}^{\gamma}},
				\end{align}
				where $N=N(d,p,\gamma,\theta,i,j,\Psi)$.
				For any $k\in\bN$, $\Lambda_{i,j,k}f$ and $\Lambda_{i,j}$ are supported on $K_{i,j}$.
				Therefore it follows from \eqref{2206091022} and \eqref{230120545} that
				\begin{align*}
					\|\Lambda_{i,j,k}f\|_{\Psi X_{p,\theta}^{\gamma}(\Omega)}\,&\simeq_{N_3} \|\big(\Lambda_{i,j}f\big)^{(N(i)^{-1}k^{-1})}\|_{X_p^{\gamma}}\\
					&\lesssim_{N_3} \|\Lambda_{i,j}f\|_{X_p^{\gamma}}\simeq_{N_3}\|\Lambda_{i,j}f\|_{\Psi X_{p,\theta}^{\gamma}(\Omega)}\lesssim_{N_3}\|f\|_{\Psi X_{p,\theta}^{\gamma}(\Omega)}
				\end{align*}
				and
				$$
				\big\|\Lambda_{i,j}f-\Lambda_{i,j,k}f\big\|_{\Psi X_{p,\theta}}
				\simeq_{N_3}\big\|\Lambda_{i,j}f-\big(\Lambda_{i,j}f\big)^{(N(i)^{-1}k^{-1})}\big\|_{X_{p}^{\gamma}}\rightarrow 0\quad\text{as}\,\,k\rightarrow \infty\,,
				$$
				where $N_3=N(d,p,\theta,\gamma,\Psi,i,j,\eta)$.
			\end{proof}

			\vspace{2mm}

			\subsection{Equivalent norms}\label{0083}
			
			\begin{prop}\label{220528651}
				Let $\Phi$ be a regular Harnack function, $p\in (1,\infty)$, $k\in \bN_0$, and $\theta\in\bR$.
				There exists a constant $N=N(d,p,k,\theta,C_2(\Phi))$ such that
				\begin{align*}
					\|\Phi f\|_{H_{p,\theta}^{k}(\Omega)}^p\simeq_N \sum_{m=0}^k\int_{\Omega}|\rho^{m}D^mf|^p\Phi^p\rho^{\theta-d}\dd x\,.
				\end{align*}
			\end{prop}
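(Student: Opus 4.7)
The plan is to reduce to the case $\Phi \equiv 1_\Omega$ first, then pass to general $\Phi$ by the Leibniz rule together with the defining estimates $|D^j \Phi| \lesssim \rho^{-j}\Phi$ and $|D^j \Phi^{-1}| \lesssim \rho^{-j}\Phi^{-1}$ (the latter holds because $\Phi^{-1}$ is itself a regular Harnack function by Example~\ref{21.05.18.2}.(3), with $\mathrm{C}_2(\Phi^{-1})$ controlled by $\mathrm{C}_2(\Phi)$ and $d$).

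\textbf{Step 1 (pointwise-to-norm for $L_{p,\tau}$).} I would first show that for any $\tau \in \bR$ and $g \in \cD'(\Omega)$,
$$\|g\|_{L_{p,\tau}(\Omega)}^p \simeq \int_\Omega |g|^p \rho^{\tau-d}\,\dd x,$$
with constants depending only on $d,p,\tau$. By the definition and scaling,
$$\|g\|_{L_{p,\tau}(\Omega)}^p = \sum_{n \in \bZ} e^{n(\tau-d)} \int_\Omega |g(y) \zeta_{0,(n)}(y)|^p \,\dd y.$$
On $\supp(\zeta_{0,(n)})$ we have $\rho \simeq e^n$, and $\sum_{n} \zeta_{0,(n)}^p \simeq 1$ pointwise on $\Omega$ (upper bound from $\zeta_{0,(n)}\leq 1$ and $\sum_n \zeta_{0,(n)}=1$; lower bound from the bounded-overlap property combined with the power-mean inequality). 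The claim follows.

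\textbf{Step 2 (case $\Phi \equiv 1_\Omega$).} Apply Proposition~\ref{220527502}.(7) with $\gamma = k$:
$$\|f\|_{H_{p,\theta}^k(\Omega)}^p \simeq \sum_{i=0}^{k} \|D^i f\|_{L_{p,\theta+ip}(\Omega)}^p.$$
Inserting Step 1 gives
$$\|f\|_{H_{p,\theta}^k(\Omega)}^p \simeq \sum_{i=0}^{k} \int_\Omega |D^i f|^p \rho^{\theta+ip-d}\,\dd x = \sum_{i=0}^k \int_\Omega |\rho^i D^i f|^p \rho^{\theta-d}\,\dd x.$$

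\textbf{Step 3 (general $\Phi$, upper bound).} Apply Step 2 to $\Phi f$ and expand via Leibniz:
$$|\rho^i D^i(\Phi f)| \leq \sum_{j=0}^i \binom{i}{j} \rho^i |D^j \Phi|\, |D^{i-j} f| \lesssim \Phi \sum_{j=0}^i |\rho^{i-j} D^{i-j} f|,$$
using $|D^j \Phi| \lesssim \rho^{-j}\Phi$. Raising to the $p$-th power, summing over $i \leq k$ and integrating against $\rho^{\theta-d}$ yields
$$\|\Phi f\|_{H_{p,\theta}^k(\Omega)}^p \lesssim \sum_{m=0}^k \int_\Omega |\rho^m D^m f|^p \Phi^p \rho^{\theta-d}\,\dd x.$$

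\textbf{Step 4 (general $\Phi$, lower bound).} Write $f = \Phi^{-1}(\Phi f)$ and apply Leibniz with $|D^j \Phi^{-1}| \lesssim \rho^{-j}\Phi^{-1}$:
$$|D^i f| \lesssim \Phi^{-1} \sum_{j=0}^i \rho^{-j} |D^{i-j}(\Phi f)|,$$
hence
$$|\rho^i D^i f|^p \Phi^p \lesssim \sum_{m=0}^i |\rho^m D^m(\Phi f)|^p.$$
Summing in $i \leq k$, integrating against $\rho^{\theta-d}$, and invoking Step 2 for $\Phi f$ completes the inequality in the reverse direction.

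The proof is essentially routine once the right decompositions are identified; no step presents a serious obstacle. The only mild technicality is the power-mean estimate $\sum_n \zeta_{0,(n)}^p \simeq 1$ in Step 1, which is needed to replace the $p$-th power in the definition of $L_{p,\tau}(\Omega)$ by the partition of unity itself, so that the discrete sum becomes the continuous integral with weight $\rho^{\tau-d}$.
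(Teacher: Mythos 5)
Your proof is correct and uses the same core ingredients as the paper: Proposition~\ref{220527502}.(7) to reduce to zeroth-order norms, the computation $\sum_n e^{n(\theta-d)}\zeta_{0,(n)}^p\simeq\rho^{\theta-d}$ to identify the weighted $L_p$-norm, and the Leibniz rule together with the regular Harnack estimates on $\Phi$ and $\Phi^{-1}$. The only difference is organizational: the paper moves $\Phi$ past the derivatives at the norm level (via Lemma~\ref{21.09.29.4}.(3)) before handling $k=0$ with $\Phi$ present, whereas you establish the $\Phi\equiv 1$ case for all $k$ first and then insert $\Phi$ pointwise, but this is essentially the same argument.
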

			\begin{proof}
				Make use of Proposition~\ref{220527502}.(7) and Lemma~\ref{21.09.29.4}.(3) to obtain
				\begin{align}\label{2206091139}
					\|\Phi f\|_{\Psi H_{p,\theta}^{k}(\Omega)}\simeq \sum_{i=0}^k\|D^i (\Phi f)\|_{L_{p,\theta+ip}}\simeq \sum_{i=0}^k\|\Phi D^if\|_{L_{p,\theta+ip}}\,.
				\end{align}
				By \eqref{2206091139}, we only need to prove for $k=0$.
				Since $\|f\|_{H_p^0}^p=\|f\|_{L_p(\bR^d)}^p$, we obtain
				\begin{alignat*}{3}
					\|\Phi f\|_{L_{p,\theta}(\Omega)}^p&=&&\sum_{n\in\bZ}e^{n\theta}\int_{\Omega}|(\zeta_{0,(n)}\Phi f)(e^nx)|^p \dd x\\
					&=&&\int_{\Omega}|f|^p\Psi^p\Big(\sum_{n\in\bZ}e^{n(\theta-d)}|\zeta_{0,(n)}|^p\Big)\dd x\\
					&\simeq_{d,\theta}&&\int_{\Omega}|f|^p\Phi^p\rho^{\theta-d}\dd x\nonumber
				\end{alignat*}
				where the last inequality follows from \eqref{220526211}.
			\end{proof}
			
			\begin{prop}\label{220418435}
				Let $\Phi$ be a regular Harnack function, $p\in (1,\infty)$, $k\in\bN_0$, $\alpha\in (0,1)$, and $\theta\in\bR$.
				There exists a constant $N=N(d,p,k,\alpha,C_2(\Phi))$ such that 
				\begin{align}\label{220609106}
					\|\Phi f\|_{B_{p,\theta}^{k+\alpha}}^p \simeq_N\,&\sum_{i=0}^k\int_{\Omega}|\rho^k D^kf|^p\Phi^p\rho^{\theta-d}\dd x\\
					&+\int_{\Omega}\Big(\int_{|y-x|<\frac{\rho(x)}{2}}\frac{|D^{k}f(x)-D^{k}f(y)|^p}{|x-y|^{d+\alpha p}}dy\Big)\Phi(x)^p\rho(x)^{(k+\alpha)p +\theta-d}\dd x\nonumber
				\end{align}
			\end{prop}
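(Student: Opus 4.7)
The plan is to reduce the Besov norm equivalence to the case $k=0$, and then carry out a dyadic decomposition and a careful near-diagonal/off-diagonal splitting of the Sobolev--Slobodeckij seminorm. First, by the $B$-space counterpart of Proposition~\ref{220527502}.(7) (contained in Lemma~\ref{22.04.15.148}.(1)), I would write $\|\Phi f\|_{B_{p,\theta}^{k+\alpha}(\Omega)}^p \simeq \sum_{i=0}^k \|D^i(\Phi f)\|_{B_{p,\theta+ip}^{\alpha}(\Omega)}^p$. Expanding $D^i(\Phi f)$ by Leibniz and controlling the lower-order terms via the multiplier estimate (the $B$-space counterpart of Lemma~\ref{21.09.29.4}.(3)) together with Proposition~\ref{220527502}.(2), I can replace $D^i(\Phi f)$ by $\Phi D^i f$ at the cost of integer-order terms $\int |\rho^j D^j f|^p \Phi^p \rho^{\theta-d}\dd x$ for $j<i$, which already appear on the right-hand side of \eqref{220609106}. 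This reduces the matter to proving the $k=0$ case
\begin{align*}
\|\Phi g\|_{B_{p,\theta}^{\alpha}(\Omega)}^p \simeq \int_\Omega |g|^p \Phi^p \rho^{\theta-d}\dd x + \int_\Omega \bigg(\int_{|y-x| < \rho(x)/2} \frac{|g(x) - g(y)|^p}{|x-y|^{d+\alpha p}}\dd y\bigg) \Phi(x)^p \rho(x)^{\alpha p+\theta-d}\dd x.
\end{align*}

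For the $k=0$ case, I would use the dyadic decomposition $\|\Phi g\|_{B_{p,\theta}^{\alpha}(\Omega)}^p = \sum_n e^{n\theta}\|(\zeta_{0,(n)}\Phi g)(e^n\cdot)\|_{B_p^{\alpha}(\bR^d)}^p$ and the classical identification $\|h\|_{B_p^{\alpha}(\bR^d)}^p \simeq \|h\|_{L_p}^p + \iint |h(x)-h(y)|^p|x-y|^{-d-\alpha p}\dd x\dd y$ valid for $\alpha\in(0,1)$. The $L_p$-portion yields $\int |g|^p \Phi^p \rho^{\theta-d}\dd x$ by the same computation as in Proposition~\ref{220528651}. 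After undoing the scaling $x=e^n u$, $y=e^n v$, each seminorm summand contributes $e^{n(\theta+\alpha p-d)}\iint |(\zeta_{0,(n)}\Phi g)(x) - (\zeta_{0,(n)}\Phi g)(y)|^p |x-y|^{-d-\alpha p}\dd x\dd y$. I would split this integral at $|x-y|=\rho(x)/2$. On the off-diagonal region $|x-y|\geq \rho(x)/2$, the elementary bound $|h(x)-h(y)|^p \leq 2^{p-1}(|h(x)|^p+|h(y)|^p)$ together with $\int_{|y-x|\geq \rho(x)/2}|x-y|^{-d-\alpha p}\dd y \lesssim \rho(x)^{-\alpha p}$ and $\rho(x)\simeq e^n$ on $\mathrm{supp}(\zeta_{0,(n)})$ yields a contribution $\lesssim \int |g|^p \Phi^p \rho^{\theta-d}\dd x$, using the finite overlap of the cover. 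On the near-diagonal region $|x-y|<\rho(x)/2$, where $\rho(y)\simeq \rho(x)$ and $\Phi(y)\simeq \Phi(x)$ by the regular Harnack property, I decompose
\begin{align*}
(\zeta_{0,(n)}\Phi g)(x) - (\zeta_{0,(n)}\Phi g)(y) = \zeta_{0,(n)}(x)\Phi(x)\big(g(x)-g(y)\big) + \big[(\zeta_{0,(n)}\Phi)(x)-(\zeta_{0,(n)}\Phi)(y)\big]g(y).
\end{align*}
The first piece, after summing via $\sum_n \zeta_{0,(n)}(x)^p e^{n(\theta+\alpha p-d)} \simeq \rho(x)^{\theta+\alpha p-d}\Phi(x)^p \cdot \Phi(x)^{-p}$ (by finite overlap), produces precisely the local seminorm on the right-hand side. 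The second piece is controlled using $|(\zeta_{0,(n)}\Phi)(x)-(\zeta_{0,(n)}\Phi)(y)| \lesssim e^{-n}\Phi(x)|x-y|$ (from $|D\zeta_{0,(n)}|\lesssim e^{-n}$ and $|D\Phi|\lesssim \rho^{-1}\Phi$) together with $\int_{|y-x|<\rho(x)/2}|x-y|^{p-d-\alpha p}\dd y \lesssim \rho(x)^{p-\alpha p}$, yielding a bound by $\int |g|^p\Phi^p\rho^{\theta-d}\dd x$. The reverse inequality is obtained by restricting the left-hand dyadic integral to $|x-y|<\rho(x)/2$ where $\zeta_{0,(n)}$ is locally constant and $\Phi$ is essentially frozen.

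The main obstacle will be the bookkeeping in the near-diagonal split: one must verify that the cross term $[(\zeta_{0,(n)}\Phi)(x)-(\zeta_{0,(n)}\Phi)(y)]g(y)$ is genuinely absorbed into the $L_p$-weighted part rather than contaminating the seminorm, and that the finite overlap of $\{\mathrm{supp}(\zeta_{0,(n)})\}$ combined with the bounds $|D^m \Phi|\lesssim \rho^{-m}\Phi$ yields constants depending only on $d,p,k,\alpha,\mathrm{C}_2(\Phi)$. A related technicality is that the change of variables $x=e^n u$ transforms the domain $\{|y-x|<\rho(x)/2\}$ into $\{|v-u|<e^{-n}\rho(e^n u)/2\}$, which is legitimate because $\rho\simeq\trho$ and $e^{-n}\trho(e^n u)\simeq 1$ on the relevant support.
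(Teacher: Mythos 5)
Your proposal is correct and follows essentially the same strategy as the paper's proof: reduce to $k=0$ via the Besov counterpart of Proposition~\ref{220527502}.(7), then write $\|\Phi g\|_{B_{p,\theta}^\alpha(\Omega)}^p$ through the dyadic decomposition and the classical identification of $B_p^\alpha(\bR^d)$ as a Sobolev--Slobodeckij space, then split the double integral at $|x-y|=\rho(x)/2$ and absorb the cross term via $|D\zeta_{0,(n)}|\lesssim e^{-n}$ and $|D\Phi|\lesssim \rho^{-1}\Phi$. The only organizational difference is that the paper first establishes an intermediate identity \eqref{220609102} with the weight $\trho^{(\theta-d)/p+\alpha}$ absorbed into a modified pair $(\eta, F)$ and then inserts $\Phi$ afterward through \eqref{2206091243}, whereas you carry $\Phi$ along from the start; the estimates involved are the same. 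One phrasing in your sketch is a bit loose: for the reverse inequality it is not that $\zeta_{0,(n)}$ is ``locally constant'' on $\{|x-y|<\rho(x)/2\}$, but rather that the same triangle-inequality decomposition you wrote shows the localized seminorm of $g$ is bounded by the seminorm of $\zeta_{0,(n)}\Phi g$ plus the already-controlled cross term --- which is precisely how the paper argues ($I_{2,3}\lesssim I_2+I_{2,2}$).
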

			\begin{proof}
				\textbf{Step 1.} Our first claim is that
				\begin{align}\label{220609102}
					\|f\|_{B_{p,\theta}^{\alpha}(\Omega)}^p\simeq\,&\|f\|_{L_{p,\theta}(\Omega)}\\
					&+\int_{\Omega}\int_{|x-y|\leq \frac{\rho(x)}{2}}\frac{|(\trho^{(\theta-d)/p+\alpha}f)(x)-(\trho^{(\theta-d)/p+\alpha}f)(y)|^p}{|x-y|^{d+\alpha p}}\dd x\dd y\,.\nonumber
				\end{align}
				We note the following equivalent norm of Besov spaces:
				\begin{align}\label{2205181108}
					\|f\|_{B_{p}^{\alpha}}^p\simeq_{d,p,\alpha} \|f\|_{L_p}^p+\iint_{\bR^d\times \bR^d}\frac{|f(x)-f(y)|^p}{|x-y|^{d+\alpha p}}\dd x\dd y
				\end{align}
				(see,\textit{e.g.}, \cite[Theorem 2.5.7/(i)]{triebel2}).
				Recall that for $\xi\in C_c^{\infty}(\bR_+)$, we denote $\xi_{(n)}(x)=\xi(e^{-n}\trho(x))$.
				From \eqref{2205181108} we have
				\begin{alignat*}{2}
					\|f\|_{B_{p,\theta}^{\alpha}(\Omega)}^p&\simeq_N&& \sum_{k\in\bZ}e^{n\theta}\big\|\big(\zeta_{0,(n)}f\big)(e^n\,\cdot\,)\big\|_p^p\\
					&\quad&&+\sum_{k\in\bZ}e^{n\theta}\iint_{\bR^d\times \bR^d}\frac{\big|\big(\zeta_{0,(n)}f\big)(e^nx)-\big(\zeta_{0,(n)}f\big)(e^ny)\big|^p}{|x-y|^{d+\alpha p}}\dd x\dd y\\
					&=:&& I_1+I_2\,.
				\end{alignat*}
				Proposition~\ref{220528651} implies 
				$$
				I_1\simeq_{d,p,\theta} \|f\|_{L_{p,\theta}}^p\,.
				$$
				Change of variables implies
				\begin{alignat*}{3}
					I_2\,&=&&\sum_{k\in\bZ}\iint_{\bR^d\times \bR^d}\frac{|\zeta_{0,(n)}(x)f(x)-\zeta_{0,(n)}(y)f(y)|^p}{|x-y|^{d+\alpha p}}e^{n(\theta-d+\alpha p)}\dd x\dd y\\
					&=&&\sum_{k\in\bZ}\iint_{\bR^d\times \bR^d}\frac{|\eta_{(n)}(x)F(x)-\eta_{(n)}(y)F(y)|^p}{|x-y|^{d+\alpha p}}\dd x\dd y\\
					&\lesssim_p&& \sum_{k\in\bZ}\iint_{\left\{|x-y|\geq \rho(x)/2\right\}}\frac{|\eta_{(n)}(x)F(x)|^p+|\eta_{(n)}(y)F(y)|^p}{|x-y|^{d+\alpha p}}\dd x\dd y\\
					&&& +\sum_{k\in\bZ}\iint_{\left\{|x-y|\leq \rho(x)/2\right\}}\frac{|\eta_{(n)}(x)-\eta_{(n)}(y)|^p}{|x-y|^{d+\alpha p}}|F(x)|^p\dd x\dd y\\
					&&&+\sum_{k\in\bZ}\iint_{\left\{|x-y|\leq \rho(x)/2\right\}}|\eta_{(n)}(y)|^p\frac{|F(x)-F(y)|^p}{|x-y|^{d+\alpha p}}\dd x\dd y\\
					&=:&&I_{2,1}+I_{2,2}+I_{2,3}\,,
				\end{alignat*}
				where 
				$$
				F=\trho^{\,(\theta-d)/p+\alpha}f\quad\text{and}\quad \eta(t)=t^{-(\theta-d)/p-\alpha}\zeta_0(t)\,.
				$$
				Observe that for any $t>0$,
				\begin{align}\label{2206091204}
					\sum_{n\in\bZ}|\eta(e^{-n} t)|^p\simeq_N 1 \quad\text{and}\quad\sum_{n\in\bZ}e^{-np}|\eta'(e^{-n} t)|^p\lesssim_N t^{-p}\,,
				\end{align}
				where $N=N(d,p,\theta,\alpha)$.
				It follows from \eqref{2206091204} that
				\begin{align}\label{2206091221}
					I_{2,1}\simeq_N \int_{\Omega}\int_{y:|x-y|\geq \frac{\rho(x)}{2}}\frac{|F(x)|^p+|F(y)|^p}{|x-y|^{d+\alpha p}}\dd y\dd x\simeq_N \int_{\Omega}|f(x)|^p \rho(x)^{\theta-d}\dd x\,,
				\end{align}
				where $N=N(d,p,\theta,\alpha)$, and the last inequality is implied by that
				$$
				|x-y|\geq \rho(x)/2 \quad \Longrightarrow\quad  |x-y|\geq \rho(y)/3\,.
				$$
				To estimate $I_{2,2}$, observe that for $x,\,y\in\Omega$ with $|x-y|<\rho(x)/2$, 
				\begin{align}\label{220609621}
					\begin{split}
						\sum_{n\in\bZ}|\eta_{(n)}(x)-\eta_{(n)}(y)|^p\lesssim_N& \sum_n|x-y|^pe^{-np}\Big(\int_0^1|\eta'(e^{-n}\trho(x_r))|\dd r\Big)^p\\
						\leq\,\,\,\,& |x-y|^p\int_0^1\sum_{n}e^{-np}|\eta'(e^{-n}\trho(x_r))|^p \dd r\\
						\lesssim_N& |x-y|^p\int_0^1\trho(x_r)^{-p}\dd r\,,
					\end{split}
				\end{align}
				where $x_r=(1-r)x+ry$ and $N=N(d,p,\theta,\alpha)$.
				Here, the first inequality follows from that $|\nabla \trho|$ is bounded on $\Omega$, and the last inequality follows from \eqref{2206091204}.
				Since $\rho(x_r)\geq \rho(x)/2$, we have
				$$
				\sum_n|\eta_{(n)}(x)-\eta_{(n)}(y)|^p\lesssim_ N\,|x-y|^p\rho(x)^{-p}\,,
				$$
				where $N=N(d,p,\theta,\alpha)$.
				Consequently, we obtain
				\begin{align}\label{2206091223}
					I_{2,2}\,&\lesssim \int_{\Omega}\int_{y:|x-y|\leq \frac{\rho(x)}{2}}\frac{|F(x)|^p\rho(x)^{-p}}{|x-y|^{d-(1-\alpha)p}}\dd y\dd x\lesssim_{d,\alpha,p} \int|f(x)|^p\rho(x)^{\theta-d}\dd x.
				\end{align}
				Due to \eqref{2206091221} - \eqref{2206091223} and that
				$$
				I_{2,3}\lesssim I_2+I_{2,2}\lesssim \|f\|_{B_{p,\theta}^{\alpha}}^p\,,
				$$
				we have
				$$
				\|f\|_{B_{p,\theta}^{\alpha}(\Omega)}^p\simeq \|f\|_{L_{p,\theta}(\Omega)}+I_{2,3}\,.
				$$
				By applying \eqref{2206091204} to $I_{2,3}$, \eqref{220609102} is proved.
				
				\textbf{Step 2.}
				Now, we prove \eqref{220609106} for $k=0$.
				Denote $F:=\trho^{(\theta-d)/p+\alpha}f$.
				Since $\Phi\cdot \trho^{(\theta-d)/p+\alpha}$ is a regular Harnack function, if $|x-y|<\rho(x)/2$, then
				\begin{align}\label{2206091243}
					\begin{split}
						&\Big|\big|\Phi(x)F(x)-\Phi(y)F(y)\big|-\Phi(x)\trho(x)^{(\theta-d)/p+\alpha}\big|f(x)-f(y)\big|\Big|\\
						\leq \,&\big|\Phi(x)\trho(x)^{(\theta-d)/p+\alpha}-\Phi(y)\trho(y)^{(\theta-d)/p+\alpha}\big||f(y)|\\
						\leq \,&N |x-y|\cdot\Phi(y)\rho^{-1}(y)|F(y)|
					\end{split}
				\end{align}
				where $N=N(d,C_2(\Phi))$.
				By combining \eqref{220609102} (for $\Psi F$ instead of $f$), \eqref{2206091243}, and that
				\begin{align*}
					\int_{\Omega}\int_{y:|x-y|<\rho(y)}\frac{\big(|x-y|\cdot\Phi(y)\rho^{-1}(y)|F(y)|\big)^p}{|x-y|^{d+\alpha p}}\dd y\dd x\lesssim\int_{\Omega}|f(y)|^p\Phi(y)^p\rho(y)^{\theta-d}\dd y\,,
				\end{align*}
				we obtain \eqref{220609106} for $k=0$.
				
				\textbf{Step 3}. Let $k\geq 1$.
				The argument for \eqref{2206091139} (see with Proposition~\ref{220527502}.(2)) also implies that
				$$
				\|\Phi f\|_{B_{p,\theta}^{k+\alpha}(\Omega)}\simeq \sum_{i=0}^{k-1}\|\Phi D^if\|_{B_{p,\theta+ip}^{\alpha}(\Omega)}+\|\Phi D^kf\|_{B_{p,\theta+kp}^{\alpha}(\Omega)}\,.
				$$
				By Propositions~\ref{220527502}.(2) and (7), we have
				\begin{align*}
					\sum_{i=0}^{k-1}\|\Phi D^if\|_{L_{p,\theta+ip}(\Omega)}\lesssim &\sum_{i=0}^{k-1}\|\Phi D^if\|_{B_{p,\theta+ip}^{\alpha}(\Omega)}\\
					\lesssim &\sum_{i=0}^{k-1}\|\Phi D^if\|_{H_{p,\theta+ip}^{1}(\Omega)}\simeq \sum_{i=0}^{k}\|\Phi D^if\|_{L_{p,\theta+ip}(\Omega)}\\
					\lesssim &\sum_{i=0}^{k-1}\|\Phi D^if\|_{L_{p,\theta+ip}(\Omega)}+\|\Psi D^k f\|_{B_{p,\theta+kp}^{\alpha}(\Omega)}\,.
				\end{align*}
				Therefore, we have
				\begin{align*}
					\|\Phi f\|_{B_{p,\theta}^{k+\alpha}(\Omega)}\simeq \sum_{i=0}^k\|\Phi D^if\|_{B_{p,\theta+ip}^{\alpha}(\Omega)} \simeq \sum_{i=0}^{k-1}\|\Phi D^if\|_{L_{p,\theta+ip}(\Omega)}+\|\Psi D^k f\|_{B_{p,\theta+kp}^{\alpha}(\Omega)}\,.
				\end{align*}
				By Proposition~\ref{220528651} and the result of Step 2 (\eqref{220609106} for $k=0$), the proof is completed.
			\end{proof}

\vspace{2mm}

\section*{Acknowledgment}
The author wishes to express deep appreciation to Prof. Kyeong-Hun Kim for valuable comments.
The author is also sincerely grateful to Dr. Jin Bong Lee for careful assistance in proofreading and editing this paper.

\providecommand{\bysame}{\leavevmode\hbox to3em{\hrulefill}\thinspace}
\providecommand{\MR}{\relax\ifhmode\unskip\space\fi MR }
\providecommand{\MRhref}[2]{%
  \href{http://www.ams.org/mathscinet-getitem?mr=#1}{#2}
}
\providecommand{\href}[2]{#2}


%
%
%
%
%
%


\end{document}